\newcommand{\leqnos}{\tagsleft@true\let\veqno\@@leqno}
\newcommand{\reqnos}{\tagsleft@false\let\veqno\@@eqno}
\newcommand{\ind}{{ 1}}
\newcommand{\Cunder}{K_{\beta,\delta}}
\newcommand{\Ccrit}{K_{\beta,\delta}}
\newcommand{\Cover}{K_{\beta,\delta}}
\newcommand{\Pb}{\mathbf{P}}
\newcommand{\bP}{\mathbf{P}}
\newcommand{\bZ}{\mathbf{Z}}
\newcommand{\bE}{\mathbf{E}}
\newcommand{\Pbb}{\Pb_{\gb}}			
\newcommand{\Ebb}{\bE_{\gb}}			
\newcommand{\bbR}{\mathbb{R}}
\newcommand{\R}{\mathbb{R}}
\newcommand{\bbN}{\mathbb{N}}
\newcommand{\N}{\mathbb{N}}
\newcommand{\bbZ}{\mathbb{Z}}
\newcommand{\Var}{\mathbb{V}\mathrm{ar}}
\newcommand{\be}{\begin{equation}}
	\newcommand{\ee}{\end{equation}}		
\newcommand{\sA}{\mathscr{A}}
\newcommand{\sC}{\mathscr{C}}
\newcommand{\lmgf}{\mathcal{L}}
\newcommand{\bB}{\mathbf{B}}
\newcommand{\bh}{\mathbf{h}}
\newcommand{\Cc}{{\ensuremath{\mathcal C}}}
\newcommand{\G}{{\ensuremath{\mathcal G}}}
\newcommand{\cA}{{\ensuremath{\mathcal A}} }
\newcommand{\cE}{{\ensuremath{\mathcal E}} }
\newcommand{\cS}{{\ensuremath{\mathcal S}} }
\newcommand{\cH}{{\ensuremath{\mathcal H}} }
\newcommand{\cC}{{\ensuremath{\mathcal C}} }
\newcommand{\cN}{{\ensuremath{\mathcal N}} }
\newcommand{\cL}{{\ensuremath{\mathcal L}} }
\newcommand{\cT}{{\ensuremath{\mathcal T}} }
\newcommand{\cD}{{\ensuremath{\mathcal D}} }
\newcommand{\cV}{{\ensuremath{\mathcal V}} }
\newcommand{\cB}{{\ensuremath{\mathcal B}} }
\newcommand{\cR}{{\ensuremath{\mathcal R}} }
\newcommand{\cG}{{\ensuremath{\mathcal G}} }
\renewcommand{\epsilon}{\varepsilon}
\renewcommand{\phi}{\varphi}
\newcommand{\ga}{\alpha}
\newcommand{\gb}{\beta}
\newcommand{\gd}{\delta}
\newcommand{\gep}{\varepsilon}  
\newcommand{\gG}{\Gamma}
\newcommand{\gD}{\Delta}
\newcommand{\gl}{\lambda}
\newcommand{\cst}{({\rm{cst.}})}
\newcommand{\simL}{\underset{L\to \infty}{\sim}}
\newcounter{cst}[section]		
\newcounter{svf}[section]		
\newcommand{\beq}{\begin{equation}}
	\newcommand{\eeq}{\end{equation}}
\newtheorem{theorem}{Theorem}[section]
\newtheorem{proposition}[theorem]{Proposition}
\newtheorem{corollary}[theorem]{Corollary}
\newtheorem{lemma}[theorem]{Lemma}
\newtheorem{claim}[theorem]{Claim}
\theoremstyle{definition}
\newtheorem{definition}[theorem]{Definition}
\newtheorem{remark}[theorem]{Remark}
\numberwithin{equation}{section}			
\definecolor{darkorange}{rgb}{0.0, 0.5, 0.0}
\newcommand{\dd}{\mathrm{d}}		
\newcommand{\suptwo}[2]{\sup_{\substack{#1 \\ #2}}}
\newcommand{\sumtwo}[2]{\sum_{\substack{#1 \\ #2}}}
\renewcommand{\hat}{\widehat}
\renewcommand{\tilde}{\widetilde}
\DeclareMathOperator*{\argmax}{arg\,max}
\newcommand{\somme}[2]{\underset{#1}{\overset{#2}{\sum}}}
\newcommand{\produit}[2]{\underset{#1}{\overset{#2}{\prod}}}
\newcommand{\limite}[2]{\underset{#1 \longrightarrow #2}{\lim}}
\newcommand{\Z}[0]{\mathbb{Z}}
\newcommand{\C}[0]{\mathbb{C}}
\newcommand{\LoiMarchealeatoire}[1]{\mathbf{P}_\beta \Big( #1 \Big)}
\newcommand{\LoiMarchealeatoireSansParenthese}[0]{\mathbf{P}_\beta }
\newcommand{\EsperanceMarcheAleatoire}[1]{\mathbf{E}_\beta \Big( #1 \Big)}
\newcommand{\EsperanceMarcheAleatoiresansparenthese}[0]{\mathbf{E}_\beta}
\newcommand{\aN}{\textsl{a}_N}
\newcommand{\bN}{\textsl{b}_N}
\newcommand{\htildeq}[0]{\tilde{h}(q)}
\newcommand{\htildeqbar}[0]{\tilde{h}(\bar q)}
\newcommand{\sqdelta}[0]{s_{\delta}(q)}
\newcommand{\sqdeltaN}[0]{s_{\delta,N}(q)}
\newcommand{\sqdeltan}[0]{s_{\delta,n}(q)}
\newcommand{\qinf}[0]{q^*_\gd}
\newcommand{\qstar}{q^*_\gd}
\newcommand{\sign}{\textrm{sign}}
\newcommand{\penalite}[0]{T_\delta}
\newcommand{\deltaconcave}[0]{\check{\gd}}
\newcommand{\ProbaTiltee}[2]{\tilde{\mathbf{P}}_{#1} \Big( #2 \Big)}
\newcommand{\ProbaTilteeSansParenthese}[1]{\tilde{\mathbf{P}}_{#1}}
\newcommand{\EsperanceTiltee}[2]{\tilde{\mathbf{E}}_{#1} \Big( #2 \Big)}
\newcommand{\EsperanceTiltHomogen}[2]{\tilde{\mathbf{E}}_{#1} \Big( #2 \Big)}
\newcommand{\Proba}[2]{\mathbf{P}_{#1} \Big( #2 \Big)}
\newcommand{\ProbaSansParenthese}[1]{\mathbf{P}_{#1}}
\newcommand{\ProbaSurCritique}[1]{\mathbf{P}_{N,\rm{sup}}^{\delta,q}\left(#1 \right)}
\newcommand{\EsperanceSurCritique}[1]{\mathbf{E}_{N,\rm{sup}}^{\delta,q}\left(#1 \right)}
\newcommand{\ProbaSurCritiqueSansParenthese}[0]{\mathbf{P}_{N,\rm{sup}}^{\delta,q}}
\newcommand{\deltabar}{\underline{\delta}}
\newcommand{\cstexpo}{C_2}
\newcommand{\sfq}{\mathsf{q}}
\title[IPDSAW interacting with a vertical wall]{Phase diagram of the interacting partially directed self-avoiding walk attracted by a vertical wall}
\author{Elric Angot}
\address{Universit\'e de Nantes, Laboratoire Jean Leray, 2, rue de la Houssini\`ere, 44322 Nantes cedex 3, France}
\email{elric.angot@univ-nantes.fr}
\author{Nicolas P\'etr\'elis}
\address{Universit\'e de Nantes, Laboratoire Jean Leray, 2, rue de la Houssini\`ere, 44322 Nantes cedex 3, France}
\email{nicolas.petrelis@univ-nantes.fr}
\author{Julien Poisat}
\address{Université Paris-Dauphine, CNRS, UMR 7534, CEREMADE, PSL Research University, Place du Maréchal de Lattre de Tassigny, 75775 Paris cedex 16, France}
\email{poisat@ceremade.dauphine.fr}
\subjclass[2020]{Primary 60K35; Secondary 82B41}
\keywords{Polymer collapse, attractive wall, interacting partially-directed self-avoiding walk, large deviations, random walk representation, random walk area, local limit theorem}
\begin{document}
	\begin{abstract}
		In the present paper, we consider the interacting partially-directed self-avoiding walk (IPDSAW) attracted by a vertical wall. The IPDSAW was introduced by Zwanzig and Lauritzen (J. Chem. Phys., 1968) as a manner of investigating the collapse transition of a homopolymer dipped in a repulsive solvent. We prove in particular that a surface transition occurs inside the collapsed phase between (i) a regime where the attractive vertical wall does not influence the geometry of the polymer and (ii) a regime where the polymer is partially attached at the wall on a length that is comparable to its horizontal extension, modifying its asymptotic Wulff shape. The latter rigorously confirms the conjecture exposed by physicists in (Physica A: Stat. Mech. \& App., 2002). We push the analysis even further by providing sharp asymptotics of the partition function inside the collapsed phase.
	\end{abstract}
	\maketitle
	\let\thefootnote\relax\footnote{{\it Acknowledgements.}  The authors would like to thank the Centre Henri Lebesgue ANR-11-LABX-0020-01 for creating an attractive mathematical environment. JP acknowledges the support of ANR-22-CE40-0012 grant LOCAL.}
	\setcounter{tocdepth}{2}
	\tableofcontents
	\section*{Notation}
	Let $(a_L)_{L\geq 1}$ and $(b_L)_{L\geq 1}$ be two sequences of positive numbers. We will write 
	\beq
	a_L\underset{L\to \infty}{\sim}b_L \quad \text{if} \quad \lim_{L\to \infty} a_L/b_L=1.
	\eeq
	We will also write $\cst$  to denote generic positive constants whose value may change from line to line. We denote by $\bbN = \{1,2,3,\ldots\}$ the set of positive integers while $\bbN_0= \{0,1,2,\ldots\}$ is the set of non-negative integers.
	If $X = (X_i)_{i \in \N}$ is a random process, we note $X_I = (X_i)_{i \in I}$ for every $I\subseteq \N$ and abbreviate $\{X_I>0\}:=\{X_{i}>0,\ i \in I\}$. 
	\section{Introduction}
	The collapse transition of a polymer dipped in a repulsive solvent is a physical phenomenon that has been extensively studied in the physics literature (see e.g. \cite{refId0,flory1953principles} for theoretical background and more recently~\cite{Owczarek_2007} or \cite[Section 8]{Gut15} for computational background).
	There are so far very few mathematical models for which the collapse transition has been rigorously proven. Among this latter class of 
	models, the Interacting Partially Directed Self-Avoiding Walk  
	(IPDSAW) was initially introduced in \cite{ZL68} and investigated first with the help of combinatorial tools (see e.g. \cite{F90, OPB93}) and then, in the last 
	decade, thanks to a random walk representation of the trajectories. This probabilistic perspective allowed for a much deeper mathematical understanding of both the 
	phase transition and the geometric features of a typical trajectory sampled from the polymer measure, in each regime (see \cite{CNPT18} for a review).
	
	Physicists have also considered the effect of an interaction between the polymer and the container inside which the poor solvent is kept, see e.g. \cite{MGSY02,RGSY02}. Such an additional interaction with the bottom of the container triggers a surface transition inside the collapsed phase of the IPDSAW, which was recently put on rigorous grounds in~\cite{LP22}. In the present paper, we focus on another interaction of interest, that is an attractive interaction between the polymer and one of the vertical walls of the container. In particular, we display the phase diagram of the model and exhibit another surface transition inside the collapsed phase. 
	
	When only the  solvent-monomers interactions are taken into account, it was shown in \cite{Legrand_2022} that, inside the collapsed phase, a typical configuration of the polymer  is made 
	of a macroscopic volume called {\it bead}, which is unique since only finitely many monomers are to be found outside this bead. 
	For a polymer of length $L\in \N$, this bead, once rescaled horizontally and vertically by $\sqrt{L}$ converges 
	in probability towards a deterministic Wulff shape (see \cite{CNP16}). 
	In \cite{LP22},  the polymer is investigated inside its collapsed phase and some additional interactions are taken into account between the monomers and the bottom of the container.
	In order to keep the model tractable,  a geometric restriction has been imposed on the allowed configurations, namely they are required to describe a unique bead. 
	In the present paper, although we consider additional interactions as well (this time between the monomers and the vertical walls), we managed to get rid of the single bead restriction and display our result in the general framework. 
	From that perspective, the results displayed here are more ambitious. 
	\subsection{The IPDSAW with an attractive wall}
	The configurations of the polymer   are modeled by 
	random walk paths on $\mathbb{Z}^2$ that are \emph{self-avoiding} and take exclusively unitary steps \emph{upwards, downwards and to the right} (see Fig.~\ref{figure : sans bulles}). 
	The fact that the polymer is placed in a repulsive solvent is taken into account  by assuming that the monomers try to exclude the solvent and therefore attract one another. 
	For this reason, any pair of non-consecutive vertices of the walk that are adjacent on the lattice is called \textit{self-touching} and the interactions between monomers are taken into account by assigning an energetic reward $\beta$ to the polymer for each self-touching. In the present paper, we take into account another interaction between the polymer and the medium around it, namely 
	an attraction of the monomer at the \textit{vertical wall} of the container. This interaction is of intensity $\delta$. Note that we consider non-negative interactions, i.e. $(\beta,\delta) \in \mathcal{Q} := [0, +\infty)^2$. 
	It is convenient to represent the configurations of the model as collections of oriented vertical stretches separated by horizontal steps. To be more specific,  for a polymer made of $L\in \N$ monomers, the set of allowed 
	paths is $\Omega_L:=\bigcup_{N=1}^L\mathcal{L}_{N,L}$, where $\mathcal{L}_{N,L}$ 
	consists of all the collections made of  $N$ vertical stretches that have a total length $L-N$, that is
	\begin{equation}\label{defLL}
		\textstyle\mathcal{L}_{N,L}=\Bigl\{\ell:=(\ell_i)_{i=1}^N \in\mathbb{Z}^N:\sum_{n=1}^N|\ell_n|+N=L\Bigr\}.
	\end{equation}
	\begin{figure}[H]
		\centering
		\includegraphics[width=0.5\textwidth]{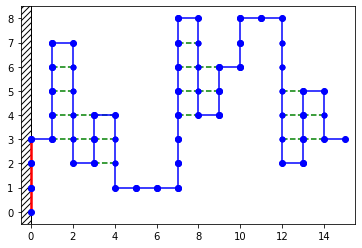}
		\caption{Picture of the trajectory $\ell\in \mathcal{L}_{15,54}$ whose vertical stretches are $(3,4,-5,2,-3,0,0,7,-4,2,2,0,-6,3,-2)$. The wall interaction is highlighted in red, and the self-interaction is represented by a dashed line.}
		\label{figure : sans bulles}
	\end{figure}
	With this representation,  the modulus of a given stretch
	corresponds to the number of monomers constituting this stretch  (and
	the sign gives the direction upwards or downwards). For convenience, we require every configuration to end with a horizontal step, and we note that any two consecutive vertical
	stretches are separated by a horizontal step. The latter explains why $\sum_{n=1}^N |\ell_n|$ must equal $L-N$ in order for $\ell=(\ell_i)_{i=1}^N$ to be associated with a polymer made of $L$ monomers (see Fig.~\ref{figure : sans bulles}). 
	We define the set of all trajectories as $\Omega=\cup_{L\geq 1} \Omega_L$ and for a given trajectory $\ell\in \Omega$, we let $N_\ell$ be
	its horizontal extension (that is also its number of vertical stretches)  and $|\ell |$ be its total length, i.e., $\ell \in \cL_{N_\ell,|\ell|}$.
	The interactions between the polymer and the medium around it are taken into account in a Hamiltonian associated with each path $\ell\in \Omega_L$ and denoted by 
	$\textstyle H_{L,\beta,\delta}(\ell)$. To be more specific, for every configuration $\ell\in \Omega_L$, the attraction between the vertical hard wall and the polymer holds along 
	the first vertical stretch of the configuration as $\delta |\ell_1|$. Moreover, the repulsion between the monomers and the solvent is taken into account by
	rewarding energetically those pairs of consecutive stretches with opposite directions, i.e.,
	\begin{equation}\label{defH}
		\textstyle H_{L,\beta,\delta}(\ell_1,\ldots,\ell_N)=\delta |\ell_1|+\beta\sum_{n=1}^{N-1}(\ell_n\;\tilde{\wedge}\;\ell_{n+1}),
	\end{equation}
	where
	\begin{equation}
		x\, \tilde\wedge\, y=\begin{dcases*}
			|x|\wedge|y| & if $xy<0$,\\
			0 & otherwise.
		\end{dcases*}
	\end{equation}
	With the Hamiltonian  in hand we can define the polymer measure as
	\begin{equation}\label{defpolme}
		P_{L,\beta,\delta}(\ell)=\frac{e^{H_{L,\beta,\delta}(\ell)}}{Z_{L,\beta,\delta}},\quad \ell \in \Omega_L,
	\end{equation}
	where $Z_{L,\beta,\delta}$ is the partition function of the model, i.e.,
	\begin{equation}\label{pff}
		Z_{L,\beta,\delta}=\sum_{N=1}^{L}\sum_{\ell\in\mathcal{L}_{N,L}} \,  e^{H_{L,\beta,\delta}(\ell)}.
	\end{equation}
	\subsection{Reminder on the model without a wall}
	The particular case where the interaction between the monomers and the vertical wall is switched off (corresponding to $\delta=0$)
	has been studied in depth in \cite{CNP16, Legrand_2022, nguyen2013variational}. In this case the existence of the exponential growth rate of the partition function sequence $(Z_{L,\beta,0})_{L\geq 1}$
	is obtained by subadditivity (in $L$) of the logarithm of the former sequence combined with Fekete's lemma. Then, the free energy defined as
	\begin{equation}\label{feww}
		f(\beta,0) = \lim_{L\to \infty} \frac{1}{L}\log Z_{L,\beta,0},
	\end{equation}
	allows us  
	to divide the phase diagram into (i) an 
	{\it extended} phase $[0,\beta_c)=\{\beta\ge0\colon 
	f(\beta,0)>\beta\}$ and (ii) a {\it collapsed} phase $[\beta_c,\infty)=\{\beta\ge0\colon 
	f(\beta,0)=\beta\}$.
	Note that the inequality $f(\beta,0)\geq \beta$ is easily obtained with the following observation. 
	For $L\in \{k^2 \colon k\in \N\}$, we restrict the partition function 
	to a single trajectory $\tilde \ell\in \cL_{\sqrt{L},L}$ defined as  
	\begin{equation}\label{deftwil}
		\tilde \ell_i:=(-1)^{i-1} (\sqrt{L}-1)\quad \text{ for}
		\quad  
		i\in  \{1,\dots,\sqrt{L}\}.
	\end{equation}
	Thus,  the Hamiltonian of $\tilde \ell$ at $\delta=0$ equals $\beta (\sqrt{L}-1)^2=\beta L (1+o_L(1))$ which 
	guarantees us that $f(\beta,0)\geq \beta$.
	\subsection{Outline of the paper}
	In Section \ref{Results}, we state and comment the most important results of the present paper. 
	To begin with, we describe the three different phases ({\it extended}, {\it collapsed} and {\it glued}) into which the phase diagram is divided. 
	Then, we present the surface transition that splits the collapsed phase into three regimes ({\it desorbed-collapsed}, {\it critical} and {\it adsorbed-collapsed}).  We provide the formula of the associated critical curve and we display some sharp asymptotics 
	of the partition function in each regime.
	In Section \ref{prphdiag}, the phase transitions  are proven rigorously, namely the existence of critical curves separating the three aforementioned phases. We take this opportunity to introduce the random walk representation that allows us to provide an alternative expression of the partition function using a random walk of law $\Pbb$ (defined in \eqref{Nantes 3.3}).  In Section \ref{prepsec} we introduce notation and auxiliary mathematical tools that are required to prove our main results. Thus, in Section \ref{auxpartfun}, we settle a class of auxiliary partition functions involving a random walk of law $\Pbb$ constrained to enclose an atypically large area. Some sharp asymptotics of those 
	partition functions are provided in Section \ref{secsharpaux} and proven in Section \ref{proofasympt}. In section \ref{beaddec} we display a method to decompose each polymer trajectory into beads
	that consist of collections of non-zero vertical stretches whose signs alternate. Such decomposition is useful when working inside the collapsed phase because a typical trajectory sampled from the polymer measure turns out to be made of a unique macroscopic bead. Sections \ref{sec:COM} and \ref{sec:analysis_aux} are dedicated to two tilted versions of the law of a random walk under $\Pbb$. One tilting is homogeneous in time whereas the other one is inhomogeneous. Both versions will be applied to study random walk trajectories enclosing an abnormally large area. Some local limit theorems are stated in Section \ref{loclim} concerning both the arithmetic area and the final position of a random walk sampled from the (above mentioned) inhomogeneous tilting of $\Pbb$. Finally, some bounds on the polymer horizontal extension inside the collapsed phase
	are displayed in Section \ref{sec:a-priori-bounds}.   With Section \ref{prodr}, we prove the existence of the surface transition and compute its critical curve.  Finally, with Sections \ref{proofasympt}   and \ref{compid} we prove the  asymptotics of the partition function corresponding to each of the three regimes in the collapsed phase.
	\section{Results}\label{Results}
	We distinguish between two types of results. First, in Section
	\ref{phdiag} below, we describe the phase diagram of the model which is divided
	into three phases:  {\it extended},  {\it collapsed} and {\it glued} (denoted respectively by $\cE$, $\cC$ and $\cG$). A typical trajectory sampled from $P_{L,\beta,\delta}$ has a horizontal extension of order $L$ inside $\cE$, $o(L)$ inside $\cC$, and finally, inside $\cG$, such trajectory takes only finitely many horizontal steps after a very long vertical stretch attached to the wall. The second type of results consists in analyzing more in depth the collapsed phase. The free energy in $\cC$ is equal to $\beta$, which guarantees us that there is no phase transition inside $\cC$. However, depending on the value of $(\beta,\delta)\in \cC$ we will 
	observe that the behavior of the polymer with respect to the attractive wall may change drastically.  This latter phenomenon is associated with a \emph{surface transition} taking place along a critical curve that divides $\cC$ into three regimes: 
	\begin{itemize}
		\item A {\bf desorbed-collapsed} ($\cD\cC$) regime inside which 
		$\delta$ is not large enough  to pin the first vertical stretch of the polymer to the wall. Thus, the first vertical stretch remains of length
		$O(1)$;
		\item An {\bf adsorbed-collapsed} ($\cA\cC$) regime inside which $\delta$ is large enough for the polymer to be pinned at the attractive wall
		along its first vertical stretch, on a length $O(\sqrt{L})$;
		\item A {\bf critical} regime at which the first vertical stretch has length $O(L^{1/4})$.
	\end{itemize}
	\subsection{Phase transition}\label{phdiag}
	Let us denote by $f(\beta,\delta)$ the free energy of the system, that is the exponential growth rate of $(Z_{L,\beta,\delta})_{L\geq 1}$. 
	It turns out that $f(\beta,\delta)$ may actually be expressed as the maximum of $\delta$ and $f(\beta,0)$.  We recall that $f(\beta,0) \geq \beta$ by \eqref{deftwil} and that $\mathcal{Q}:=[0,+\infty)^2$.
	\begin{proposition}\label{existencefe}
		For every $(\beta,\delta) \in \mathcal{Q}$, the following limit exists:
		\begin{equation}\label{deffree}
			\limite{L}{\infty} \frac{1}{L} \log Z_{L,\beta,\delta} = f(\beta,\delta) \in [0,\infty),
		\end{equation}
		and $f(\beta,\delta) = f(\beta,0) \vee \delta$.
		\label{Nantes Proposition 1}
	\end{proposition}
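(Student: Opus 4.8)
The plan is to prove the two inequalities $\liminf_L\frac1L\log Z_{L,\beta,\delta}\ge f(\beta,0)\vee\delta$ and $\limsup_L\frac1L\log Z_{L,\beta,\delta}\le f(\beta,0)\vee\delta$; together they yield both the existence of the limit and the stated value, and finiteness is then clear since $f(\beta,0)<\infty$ (crudely $Z_{L,\beta,0}\le 3^Le^{\beta L}$, because $|\Omega_L|\le 3^L$ and $\sum_n(\ell_n\twg\ell_{n+1})\le L-N$) and $\delta<\infty$. I will use three facts recalled before the statement: the limit $f(\beta,0)$ exists, $f(\beta,0)\ge\beta$, and — by the concatenation argument that makes $\log Z_{\cdot,\beta,0}$ superadditive (the junction of two trajectories only creating a nonnegative self-interaction) — $Z_{m,\beta,0}\le e^{mf(\beta,0)}$ for every $m\ge1$. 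The lower bound is immediate: discarding the nonnegative term $\delta|\ell_1|$ in \eqref{defH} gives $Z_{L,\beta,\delta}\ge Z_{L,\beta,0}$, hence $\liminf\ge f(\beta,0)$; keeping in \eqref{pff} only the configuration made of a single vertical stretch of length $L-1$ gives $Z_{L,\beta,\delta}\ge e^{\delta(L-1)}$, hence $\liminf\ge\delta$.

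For the upper bound I would peel off the first stretch. Since the wall contribution only depends on $|\ell_1|$, setting $A_{L,k}:=\sum_{\ell\in\Omega_L:\,|\ell_1|=k}\exp(\beta\sum_{n=1}^{N_\ell-1}\ell_n\twg\ell_{n+1})$ we have $Z_{L,\beta,\delta}=\sum_{k=0}^{L-1}e^{\delta k}A_{L,k}$, with $A_{L,0}=Z_{L-1,\beta,0}$ and $A_{L,L-1}=2$. For $1\le k\le L-2$, a trajectory with $|\ell_1|=k$ consists of $\ell_1=\pm k$ followed by an arbitrary $\ell'=(\ell_2,\dots,\ell_{N})\in\Omega_{L-1-k}$, and its exponent equals $\beta(\ell_1\twg\ell_2)+H_{L-1-k,\beta,0}(\ell')$. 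Bounding the junction term in two ways — namely $\ell_1\twg\ell_2\le|\ell_1|\wedge|\ell_2|\le k$, and $\ell_1\twg\ell_2\le|\ell_2|=|\ell'_1|$ together with $\beta|\ell'_1|+H_{L-1-k,\beta,0}(\ell')=H_{L-1-k,\beta,\beta}(\ell')$ — gives $A_{L,k}\le 2e^{\beta k}Z_{L-1-k,\beta,0}$ and $A_{L,k}\le 2Z_{L-1-k,\beta,\beta}$, so that for $L\ge2$,
\begin{equation}\label{eq:planrec}
Z_{L,\beta,\delta}\ \le\ 2e^{\delta(L-1)}+Z_{L-1,\beta,0}+2\sum_{k=1}^{L-2}e^{\delta k}\,\min\!\bigl(e^{\beta k}Z_{L-1-k,\beta,0},\,Z_{L-1-k,\beta,\beta}\bigr).
\end{equation}

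The crucial step is to close \eqref{eq:planrec} at $\delta=\beta$. Let $g:=\limsup_m\frac1m\log Z_{m,\beta,\beta}$, which is finite and, by monotonicity in the wall strength, satisfies $g\ge f(\beta,0)$. Inserting $Z_{m,\beta,0}\le e^{mf(\beta,0)}$ and, for fixed $\epsilon>0$, $Z_{m,\beta,\beta}\le C_\epsilon e^{m(g+\epsilon)}$ into \eqref{eq:planrec} with $\delta=\beta$, scaling $k=\theta(L-1)$, and letting $L\to\infty$ then $\epsilon\to0$, one obtains
\begin{equation}\label{eq:planfix}
g\ \le\ \max_{\theta\in[0,1]}\min\!\bigl(2\beta\theta+f(\beta,0)(1-\theta),\,\beta\theta+g(1-\theta)\bigr).
\end{equation}
If $g>f(\beta,0)$ this is contradictory: the two affine functions of $\theta$ in \eqref{eq:planfix} take at $\theta=0$ the values $f(\beta,0)$ and $g$, and the second one decreases; when $2\beta\le f(\beta,0)$ the first is also nonincreasing, so the right-hand side equals $f(\beta,0)<g$; when $2\beta>f(\beta,0)$ the two lines cross at some $\theta^\ast\in(0,1)$ and the right-hand side equals their common value there, which is strictly below $g$ since the second line has already decreased at $\theta^\ast$. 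Hence $g=f(\beta,0)$. Feeding this back into \eqref{eq:planrec} for an arbitrary $\delta\ge0$, with $Z_{L-1-k,\beta,\beta}\le C_\epsilon e^{(L-1-k)(f(\beta,0)+\epsilon)}$, gives $\limsup_L\frac1L\log Z_{L,\beta,\delta}\le\max(\delta,f(\beta,0)+\epsilon)$ for every $\epsilon>0$, hence $\le f(\beta,0)\vee\delta$, which finishes the proof.

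The main obstacle is exactly this $\delta=\beta$ step. The crude estimate $\ell_1\twg\ell_2\le|\ell_1|$ alone is too lossy: it produces the spurious growth rate $(\beta+\delta)\vee f(\beta,0)$, coming from configurations with a long first stretch stacked against an equally long, oppositely oriented second stretch; ruling these out forces one to also use $\ell_1\twg\ell_2\le|\ell_2|$ and to carry out the elementary convexity/crossing analysis of the self-referential inequality \eqref{eq:planfix}. An alternative, somewhat heavier route would run the same peeling inside the random walk representation introduced in Section~\ref{prphdiag}.
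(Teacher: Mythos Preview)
Your argument is correct. The lower bound and the decomposition by the first stretch are shared with the paper; the difference is in how you establish the key identity $f(\beta,\beta)=f(\beta,0)$ (your $g=f(\beta,0)$). The paper keeps only the bound $\ell_1\twg\ell_2\le|\ell_2|$, obtaining $Z_{L,\beta,\delta}\le 2\sum_k e^{k\delta}Z_{L-k-1,\beta,\beta}$ and hence $f(\beta,\delta)\le f(\beta,\beta)\vee\delta$, and then proves $f(\beta,\beta)=f(\beta,0)$ through the random walk representation of Section~\ref{sec:RWrepr}: passing to a grand-canonical sum $\sum_L Z_{L,\beta,\beta}e^{-\gamma L}$ and applying an iterative estimate (Claim~\ref{claim 2}) that reduces the wall exponent on $|X_N|$ by $\gamma-\beta>0$ at each step, until it becomes nonpositive after finitely many iterations. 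You instead keep \emph{both} junction bounds $\ell_1\twg\ell_2\le|\ell_1|$ and $\ell_1\twg\ell_2\le|\ell_2|$, arriving at the self-referential inequality~\eqref{eq:planfix} and closing it by the crossing-of-affine-lines argument.

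Your route is more elementary: it avoids the random walk representation and the grand-canonical machinery entirely, and the superadditivity bound $Z_{m,\beta,0}\le e^{mf(\beta,0)}$ you invoke is the right one (the paper's word ``subadditivity'' in Section~1.2 is a slip). What the paper's route buys is that it deploys the random walk representation early, which is anyway the central tool for all the sharper results that follow; its Claim~\ref{claim 2} is also a cleaner iterative device than a variational fixed point, and would generalise more readily if the wall reward were not exactly $\delta|\ell_1|$. Your comment on why the single bound $\ell_1\twg\ell_2\le|\ell_1|$ is insufficient (the spurious rate $(\beta+\delta)\vee f(\beta,0)$) is accurate and well put.
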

	\par The free energy allows us to distinguish between three phases: {\it collapsed} ($\cC$), {\it extended} ($\cE$) and {\it glued} along the vertical wall ($\cG$). 
	We let $\gb_c$ be the unique positive solution to the equation $\Gamma_\beta=1$ with 
	\beq\label{eq:defGammaBeta}
	\Gamma_\beta=c_\beta\, e^{-\beta}=\frac{e^{-\beta}+e^{-3\beta/2}}{1-e^{-\beta/2}}
	\eeq 
	with $c_\beta$ properly defined in \eqref{Nantes 3.3}. 
	\begin{definition}\label{three phases}
		Rigorously, the three phases are
		\begin{itemize}
			\item $\cE:=\{(\beta,\delta)\in \mathcal{Q} \colon\, \beta< \beta_c {\rm\ and\ }    f(\beta,\delta)>\delta\}$
			\item $\cC:=\{(\beta,\delta)\in  \mathcal{Q} \colon\, \beta\geq \beta_c {\rm\ and\ }  f(\beta,\delta)=\beta\}$
			\item $\cG:=\{(\beta,\delta)\in  \mathcal{Q} \colon\, f(\beta,\delta)=\delta\}$
		\end{itemize}
	\end{definition}
	With Proposition \ref{existencefe}, we may rewrite these three phases as follows:
	\begin{align*}
		\cE&:=\{(\beta,\delta)\in \mathcal{Q} \colon\, \beta< \beta_c,\ \delta\leq f(\beta,0)\},\\
		\cC&:=\{(\beta,\delta)\in \mathcal{Q} \colon\, \beta\geq \beta_c,\ \delta\leq \beta\},\\
		\cG&:=\{(\beta,\delta)\in \mathcal{Q} \colon\, \delta> f(\beta,0)\}.
	\end{align*}
	Fig.\ \ref{fig:ipdsaw} provides a picture of the phase diagram. We observe that the boundaries meet at the tri-critical point $(\beta_c, \beta_c)$.
	\begin{figure}[H]
		\includegraphics[scale=0.5]{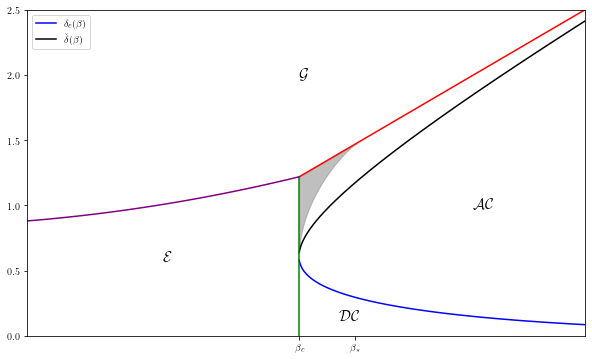}
		\caption{{\small Phase diagram of the polymer pinned at the vertical wall. The three phases $\cE$, $\cC$ and $\cG$ are separated by the purple ($\gd = f(\gb,0)$), red ($\gd = \gb$) and green ($\gb = \gb_c$) curves. The surface transition between the regimes $\cD\cC$ and $\cA\cC$ is indicated by the blue curve ($\delta = \delta_c(\gb)$), for which we have an explicit expression. A change of convexity happens for the Wulff shape at the black curve ($\gd = \check \gd(\gb)$). The bounded grey set $\cC_{\rm bad}$, see~\eqref{eq:defCgood}, is where our method fails and has been computed numerically. The first coordinate of the rightmost point in $\cC_{\rm bad}$, denoted by $\gb_*$, is smaller than $\pi /\sqrt{3}\approx 1.81$ (rigorously) and around 1.47 (numerically).}}
		\label{fig:ipdsaw}
	\end{figure}
	The phase transitions being now identified, the rest of the present section is dedicated to the collapsed phase $\cC$ and in particular to the surface transition 
	that takes place inside $\cC$.
	\subsection{Surface transition}\label{surftransit}
	Figuring out the regime associated with a given coupling parameter  $(\beta,\delta)\in \cC$ requires a detailed analysis of the {\it second-order} term of the exponential growth rate of the partition function sequence $(Z_{L,\beta,\delta})_{L\in \N}$. For this reason,  we set for $L\in \N$, 
	\begin{equation}\label{deftilz}
		\tilde Z_{L,\beta,\delta}=Z_{L,\beta,\delta}\  e^{-\beta L}.
	\end{equation}
	We will prove that the exponential growth rate of $(\tilde Z_{L,\beta,\delta})_{L\in \N}$ is $\sqrt{L}$ with a prefactor 
	$g(\beta,\delta)$ which loses analyticity precisely where the polymer switches from $\cA\cC$ to $\cD\cC$. For $\beta>\beta_c$ and $\delta <\beta$, we denote by
	\begin{equation}\label{second}
		g(\beta,\delta):=\lim_{L\to \infty} \frac{1}{\sqrt{L}} \log \tilde Z_{L,\beta,\delta} 
		=\lim_{L\to \infty} \frac{1}{\sqrt{L}} \Big( \log Z_{L,\beta,\delta} - \beta L \Big),
	\end{equation}
	the so-called {\it surface} free energy (in contrast with the {\it volume} free energy)
	provided that the limit exists. The adsorbed-collapsed regime and the desorbed-collapsed regime may be  rigorously defined as 
	follows:
	\begin{align}
		\mathcal {DC}&:=\{(\beta,\delta) \in \cC \colon \,  g(\beta,\delta)=g(\beta,0)\}, \\
		\nonumber \mathcal {AC}&:=\{(\beta,\delta) \in \cC \colon \,  g(\beta,\delta)>g(\beta,0)\}.
	\end{align}
	Since  for every $\beta> \beta_c$, the function $\delta\mapsto g(\beta,\delta)$ is obviously non-decreasing, 
	we may define the critical curve as 
	\begin{equation}\label{exprcourbecrit}
		\delta_c(\beta):=\inf\{\delta>0\colon\, g(\beta,\delta)>g(\beta,0)\}, \qquad \beta>\beta_c
	\end{equation}
	so that 
	\begin{align}
		\mathcal {DC}&:=\{(\beta,\delta) \in \cC \colon \, \delta\leq \delta_c(\beta) \}, \\
		\nonumber \mathcal {AC}&:=\{(\beta,\delta) \in \cC \colon \, \delta >\delta_c(\beta) \}.
	\end{align}
	In the case that $\gb > \gb_c$ and $\gd=0$, it is known from \cite[Eq. (1.27)]{CNP16} that
	\beq
	g(\gb, 0) < 0.
	\eeq
	We provide a variational formula for $g$ in Theorem~\ref{varfor} below. It turns out that, for technical reasons, this result only holds in a subset of the collapsed phase, which we call $\cC_{\text{good}}$ and whose precise definition in~\eqref{eq:defCgood} below calls for additional notation and lemmas. Let us slightly anticipate by pointing out that the complement $\cC\setminus\cC_{\text{good}}$ is a bounded subset of $\cA\cC$, located far away from the surface transition critical curve. In particular, for $\gb$ large enough, $(\gd,\gb)\in\cC_{\text{good}}$ for every $\gd\in[0,\gb)$.
	Providing an analytic expression of $g$ requires to introduce 
	a handful of auxiliary functions. To that aim, we introduce a probability law $\Pbb$ on $\Z$ (with $\Ebb$ its associated expectation) as 
	\begin{equation}
		\LoiMarchealeatoire{\cdot =k} = \frac{e^{-\frac{\beta}{2}|k|}}{c_\beta}\ , \qquad c_\beta := \somme{k \in \Z}{} e^{-\frac{\beta}{2}|k|} = \frac{1+e^{-\beta/2}}{1-e^{-\beta/2}}.
		\label{Nantes 3.3}
	\end{equation}
	We set $X_0=0$ and we let $X=(X_i)_{i\geq 0}$ be a random walk with i.i.d.\ increments of law $\Pbb$.   
	Throughout the paper we will need to consider  trajectories of $X$ that enclose an abnormally large area. This leads us to apply some tilting 
	procedures to  the  increments of $X$ that will be explained in more details in Section~\ref{sec:COM}. All functions below arise in this context, namely
	\begin{equation}\label{def:cL}
		\lmgf(h) := \log \Ebb [e^{hX_1}], \quad \quad  h\in \big(-\tfrac{\beta}{2}, \tfrac{\beta}{2}\big),
	\end{equation}
	and $\G : (-\beta,\beta) \longrightarrow \R$ as
	\begin{equation}
		\G(h) :=  \int_0^1 \lmgf\Big(h\Big(x-\frac{1}{2}\Big)\Big) \dd x.
		\label{Nantes 4.14}
	\end{equation}
	For every $q\geq 0$  we denote by $\htildeq$  the unique solution in $h \in [0,\beta)$ of 
	\begin{equation} \label{Nantes 3.7}
		\G'(h)=\int_0^1 (x-\tfrac12) \lmgf'\Big(h\Big(x-\tfrac{1}{2}\Big)\Big)\, \dd x=q.
	\end{equation}
	Then, for $\delta \in [0,\beta)$, we define $\cH_\delta:(-\delta,\beta-\delta)\longrightarrow \R$ as
	\begin{equation}
		\cH_\delta(s) :=  \int_0^1 \lmgf\left(s x+\delta-\frac{\beta}{2}\right) \dd x,
		\label{Nantes 44.14}
	\end{equation}
	and we denote by $\sqdelta$ the unique solution in $s\in (-\delta,\beta-\delta)$
	of 
	\begin{equation}
		\cH_\delta'(s) =q. 
		\label{Nantes 3.10}
	\end{equation}
	At this stage, we introduce the function 
	\begin{equation}\label{deftildepsi}
		\psi(q,\delta):=
		\begin{cases}
			-q \tilde h(q)+\cG(\tilde h(q)) &\text{for}\quad 0\leq \delta\leq \delta_0(q),\\
			-q \sqdelta+\cH_\delta(\sqdelta) & \text{for}\quad \delta_0(q)<\delta<  \beta,
		\end{cases}
	\end{equation}
	where 
	\beq\label{eq:delta0q}
	\delta_0(q) := \frac{\beta}{2}- \frac{\tilde h(q)}{2}.
	\eeq
	We observe in particular that $\psi(q,\delta)=\psi(q, 0)$ as long as $\delta\in [0, \delta_0(q)]$. Note that $\psi(q,\delta)$ will be the exponential growth rate of a 
	sequence of auxiliary partition functions introduced in Section \ref{prepsec} (see \eqref{eq:D_Nqdelta}).
	As mentioned above, there is a tiny subset of $\mathcal{C}$, which we denote as $\mathcal{C}_{\text{bad}}$, inside which  the variational characterization of $g$ given in Theorem \ref{varfor} below is not valid. To be more specific,  $\mathcal{C}_{\text{bad}}:=\cC\setminus \cC_{\text{good}}$ with
	\beq
	\label{eq:defCgood}
	\cC_{\rm good} = \{(\beta,\delta) \in \cC \colon \gd \le \bar \gd(\gb) \},
	\eeq
	with
	\beq
	\label{eq:def-bar-gd}
	\bar \gd(\gb) =\beta\wedge\inf\{\gd\in(0,\gb)\colon \cH'_\gd(\gb/2 - \gd -x_\gb) >0\},
	\eeq
	and where $x_\gb$ is the unique solution in $(0,\gb/2)$ of $\lmgf(x)=-\log \Gamma_\gb$. Note in particular that $\bar \gd(\gb) = \gb$ when the set in the r.h.s.\ in \eqref{eq:def-bar-gd} is empty.
	\begin{theorem}\label{varfor}
		For $(\beta,\delta) \in \cC_{\rm{good}}$, the limit in~\eqref{second} exists and equals
		\begin{equation}\label{eq:gbetadelta-varfor}
			g(\beta,\delta)=\max\{ a \log \Gamma_\beta+a\,  \psi(\tfrac{1}{a^2},\delta)\colon\, a>0\}.
		\end{equation}
	\end{theorem}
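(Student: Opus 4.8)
The plan is to establish the surface free energy formula through a sharp analysis of the partition function $\tilde Z_{L,\beta,\delta}$, combining the random walk representation introduced in Section~\ref{prphdiag} with the bead decomposition of Section~\ref{beaddec}. The starting point is that, inside the collapsed phase, a typical trajectory is dominated by a unique macroscopic bead; the a priori bounds on the horizontal extension (Section~\ref{sec:a-priori-bounds}) guarantee that this bead has horizontal extension of order $\sqrt{L}$, so that the relevant contributions come from configurations where the dominant bead has length $n$ with $n/\sqrt{L}$ converging to some $a>0$. Writing $a = 1/\sqrt{L}\cdot n$ equivalently as $n \sim a\sqrt{L}$, such a bead encloses area of order $L$, i.e.\ area roughly $q n^2$ with $q=1/a^2$, which is precisely the atypical-area regime governing the auxiliary partition functions $D_{n,q,\delta}$ from Section~\ref{auxpartfun}. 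Their exponential rate was identified as $\psi(q,\delta)$ in~\eqref{deftildepsi}.

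The first main step is the lower bound. I would fix $a>0$, select $n = \lfloor a\sqrt{L}\rfloor$, and restrict the sum~\eqref{pff} to trajectories consisting of a single bead of horizontal extension $n$ whose vertical stretches enclose area close to $(L-n)\approx L$, glued to $O(1)$ horizontal steps at the ends. Using the random walk representation, the entropic cost of producing $n$ alternating stretches is captured by $c_\beta^n e^{-\beta L}$ up to subexponential corrections, contributing $a\log\Gamma_\beta\cdot\sqrt{L}$ to the surface term, while the constrained-area event contributes $e^{n\psi(1/a^2,\delta)(1+o(1))} = e^{a\sqrt{L}\,\psi(1/a^2,\delta)(1+o(1))}$ by the sharp asymptotics of Section~\ref{secsharpaux}; the wall interaction $\delta|\ell_1|$ is already incorporated into $\psi$ through the tilted functions $\cH_\delta$ and $s_\delta(q)$. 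Optimizing over $a$ gives $g(\beta,\delta)\ge \max_{a>0}\{a\log\Gamma_\beta + a\,\psi(1/a^2,\delta)\}$.

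The second main step, the upper bound, is where the real work lies. Here one must show that no other type of configuration beats the single-bead ansatz: trajectories with several macroscopic beads, or with a macroscopic bead not touching the wall plus separate mass, must all be controlled. The bead decomposition lets one write $\tilde Z_{L,\beta,\delta}$ as a sum over the number and sizes of beads, with at most one bead carrying the wall reward $\delta|\ell_1|$; a concavity/superadditivity argument for $a\mapsto a\,\psi(1/a^2,\cdot)$ then shows that merging beads is never worse, reducing to the single-bead case. The homogeneous versus inhomogeneous tilting of Sections~\ref{sec:COM}--\ref{sec:analysis_aux} and the local limit theorems of Section~\ref{loclim} are needed to upgrade the exponential estimates for the constrained-area auxiliary partition functions to the required precision and to handle the matching of the area constraint with the total-length constraint $\sum|\ell_n|+N=L$. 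The restriction to $\cC_{\mathrm{good}}$ enters exactly here: the definition~\eqref{eq:def-bar-gd} of $\bar\delta(\beta)$ via $\cH'_\delta(\beta/2-\delta-x_\beta)>0$ is the condition ensuring that the relevant tilting parameter $s_\delta(q)$ stays in its admissible range $(-\delta,\beta-\delta)$ for all $q$ arising from $a>0$, so that $\psi(q,\delta)$ is the genuine rate; outside this set the optimizing tilt would hit the boundary of the domain and the variational formula could fail.

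The main obstacle I anticipate is the upper bound's control of configurations with a macroscopic bead detached from the wall — such a bead contributes $\psi(q,0)$ rather than $\psi(q,\delta)$, and in the $\cA\cC$ regime one must verify it is strictly suboptimal — together with the delicate interplay, in the constrained auxiliary partition functions, between the area constraint and the requirement that the total number of monomers be exactly $L$, which forces the sharp local limit theorems of Section~\ref{loclim} to be invoked uniformly in the relevant range of parameters. Once these are in hand, the matching of the two bounds and the identification with $\max_{a>0}\{a\log\Gamma_\beta + a\,\psi(1/a^2,\delta)\}$ is routine optimization.
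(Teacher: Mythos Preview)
Your overall architecture (restrict to a dominant single bead, random-walk representation, identify the rate as $\psi(q,\delta)$ via the auxiliary partition functions $D_N(q,\delta)$, then optimize over $a=N/\sqrt L$) matches the paper's. The lower bound is essentially as in the paper: restrict to $N=\lfloor a\sqrt L\rfloor$ and invoke the sharp estimate of Proposition~\ref{asympteach3reg}.

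There are two concrete problems in the upper bound, however.

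\textbf{Multi-bead control.} Your plan is to reduce to a single bead by a ``concavity/superadditivity argument for $a\mapsto a\,\psi(1/a^2,\cdot)$''. This function is $T_\delta(a)-a\log\Gamma_\beta$, and by Lemma~\ref{concavity} the map $T_\delta$ is \emph{not} concave on all of $(0,\infty)$ when $\delta>\beta/2$: it is strictly convex on $(1/\sqrt{q^*_\delta},\infty)$. So the merging argument as stated fails. Moreover, only the first bead carries $\psi(\cdot,\delta)$; all later beads contribute $\psi(\cdot,0)$, so even formally you would be merging heterogeneous pieces. The paper sidesteps this entirely: in the proof of Theorem~\ref{varfor} it works with the \emph{single-bead} partition function $\tilde Z^{\,\mathrm o}_{L,\beta,\delta}$, proves the variational formula for that object (upper bound via Lemma~\ref{rough-bounds} and Lemma~\ref{compact}, lower bound by restricting to the maximizing $a$), and then defers the identification $g(\beta,\delta)=\lim L^{-1/2}\log\tilde Z_{L,\beta,\delta}$ to Theorem~\ref{asymptotics}, where the multi-bead contribution is controlled by the fact that extra beads cost $g(\beta,0)\sqrt{L_i}<0$ each (see the renewal-type sums in Section~\ref{proofasympt}).

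\textbf{Role of $\cC_{\mathrm{good}}$.} You say the restriction to $\cC_{\mathrm{good}}$ is to keep $s_\delta(q)$ inside its admissible interval $(-\delta,\beta-\delta)$. This is not where it enters: by Lemma~\ref{lem:prop-Hgd-Hngd}, $s_\delta(q)$ is always well defined for every $q>0$. The condition $(\beta,\delta)\in\cC_{\mathrm{good}}$ is used to guarantee that $T_\delta(a)\to-\infty$ as $a\to\infty$ (Lemma~\ref{lem:limits}), which in turn gives the upper a-priori bound $N_\ell\le a_2\sqrt L$ in Lemma~\ref{compact}; without it the horizontal extension could escape to infinity and the upper bound collapses. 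This is exactly the technical obstruction that creates the bad set $\cC_{\mathrm{bad}}$.
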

	\subsection{Critical curve and order of the surface transition}
	With the following theorem, we provide an analytic expression of the critical curve and state that the surface transition inside the collapsed phase is second-order. It turns out that the critical value of $\delta$ corresponds to the value in~\eqref{eq:delta0q} for a suitable choice of $q$:
	\begin{theorem}\label{surfacetransition}
		For $\beta>\beta_c$\ ,
		\begin{equation}\label{closedexprdeltac}
			\delta_c(\beta)=\frac{\beta}{2} - \frac{\tilde{h}(a_\beta^{-2})}{2},
		\end{equation}
		where
		\begin{equation}\label{eq:defqbeta}
			a_\beta:=\argmax\{a \log \Gamma_\beta+a \psi(\tfrac{1}{a^2},0)\colon a>0\}.
		\end{equation}
		The critical curve admits the following explicit expression:
			\beq
			\label{eq:delta_c_explicit_exp}
			\delta_c(\beta) = \log(\cosh(\gb)-\sqrt{\cosh(\gb)^2 - e^{\gb}}),
			\eeq
		and
		\beq
		\label{eq:delta_c_low_temp_asympt}
		\delta_c(\beta) = e^{-\gb}[1+O(e^{-\gb})], \qquad \text{as }\gb\to\infty.
		\eeq
		Moreover, there exists a positive constant $C_\beta$ such that:
		\begin{equation}\label{secorder}
			g(\beta,\delta_c(\beta)+\varepsilon)-g(\beta,\delta_c(\beta))\underset{\varepsilon\to 0}{\sim} C_\beta \, \varepsilon^2.
		\end{equation}
	\end{theorem}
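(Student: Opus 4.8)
\textbf{Proof proposal for Theorem~\ref{surfacetransition}.}

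The plan is to extract everything from the variational formula \eqref{eq:gbetadelta-varfor} of Theorem~\ref{varfor}, combined with a careful study of the function $\delta\mapsto\psi(q,\delta)$ and its interplay with the maximiser $a_\beta$. First I would set $\Phi(a,\delta):=a\log\Gamma_\beta+a\,\psi(a^{-2},\delta)$, so that $g(\beta,\delta)=\max_{a>0}\Phi(a,\delta)$. The key structural observation, already recorded after \eqref{eq:delta0q}, is that $\psi(q,\delta)=\psi(q,0)$ precisely when $\delta\le\delta_0(q)=\frac\beta2-\frac{\tilde h(q)}2$; hence $\Phi(a,\delta)=\Phi(a,0)$ as soon as $\delta\le\delta_0(a^{-2})$. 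Since $\delta\mapsto\delta_0(q)$ is continuous and, evaluated at the maximiser $q=a_\beta^{-2}$, gives exactly the claimed $\delta_c(\beta)$ in \eqref{closedexprdeltac}, I would argue: (i) if $\delta\le\delta_c(\beta)$, then the value of $\Phi$ at $a=a_\beta$ is unchanged, while $\Phi(a,\delta)\le\Phi(a,0)$ everywhere (by monotonicity of $\psi$ in $\delta$), so the maximum is still attained at $a_\beta$ with value $g(\beta,0)$, giving $(\beta,\delta)\in\mathcal{DC}$; (ii) if $\delta>\delta_c(\beta)$, then near $a=a_\beta$ we are in the second branch of \eqref{deftildepsi} and a strict-monotonicity argument shows $\Phi(a_\beta,\delta)>\Phi(a_\beta,0)$, hence $g(\beta,\delta)>g(\beta,0)$, i.e. $(\beta,\delta)\in\mathcal{AC}$. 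Together these identify $\delta_c(\beta)=\delta_0(a_\beta^{-2})$, which is \eqref{closedexprdeltac}.

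Next, for the closed-form expression \eqref{eq:delta_c_explicit_exp}, I would compute $\tilde h(a_\beta^{-2})$ explicitly. The maximiser $a_\beta$ in \eqref{eq:defqbeta} is characterised by the first-order condition $\partial_a\Phi(a,0)=0$; differentiating $a\mapsto a\log\Gamma_\beta+a\,\psi(a^{-2},0)$ and using \eqref{deftildepsi}, \eqref{Nantes 3.7} (so that the envelope terms collapse, $\frac{d}{dq}[-q\tilde h(q)+\mathcal G(\tilde h(q))]=-\tilde h(q)$), the stationarity condition reduces to a clean relation between $\log\Gamma_\beta$, $\tilde h:=\tilde h(a_\beta^{-2})$ and $\mathcal G(\tilde h)$ — essentially $\log\Gamma_\beta+\psi(a_\beta^{-2},0)+\frac{2}{a_\beta^2}\tilde h=0$ after simplification. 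Plugging in $\Gamma_\beta=c_\beta e^{-\beta}$ from \eqref{eq:defGammaBeta} and the explicit Laplacian $\mathcal{L}(h)=\log\big(\frac{\cosh(\frac\beta2-h)}{\cosh(\frac\beta2)}\big)+\text{const}$ (obtained from \eqref{Nantes 3.3} and \eqref{def:cL} — the increments have a two-sided geometric law, whose m.g.f.\ is a rational function of $e^{h}$), one reduces the system to a quadratic equation in $e^{-\delta_c(\beta)}$. Matching with $\delta_c(\beta)=\frac\beta2-\frac{\tilde h}2$ turns this into $e^{\delta_c}=\cosh(\beta)+\sqrt{\cosh(\beta)^2-e^\beta}$, i.e. \eqref{eq:delta_c_explicit_exp} after taking the reciprocal root; the sign of the square root is fixed by requiring $\delta_c(\beta)\in(0,\beta)$. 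The low-temperature asymptotics \eqref{eq:delta_c_low_temp_asympt} then follows by Taylor-expanding this closed form as $\beta\to\infty$: $\cosh\beta\sim\frac12 e^\beta$, so $\cosh\beta-\sqrt{\cosh^2\beta-e^\beta}=\frac{e^\beta}{\cosh\beta+\sqrt{\cosh^2\beta-e^\beta}}=e^{-\beta}(1+O(e^{-\beta}))$, and $\log$ of that is $e^{-\beta}(1+O(e^{-\beta}))$.

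For the second-order statement \eqref{secorder}, I would perform a perturbative expansion of $g(\beta,\delta_c+\varepsilon)$ in $\varepsilon$. Write $a(\delta)$ for the maximiser in \eqref{eq:gbetadelta-varfor} at parameter $\delta$; by the envelope theorem, $\frac{d}{d\delta}g(\beta,\delta)=a(\delta)\,\partial_\delta\psi(a(\delta)^{-2},\delta)$. On the second branch of \eqref{deftildepsi}, $\partial_\delta\psi(q,\delta)=-q\,\partial_\delta s_\delta(q)+\partial_\delta\mathcal H_\delta(s_\delta(q))=s_\delta(q)$ after using \eqref{Nantes 3.10} and $\partial_\delta\mathcal H_\delta(s)=\int_0^1\mathcal L'(sx+\delta-\frac\beta2)\,dx$ (the cross terms cancel by the defining equation for $s_\delta$). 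Crucially, at $\delta=\delta_c(\beta)$ the two branches of $\psi$ meet $C^1$-smoothly and one checks $s_{\delta_c}(a_\beta^{-2})=0$, so $\frac{d}{d\delta}g(\beta,\delta)\big|_{\delta=\delta_c^+}=0$: the surface free energy has a vanishing right-derivative at the critical curve. Hence the leading behaviour is quadratic, and differentiating once more, $C_\beta=\tfrac12\frac{d^2}{d\delta^2}g(\beta,\delta)\big|_{\delta_c^+}=\tfrac12 a_\beta\,\partial_\delta s_\delta(a_\beta^{-2})\big|_{\delta_c}=\tfrac12 a_\beta/\mathcal H''_{\delta_c}(0)$, which is strictly positive since $\mathcal H_{\delta}$ is strictly convex (as $\mathcal L''>0$). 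The one subtlety to handle carefully here is the smoothness of $\delta\mapsto a(\delta)$ near $\delta_c$, which follows from the implicit function theorem once one verifies the maximiser is non-degenerate (strict concavity of $a\mapsto\Phi(a,\delta)$ at the max), and the left-continuity: for $\delta<\delta_c$ we are on the first branch and $g$ is constant, consistent with the vanishing derivative.

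The main obstacle I expect is \textbf{step two}, the explicit algebraic reduction yielding \eqref{eq:delta_c_explicit_exp}: one must correctly carry out the stationarity computation for $a_\beta$ — in particular identifying which combination of $\mathcal G$, $\mathcal L$ and $\log\Gamma_\beta$ survives in the envelope — and then recognise that, despite $\mathcal G$ being defined through an integral \eqref{Nantes 4.14} with no elementary closed form, the \emph{specific} quantity $\tilde h(a_\beta^{-2})$ entering $\delta_c$ does satisfy an elementary equation. This is where a clean change of variables (e.g. writing everything in terms of $u=e^{-\delta_c}$ and exploiting the geometric-law structure of $\Pbb$) is essential; without it the computation looks intractable. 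The other steps are, by comparison, routine convexity and envelope-theorem arguments built on Theorem~\ref{varfor}.
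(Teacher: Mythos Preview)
Your overall plan is sound and close to the paper's, but there is a genuine gap in part (i) and computational errors in part (iii).

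\textbf{Part (i), the identification $\delta_c(\beta)=\delta_0(a_\beta^{-2})$.} Your monotonicity inequality is backwards. The map $\delta\mapsto\psi(q,\delta)$ is \emph{non-decreasing} (it is constant equal to $\psi(q,0)$ on $[0,\delta_0(q)]$ and strictly larger beyond, by Lemma~\ref{inegcentral}), so $\Phi(a,\delta)\ge\Phi(a,0)$, not $\le$. Consequently your argument for ``$\delta\le\delta_c(\beta)\Rightarrow g(\beta,\delta)=g(\beta,0)$'' collapses: knowing $\Phi(a_\beta,\delta)=\Phi(a_\beta,0)$ does not rule out some other $a_0$ with $\Phi(a_0,\delta)>\Phi(a_0,0)\ge g(\beta,0)$; indeed for $a_0<a_\beta$ one has $\delta_0(a_0^{-2})<\delta_0(a_\beta^{-2})$, so such $a_0$ sit on the second branch of $\psi$ and \emph{do} get a strict boost. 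The paper closes this gap with a contradiction argument using the strict concavity of $T_{\delta_0}$ (Lemma~\ref{concavity}, valid since $\delta_0\le\beta/2$): if $T_{\delta_0}(a_0)>T_{\delta_0}(a_\beta)=T_0(a_\beta)$ for some $a_0\ne a_\beta$, the chord inequality forces $(T_0)'(a_\beta)\ne 0$, contradicting that $a_\beta$ is the maximiser of $T_0$. You need this (or an equivalent concavity argument); monotonicity alone is not enough.

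\textbf{Part (iii), the second-order expansion.} Your envelope computation contains two errors. First, differentiating the second branch of \eqref{deftildepsi} gives $\partial_\delta\psi(q,\delta)=(\partial_\delta\cH_\delta)(s_\delta(q))=\int_0^1\lmgf'(s_\delta(q)x+\delta-\tfrac\beta2)\,\dd x$, not $s_\delta(q)$. Second, $s_{\delta_c}(a_\beta^{-2})=\tilde h(a_\beta^{-2})$ (this is the matching condition at $q=q_\delta$, see the proof of Lemma~\ref{regulpsitilde}), not $0$. Your conclusion that the first $\delta$-derivative vanishes at $\delta_c^+$ happens to be correct, but for a reason you did not identify: plugging in $s_{\delta_c}=\tilde h$ and $\delta_c-\tfrac\beta2=-\tfrac{\tilde h}{2}$ the integrand becomes $\lmgf'(\tilde h(x-\tfrac12))$, which integrates to zero by \emph{oddness} of $\lmgf'$. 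The paper avoids these pitfalls by a different route: from Remark~\ref{Nantes Remark 4.13} it extracts the exact relation $s_{\delta_c+\varepsilon}(\mathsf q_\varepsilon)+\varepsilon=\tilde h(\mathsf q_0)$ for all $\varepsilon\ge 0$ (equating the two expressions for $-\log\Gamma_\beta$), then expands $\mathsf q_\varepsilon$ to second order and computes $g(\beta,\delta_c+\varepsilon)-g(\beta,\delta_c)$ directly via Lemma~\ref{lem:technic_IPP}. Your envelope approach is salvageable once the two formulae are corrected, but the second derivative then involves $\partial_\delta^2\psi$ and $\partial_\delta a(\delta)$ simultaneously, which is messier than the paper's route.

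\textbf{Part (ii).} Your outline is essentially the paper's, modulo one slip: the quadratic in $\mathsf X=e^{\delta_c}$ has roots $\cosh\beta\pm\sqrt{\cosh^2\beta-e^\beta}$ and the correct one is with the \emph{minus} sign directly (since $\delta_c\le\beta/2$); no ``reciprocal root'' is needed.
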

	Note that the existence and uniqueness of $a_\beta$ will be guaranteed by Lemmas \ref{concavity} and \ref{lem:limits}. Moreover, an   explicit expression of $C_\beta$ above can be found in \eqref{Nantes C.22}, and an observation on the large $\gb$-limit of $q_\gb := a_\gb^{-2}$ (interpreted in terms of the model without a wall) is stated in Proposition~\ref{Nantes Lemma 8.3}. The explicit expression in~\eqref{eq:delta_c_explicit_exp} appeared in~\cite[Equation (21)]{RGSY02}.
	\subsection{Sharp asymptotics of the partition function}\label{sharpasympt}
	With Theorems \ref{varfor} and  \ref{surfacetransition} above we analytically characterized the surface transition.  With Theorem \ref{asymptotics} below, we push one step further our analysis of the partition functions by providing {\it sharp} asymptotics. In particular, we answer a group of questions raised in \cite[p.\ 10]{Gut15} 
	and prove that for our model, following the notation therein, $\sigma=1/2$, $\mu= \exp(\gb)$, $\mu_1=\exp(g(\beta,\delta))$ and $g=-1/2$ inside $\cA\cC$ (including the critical regime) or $g=-3/4$ inside $\cD\cC$.
	\begin{theorem}\label{asymptotics}
		For $\beta>\beta_c$ we have in each of the three regimes, as $L\to \infty$:
		\begin{enumerate}
			\item If $\delta<\delta_c(\beta)$ then there exists a positive constant $\Cunder$ such that
			\begin{equation}\label{parfunqsouscrit} 
				Z_{L,\beta,\delta} \simL \frac{\Cunder}{L^{3/4}} \, e^{\beta L+g(\beta, 0)\sqrt{L}}.
			\end{equation}
			\item  If $\delta=\delta_c(\beta)$ then there exists a positive constant $\Ccrit$ such that
			\begin{equation} \label{parfunqcrit}
				Z_{L,\beta,\delta} \simL \frac{\Ccrit}{\sqrt{L}} \, e^{\beta L+g(\beta,\delta) \sqrt{L} }.
			\end{equation}
			\item  If $\delta>\delta_c(\beta)$ and $(\beta,\delta) \in \cC_{\rm{good}}$ then there exists a positive constant $\Cover$ such that
			\begin{equation}\label{parfuneqsupcrit}
				Z_{L,\beta,\delta}\simL\frac{\Cover}{\sqrt{L}} \, e^{\beta L+g(\beta,\delta) \sqrt{L} }.
			\end{equation}
		\end{enumerate}
	\end{theorem}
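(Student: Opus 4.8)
The plan is to combine the random walk representation of $Z_{L,\beta,\delta}$ set up in Section~\ref{prphdiag}, the bead decomposition of Section~\ref{beaddec}, and the sharp asymptotics of the auxiliary partition functions $D_{N,q,\delta}$, and then to carry out a Laplace-type summation. Recall that the elementary identity
\[
\sum_{n=1}^{N-1}(\ell_n\,\tilde\wedge\,\ell_{n+1})=(L-N)-\tfrac12\bigl(|\ell_1|+|\ell_N|\bigr)-\tfrac12\sum_{n=1}^{N-1}|\ell_n+\ell_{n+1}|
\]
makes the Hamiltonian~\eqref{defH} affine in the stretches; setting $X_n:=(-1)^{n-1}\ell_n$ realizes the sequence of stretches as a path with increments of law $\Pbb$ (with $|\ell_n+\ell_{n+1}|=|X_n-X_{n+1}|$ and $\sum_n|X_n|=L-N$), so that $\tilde Z_{L,\beta,\delta}=\sum_{N=1}^{L}\Gamma_\beta^{N}\,\Ebb[\,\cdots\,]$, the expectation being over such a path of length $N$, reweighted by the wall term $e^{(\delta-\beta/2)|X_1|}$ and forced, through its bead-by-bead excursion structure, to enclose total area $L-N$. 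The first step is to invoke the a priori bounds of Section~\ref{sec:a-priori-bounds}, together with the concavity of the relevant rate (Lemma~\ref{concavity}), to discard the contributions of trajectories with a macroscopically large exterior or with two or more macroscopic beads --- splitting the macroscopic area among several beads being exponentially penalized in $\sqrt L$ --- so that, up to a bounded multiplicative factor $C_{\mathrm{ext}}(\beta,\delta)>0$ (from summing over the finitely many exterior monomers, microscopic beads, zero-stretches, and, in the desorbed regime, the $O(1)$ wall stretch) and a negligible additive remainder, $\tilde Z_{L,\beta,\delta}$ reduces to $\sum_{N}\Gamma_\beta^{N}D_{N,q_{N,L},\delta}$, where $q_{N,L}=(L-N)/N^2+o(1/N)$ is the area density imposed by the length constraint; crude upper bounds on $D_{N,q,\delta}$ confine the effective range of the sum to $|N-a_\beta\sqrt L|\le L^{1/4}\omega_L$ for a slowly growing $\omega_L\to\infty$.

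The technical heart is the sharp asymptotics of $D_{N,q,\delta}$: as $N\to\infty$, locally uniformly in $q$ away from the (at most one) value $q^*(\delta)$ solving $\delta_0(q^*(\delta))=\delta$,
\[
D_{N,q,\delta}\ \sim\ \frac{\kappa(q,\delta)}{N^{\gamma}}\,e^{N\psi(q,\delta)},\qquad
\gamma=\begin{cases}2,&\text{if }\delta<\delta_0(q),\\[3pt] 3/2,&\text{if }\delta>\delta_0(q),\end{cases}
\]
with $\psi$ as in~\eqref{deftildepsi} and $\kappa(\cdot,\delta)$ positive and continuous on each side of $q^*(\delta)$ (this is the content of Sections~\ref{secsharpaux}--\ref{proofasympt}). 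The proof tilts the increments of the walk --- a homogeneous Cram\'er tilt in the bulk of the macroscopic excursion, and the inhomogeneous tilt of Section~\ref{sec:COM}, with profile $\tilde h(q)\,(x-\tfrac12)$ in the regime governed by $\cG$ ($\delta\le\delta_0(q)$) and profile $s_\delta(q)\,x+\delta-\tfrac\beta2$ in the regime governed by $\cH_\delta$ ($\delta>\delta_0(q)$) --- so that the area constraint becomes typical, the exponential cost of this tilt being precisely $e^{N\psi(q,\delta)}$ by construction of $\psi$. Under the tilted law one then applies the local limit theorems of Section~\ref{loclim}. In the desorbed regime the tilted excursion is anchored at height $0$ at both of its ends, so the pair (enclosed area, final displacement) must hit its mean; the $2\times2$ covariance matrix, whose entries are of orders $N^3$, $N^2$, $N$, is nondegenerate (a strict Cauchy--Schwarz inequality, the weight profiles being nonconstant), whence a joint density $\asymp N^{-2}$, while the positivity constraint itself contributes only a positive constant, extracted by a boundary-layer surgery near the two ends of the bead. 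In the adsorbed regime the wall-attached first stretch has length of order $\sqrt L$, and summing over this additional macroscopic degree of freedom --- a saddle of width $\asymp\sqrt N$ --- restores a factor $N^{1/2}$, so that only the enclosed area remains pinned and $\gamma=3/2$. I expect this step, and in particular obtaining the local limit theorems with enough uniformity in $q$ while treating the positivity constraint jointly with the area and displacement constraints, to be the main obstacle.

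There remains the Laplace summation over $N$. Let $\Phi(a):=a\log\Gamma_\beta+a\,\psi(a^{-2},\delta)$; by Lemmas~\ref{concavity} and~\ref{lem:limits}, $\Phi$ is strictly concave with a unique maximiser $\bar a$ interior to $(0,\infty)$, one has $g(\beta,\delta)=\Phi(\bar a)$ by Theorem~\ref{varfor}, $\Phi''(\bar a)<0$, and $\bar a=a_\beta$ (so $\bar a^{-2}=q_\beta$) whenever $\delta\le\delta_c(\beta)$. Writing $N=\bar a\sqrt L+k$ and using $q_{N,L}\to\bar a^{-2}$, a Taylor expansion of $N\mapsto N(\log\Gamma_\beta+\psi(q_{N,L},\delta))$ at $\bar a\sqrt L$ gives $g(\beta,\delta)\sqrt L+\tfrac12\Phi''(\bar a)\,k^2/\sqrt L+c_0+o(1)$ for an explicit constant $c_0$, while $N^{-\gamma}\sim(\bar a\sqrt L)^{-\gamma}$ and $\kappa(q_{N,L},\delta)\to\kappa(\bar a^{-2},\delta)$; the Gaussian sum $\sum_k e^{\Phi''(\bar a)k^2/(2\sqrt L)}\sim\sqrt{2\pi\sqrt L/|\Phi''(\bar a)|}$ then contributes a factor of order $L^{1/4}$, so that $\tilde Z_{L,\beta,\delta}\sim(\mathrm{const})\,L^{1/4-\gamma/2}\,e^{g(\beta,\delta)\sqrt L}$ with a positive constant. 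Since $\delta_0$ is strictly decreasing, the desorbed case $\delta<\delta_c(\beta)=\delta_0(q_\beta)$ forces $q_\beta<q^*(\delta)$, hence $\delta<\delta_0(q)$ and $\gamma\equiv2$ throughout the window, giving the exponent $1/4-1=-3/4$ of~\eqref{parfunqsouscrit}; the adsorbed case $\delta>\delta_c(\beta)$ (with $(\beta,\delta)\in\cC_{\mathrm{good}}$) forces $\bar a^{-2}>q^*(\delta)$, hence $\delta>\delta_0(q)$ and $\gamma\equiv3/2$ throughout the window, giving the exponent $1/4-3/4=-1/2$ of~\eqref{parfuneqsupcrit}. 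In the critical case $\delta=\delta_c(\beta)$, Theorem~\ref{surfacetransition} shows $q_\beta=q^*(\delta_c(\beta))$ exactly, so $\Phi$ has matching one-sided first derivatives at $a_\beta$ but a corner in $\Phi''$; the sum over $N$ then splits at $N=a_\beta\sqrt L$ into a half with $q_{N,L}>q_\beta$ (the $\cH_\delta$-regime, $\gamma=3/2$) and a half with $q_{N,L}<q_\beta$ (the $\cG$-regime, $\gamma=2$), each a half-Gaussian of size $\asymp L^{1/4}$, and the first half dominates (being of size $L^{-1/2}$ against $L^{-3/4}$), yielding the exponent $-1/2$ of~\eqref{parfunqcrit}. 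Multiplying back by $e^{\beta L}$ gives the three stated asymptotics; the restriction to $\cC_{\mathrm{good}}$ is used only through Theorem~\ref{varfor} in the adsorbed case, the desorbed and critical regimes lying automatically in $\cC_{\mathrm{good}}$, which is why cases~(1) and~(2) are stated without it.
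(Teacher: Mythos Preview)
Your strategy matches the paper's: random walk representation, bead decomposition, a priori bounds localizing $N$ near $\bar a\sqrt L$, sharp asymptotics of $D_N(q,\delta)$ with the correct polynomial exponents $N^{-2}$ and $N^{-3/2}$, and a Laplace sum over $N$. Two places deserve more care.

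First, the passage from $Z_{L,\beta,\delta}$ to a single macroscopic bead is not just ``a bounded multiplicative factor $C_{\mathrm{ext}}$''. The paper writes the full partition function as an explicit convolution over beads (equation~\eqref{fullpartfun}) and then, in the supercritical and critical cases, argues according to the sign of $g(\beta,\delta)$: if $g>0$ one uses monotonicity of the first-bead contribution, if $g<0$ one invokes results on convolutions of subexponential sequences, and the borderline $g=0$ needs a separate splitting. In the subcritical case the macroscopic bead need not be the first one, so the constant acquires an additional term coming from summing the total first-bead mass $R(\beta,\delta)$ against the later beads; your reduction to $\sum_N\Gamma_\beta^N D_{N,q_{N,L},\delta}$ does not capture this, though it does give the correct order.

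Second, in the critical case your splitting argument is heuristically right but not rigorous as stated: Items~(1) and~(3) of Proposition~\ref{asympteach3reg} hold only for $q$ in intervals bounded away from the critical value $q_\beta$, so you cannot apply them on the two halves of a window shrinking to $q_\beta$. The paper instead proves a dedicated estimate (Item~(2)) for $D_N(q_\beta+c/\sqrt N,\delta)$ with $c$ in a compact set, which yields the $N^{-3/2}$ prefactor \emph{uniformly} across the window and produces the correct constant; the two-sided splitting is used only in the restriction step (Claim~\ref{claimrestbig2}), where each side is controlled by its own tilt.
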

	Explicit expressions for the constants above can be found in Section~\ref{proofasympt}.%
	\subsection{Uniqueness of the macroscopic bead}
	We close this section with a result concerning  the geometry of the partially-directed self-avoiding walk under the polymer measure.
	To prove this result, we first need to break down every trajectory into a series of {\it beads}. These beads are sub-trajectories consisting of non-zero vertical stretches that alternate in direction. We shall expand on this notion in Section~\ref{beaddec}. In the context of the collapsed phase, physicists have been interested in determining whether a typical trajectory contains a single large bead or multiple smaller ones, and whether the large one touches the vertical wall or not. Thus, for every $\ell \in \Omega_L$ we let $N_\ell$ be its horizontal extension (i.e., $\ell \in L_{N_\ell}, L$) and also $|I_{\text{max}}(\ell)|$ be the length of its largest bead, i.e.,
	\begin{equation}
		|I_{\max}(\ell)| := \max \left\{ \sum_{i=u}^{v} (1 + |\ell_i|) : 1 \leq u \leq v \leq N_\ell,\  \ell_i \ell_{i+1} < 0 \,\  \forall \, u \leq i < v \right\}.
	\end{equation}
	We set $I_0$ the length of the first bead, i.e:
	\begin{equation}
		|I_{0}(\ell)| :=  \sum_{i=1}^{\tau_1} (1 + |\ell_i|),
	\end{equation}
	with $\tau_1$ defined as the end of the first bead, i.e.\ $\tau_1 = \sup \{n \geq 0 : \exists k \in \N,\ \ell_0 = ... = \ell_{k-1} = 0,\ \text{ and }\ \forall k \leq i \leq n-1,\ \ell_i \ell_{i+1} < 0\}$ (with the convention that $\ell_0=0$).
	Our next theorem states that there is a unique macroscopic bead in the collapsed phase (in agreement with previous work of \cite{Legrand_2022}). Moreover, in the adsorbed-collapsed phase, this unique bead is necessarily the first one in the bead decomposition of the trajectory, that is the one pinned at the wall.
	\begin{theorem} \label{The One Bubble}
		For all $\beta > \beta_c$ and $\delta \in [0,\delta_c(\beta)]$,
		\begin{equation}
			\lim_{k \to \infty} \liminf_{L \to \infty} P_{L, \beta,\delta} \left( |I_{\max}(\ell)| \geq L - k \right) = 1.
			\label{Nantes E.45}
		\end{equation}
		For all $\beta > \beta_c$ and $\delta \in [\delta_c(\beta),\beta)$, 
		\begin{equation}
			\lim_{k \to \infty} \liminf_{L \to \infty} P_{L, \beta,\delta} \left( |I_{0}(\ell)| \geq L - k \right) = 1.
			\label{Nantes E.46}
		\end{equation}
	\end{theorem}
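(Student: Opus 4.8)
\textbf{Proof proposal for Theorem~\ref{The One Bubble}.} The plan is to reduce both statements to the sharp partition function asymptotics of Theorem~\ref{asymptotics}, together with matching upper bounds on the partition function of trajectories whose largest bead (resp.\ first bead) is \emph{not} essentially of full length. The general strategy is the classical ``cost of a defect'' argument: if $|I_{\max}(\ell)|<L-k$, then the trajectory splits into at least two parts, each of which is itself (a slight perturbation of) an IPDSAW configuration, and the total area they can enclose together is bounded by the area of a single bead of comparable total length; since the surface free energy $g$ is built from a concave optimization in the parameter $a$ (the inverse square root of the area-to-length ratio $q$), splitting strictly decreases the $\sqrt L$-prefactor, making such configurations exponentially negligible at the $\sqrt L$-scale. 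Concretely, I would first set up, for each $k$, the event $A_{L,k}:=\{|I_{\max}(\ell)|<L-k\}$ and bound $Z_{L,\beta,\delta}\bone_{A_{L,k}}$ by summing over the length $m$ of the largest bead and over the position of the ``gap'': the remaining $L-m\ge k$ monomers form one or more beads, and using the bead decomposition of Section~\ref{beaddec} together with the auxiliary partition function asymptotics of Section~\ref{secsharpaux}, each block of size $m_i$ contributes $e^{\beta m_i}$ times a factor $\exp(\sqrt{m_i}\,\psi(q_i,\cdot)+O(\log m_i))$.

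The heart of the matter is then a \emph{strict superadditivity defect} for the surface functional: one must show that
\begin{equation}\label{eq:superadd-defect}
\sup\Big\{\textstyle\sum_i \sqrt{m_i}\,\big(\log\Gamma_\beta\cdot\tfrac{1}{a_i}+\tfrac{1}{a_i}\psi(a_i^{-2},\delta)\big)\colon \sum_i m_i\le L,\ \text{at least two blocks, one of size}\ge k\Big\}\le g(\beta,\delta)\sqrt L-c\sqrt k,
\end{equation}
for some $c=c(\beta,\delta)>0$. This is where the concavity of $a\mapsto a\log\Gamma_\beta+a\psi(a^{-2},\delta)$ (Lemma~\ref{concavity}) and the fact that its maximum is $g(\beta,\delta)$ enter: the single-block optimum dominates any split, with a quantitative gap proportional to the square root of the smallest excised piece. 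Combined with Theorem~\ref{asymptotics}, which gives $Z_{L,\beta,\delta}$ up to constants times $L^{-3/4}$ or $L^{-1/2}$ times $e^{\beta L+g\sqrt L}$, one gets $P_{L,\beta,\delta}(A_{L,k})\le \cst\,L^{\cst}\,e^{-c\sqrt k}$ uniformly in $L$ large, and then letting $L\to\infty$ and $k\to\infty$ yields~\eqref{Nantes E.45}. The polynomial prefactor $L^{\cst}$ coming from the sum over $m$ and over gap positions is harmless against $e^{-c\sqrt k}$ once $k$ is large; a little care is needed because the ``defect'' is only $\sqrt k$ and not $k$, so I would organize the sum over $m$ so that the dominant contribution has the excised part of size exactly of order $k$, not larger.

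For the second statement~\eqref{Nantes E.46}, valid in $\cA\cC$ (i.e.\ $\delta>\delta_c(\beta)$), the new input is that in this regime the optimal trajectory \emph{must} place its macroscopic bead against the wall. By the first part we already know there is a unique macroscopic bead of length $\ge L-k'$; it remains to rule out that this bead sits away from the wall, i.e.\ that $I_0(\ell)$ is small while $I_{\max}(\ell)$ is large. A trajectory of that type has surface cost governed by $\psi(q,0)$ (no wall bonus on the big bead) plus at most an $O(1)$ wall contribution from the tiny first bead, hence a surface free energy $\le g(\beta,0)<g(\beta,\delta)$ strictly (the strict inequality is exactly the definition of $\cA\cC$, since $\delta>\delta_c(\beta)$). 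Again dividing by the sharp asymptotics of Theorem~\ref{asymptotics}(3) gives an exponentially small probability at scale $\sqrt L$, and the proof concludes as before. I also need the complementary easy direction—that configurations with $|I_0(\ell)|\ge L-k$ already carry enough mass—but this is immediate from the lower bound implicit in~\eqref{parfuneqsupcrit} together with the variational formula~\eqref{eq:gbetadelta-varfor}, whose maximizer in the $\cA\cC$ regime is attained on the ``$\cH_\delta$'' branch of $\psi$, i.e.\ with the bead attached to the wall.

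\textbf{Main obstacle.} The routine parts are the combinatorial bookkeeping of the bead decomposition and the passage from free-energy estimates to probabilities via Theorem~\ref{asymptotics}. The genuinely delicate step is establishing~\eqref{eq:superadd-defect} with the correct $\sqrt k$ gap \emph{uniformly} over all admissible splittings, including those where one block is only polynomially smaller than $L$: here one cannot just invoke strict concavity at the maximizer but needs a uniform modulus of concavity for $a\mapsto a\log\Gamma_\beta+a\psi(a^{-2},\delta)$ on the relevant range of $a$, plus control of the $\psi$-function near the endpoints $q\to0$ and $q\to\infty$ (equivalently $a\to\infty$ and $a\to0$), which is governed by Lemma~\ref{lem:limits}. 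Matching the polynomial corrections from the auxiliary partition functions against this $\sqrt k$-gain, so that the final bound is summable in the number of blocks, is the part I expect to require the most care.
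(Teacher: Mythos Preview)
Your overall strategy---split at a bead boundary and compare the partition function of the pieces to the full partition function via Theorem~\ref{asymptotics}---is exactly the paper's. But your execution has a real gap and an unnecessary detour.

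\textbf{The gap.} Your bound $P_{L,\beta,\delta}(A_{L,k})\le \cst\,L^{\cst}\,e^{-c\sqrt k}$ does not prove~\eqref{Nantes E.45}: for fixed $k$ the $\limsup$ in $L$ of the right-hand side is $+\infty$, so you cannot take $L\to\infty$ first as the theorem requires. The paper gets a bound \emph{uniform in $L$}. For~\eqref{Nantes E.46} it decomposes on the size $i\le L-k$ of the first extended bead (after which the polymer no longer touches the wall), obtaining
\[
P_{L,\beta,\delta}(|I_0|\le L-k)\ \le\ \sum_{i=1}^{L-k}\frac{Z_{i,\beta,\delta}\,Z_{L-i,\beta,0}}{Z_{L,\beta,\delta}}
\ \le\ \cst\sum_{i=1}^{L-k}\frac{L^{1/2}}{i^{1/2}(L-i)^{3/4}}\,e^{g(\beta,\delta)(\sqrt i-\sqrt L)+g(\beta,0)\sqrt{L-i}}.
\]
Since $g(\beta,\delta)\ge g(\beta,0)$ and $\sqrt i-\sqrt L\le0$, the exponent is at most $g(\beta,0)(\sqrt i+\sqrt{L-i}-\sqrt L)$, and the elementary inequality $\sqrt i+\sqrt{L-i}-\sqrt L\ge\tfrac12\sqrt{i(1-i/L)}$ makes the exponential sum bounded by $2\sum_{j\ge1}e^{\frac14 g(\beta,0)\sqrt j}<\infty$, uniformly in $L$. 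The polynomial factor is dealt with separately: its maximum over $1\le i\le L-k$ is at most $\sqrt2/k^{3/4}$ for $L$ large. The net bound is $\cst/k^{3/4}$, with no $L$-dependence.

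\textbf{The detour.} Your proposed ``main obstacle''---a uniform modulus of concavity for $a\mapsto T_\delta(a)$ and endpoint control of $\psi$---is not needed. Once each bead's contribution is optimized over its own $a$, it equals $\sqrt{m_i}\,g(\beta,\cdot)$; your displayed inequality~\eqref{eq:superadd-defect} then reduces to a statement about $\sum_i\sqrt{m_i}$ versus $\sqrt{\sum_i m_i}$, i.e.\ strict concavity of $\sqrt{\phantom{x}}$ combined with $g(\beta,0)<0$. The concavity of $T_\delta$ (Lemma~\ref{concavity}) is used elsewhere in the paper to establish the sharp asymptotics of Theorem~\ref{asymptotics}, but not here.

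\textbf{On~\eqref{Nantes E.46}.} Your two-step route (first uniqueness of the macroscopic bead for all $\delta$, then localize it at the wall via $g(\beta,0)<g(\beta,\delta)$) can be made to work, but as written it invokes~\eqref{Nantes E.45} in the range $\delta>\delta_c(\beta)$, which you have not established. The paper's single factorization above is more direct. For~\eqref{Nantes E.45} itself the paper simply refers to the identical argument in~\cite{Legrand_2022}, with Proposition~\ref{asymptoticsinter}(1) supplying the needed input.
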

	We prove this theorem here, as it directly follows from Theorem~\ref{asymptotics}.
	\begin{proof}[Proof of Theorem \ref{The One Bubble}]
		Let us start with the proof of \eqref{Nantes E.46}. 
		A rough upper bound on the contribution to the partition function of those trajectories whose first bead has length $i$ gives
		\begin{align}\label{sumex}
			\nonumber P_{L, \beta,\delta}(|I_0(\ell)| \leq L-k) 
			&\leq \somme{i=1}{L-k} \frac{Z_{i,\beta,\delta} Z_{L-i,\beta, 0}}{Z_{L,\beta,\delta}} \\
			&\leq  \cst \sum_{i=1}^{L-k} \frac{L^{1/2}}{i^{1/2}(L-i)^{3/4}}\,  e^{\, g(\gb,\gd)\, (\sqrt{i}-\sqrt{L})+g(\gb,0)\, \sqrt{L-i}},
		\end{align}
		where we have used the asymptotics (2) and (3) in  Theorem \ref{asymptotics} to obtain the second inequality.
		After considering separately the case $i\leq L/2$ and $L/2\leq i \leq L-k$, we observe that 
		\begin{equation}\label{boundpolfrac}
			\max_{i\in \{1,\dots,L-k\}} \frac{L^{1/2}}{i^{1/2}\, (L-i)^{3/4}} \leq \max\Big\{\frac{2^{3/4}}{L^{1/4}},\frac{\sqrt{2}}{k^{3/4}}\Big\}
		\end{equation}
		so that for $L$ large enough the l.h.s.\ in \eqref{boundpolfrac} is smaller that $\sqrt{2}/k^{3/4}$.
		We also need to bound from above the exponential terms in the sum in \eqref{sumex}. Using that $g(\gb,\gd)\ge g(\gb,0)$, we obtain: 
		\beq
		g(\gb,\gd)(\sqrt{i}-\sqrt{L})+g(\gb,0)\sqrt{L-i} \leq g(\gb,0)(\sqrt{L-i}+\sqrt{i}-\sqrt{L}).
		\eeq
		We observe that 
		\begin{equation}\label{equalracine}
			\sqrt{L-i}+\sqrt{i}-\sqrt{L}=\sqrt{L} \Big[ \Big(1+2\sqrt{\tfrac{i}{L}(1-\tfrac{i}{L})}\Big)^{1/2}-1\Big]
		\end{equation}
		and that $2\sqrt{\tfrac{i}{L}(1-\tfrac{i}{L})}\in [0,1]$ for every $i\in \{0,\dots,L\}$. At this stage, given that 
		$\sqrt{1+x}-1\geq x/4$ for $x\in [0,1]$ we derive from \eqref{equalracine} that
		$$\sqrt{L-i}+\sqrt{i}-\sqrt{L}\geq \tfrac{1}{2} \,  \sqrt{i(1-\tfrac{i}{L})}.$$
		Since $g(\beta,0)<0$~\cite[Eq. (1.27)]{CNP16}, we can rewrite \eqref{sumex} as
		\begin{align}\label{ismall}
			P_{L, \beta,\delta}(|I_0(\ell)| \leq L-k) 
			&\leq  \cst \frac{1}{k^{3/4}} \sum_{i=1}^{L-k}  
			e^{\,\frac12 g(\gb,0)\, \sqrt{i(1-\tfrac{i}{L})}}.
		\end{align}
		The sum in the r.h.s.\ in \eqref{ismall} may be bounded above by 
		\begin{align}
			\nonumber \sum_{i=1}^{L-k}  
			e^{\,\frac12 g(\gb,0)\, \sqrt{i(1-\tfrac{i}{L})}}&\leq \sum_{i=1}^{L/2}  
			e^{\,\frac{1}{4}  g(\gb,0)\,   \sqrt{i} }+ \sum_{i=L/2}^{L-k}  
			e^{\,\frac{1}{4}  g(\gb,0)\,   \sqrt{L-i} }\\
			&\leq 2\, \sum_{i\ge1}
			e^{\,\frac{1}{4}  g(\gb,0)\,   \sqrt{i} }<\infty.
		\end{align}
		This completes the proof of \eqref{Nantes E.46}.
		\par Since the proof of \eqref{Nantes E.45} is almost identical to the proof of \cite[Theorem 2.2]{Legrand_2022},  we will not reproduce it here with every detail. However, let us stress that Item (1) in Proposition \ref{asymptoticsinter} will play the role of \cite[equation (6.4)]{Legrand_2022} which is the key to obtain the result.
	\end{proof}
	\subsection{On the shape of the bead}
	In this section we discuss an (unexpected) consequence of our analysis concerning the convexity of the globule (i.e.\ the unique macroscopic bead). Let us first recall that in the absence of the pinning interaction ($\gd=0$), it was proven in~\cite{CNP16} that the properly rescaled polymer converges, in the Hausdorff distance, to the following (convex) set
	\beq
	\cS_\beta = \Big\{(x,y)\in\bbR^2\colon x\in[0,a_\gb],\ |y| \le \tfrac12 a_\gb \mathsf{W}_\gb(x/a_\gb)\Big\},
	\eeq
	where $a_\gb$ is as in~\eqref{eq:defqbeta} and the so-called Wulff shape (that is a concave curve)
	\beq
	\mathsf{W}_\gb(t) = \int_0^t \lmgf'\Big((\tfrac12 - x) \tilde h(a_\gb^{-2})\Big)\dd x, \qquad t\in[0,1],
	\eeq
	is intimately linked to the tilt function set forth in~\eqref{Nantes 4.14}. In this paper  we claim without proof that the Wulff shape should remain the same when $\gd \le \gd_c(\gb)$ while, in the case $\gd> \gd_c(\gb)$, it should become
	\beq
	\mathsf{W}_{\gb,\gd}(t) := \int_0^t \lmgf'\Big(s_\gd(\bar a_{\gb,\gd}^{-2})(1-x) + \gd - \gb/2\Big)\dd x, \qquad t\in[0,1],
	\label{eq:Wulff_beta_delta}
	\eeq
	where this time we used the tilt function from~\eqref{Nantes 44.14} with the special value $s = s_\gd(\bar a_{\gb,\gd}^{-2})$, see~\eqref{Nantes 3.10} and~\eqref{defbara} below. This is also natural in view of~\eqref{Nantes 9.5}. We are actually able to prove the following:
	\begin{proposition}
		Let $\deltaconcave(\gb)$ be the unique solution in $(\beta/2,\beta)$ of the equation $\lmgf(\gd - \beta/2) = -\log \Gamma_\beta$. Assume $(\gb,\gd)\in \cC_{\rm good}$. If $\gd_c(\gb)< \gd < \deltaconcave(\gb)$ then $\mathsf{W}_{\gb,\gd}$ is concave (convex globule phase). If $\deltaconcave(\gb)< \gd < \gb$ then $\mathsf{W}_{\gb,\gd}$ is convex (concave globule phase).
		\label{Proposition concave}
	\end{proposition}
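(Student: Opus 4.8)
The plan is to reduce the statement to the sign of the single number $s^\ast:=s_\gd(\bar a_{\gb,\gd}^{-2})$ that enters~\eqref{eq:Wulff_beta_delta}, and then to fix this sign by locating $\bar a_{\gb,\gd}^{-2}$ relative to the value of $q$ at which $q\mapsto s_\gd(q)$ vanishes. First I would differentiate~\eqref{eq:Wulff_beta_delta}: by the fundamental theorem of calculus $\mathsf W_{\gb,\gd}'(t)=\lmgf'\big(s^\ast(1-t)+\gd-\gb/2\big)$, whence
\[
\mathsf W_{\gb,\gd}''(t)=-\,s^\ast\,\lmgf''\big(s^\ast(1-t)+\gd-\gb/2\big),\qquad t\in[0,1].
\]
Since $\lmgf$ is the log-moment generating function of the non-degenerate law $\Pbb$ (see~\eqref{def:cL}), one has $\lmgf''>0$ throughout its domain (it is the variance of the associated exponential tilt of $\Pbb$, hence positive). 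Thus $\mathsf W_{\gb,\gd}$ is strictly concave if $s^\ast>0$ and strictly convex if $s^\ast<0$, and the proposition reduces to showing that $s^\ast>0$ when $\gd_c(\gb)<\gd<\deltaconcave(\gb)$ and $s^\ast<0$ when $\deltaconcave(\gb)<\gd<\gb$.

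Next I would use that $s\mapsto\cH_\gd'(s)$ is increasing (because $\cH_\gd''>0$, again from $\lmgf''>0$), so that by~\eqref{Nantes 3.10} the map $q\mapsto s_\gd(q)$ is increasing and vanishes precisely at $q_0:=\cH_\gd'(0)=\tfrac12\lmgf'(\gd-\gb/2)$; consequently $\sign(s^\ast)=\sign(\bar a_{\gb,\gd}^{-2}-q_0)$, with the understanding that $q_0\le0$ forces $s^\ast>0$ since $\bar a_{\gb,\gd}^{-2}>0$. When $\gd\le\gb/2$ this is already conclusive: $q_0=\tfrac12\lmgf'(\gd-\gb/2)\le\tfrac12\lmgf'(0)=0$ as $\lmgf'$ is increasing and $\lmgf'(0)=0$ by symmetry of $\Pbb$, so $s^\ast>0$ and $\mathsf W_{\gb,\gd}$ is strictly concave; this handles the range $\gd_c(\gb)<\gd\le\gb/2$, which is nonempty and contained in $\{\gd<\deltaconcave(\gb)\}$ since $\gd_c(\gb)=\delta_0(a_\gb^{-2})<\gb/2$ by~\eqref{closedexprdeltac} and~\eqref{eq:delta0q} (using $\tilde h(a_\gb^{-2})>0$). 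I would also record that $\deltaconcave(\gb)$ is well defined: $\gd\mapsto\lmgf(\gd-\gb/2)$ is continuous and increasing on $(\gb/2,\gb)$, equal to $\lmgf(0)=0$ at $\gd=\gb/2$ and tending to $+\infty$ as $\gd\uparrow\gb$ (its argument then approaching the right endpoint $\gb/2$ of the domain of $\lmgf$), while $-\log\Gamma_\gb\in(0,\infty)$ for $\gb>\gb_c$.

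For $\gd>\gb/2$ I would argue as follows. Then $\gd>\gb/2>\delta_0(q)$ for every $q>0$, so~\eqref{deftildepsi} is always on its second branch, on which the envelope identity $\partial_q\psi(q,\gd)=-s_\gd(q)$ holds (differentiate $\psi(q,\gd)=-q\,s_\gd(q)+\cH_\gd(s_\gd(q))$ and use $\cH_\gd'(s_\gd(q))=q$), and moreover $\psi(q_0,\gd)=\cH_\gd(0)=\lmgf(\gd-\gb/2)$ because $s_\gd(q_0)=0$. By~\eqref{defbara}, $\bar a_{\gb,\gd}^{-2}$ is the maximiser of the map $q\mapsto q^{-1/2}\big(\log\Gamma_\gb+\psi(q,\gd)\big)$; differentiating it and evaluating at $q_0$ (where the $\partial_q\psi$ term drops out) gives
\[
\frac{\dd}{\dd q}\Big[q^{-1/2}\big(\log\Gamma_\gb+\psi(q,\gd)\big)\Big]\Big|_{q=q_0}=-\tfrac12\,q_0^{-3/2}\big(\log\Gamma_\gb+\lmgf(\gd-\gb/2)\big).
\]
Since that map has a unique critical point, namely its maximiser (by Lemmas~\ref{concavity} and~\ref{lem:limits}), we get $\bar a_{\gb,\gd}^{-2}<q_0$ exactly when the right-hand side is negative, i.e.\ when $\log\Gamma_\gb+\lmgf(\gd-\gb/2)>0$, and $\bar a_{\gb,\gd}^{-2}>q_0$ in the opposite case. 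As $\gd\mapsto\log\Gamma_\gb+\lmgf(\gd-\gb/2)$ is increasing on $(\gb/2,\gb)$ with unique zero at $\deltaconcave(\gb)$, this gives $s^\ast<0\Leftrightarrow\gd>\deltaconcave(\gb)$ and $s^\ast>0\Leftrightarrow\gb/2<\gd<\deltaconcave(\gb)$, which combined with the first step proves the claim. (As a consistency check, the stationarity equation at the maximiser also gives $s^\ast=-\tfrac12\,\bar a_{\gb,\gd}\,g(\gb,\gd)$, so the change of convexity occurs exactly where the surface free energy $g(\gb,\cdot)$ changes sign.)

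The computations are short; the two points requiring care are (i) the envelope identity $\partial_q\psi(q,\gd)=-s_\gd(q)$ on the second branch of~\eqref{deftildepsi}, together with $q_0$ lying in the interior of the admissible $q$-range, and (ii) the unimodality of $q\mapsto q^{-1/2}\big(\log\Gamma_\gb+\psi(q,\gd)\big)$ supplied by Lemmas~\ref{concavity} and~\ref{lem:limits}, which is what lets one pass from ``the objective is decreasing at $q_0$'' to ``its maximiser lies to the left of $q_0$''. I expect (ii) to be the only genuinely delicate ingredient; the hypothesis $(\gb,\gd)\in\cC_{\rm good}$ is precisely what keeps the whole argument inside the region where Theorem~\ref{varfor} and the representation~\eqref{eq:Wulff_beta_delta} are valid.
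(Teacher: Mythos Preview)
Your proposal is correct and follows essentially the same route as the paper's proof: reduce to $\sign(s_\gd(\bar q_{\gb,\gd}))$, compare $\bar q_{\gb,\gd}$ to the zero $q_0=\cH_\gd'(0)$ of $s_\gd$, and decide this by evaluating the derivative of the objective $\penalite$ at that point, where it collapses to $\log\Gamma_\gb+\lmgf(\gd-\gb/2)$. Your write-up is in fact slightly cleaner than the paper's in that you treat the range $\gd_c(\gb)<\gd\le\gb/2$ explicitly (where $q_0\le 0$ and the derivative argument is not literally available), and your closing identity $s_\gd(\bar q_{\gb,\gd})=-\tfrac12\,\bar a_{\gb,\gd}\,g(\gb,\gd)$ is a nice consistency check not spelled out in the paper.
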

	The proof can be found in Section~\ref{proof concave}. Note that $\deltaconcave(\gb) = \gb/2 + x_\gb$, where $x_{\gb}$ has been defined below~\eqref{eq:def-bar-gd}. Anticipating on Remark~\ref{Nantes Remark 4.13}, it turns out that $x_\gb = \tilde h(a_\gb^{-2})/2$ and, by virtue of~\eqref{closedexprdeltac}, we notice that
		\beq
		\deltaconcave(\gb) + \gd_c(\gb) = \gb.
		\eeq
	\begin{figure}[H]
		\centering
		\includegraphics[width=0.8\textwidth]{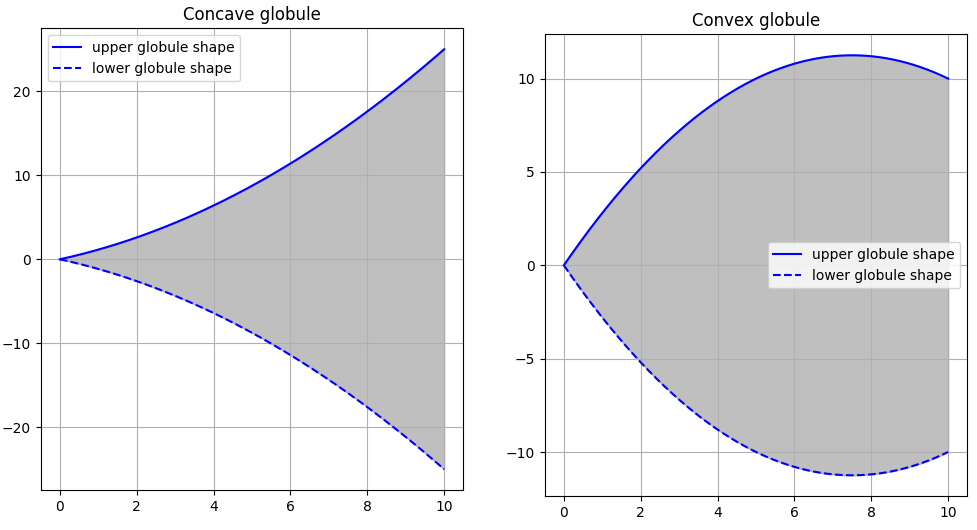}
		\caption{Schematic picture of a concave and a convex globule, on the left ($\gd>\deltaconcave(\gb)$) and right ($\gd < \deltaconcave(\gb)$) respectively.}
		\label{fig:comparison}
	\end{figure}
	\section{Proof of Proposition~\ref{existencefe}: volume free energy}\label{prphdiag}
	This section is devoted to the proof of Proposition~\ref{Nantes Proposition 1}, that is the existence of the (volume) free energy. Indeed, in statistical physics, the loss of analyticity in the free energy function indicates the presence of a phase transition. Extending the definition of the free energy in \eqref{feww} to the case $\delta>0$ is not immediate 
	since the sequence $(Z_{L,\beta,\delta})_{L\geq 1}$ is no longer  trivially sub-additive in $L$.  
	To that aim, we begin by {\it defining} the free energy as 
	\begin{definition}\label{deffe}
		\begin{equation}\label{fer}
			f(\beta,\delta):=\limsup_{L\to \infty} \frac{1}{L} \log Z_{L,\beta,\delta} \in \mathbb{R}.
		\end{equation}
	\end{definition}
	To prove Proposition \ref{Nantes Proposition 1}, we first need some classical results on a random walk representation, first stated in \cite{nguyen2013variational}.
	\subsection{Random walk representation}\label{sec:RWrepr}
	Let $X := (X_i)_{i \geq 0}$ be a random walk on $\Z$ starting at the origin, with i.i.d.\ increments distributed as in \eqref{Nantes 3.3}. We will need to consider the {\it geometric} area enclosed between the random walk trajectory and the horizontal axis
	up to time $N$, as well as its {\it arithmetic} counterpart: 
	\begin{equation}\label{defarea}
		G_N(X):=\sum_{i=0}^N |X_i|\quad \text{and}\quad A_N(X):=\sum_{i=0}^N X_i.
	\end{equation}
	Recall \eqref{defLL}--\eqref{pff}. For $A\subset \Omega_L$ we denote by  $Z_{L,\beta,\delta}(A)$ the partition function restricted to those trajectories $\ell\in A$, i.e., 
	\begin{equation}\label{defZA}
		Z_{L,\beta,\delta}(A)=\sum_{\ell\in A} e^{H_{L,\beta,\delta}(\ell)}.
	\end{equation}
	For $N\in\N$ we define the one-to-one correspondence:
	\begin{align}
		\label{deftnappli}
		\nonumber T_N:\{0\}\times \cL_{N,L}&\mapsto \{(X_i)_{i=0}^{N}\in \{0\}\times \Z^N \colon\,  G_N(X) = L-N\}\\
		(\ell_i)_{i=0}^N & \mapsto ((-1)^{i-1} \ell_i)_{i=0}^N.
	\end{align}
	Then, any subset $A\subset \Omega_L$ is said to be {\it stable by time inversion} if  for every $N\in \N$ we have that 
	$(\ell_i)_{i=1}^N\in  A\cap \cL_{N,L}$ implies $(\ell_{N+1-i})_{i=1}^N \in A\cap \cL_{N,L}$.
	\begin{lemma}\label{rwrep}
		Let $L\in \N$ and $A\subset \Omega_L$ be stable by time inversion. 
		Then,
		\begin{equation}
			Z_{L,\beta,\delta}(A) =e^{\beta L} \, \somme{N=1}{L}\,  \Gamma_\beta^N \, 
			\EsperanceMarcheAleatoire{e^{(\delta - \frac{\beta}{2})|X_N|}1_{\{ X\in T_N(A\cap \cL_{N,L})\} }}.
			\label{Nantesrwrrepaux}
		\end{equation}
	\end{lemma}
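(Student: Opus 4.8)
The plan is to rewrite the Hamiltonian \eqref{defH} so that, for each fixed number of stretches $N$, the weight $e^{H_{L,\beta,\delta}(\ell)}$ of a configuration $\ell\in\cL_{N,L}$ becomes, up to the explicit deterministic prefactor $e^{\beta L}\Gamma_\beta^N$, the $\Pbb$-probability of the walk trajectory $T_N(\ell)$ times a single boundary factor carrying the wall interaction. The only genuinely algebraic input is an identity for the self-touching sum; the stability of $A$ by time inversion is then used to relocate the wall factor from the first stretch (which corresponds to the ``origin end'' of $X$, where the random-walk density has no room for an extra factor) to the last stretch (the ``free end'' of $X$, where the telescoping leaves exactly such a room).

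I would begin with the elementary identity $x\,\tilde\wedge\,y=\tfrac12\bigl(|x|+|y|-|x+y|\bigr)$, valid for all $x,y\in\Z$ and checked by distinguishing $xy<0$ from $xy\ge0$. Inserting it into \eqref{defH}, using the telescoping $\sum_{n=1}^{N-1}(|\ell_n|+|\ell_{n+1}|)=2\sum_{n=1}^{N}|\ell_n|-|\ell_1|-|\ell_N|$ and the length constraint $\sum_{n=1}^{N}|\ell_n|=L-N$ built into $\cL_{N,L}$, one obtains
\beq
\beta\sum_{n=1}^{N-1}(\ell_n\,\tilde\wedge\,\ell_{n+1})=\beta(L-N)-\tfrac\beta2|\ell_1|-\tfrac\beta2|\ell_N|-\tfrac\beta2\sum_{n=1}^{N-1}|\ell_n+\ell_{n+1}|.
\eeq
This is the key algebraic step; everything afterwards is bookkeeping. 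Next I would use the hypothesis: since $A\cap\cL_{N,L}$ is stable by time inversion and $\ell\mapsto\widehat\ell:=(\ell_{N+1-i})_{i=1}^{N}$ is an involution of it, $\sum_{\ell\in A\cap\cL_{N,L}}e^{H_{L,\beta,\delta}(\ell)}=\sum_{\ell\in A\cap\cL_{N,L}}e^{H_{L,\beta,\delta}(\widehat\ell)}$. Inspecting \eqref{defH} one finds $H_{L,\beta,\delta}(\widehat\ell)=\delta|\ell_N|+\beta\sum_{n=1}^{N-1}(\ell_n\,\tilde\wedge\,\ell_{n+1})$, the interaction sum being reversal-invariant because $\tilde\wedge$ is symmetric; combining with the display above gives
\beq
H_{L,\beta,\delta}(\widehat\ell)=\beta L-\beta N+\bigl(\delta-\tfrac\beta2\bigr)|\ell_N|-\tfrac\beta2\Bigl(|\ell_1|+\sum_{n=1}^{N-1}|\ell_n+\ell_{n+1}|\Bigr).
\eeq

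I would then pass to the walk side via $x:=T_N(\ell)$, i.e.\ $x_i=(-1)^{i-1}\ell_i$ with $x_0=0$. A short sign check yields $|\ell_N|=|x_N|$, $|\ell_1|=|x_1-x_0|$ and $|\ell_n+\ell_{n+1}|=|x_{n+1}-x_n|$, so the parenthesis above equals $\sum_{m=1}^{N}|x_m-x_{m-1}|$. Recognizing from \eqref{Nantes 3.3} that $\Pbb(X_1=x_1,\dots,X_N=x_N)=c_\beta^{-N}\exp\bigl(-\tfrac\beta2\sum_{m=1}^{N}|x_m-x_{m-1}|\bigr)$, and using $\Gamma_\beta=c_\beta e^{-\beta}$ from \eqref{eq:defGammaBeta}, one gets $e^{H_{L,\beta,\delta}(\widehat\ell)}=e^{\beta L}\Gamma_\beta^N e^{(\delta-\frac\beta2)|x_N|}\Pbb(X=T_N(\ell))$. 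Summing over $\ell\in A\cap\cL_{N,L}$ and using that $T_N$ is a bijection from $A\cap\cL_{N,L}$ onto $T_N(A\cap\cL_{N,L})\subset\{(X_i)_{i=0}^{N}\colon G_N(X)=L-N\}$, the configuration sum turns into $\Ebb\bigl[e^{(\delta-\frac\beta2)|X_N|}\,1_{\{X\in T_N(A\cap\cL_{N,L})\}}\bigr]$; summing over $N\in\{1,\dots,L\}$ then yields \eqref{Nantesrwrrepaux}.

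The only point that requires genuine care is the endpoint/sign bookkeeping in the time-inversion step: one must verify that reversing $\ell$ leaves $\sum_n(\ell_n\,\tilde\wedge\,\ell_{n+1})$ untouched while swapping $|\ell_1|\leftrightarrow|\ell_N|$, so that the surplus term $-\tfrac\beta2|\ell_N|$ produced by the telescoping is exactly what is needed to complete the missing first increment $-\tfrac\beta2|x_1-x_0|$ of the random-walk density. Without the stability hypothesis the wall factor $e^{\delta|\ell_1|}$ would remain attached to the origin end of $X$ and could not be absorbed into such a clean expectation, which is precisely why the statement is restricted to sets $A$ that are stable by time inversion.
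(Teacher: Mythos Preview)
Your proof is correct and follows essentially the same approach as the paper's own proof: both use the identity $x\,\tilde\wedge\,y=\tfrac12(|x|+|y|-|x+y|)$ (the paper writes it directly in the exponent), invoke stability under time inversion to move the wall factor from $|\ell_1|$ to $|\ell_N|$, and then recognize the resulting product as $c_\beta^N$ times the $\Pbb$-probability of the walk trajectory $T_N(\ell)$. Your presentation is somewhat more explicit about the telescoping and the sign bookkeeping in the change of variables, but the argument is the same.
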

	\begin{proof}[Proof of Lemma~\ref{rwrep}] 
		To begin with, we use the stability of $A$ by time inversion to get
		\begin{equation}
			\begin{aligned}
				Z_{L,\beta,\delta}(A) &= \somme{N=1}{L}\,  \sum_{\ell\in A\cap \cL_{N,L}} 
				e^{|\ell_1| \delta}  \, \produit{i=1}{N-1} e^{\frac{ \beta |\ell_i| + \beta|\ell_{i+1}| - \beta |\ell_{i} + \ell_{i+1}|}{2}}\\
				&= \somme{N=1}{L}\,  \sum_{\ell\in A\cap \cL_{N,L}} 
				e^{|\ell_N| \delta}  \, \produit{i=1}{N-1} e^{\frac{ \beta |\ell_{i}| + \beta|\ell_{i+1}| - \beta |\ell_{i} + \ell_{i+1}|}{2}}.
			\end{aligned}
		\end{equation}
		For computational reasons, we add to every $\ell\in \cL_{N,L}$ a zero-length stretch at the beginning  of the configuration, that is, $\ell_0=0$.
		Thus, 
		\begin{equation}
			Z_{L,\beta,\delta}(A) =e^{\beta L} \somme{N=1}{L} \Gamma_\beta^N \sum_{\ell\in A\cap \cL_{N,L}}  e^{|\ell_N| (\delta - \frac{\beta}{2})} \, \produit{i=0}{N-1} \frac{e^{ - \frac{\beta |\ell_{i} + \ell_{i+1}|}{2}}}{c_\beta},
			\label{Nantes 1.9}
		\end{equation}
		where $\Gamma_\beta=c_\beta\, e^{-\beta}$ and where we have used that $\sum_{i=1}^N |\ell_{i}|=L-N$.
		We observe that  the product in the r.h.s. of~\eqref{Nantes 1.9} coincides with the probability that the random walk $X$ defined above 
		follows the trajectory $X_i=(-1)^{i-1} \ell_i$ for $i\in \{0,\ldots, N\}$. Thus, \eqref{Nantes 1.9}
		can be written as 
		\begin{equation}
			Z_{L,\beta,\delta}(A) =e^{\beta L} \somme{N=1}{L} \Gamma_\beta^N \sum_{\ell\in A\cap \cL_{N,L}}  e^{|\ell_N| (\delta - \frac{\beta}{2})} \, 
			\LoiMarchealeatoire{X_i=(-1)^{i-1} \ell_i,\ 0\le i\le N},
			\label{Nantesrwrrep}
		\end{equation}
		Using the one-to-one correspondence $T_N$ defined in \eqref{deftnappli}, we may conclude.
	\end{proof}
	\subsection{Proof}
	In
	the 
	core 
	of 
	the 
	proof 
	of 
	Proposition 
	\ref{Nantes Proposition 1} 
	we 
	will 
	show 
	that
	the $\limsup$ in \eqref{fer} equals the $\liminf$, hence the convergence of the full sequence.
	We first prove Proposition \ref{Nantes Proposition 1} subject to Claim \ref{claim 2} and then prove~Claim \ref{claim 2}.  We recall \eqref{Nantes 3.3},  \eqref{defarea} and 
	for $x \in \R$, we define $x^+ := \max\{0,x\}$. 
	\begin{claim}
		\label{lem:lemma-free-energy}
		For $u \leq \beta/2$ and $\gamma > 0$, there exists $C > 0$ such that, for every $N \geq 1$,
		\begin{equation}
			\EsperanceMarcheAleatoire{e^{u|X_N|} e^{-\gamma G_{N}(X)}} \leq 
			C \EsperanceMarcheAleatoire{e^{(u-\gamma)^+|X_{N-1}|} e^{-\gamma G_{N-1}(X)}}.
			\label{Nantes 5.3}
		\end{equation}
		with $X_0=G_0=0$.
		\label{claim 2}
	\end{claim}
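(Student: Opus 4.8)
The plan is to peel off the last step of the walk by conditioning, reducing the bound to a one-step estimate. Let $\cF_{N-1}$ denote the $\sigma$-algebra generated by $X_0,\dots,X_{N-1}$. Since $G_N(X)=G_{N-1}(X)+|X_N|$, one rewrites
\[
e^{u|X_N|}e^{-\gamma G_N(X)}=e^{-\gamma G_{N-1}(X)}\,e^{(u-\gamma)|X_N|},
\]
where the first factor is $\cF_{N-1}$-measurable. The $N$-th increment $X_N-X_{N-1}$ is independent of $\cF_{N-1}$ and distributed as in \eqref{Nantes 3.3}, so conditioning on $\cF_{N-1}$ yields
\[
\EsperanceMarcheAleatoire{e^{u|X_N|}e^{-\gamma G_N(X)}}=\EsperanceMarcheAleatoire{e^{-\gamma G_{N-1}(X)}\,\phi(X_{N-1})},
\]
where $\phi(x):=\EsperanceMarcheAleatoire{e^{(u-\gamma)|x+X_1|}}=c_\beta^{-1}\sum_{k\in\Z}e^{-\frac{\beta}{2}|k|}e^{(u-\gamma)|x+k|}$ for $x\in\Z$. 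The claim then reduces to the uniform pointwise bound $\phi(x)\le C\,e^{(u-\gamma)^+|x|}$, with $C$ depending only on $\beta,u,\gamma$: inserting it inside the expectation above gives \eqref{Nantes 5.3}, the case $N=1$ being immediate since then $X_{N-1}=X_0=0$ and $G_{N-1}=G_0=0$.

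To get the pointwise bound I would set $v:=u-\gamma$ and split according to its sign. If $v\le 0$, then $e^{v|x+k|}\le 1$ for every $k$, so $\phi(x)\le c_\beta^{-1}\sum_{k}e^{-\frac{\beta}{2}|k|}=1$. If $v>0$, the triangle inequality $|x+k|\le|x|+|k|$ gives $\phi(x)\le e^{v|x|}\,c_\beta^{-1}\sum_{k}e^{(v-\frac{\beta}{2})|k|}$; here the hypothesis $u\le \beta/2$ enters, since combined with $\gamma>0$ it forces $v<\beta/2$, which is exactly the integrability threshold making the last sum finite. In both cases one may take $C:=\EsperanceMarcheAleatoire{e^{(u-\gamma)^+|X_1|}}$, which is finite and independent of $N$.

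I do not expect any genuine obstacle: this is a short one-step Markov estimate. The two points deserving attention are (i) absorbing one unit of the $-\gamma$ cost into the reward $e^{u|X_N|}$, so that the effective exponent on $|X_N|$ becomes $u-\gamma$, \emph{strictly} below the integrability threshold $\beta/2$; and (ii) the appearance of the positive part, which encodes the fact that $\phi$ merely stays bounded --- rather than decaying at rate $e^{(u-\gamma)|x|}$ --- once $u-\gamma\le 0$. Producing a constant $C$ uniform in $N$ is precisely what makes \eqref{Nantes 5.3} iterable in the subsequent proof of Proposition~\ref{existencefe}.
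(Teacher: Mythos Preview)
Your proof is correct and follows essentially the same route as the paper's: both absorb one factor $e^{-\gamma|X_N|}$ from $G_N=G_{N-1}+|X_N|$ to turn the exponent into $(u-\gamma)|X_N|$, then treat the cases $u-\gamma\le 0$ (trivial bound by $1$) and $u-\gamma>0$ (triangle inequality $|X_N|\le|X_{N-1}|+|X_N-X_{N-1}|$ plus independence of the last increment). Your explicit identification of the constant $C=\EsperanceMarcheAleatoire{e^{(u-\gamma)^+|X_1|}}$ is a nice touch that the paper leaves implicit.
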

	\begin{proof}[Proof of Proposition \ref{Nantes Proposition 1}] We proceed with lim sup and lim inf successively. By (i) restricting the partition function to the path taking only one vertical stretch of length $L-1$ and (ii) noticing that $Z_{L,\beta,\delta} \geq Z_{L,\beta,0}$ for $L \in \N$ and $\delta \geq 0$, we get that 
		\begin{equation}
			\underset{ L \longrightarrow \infty}{\liminf} \frac{1}{L} \log Z_{L,\beta,\delta} \geq f(\beta,0) \vee \delta.
			\label{Nantes 1.1}
		\end{equation}
		To complete the proof, it remains to consider lim sup instead of lim inf in \eqref{Nantes 1.1}. Therefore, we want to prove that:
		\begin{equation}
			f(\beta,\delta) \leq f(\beta,0) \vee \delta.
			\label{Nantes 1.2}
		\end{equation}
		We first decompose the partition function according to the length of the first vertical stretch (either up or down) and add a reward $\beta$ along the whole second stretch, which gives us the following upper bound :
		\begin{equation}
			Z_{L,\beta,\delta} \leq 2 \somme{k=0}{L-1}e^{k \delta} Z_{L-k-1,\beta,\beta}.
		\end{equation}
		Hence, after taking the logarithm and dividing by the polymer length $L$ we obtain
		\begin{equation}
			\frac{1}{L} \log Z_{L,\beta,\delta} \leq \underset{0 \leq k < L}{\max} \Big\{ \tfrac{k}{L} \delta + (1-\tfrac{k+1}{L})\,  \tfrac{1}{L-k-1}\,  \log(Z_{L-k-1,\beta,\beta}) \Big\} + o_L(1),
		\end{equation}
		from which we deduce, after letting $L\to \infty$ that $f(\beta,\delta) \leq f(\beta,\beta) \vee \delta$. Thus, the proof will be complete once we show that  $f(\beta,0) = f(\beta,\beta)$. To that aim, we start by applying the Cauchy-Hadamard Theorem, which guarantees us that for $(\beta,\delta) \in \mathcal{Q}$, 
		\begin{equation}
			f(\beta,\delta) = \inf \Big\{ \gamma \geq 0 : \somme{L \geq 1}{}Z_{L,\beta,\delta} e^{-\gamma L}  < \infty\Big\}.
			\label{Nantes 1.4}
		\end{equation}
		Using Lemma \ref{rwrep} with $A=\Omega_L$ and the fact that $\{X\in T_N(\cL_{N,L})\}=\{G_N(X)=L-N\}$, we obtain for $\gamma\ge0$,
		\begin{equation}
			\begin{aligned}
				\somme{L \geq 1}{} Z_{L,\beta,\delta}\, e^{-\gamma L} &=
				\somme{N\ge1}{} \somme{L\ge N}{} e^{-(\gamma-\beta) N} \, \Gamma_\beta^N  \, \EsperanceMarcheAleatoire{e^{(\delta - \beta/2)|X_N|} 1_{\{ G_N(X) = L-N \} } e^{-(\gamma-\beta) (L-N)} } \\
				&= \somme{N\ge1}{} \left( \Gamma_\beta e^{-(\gamma-\beta)} \right)^N \EsperanceMarcheAleatoire{e^{(\delta - \beta/2)|X_N|} e^{-(\gamma-\beta) G_{N}(X)}}.    
			\end{aligned}
			\label{Nantes 1.11}
		\end{equation}
		We now pick $\delta = \beta$ and $\gamma > f(\beta,0)$. If we manage to prove that~$ \sum_{L \geq 1}Z_{L,\beta,\beta} e^{-\gamma L} < \infty$ then $f(\gb,\gb)\le f(\gb,0)$ by~\eqref{Nantes 1.4}, which would complete the proof (the reverse inequality clearly holds true). Using \eqref{Nantes 1.11}, it comes that:
		\begin{equation}\label{grandcanga}
			\somme{L \geq 1}{}Z_{L,\beta,\beta} e^{-\gamma L} =  
			\somme{N\ge 1}{} \left( \Gamma_\beta e^{-(\gamma - \beta)} \right)^N \EsperanceMarcheAleatoire{e^{(\beta/2)|X_N|} e^{-(\gamma-\beta) G_{N}(X)}}.    
		\end{equation}
		We recall from~\eqref{deftwil} that $f(\beta,0) \geq \beta$ and therefore $\gamma -\beta>0$. Thus, we can denote by  $k$ the smallest positive integer satisfying $\beta/2-k(\gamma-\beta)\leq 0$. It remains to successively use Claim~\ref{lem:lemma-free-energy} $k$ times to assert that there exists $C>0$, such that for $N\geq k$
		\begin{equation}
			\EsperanceMarcheAleatoire{e^{(\beta/2)|X_N|} e^{-(\gamma-\beta) G_{N}(X)}}
			\leq C\ \EsperanceMarcheAleatoire{ e^{-(\gamma-\beta) G_{N-k}(X)}}.
		\end{equation}
		As a consequence, there exists $C_1>0$ such that \eqref{grandcanga} becomes 
		\begin{equation}\label{upbounvar}
			\begin{aligned}
				\somme{L \geq 1}{}Z_{L,\beta,\beta} e^{-\gamma L} &\leq
				C_1+ C \somme{N\ge k}{} \left( \Gamma_\beta e^{-(\gamma - \beta)} \right)^N \EsperanceMarcheAleatoire{e^{-(\gamma-\beta) G_{N-k}(X)}} \\
				&=
				C_1+ C (\Gamma_\beta e^{\beta - \gamma})^k \somme{N\ge 0}{} \left( \Gamma_\beta e^{-(\gamma - \beta)} \right)^N \EsperanceMarcheAleatoire{  e^{-(\gamma-\beta) G_{N}(X)}}.
			\end{aligned}
		\end{equation}
		At this stage, we recall the following exponential growth rate from~\cite[Lemma 2.1]{CNP16}: 
		\begin{equation}\label{defhbet}
			\mathfrak{h}_\beta(u):=\lim_{N\to \infty} \frac{1}{N}\log \EsperanceMarcheAleatoire{e^{-u G_{N}(X)}}\le 0, \quad u\in [0,\infty).
		\end{equation}
		We now distinguish between two cases. If $\Gamma_\beta \le 1$, then clearly
		$\log \Gamma_\beta-(\gamma-\beta)+\mathfrak{h}_\beta(\gamma-\beta)\le -(\gamma-\beta)$, which is negative, since $\gamma > f(\beta,0)=\beta$. Assume now that $\Gamma_\beta > 1$. Then, it was proven in \cite[Theorem A]{CNP16} that 
		\begin{equation}
			f(\beta,0) = \sup\{u\ge \beta\colon \log \Gamma_\beta-(u-\beta)+\mathfrak{h}_\beta(u-\beta)>0\}>\beta,
		\end{equation}
		and that $f(\beta,0)$ is the only solution in $u$ of $\log \Gamma_\beta-(u-\beta)+\mathfrak{h}_\beta(u-\beta)=0$ (we pay attention to the fact that the excess free energy is $f(\beta,0)-\beta$). Since $\gamma>f(\beta,0)$ we necessarily have that 
		$\log \Gamma_\beta-(\gamma-\beta)+\mathfrak{h}_\beta(\gamma-\beta)<0$
		which implies that  the r.h.s. in~\eqref{upbounvar} is finite. This completes the proof.
	\end{proof}
	\begin{proof}[Proof of Claim \ref{claim 2}] Let $u \in (0,\beta/2]$ and $\gamma > 0$. Since $G_N(X) - G_{N-1}(X) = |X_{N}|$,
		\begin{equation}
			\EsperanceMarcheAleatoire{e^{u|X_N|} e^{-\gamma G_{N}(X)}} \leq 
			\EsperanceMarcheAleatoire{e^{(u - \gamma)^+|X_N|} e^{-\gamma G_{N-1}(X)}}.
		\end{equation}
		If $u-\gamma \le 0$, the claim readily follows. Otherwise, we use the triangular inequality, independence of the increments, and the fact that $u-\gamma< \beta/2$ to obtain as an upper bound:
		\begin{equation}
			\begin{aligned}
				&\EsperanceMarcheAleatoire{e^{(u - \gamma)|X_N - X_{N-1} + X_{N-1}|} e^{-\gamma G_{N-1}(X)}}  \\&\leq  
				\EsperanceMarcheAleatoire{e^{(u-\gamma) |X_1|}}
				\EsperanceMarcheAleatoire{e^{(u-\gamma)|X_{N-1}|} e^{-\gamma G_{N-1}(X)}} \\
				&\leq C \EsperanceMarcheAleatoire{e^{(u-\gamma)|X_{N-1}|} e^{-\gamma G_{N-1}(X)}}.
			\end{aligned}
		\end{equation}
		This completes the proof.
	\end{proof}
	\section{Preparation}\label{prepsec}
	\subsection{Auxiliary partition functions}\label{auxpartfun}
	In what follows, for $q\geq 0$ and $\delta\in [0,\beta)$, we set
	\begin{align}\label{eq:D_Nqdelta}
		D_N(q,\delta):&=\EsperanceMarcheAleatoiresansparenthese \big[e^{(\delta - \frac{\beta}{2}) X_N}\, 1_ {\{\cV_{N,qN^2,+} \} }\big], \qquad N\in\N,
	\end{align}
	where
	\begin{align}\label{defnun}
		\cV_{N,k,+} &= \{A_N = k,\ X_i > 0,\ 0<i\leq N\},
	\end{align} 
	and $A_N$ has been defined in \eqref{defarea}. It turns out that $\psi(q,\delta)$ defined in \eqref{deftildepsi} is the exponential growth rate of the sequence $(D_N(q,\delta))_{N\in \N}$. This result will be established as a byproduct of the proof of Proposition \ref{asympteach3reg}.
	\subsection{Bead decomposition}\label{beaddec}
	The aim of this section is to show how one can decompose a given trajectory in $\Omega_L$ into sub-trajectories 
	that do not interact with each other, referred to as {\it beads}. 
	
	Note that we will extract some estimates from \cite{Legrand_2022} where a similar decomposition has been introduced to study the same model when the wall-polymer interaction is shut-down ($\delta=0$).
	In the latter case, it was proven in \cite[Theorem 2.2]{Legrand_2022} that, inside the 
	collapsed phase,  a typical
	trajectory is made of a unique macroscopic bead from which only finitely many monomers may escape.  
	In the present paper, although the uniqueness of the macroscopic bead still holds true inside the collapsed phase,
	the presence of an attractive wall both changes drastically the asymptotics of the partition function, but triggers also a much richer phenomenology including a surface transition and a radical change of the shape of the macroscopic bead. 
	
	The main difference between the model at $\delta>0$ and the model at $\delta=0$  is 
	that in the former, the very first bead of a trajectory and the following beads  need to be considered separately. 
	The polymer-wall interaction indeed entails that, in large parts of the collapsed phase, the very first bead is the unique macroscopic bead. Therefore, deriving results on the polymer in this phase requires a deep understanding of most features of this first bead.
	\begin{figure}[H]
		\centering
		\includegraphics[width=0.5\textwidth]{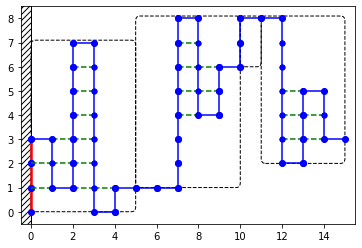}
		\caption{Bead decomposition of a trajectory. The blue lines stand for the polymer configuration, the orange dashed lines stand for the attractive self-interaction, and the red lines stand for the edges pinned at the attractive wall. In this example, we can see four beads each delimited by black dashed lines. The sign of the initial stretch of the third bead is determined by the last stretch of the previous bead. Since the second and fourth beads start with horizontal stretches, the sign of their first non-zero stretch may be either positive or negative.}
		\label{figure : des bulles}
	\end{figure}
	\subsubsection{Decomposition of a trajectory into beads}
	Let us start with a handful of notation. Set $\Omega^{\, \text{o}}:=\emptyset \cup(\cup_{L\geq 1}\, 
	\Omega_L^{\, \text{o}})$ with 
	$\Omega_L^{\, \text{o}}:=\cup_{N=1}^{L/2}\,  \cL_{N,L}^{\, \text{o}}$ where
	\begin{equation}\label{defLnl}
		\cL_{N,L}^{\, \text{o}}:=\Big\{(\ell_i)_{i=1}^N\in \Z^N\colon\, \sum_{i=1}^N|\ell_i|=L-N,\  \ell_i \ell_{i+1}<0,  \ \forall\,  1\leq i<N\Big\},
	\end{equation}
	so that $\Omega_L^{\, \text{o}}$ gathers those trajectories forming a unique bead of length $L$. The associated {\it bead partition function} is defined by
	\begin{equation}\label{partfunbead}
		Z_{L,\beta,\delta}^{\, \text{o}}:=\sum_{\ell\in \Omega_L^{o}} e^{H_{L,\beta,\delta}(\ell)}=\sum_{N=1}^{L/2} Z_{L,\beta,\delta}(\cL_{N,L}^{\, \text{o}}).
	\end{equation}
	Recall the definitions of $\Gamma_\gb$, $D_N(q,\delta)$ and $\cV_{N,k,+}$ in \eqref{eq:defGammaBeta}, \eqref{eq:D_Nqdelta} and \eqref{defnun}. By using Lemma~\ref{rwrep} with $A=\cL_{N,L}^{\, \text{o}}$ and by noticing that $T_N$ (defined in \eqref{deftnappli}) is a 
	one-to-one correspondance between $\cL_{N,L}^{\,\text{o}}$ and $\cV_{N,L-N,+}\cup \cV_{N,L-N,-}$
	we obtain that
	\begin{equation}\label{partfunbeadaux}
		\tilde Z_{L,\beta,\delta}^{\, \text{o}}:=e^{-\beta L} Z_{L,\beta,\delta}^{\, \text{o}}= 2 \sum_{N=1}^{L/2} \Gamma_\beta^N D_{N}\big(\tfrac{L-N}{N^2},\delta\big).
	\end{equation}
	We now allow beads to start with stretches of zero length. To that aim, we define
	$\hat \Omega^{\, \text{o}}:=\cup_{L\geq 1} \, \hat \Omega_L^{\, \text{o}}$ where $ \hat \Omega_L^{\, \text{o}}:=\cup_{k=0}^{L-2}\,   \hat \Omega^{\text{o},k}_L$ with 
	\begin{align}\label{defbeadwithstraightline}
		\hat \Omega_L^{\, \text{o},0}&:=\{\ell \in \Omega_L^{\, \text{o}}\colon\, \ell_1>0\},\\ 
		\nonumber \hat \Omega_{L}^{\, \text{o}, k}&:=\{\ell\in \Omega_L\colon\, N_\ell\geq k,\  \ell_1=\dots=\ell_k=0,\ (\ell_{i+k})_{i=1}^{N_\ell-k}\in \Omega_{L-k}^{\, \text{o}}\}, \ k\in \N.
	\end{align}
	Those trajectories shall be called {\it extended beads}.
	Note that the condition $\ell_1>0$ in the first line of \eqref{defbeadwithstraightline} is imposed by the fact that, when there is no zero-length stretch between two beads, the sign
	of the first vertical stretch of the second bead must correspond to that of the last stretch of the first bead. 
	For the sake of simplicity, we define $\Omega^c_L$ as the subset of $\Omega_L$ that contains trajectories ending with a non-zero stretch, i.e.,
	\begin{equation}
		\Omega^c_L = \left\{ \ell \in \Omega_L : \ell_{N_\ell} \neq 0 \right\}.
	\end{equation}
	With those subsets of trajectories in hand, we may divide a given trajectory as follows: 
	$\tau_0:=0$ and for $j\in  \N$ such that $\tau_{j-1}<N_\ell$,
	\begin{align}
		\tau_j:=\tau_{j-1}+\max\{s>0 \colon\, (\ell_{\tau_{j-1}+i})_{i=1}^{s}\in \hat \Omega^{\, \text{o}}\quad \text{or}\quad (-\ell_{\tau_{j-1}+i})_{i=1}^{s}\in \hat \Omega^{\, \text{o}} \}.
	\end{align}
	Finally, we let $n(\ell)$ be the number of (extended) beads into which a given trajectory $\ell \in \Omega_L$ may be divided. Thus, $\tau_{n(\ell)}=N_\ell$ and 
	$\ell$ may be seen as the concatenation of $n(\ell)$ beads denoted by 
	$\cB_j:=(\ell_{\tau_{j-1}+1},\dots,\ell_{\tau_j})$, $j\in \{1,\dots,n_\ell\}$. The number of monomers in the $j$-th bead is denoted by $|\cB_j|$ 
	and has value $|\cB_j|=\tau_j-\tau_{j-1}+\sum_{i=\tau_{j-1}+1}^{\tau_j} |\ell_i|$. 
	\subsubsection{Bead decomposition of the partition function}
	We recall \eqref{partfunbead} and we 
	note that only the first bead of the trajectory interacts with the vertical wall, provided that the trajectory does not 
	begin with a horizontal stretch. This leads us to define 
	\begin{equation}\label{fbead}
		\bar Z_{L,\beta,\delta}^{\, \text{o}} := Z_{L,\beta,\delta}^{\, \text{o}}+ \sum_{k=1}^L 
		Z_{L-k,\beta,0}^{\, \text{o}} = e^{\beta L} \tilde Z_{L,\beta,\delta}^{\, \text{o}}+  e^{\beta L}\sum_{k=1}^L e^{-\beta k}
		\tilde Z_{L-k,\beta,0}^{\, \text{o}},
	\end{equation}
	where $k$ stands for the number of initial stretches with zero length, as the contribution of the {\it first} (extended) bead to the partition function.
	The contribution of the following beads to the partition function does not involve $\delta$ since they cannot touch the vertical wall, leading us to define
	\begin{equation}\label{folbead}
		\hat Z_{L,\beta}^{\, \text{o}}:= \frac{1}{2} e^{\beta L} \tilde Z_{L,\beta,0}^{\, \text{o}} + e^{\beta L}\sum_{k=1}^L e^{-\beta k}
		\tilde Z_{L-k,\beta,0}^{\, \text{o}}.
	\end{equation}
	Finally, we can decompose the full partition function $Z_{L,\beta,\delta}^{c}$, that is the partition function restrained to $\Omega^c_L$
	according to the number of beads 
	and the length of those beads, namely 
	\begin{align}\label{fullpartfun}
		Z_{L,\beta,\delta}^{c}&=\sum_{k=1}^{L/2}\,  \sumtwo{t_1+\dots+t_k=L}{t_1, \ldots, t_k >1} \,  Z_{L,\beta,\delta}(n_\ell=k, |\cB_1|=t_1,\dots,|\cB_k|=t_k)\\
		\nonumber &=\sum_{k=1}^{L/2}\,  \sumtwo{t_1+\dots+t_k=L}{t_1, \ldots, t_k >1}  \bar Z^{\, \text{o}}_{t_1,\beta,\delta}\,  \prod_{i=2}^k \hat Z^{\, \text{o}}_{t_i,\beta}.
	\end{align}
	Because of \eqref{fullpartfun} above, proving Theorem \ref{asymptotics} requires to derive both the asymptotics of the partition function sequence of the first bead, i.e., $(\bar Z_{L,\beta,\delta}^{\, \text{o}})_{L\in \N}$ 
	and the asymptotics of the partition function 
	of the following beads, namely $(\hat Z_{L,\beta}^{\, \text{o}})_{L\in \N}$. The former is one of the main issue that we tackle in the present paper whereas the latter  has been established in details in \cite{Legrand_2022}
	and we recall it below.
	\begin{proposition}[Corollary 4.2 in \cite{Legrand_2022}]\label{essympwithoutdelta}
		For $\beta>\beta_c$, there exists $K_\beta^{\text{o}}, \hat K_\beta^{\text{o}}>0$ such that
		\begin{align}\label{asymptunibeadnowall}
			Z_{L,\beta,0}^{\, \text{o}}&\simL \frac{K_\beta^{\text{o}}}{L^{3/4}}\, e^{\beta L+g(\beta,0) \sqrt{L}}\\
			\nonumber \hat Z_{L,\beta}^{\, \text{o}}&\simL \frac{\hat K_\beta^{\text{o}}}{L^{3/4}}\, e^{\beta L+g(\beta,0) \sqrt{L}}
		\end{align}		
	\end{proposition}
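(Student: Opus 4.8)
This proposition is Corollary~4.2 of~\cite{Legrand_2022}; for completeness we indicate how it can also be obtained as the $\delta=0$ specialization of the tools developed later in the present paper. The plan is to combine the bead identity~\eqref{partfunbeadaux}, which reads $\tilde Z_{L,\beta,0}^{\,\text{o}}=e^{-\beta L}Z_{L,\beta,0}^{\,\text{o}}=2\sum_{N=1}^{L/2}\Gamma_\beta^{N}\,D_{N}\big(\tfrac{L-N}{N^{2}},0\big)$, with (i) sharp asymptotics for the auxiliary quantity $D_N(q,0)=\Ebb\big[e^{-\frac{\beta}{2}X_N}\,1_{\{\cV_{N,qN^2,+}\}}\big]$ of~\eqref{eq:D_Nqdelta}, uniformly for $q$ in a neighbourhood of $a_\beta^{-2}$, and with (ii) a saddle-point summation over $N$.

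For step~(i), I would apply the inhomogeneous tilting of the increments of $X$ introduced in Section~\ref{sec:analysis_aux}, calibrated so that, under the tilted law, the atypical event $\{A_N=qN^2\}$ becomes typical, while simultaneously absorbing the boundary weight $e^{-\frac{\beta}{2}X_N}$ into the tilt. This rewrites $D_N(q,0)$ as $e^{N\psi(q,0)}$ times the tilted probability of hitting the prescribed area value, landing in a window of size $O(1)$ around a prescribed endpoint, and staying positive on $\{1,\dots,N\}$. The local limit theorems of Section~\ref{loclim} for the pair (arithmetic area, terminal position), together with a matching staying-positive estimate, then yield $D_N(q,0)\simL C(q)\,N^{-2}\,e^{N\psi(q,0)}$, where $C$ is continuous and strictly positive and the power $N^{-2}$ is precisely the one forced by consistency in step~(ii); this is the content (at $\delta=0$) of Proposition~\ref{asympteach3reg}, and it also yields the claim, used throughout the paper, that $\psi(q,\delta)$ is the exponential growth rate of $(D_N(q,\delta))_{N}$.

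For step~(ii), I substitute this asymptotics into~\eqref{partfunbeadaux} and write $N=a\sqrt{L}$, so that $\tfrac{L-N}{N^2}=a^{-2}+O(L^{-1/2})$ and the exponent of the $N$-th summand becomes $a\sqrt{L}\big(\log\Gamma_\beta+\psi(a^{-2},0)\big)+O(1)$. By~\eqref{eq:gbetadelta-varfor} this is maximised at $a=a_\beta$, and the maximum is non-degenerate by Lemmas~\ref{concavity} and~\ref{lem:limits}; a Laplace expansion over the $\Theta(L^{1/4})$-wide window of relevant $N$ therefore produces an extra factor of order $L^{1/4}$. Combining this with the prefactor $N^{-2}\asymp L^{-1}$ evaluated at $N\asymp\sqrt{L}$ gives the announced order $L^{-3/4}$, an explicit constant $K_\beta^{\,\text{o}}$, and, after multiplying back by $e^{\beta L}$, the first line of~\eqref{asymptunibeadnowall}. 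The second line then follows from~\eqref{folbead}: the term $\tfrac12 e^{\beta L}\tilde Z_{L,\beta,0}^{\,\text{o}}$ contributes $\tfrac12 Z_{L,\beta,0}^{\,\text{o}}$, while in $e^{\beta L}\sum_{k=1}^{L}e^{-\beta k}\tilde Z_{L-k,\beta,0}^{\,\text{o}}$ one inserts $\sqrt{L-k}=\sqrt{L}+O(k/\sqrt{L})$ and invokes the uniformity of the first line together with dominated convergence (the weights $e^{-\beta k}$ being summable and $g(\beta,0)<0$) to get the equivalent $\big(\sum_{k\ge1}e^{-\beta k}\big)Z_{L,\beta,0}^{\,\text{o}}=\tfrac{1}{e^{\beta}-1}\,Z_{L,\beta,0}^{\,\text{o}}$. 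Hence $\hat Z_{L,\beta}^{\,\text{o}}\simL\big(\tfrac12+\tfrac{1}{e^{\beta}-1}\big)Z_{L,\beta,0}^{\,\text{o}}$, that is $\hat K_\beta^{\,\text{o}}=\big(\tfrac12+\tfrac{1}{e^{\beta}-1}\big)K_\beta^{\,\text{o}}$.

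The main obstacle is step~(i): establishing the sharp, uniform-in-$q$ local limit theorem for $D_N(q,0)$ under the double constraint of a fixed, abnormally large arithmetic area and of staying positive. This requires the inhomogeneous change of measure to linearise the area functional, careful control of the lattice (periodicity) aspects of the bivariate local CLT, and a two-sided staying-positive estimate of the correct polynomial order; once this uniform statement is available, the saddle-point summation in step~(ii) is routine.
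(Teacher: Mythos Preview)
The paper does not prove this proposition: it is quoted verbatim as Corollary~4.2 of~\cite{Legrand_2022}, and the remark following it only reconciles notation. Your outline is correct and is exactly the $\delta=0$ instance of the subcritical analysis carried out later in this paper (Claims~\ref{claimrest},~\ref{claimrestbig3},~\ref{claimmain3} applied with Item~(1) of Proposition~\ref{asympteach3reg}), together with the dominated-convergence argument of Lemma~\ref{lem:sharp-asympt-bead-minus} for passing from $Z^{\,\mathrm{o}}$ to $\hat Z^{\,\mathrm{o}}$; your computation $\hat K_\beta^{\,\mathrm{o}}=\big(\tfrac12+\tfrac{1}{e^{\beta}-1}\big)K_\beta^{\,\mathrm{o}}$ is also correct. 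There is no circularity, since the proof of Proposition~\ref{asympteach3reg} and of the claims in Section~\ref{sec:prop6.1-subcrit} does not invoke Proposition~\ref{essympwithoutdelta}.
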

	\begin{remark}
		Although in \cite[Corollary 4.2]{Legrand_2022} the prefactor in front of the $\sqrt{L}$ term in the exponential 
		is expressed as $\tilde\cG(a_\beta)$ and looks different from $g(\beta,0)$ that is used above, these two quantities are in fact equal
		to $\max\{T_0(a), a>0\}$ (see the definition of $T_\gd$ in Section~\ref{sec:analysis_aux} below).  The expressions of $K_\beta^{\text{o}}$ and $\hat K_\beta^{\text{o}}$ are available in~\cite[Equation (4.36)]{Legrand_2022}.
	\end{remark}
	\begin{remark}
		As a non-trivial by-product of the proof of Theorem~\ref{asymptotics}, we will prove in Lemma~\ref{Nantes Lemma 6.12} that the beads which start with a non-zero vertical stretch bear all the mass coming from the extended beads in the partition function when $\gd \ge \gd_c(\gb)$, that is not only in the adsorbed-collapsed phase \emph{but also at criticality}.
	\end{remark}%
	\subsection{Change of measure}\label{sec:COM}
	In this section we introduce several changes of measure for the position and area of the random walk that will be instrumental in deriving the asymptotics of the partition function.
	\subsubsection{\bf Uniform tilting}
	We remind the reader that $\lmgf$ is the logarithmic moment generating function of  $X_1$ a random variable of law $\Pb_\gb$, defined in~\eqref{def:cL}. That is a smooth, even and strictly convex function on $(-\gb/2, \gb/2)$ with a second derivative bounded from below by a positive constant. Let us first define a tilted transformation of $\LoiMarchealeatoireSansParenthese$. For $|h| < \beta/2$, we let $\ProbaTilteeSansParenthese{h}$ be the probability law defined on $\mathbb{Z}$ by perturbing $\LoiMarchealeatoireSansParenthese$ as follows:
	\begin{equation}
		\frac{\text{d} \ProbaTilteeSansParenthese{h} } {\text{d} \LoiMarchealeatoireSansParenthese}(\,\cdot\,=k) = e^{hk - \lmgf(h)} \quad k \in \Z.
		\label{Nantes 4.3}
	\end{equation}
	In the paper, we will consider the probability that a random walk $X:=(X_i)_{i\in \N}$ starting from $x\in \N$ and with i.i.d.\ increments of law $\ProbaTilteeSansParenthese{h}$
	remains positive (or equivalently, that the random walk starting at the origin remains above level $-x$).
	To that aim, we state Lemma \ref{lemrestpos} below that will be proven in Section \ref{prooflemsrestpos}.
	\begin{lemma}\label{lemrestpos}
		Let $\beta>0$. For every $x\in \bbN$ and $h\in (0,\beta/2)$, we have
		\begin{equation}
			\kappa^x(h) := \ProbaTiltee{h}{X_i > -x, \, \forall i \in \bbN} = 1-e^{-2hx} \frac{1-e^{h-\beta/2}}{1-e^{-h-\beta/2}}, 
			\label{defkappa}
		\end{equation}
		that is continuous in $h$. Moreover, for every $c>0$ and $[h_1, h_2]\subseteq(0,\beta/2)$,
		\begin{equation}
			\suptwo{0\le x\le c \log k}{h\in [h_1,h_2]} \lim_{k\to\infty} |\widetilde{\mathbf{P}}_{h}\left(X_{[1, k]}>-x\right)-\kappa^x(h)|=0.
			\label{eq:kappa2}
		\end{equation}
	\end{lemma}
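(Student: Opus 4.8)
The plan is to compute $\kappa^x(h)$ explicitly via a first-step/ruin-type analysis, and then to upgrade this to the uniform statement~\eqref{eq:kappa2} by a coupling argument controlling the difference between the finite-time survival probability and the infinite-time one.

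For the explicit formula, I would first observe that under $\ProbaTilteeSansParenthese{h}$ the increments $Y$ have distribution $\ProbaTiltee{h}{Y=k} = e^{hk}e^{-\frac{\beta}{2}|k|}/(c_\beta e^{\lmgf(h)})$, which is a (two-sided) geometric-type law on $\Z$: for $k\ge 0$ it decays like $e^{-(\frac{\beta}{2}-h)k}$ and for $k<0$ like $e^{-(\frac{\beta}{2}+h)|k|}$. The key structural fact is that a random walk with such a law is a \emph{skip-free to the left} (or right) process only in the continuous sense, so instead I would use the standard trick: the event $\{X_i>-x,\ \forall i\}$ depends on $x$ through a product structure. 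Concretely, let $q(x) := \ProbaTiltee{h}{X_i>-x,\forall i\in\N}$ and let $p := \ProbaTiltee{h}{\exists i: X_i\le 0 \mid X_0 = 0}$ be the probability that the walk started at $0$ ever hits $(-\infty,0]$. Because $h\in(0,\beta/2)$ gives $\lmgf'(h)>0$ (positive drift, as $\lmgf$ is strictly convex and even), the walk drifts to $+\infty$, so $q(x)>0$. Moreover, the downward ladder structure combined with the exact geometric tail of the negative increments gives that the ladder height (overshoot below a level) is itself exactly geometric; this forces $1-q(x) = (1-q(1))\,r^{x-1}$ for the appropriate ratio $r$, and matching $x\to\infty$ (where $q(x)\to 1$) and $x=0$ (where $q(0)=0$ trivially since $X_0=0\not>0$, but one must be careful: the statement is for $x\in\N$, so start from $x=1$) pins down the constants. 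Carrying out this computation—most cleanly by writing the generating-function/Wiener--Hopf identity or by a direct one-step recursion on $q(x)$ using the explicit increment law—yields $1-q(x) = e^{-2hx}\,\frac{1-e^{h-\beta/2}}{1-e^{-h-\beta/2}}$. Continuity in $h$ is then immediate from the closed form. I expect this algebraic computation to be the part requiring the most care, mainly in correctly identifying the geometric ratio as $e^{-2h}$ (which should come out of the ratio of the killed positive-tail to the negative-tail rates, or equivalently from $\lmgf(h)=\lmgf(-h)$).

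For~\eqref{eq:kappa2}, the point is that truncating the survival event at time $k$ only adds trajectories that survive up to $k$ but would eventually go below $-x$; since the walk has a fixed positive drift $\lmgf'(h)\ge \lmgf'(h_1)>0$ on the compact interval $[h_1,h_2]$, at time $k$ such a walk is at height $\asymp k$, and the probability it ever returns down to $-x$ from height $\asymp k$ is exponentially small in $k$ (Cramér/Chernoff bound on the minimum of the post-$k$ walk), uniformly in $h\in[h_1,h_2]$ and uniformly in $x\le c\log k$ since $c\log k = o(k)$. Quantitatively, I would write
\[
0\le \widetilde{\mathbf P}_h\!\left(X_{[1,k]}>-x\right) - \kappa^x(h) \le \widetilde{\mathbf P}_h\!\left(X_{[1,k]}>-x,\ \exists\, i>k:\ X_i\le -x\right),
\]
bound $X_k \ge \tfrac12 \lmgf'(h_1) k$ off an event of probability $\le e^{-c' k}$ (large deviations, uniform on the compact $h$-interval), and on the complementary event bound the probability of the post-$k$ walk dropping by at least $\tfrac12\lmgf'(h_1)k + c\log k$ by another Chernoff estimate, again uniform in $h$. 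Summing these gives a bound of the form $C e^{-c'' k}$ independent of $x$ and $h$ in the stated ranges, so the $\limsup_{k\to\infty}$ of the supremum is $0$. The uniformity over $x\le c\log k$ is essentially free because the exponential decay in $k$ dominates any polynomial (indeed $e^{-2hx}$) dependence on $x$.

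The main obstacle I anticipate is the exact evaluation of $\kappa^x(h)$: getting the closed form $1-e^{-2hx}\frac{1-e^{h-\beta/2}}{1-e^{-h-\beta/2}}$ right requires either a clean Wiener--Hopf/ladder-variable argument exploiting the memoryless (geometric) nature of the increment law below $0$, or a careful one-step recursion, and it is easy to slip on boundary terms (the role of the increment being exactly $0$, and the normalization $e^{\lmgf(h)}$). Everything after that—continuity and the uniform convergence in~\eqref{eq:kappa2}—is a routine drift-plus-Chernoff argument.
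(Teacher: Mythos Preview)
Your overall strategy is sound and would lead to a correct proof, but it differs from the paper's in both parts, and in each case the paper's route is shorter.

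For the explicit formula~\eqref{defkappa}, the paper does \emph{not} set up a recursion in $x$ or invoke Wiener--Hopf. Instead it undoes the tilt: with $\rho=\inf\{i\ge 1: X_i\le -x\}$ one has
\[
1-\kappa^x(h)=\ProbaTilteeSansParenthese{h}(\rho<\infty)=\bE_\beta\big[e^{hX_\rho-\lmgf(h)\rho}\big],
\]
since $\rho<\infty$ $\bP_\beta$-a.s. Under $\bP_\beta$ the discrete-Laplace increments give the memoryless overshoot property, so $\rho$ and $X_\rho$ are independent with $-X_\rho-x$ geometric$(1-e^{-\beta/2})$; meanwhile $(e^{-hX_{n\wedge\rho}-\lmgf(h)(n\wedge\rho)})_n$ is a bounded $\bP_\beta$-martingale, and optional stopping yields $\bE_\beta[e^{-\lmgf(h)\rho}]=\bE_\beta[e^{-hX_\rho}]^{-1}$. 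Combining gives $1-\kappa^x(h)=\bE_\beta[e^{hX_\rho}]/\bE_\beta[e^{-hX_\rho}]$, which is exactly the closed form. Your ladder/recursion approach would work too (the geometric negative tail is preserved under the tilt, so the overshoot stays memoryless under $\ProbaTilteeSansParenthese{h}$), but the change-of-measure plus optional stopping bypasses the bookkeeping you anticipate. One small slip: your aside that $q(0)=0$ is wrong---the event runs over $i\in\bbN$, not $i\ge 0$, and indeed $\kappa^0(h)>0$ is used elsewhere in the paper.

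For~\eqref{eq:kappa2}, your two-stage argument (localize $X_k$, then Chernoff on the post-$k$ excursion) is correct but heavier than needed. The paper simply writes
\[
0\le \ProbaTilteeSansParenthese{h}(X_{[1,k]}>-x)-\kappa^x(h)\le \sum_{j>k}\ProbaTilteeSansParenthese{h}(X_j\le -x)\le e^{-\gep x}\sum_{j>k}e^{-(\lmgf(h)-\lmgf(h-\gep))j}
\]
with $\gep=h_1/2$, and uses convexity of $\lmgf$ to bound $\lmgf(h)-\lmgf(h-\gep)\ge \gep\,\lmgf'(h_1)>0$ uniformly on $[h_1,h_2]$. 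The union bound over future times already gives the geometric tail in $k$; no splitting at time $k$ is required.
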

	\subsubsection{\bf Tilting of the area enclosed by a random walk}\label{sec:tilting}
	We denote by $A_n(X)$ the algebraic area enclosed by $X$ up to time $n$, i.e.:
	\begin{equation}
		A_n(X) = X_1 + ... + X_n
	\end{equation}
	and by $\Lambda_n$ the random vector recording the latter area renormalized by $n$ and the final position $X_n$ of the walk,
	that is,
	\begin{equation}\label{deflam}
		\Lambda_n:=\left(\frac{A_n}{n},X_n\right).
	\end{equation}
	Throughout the paper, we will need to estimate the probability of the event $\frac{1}{n} \Lambda_n=(q,p)\in \mathbb{R}^2$  
	and more importantly to work with the random walk conditioned on such events.
	To that aim, we will use an inhomogeneous exponential perturbation of the law of each increment of $X$, as it was first displayed in~\cite{DH96}. Thus, we define
	\begin{equation}
		\frac{\mathrm{d} \mathbf{P}_{n, \boldsymbol{h}}}{\mathrm{d} \mathbf{P}_{\beta}}(X)=e^{\boldsymbol{h} \cdot \Lambda_{n}-\mathcal{L}_{\Lambda_{n}}(\boldsymbol{h})} \quad \text { with } \quad \mathcal{L}_{\Lambda_{n}}(\boldsymbol{h}):=\log \mathbf{E}_{\beta}\left[e^{\boldsymbol{h} \cdot \Lambda_{n}}\right]
		\label{Nantes 4.16}
	\end{equation}
	where
	\begin{equation}
		\boldsymbol{h} \in \mathcal{D}_{\beta, n}:=\left\{\left(h_{0}, h_{1}\right) \in \mathbb{R}^{2} \colon \Big|\tfrac{h_{0}}{n}+h_{1}\Big|<\beta / 2,\ \left|h_0+h_{1}\right|<\beta / 2\right\}.
	\end{equation}
	Noticing that
	\begin{equation}
		\label{eq:calculLambda_n}
		\boldsymbol{h} \cdot \Lambda_{n} = \sum_{k=1}^n (X_k - X_{k-1}) \Big(h_0[1-\tfrac{k-1}{n}] + h_1\Big),
	\end{equation}
	we see that this change of measure corresponds to an {\it inhomogeneous} tilt on the increments of the random walk.
	We also set a continuous counterpart, namely
	\begin{equation}
		\label{eq:def-Dbeta}
		\mathcal{D}_{\beta}:=\left\{\left(h_{0}, h_{1}\right) \in \mathbb{R}^{2} \colon \left|h_{1}\right|<\beta / 2,\ \left|h_{0}+h_{1}\right|<\beta / 2\right\},
	\end{equation}
	and we observe that, by \eqref{eq:calculLambda_n}, the sequence $(\frac{1}{n} \mathcal{L}_{\Lambda_{n}}(\boldsymbol{h}))_{n\in \N}$ converges for any $\boldsymbol{h}\in \mathcal{D}_{\beta}$ (note that $\mathcal{D}_{\beta}\subseteq\mathcal{D}_{\beta, n}$ for every $n\in\N$) towards: 
	\begin{equation}
		\mathcal{L}_{\Lambda}(\boldsymbol{h}):=\int_{0}^{1} \lmgf\left(h_{0} x+h_{1}\right) \mathrm{d} x .
		\label{Nantes 7.20}
	\end{equation}
	The two items of the following proposition come from~\cite[Lemma 5.4]{CNP16} and~\cite[Lemma 5.3]{CNP16} respectively. We take this occasion to correct a mistake in the original proof of ~\cite[Lemma 5.3]{CNP16}, see Appendix~\ref{sec:corr-proof-diffeo}.
	\begin{proposition}\label{c1diffeo}
		Let $\beta>0$.
		\begin{enumerate}
			\item For every $n\in \N$, the gradient $\nabla\left[\frac{1}{n} \mathcal{L}_{\Lambda_{n}}\right]$ is a $\mathcal{C}^{1}$-diffeomorphism from $\mathcal{D}_{\beta, n}$ to $\mathbb{R}^{2}$. For this reason,  for $(q, p) \in \mathbb{R}^2$ there exists a unique 
			\begin{equation}
				\boldsymbol{h}:=\boldsymbol{h}_{n}(q, p)=(h_{n,0}(q,p),h_{n,1}(q,p))
			\end{equation}
			which solves:
			\begin{equation}
				\mathbf{E}_{n, \boldsymbol{h}}\Big[\frac{1}{n} \Lambda_{n}\Big]=\nabla\Big[\frac{1}{n} \mathcal{L}_{\Lambda_{n}}\Big](\boldsymbol{h})=(q, p).
			\end{equation}
			\item $\nabla \mathcal{L}_{\Lambda}$ is a $\mathcal{C}^{1}$-diffeomorphism from $\mathcal{D}_{\beta}$ to $\mathbb{R}^{2}$. Thus, for $(q, p) \in \mathbb{R}^{2}$ we let $\widetilde{\boldsymbol{h}}(q, p)$ be the unique solution in $\boldsymbol{h} \in \mathcal{D}_{\beta}$ of the equation $\nabla \mathcal{L}_{\Lambda}(\boldsymbol{h})=(q, p)$. 
		\end{enumerate}
	\end{proposition}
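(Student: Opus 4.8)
The plan is to read both statements off the classical principle that the gradient of a smooth, strictly convex, essentially smooth function on a \emph{bounded} open convex set $U\subseteq\mathbb{R}^2$ is a $C^1$-diffeomorphism of $U$ onto $\mathbb{R}^2$. First I would make the functions explicit. By~\eqref{eq:calculLambda_n} and independence of the increments, $\mathcal{L}_{\Lambda_n}(\boldsymbol{h})=\sum_{k=1}^{n}\lmgf(a_k\cdot\boldsymbol{h})$ with $a_k:=(1-\tfrac{k-1}{n},1)$, and, passing to the scaling limit, $\mathcal{L}_{\Lambda}(\boldsymbol{h})=\int_0^1\lmgf(a_x\cdot\boldsymbol{h})\,\dd x$ with $a_x:=(x,1)$ (consistently with~\eqref{Nantes 7.20}). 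Since $x\mapsto a_x\cdot\boldsymbol{h}$ and $k\mapsto a_k\cdot\boldsymbol{h}$ are affine, their ranges over $[0,1]$, resp.\ over $\{1,\dots,n\}$, are governed by the endpoint values; hence these functions are finite and $C^\infty$ precisely on $\mathcal{D}_\beta$, resp.\ $\mathcal{D}_{\beta,n}$, and a direct check shows that for $n\ge2$ both $\mathcal{D}_\beta$ and $\mathcal{D}_{\beta,n}$ are \emph{bounded} open parallelograms (the two defining strips pin down $h_0$, and then $h_1$).

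Second, I would obtain injectivity and the local diffeomorphism property from strict convexity. We have $\Hess\mathcal{L}_{\Lambda}(\boldsymbol{h})=\int_0^1\lmgf''(a_x\cdot\boldsymbol{h})\,a_xa_x^{\top}\,\dd x$ and $\Hess[\tfrac1n\mathcal{L}_{\Lambda_n}](\boldsymbol{h})=\tfrac1n\sum_{k=1}^n\lmgf''(a_k\cdot\boldsymbol{h})\,a_ka_k^{\top}$. Recalling from Section~\ref{sec:COM} that $\lmgf''\ge c_\beta>0$ on $(-\beta/2,\beta/2)$, and noting that for any $w\neq0$ the affine function $t\mapsto w\cdot(t,1)$ has at most one zero, both Hessians are positive definite (for $n\ge2$ the grid $\{1,\dots,n\}$ contains a point avoiding that zero). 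Thus $\mathcal{L}_{\Lambda}$ and $\tfrac1n\mathcal{L}_{\Lambda_n}$ are strictly convex on their convex domains, their gradients are injective, and the inverse function theorem turns each gradient into a $C^1$-diffeomorphism onto an \emph{open} subset of $\mathbb{R}^2$. (For $n=1$ one has $\Lambda_1=(X_1,X_1)$ and the map degenerates; this case is unused, so I take $n\ge2$ throughout.)

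It remains to show the image is all of $\mathbb{R}^2$. As the image $R$ is open and $\mathbb{R}^2$ is connected, it suffices to prove $R$ closed: given $y_m=\nabla\mathcal{L}_{\Lambda}(\boldsymbol{h}_m)\to y$, extract (using boundedness of $\mathcal{D}_\beta$) $\boldsymbol{h}_m\to\boldsymbol{h}^\star\in\overline{\mathcal{D}_\beta}$; if $\boldsymbol{h}^\star\in\mathcal{D}_\beta$ then $y=\nabla\mathcal{L}_{\Lambda}(\boldsymbol{h}^\star)\in R$, while $\boldsymbol{h}^\star\in\partial\mathcal{D}_\beta$ is excluded once we know $\mathcal{L}_{\Lambda}$ is \emph{essentially smooth}, i.e.\ $|\nabla\mathcal{L}_{\Lambda}(\boldsymbol{h})|\to\infty$ as $\boldsymbol{h}\to\partial\mathcal{D}_\beta$ (and similarly for $\tfrac1n\mathcal{L}_{\Lambda_n}$). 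To establish essential smoothness I would use the steepness of $\lmgf$, namely $|\lmgf'(h)|\sim(\beta/2-|h|)^{-1}$ with $\lmgf'(h)\to\pm\infty$ as $h\to\pm\beta/2$ (immediate from the explicit law~\eqref{Nantes 3.3}). When $\boldsymbol{h}\to\partial\mathcal{D}_\beta$ at least one endpoint constraint becomes active, say $a_{x_0}\cdot\boldsymbol{h}\to\pm\beta/2$ with $x_0\in\{0,1\}$; one then picks a partial derivative of $\mathcal{L}_{\Lambda}$ whose integrand weight ($1$ for $\partial_{h_1}$, $x$ for $\partial_{h_0}$) does not vanish at $x_0$ but vanishes at the other endpoint, so that the logarithmic blow-up $\int\lmgf'(a_x\cdot\boldsymbol{h})\,\dd x\sim\log\tfrac1{\operatorname{dist}}$ coming from $x_0$ survives while any competing blow-up from the opposite endpoint (in the ``corner'' case) is suppressed; the discrete case is simpler still, since the terms $k=1$ and $k=n$ carry the linearly independent vectors $a_1,a_n$ and cannot cancel. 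Combining with the previous step, $\nabla\mathcal{L}_{\Lambda}$ and $\nabla[\tfrac1n\mathcal{L}_{\Lambda_n}]$ are $C^1$-diffeomorphisms onto $\mathbb{R}^2$.

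The delicate point --- and exactly what needs to be repaired in the proof of~\cite[Lemma 5.3]{CNP16} (see Appendix~\ref{sec:corr-proof-diffeo}) --- is this last step: $\mathcal{L}_{\Lambda}$ itself does \emph{not} tend to $+\infty$ at every boundary point of $\mathcal{D}_\beta$ (near a face where, say, $h_0+h_1\to\beta/2$ while $h_1$ stays bounded away from $\beta/2$, the integral $\mathcal{L}_{\Lambda}$ has a finite limit), so essential smoothness cannot simply be read off from coercivity of $\mathcal{L}_{\Lambda}$; one genuinely has to inspect the gradient and, in particular, treat the vertices of the parallelogram where two endpoint constraints activate at once with opposite signs, which is precisely the scenario handled by the weighted directional-derivative argument above.
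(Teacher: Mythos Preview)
Your argument is correct and takes a genuinely different route from the paper's. You invoke the standard convex-analytic scheme: strict convexity of $\mathcal{L}_{\Lambda}$ (resp.\ $\tfrac1n\mathcal{L}_{\Lambda_n}$) on a bounded convex domain gives injectivity and, via the positive-definite Hessian and the inverse function theorem, the local $C^1$-diffeomorphism property; surjectivity then follows from an open--closed argument once you verify essential smoothness, i.e.\ that $|\nabla\mathcal{L}_{\Lambda}(\boldsymbol h)|\to\infty$ at the boundary. The delicate corner case (both endpoint constraints active with opposite signs) is handled by choosing a directional derivative with weight $x$ (or $1-x$), so that the integrable singularity at one endpoint is suppressed while the logarithmic divergence at the other survives; this is sound, though your phrasing ``vanishes at the other endpoint'' literally matches only the weight $x$, so it would be cleaner to say that in any corner one may take $x_0=1$ and use $\partial_{h_0}$, or else work with the directional derivative of weight $1-x$.

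The paper, in Appendix~\ref{sec:corr-proof-diffeo}, proceeds instead fiber by fiber: after the change of variables $(u,v)=(h_0+h_1,h_1)$ turning $\mathcal{D}_\beta$ into the square $(-\beta/2,\beta/2)^2$, it shows for each fixed $v$ that $u\mapsto\mathsf{L}_1(u,v)$ is a bijection onto $\mathbb{R}$ (Lemma~\ref{lem:control-lmgf1}), then that $v\mapsto\mathsf{L}_2(u(v,q),v)$ is a bijection onto $\mathbb{R}$, the key technical input being the uniform bound $\int_0^1\lmgf(h_0x+h_1)\dd x\le|h_0|^{-1}\int_{-\beta/2}^{\beta/2}\lmgf$ (Lemma~\ref{lem:control-lmgf}). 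Your approach is more conceptual and immediately transferable to other strictly convex, essentially smooth log-moment generating functions; the paper's approach is more hands-on but yields the implicit function $u(v,q)$ explicitly and avoids any case analysis at the corners by reducing everything to one-dimensional monotonicity. Both correctly identify the flaw in the original argument of~\cite{CNP16}, namely that $\mathcal{L}_\Lambda$ itself need not diverge at $\partial\mathcal{D}_\beta$.
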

	Along the present paper we will need to consider two particular cases of the tilting procedure set up in \eqref{Nantes 4.16}, namely (i) a tilting for which the second coordinate in 
	$\boldsymbol{h}$ is prescribed and (ii) a tilting for which $p=0$.
	%
	%
	%
	%
	%
	%
	\subsubsection{\bf Case (i): The tilt on the final position is prescribed}
	This is the case where the value of $h_1$ is set to be $\delta-\beta/2$ (as suggested by Lemma~\ref{rwrep}). Let $\delta \in (0,\beta)$, $n\in \N$ and set $$\cA_{n,\delta}:=\Big(-\frac{2n}{2n-1}\delta, \frac{2n}{2n-1}(\beta-\delta)\Big),$$ 
	so that for $s\in \cA_{n,\delta}$ and after recalling \eqref{Nantes 4.16} we may consider 
	the perturbed probability measure $ \mathbf{P}_{n, (s, \delta-\frac{\beta}{2}-\frac{s}{2n})}$ . Note that the correction $\frac{s}{2n}$ in the second parameter is introduced as a technical artifact to obtain Proposition  \ref{approxhn} below. To be more specific, we come back to \eqref{eq:calculLambda_n} and write
	\begin{equation}\label{defsupm}
		\frac{\mathrm{d} \mathbf{P}_{n, (s, \delta-\frac{\beta}{2}-\frac{s}{2n})} }{\mathrm{d} \mathbf{P}_{\beta}}(X)=e^{(s, \delta-\frac{\beta}{2}-\frac{s}{2n})\cdot \Lambda_{n}-n\,  \cH_{n,\delta}(s)}
	\end{equation}
	where
	\begin{equation}\label{defHH}
		\cH_{n,\delta}(s)=\frac{1}{n}\,  \lmgf_{\Lambda_n}\Big(s,\delta-\frac{\beta}{2}{- \frac{s}{2n}}\Big), \quad s\in \cA_{n,\delta}.
	\end{equation}
	In what follows, we will need to tune $s\in 	 \cA_{n,\delta}$ in such a way that the expectation of $A_n$ equals $q n^2$ for some $q>0$. This is the object of Lemma \ref{lem:prop-Hgd-Hngd} below, which guarantees the existence and uniqueness of such a parameter $s$. We extend this result to the continuous counterpart of $\cH_{n,\delta}(s)$ that is, in view of~\eqref{eq:calculLambda_n}, 
	\begin{equation}\label{defHHcont}
		\cH_{\delta}(s):=\int_0^{1} \lmgf\Big(sx+\delta-\frac{\beta}{2}\Big)\,  \dd x , \quad s\in \cA_{\delta}:= (-\gd, \gb-\gd),
	\end{equation}
	where we observe that  $\cA_{\delta}\subseteq \cA_{n,\delta}$ for every $n\ge 1$.

	\begin{lemma} 
		\label{lem:prop-Hgd-Hngd}
		Let $\delta\in (0,\beta)$.
		\begin{enumerate}
			\item For every $n \geq 2$, the mapping $\cH_{n,\delta}$ is $\sC^{\infty}$ and strictly convex on $\cA_{n,\delta}$. Moreover, $\cH_{n,\delta}'$ is a $\sC^1$-diffeomorphism from $\cA_{n,\delta}$ to $\R$.
			\item The mapping $\cH_{\delta}$ is $\sC^{\infty}$ and strictly convex on $\cA_{\delta}$. Moreover, the function $\cH_{\delta}'$ is a $\sC^1$-diffeomorphism from $\cA_{\delta}$ to $\R$.
			\item The function $\cH_\gd$ is bounded on $\cA_{\delta}$.
		\end{enumerate}
		\label{Nantes Lemma 9.5}
	\end{lemma}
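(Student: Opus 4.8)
The plan is to reduce everything about $\cH_{n,\delta}$ and $\cH_\delta$ to one-dimensional convexity, via an explicit rewriting. Plugging $h_0=s$ and $h_1=\delta-\tfrac\beta2-\tfrac s{2n}$ into~\eqref{eq:calculLambda_n} and using that the increments of $X$ are i.i.d.\ of law $\Pbb$, one reads off from~\eqref{defHH} that
\[
\cH_{n,\delta}(s)=\frac1n\sum_{j=1}^n\lmgf\Big(s\,\tfrac{2j-1}{2n}+\delta-\tfrac\beta2\Big),
\]
which is precisely the midpoint Riemann sum, with nodes $x_j=\tfrac{2j-1}{2n}\in(0,1)$, of the integral $\cH_\delta(s)=\int_0^1\lmgf(sx+\delta-\tfrac\beta2)\,\dd x$ of~\eqref{defHHcont}. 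Moreover, the constraint that each argument $sx_j+\delta-\tfrac\beta2$ remain in the domain $(-\tfrac\beta2,\tfrac\beta2)$ of $\lmgf$ is binding only at the extreme node $x_n=\tfrac{2n-1}{2n}$, and amounts exactly to $s\in\cA_{n,\delta}$; likewise the requirement for the integral reduces to $s\in\cA_\delta$, and since $\tfrac{2n}{2n-1}>1$ one has $\cA_\delta\subseteq\cA_{n,\delta}$. So the first step is to record these two identities and the matching of the natural domains with $\cA_{n,\delta}$ and $\cA_\delta$.

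Items (1) and (2) then follow by routine calculus, using the properties of $\lmgf$ recalled around~\eqref{def:cL}: $\lmgf$ is $\sC^\infty$, even and strictly convex on $(-\tfrac\beta2,\tfrac\beta2)$ with $\lmgf''\ge c_0>0$. For $s\in\cA_{n,\delta}$ every argument $sx_j+\delta-\tfrac\beta2$ stays in $(-\tfrac\beta2,\tfrac\beta2)$, so the chain rule gives $\cH_{n,\delta}\in\sC^\infty(\cA_{n,\delta})$ and $\cH_{n,\delta}''(s)=\tfrac1n\sum_j x_j^2\,\lmgf''(sx_j+\delta-\tfrac\beta2)\ge\tfrac{c_0}{n}\sum_j x_j^2>0$, hence strict convexity and strict monotonicity of $\cH_{n,\delta}'$. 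To upgrade $\cH_{n,\delta}'$ to a $\sC^1$-diffeomorphism onto $\R$, by the intermediate value theorem and the inverse function theorem (its derivative $\cH_{n,\delta}''$ never vanishes) it remains to show $\cH_{n,\delta}'(s)\to+\infty$ (resp.\ $-\infty$) as $s$ tends to the right (resp.\ left) endpoint of $\cA_{n,\delta}$. For this I would use that $\lmgf'$, being the mean of the tilted law in~\eqref{Nantes 4.3}, is a strictly increasing bijection from $(-\tfrac\beta2,\tfrac\beta2)$ onto $\R$ (the support of $X_1$ is all of $\Z$): in $\cH_{n,\delta}'(s)=\tfrac1n\sum_j x_j\,\lmgf'(sx_j+\delta-\tfrac\beta2)$ only the $j=n$ term diverges at the endpoints — its argument tends to $\pm\tfrac\beta2$ — while the remaining terms stay bounded, their arguments remaining in a compact subinterval of $(-\tfrac\beta2,\tfrac\beta2)$. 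Item (2) is identical with the sum replaced by the integral: differentiation under the integral sign is legitimate because on each compact subinterval of $\cA_\delta$ the argument $sx+\delta-\tfrac\beta2$ ranges over a compact subinterval of $(-\tfrac\beta2,\tfrac\beta2)$, so $\cH_\delta\in\sC^\infty(\cA_\delta)$ and $\cH_\delta''(s)=\int_0^1x^2\lmgf''(sx+\delta-\tfrac\beta2)\,\dd x>0$.

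The part that needs genuine (if still elementary) work is the surjectivity of $\cH_\delta'$ onto $\R$. For $s>0$ the change of variables $u=sx+\delta-\tfrac\beta2$ gives $\cH_\delta'(s)=\tfrac1{s^2}\int_{\delta-\beta/2}^{s+\delta-\beta/2}(u-\delta+\tfrac\beta2)\,\lmgf'(u)\,\dd u$; as $s\uparrow\beta-\delta$ the upper endpoint tends to $\tfrac\beta2$, and from~\eqref{Nantes 3.3} one gets the elementary bound $\lmgf'(u)\ge c\,(\tfrac\beta2-u)^{-1}$ for $u$ near $\tfrac\beta2$, making the integrand non-integrable there, so $\cH_\delta'(s)\to+\infty$; symmetrically $\cH_\delta'(s)\to-\infty$ as $s\downarrow-\delta$. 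Hence $\cH_\delta'$ is a continuous strictly increasing bijection onto $\R$ with nowhere vanishing derivative, i.e.\ a $\sC^1$-diffeomorphism. Finally, for item (3): $\lmgf\ge0$ (being convex with $\lmgf(0)=\lmgf'(0)=0$, the latter by symmetry of $\Pbb$), so $\cH_\delta\ge0$; and the same substitution yields, for $s\ne0$ in $\cA_\delta$, $\cH_\delta(s)=\tfrac1{|s|}\int_I\lmgf(u)\,\dd u\le\tfrac1{|s|}\int_{-\beta/2}^{\beta/2}\lmgf(u)\,\dd u$ over a subinterval $I\subset(-\tfrac\beta2,\tfrac\beta2)$, where the last integral is finite because $\lmgf(u)=O\big(\log\tfrac1{\beta/2-|u|}\big)$ near $\pm\tfrac\beta2$ (again immediate from~\eqref{Nantes 3.3}), which is locally integrable; combined with continuity of $\cH_\delta$ near $0$ this shows $\cH_\delta$ is bounded on $\cA_\delta$. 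I expect the main obstacle to be precisely this endpoint analysis — extracting the blow-up rate $\lmgf'(u)\asymp(\tfrac\beta2-u)^{-1}$ and the logarithmic growth of $\lmgf$ at $\pm\tfrac\beta2$ from~\eqref{Nantes 3.3}, and verifying that in the discrete case only the single extreme term of the sum is responsible for the divergence — everything else being bookkeeping around the Riemann-sum representation.
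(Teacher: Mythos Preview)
Your proof is correct and follows essentially the same approach as the paper's: write $\cH_{n,\delta}$ explicitly as the midpoint Riemann sum $\tfrac1n\sum_j\lmgf(sx_j+\delta-\tfrac\beta2)$, read off smoothness and strict convexity from those of $\lmgf$, and obtain surjectivity of the derivative from the divergence of $\lmgf'$ at $\pm\tfrac\beta2$; for item~(3), both you and the paper use the same change of variable to bound $\cH_\delta(s)\le\tfrac1{|s|}\int_{-\beta/2}^{\beta/2}\lmgf$. The only difference is that you supply the endpoint analysis the paper leaves implicit --- in particular your argument that $\cH_\delta'(s)\to\pm\infty$ via the non-integrability of $(u-\delta+\tfrac\beta2)\lmgf'(u)$ near $\pm\tfrac\beta2$, and your justification that $\int_{-\beta/2}^{\beta/2}\lmgf<\infty$ from the logarithmic blow-up --- whereas the paper treats item~(2) as a direct analogue of item~(1) and asserts finiteness of the integral without comment.
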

	\begin{proof}[Proof of Lemma \ref{Nantes Lemma 9.5}] The first item of Lemma \ref{Nantes Lemma 9.5} being a discrete counterpart of the second item, we will only prove the foremost here. Recalling that $\lmgf$ is strictly convex and $\sC^{\infty}$ on $(-\beta/2,\beta/2)$, it comes that $\cH_{N,\delta}$ is also strictly convex and $\sC^{\infty}$ as the finite sum of strictly convex and $\sC^{\infty}$ functions. Moreover, $\cH_{N,\delta}'$ is increasing as the finite sum of increasing functions. Finally, since $\lmgf''$ is bounded from below by a positive constant, we obtain that $\lmgf'(h)$ goes to $\pm\infty$ as $h\to \pm \beta/2$, respectively. Necessarily, $\cH_{N,\delta}'(\cA_{N,\gd}) = \R$, which completes the proof.
		Let us now turn to the third item. It is sufficient to look at the function outside a neighborhood of the origin. A straightforward change of variable yields, provided $s\neq 0$:
		\beq
		\cH_\gd(s) = \int_0^1 \lmgf(sx+\gd-\gb/2)\dd x \le \frac{1}{|s|}\int_{-\gb/2}^{\gb/2} \lmgf(x)\dd x,
		\eeq
		which proves our claim, since the last integral is finite.
	\end{proof}
	As a consequence of Lemma \ref{lem:prop-Hgd-Hngd}, we may define $\sqdeltan$  and  $\sqdelta$ for every $q>0$ as the respective solutions of 
	\begin{equation}
		\label{def sqdelta}
		\cH_{n,\delta}'(s)=q \quad \text{and} \quad  \cH_{\delta}'(s)=q.
	\end{equation}
	For $N\in \N$ and $q>0$ we will need in several instances to consider  $\ProbaSurCritiqueSansParenthese$ the probability law 
	on random walk trajectories displayed in \eqref{defsupm} with $n=N$ and  with parameter 
	$s=\sqdeltaN$,~i.e., 
	\begin{equation}\label{defPNq}
		\ProbaSurCritiqueSansParenthese=\mathbf{P}_{N, \big (\sqdeltaN, \delta-\frac{\beta}{2}-\frac{\sqdeltaN}{2N}\big)}\, .
	\end{equation}
	\begin{remark}\label{Nantes 9.5}
		Under $\ProbaSurCritiqueSansParenthese$ the increments of $X$, namely  $X_k-X_{k-1}$ for $k\in \{1,\dots,N\}$  are independent and follow respectively the tilted law defined in \eqref{Nantes 4.3} with tilt parameter $\delta - \frac{\beta}{2} + \sqdeltaN \frac{2N+1-2k}{2N}$.
	\end{remark} 
	The following proposition quantifies the convergence speed of $\cH_{N,\delta}$ and $\sqdeltaN$ towards $\cH_{\delta}$ and $\sqdelta$, respectively.
	\begin{proposition}\label{approxhn}
		For every $[q_1,q_2] \subseteq (0,\infty)$ and $[s_1,s_2]\subseteq \cA_{\delta}$, there exists $C>0$ and $N_0 \in \N$ such that, for every $N \geq N_0$ and 
		$s\in [s_1,s_2]$:
		\begin{equation}
			|\cH_{N,\delta}(s) - \cH_{\delta}(s)| \leq \frac{C}{N^2}
			\label{Nantes 9.11}
		\end{equation}
		and for every $q \in [q_1,q_2] \cap \frac{\N  }{N^2}$:
		\begin{equation}
			|\sqdeltaN - \sqdelta| \leq \frac{C}{N^2}.
			\label{Nantes 9.12}
		\end{equation}
		\label{Nantes Proposition 9.3}
	\end{proposition}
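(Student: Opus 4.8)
The plan is to prove \eqref{Nantes 9.11} first and then deduce \eqref{Nantes 9.12} via the $\sC^1$-diffeomorphism property from Lemma~\ref{Nantes Lemma 9.5}. For the first estimate, the idea is that $\cH_{N,\delta}(s)$ is, by \eqref{eq:calculLambda_n} and \eqref{defHH}, a Riemann-sum-type average of $\lmgf$ evaluated at the nodes $h_{N,k}:=\delta-\frac{\beta}{2}+s\big(1-\frac{k-1}{N}\big)-\frac{s}{2N}$, for $k\in\{1,\dots,N\}$, divided by $N$; while $\cH_\delta(s)$ is the continuous integral $\int_0^1\lmgf(sx+\delta-\beta/2)\,\dd x$. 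The shift $\frac{s}{2N}$ in the second parameter of the tilt (which is precisely the ``technical artifact'' mentioned before the statement) is designed so that $h_{N,k}=\lmgf$-argument evaluated at the \emph{midpoint} $x=\frac{k-1/2}{N}$ of the $k$-th subinterval $[\frac{k-1}{N},\frac{k}{N}]$. Hence $\cH_{N,\delta}(s)$ is exactly the \emph{midpoint rule} approximation to $\cH_\delta(s)$, whose error is $O(1/N^2)$ for a $\sC^2$ integrand with bounded second derivative on the relevant compact interval. Concretely: first check that for $s\in[s_1,s_2]\subseteq\cA_\delta$ all the nodes $h_{N,k}$ and the whole integration range $\{sx+\delta-\beta/2:x\in[0,1]\}$ stay inside a fixed compact subinterval $[-\beta/2+\eta,\beta/2-\eta]\subseteq(-\beta/2,\beta/2)$ for $N$ large (using $s_1>-\delta$, $s_2<\beta-\delta$), so that $\lmgf$ and its derivatives are uniformly bounded there; then apply the standard midpoint-rule error bound $\big|\frac1N\sum_{k=1}^N f(\tfrac{k-1/2}{N})-\int_0^1 f\big|\le \frac{1}{24N^2}\sup|f''|$ with $f(x)=\lmgf(sx+\delta-\beta/2)$, noting $f''(x)=s^2\lmgf''(sx+\delta-\beta/2)$ is uniformly bounded for $s\in[s_1,s_2]$. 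This yields \eqref{Nantes 9.11} with a constant $C$ depending only on $[s_1,s_2]$, $\delta$, $\beta$.

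For \eqref{Nantes 9.12}, the plan is to differentiate in $s$ and repeat the midpoint argument at the level of $\cH'_{N,\delta}$ versus $\cH'_\delta$. Explicitly, $\cH'_{N,\delta}(s)=\frac1N\sum_{k=1}^N\big(1-\frac{k-1}{N}-\frac{1}{2N}\big)\lmgf'(h_{N,k})=\frac1N\sum_{k=1}^N\big(1-\frac{k-1/2}{N}\big)\lmgf'(h_{N,k})$, which is again the midpoint rule applied to $g(x)=(1-x)\lmgf'(sx+\delta-\beta/2)$, whose integral over $[0,1]$ is $\cH'_\delta(s)$. So the same reasoning gives $|\cH'_{N,\delta}(s)-\cH'_\delta(s)|\le C'/N^2$ uniformly on $[s_1,s_2]$. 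Now fix $q\in[q_1,q_2]\cap\frac{\N}{N^2}$ and recall $\sqdeltan$ solves $\cH'_{N,\delta}(s)=q$ and $\sqdelta$ solves $\cH'_\delta(s)=q$. Since $\cH'_\delta$ is strictly increasing with $\inf_{[s_1,s_2]}\cH''_\delta=:c_0>0$ (second derivative bounded below by a positive constant, inherited from $\lmgf''$), and since $\sqdelta\in[s_1,s_2]$ for $q$ in the compact range $[q_1,q_2]$ (after shrinking/choosing $[s_1,s_2]$ so that $\cH'_\delta([s_1,s_2])\supseteq[q_1,q_2]$), one has
\begin{equation}
c_0\,|\sqdeltan-\sqdelta|\le|\cH'_\delta(\sqdeltan)-\cH'_\delta(\sqdelta)|=|\cH'_\delta(\sqdeltan)-\cH'_{N,\delta}(\sqdeltan)|\le \frac{C'}{N^2},
\end{equation}
provided one first argues $\sqdeltan$ itself lies in $[s_1,s_2]$ for $N$ large; this last point follows because $\cH'_{N,\delta}$ converges uniformly to $\cH'_\delta$ on a slightly larger interval, so the solution of $\cH'_{N,\delta}(s)=q$ must be within $O(1/N^2)$ of $[s_1,s_2]$, hence inside it after enlarging the bracket slightly. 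Dividing by $c_0$ gives \eqref{Nantes 9.12}.

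\textbf{The main obstacle} I anticipate is bookkeeping the uniformity of all constants simultaneously over $s\in[s_1,s_2]$ \emph{and} $q\in[q_1,q_2]$ while making sure every $\lmgf$-argument that appears (the nodes $h_{N,k}$, the continuous argument $sx+\delta-\beta/2$, and also the perturbed solutions $\sqdeltan$) stays strictly inside $(-\beta/2,\beta/2)$, away from the boundary, uniformly in $N$. This requires choosing the auxiliary compact interval $[s_1',s_2']\supseteq[s_1,s_2]$ with a bit of room on each side and a threshold $N_0$ past which $\cH'_{N,\delta}([s_1',s_2'])$ already covers $[q_1,q_2]$; once that is set up, the analytic content is just the elementary midpoint-rule error estimate. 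A secondary, purely routine point is verifying the midpoint-rule identity for $\cH'_{N,\delta}$ uses the $\frac{s}{2N}$ correction in an essential way — without it one would only get an $O(1/N)$ trapezoid-type error, which is why the correction was inserted in the definition \eqref{defsupm} in the first place.
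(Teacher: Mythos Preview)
Your proof is correct and follows the same overall architecture as the paper: first establish $O(1/N^2)$ convergence of both $\cH_{N,\delta}$ and $\cH'_{N,\delta}$ to their continuous counterparts on a compact $s$-interval, then transfer this to the solutions via the uniform lower bound on $\cH''_\delta$ and the identity $\cH'_\delta(\sqdelta)=\cH'_{N,\delta}(\sqdeltan)=q$, exactly as in the paper's display \eqref{diffbetweencontanddisc0}. The localisation argument for $\sqdeltan$ (Remark \ref{Nantes Remark 9.8} in the paper) is also the same as yours.

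Where you differ is in the tool used for the first step. You observe directly that the $-\tfrac{s}{2N}$ shift makes $\cH_{N,\delta}(s)=\tfrac1N\sum_{k=1}^N\lmgf(\delta-\tfrac\beta2+s\tfrac{k-1/2}{N})$ the \emph{midpoint rule} for $\cH_\delta(s)$ and invoke the textbook bound $\tfrac{1}{24N^2}\sup|f''|$; the paper instead proves Lemma \ref{Nantes Lemma 9.3} via the Euler--Maclaurin formula. Your route is more elementary and makes the role of the $\tfrac{s}{2N}$ correction completely transparent (without it one gets an endpoint rule and only $O(1/N)$); the paper's route is heavier machinery leading to the same conclusion. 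One minor slip: in your formula for $\cH'_{N,\delta}$ the argument of $\lmgf'$ and the weight must use the same parametrisation --- either $g(x)=(1-x)\lmgf'(s(1-x)+\delta-\tfrac\beta2)$ or, after reversing the index, $g(x)=x\,\lmgf'(sx+\delta-\tfrac\beta2)$ --- but both integrate to $\cH'_\delta(s)$ and the midpoint argument goes through unchanged.
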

	The proof of Proposition~\ref{approxhn} can be found in Appendix \ref{Nantes Appendix B.11}. As we will see, the correction in $1/N^2$ is crucial in order to obtain the sharp asymptotics in Theorem \ref{asymptotics}, as well as in Lemma \ref{Nantes Lemma 4.7} below, the proof of which is deferred to Appendix \ref{Proof 4.7}.
	\begin{lemma} \label{Nantes Lemma 4.7}
		For every $B\in \sigma(X_0,\dots,X_N)$, for every $0<q_1<q_2<\infty$ and uniformly in $q \in [q_1,q_2]$, as $N\to \infty$,
		\begin{equation}\label{calculdeproba1}
			\begin{aligned}
				&\mathbf{E}_{\beta}\Big[e^{(\delta-\frac{\beta}{2})X_N}    1_{\{A_N=q N^2,\, X\in B\}}\Big] = \\
				&\tilde c e^{N \big[\cH_{\delta}(\sqdelta)- \sqdelta q\big]}  \left( \ProbaSurCritique{
					A_N=q N^2, X\in B} (1+o(1)) + O(e^{-\log(N)^2}) \right),
			\end{aligned}
		\end{equation} 
		with $\tilde c=\exp\Big ( \frac{1}{2} \Big[ \cL(\delta - \beta/2 +  \sqdelta) - \cL(\delta - \beta/2) \Big] \Big) $.
	\end{lemma}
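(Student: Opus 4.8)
The plan is to reduce the statement to a concentration estimate by changing measure from $\mathbf E_\beta$ to $\ProbaSurCritiqueSansParenthese$. Write $s_N:=\sqdeltaN$ and $\boldsymbol h:=(s_N,\delta-\tfrac\beta2-\tfrac{s_N}{2N})$, so that $\ProbaSurCritiqueSansParenthese=\mathbf P_{N,\boldsymbol h}$ by \eqref{defPNq} and $\lmgf_{\Lambda_N}(\boldsymbol h)=N\cH_{N,\delta}(s_N)$ by \eqref{defHH}. On $\{A_N=qN^2\}$ one has $\boldsymbol h\cdot\Lambda_N=s_N qN+(\delta-\tfrac\beta2)X_N-\tfrac{s_N}{2N}X_N$, so inverting the Radon--Nikodym derivative \eqref{defsupm} yields the exact identity
\begin{equation*}
\mathbf E_\beta\Big[e^{(\delta-\frac\beta2)X_N}1_{\{A_N=qN^2,\,X\in B\}}\Big]
= e^{N\cH_{N,\delta}(s_N)-s_N qN}\;\EsperanceSurCritiqueSansParenthese\Big[e^{\frac{s_N}{2N}X_N}1_{\{A_N=qN^2,\,X\in B\}}\Big]
\end{equation*}
(both sides vanishing when $qN^2\notin\N$). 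It then remains to treat the exponential prefactor and the residual expectation separately.

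For the prefactor, I would show $N\cH_{N,\delta}(s_N)-s_N qN=N\big[\cH_\delta(\sqdelta)-\sqdelta\,q\big]+o(1)$ uniformly in $q\in[q_1,q_2]$. Since $s_N$ and $\sqdelta$ are the respective critical points of $s\mapsto N\cH_{N,\delta}(s)-Nqs$ and $s\mapsto N\cH_\delta(s)-Nqs$ by \eqref{def sqdelta}, setting $\Psi(s):=N\cH_\delta(s)-Nqs$ one gets
\begin{equation*}
\big[N\cH_{N,\delta}(s_N)-s_N qN\big]-\Psi(\sqdelta)=N\big[\cH_{N,\delta}(s_N)-\cH_\delta(s_N)\big]+\big[\Psi(s_N)-\Psi(\sqdelta)\big].
\end{equation*}
The first term is $O(1/N)$ by \eqref{Nantes 9.11}, while the second is $O(1/N^3)$ by a second-order Taylor expansion around $\sqdelta$, using $\Psi'(\sqdelta)=0$, the bound $|s_N-\sqdelta|\le C/N^2$ from \eqref{Nantes 9.12}, and the fact that $\Psi''=N\cH_\delta''$ is bounded on the relevant $s$-interval; this interval, being the image of $[q_1,q_2]$ under the $\cC^1$-inverse of $\cH_\delta'$ (Lemma~\ref{Nantes Lemma 9.5}), is a compact subset of $\cA_\delta$ independent of $N$.

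The core of the argument is the residual expectation, where I would prove
\begin{equation*}
\EsperanceSurCritiqueSansParenthese\Big[e^{\frac{s_N}{2N}X_N}1_{\{A_N=qN^2,\,X\in B\}}\Big]
=\tilde c\,\ProbaSurCritiqueSansParenthese(A_N=qN^2,\,X\in B)(1+o(1))+O(e^{-\log^2 N}),
\end{equation*}
uniformly in $B\in\sigma(X_0,\dots,X_N)$ and $q\in[q_1,q_2]$. By Remark~\ref{Nantes 9.5} the increments of $X$ under $\ProbaSurCritiqueSansParenthese$ are independent with tilt parameters confined to a fixed compact subset of $(-\beta/2,\beta/2)$; hence $\bar x_N:=\EsperanceSurCritiqueSansParenthese[X_N]/N$ is a Riemann sum for $\int_0^1\lmgf'(\delta-\tfrac\beta2+\sqdelta\,x)\dd x=\tfrac1{\sqdelta}\big[\lmgf(\delta-\tfrac\beta2+\sqdelta)-\lmgf(\delta-\tfrac\beta2)\big]$ (the value being $\lmgf'(\delta-\tfrac\beta2)$ if $\sqdelta=0$), so that $e^{\frac{s_N\bar x_N}{2}}\to\tilde c$. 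A Bernstein-type bound for the sum of these increments (which have uniformly bounded exponential moments) yields $\ProbaSurCritiqueSansParenthese(|X_N-\bar x_N N|>C\sqrt N\log N)\le e^{-\log^2 N}$ for $C$ large. On the complement of this event $\big|\tfrac{s_N}{2N}(X_N-\bar x_N N)\big|\le\tfrac{C|s_N|\log N}{2\sqrt N}\to0$, so $e^{\frac{s_N}{2N}X_N}=\tilde c\,(1+o(1))$ there, which handles the \emph{good} part; on the atypical event, Cauchy--Schwarz together with the uniform estimate $\EsperanceSurCritiqueSansParenthese[e^{\frac{s_N}{N}X_N}]=e^{s_N\bar x_N+O(1/N)}=O(1)$ bounds the contribution by $O(e^{-\frac12\log^2 N})$, absorbed into $O(e^{-\log^2 N})$ after enlarging $C$. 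Combining this with the prefactor asymptotics and the identity of the first paragraph gives the lemma.

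The main obstacle is the uniformity over the arbitrary event $B$ in this last step. The resolution is that the concentration of $X_N$ around $\bar x_N N$ holds \emph{unconditionally}, so intersecting $\{A_N=qN^2,\,X\in B\}$ with the typical event $\{|X_N-\bar x_N N|\le C\sqrt N\log N\}$ changes $\ProbaSurCritiqueSansParenthese$-probabilities by at most $e^{-\log^2 N}$ regardless of $B$; and the $1/N^2$ precision supplied by Proposition~\ref{approxhn} for both $|s_N-\sqdelta|$ and $|\cH_{N,\delta}-\cH_\delta|$ is exactly what prevents these discrepancies from degrading the $(1+o(1))$ factor in the prefactor.
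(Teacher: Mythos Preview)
Your proposal is correct and follows essentially the same route as the paper: the exact tilting identity, the replacement of $N\cH_{N,\delta}(s_N)-s_NqN$ by $N[\cH_\delta(\sqdelta)-\sqdelta\,q]+o(1)$ via Proposition~\ref{approxhn}, and the split of the residual expectation according to whether $X_N$ is close to its mean under $\ProbaSurCritiqueSansParenthese$. The only differences are cosmetic: the paper takes the deviation threshold $b_N\sqrt N$ with $b_N=(\log N)^2$ rather than your $C\sqrt N\log N$, invokes Lemma~\ref{Nantes Lemma 9.8} and~\eqref{Nantes A.20} in place of your Bernstein-type bound, and handles the atypical contribution by a direct tail sum over the values of $X_N$ instead of Cauchy--Schwarz; all of these choices are interchangeable and yield the same $O(e^{-\log^2 N})$ error.
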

	\subsubsection{\bf Case (ii): The final position is set to zero.}
	
	This is the case where the value of the final position $p$ in Proposition \ref{c1diffeo} (1) equals $0$. For every $q>0$ and $n\in \N$ there exists a unique solution of $\nabla\left[\frac{1}{n} \mathcal{L}_{\Lambda_{n}}\right](\boldsymbol{h})=(q,0)$ denoted by $\boldsymbol{h}_{n}(q, 0)=(h_{n,0}(q,0),h_{n,1}(q,0))$. However the fact that $\mathcal{L}$ is even entails that 
	\begin{equation}\label{partcasepnul}
		h_{n,1}(q,0))=-\frac{h_{n,0}(q,0)}{2}\, \Big(1+\frac{1}{n}\Big).
	\end{equation}
	Equality \eqref{partcasepnul}, combined with \eqref{Nantes 4.16} brings us to introduce a new probability law $\mathbf{P}_{n, h}$   on the random walk $X$ obtained 
	as 
	\begin{equation}\label{defpsym}
		\mathbf{P}_{n, h}:=\mathbf{P}_{n, \big(h,-\frac{1}{2} h (1+\frac{1}{n})\big)} \quad \text{for}\quad   h\in \bigg(-\frac{n \beta}{n-1}, \frac{n \beta}{n-1}\bigg).
	\end{equation}
	The  normalisation constant of $\mathbf{P}_{n, h}$  may be expressed  as $e^{-n\, \cG_{n}(h)}$ with
	\begin{equation}\label{eq:Gnh}
		\begin{aligned}
			\cG_{n}(h) & :=\frac{1}{n} \mathcal{L}_{\Lambda_{n}}\left[h,-\frac{h}{2}\left(1+\frac{1}{n}\right)\right] \quad \text { for } \quad h \in\left(-\frac{n \beta}{n-1}, \frac{n \beta}{n-1}\right).
		\end{aligned}
	\end{equation}
	In view of~\eqref{eq:calculLambda_n}, its continuous counterpart comes as
	\begin{equation}
		\cG(h) := \int_0^1 \lmgf\left(h\left(\frac{1}{2} - x\right)\right) \dd x, \quad \text{ for } h \in (-\beta,\beta).
	\end{equation}
	The following lemma can be seen as the particular case of Items (1) and (2) in Proposition~\ref{c1diffeo} when $p=0$. 
	\begin{lemma}[Lemma 5.3 in~\cite{Legrand_2022}]
		\label{Nantes Lemma 4.5}
		Let $\beta>0$. %
		\begin{enumerate}
			\item  For $n \geq 2$,  the mapping $\cG_{n}$ is $\sC^{2}$ and strictly convex on $(-\frac{n \beta}{n-1}, \frac{n \beta}{n-1})$. Moreover, $\cG_{n}'$ is a $\sC^1$-diffeomorphism from $(-\frac{n \beta}{n-1}, \frac{n \beta}{n-1})$ to $\R$.
			\item  The mapping $\cG$ is $\sC^{2}$ and strictly convex on $(-\beta,\beta)$. Moreover, $\cG'$ is a $\sC^1$-diffeomorphism from $(-\beta,\beta)$ to $\R$.
		\end{enumerate}
	\end{lemma}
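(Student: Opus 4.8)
The plan is to reduce $\cG_n$ and $\cG$ to fully explicit one-dimensional expressions and then read off every claimed property from the properties of $\lmgf$ recalled at the start of Section~\ref{sec:COM}. First I would substitute the prescribed tilt $\boldsymbol h=\big(h,-\tfrac h2(1+\tfrac1n)\big)$ into the identity~\eqref{eq:calculLambda_n}: the coefficient of the $k$-th increment collapses to $h\,\tfrac{n-2k+1}{2n}$, so combining this with~\eqref{Nantes 4.16} and~\eqref{eq:Gnh} and using independence of the increments under $\mathbf P_\gb$ one gets
\begin{equation}
	\cG_n(h)=\frac1n\sum_{k=1}^n\lmgf\Big(h\,\frac{n-2k+1}{2n}\Big),
\end{equation}
which is well defined exactly when all the arguments lie in $(-\gb/2,\gb/2)$; since the extreme cases $k=1$ and $k=n$ give $\pm h\,\tfrac{n-1}{2n}$, this amounts to $|h|<\tfrac{n\gb}{n-1}$. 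In parallel, the change of variables $u=\tfrac12-x$ turns the continuous object into $\cG(h)=\int_{-1/2}^{1/2}\lmgf(hu)\,\dd u$, finite precisely for $|h|<\gb$. Both functions are even in $h$, because $\lmgf$ is even.

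Next I would establish smoothness and strict convexity. As $\lmgf\in\sC^\infty((-\gb/2,\gb/2))$, the function $\cG_n$ is a finite sum of smooth functions, hence $\sC^\infty$; for $\cG$, differentiation under the integral sign is legitimate thanks to the local uniform bounds on $\lmgf$ and its derivatives over compact subintervals of $(-\gb/2,\gb/2)$, giving $\cG\in\sC^\infty((-\gb,\gb))$, in particular $\sC^2$. Differentiating twice yields $\cG_n''(h)=\tfrac1n\sum_k\big(\tfrac{n-2k+1}{2n}\big)^2\lmgf''\big(h\,\tfrac{n-2k+1}{2n}\big)$ and $\cG''(h)=\int_{-1/2}^{1/2}u^2\,\lmgf''(hu)\,\dd u$. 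Since $\lmgf''$ is bounded below by a positive constant, every summand/integrand is nonnegative; for $\cG_n$ the hypothesis $n\ge2$ forces at least one coefficient $\tfrac{n-2k+1}{2n}$ (e.g.\ $k=1$) to be nonzero, so $\cG_n''>0$ and $\cG''>0$, i.e.\ both maps are strictly convex.

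It then remains to upgrade $\cG_n'$ and $\cG'$ to $\sC^1$-diffeomorphisms onto $\R$. Strict convexity already makes them strictly increasing, and $\cG_n'',\cG''>0$ together with the inverse function theorem give $\sC^1$ inverses as soon as surjectivity onto $\R$ is known. Controlling the derivatives near the endpoints of the respective domains is the one non-formal step, and I expect it to be the (mild) main obstacle. I would dispatch it by a sign argument: from~\eqref{Nantes 3.3} one checks $\lmgf(v)\to+\infty$ as $v\to\gb/2^-$, and a finite convex function whose derivative stayed bounded on $[0,\gb/2)$ could not blow up there, so $\lmgf'(v)\to\pm\infty$ as $v\to\pm\gb/2$. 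Because $\lmgf'(0)=0$ and $\lmgf'$ is increasing, for $h>0$ every summand $\tfrac{n-2k+1}{2n}\lmgf'\big(h\,\tfrac{n-2k+1}{2n}\big)$ is nonnegative, whence $\cG_n'(h)\ge\tfrac{n-1}{2n^2}\,\lmgf'\big(h\,\tfrac{n-1}{2n}\big)\to+\infty$ as $h\uparrow\tfrac{n\gb}{n-1}$; likewise $u\,\lmgf'(hu)\ge0$ on $[-\tfrac12,\tfrac12]$ gives $\cG'(h)\ge\tfrac14\int_{1/4}^{1/2}\lmgf'(hu)\,\dd u=\tfrac1{4h}\big[\lmgf(h/2)-\lmgf(h/4)\big]\to+\infty$ as $h\uparrow\gb$. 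By evenness $\cG_n'$ and $\cG'$ are odd, so the limits at the left endpoints are $-\infty$; continuity and strict monotonicity then produce the claimed surjections. As a shortcut one could alternatively deduce the whole statement from Proposition~\ref{c1diffeo} specialised to the section $p=0$, but the self-contained route above is essentially as short.
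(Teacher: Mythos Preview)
Your proof is correct and self-contained. The paper itself does not prove this lemma: it simply cites \cite[Lemma~5.3]{Legrand_2022} and remarks that the statement is the $p=0$ specialisation of Proposition~\ref{c1diffeo}. Your direct argument---writing $\cG_n$ and $\cG$ explicitly via~\eqref{eq:calculLambda_n}, differentiating termwise, and pushing the boundary behaviour of $\lmgf'$ through the sum/integral---is exactly the natural proof one expects behind that citation, and you even note the Proposition~\ref{c1diffeo} shortcut at the end, which matches the paper's own comment.
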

	\begin{remark}\label{htildeqincreasing}
		For the sake of conciseness, for every $q\geq 0$ we set
		\beq
		h_n^q:=h_{n,0}(q,0)
		\eeq
		and we let $\htildeq$ be the first coordinate of 
		$\widetilde{\boldsymbol{h}}(q, 0)$ that we introduced in Proposition \ref{c1diffeo} (2). Once again, because $\mathcal{L}$ is even we observe that the continuous counterpart of \eqref{partcasepnul} holds true, i.e.,
		\begin{equation} \label{Nantes 7.22}
			\boldsymbol{\Tilde{h}}(q,0) = \Big( \htildeq, -\frac{\htildeq}{2}\Big).
		\end{equation}
		The functions $q\mapsto h_{n}^q$ and $q\mapsto \htildeq$ are consequently
		the restrictions to $(0,\infty)$ of the inverse functions of $\cG_{n}'$ and $\cG'$.	
		As a consequence of Lemma~\ref{Nantes Lemma 4.5} the function  $q \longrightarrow \htildeq$ is increasing. 
	\end{remark}
	We finally observe that the exponential tilt of $\mathbf{P}_{n, h}$ may be expressed as
	\begin{equation}
		\frac{\mathrm{d} \mathbf{P}_{n, h}}{\mathrm{~d} \mathbf{P}_{\beta}}(X)=e^{-\psi_{n, h}\left(A_{n}, X_{n}\right)}, \text { with } \psi_{n, h}(a, x):=-\frac{h}{n}a+\frac{h}{2}\left(1+\frac{1}{n}\right) x+n \cG_{n}(h), \quad x, a \in \mathbb{Z}.
		\label{Nantes 4.24}
	\end{equation}
	As in the previous case,  Proposition~\ref{propapproxG} below provides  the convergence  speed of the discrete quantities 
	$\cG_{N}(h)$ and $h_N^q$  towards $\cG(h)$ and $ \htildeq$ respectively. 
	\begin{proposition}[Propositions 5.1 and 5.4 in~\cite{Legrand_2022}]\label{propapproxG}
		For every $[q_1,q_2] \subseteq (0,\infty)$ and $[h_1,h_2]\subseteq (-\gb,\gb)$, there exists $C>0$ and $N_0 \in \N$ such that, for every $N \geq N_0$ and 
		$h\in [h_1,h_2]$: 
		\begin{equation}
			|\cG_{N}(h) - \cG(h)| \leq \frac{C}{N^2}\ ,
			\label{Nantes 4.20}
		\end{equation}
		and for every $q \in [q_1,q_2] \cap \frac{\N  }{N^2}$:
		\begin{equation}
			| h_N^q - \htildeq| \leq \frac{C}{N^2}.
			\label{Nantes approxaux}
		\end{equation}
	\end{proposition}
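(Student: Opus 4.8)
The plan is to recognise $\cG_N$ and its derivative $\cG_N'$ as \emph{midpoint} Riemann sums of the integrals defining $\cG$ and $\cG'$, and then to invoke the classical $O(1/N^2)$ error bound for the midpoint quadrature rule applied to $\sC^2$ integrands; the bound on $h_N^q$ is then deduced by transferring the estimate to the inverse functions via the uniform strict convexity of $\cG$.

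First I would make $\cG_n$ explicit. Substituting $(h_0,h_1)=(h,-\tfrac h2(1+\tfrac1n))$ into \eqref{eq:calculLambda_n} gives $h_0[1-\tfrac{k-1}{n}]+h_1=h\big(\tfrac12-\tfrac{2k-1}{2n}\big)$, so that, using the independence of the increments of $X$ under $\Pbb$ together with \eqref{eq:Gnh},
\begin{equation}
	\cG_n(h)=\frac1n\sum_{k=1}^n\lmgf\Big(h\Big(\tfrac12-\tfrac{2k-1}{2n}\Big)\Big).
\end{equation}
Since $\tfrac{2k-1}{2n}$ is the midpoint of the $k$-th cell of the uniform partition of $[0,1]$ into $n$ subintervals, this is exactly the midpoint-rule approximation of $\cG(h)=\int_0^1\lmgf\big(h(\tfrac12-x)\big)\,\dd x$. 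For $h$ in a fixed compact $[h_1,h_2]\subseteq(-\beta,\beta)$ and every $x\in[0,1]$, the argument $h(\tfrac12-x)$ stays in a compact subinterval of $(-\tfrac\beta2,\tfrac\beta2)$ on which $\lmgf$ and all its derivatives are bounded, hence $x\mapsto\lmgf(h(\tfrac12-x))$ is $\sC^2$ with a second derivative bounded independently of $h\in[h_1,h_2]$; the midpoint-rule error bound $\big|\int_0^1\phi-\tfrac1n\sum_k\phi(\tfrac{2k-1}{2n})\big|\le\tfrac{1}{24n^2}\|\phi''\|_\infty$ then gives \eqref{Nantes 4.20}. Differentiating once, the same computation identifies $\cG_n'(h)=\tfrac1n\sum_{k=1}^n(\tfrac12-\tfrac{2k-1}{2n})\lmgf'(h(\tfrac12-\tfrac{2k-1}{2n}))$ as the midpoint approximation of $\cG'(h)=\int_0^1(\tfrac12-x)\lmgf'(h(\tfrac12-x))\,\dd x$, whose integrand is again $\sC^2$ in $x$ with a second derivative bounded uniformly for $h\in[h_1,h_2]$ (using that $\lmgf$ is $\sC^\infty$); therefore $|\cG_N'(h)-\cG'(h)|\le C/N^2$ uniformly on $[h_1,h_2]$.

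Next I would transfer this to the inverse functions. Recall from Lemma~\ref{Nantes Lemma 4.5} and Remark~\ref{htildeqincreasing} that $h_N^q$ and $\htildeq$ solve $\cG_N'(h)=q$ and $\cG'(h)=q$, and that $\cG$ is strictly convex with $\cG''(h)=\int_0^1(\tfrac12-x)^2\lmgf''(h(\tfrac12-x))\,\dd x\ge c_{\lmgf}/12>0$, where $c_{\lmgf}>0$ is the uniform lower bound on $\lmgf''$; this bound on $\cG''$ holds on all of $(-\beta,\beta)$. Since $\tilde h([q_1,q_2])$ is a compact subset of $(0,\beta)$ and $\cG_N'\to\cG'$ uniformly on compacts, a routine argument shows that $h_N^q$ lies in a fixed compact $[h_1,h_2]\subseteq(-\beta,\beta)$ for all $q\in[q_1,q_2]$ once $N$ is large. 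Writing $\cG'(\htildeq)=q=\cG_N'(h_N^q)$ and applying the mean value theorem,
\begin{equation}
	\frac{c_{\lmgf}}{12}\,|h_N^q-\htildeq|\le|\cG'(h_N^q)-\cG'(\htildeq)|=|\cG'(h_N^q)-\cG_N'(h_N^q)|\le\frac{C}{N^2},
\end{equation}
which is \eqref{Nantes approxaux}.

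The main obstacle is not any single estimate but the uniformity bookkeeping: one must rule out that $h_N^q$ drifts towards $\pm\beta$ as $q$ ranges over $[q_1,q_2]$ and $N\to\infty$ (otherwise the bounds on the derivatives of $\lmgf$ and on $\cG''$ would fail to hold with constants independent of $N$ and $q$), and one must check that the midpoint-quadrature constant inherits the same uniformity. Both follow from compactness combined with the uniform convergence $\cG_N'\to\cG'$, but they are where the argument requires the most care; the remainder is the textbook midpoint-rule estimate. (This is, in essence, the content of Propositions~5.1 and 5.4 in \cite{Legrand_2022}.)
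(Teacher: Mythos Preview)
Your proposal is correct. The key observation---that $\cG_N(h)$ is exactly the midpoint quadrature of the integral defining $\cG(h)$, because the tilt parameters $\tfrac12-\tfrac{2k-1}{2N}$ hit the cell midpoints of the uniform partition of $[0,1]$---immediately delivers the $O(1/N^2)$ bound, and the transfer to the inverse functions via the uniform lower bound on $\cG''$ is exactly the right way to conclude.

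The paper does not prove this proposition directly (it is quoted from \cite{Legrand_2022}), but it does prove the parallel statement for $\cH_{N,\delta}$ and $\cH_\delta$ in Appendix~\ref{Nantes Appendix B.11}, explicitly modelled on the argument in \cite{Legrand_2022}. There the route is Euler--MacLaurin: one writes $\cH_{N,\delta}(s)=\tfrac1N\sum_{k=1}^N h_{N,s}(k)$ with $h_{N,s}(x)=\lmgf(\delta-\tfrac\beta2+s\tfrac{x-1/2}{N})$, applies the summation formula to compare with $\int_1^N h_{N,s}$, and then controls separately the two boundary half-cells (terms of size $O(1/N)$ that cancel against $\tfrac12(h_{N,s}(1)+h_{N,s}(N))$) and the second-order remainder $B(N,s)$. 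Your midpoint-rule argument is a genuinely cleaner packaging of the same cancellation: the midpoint rule has no boundary terms and its $\tfrac{1}{24N^2}\|\phi''\|_\infty$ error already encodes what Euler--MacLaurin produces after the dust settles. The inverse-function step (mean value theorem plus the uniform positive lower bound on the second derivative) is identical in both approaches; your handling of the uniformity issue---pinning $h_N^q$ to a fixed compact via the uniform convergence $\cG_N'\to\cG'$ and the monotonicity of $\cG_N'$---matches Remark~\ref{Nantes Remark 9.8} in the paper's $\cH$-version.
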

	\begin{remark}[Time-reversal property]
		\label{rmk:time-rev-prop}
		If $|h|<\beta/2$ and $Z$ is distributed as $\ProbaTilteeSansParenthese{h}$, as in~\eqref{Nantes 4.3}, then one can check that $-Z$ is distributed as $\widetilde{\mathbf{P}}_{-h}$. Recalling~\eqref{eq:calculLambda_n} we note that under $\mathbf{P}_{n, h}$, the increments of $X$, namely  $X_k-X_{k-1}$ for $k\in \{1,\dots,N\}$  are independent and follow respectively the tilted law
		$\ProbaTilteeSansParenthese{\frac{h}{2}\left(1-\frac{2 k-1}{n}\right)}$.
		Therefore, $X$ is time-reversible, i.e.,
		\begin{equation}
			\left(X_{k}\right)_{k=0}^{n} \underset{\text {(law)}}{=}\left(X_{n-k}-X_{n}\right)_{k=0}^{n}
			\label{Nantes 7.32}
		\end{equation}
		We deduce therefrom that the random walk $X$ distributed as $\mathbf{P}_{n, h}$ is an inhomogeneous Markov chain that satisfies for all $j \in\{1, \ldots, n-1\}$ and $y \in \mathbb{Z}$,
		\begin{equation}
			\mathbf{P}_{n, h}\left(\left(X_{j+k}\right)_{k=1}^{n-j-1} \in \cdot\ ,\ X_{n}=0 \mid X_{j}=y\right)=\mathbf{P}_{n, h}\left(\left(X_{n-j-k}\right)_{k=1}^{n-j-1} \in \cdot\ ,\ X_{n-j}=y\right).
		\end{equation}
		Finally, note that the case $h=0$ corresponds to the random walk $X$ with i.i.d. increments of law $\LoiMarchealeatoireSansParenthese$.
	\end{remark}
	\subsection{Analysis of auxiliary functions}
	\label{sec:analysis_aux}
	In this section we analyse the function $\psi$ displayed in~\eqref{deftildepsi} and the function
	\beq
	\label{eq:def_penalite}
	\penalite: a\in (0,\infty)\mapsto a\log \Gamma_\beta+a \psi (\tfrac{1}{a^2},\delta),
	\eeq
	which play a key role in deriving the asymptotic behaviour of the auxiliary partition functions in~\eqref{eq:D_Nqdelta} and ultimately expressing the surface free energy as a variational formula, see~\eqref{eq:gbetadelta-varfor}.
	\par Let us start with the regularity properties of $\psi$. Recalling the two cases in~\eqref{deftildepsi}, we first define, for every $0\le \gd < \gb$:
	\beq
	\label{eq:explicit-qdelta0}
	q_\delta := \inf\Big\{q>0 \colon \gd > \gd_0(q) =  \tfrac{\gb}{2} - \tfrac{\htildeq}{2}\Big\}.
	\eeq
	Two cases arise:
	\begin{enumerate} 
		\item If $0\le \gd<\gb/2$ then, by Remark~\ref{htildeqincreasing}, $q_\delta$ is actually the unique solution in $q>0$ of 
		${\htildeq}/{2}= {\beta}/{2}-\delta$ and, by definition of $\htildeq$, we have the following explicit expression:
		\begin{equation}\label{eq:explicit-qdelta}
			q_\delta := \int_0^1 (x-\tfrac12) \lmgf'\Big((\tfrac{\beta}{2}-\delta)(2x-1)\Big)\dd x.
		\end{equation}
		Note that the factor $x-1/2$ in~\eqref{eq:explicit-qdelta} may be replaced by $x$, since $\lmgf'$ is odd. Moreover, the cases $\gd \le \gd_0(q)$ and $\gd > \gd_0(q)$ in the expression of $\psi$, see~\eqref{deftildepsi}, correspond to $q\le q_\gd$ and $q> q_\gd$, respectively.
		\item If $\gb/2 \le \gd < \gb$ then the inequality in~\eqref{eq:explicit-qdelta0} is always true, hence $q_\gd =0$ and, from~\eqref{deftildepsi}, $\psi(q,\delta) = \cH_\delta(\sqdelta)-q \sqdelta$ for every $q>0$. 
	\end{enumerate}
	\begin{lemma}\label{regulpsitilde}
		Let $0\le \gd < \gb$. The mapping $q\mapsto \psi(q,\delta) $ is $\sC^1$ on $(0,\infty)$. Moreover, it is $\sC^2$ on $(0,q_\delta)\cup (q_\delta,\infty)$ if $0<\gd< \beta/2$ (in which case $q_\gd>0)$ and it is $\sC^2$ on $(0,\infty)$ if $\beta/2 \le \gd < \beta$ (in which case $q_\gd=0)$.
	\end{lemma}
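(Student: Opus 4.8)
The plan is to prove the regularity of $q\mapsto\psi(q,\delta)$ by treating the two branches in~\eqref{deftildepsi} separately, establishing smoothness on each piece, and then checking that the two pieces glue in a $\sC^1$ fashion at $q=q_\delta$ when $0<\delta<\beta/2$ (the only case where the transition between branches occurs at a positive value of $q$).

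First I would handle the branch $\psi(q,\delta)=-q\,\tilde h(q)+\cG(\tilde h(q))$, which is relevant for $q\le q_\delta$ (and for all $q>0$ when $q_\delta=0$, i.e.\ $\delta\geq\beta/2$ — though in that case this branch is empty and there is nothing to do, so really this is the case $0\le\delta<\beta/2$ on $(0,q_\delta)$, plus the borderline observation that $\psi(\cdot,\delta)=\psi(\cdot,0)$ there). By Remark~\ref{htildeqincreasing}, $q\mapsto\tilde h(q)$ is the inverse of $\cG'$ on the relevant range, and by Lemma~\ref{Nantes Lemma 4.5}(2) the map $\cG'$ is a $\sC^1$-diffeomorphism from $(-\beta,\beta)$ to $\R$; moreover $\cG$ is $\sC^2$ and strictly convex. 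Since $\lmgf$ is $\sC^\infty$, in fact $\cG$ is $\sC^\infty$ on $(-\beta,\beta)$ (differentiating under the integral in~\eqref{Nantes 4.14}), so $\tilde h$ is $\sC^\infty$ on $(0,\infty)$ as well, being the inverse of a $\sC^\infty$ function with nonvanishing derivative. Then the envelope-type identity $\frac{\dd}{\dd q}\big[-q\tilde h(q)+\cG(\tilde h(q))\big]=-\tilde h(q)-q\tilde h'(q)+\cG'(\tilde h(q))\tilde h'(q)=-\tilde h(q)$, using $\cG'(\tilde h(q))=q$, shows this branch is $\sC^\infty$ with derivative $-\tilde h(q)$.

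Next I would treat the branch $\psi(q,\delta)=-q\,s_\delta(q)+\cH_\delta(s_\delta(q))$, relevant for $q>q_\delta$. By Lemma~\ref{Nantes Lemma 9.5}(2), $\cH_\delta$ is $\sC^\infty$ and strictly convex on $\cA_\delta=(-\delta,\beta-\delta)$, and $\cH_\delta'$ is a $\sC^1$-diffeomorphism onto $\R$; again since $\lmgf\in\sC^\infty$ one upgrades $\cH_\delta'$ to a $\sC^\infty$-diffeomorphism, so $s_\delta(\cdot)$, defined by $\cH_\delta'(s_\delta(q))=q$ in~\eqref{Nantes 3.10}, is $\sC^\infty$ on $(0,\infty)$. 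The same envelope computation gives $\frac{\dd}{\dd q}\big[-q s_\delta(q)+\cH_\delta(s_\delta(q))\big]=-s_\delta(q)$, so this branch is $\sC^\infty$ with derivative $-s_\delta(q)$.

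The remaining point — and this is the one that requires genuine care — is the matching at $q=q_\delta$ in the case $0<\delta<\beta/2$. The function is continuous there by direct substitution, since at $q=q_\delta$ we have $\delta=\delta_0(q_\delta)=\beta/2-\tilde h(q_\delta)/2$, which forces $s_\delta(q_\delta)=\tilde h(q_\delta)$: indeed plugging $s=\tilde h(q_\delta)$ into $\cH_\delta(s)=\int_0^1\lmgf(sx+\delta-\beta/2)\dd x$ and using $\delta-\beta/2=-\tilde h(q_\delta)/2$ turns the integrand into $\lmgf(\tilde h(q_\delta)(x-1/2))$, which is exactly the integrand of $\cG(\tilde h(q_\delta))$, and differentiating this identity in the parameter shows $\cH_\delta'(\tilde h(q_\delta))=\cG'(\tilde h(q_\delta))=q_\delta$, hence $s_\delta(q_\delta)=\tilde h(q_\delta)$ by uniqueness. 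Consequently the left and right derivatives of $\psi(\cdot,\delta)$ at $q_\delta$ are $-\tilde h(q_\delta)$ and $-s_\delta(q_\delta)=-\tilde h(q_\delta)$, which agree, and since both one-sided derivatives extend continuously up to $q_\delta$ from their respective sides, $\psi(\cdot,\delta)$ is $\sC^1$ at $q_\delta$. The $\sC^2$ regularity fails in general at $q_\delta$ precisely because the second derivatives $-\tilde h'(q_\delta)$ and $-s_\delta'(q_\delta)$, equal respectively to $-1/\cG''(\tilde h(q_\delta))$ and $-1/\cH_\delta''(\tilde h(q_\delta))$, need not coincide; away from $q_\delta$ each branch is $\sC^\infty$ hence in particular $\sC^2$, which gives the stated $\sC^2$ conclusion on $(0,q_\delta)\cup(q_\delta,\infty)$, and the full $\sC^2$ statement on $(0,\infty)$ when $\delta\geq\beta/2$ since then only the second branch is active. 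The main obstacle is thus the gluing argument: identifying $s_\delta(q_\delta)=\tilde h(q_\delta)$ and verifying the matching of first derivatives while noting the generic mismatch of second derivatives.
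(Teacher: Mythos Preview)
Your approach is essentially the same as the paper's: compute the branch derivatives $-\tilde h(q)$ and $-s_\delta(q)$ via the envelope identity, then verify $s_\delta(q_\delta)=\tilde h(q_\delta)$ for the $\sC^1$ matching at $q_\delta$, noting the second derivatives $-\tilde h'(q)$ and $-s_\delta'(q)$ need not agree there. The only imprecision is the phrase ``differentiating this identity in the parameter'': the equality $\cH_\delta'(\tilde h(q_\delta))=\cG'(\tilde h(q_\delta))$ does not follow by differentiating the \emph{pointwise} identity $\cH_\delta(\tilde h(q_\delta))=\cG(\tilde h(q_\delta))$, but rather by computing $\cH_\delta'(\tilde h(q_\delta))=\int_0^1 x\,\lmgf'\!\big(\tilde h(q_\delta)(x-\tfrac12)\big)\dd x$ directly and using that $\lmgf'$ is odd (so the missing factor $-\tfrac12$ integrates to zero), which is exactly how the paper proceeds via~\eqref{eq:explicit-qdelta}.
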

	\begin{proof}[Proof of Lemma~\ref{regulpsitilde}] 
		By~\eqref{eq:delta0q} and~\eqref{deftildepsi},
		\begin{equation}\label{exprepsitilde}
			\psi(q,\delta)=\ind_{\{\frac{\htildeq}{2}\leq \frac{\beta}{2}-\delta\}}\ \big(\cG(\htildeq)-q \htildeq\big)+
			\ind_{\{\frac{\htildeq}{2}> \frac{\beta}{2}-\delta\}}\ \big(\cH_\delta(\sqdelta)-q \sqdelta \big).
		\end{equation}
		Since, by Lemmas~\ref{Nantes Lemma 4.5} and~\ref{Nantes Lemma 9.5}, both functions $q\mapsto \cG(\htildeq)-q \htildeq$ and $q\mapsto \cH_\delta(\sqdelta)-q \sqdelta$ are $\sC^2$ on $(0,\infty)$, it is sufficient to 
		show that the first derivatives coincide at $q_\delta$. Indeed, we compute, when $q\in(0, q_\gd)$,
		\beq\label{derivpar}
		(\partial_q\psi)(q,\delta)=\tilde h'(q) \cG'(\htildeq)-q \tilde h'(q)-\htildeq= -\htildeq,
		\eeq
		and when $q\in(q_{\gd},\infty)$,
		\beq
		(\partial_q\psi)(q,\delta)= \partial_q (\sqdelta) (\cH_\delta)'(\sqdelta) -q \partial_q (\sqdelta)- \sqdelta
		=-\sqdelta.
		\eeq
		We may now check that the first derivatives coincide at $q_\delta$. Indeed, one can verify that $\cH'_\gd(\tilde h(q_\gd)) = \cH'_\gd(\gb-2\gd) = q_\gd$, by~\eqref{eq:explicit-qdelta}. As for the second derivatives, we obtain
		\beq
		\label{derivparordre2}
		(\partial^2_q\psi)(q,\delta)=
		\begin{cases}
			\tilde h'(q) & q \in(0,q_\gd),\\
			s_\gd'(q) & q\in(q_\gd,\infty).
		\end{cases}
		\eeq
	\end{proof}
	Let us now turn to the properties of the function $\penalite$ defined in~\eqref{eq:def_penalite}. In the rest of the paper we often jump from one dummy variable $a\in(0,\infty)$ to another dummy variable $q\in(0,\infty)$ via the relation $q=1/a^2$. First, we focus on the concavity of the function. To this end, we define:
	\beq
	\label{eq:def_qstar}
	\qstar := \inf\{q>0\colon \gd - \gb/2 + s_\gd(q)\ge 0\}
	\eeq
	and notice that
	\beq
	\cH'_\gd(\gb/2 - \gd) = \int_0^1 x \lmgf'((\gd - \gb/2)(1-x))\dd x,
	\eeq
	which implies, since $\lmgf$ is odd, that
	\beq
	\sign(\cH'_\gd(\gb/2 - \gd)) = \sign(\gd - \gb/2).
	\label{Nantes 4.54}
	\eeq
	We may now distinguish between two cases:
	\begin{enumerate}
		\item If $0<\gd \le \gb/2$ then $\cH_\gd'(\gb/2- \gd)\le 0$, hence $s_\gd(0)\ge \gb/2 -\gd$, by Lemma~\ref{Nantes Lemma 9.5}. Since $s_\gd$ is increasing, $\gd - \gb/2 + s_\gd(q)\ge 0$ for every $q>0$, and $\qstar = 0$.
		\item If $\gb/2 < \gd< \gb$ then $\cH_\gd'(\gb/2- \gd)>0$, hence $s_\gd(0)< \gb/2 -\gd$. Therefore, $q_\gd^*$ is the only solution in $q>0$ of the equation $\gd-\gb/2+s_\gd(q)=0$.
	\end{enumerate}
	\begin{lemma}[Concavity/Convexity]
		\label{concavity}
		For every $\beta>\beta_c$ and $\delta<\beta$, the function 
		$\penalite$ is $\sC^1$ on $(0,\infty)$. It is $\sC^2$ on $(0,1/\sqrt{q_\delta})\cup (1/\sqrt{q_\delta},\infty)$ if $0<\gd< \beta/2$ and it is $\sC^2$ on $(0,\infty)$ if $\beta/2 \le \gd < \beta$. If $\gd \le \gb/2$ (in which case $\qstar = 0$) it is
		strictly concave on $(0,\infty)$. If $\gb/2<\gd <\gb$ (in which case $\qstar>0$) then it is strictly concave on $(0, 1/\sqrt{\qstar})$ and strictly convex on $(1/\sqrt{\qstar}, \infty)$. 
	\end{lemma}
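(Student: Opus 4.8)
The plan is to express $\penalite$ and its derivatives in closed form using the computations already performed in the proof of Lemma~\ref{regulpsitilde}, and then reduce the concavity/convexity question to the sign of a single scalar quantity. Writing $q=1/a^2$, we have $\penalite(a) = a\log\Gamma_\beta + a\,\psi(1/a^2,\delta)$. Since $a\mapsto 1/a^2$ is a $\sC^\infty$ diffeomorphism of $(0,\infty)$ onto itself, the $\sC^1$ regularity on $(0,\infty)$ and the $\sC^2$ regularity away from the point $a=1/\sqrt{q_\delta}$ (respectively, everywhere when $q_\delta=0$) follow immediately from Lemma~\ref{regulpsitilde}. First I would record the first two derivatives. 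Using the chain rule and $(\partial_q\psi)(q,\delta)=-\htildeq$ for $q<q_\delta$ and $=-s_\gd(q)$ for $q>q_\delta$ (from~\eqref{derivpar} and the line after it), together with $(\partial_q^2\psi)(q,\delta)$ given in~\eqref{derivparordre2}, one gets after a short computation
\begin{equation}
\penalite'(a) = \log\Gamma_\beta + \psi(\tfrac{1}{a^2},\delta) + \tfrac{2}{a^2}\,(\partial_q\psi)(\tfrac{1}{a^2},\delta),
\end{equation}
and the second derivative will, after collecting terms, be of the form
\begin{equation}
\penalite''(a) = \frac{c}{a^5}\,(\partial_q^2\psi)(\tfrac{1}{a^2},\delta)
\end{equation}
for an explicit positive constant $c$ — the key point being that the first-order terms in $\psi$ cancel, so that the sign of $\penalite''$ is governed entirely by the sign of $(\partial_q^2\psi)(\cdot,\delta)$. (I would double-check the precise constant and the cancellation; this is the one routine calculation to carry out carefully.)

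Next I would read off the sign of $(\partial_q^2\psi)(q,\delta)$ from~\eqref{derivparordre2}. For $q<q_\delta$ it equals $\tilde h'(q)$, which is positive because $q\mapsto\tilde h(q)$ is increasing (Remark~\ref{htildeqincreasing}); hence $\penalite$ is strictly convex on the corresponding $a$-interval. Wait — that is convexity, not concavity, so the correct reading must be that for $q<q_\delta$ one is in the regime where $\psi(q,\delta)=\psi(q,0)$ and $\penalite$ is strictly concave; I therefore need to be attentive to the sign convention, i.e.\ whether the relevant quantity entering $\penalite''$ is $+(\partial_q^2\psi)$ or $-(\partial_q^2\psi)$ once all the $1/a^2$ substitutions are unwound. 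The clean way to settle this is to note that the claimed statement must be consistent with the already-known facts: the model without a wall has $g(\beta,0)=\max\{\penalite(a)\colon a>0\}$ attained at a unique $a_\beta$, which forces $T_0$ (i.e.\ $\delta=0$, hence $q_\delta>0$ but $q^*_\delta=0$) to be strictly concave on all of $(0,\infty)$; so for $0<\delta\le\beta/2$ the same must hold, and the sign in $\penalite''=\tfrac{c}{a^5}\cdot(\pm\partial_q^2\psi)$ is pinned down by requiring $\pm\tilde h'(q)<0$, i.e.\ the minus sign, when $q<q_\delta$, and similarly $\pm s_\gd'(q)<0$ when $q>q_\delta$. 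For $q>q_\delta$ the relevant derivative is $s_\gd'(q)=\cH_\delta''(s_\gd(q))/1>0$ wherever $\cH_\delta$ is strictly convex — but $\cH_\delta$ is strictly convex on all of $\cA_\delta$ by Lemma~\ref{Nantes Lemma 9.5}(2), so $s_\gd'(q)>0$ for every $q>q_\delta$, and with the minus sign $\penalite$ is strictly concave there too. Thus when $q^*_\delta=0$ (the case $\delta\le\beta/2$, by the dichotomy stated just before the lemma), $\penalite$ is strictly concave on all of $(0,\infty)$.

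It remains to handle $\beta/2<\delta<\beta$, where $q_\delta=0$ (second case in the dichotomy before Lemma~\ref{regulpsitilde}) so $\psi(q,\delta)=\cH_\delta(s_\gd(q))-q s_\gd(q)$ for all $q>0$, and where $q^*_\delta>0$ is the unique solution of $\delta-\beta/2+s_\gd(q)=0$. Here $(\partial_q^2\psi)(q,\delta)=s_\gd'(q)>0$ for all $q>0$, so the naive reading would give strict concavity everywhere — contradicting the claimed convexity on $(1/\sqrt{q^*_\delta},\infty)$. The resolution, and the \emph{main obstacle}, is that the expression~\eqref{derivparordre2} for $(\partial_q^2\psi)$ is not the whole story for $\penalite''$: there is a boundary/regularity subtlety at $s_\gd(q)=\beta/2-\delta$, and more importantly the reduction $\penalite''=\tfrac{c}{a^5}(\pm\partial_q^2\psi)$ I proposed above must be re-examined — in fact the correct identity should involve an extra term, something like $\penalite''(a)\propto -q^{5/2}\big(2(\partial_q\psi)+4q(\partial_q^2\psi)\big)$ evaluated at $q=1/a^2$, whose sign is \emph{not} determined by $(\partial_q^2\psi)$ alone. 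Concretely, with $(\partial_q\psi)=-s_\gd(q)$ and $(\partial_q^2\psi)=s_\gd'(q)$, the bracket becomes $2s_\gd(q)-4q\,s_\gd'(q)$, and I would show this changes sign exactly once, at $q=q^*_\delta$: indeed at that point $s_\gd(q^*_\delta)=\beta/2-\delta$ and one can check, using $q^*_\delta=\cH_\delta'(\beta/2-\delta)$ and the explicit integral formula $\cH_\delta'(\beta/2-\delta)=\int_0^1 x\lmgf'((\delta-\beta/2)(1-x))\,dx$ (displayed right after~\eqref{eq:def_qstar}), that $2s_\gd(q^*_\delta)-4q^*_\delta s_\gd'(q^*_\delta)=0$ and that the derivative of $q\mapsto 2s_\gd(q)-4q s_\gd'(q)$ is negative at $q^*_\delta$ (this uses $s_\gd''<0$, i.e.\ concavity of the \emph{inverse} of $\cH_\delta'$, which follows from convexity of $\cH_\delta'$, itself a consequence of $\lmgf$ having a convex derivative on one side — a sign analysis of $\lmgf'''$). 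Hence the bracket is positive on $(0,q^*_\delta)$ and negative on $(q^*_\delta,\infty)$, giving, after the $q=1/a^2$ substitution (which reverses orientation), strict concavity of $\penalite$ on $(0,1/\sqrt{q^*_\delta})$ and strict convexity on $(1/\sqrt{q^*_\delta},\infty)$. I expect the delicate point to be pinning down the exact form of $\penalite''$ as a functional of $\partial_q\psi$ and $\partial_q^2\psi$ (the cancellation of first-order terms fails here, unlike in the $\delta\le\beta/2$ case, precisely because $\psi(q,\delta)\ne\psi(q,0)$), and then the monotonicity of the sign of that functional, for which the convexity/concavity properties of $s_\gd$ inherited from $\cH_\delta$ via Lemma~\ref{Nantes Lemma 9.5} are the essential input.
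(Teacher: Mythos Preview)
Your overall plan---compute $\penalite''$ by pushing the chain rule through $q=1/a^2$ and then analyze the sign of a single bracket---is the right shape, and the regularity part is fine. But there is a genuine gap in the concavity/convexity analysis, plus several sign errors that obscure the issue.

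First, the computation. A careful differentiation gives
\[
\penalite'(a)=\log\Gamma_\beta+\psi(\tfrac{1}{a^2},\delta)-\tfrac{2}{a^2}(\partial_q\psi)(\tfrac{1}{a^2},\delta),
\qquad
\penalite''(a)=\tfrac{2}{a^5}\Big[a^2(\partial_q\psi)+2(\partial_q^2\psi)\Big]\Big|_{q=1/a^2},
\]
so the first-order term does \emph{not} cancel in either regime (your initial guess $\penalite''\propto(\partial_q^2\psi)$ is wrong in both cases, not just for $\delta>\beta/2$). With $(\partial_q\psi)=-s_\gd(q)$ and $(\partial_q^2\psi)=-s_\gd'(q)$ (note: differentiating $-s_\gd$ gives $-s_\gd'$, not $+s_\gd'$; \eqref{derivparordre2} carries a sign typo), the bracket is $-(s_\gd(q)+2q\,s_\gd'(q))$, not your $2s_\gd(q)-4q\,s_\gd'(q)$. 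In particular your claim that \emph{your} bracket vanishes at $q^*_\delta$ is false; it is $s_\gd(q)+2q\,s_\gd'(q)$ that vanishes there.

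Second, and more importantly, you are missing the key simplification that makes the sign analysis immediate. Writing $s=s_\gd(q)$ and using $s_\gd'(q)=1/\cH_\delta''(s)$, $q=\cH_\delta'(s)$, one has
\[
s_\gd(q)+2q\,s_\gd'(q)\ \propto\ s\,\cH_\delta''(s)+2\cH_\delta'(s)
=\int_0^1\frac{\dd}{\dd x}\Big[x^2\lmgf'(sx+\delta-\tfrac{\beta}{2})\Big]\dd x
=\lmgf'\big(s+\delta-\tfrac{\beta}{2}\big).
\]
Since $\lmgf'$ is odd and strictly increasing, this shows directly that
\[
\mathrm{sign}\big(\penalite''(1/\sqrt{q})\big)=-\mathrm{sign}\big(\delta-\tfrac{\beta}{2}+s_\gd(q)\big),
\]
and the right-hand side changes sign exactly once, at $q=q^*_\delta$, by the very definition~\eqref{eq:def_qstar} and the monotonicity of $s_\gd$. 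No third derivatives are needed. Your proposed route via $s_\gd''<0$ and a sign analysis of $\lmgf'''$ is both unnecessary and unreliable: $\lmgf'''$ is odd (since $\lmgf$ is even) and changes sign at $0$, so you cannot expect $\cH_\delta'''$ to have a fixed sign on the relevant range, and a local computation of the bracket's derivative at $q^*_\delta$ would in any case not give the global ``exactly one sign change'' you need.

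For the case $\delta\le\beta/2$, your argument is also incomplete: knowing only $s_\gd'(q)>0$ does not give $s_\gd(q)+2q\,s_\gd'(q)>0$. You need in addition $s_\gd(q)\ge0$ for all $q>0$, which follows from Item~(1) in the dichotomy right after~\eqref{Nantes 4.54} (equivalently, from the integration-by-parts identity above, since then $\delta-\beta/2+s_\gd(q)\ge0$).
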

	The proof can be found in Appendix \ref{Nantes Appendix fct 1}
	The next step is to determine the limits of $\penalite(a)$ when $a\to 0$ and $a\to +\infty$. Recall that the parameter $a$ stands for the prescribed horizontal extension ($L= qN^2$ hence $N=a\sqrt{L}$ from \eqref{eq:D_Nqdelta} and~\eqref{defnun}).  This step is important to restrict the set of possible horizontal extensions to a {\it compact} set, see Section~\ref{sec:a-priori-bounds}. To this end, we first notice that 
	for every $|x|<\gb/2$, the map $\gd \mapsto \cH'_\gd(\gb/2 - \gd -x)$ is increasing. Indeed, its derivative writes:
	\beq
	\int_0^1 t(1-t) \lmgf''((\gb/2-\gd-x)t+\gd - \gb/2) \dd t >0,
	\eeq
	which is positive, by strict convexity of $\lmgf$.
	Recall the definitions of $\cC_{\rm{good}}$ and $\bar \gd(\gb)$ in~\eqref{eq:defCgood} and~\eqref{eq:def-bar-gd}.%
	\begin{lemma}[Limits]\label{lem:limits} For every $0< \gd < \gb$,
		$\penalite(a)$ converges to $-\infty$ as $a\to 0$. For every $0< \gd <\bar \gd(\gb)$,
		$\penalite(a)$ converges to $-\infty$ as $a\to +\infty$ and for every $\bar \gd(\gb)< \gd < \gb$ (provided this case is not empty), $\penalite(a)$ converges to $+\infty$ as $a\to +\infty$.
	\end{lemma}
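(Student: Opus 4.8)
The plan is to reduce everything to the one–variable function $q\mapsto\psi(q,\gd)$ via the substitution $q=1/a^2$, which rewrites the target as $\penalite(a)=\tfrac{1}{\sqrt q}\bigl(\log\Gamma_\beta+\psi(q,\gd)\bigr)$. Thus the behaviour of $\penalite$ near $a=0$ is governed by that of $\psi(\cdot,\gd)$ near $+\infty$, and near $a=+\infty$ by that of $\psi(\cdot,\gd)$ near $0^+$. Two elementary analytic facts will carry the load: that $\tilde h(q)\to\beta$ as $q\to\infty$ while $\tilde h(q)\to0$ as $q\to0^+$ (since $\tilde h=(\cG')^{-1}$ on $[0,\beta)$ and $\cG$ is even, see Remark~\ref{htildeqincreasing}, so $\cG(0)=\lmgf(0)=0$), and the integration‑by‑parts identity $\cH_\gd(s)=\lmgf\!\bigl(s+\gd-\tfrac{\beta}{2}\bigr)-s\,\cH_\gd'(s)$ read off directly from~\eqref{Nantes 44.14}.

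For $a\to0$ (i.e.\ $q\to\infty$): since $\gd_0(q)=\tfrac{\beta}{2}-\tfrac{\tilde h(q)}{2}\to0$, for $q$ large we are in the second branch of~\eqref{deftildepsi}, so $\psi(q,\gd)=\cH_\gd(\sqdelta)-q\,\sqdelta$. Setting $M:=\sup_{\cA_\gd}\cH_\gd<\infty$ by Lemma~\ref{Nantes Lemma 9.5}(3), and noting that $\sqdelta=(\cH_\gd')^{-1}(q)\to\beta-\gd>0$ as $q\to\infty$, we get $\psi(q,\gd)\le M-\tfrac12(\beta-\gd)q$ for $q$ large, whence $\penalite(a)\le\tfrac{M+\log\Gamma_\beta}{\sqrt q}-\tfrac12(\beta-\gd)\sqrt q\to-\infty$. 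This gives the first assertion for every $0<\gd<\beta$.

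For $a\to+\infty$ (i.e.\ $q\to0^+$) I would first compute $\ell_\gd:=\lim_{q\to0^+}\psi(q,\gd)$. If $0<\gd<\beta/2$, then $q<q_\gd$ for $q$ small, so $\psi(q,\gd)=\cG(\tilde h(q))-q\,\tilde h(q)\to\cG(0)=0$. If $\beta/2\le\gd<\beta$, then $q_\gd=0$, so $\psi(q,\gd)=\cH_\gd(\sqdelta)-q\,\sqdelta$ for all $q>0$, and letting $s_*$ be the unique zero of $\cH_\gd'$ in $\cA_\gd$ (Lemma~\ref{Nantes Lemma 9.5}(2)), continuity of $(\cH_\gd')^{-1}$ gives $\sqdelta\to s_*$ and hence, by the identity above, $\ell_\gd=\cH_\gd(s_*)=\lmgf\!\bigl(s_*+\gd-\tfrac{\beta}{2}\bigr)$. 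Since $\penalite(a)=\tfrac{1}{\sqrt q}\bigl(\log\Gamma_\beta+\psi(q,\gd)\bigr)$ and the bracket tends to $\log\Gamma_\beta+\ell_\gd$, whenever this limit is nonzero $\penalite(a)$ diverges to $-\infty$ or $+\infty$ according to its sign; so everything comes down to comparing $\ell_\gd$ with $-\log\Gamma_\beta=\lmgf(x_\gb)$, recalling that $\lmgf$ is even, strictly convex, with minimum $0$ at $0$, so that $\lmgf(x_\gb)>0$ and $\lmgf(y)\lessgtr\lmgf(x_\gb)\iff|y|\lessgtr x_\gb$ for $|y|<\beta/2$.

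The crux is this comparison, and it is where $\bar\gd(\gb)$ enters through~\eqref{eq:def-bar-gd}. When $\gd<\beta/2$ we have $\ell_\gd=0<\lmgf(x_\gb)$, so $\penalite(a)\to-\infty$; such $\gd$ do satisfy $\gd<\bar\gd(\gb)$ because $\bar\gd(\gb)\ge\beta/2$, which follows from the monotonicity of $\gd\mapsto\cH_\gd'(\tfrac{\beta}{2}-\gd-x)$ established just above the lemma together with $\cH_{\gb/2}'(-x_\gb)=\int_0^1 x\,\lmgf'(-x_\gb x)\,\dd x<0$. When $\beta/2\le\gd<\beta$, I would place $s_*+\gd-\tfrac{\beta}{2}$ relative to $\pm x_\gb$ using that $\cH_\gd'$ is increasing: on one side, $\cH_\gd'\!\bigl(\tfrac{\beta}{2}-\gd+x_\gb\bigr)>0$ always (its integrand has argument $(\tfrac{\beta}{2}-\gd+x_\gb)x+(\gd-\tfrac{\beta}{2})$, linear in $x$, nonnegative at $x=0$ and equal to $x_\gb>0$ at $x=1$), hence $s_*<\tfrac{\beta}{2}-\gd+x_\gb$, i.e.\ $s_*+\gd-\tfrac{\beta}{2}<x_\gb$; on the other side, by the very definition of $\bar\gd(\gb)$ and the strict $\gd$‑monotonicity, $\cH_\gd'\!\bigl(\tfrac{\beta}{2}-\gd-x_\gb\bigr)$ is $<0$ for $\gd<\bar\gd(\gb)$ and $>0$ for $\gd>\bar\gd(\gb)$. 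In the first case $-x_\gb<s_*+\gd-\tfrac{\beta}{2}<x_\gb$, hence $\ell_\gd<\lmgf(x_\gb)$ and $\penalite(a)\to-\infty$; in the second case $s_*+\gd-\tfrac{\beta}{2}<-x_\gb$, hence $\ell_\gd>\lmgf(x_\gb)$ and $\penalite(a)\to+\infty$. The main obstacle is keeping this last step honest: tracking which branch of $\psi$ is active, justifying continuity of $\sqdelta$ and $\tilde h(q)$ down to $q=0$, and translating the abstract threshold $\bar\gd(\gb)$ into the location of $s_*$ with respect to $\pm x_\gb$ — all the underlying analytic inputs (boundedness of $\cH_\gd$, the diffeomorphism properties of $\cG'$ and $\cH_\gd'$, the evenness and convexity of $\lmgf$ and $\cG$) being already at our disposal.
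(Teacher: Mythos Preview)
Your proof is correct and follows essentially the same route as the paper's. Both approaches reduce the large-$a$ behaviour to the sign of $\log\Gamma_\beta+\lmgf\bigl(s_\gd(0)+\gd-\tfrac{\beta}{2}\bigr)$ and then translate this sign into the threshold condition on $\cH_\gd'(\tfrac{\beta}{2}-\gd-x_\gb)$ that defines $\bar\gd(\gb)$. The only presentational difference is that the paper reaches this quantity as $\lim_{a\to\infty}\penalite'(a)$ (via the formula $\penalite'(1/\sqrt q)=\log\Gamma_\beta+\lmgf(s_\gd(q)+\gd-\tfrac{\beta}{2})$) and then invokes the concavity/convexity structure of Lemma~\ref{concavity} to pass from the sign of the derivative to the divergence of $\penalite$, whereas you observe directly that $\penalite(a)=q^{-1/2}\bigl(\log\Gamma_\beta+\psi(q,\gd)\bigr)$ and compute $\lim_{q\to0^+}\psi(q,\gd)=\lmgf\bigl(s_\gd(0)+\gd-\tfrac{\beta}{2}\bigr)$ using the integration-by-parts identity of Lemma~\ref{lem:technic_IPP}. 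Your version is marginally more self-contained in that it does not need the concavity lemma at this point; the paper's version has the small side benefit that the limit of $\penalite'$ is reused elsewhere (equation~\eqref{Poitiers 11}). Your auxiliary observation that $\cH_\gd'(\tfrac{\beta}{2}-\gd+x_\gb)>0$, giving the upper bound $s_*+\gd-\tfrac{\beta}{2}<x_\gb$, is correct but can be replaced by the stronger fact $s_\gd(0)+\gd-\tfrac{\beta}{2}<0$ for $\gd>\gb/2$ (Lemma~\ref{lem:deltabar-plus-s} in the paper), which the paper has at its disposal anyway.
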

	The proof of this lemma can be found in Appendix~\ref{Nantes Appendix fct 2}.
	As an immediate corollary of Lemma~\ref{concavity} and Lemma~\ref{lem:limits}, we obtain:
	\begin{corollary}
		\label{cor:unique_max_in_Cgood}
		If $(\gb,\gd)\in \cC_{\rm{good}}$ then $\penalite$ admits a unique maximizer on $(0,\infty)$.
	\end{corollary}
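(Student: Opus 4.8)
The plan is to obtain the corollary by a soft one‑variable analysis argument, feeding in Lemmas~\ref{concavity} and~\ref{lem:limits} and following the two regimes $0\le\gd\le\gb/2$ and $\gb/2<\gd\le\bar\gd(\gb)$ that already structure those lemmas. Throughout I work with $a\in(0,\infty)$ and freely use $q=1/a^2$.

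\emph{First regime $0\le\gd\le\gb/2$.} Here $\qstar=0$, so Lemma~\ref{concavity} says that $\penalite$ is strictly concave on the whole of $(0,\infty)$. I would first record that $\bar\gd(\gb)>\gb/2$: this follows from~\eqref{Nantes 4.54} and the strict convexity of $\cH_{\gb/2}$ together with the monotonicity of $\gd\mapsto\cH'_\gd(\gb/2-\gd-x_\gb)$ noted just before Lemma~\ref{lem:limits}. Hence the entire segment $\{0\le\gd\le\gb/2\}$ lies in $\cC_{\rm good}$, and for $\gd>0$ Lemma~\ref{lem:limits} gives $\penalite(a)\to-\infty$ both as $a\to0$ and as $a\to+\infty$ (the degenerate value $\gd=0$ being covered by the wall‑free analysis of~\cite{CNP16}, or by a direct check that the $a\to\infty$ limit of $\penalite$ is $\log\Gamma_\beta<0$ per unit length hence $-\infty$). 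A strictly concave function on an open interval that diverges to $-\infty$ at both endpoints has a unique maximizer, which closes this regime.

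\emph{Second regime $\gb/2<\gd\le\bar\gd(\gb)$} (non‑empty only when $\bar\gd(\gb)>\gb/2$). I would set $a^*:=1/\sqrt{\qstar}$ and use Lemma~\ref{concavity}: $\penalite$ is strictly concave on $(0,a^*)$ and strictly convex on $(a^*,\infty)$, so $\penalite'$ is strictly decreasing on $(0,a^*)$ and strictly increasing on $(a^*,\infty)$. Two observations then pin down the derivative. (i) Since $\penalite$ is $\sC^1$, concave near $0$, and $\penalite(a)\to-\infty$ as $a\to0$, a standard concavity estimate forces $\penalite'(a)\to+\infty$ as $a\to0^+$. (ii) On the convex tail $(a^*,\infty)$, Lemma~\ref{lem:limits} guarantees $\penalite$ does not tend to $+\infty$ (for $\gd<\bar\gd(\gb)$ it tends to $-\infty$); a convex function on a half‑line that does not go to $+\infty$ has a non‑decreasing derivative with non‑positive limit, hence $\penalite'<0$ on all of $(a^*,\infty)$. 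Combining (i), strict monotonicity of $\penalite'$ on $(0,a^*)$, and (ii), the function $\penalite'$ is positive near $0$, strictly decreasing up to $a^*$, and strictly negative afterwards, so it vanishes at exactly one point $\bar a\in(0,a^*)$, which is the unique maximizer of $\penalite$.

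The only genuinely delicate point — the main (minor) obstacle — is the borderline value $\gd=\bar\gd(\gb)$, which belongs to $\cC_{\rm good}$ but is not covered by the limit statements in Lemma~\ref{lem:limits}. Here I would argue by continuity in $\gd$: since $\gd\mapsto\cH'_\gd(\gb/2-\gd-x_\gb)$ is continuous and increasing and vanishes precisely at $\bar\gd(\gb)$, rerunning the computation behind the proof of Lemma~\ref{lem:limits} shows that $\penalite(a)$ converges to a \emph{finite} limit as $a\to+\infty$, and a finite limit is all that step (ii) above requires. Since the only way uniqueness could fail is $\penalite(a)\to+\infty$, and by Lemma~\ref{lem:limits} this occurs exactly for $\gd>\bar\gd(\gb)$ — i.e.\ outside $\cC_{\rm good}$ — the corollary covers precisely the good region, and everything else in the argument is routine convexity bookkeeping.
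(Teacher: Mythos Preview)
Your argument is correct and follows the same route the paper intends: the paper states the corollary as an ``immediate'' consequence of Lemma~\ref{concavity} and Lemma~\ref{lem:limits} without giving any details, and what you have written is precisely a careful fleshing-out of that implication via the natural case split $\gd\le\gb/2$ versus $\gb/2<\gd\le\bar\gd(\gb)$. Your treatment of the borderline values $\gd=0$ and $\gd=\bar\gd(\gb)$ (neither of which is literally covered by Lemma~\ref{lem:limits} as stated) is a welcome addition; for the latter, note that all you actually need is $\lim_{a\to\infty}\penalite'(a)\le 0$, which follows directly from \eqref{Poitiers 11} and the definition of $\bar\gd(\gb)$, so you can skip the finer claim about a finite limit of $\penalite$ itself.
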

	In view of Lemma~\ref{lem:limits}, a natural question is to determine for which values of $\beta$ we have $\bar \gd(\gb)<\gb$, respectively $\bar \gd(\gb)=\gb$. This is the content of the following lemma.
	\begin{lemma}
		\label{lem:bar-gd}
		If $\gb>\gb_c$ is close enough to $\gb_c$ then $\gb/2 \le \bar \gd(\gb) < \gb$. 
		However, there exists $\gb_* > \gb_c$ such that $\bar \gd(\gb)=\gb$ for every $\gb\ge\gb_*$.
	\end{lemma}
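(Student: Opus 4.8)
The plan is to reduce the whole lemma to tracking the sign, as $\gb$ varies over $(\gb_c,\infty)$, of a single explicit integral obtained by pushing $\gd$ to the top of its range. To set this up, write $G_\gb(\gd):=\cH'_\gd(\gb/2-\gd-x_\gb)$, which is well defined for $\gd\in(0,\gb)$ because $\gb/2-\gd-x_\gb\in\cA_\gd=(-\gd,\gb-\gd)$. For $\gd\le\gb/2$ the argument of $\lmgf'$ in $G_\gb(\gd)=\int_0^1 x\,\lmgf'\big((\gb/2-\gd)(x-1)-x_\gb x\big)\dd x$ is $\le0$ on $[0,1]$ and $<0$ on $(0,1]$; since $\lmgf'$ is odd, increasing and vanishes at $0$, this forces $G_\gb(\gd)<0$, hence $\{\gd\in(0,\gb)\colon G_\gb(\gd)>0\}\subseteq[\gb/2,\gb)$ and $\bar\gd(\gb)\ge\gb/2$ by \eqref{eq:def-bar-gd}. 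Moreover $\gd\mapsto G_\gb(\gd)$ is continuous and strictly increasing on $(0,\gb)$ — this is precisely the monotonicity computed just above Lemma~\ref{lem:limits}, applied with $x=x_\gb\in(0,\gb/2)$ — so, setting $\Phi(\gb):=\lim_{\gd\uparrow\gb}G_\gb(\gd)$, the intermediate value theorem yields $\bar\gd(\gb)<\gb\iff\Phi(\gb)>0$ and $\bar\gd(\gb)=\gb\iff\Phi(\gb)\le0$.

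The heart of the argument is an explicit formula for $\Phi(\gb)$. Passing to the limit $\gd\uparrow\gb$ under the integral sign — legitimate by dominated convergence, since for $\gd$ close to $\gb$ the integrand is positive and bounded by a constant independent of $\gd$ and $x$, the prefactor $x$ cancelling the $1/(\gb/2-h)$-type blow-up of $\lmgf'(h)$ as $h\to\gb/2$ — gives $\Phi(\gb)=\int_0^1 x\,\lmgf'\big(\gb/2-(\gb/2+x_\gb)x\big)\dd x$. The substitution $y=\gb/2-(\gb/2+x_\gb)x$ turns this into $(\gb/2+x_\gb)^{-2}\int_{-x_\gb}^{\gb/2}(\gb/2-y)\lmgf'(y)\dd y$, and one integration by parts — in which the boundary term $[(\gb/2-y)\lmgf(y)]_{-x_\gb}^{\gb/2}$ reduces to $(\gb/2+x_\gb)\log\Gamma_\gb$, the endpoint $y=\gb/2$ contributing $0$ because $(\gb/2-y)\lmgf(y)\to0$ while $\lmgf(-x_\gb)=\lmgf(x_\gb)=-\log\Gamma_\gb$ by evenness of $\lmgf$ — yields
\beq
\Phi(\gb)=\frac{1}{(\gb/2+x_\gb)^2}\Big[(\gb/2+x_\gb)\log\Gamma_\gb+\int_{-x_\gb}^{\gb/2}\lmgf(y)\,\dd y\Big],
\eeq
all the integrals converging thanks to the $-\log(\gb/2-|y|)$-behaviour of $\lmgf$ at $\pm\gb/2$. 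In particular $\bar\gd(\gb)<\gb$ is equivalent to $\int_{-x_\gb}^{\gb/2}\lmgf(y)\dd y>(\gb/2+x_\gb)|\log\Gamma_\gb|$.

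It then remains to read off the two regimes from this formula, using that $\gb\mapsto\Phi(\gb)$ is continuous on $(\gb_c,\infty)$. As $\gb\downarrow\gb_c$ one has $\log\Gamma_\gb\to\log\Gamma_{\gb_c}=0$ and $x_\gb\to0$ (the solution of $\lmgf_\gb(x_\gb)=-\log\Gamma_\gb$ converging to that of $\lmgf_{\gb_c}(\cdot)=0$), so $\Phi$ extends continuously to $\gb_c$ with $\Phi(\gb_c)=(\gb_c/2)^{-2}\int_0^{\gb_c/2}\lmgf_{\gb_c}(y)\dd y>0$; hence $\Phi(\gb)>0$ on a right neighbourhood of $\gb_c$, which together with $\bar\gd(\gb)\ge\gb/2$ proves the first assertion. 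For large $\gb$, \eqref{eq:defGammaBeta} gives $\log\Gamma_\gb=-\gb+o(1)$, whence $(\gb/2+x_\gb)\log\Gamma_\gb\le(\gb/2)\log\Gamma_\gb\le-\gb^2/4$ eventually, while the crude bound $\lmgf(y)\le C_0-\log(1-e^{-(\gb/2-|y|)})$ on $(-\gb/2,\gb/2)$ gives $\int_{-x_\gb}^{\gb/2}\lmgf(y)\dd y\le C_0\gb+2\int_0^\infty-\log(1-e^{-u})\dd u=C_0\gb+\pi^2/3=o(\gb^2)$; so the bracket is eventually negative, $\Phi(\gb)<0$, and $\bar\gd(\gb)=\gb$. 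Setting $\gb_*:=\sup\{\gb>\gb_c\colon\bar\gd(\gb)<\gb\}$, which is finite by the previous line, and invoking continuity of $\Phi$ at $\gb_*$, we obtain $\bar\gd(\gb)=\gb$ for every $\gb\ge\gb_*$; a sharper version of these estimates (using in addition $\gb/2-x_\gb=O(e^{-\gb})$, so $\gb/2+x_\gb=\gb-O(e^{-\gb})$) is what produces the quantitative bound $\gb_*<\pi/\sqrt3$ recorded in the caption of Figure~\ref{fig:ipdsaw}. I expect the delicate step to be the limit $\gd\uparrow\gb$ in the middle paragraph: there $\cH_\gd$ degenerates onto the boundary of its domain of definition, and both the interchange of limit and integral and the integration by parts must be handled in the presence of the logarithmic singularity of $\lmgf$ at $\pm\gb/2$; everything else is soft.
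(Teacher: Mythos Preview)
Your proof is correct and takes a genuinely different route from the paper. Both arguments start from the same observation that $\bar\gd(\gb)\ge\gb/2$ and that the defining condition $\cH'_\gd(\gb/2-\gd-x_\gb)>0$ is monotone in $\gd$. From there the paper works instead with the equivalent condition $\lim_{a\to\infty}\penalite'(a)=\log\Gamma_\gb+\cH_\gd(s_\gd(0))>0$ (see~\eqref{Poitiers 11}): near $\gb_c$ it lower-bounds $\cH_\gb(s_\gb(0))$ by $\tfrac1\gb\int_0^{\gb/2}\lmgf$, which stays positive while $\log\Gamma_\gb\to0$; for large $\gb$ it upper-bounds $\cH_\gd(s_\gd(0))$ \emph{uniformly in $\gd$} by $\int_0^1\lmgf(\gb x/2)\dd x$ using the auxiliary Lemma~\ref{lem:deltabar-plus-s}, and then invokes Lemma~\ref{Nantes Lemma 9.13BIS} to make $\log\Gamma_\gb+\int_0^1\lmgf(\gb x/2)\dd x<0$. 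Your reduction to a single explicit function $\Phi(\gb)$ via the limit $\gd\uparrow\gb$ and an integration by parts is cleaner --- it bypasses both preparatory lemmas --- and the resulting criterion $\Phi(\gb)\lessgtr0$ is sharp rather than sufficient. The price is the care needed at the boundary $\gd=\gb$ (and the continuity of $\Phi$ in $\gb$, which you invoke but do not fully justify; it requires a short uniform-integrability argument near $y=\gb/2$).

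One small slip: in your dominated-convergence step you say the integrand is positive for $\gd$ near $\gb$, but it is not --- the argument $(\gd-\gb/2)(1-x)-x_\gb x$ changes sign on $[0,1]$. This does not matter: writing $\gb/2-\text{argument}\ge x_\gb x$ when the argument is positive, and noting that for negative argument one stays at distance $\ge\gb/2-x_\gb>0$ from the boundary, you get $|x\,\lmgf'(\text{argument})|\le C\max(x_\gb^{-1},(\gb/2-x_\gb)^{-1})$ uniformly in $\gd$ and $x$, which is the dominating constant you need.
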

	The proof of Lemma \ref{lem:bar-gd} can be found in Appendix \ref{Nantes Appendix fct 3}. Numerically, we have $\gb_c \approx 1,219$ and $\gb_* \le \pi/\sqrt{3} \approx 1,814$ (see the proof of Lemma~\ref{Nantes Lemma 9.13BIS}). As a straightforward consequence of Lemma~\ref{lem:bar-gd}, the set $\mathcal{C}_{\text{bad}}:=\cC\setminus \cC_{\text{good}}$ defined in~\eqref{eq:defCgood} is bounded.

	\par Let us now make a few remarks on the maximizer of $q \mapsto \penalite(1/\sqrt{q})$ when $(\gb, \gd)\in \cC_{\rm {good}}$. First, we observe that:
	\begin{lemma}
		\label{lem:technic_IPP}
		For every $q \geq 0$:
		\begin{equation}\label{Nantes 4.29}
			\cH_\delta(\sqdelta) = \lmgf(\sqdelta + \delta - \beta/2) - q \sqdelta ;
		\end{equation}
		\begin{equation} \label{Nantes 4.30}
			\cG(\htildeq) = \lmgf({\htildeq}/{2}) - q\htildeq.
		\end{equation}
	\end{lemma}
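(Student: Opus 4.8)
The plan is to derive both identities by a single integration by parts, using the defining first-order conditions \eqref{Nantes 3.10} and \eqref{Nantes 3.7} of $\sqdelta$ and $\htildeq$, together with the fact (recalled in Section~\ref{sec:COM}) that $\lmgf$ is even, so that $\lmgf'$ is odd.

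For \eqref{Nantes 4.29}, I would start from the definition \eqref{Nantes 44.14} of $\cH_\delta$ and integrate $\int_0^1 \lmgf(sx+\delta-\beta/2)\,\dd x$ by parts, differentiating the factor $\lmgf(sx+\delta-\beta/2)$ and taking $x$ as antiderivative of $\dd x$. The boundary term vanishes at $x=0$ and equals $\lmgf(s+\delta-\beta/2)$ at $x=1$, while the leftover integral is exactly $s\int_0^1 x\,\lmgf'(sx+\delta-\beta/2)\,\dd x=s\,\cH_\delta'(s)$. Specializing to $s=\sqdelta$ and using $\cH_\delta'(\sqdelta)=q$ from \eqref{Nantes 3.10} then gives $\cH_\delta(\sqdelta)=\lmgf(\sqdelta+\delta-\beta/2)-q\,\sqdelta$, which is \eqref{Nantes 4.29}.

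For \eqref{Nantes 4.30}, I would first symmetrize $\cG(h)=\int_0^1 \lmgf\big(h(\tfrac12-x)\big)\,\dd x$ through the change of variable $y=\tfrac12-x$, which turns it into $\int_{-1/2}^{1/2}\lmgf(hy)\,\dd y$. Integrating this by parts (differentiating $\lmgf(hy)$, antiderivative $y$ of $\dd y$), the two boundary contributions $\tfrac12\lmgf(h/2)$ and $\tfrac12\lmgf(-h/2)$ add up to $\lmgf(h/2)$ because $\lmgf$ is even, and the leftover term is $h\int_{-1/2}^{1/2}y\,\lmgf'(hy)\,\dd y$. Undoing the substitution and using that $\lmgf'$ is odd, this last integral matches $\cG'(h)$ as given in \eqref{Nantes 3.7}, so $\cG(h)=\lmgf(h/2)-h\,\cG'(h)$; evaluating at $h=\htildeq$ with $\cG'(\htildeq)=q$ yields \eqref{Nantes 4.30}. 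The boundary case $q=0$ requires no modification, and one checks directly that $\tilde h(0)=0$ and $\cG(0)=\lmgf(0)=0$.

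I do not expect any genuine obstacle here; the statement is a pair of routine integration-by-parts identities. The only points that need a little care are choosing the antiderivative of $\dd x$ (respectively $\dd y$) so that the surviving boundary term carries precisely the argument $s+\delta-\beta/2$ (respectively $h/2$) appearing on the right-hand side, and keeping track of the parity of $\lmgf$ and $\lmgf'$ when collecting boundary terms and when identifying the leftover integral with $\cH_\delta'$, respectively $\cG'$.
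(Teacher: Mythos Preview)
Your proposal is correct and follows essentially the same approach as the paper: a direct integration by parts for $\cH_\delta$, and the analogous integration by parts for $\cG$ (the paper simply says ``a similar computation''). Your preliminary substitution $y=\tfrac12-x$ before integrating by parts in the $\cG$ case is a harmless cosmetic variant of what the paper does.
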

	\begin{proof}[Proof of Lemma~\ref{lem:technic_IPP}]
		An integration by part gives:
		\begin{equation}
			\begin{aligned}
				\cH_\delta(s) = \int_0^1 \lmgf(s t+ \delta - \beta/2) \dd t
				&=[t \lmgf(s t+ \delta - \beta/2)]_0^1 - s \int_0^1 t \lmgf'(s t+ \delta - \beta/2) \dd t \\
				&= \lmgf(s + \delta - \beta/2) - s \cH_\delta'(s),
			\end{aligned}
		\end{equation}
		and \eqref{Nantes 4.29} readily follows from \eqref{Nantes 3.10}.
		A similar computation combined with~\eqref{Nantes 3.7} gives \eqref{Nantes 4.30}.
	\end{proof}
	Combining the derivative of $\penalite$ 
	in \eqref{Nantes 4.26} and \eqref{eq:calcul_derivees_T} with Lemma~\ref{lem:technic_IPP}, we get that
	\beq
	\penalite'(1/\sqrt{q}) =
	\begin{cases}
		\log \Gamma_\beta + \lmgf(\sqdelta + \delta - \beta/2) & (q > q_\delta)\\
		\log \Gamma_\beta + \lmgf(\htildeq/2) & (q < q_\delta).
	\end{cases}
	\eeq
	These observations lead to the following:
	\begin{remark}[On the maximizer of $\penalite$]\label{Nantes Remark 4.13}
		If $(\gb,\gd)\in \cC_{\rm {good}}$ then the unique maximizer of $q \mapsto \penalite(1/\sqrt{q})$, that we denote by $\bar q_{\gb,\gd}$, satisfies $\bar q_{\gb,\gd}= \bar q_{\gb,0}$ and $-\log \Gamma_\beta = \lmgf(\tilde h(\bar q_{\gb,0})/2)$ if $(\gb,\gd)\in \cD\cC$ and $-\log \Gamma_\beta = \lmgf(s_\gd(\bar q_{\gb,\gd}) + \delta - \beta/2)$ if $(\gb,\gd)\in \cA\cC$.
	\end{remark}
	\par To close this section, we shortly come back to the function $\psi$ and state a lemma that will be essential for computing the order of the surface transition. Recall that $\gd_0(q) = \frac{\beta}{2}-\frac{\tilde h(q)}{2}$.
	\begin{lemma}\label{inegcentral}
		For $(\gb, \gd)\in \Cc$ and $q>0$ such that $\gd > \gd_0(q)$ it holds that
		$\psi(q,\delta)>\psi(q,0)$. Moreover, as $\gep\to0^+$,
		\begin{equation} \label{Nantes 4.27}
			\psi(q,\gd_0(q) + \varepsilon) - \psi(q,0) \sim C
			\varepsilon^2,
		\end{equation}
		where $C = \frac{\lmgf'(\htildeq/2)( \lmgf'(\htildeq/2) - 4q) }{2\htildeq( \lmgf'(\htildeq/2) - 2q)} > 0$.
	\end{lemma}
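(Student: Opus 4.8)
The plan is to work directly from the two-case expression of $\psi$ in~\eqref{deftildepsi}. Fix $(\gb,\gd)\in\Cc$ and $q>0$ with $\gd>\gd_0(q)$, so that $\psi(q,\gd)=\cH_\gd(\sqdelta)-q\sqdelta$ while $\psi(q,0)=\cG(\htildeq)-q\htildeq$. Using Lemma~\ref{lem:technic_IPP}, rewrite both as $\psi(q,\gd)=\lmgf(\sqdelta+\gd-\gb/2)-2q\sqdelta$ and $\psi(q,0)=\lmgf(\htildeq/2)-2q\htildeq$. The key observation is that each of these is the value at its own argument of a concave function of a single scalar: indeed, for fixed $q$ the map $s\mapsto \cH_\gd(s)-qs$ is (minus a strictly convex function), hence strictly concave, and $\sqdelta$ is precisely its maximizer by~\eqref{Nantes 3.10}; similarly $\htildeq$ maximizes $h\mapsto\cG(h)-qh$ by~\eqref{Nantes 3.7}. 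So to get the strict inequality $\psi(q,\gd)>\psi(q,0)$ it suffices to exhibit \emph{one} admissible value of $s$ at which $\cH_\gd(s)-qs$ already exceeds $\psi(q,0)$. The natural candidate is $s=\htildeq/2-(\gd-\gb/2)$: plugging this into $s+\gd-\gb/2$ gives exactly $\htildeq/2$, so $\cH_\gd(s)-qs$ evaluated here equals $\lmgf(\htildeq/2)-q(\htildeq/2-\gd+\gb/2)$, which differs from $\psi(q,0)=\lmgf(\htildeq/2)-q\htildeq$ by $q(\htildeq/2+\gd-\gb/2)=q(\gd-\gd_0(q))>0$ since $\gd>\gd_0(q)$ and $q>0$. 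One must check this $s$ lies in $\cA_\gd=(-\gd,\gb-\gd)$: it equals $\htildeq/2-\gd+\gb/2\in(\gb/2-\gd,\gb-\gd)$ using $0<\htildeq<\gb$, which is inside $\cA_\gd$. Strictness of the concave maximum (the candidate is not the maximizer unless $\sqdelta=\htildeq/2-(\gd-\gb/2)$, i.e.\ unless $\gd=\gd_0(q)$) then gives $\psi(q,\gd)>\psi(q,0)$.

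For the asymptotic~\eqref{Nantes 4.27}, set $\gd=\gd_0(q)+\gep$ and expand in $\gep$. At $\gep=0$ we have $\gd_0(q)-\gb/2=-\htildeq/2$, and one checks $s_{\gd_0(q)}(q)=\htildeq/2$: indeed $\cH'_{\gd_0(q)}(\htildeq/2)=\int_0^1\lmgf'(\tfrac{\htildeq}{2}x-\tfrac{\htildeq}{2})\,\dd x=\int_0^1\lmgf'(-\tfrac{\htildeq}{2}(1-x))\,\dd x$, and after the change of variable $x\mapsto 1-x$ and using that $\lmgf'$ is odd this equals $\int_0^1 x\,\lmgf'(\tfrac{\htildeq}{2}(2x-1)\cdot\tfrac12\cdots)$—more directly, it matches~\eqref{Nantes 3.7} rewritten via $x\mapsto x-\tfrac12$, so it equals $q$. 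Hence at $\gep=0$ the two branches of $\psi$ agree, consistent with Lemma~\ref{regulpsitilde}. Now use $\psi(q,\gd)=\max_s\{\cH_\gd(s)-qs\}$ with maximizer $\sqdelta$, and Taylor-expand. Write $F(s,\gd):=\cH_\gd(s)-qs=\lmgf(s+\gd-\gb/2)-q(s+s)$... (careful: $\cH_\gd(s)-qs$, not the IPP form) — cleanest is to differentiate the envelope: $\partial_\gd\psi(q,\gd)=\partial_\gd\cH_\gd(s)\big|_{s=\sqdelta}=\int_0^1\lmgf'(\sqdelta x+\gd-\gb/2)\,\dd x=\cH'_\gd(\sqdelta)+$(boundary term)$\cdots$; in fact $\partial_\gd\cH_\gd(s)=\int_0^1\lmgf'(sx+\gd-\gb/2)\dd x$, which at $s=\sqdelta$ is \emph{not} $q$ in general. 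The plan is therefore: compute $\tfrac{d}{d\gep}\psi(q,\gd_0(q)+\gep)\big|_{\gep=0^+}$ and show it vanishes (matching the $\gd\le\gd_0(q)$ branch where $\psi$ is constant in $\gd$), then compute the second derivative at $\gep=0^+$ and identify it with $2C$. Using the IPP identity~\eqref{Nantes 4.29}, $\psi(q,\gd)=\lmgf(\sqdelta+\gd-\gb/2)-2q\sqdelta$; differentiating in $\gd$ and using $\cH'_\gd(\sqdelta)=q$ to kill the $\sqdelta'$ terms should leave $\partial_\gd\psi=\lmgf'(\sqdelta+\gd-\gb/2)\cdot(1-\text{something})$, which at $\gep=0$ becomes $\lmgf'(\htildeq/2)\cdot 0$-type, vanishing; the second derivative then produces the stated rational expression in $\lmgf'(\htildeq/2)$, $q$ and $\htildeq$ after substituting $\partial_q\sqdelta=1/\cH''_\gd(\sqdelta)$ and the relation between $\cH''_{\gd_0(q)}(\htildeq/2)$ and $q$, $\htildeq$, $\lmgf'(\htildeq/2)$.

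The main obstacle is the second-order computation: one must carefully track the $\gep$-dependence of $\sqdelta$ (via $\partial_\gd\sqdelta=-\partial_\gd\cH'_\gd(\sqdelta)/\cH''_\gd(\sqdelta)$, where $\partial_\gd\cH'_\gd(s)=\int_0^1 x\lmgf''(sx+\gd-\gb/2)\dd x$), evaluate everything at the base point $(s,\gd)=(\htildeq/2,\gd_0(q))$, and simplify the resulting combination of integrals of $\lmgf''$ against $x$ and $x^2$ into the closed form $C=\frac{\lmgf'(\htildeq/2)(\lmgf'(\htildeq/2)-4q)}{2\htildeq(\lmgf'(\htildeq/2)-2q)}$. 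Positivity of $C$ should follow because $\psi(q,\gd)>\psi(q,0)$ for $\gd>\gd_0(q)$ forces the leading nonzero coefficient to be positive, so once we know the expansion starts at order $\gep^2$ the sign is automatic; alternatively one verifies $\lmgf'(\htildeq/2)$, $\lmgf'(\htildeq/2)-4q$ and $\lmgf'(\htildeq/2)-2q$ have consistent signs using that $\htildeq\mapsto\lmgf'(\htildeq/2)$ and comparisons with $q$ coming from~\eqref{Nantes 3.7}. I expect the sign bookkeeping in these integral identities to be the only delicate point; the structure of the argument (strict concavity for the inequality, envelope theorem plus Taylor expansion for the asymptotic) is otherwise routine.
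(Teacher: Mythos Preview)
Your approach to the strict inequality $\psi(q,\gd)>\psi(q,0)$ contains a fatal sign error: by Lemma~\ref{lem:prop-Hgd-Hngd} the function $\cH_\gd$ is strictly \emph{convex}, so $s\mapsto\cH_\gd(s)-qs$ is strictly convex and $\sqdelta$ is its \emph{minimizer}, not its maximizer. Exhibiting some $s_0$ with $\cH_\gd(s_0)-qs_0>\psi(q,0)$ therefore says nothing about the minimum $\psi(q,\gd)$. A second error sits on top of this: $\cH_\gd(s)\neq\lmgf(s+\gd-\gb/2)$ for generic $s$; the proof of Lemma~\ref{lem:technic_IPP} gives $\cH_\gd(s)=\lmgf(s+\gd-\gb/2)-s\,\cH_\gd'(s)$, which reduces to~\eqref{Nantes 4.29} only at $s=\sqdelta$. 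So even your evaluation at the candidate $s_0$ is incorrect. The paper does not use any variational trick here; it writes $\psi(q,\gd)-\psi(q,0)=\lmgf(\sqdelta+\gd-\gb/2)-\lmgf(\htildeq/2)-2q(\sqdelta-\htildeq)$ via Lemma~\ref{lem:technic_IPP}, expands $\sqdelta$ as a function of $\gep$, and verifies $C>0$ directly from the inequality $\lmgf'(\htildeq/2)>4q$.

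For the asymptotic~\eqref{Nantes 4.27}, your envelope-theorem plan is essentially the paper's computation in different clothing, but your base point is wrong: at $\gep=0$ one has $s_{\gd_0(q)}(q)=\htildeq$, not $\htildeq/2$. This is because $\cH'_\gd(s)=\int_0^1 x\,\lmgf'(sx+\gd-\gb/2)\,\dd x$ (you dropped the factor $x$), and then $\cH'_{\gd_0(q)}(\htildeq)=\int_0^1 x\,\lmgf'(\htildeq(x-\tfrac12))\,\dd x=q$ by~\eqref{Nantes 3.7}. With the correct base point the envelope computation does go through: $\partial_\gd\psi|_{\gep=0}=\int_0^1\lmgf'(\htildeq(x-\tfrac12))\,\dd x=0$ by oddness of $\lmgf'$, and the second derivative recovers $2C$. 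The paper organizes the same calculation as an expansion $\sqdelta=\htildeq-a\gep-b\gep^2+o(\gep^2)$ with $a,b$ extracted from the constraint $\cH'_\gd(\sqdelta)=q$. Finally, your proposed shortcut for $C>0$ (inferring the sign of the leading coefficient from the strict inequality) is circular, since your proof of the strict inequality has collapsed.
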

	The detailed proof is deferred to Appendix \ref{Nantes Appendix C.1}.
	\subsection{Sharp asymptotics of auxiliary partition functions}\label{secsharpaux}
	In Proposition~\ref{asympteach3reg} below, we provide sharp asymptotics for the auxiliary partition function introduced in Section~\ref{eq:D_Nqdelta}, in each of the three (desorbed, critical and adsorbed) regimes lying in the collapsed phase. Its proof is postponed to Section~\ref{compid}. Beforehand, we define $\vartheta : (-\beta/2,\beta/2) \rightarrow \R$ as:
	\begin{equation}
		\vartheta(h)= \int_0^1 x^2 \lmgf''\Big(h\Big(x-\frac{1}{2}\Big)\Big)\dd x 
		\int_0^1  \lmgf''\Big(h\Big(x-\frac{1}{2}\Big)\Big)\dd x -
		\left[ 
		\int_0^1 x \lmgf''\Big(h\Big(x-\frac{1}{2}\Big)\Big)\dd x
		\right]^2\, .
		\label{Nantes 7.6}
	\end{equation}
	We also recall the definitions of $\kappa^x(h)$ in \eqref{defkappa}, $\psi$ in \eqref{deftildepsi}, $\gd_0(q)$ in~\eqref{eq:delta0q}, and $q^*_\gd$ in \eqref{eq:def_qstar}.
	\begin{proposition}
		\label{asympteach3reg}
		Let $\beta > \beta_c$ and $0<q_1<q_2<\infty$.%
		\begin{enumerate}
			\item For $\delta<\min\{ \gd_0(q)\colon\;  q \in [q_1,q_2]\}$,
			\begin{equation}
				D_{N}(q,\delta) = \frac{C_{\beta,q,\delta}}{N^2}e^{N {\psi}(q,0)}(1+o(1)),
				\label{Nantes 7.1}
			\end{equation}
			where $o(1)$ is uniform in $q\in (q_1,q_2)$, and
			\begin{equation}
				C_{\beta,q,\delta} =  \frac{\kappa^0\big(  \tfrac{\tilde{h}(q)}{2} \big)}{2 \pi \vartheta (\tilde{h}(q))^{\frac{1}{2}}} \left( \frac{1}{1-e^{\tilde u}} - \frac{1-\kappa^0\big(  \frac{\tilde{h}(q)}{2} \big)}{1-e^{\tilde{u}-\tilde{h}(q)}} \right),
				\label{Nantes 7.24}
			\end{equation}
			with $\tilde u:=\delta-\gd_0(q)$.
			\label{Nantes Proposition 7.1}
			\item
			For $q \in (q_1,q_2)$ and
			$\delta = \gd_0(q)$, for all $R \geq 0$ and uniformly over $c\in [-R,R]$
			\begin{equation}
				D_{N}(q+\tfrac{c}{\sqrt{N}},\delta)= 
				\frac{C_{\beta,q,c}^{\rm{crit}}}{N^{3/2}} e^{N \psi(q,0)-\tilde h(q) c\sqrt{N}}(1+o(1)), 
				\label{Nantes 7.5}
			\end{equation}
			where $o(1)$ is uniform in $q\in (q_1,q_2)$, and
			\begin{equation}
				C_{\beta,q,c}^{\rm{crit}}=\kappa^0(\htildeq/2) \int_0^\infty f_{\htildeq}(c,z) \dd z,
			\end{equation}
			with $f_{\htildeq}$ defined in \eqref{def_f_h}.
			\label{Proposition Partition function critical curve}
			\item 
			For $\gd>0$ and $[q_1,q_2]\subset(q^*_\gd, +\infty)$ such that $\delta > \delta_0(q_1)$.
			We have the following estimate:
			\begin{equation}
				D_{N}(q,\delta)= 
				\kappa^0\big(\htildeq \big) \frac{\xi(q,\delta)}{N^{3/2}} e^{N \psi(q,\delta)}(1+o(1)),
				\label{Nantes 7.4}
			\end{equation}
			where the $o(1)$ is uniform over $q \in (q_1,q_2)$, and 
			\begin{equation}
				\xi(q,\delta) := \frac{e^{\lmgf( \delta-\frac{\beta}{2} + \sqdelta) - \lmgf( \delta-\frac{\beta}{2})} }{\sqrt{2\pi \int_0^1 x^2 \lmgf''\Big( \delta-\frac{\beta}{2} + \sqdelta x \Big) \dd x }}.
			\end{equation}
			\label{Proposition Partition function above critical curve}
		\end{enumerate}
	\end{proposition}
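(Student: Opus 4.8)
The plan is to treat the three regimes by a common two–step scheme: (i) an exponential change of measure that turns the atypical area constraint $\{A_N=qN^2\}$ into a \emph{typical} event and extracts the factor $e^{N\psi(q,\cdot)}$; then (ii) a sharp analysis, under the tilted law, of the local event $\{A_N=qN^2\}$ intersected with the positivity event $\{X_{[1,N]}>0\}$, combining an \emph{inhomogeneous} local central limit theorem for the area with the explicit survival probabilities of Lemma~\ref{lemrestpos}.

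For the \emph{desorbed} and \emph{critical} regimes (items (1)--(2), where $\delta\le\delta_0(q)$) I would tilt with the symmetric law $\mathbf{P}_{N,h_N^q}$ of Section~\ref{sec:COM}, under which $A_N$ is centered at $qN^2$ and $X_N$ at $0$. Inserting the Radon--Nikodym derivative~\eqref{Nantes 4.24} evaluated on $\{A_N=qN^2\}$ gives
\[
D_N(q,\delta)=e^{N[\cG_N(h_N^q)-q h_N^q]}\,\mathbf{E}_{N,h_N^q}\!\Big[e^{\tilde u_N X_N}\,\ind_{\{A_N=qN^2,\ X_{[1,N]}>0\}}\Big],\qquad \tilde u_N:=\delta-\tfrac\beta2+\tfrac{h_N^q}{2}\big(1+\tfrac1N\big)\to \delta-\delta_0(q).
\]
By Proposition~\ref{propapproxG}, $\cG_N(h_N^q)-q h_N^q=\psi(q,0)+O(N^{-2})$ uniformly for $q$ in a compact set (this is exactly where the $O(N^{-2})$ rate rather than $O(N^{-1})$ is essential), so the prefactor is $e^{N\psi(q,0)}(1+o(1))$. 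For the \emph{adsorbed} regime (item (3), $\delta>\delta_0(q)$) I would instead invoke Lemma~\ref{Nantes Lemma 4.7} directly with $B=\{X_{[1,N]}>0\}$, which already outputs $D_N(q,\delta)=\tilde c\,e^{N\psi(q,\delta)}\big(\ProbaSurCritique{A_N=qN^2,\ X_{[1,N]}>0}(1+o(1))+O(e^{-\log^2 N})\big)$, using $\psi(q,\delta)=\cH_\delta(\sqdelta)-q\sqdelta$ there; the hypothesis $q>q^*_\delta$ guarantees that under $\ProbaSurCritiqueSansParenthese$ the walk has positive drift at its left end and a macroscopically positive final position, so $\{X_{[1,N]}>0\}$ is not exponentially costly.

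In all three cases one is reduced to estimating $\mathbf{P}(A_N=qN^2,\ X_{[1,N]}>0)$ (weighted by $e^{\tilde u_N X_N}$ in items (1)--(2)) under a law with independent increments whose tilt varies slowly and smoothly along the trajectory. The structural point is that under this tilt the walk makes a macroscopic hump: it carries a uniformly positive drift on an initial macroscopic stretch, so $\min_{m\le i\le N-m}X_i\ge\eta N$ off an event of super-polynomially small probability, where $m=m(N)$ grows like a power of $\log N$. Hence positivity on $[1,N]$ reduces, up to error negligible at the relevant polynomial scale, to positivity on the two short end-segments $[1,m]$ and $[N-m,N]$, which conditionally on $(X_m,X_{N-m})$ decouple; the right segment is handled by the time-reversal identity of Remark~\ref{rmk:time-rev-prop}. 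The bulk $[m,N-m]$ supplies a Dobrushin--Hryniv-type local CLT: since $A_N=\sum_{k=1}^N(N-k+1)(X_k-X_{k-1})$ has variance of order $N^3$, one gets $\mathbf{P}(A_N=qN^2)\sim\big(2\pi N^3\int_0^1 x^2\lmgf''(\delta-\tfrac\beta2+\sqdelta x)\dd x\big)^{-1/2}$ in item (3), while the pair $(A_N,X_N)$ has non-degenerate rescaled covariance of determinant $\vartheta(\htildeq)$, giving $\mathbf{P}(A_N=qN^2,X_N=x)\sim(2\pi N^2\sqrt{\vartheta(\htildeq)})^{-1}$ uniformly for $x=O(1)$ in item (1). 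Each end then converges, by Lemma~\ref{lemrestpos}, to the survival probability $\kappa^0$ (left) and $\kappa^{X_N}$ (right) of a walk carrying the limiting boundary tilt; the uniformity~\eqref{eq:kappa2} over landing heights $O(\log k)$ is precisely what licenses this. Summing $\sum_x e^{\tilde u x}\kappa^0\kappa^x$ against the local density and using $\kappa^x=1-(1-\kappa^0)e^{-\htildeq x}$ assembles $C_{\beta,q,\delta}$ of~\eqref{Nantes 7.24}; in item (3) there is no $\kappa$-factor from the right end ($X_N$ being of order $N$, positivity there is automatic), and the 1D local CLT together with $\tilde c$ produces $\xi(q,\delta)$. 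Item (2) is the crossover: with $q'=q+c/\sqrt N$ and $\delta=\delta_0(q)$ one has $\tilde u_N=O(N^{-1/2})$ and $X_N$ of order $\sqrt N$, so $\tilde u_N X_N=O(1)$, and the quadratic Taylor term in $N\psi(q',0)=N\psi(q,0)-\htildeq c\sqrt N+\tfrac12\tilde h'(q)c^2+o(1)$ is also $O(1)$; rescaling $X_N$ by $\sqrt N$ turns the sum over $x$ into $\int_0^\infty f_{\htildeq}(c,z)\dd z$ against the conditional density of the rescaled endpoint (still $\kappa^0$ from the left), which is $C_{\beta,q,c}^{\rm{crit}}$.

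The main obstacle is quantitative rather than conceptual: every error has to be controlled at the level of the sharp prefactor ($N^{-2}$ in item (1), $N^{-3/2}$ in items (2)--(3)), uniformly over $q\in(q_1,q_2)$ and over $c\in[-R,R]$ in item (2). The genuinely hard parts are (a) the inhomogeneous local CLT for $A_N$ and for $(A_N,X_N)$ with error $o(\text{prefactor})$, uniform over the one-parameter families of tilts that occur and \emph{joint} with the positivity-at-the-ends estimates, so that bulk and ends factor at the sharp scale; (b) showing $\mathbf{P}(\min_{m\le i\le N-m}X_i\le(\log N)^2)$ is negligible compared with the prefactor, which requires quantitative control of how fast the tilted walk climbs; and (c) in item (2), disentangling the simultaneous $O(1)$ contributions of $\tilde u_N X_N$, the quadratic Taylor term, and the endpoint's Gaussian fluctuation, i.e.\ isolating and justifying the limit object $f_{\htildeq}$. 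Once (a)--(b) are in place, the constants $C_{\beta,q,\delta}$, $C_{\beta,q,c}^{\rm{crit}}$ and $\xi(q,\delta)$ fall out of the geometric-series and Gaussian-integral computations sketched above.
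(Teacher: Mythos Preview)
Your proposal is correct and follows essentially the same approach as the paper: the same choice of tilts (the symmetric $\mathbf{P}_{N,h_N^q}$ for items (1)--(2), the asymmetric $\ProbaSurCritiqueSansParenthese$ via Lemma~\ref{Nantes Lemma 4.7} for item (3)), the same decomposition via Markov at a mesoscopic time $a_N=(\log N)^2$ to decouple the end(s) from the bulk, the same use of Lemma~\ref{lemrestpos} for the survival factors, and the same local limit theorems (Propositions~\ref{Nantes Proposition 8.10} and Lemmas~\ref{Nantes Lemma 8.15},~\ref{Nantes Lemma 8.29}) for the bulk. The only organizational difference is that in items (2)--(3) the paper cuts only at the \emph{left} end and handles positivity on $[a_N,N]$ directly (via Lemma~\ref{Nantes Lemma 8.9} in the critical case, Lemma~\ref{Nantes Lemma 8.28} in the supercritical case) rather than cutting symmetrically at both ends as you describe; and in item (2) the paper tilts at $\tilde h(q)$ and absorbs the $c\sqrt N$-shift of the area into the argument of $f_{\tilde{\mathbf h}(q,0)}$ rather than Taylor-expanding $\psi$, which avoids the quadratic bookkeeping you mention in point (c).
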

	In our way of proving Theorem~\ref{varfor}, we shall need to check the assumption in Item (3) of Proposition~\ref{asympteach3reg}. To this end, we can rely on the following lemma:
	\begin{lemma} 
		\label{Nantes Lemme 5.1} 
		If $(\beta,\delta) \in \cC_{\rm{good}}$ then the maximizer of $q \mapsto \penalite(1/\sqrt{q})$ is larger than $q^*_\gd$.
	\end{lemma}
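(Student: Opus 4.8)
The plan is to pass from the statement about $q\mapsto\penalite(1/\sqrt q)$ to the maximizer $\bar a_{\gb,\gd}$ of $\penalite$ on $(0,\infty)$, which exists and is unique by Corollary~\ref{cor:unique_max_in_Cgood}. With the notation $\bar q_{\gb,\gd}:=\bar a_{\gb,\gd}^{-2}$ of Remark~\ref{Nantes Remark 4.13}, this $\bar q_{\gb,\gd}$ is precisely the maximizer of $q\mapsto\penalite(1/\sqrt q)$, and since $a\mapsto a^{-2}$ is decreasing the target inequality $\bar q_{\gb,\gd}>\qstar$ is equivalent to $\bar a_{\gb,\gd}<a^*$, where $a^*:=1/\sqrt{\qstar}$. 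If $0<\gd\le\gb/2$ then $\qstar=0$ by the discussion following~\eqref{eq:def_qstar}, and $\bar q_{\gb,\gd}>0=\qstar$ holds trivially; so I would immediately reduce to the case $\gb/2<\gd<\gb$, in which $\qstar>0$ is the unique solution of $\gd-\gb/2+s_\gd(q)=0$ and $a^*\in(0,\infty)$.

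The crux is the sign of $\penalite'(a^*)$. Since $\gd>\gb/2$ forces $q_\gd=0$ (second bullet after~\eqref{eq:explicit-qdelta0}), the branch ``$q>q_\gd$'' of the expression for $\penalite'(1/\sqrt q)$ recorded just before Remark~\ref{Nantes Remark 4.13} is valid for every $q>0$ and reads $\penalite'(1/\sqrt q)=\log\Gamma_\gb+\lmgf(s_\gd(q)+\gd-\gb/2)$. Evaluating at $q=\qstar$, where by definition $s_\gd(\qstar)+\gd-\gb/2=0$, and using $\lmgf(0)=0$, I obtain $\penalite'(a^*)=\log\Gamma_\gb$, which is strictly negative because $\gb>\gb_c$ forces $\Gamma_\gb<1$ (recall the equivalence $\Gamma_\gb<1\iff\gb>\gb_c$ from the proof of Proposition~\ref{existencefe}).

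To conclude I would run a short convexity argument. By Lemma~\ref{concavity}, in the regime $\gb/2<\gd<\gb$ the function $\penalite$ is $\sC^1$ on $(0,\infty)$ and strictly convex on $(a^*,\infty)$, and by Corollary~\ref{cor:unique_max_in_Cgood} it is bounded above. A convex function bounded above on the half-line $[a^*,\infty)$ is non-increasing there (a strictly positive value of $\penalite'$ at any point would force $\penalite(a)\to+\infty$ as $a\to\infty$ by convexity, contradicting boundedness), so $\sup_{[a^*,\infty)}\penalite=\penalite(a^*)$. On the other hand, since $\penalite'(a^*)=\log\Gamma_\gb<0$ and $\penalite'$ is continuous, $\penalite$ is strictly decreasing on some interval $[a^*-\eta,a^*]$ with $0<\eta<a^*$, hence $\penalite(a^*-\eta)>\penalite(a^*)=\sup_{[a^*,\infty)}\penalite$. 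Therefore the unique maximizer $\bar a_{\gb,\gd}$ cannot lie in $[a^*,\infty)$, i.e.\ $\bar a_{\gb,\gd}<a^*$, which is exactly $\bar q_{\gb,\gd}>\qstar$. (Alternatively one could use Lemma~\ref{lem:limits} to get $\penalite(a)\to-\infty$ as $a\to\infty$ and argue via strict concavity on $(0,a^*)$ versus strict convexity on $(a^*,\infty)$, but the boundedness route is shorter.) I do not foresee any real difficulty; the points that require the most care are checking that the ``$q>q_\gd$'' branch is indeed the relevant one for every $q>0$ once $\gd>\gb/2$, and the elementary but easy-to-botch fact that a convex, bounded-above function on a half-line is monotone. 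As a consistency check, Remark~\ref{Nantes Remark 4.13} gives $-\log\Gamma_\gb=\lmgf(s_\gd(\bar q_{\gb,\gd})+\gd-\gb/2)$, and since also $-\log\Gamma_\gb=\lmgf(x_\gb)$ with $x_\gb\in(0,\gb/2)$ and $\lmgf$ even, one has $s_\gd(\bar q_{\gb,\gd})+\gd-\gb/2=\pm x_\gb$; the argument above is what selects the $+$ sign, equivalently $\bar q_{\gb,\gd}>\qstar$.
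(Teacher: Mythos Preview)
Your proof is correct. It differs from the paper's in the concluding step for the case $\gb/2<\gd<\gb$. The paper invokes Remark~\ref{Nantes Remark 4.13} directly: at the maximizer $\bar q_{\gb,\gd}$ one has $\lmgf(s_\gd(\bar q_{\gb,\gd})+\gd-\gb/2)=-\log\Gamma_\gb>0$, and then appeals to monotonicity of $s_\gd$ and of $\lmgf$ on $[0,\gb/2]$ to conclude $\bar q_{\gb,\gd}>\qstar$. Your route instead evaluates $\penalite'$ at $a^*=1/\sqrt{\qstar}$, finds it equal to $\log\Gamma_\gb<0$, and then uses the convexity of $\penalite$ on $(a^*,\infty)$ together with boundedness from above to force the maximizer into $(0,a^*)$. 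Your approach is slightly longer but more self-contained: as you observe in your consistency check, the first-order condition alone only gives $s_\gd(\bar q_{\gb,\gd})+\gd-\gb/2=\pm x_\gb$ since $\lmgf$ is even, and selecting the $+$ sign is precisely what needs justification; the paper's monotonicity argument implicitly restricts $\lmgf$ to $[0,\gb/2]$, which presupposes the sign. Your convexity argument supplies this missing ingredient cleanly.
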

	\begin{proof}[Proof of Lemma~\ref{Nantes Lemme 5.1}]
		If $\gd \le \gb/2$ there is nothing to prove since then $q^*_\gd=0$ by Item (1) below \eqref{Nantes 4.54}. Now assume that $\gb/2 < \gd < \gb$.
		By the definition of $q_\gd^*$ in~\eqref{eq:def_qstar} and Remark~\ref{Nantes Remark 4.13}, 
		when $\delta \geq \beta/2$, we have 
		$s_\gd(q_\gd^*) + \delta - \beta/2 = 0$, and the maximizer $\bar q_{\gb,\gd}$ 
		satisfies $\lmgf(s_\gd(\bar q_{\gb,\gd}) + \delta - \beta/2) = -
		\log \Gamma_\beta > 0$. Because both $q \in \R_+ 
		\mapsto s(q)$ and $x \in [0,\beta/2] \mapsto 
		\lmgf(x) $ are increasing functions, and since $\lmgf(0)=0$, this 
		proves that $\bar q_{\gb,\gd} > q_\gd^*$.
	\end{proof}
	Finally, Lemma \ref{rough-bounds} gives a uniform control on the sequence $(D_N(q,\delta))_{N\geq 1}$:
	\begin{lemma}\label{rough-bounds}
		Let $q_2>0$. There exists $c>0$ such that for every $N\in \N$ and $q\in (0,q_2]$, 
		\begin{equation}
			D_{N}(q,\delta)\leq c\,  e^{N \psi(q,\delta)}.
		\end{equation}
	\end{lemma}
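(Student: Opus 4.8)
The plan is to run, separately for each $q$, a Chernoff-type estimate based on the exponential tilt naturally attached to the relevant branch of $\psi$ in~\eqref{deftildepsi}, so that the exponential rate comes out exactly and only $O(1)$ (or smaller) corrections survive. First I would dispose of the finitely many small values of $N$: for fixed $N$ the quantity $D_N(q,\delta)$ vanishes unless $qN^2\in\N$, so only finitely many $q\in(0,q_2]$ contribute, at each of which $D_N(q,\delta)/e^{N\psi(q,\delta)}$ is finite. So I fix $N$ large and $q\in(0,q_2]$ and distinguish the two lines of~\eqref{deftildepsi}.

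\emph{Adsorbed branch} ($\delta>\delta_0(q)$, so $\psi(q,\delta)=\cH_\delta(\sqdelta)-q\,\sqdelta$). I would bound the positivity indicator by $1$, apply the change of measure~\eqref{Nantes 4.16} with the inhomogeneous tilt $\boldsymbol h=(\sqdelta,\delta-\tfrac\beta2)\in\cD_\beta$, use $\{A_N=qN^2\}$ to write $\boldsymbol h\cdot\Lambda_N=\sqdelta\,qN+(\delta-\tfrac\beta2)X_N$, and then discard the area indicator; by~\eqref{eq:calculLambda_n} and independence of the increments this gives
\begin{equation*}
D_N(q,\delta)\ \le\ e^{-\sqdelta\,qN}\,\mathbf{E}_\beta\big[e^{\boldsymbol h\cdot\Lambda_N}\big]\ =\ \exp\Big(-\sqdelta\,qN+\sum_{k=1}^N\lmgf\big(\sqdelta[1-\tfrac{k-1}{N}]+\delta-\tfrac\beta2\big)\Big).
\end{equation*}
By Lemma~\ref{Nantes Lemma 9.5}, $q\mapsto\sqdelta$ is the inverse of the diffeomorphism $\cH_\delta'$, hence $\{\sqdelta\colon q\in(0,q_2]\}$ has compact closure inside $\cA_\delta$; all arguments of $\lmgf$ above therefore stay in a fixed compact subinterval of $(-\tfrac\beta2,\tfrac\beta2)$, and a standard Riemann-sum estimate (using the change of variable $x\mapsto1-x$ to recognise the integral as $\cH_\delta(\sqdelta)$, recall~\eqref{defHHcont}) yields $\big|\sum_{k=1}^N\lmgf(\cdots)-N\cH_\delta(\sqdelta)\big|\le C$ uniformly in $q\in(0,q_2]$. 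This gives $D_N(q,\delta)\le e^{C}\,e^{N\psi(q,\delta)}$.

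\emph{Desorbed branch} ($\delta\le\delta_0(q)$, so $\psi(q,\delta)=\psi(q,0)=\cG(\htildeq)-q\htildeq$, and note that then $\delta\le\delta_0(q)\le\beta/2$). Here I would instead use the symmetric area tilt $\mathbf{P}_{N,\htildeq}$ of~\eqref{defpsym} (valid since $\htildeq\in[0,\beta)$): the change of measure~\eqref{Nantes 4.24} together with $\{A_N=qN^2\}$ gives
\begin{equation*}
D_N(q,\delta)\ =\ e^{N\cG_N(\htildeq)-q\htildeq N}\,\mathbf{E}_{N,\htildeq}\big[e^{\varepsilon_N X_N}\,1_{\{A_N=qN^2,\ X_i>0\}}\big],\qquad \varepsilon_N:=\delta-\tfrac\beta2+\tfrac{\htildeq}{2}\big(1+\tfrac1N\big).
\end{equation*}
The crucial point is that, by~\eqref{eq:delta0q}, $\varepsilon_N=(\delta-\delta_0(q))+\tfrac{\htildeq}{2N}\le\tfrac{\htildeq}{2N}$, so $|\varepsilon_N|=O(1/N)$ uniformly in $q\in(0,q_2]$. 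If $\varepsilon_N\le0$ I bound $e^{\varepsilon_N X_N}\le1$ directly; otherwise I bound the indicator by $1$ and use that, by Remark~\ref{rmk:time-rev-prop}, under $\mathbf{P}_{N,\htildeq}$ the increments of $X$ are independent with tilt parameters $h_k=\tfrac{\htildeq}{2}(1-\tfrac{2k-1}{N})$ lying in a fixed compact subinterval of $(-\tfrac\beta2,\tfrac\beta2)$ (because $\htildeq\le\tilde h(q_2)<\beta$ by Remark~\ref{htildeqincreasing}), so that for $N$ large $\mathbf{E}_{N,\htildeq}[e^{\varepsilon_N X_N}]=\prod_{k=1}^N e^{\lmgf(h_k+\varepsilon_N)-\lmgf(h_k)}\le e^{CN\varepsilon_N}\le e^{C'}$, uniformly in $q$. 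Finally Proposition~\ref{propapproxG} gives $\cG_N(\htildeq)=\cG(\htildeq)+O(1/N^2)$ uniformly in $q\in(0,q_2]$, so $N\cG_N(\htildeq)-q\htildeq N=N\psi(q,0)+O(1/N)$ and $D_N(q,\delta)\le c\,e^{N\psi(q,0)}=c\,e^{N\psi(q,\delta)}$.

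The main obstacle is not either branch by itself but controlling the final-position weight $e^{(\delta-\beta/2)X_N}$ in the desorbed branch: after the tilt, the net exponent $\varepsilon_N$ on $X_N$ must be $O(1/N)$ for the residual expectation to stay bounded, and this is exactly what the defining inequality $\delta\le\delta_0(q)$ guarantees — it forces $\delta-\tfrac\beta2\le-\tfrac{\htildeq}{2}$, which cancels the $+\tfrac{\htildeq}{2}$ coming from the tilt up to an $O(1/N)$ term. Everything else is routine: uniformity of the constants as $q\downarrow0$ and across the split at $q_\delta$ follows from the tilt parameters $\sqdelta$ and $\htildeq$ staying in fixed compact sets for $q\in(0,q_2]$, and one takes for $c$ the maximum of the two constants produced above together with the contributions of the finitely many small $N$.
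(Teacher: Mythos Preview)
Your proof is correct and follows the same two-branch tilting strategy as the paper: in each case one applies the exponential tilt matching the relevant line of~\eqref{deftildepsi}, bounds the residual expectation by a constant, and replaces the $N$-dependent normalization by its continuous limit at $O(1)$ cost. The only notable difference is in the desorbed branch: the paper tilts by $\boldsymbol h=(\tilde h(q),-\tilde h(q)/2)$ (rather than by the $1/N$-corrected $\mathbf P_{N,\tilde h(q)}$), so the leftover exponent on $X_N$ is exactly $\delta-\delta_0(q)\le 0$ and the bound $e^{(\delta-\delta_0(q))X_N}\le 1$ on $\{X_N>0\}$ is immediate, which spares your case split on the sign of $\varepsilon_N$.
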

	\begin{proof}[Proof of Lemma~\ref{rough-bounds}]
		We distinguish between two cases.\\
		(i) If $\gd_0(q)< \delta \le \beta$, we obtain with the help of Lemma~\ref{Nantes Lemma 4.7}:
		\begin{equation}
			\begin{aligned}\label{calculdeproba2}
				D_{N}(q,\delta)=  \mathbf{E}_{\beta}\Big[e^{(\delta-\frac{\beta}{2})X_N}  1_{\{\cV_{N,qN^2,+} \}}  \Big]&\leq 
				\cst \ProbaSurCritique{\cV_{N,qN^2,+}}\  e^{N \big[\cH_{N,\delta}(s_{q}^\delta)-s_{q}^\delta q\big]}\\
				&\leq  \cst \, e^{N \big[\cH_{N,\delta}(s_{q}^\delta)-s_{q}^\delta q\big]}.
			\end{aligned} 
		\end{equation}
		It remains to apply \eqref{Nantes 9.12} in Proposition \ref{approxhn}  to conclude that for $N$ large enough, and for $q\in [q_1,q_2]$,
		\begin{align}\label{calculdeprobafin}
			e^{N \big[\cH_{N,\delta}(s_{q}^\delta)-s_{q}^\delta q\big]}
			&\leq  2 e^{N \big[\cH_{\delta}(s_{q}^\delta)-s_{q}^\delta q\big]}=2 e^{\psi(q,\delta)N}.
		\end{align} 
		(ii) If $0\le \delta\le \gd_0(q)$, we apply the tilting in \eqref{Nantes 4.16} with 
		$\boldsymbol{h}=(\htildeq,-\frac{\htildeq}{2})$ to get
		\begin{equation}
			\begin{aligned}\label{calculdeproba3}
				D_{N}(q,\delta)&= \mathbf{E}_{\beta}\Big[e^{(\delta-\frac{\beta}{2})X_N}  1_{\{\cV_{N,qN^2,+} \}}  \Big]\\
				&= 
				\mathbf{E}_{N,\htildeq,-\frac12 \htildeq} \Big[e^{(\delta-\gd_0(q)) X_N}
				1_{\{\cV_{N,q N^2,+}\}}  \Big]\  e^{N \big[\cG_{N}(\htildeq)-\tilde h_{q} q\big]}\\
				&\leq  e^{N \big[\cG_{N}(\htildeq)-\tilde h_{q} q\big]},
			\end{aligned}
		\end{equation}
		where we have used that  $X\in\cV_{N,q,+}$ necessarily implies $X_N\geq 0$. Using~\eqref{Nantes 4.20} in Proposition~\ref{propapproxG}, together with the continuity of
		$q\mapsto \htildeq$, there exists an $N_0\in \N$ such that uniformly in $q\in [0,q_2]$, for $N\geq N_0$, 
		\begin{align}\label{calculdeprobater}
			e^{N \big[\cG_{N}(\htildeq)-\tilde h_{q} q\big]}&\leq 2 
			e^{N \big[\cG(\htildeq)-\htildeq q\big]}=2e^{\psi(q,\delta)N}.
		\end{align} 
		This completes the proof.
	\end{proof}

	\subsection{Local limits}\label{loclim}
		The last main tool that we will use throughout the paper are Gnedenko-type local limit theorems. In this section, we present  three theorems of that type involving $A_N$ and $X_N$, and introduce a change of measure used in the $\mathcal{AC}$ phase.
	\subsubsection{Local limit inside  \texorpdfstring{$\mathcal{DC}$}{DC} phase and at  the critical curve}
	We
	recall the definitions of $\lmgf_\Lambda$ in~\eqref{Nantes 7.20} and of $\cD_\gb$ in~\eqref{eq:def-Dbeta}. For every $\bh \in \cD_\gb$, we define the matrix
	\begin{equation}
		\bB(\bh) := \text{Hess } \lmgf_\Lambda(\bh)
		\label{Nantes 8.52}
	\end{equation}
	and the following Gaussian probability density:
	\begin{equation}f_{\bh} : z \in \R^2 \mapsto \frac{1}{2\pi \sqrt{\det \bB(\bh)}} \exp \left(
		-\frac{1}{2} \langle \bB(\bh)^{-1}z,z \rangle
		\right).
		\label{def_f_h}
	\end{equation}
	Recall the definition of $\boldsymbol{\Tilde{h}}(q,0)$ in~\eqref{Nantes 7.22}. The following proposition is a slight quantitative upgrade of \cite[Proposition 6.1]{CNP16}, in the sense that we provide a rate of convergence to zero.
	\begin{proposition}\label{loccontrlimtheoarepos}
		Let $[q_1,q_2] \subset \R$. As $N\to\infty$,
		\begin{equation}
			\underset{q \in [q_1,q_2]}{\sup} \underset{x,y \in \Z}{\sup} 
			\left| 
			N^2 \mathbf{P}_{N,h_N^q}(A_N = qN^2 + x, X_N = y ) - f_{\boldsymbol{\Tilde{h}}(q,0)}\left(\frac{x}{N^{3/2}}, \frac{y}{N^{1/2}} \right)
			\right| 
			= O \left( \frac{(\log N )^4}{\sqrt{N}} \right).
		\end{equation}
		\label{Nantes Proposition 8.10}
	\end{proposition}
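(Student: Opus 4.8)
The plan is to run the Fourier-analytic scheme used for \cite[Proposition 6.1]{CNP16}, the only genuinely new ingredient being an explicit control of the rate in the local Gaussian approximation. First I would fix notation: by Remark~\ref{rmk:time-rev-prop}, under $\Pb_{N,h_N^q}$ the increments $Y_k:=X_k-X_{k-1}$, $1\le k\le N$, are independent with $Y_k$ distributed as $\ProbaTilteeSansParenthese{\frac{h_N^q}{2}(1-\frac{2k-1}{N})}$, while $A_N=\sum_{k=1}^N(N+1-k)Y_k$ and $X_N=\sum_{k=1}^N Y_k$. Writing $\phi_k$ for the characteristic function of $Y_k$ and setting
\beq
\Phi_N(t_1,t_2):=e^{-it_1 qN^2}\prod_{k=1}^N\phi_k\big(t_1(N+1-k)+t_2\big),
\eeq
which — since $\bE_{N,h_N^q}[A_N]=qN^2$ and $\bE_{N,h_N^q}[X_N]=0$ by Proposition~\ref{c1diffeo} — is the characteristic function of $(A_N-qN^2,X_N)$, Fourier inversion on $[-\pi,\pi]^2$ (licit because $(A_N,X_N)$ is $\Z^2$-valued) gives
\beq
N^2\,\Pb_{N,h_N^q}\big(A_N=qN^2+x,\,X_N=y\big)=\frac{N^2}{(2\pi)^2}\int_{[-\pi,\pi]^2}e^{-i(t_1 x+t_2 y)}\,\Phi_N(t_1,t_2)\,\dd t_1\,\dd t_2,
\eeq
so the task reduces to comparing this integral with $f_{\widetilde{\mathbf h}(q,0)}(xN^{-3/2},yN^{-1/2})$.

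\emph{Contribution near the origin.} I would substitute $t_1=u_1N^{-3/2}$, $t_2=u_2N^{-1/2}$, so the Jacobian $N^{-2}$ cancels the prefactor $N^2$ and the argument of $\phi_k$ becomes $\theta_k(u)=N^{-1/2}\big(u_1(1-\tfrac{k-1}{N})+u_2\big)=O(|u|/\sqrt N)$ uniformly on compacts. By Proposition~\ref{propapproxG} and continuity of $q\mapsto\htildeq$, the tilt parameters $\tfrac{h_N^q}{2}(1-\tfrac{2k-1}{N})$ all lie in a fixed compact subset of $(-\beta/2,\beta/2)$, so the laws of the $Y_k$ have uniformly bounded exponential moments and $\mathrm{Var}(Y_k)=\lmgf''\!\big(\tfrac{h_N^q}{2}(1-\tfrac{2k-1}{N})\big)$ is uniformly bounded above and below by a positive constant. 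A third-order Taylor expansion of $\sum_k\log\phi_k$, together with the cancellation of the linear term against $e^{-it_1 qN^2}$, then shows that on the box $\{|u|\le(\log N)^2\}$ the rescaled $\Phi_N$ equals $\exp\big(-\tfrac12\langle\bB(\widetilde{\mathbf h}(q,0))u,u\rangle\big)$ up to a multiplicative error $1+O(|u|^3/\sqrt N)+O(|u|^2/N)$, where the quadratic form is recognised as the limit of the Riemann sum $\tfrac1N\sum_k\big(u_1(1-\tfrac{k-1}{N})+u_2\big)^2\lmgf''\!\big(\tfrac{h_N^q}{2}(1-\tfrac{2k-1}{N})\big)$, equal to $\int_0^1(u_1x+u_2)^2\lmgf''(\htildeq(x-\tfrac12))\,\dd x=\langle\bB(\widetilde{\mathbf h}(q,0))u,u\rangle$ in view of \eqref{Nantes 7.20}, \eqref{Nantes 8.52} and \eqref{Nantes 7.22}. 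Inverting the Gaussian and integrating the errors against the Gaussian weight over this box (the tail $\{|u|>(\log N)^2\}$ being super-polynomially small on both sides) reproduces $f_{\widetilde{\mathbf h}(q,0)}(xN^{-3/2},yN^{-1/2})$ up to an additive error that is a power of $\log N$ over $\sqrt N$; with the window choices of \cite{CNP16} that power is $4$.

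\emph{Contribution away from the origin.} This is the part I expect to be the main obstacle: one must show that $N^2\int|\Phi_N(t_1,t_2)|\,\dd t_1\,\dd t_2$ over $[-\pi,\pi]^2$ minus the box $\{|t_1|\le N^{-3/2}(\log N)^2,\ |t_2|\le N^{-1/2}(\log N)^2\}$ is negligible compared with $(\log N)^4/\sqrt N$ (in fact super-polynomially small). The two ingredients are (i) a uniform aperiodicity bound $\sup_k\sup_{q\in[q_1,q_2]}|\phi_k(\theta)|\le 1-c_1\min\{\theta^2,1\}$ on $[-\pi,\pi]$ for some $c_1>0$, which follows from the explicit two-sided geometric form \eqref{Nantes 3.3}--\eqref{Nantes 4.3}; and (ii) an equidistribution (``sweeping'') argument exploiting that, as $k$ runs over $\{1,\dots,N\}$, the phases $t_1(N+1-k)+t_2$ form an arithmetic progression of step $t_1$: when $|t_1|$ is not too small a uniformly positive proportion of these phases stay at distance $\gtrsim|t_1|$ from $2\pi\Z$, forcing $\prod_k|\phi_k|\le e^{-cN^{\rho}}$ for some $\rho>0$, whereas when $|t_1|$ is negligible the phases cluster near $t_2$ (which is then bounded away from $2\pi\Z$) and $\prod_k|\phi_k|$ is again super-polynomially small. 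The case split on the relative sizes of $|t_1|$ and $|t_2|$ is somewhat delicate but is carried out in the proof of \cite[Proposition 6.1]{CNP16}, which I would import essentially verbatim, the only adjustment being that the excluded box is now of polylogarithmic rather than polynomial size.

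\emph{Uniformity and conclusion.} For $|x|>N^{3/2}(\log N)^2$ or $|y|>N^{1/2}(\log N)^2$, a Markov--Chernoff bound using the uniform exponential moments of $A_N/N^{3/2}$ and $X_N/N^{1/2}$ shows that $N^2\Pb_{N,h_N^q}(A_N=qN^2+x,X_N=y)$ is super-polynomially small, and so is $f_{\widetilde{\mathbf h}(q,0)}(xN^{-3/2},yN^{-1/2})$; for $(x,y)$ in the complementary moderate range the near- and away-from-origin estimates combine into the bound $O((\log N)^4/\sqrt N)$. Uniformity in $q\in[q_1,q_2]$ is free throughout, since by Proposition~\ref{propapproxG} every $q$-dependent quantity entering the argument (the tilt parameters, the increment moments and exponential-moment bounds, the constant $c_1$, the Riemann-sum discrepancies which are $O(N^{-1})$, and $|h_N^q-\htildeq|=O(N^{-2})$) is controlled uniformly. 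Collecting the three contributions gives the stated estimate; the genuinely new work compared with \cite{CNP16} is the quantitative near-origin expansion, which is a routine third-order Taylor computation, so the bulk of the proof consists in checking that the oscillatory estimate of \cite{CNP16} goes through with the smaller excluded box.
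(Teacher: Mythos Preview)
Your proposal is correct and is essentially the paper's own argument: the paper's proof consists of the single instruction to rerun \cite[Proposition~6.1]{CNP16} with the fixed cutoff $A$ replaced by $\log N$, and your outline is precisely that Fourier-analytic scheme (inversion, third-order Taylor expansion on a polylogarithmic box, aperiodicity/sweeping bounds outside) made explicit. The only cosmetic slip is the window size: taking $B_N=\log N$ rather than $(\log N)^2$ is what produces exactly the exponent~$4$, as one sees in the analogous one-dimensional computation the paper carries out in the proof of Lemma~\ref{Nantes Lemma 8.29}.
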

	\begin{proof}[Proof of Proposition~\ref{Nantes Proposition 8.10}] Change the constant $A$ by $\log N$ in the proof of~\cite[Proposition 6.1]{CNP16}, and everything follows. 
	\end{proof}
	We also need a local limit theorem that applies exclusively to the area enclosed by the walk. Let us first denote by $l_{\sigma^2}$ the density of  $\cN(0,\sigma^2)$, i.e., 
	\begin{equation}
		l_{\sigma^2}(x) = \frac{1}{\sqrt{2\pi \sigma^2}}\exp\Big(-\frac{x^2}{2\sigma^2}\Big), \qquad x\in \bbR.
	\end{equation}
	We also set
	\beq
	b(h) := \int_0^1 (x-1/2)^2 \lmgf''(h(x-1/2)) \dd x, \qquad |h|<\beta/2.
	\eeq
	%
	%
	\begin{lemma}
		Let $[q_1,q_2] \subset \R$. As $N \longrightarrow \infty$,
		\begin{equation}
			\underset{q \in [q_1,q_2]}{\sup}
			\underset{x \in \bZ}{\sup}
			\left| N^{3/2} \Proba{N,h_N^q}{A_N=qN^2+x} - l_{b\big(\htildeq\big)} \left( \frac{x}{N^{3/2}} \right) \right| = O \left( \frac{(\log N)^4}{\sqrt{N}} \right),
			\label{Nantes 8.42}
		\end{equation}
		\label{Nantes Lemma 8.15}
	\end{lemma}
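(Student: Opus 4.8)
The plan is to obtain Lemma~\ref{Nantes Lemma 8.15} from the joint local limit theorem of Proposition~\ref{Nantes Proposition 8.10} by summing out the endpoint of the walk. For $q\in[q_1,q_2]$ and $x\in\bZ$ one writes
\begin{equation*}
N^{3/2}\,\mathbf{P}_{N,h_N^q}(A_N=qN^2+x)=\frac{1}{\sqrt N}\sum_{y\in\bZ}N^2\,\mathbf{P}_{N,h_N^q}(A_N=qN^2+x,\,X_N=y).
\end{equation*}
Heuristically, Proposition~\ref{Nantes Proposition 8.10} replaces each summand by $f_{\widetilde{\boldsymbol{h}}(q,0)}(x/N^{3/2},y/\sqrt N)$, turning the right-hand side into a Riemann sum of mesh $1/\sqrt N$ that converges to the first marginal $\int_{\bbR}f_{\widetilde{\boldsymbol{h}}(q,0)}(x/N^{3/2},v)\,\dd v$ of the centred Gaussian density with covariance $\bB(\widetilde{\boldsymbol{h}}(q,0))$. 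Recalling $\widetilde{\boldsymbol{h}}(q,0)=(\htildeq,-\htildeq/2)$ from~\eqref{Nantes 7.22} and using that $\lmgf$ is even, a short computation identifies this marginal with $l_{b(\htildeq)}(x/N^{3/2})$, which is the announced limit.

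Making this rigorous requires one genuine idea, since the error $O((\log N)^4/\sqrt N)$ in Proposition~\ref{Nantes Proposition 8.10} is uniform in $y$ and therefore not summable over $\bZ$. I would truncate at $|y|\le R_N$ with $R_N=c_0\sqrt N\log N$ for a large constant $c_0$. On the tail $\{|y|>R_N\}$, the contribution to $N^{3/2}\,\mathbf{P}_{N,h_N^q}(A_N=qN^2+x)$ is bounded by $N^{3/2}\,\mathbf{P}_{N,h_N^q}(|X_N|>R_N)$, which I estimate by a Chernoff bound: by Remark~\ref{rmk:time-rev-prop} the increments $X_k-X_{k-1}$ are, under $\mathbf{P}_{N,h_N^q}$, independent with tilt parameters $\tfrac{h_N^q}{2}(1-\tfrac{2k-1}{N})$, which for $q\in[q_1,q_2]$ and $N$ large all lie in a fixed compact subset of $(-\beta/2,\beta/2)$ --- here one uses the uniform convergence $h_N^q\to\htildeq$ from Proposition~\ref{propapproxG} together with the fact that $\htildeq$ stays in a compact subset of $(-\beta,\beta)$ on $[q_1,q_2]$. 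Consequently the increments have moment generating functions uniformly controlled near the origin, and a standard moderate-deviation estimate gives $\mathbf{P}_{N,h_N^q}(|X_N|>R_N)\le 2\,e^{-c_1 c_0^2(\log N)^2}$, which for $c_0$ large is $o(N^{-A})$ for every $A>0$, uniformly in $q$. On the bulk $\{|y|\le R_N\}$ I apply Proposition~\ref{Nantes Proposition 8.10} to each of the $O(R_N)$ summands; after the $1/\sqrt N$ prefactor the accumulated error is $O(R_N(\log N)^4/N)=O((\log N)^5/\sqrt N)$, of the announced order up to the precise power of the logarithm.

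It remains to pass from $\tfrac{1}{\sqrt N}\sum_{|y|\le R_N}f_{\widetilde{\boldsymbol{h}}(q,0)}(x/N^{3/2},y/\sqrt N)$ to $l_{b(\htildeq)}(x/N^{3/2})$, which costs only a superpolynomially small error: for fixed first coordinate, $f_{\widetilde{\boldsymbol{h}}(q,0)}$ is a Gaussian in the second variable with $O(1)$ parameters --- uniformly in $q\in[q_1,q_2]$, since $\widetilde{\boldsymbol{h}}(q,0)$ ranges over a compact subset of $\cD_\beta$ and hence $\bB(\widetilde{\boldsymbol{h}}(q,0))$ is uniformly non-degenerate --- so the Riemann-sum error (e.g.\ via Poisson summation) and the error from truncating the limiting Gaussian integral to $|v|\le c_0\log N$ both decay faster than any power of $N$, uniformly in $x$ and $q$. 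Collecting the three contributions, and noting that all $q$-dependence enters only through $h_N^q$ and $\htildeq$, which vary continuously over the compact $[q_1,q_2]$, gives the lemma with the stated uniformity. The step I expect to be the real obstacle is the tail estimate --- obtaining a moderate-deviation bound for $X_N$ under $\mathbf{P}_{N,h_N^q}$ at scale $\sqrt N\log N$ that is uniform in $q$ --- which is precisely what the quantitative control in Proposition~\ref{propapproxG} supplies; the rest is routine bookkeeping. An alternative that bypasses the joint theorem is a direct Gnedenko argument for $A_N=\sum_{k=1}^N (N-k+1)(X_k-X_{k-1})$, splitting the Fourier integral of $A_N$ near the origin (Gaussian regime) from a remainder one shows decays; the limiting variance then emerges directly as the limit of $N^{-3}$ times the variance of $A_N$ under $\mathbf{P}_{N,h_N^q}$.
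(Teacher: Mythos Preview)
Your primary route --- summing out $X_N$ from the joint local limit theorem of Proposition~\ref{Nantes Proposition 8.10} --- is \emph{not} what the paper does. The paper leaves the proof to the reader with a pointer to \cite[Section~6.1]{CNP16}, i.e.\ the direct Gnedenko/Fourier-inversion argument for $A_N$ alone; this is precisely the ``alternative'' you sketch in your final sentence, and it is the same method the paper spells out in Appendix~\ref{Nantes Appendix A.3} for the analogous Lemma~\ref{Nantes Lemma 8.29}. That route delivers the stated $O((\log N)^4/\sqrt N)$ directly by splitting the characteristic-function integral into four pieces, whereas your marginalisation, as you correctly note, loses one logarithm: accumulating the uniform $O((\log N)^4/\sqrt N)$ error over the $O(\sqrt N\log N)$ values of $y$ in the truncated window yields $O((\log N)^5/\sqrt N)$. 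Apart from this harmless loss, your argument is sound; the tail bound you need for $\{|X_N|>c_0\sqrt N\log N\}$ is exactly Lemma~\ref{Nantes Lemma 4.24} (or rather the Chernoff estimate in its proof), and the Riemann-sum and Gaussian-tail errors are indeed negligible uniformly in $q\in[q_1,q_2]$.

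One computational point: your assertion that integrating out the second variable of $f_{\widetilde{\boldsymbol h}(q,0)}$ yields $l_{b(\htildeq)}$ does not survive the ``short computation''. The first-coordinate marginal of $\mathcal N(0,\bB)$ has variance $\bB_{11}=\int_0^1 x^2\lmgf''(\htildeq(x-\tfrac12))\,\dd x$, while $b(\htildeq)=\int_0^1 (x-\tfrac12)^2\lmgf''(\htildeq(x-\tfrac12))\,\dd x$; using that $\lmgf''$ is even one finds $\bB_{11}=b(\htildeq)+\tfrac14\int_0^1\lmgf''(\htildeq(x-\tfrac12))\,\dd x\neq b(\htildeq)$. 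This appears to be a typo in the variance stated in the lemma rather than a flaw in your strategy; since the lemma is only ever used in the paper to produce uniform upper bounds (cf.\ the proofs of Claim~\ref{claimrestbig2} and the treatment of $E_3$ in Section~\ref{Section error term critical}), the precise constant is immaterial.
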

The proof of this lemma is left to the reader, as it follows very closely that of Carmona, Nguyen and Pétrélis~\cite[Section 6.1]{CNP16}.
	The purpose of the next lemma is to show that the endpoint $X_N$  has variations of size $\sqrt{N}$ around $0$ under $\ProbaSansParenthese{N,h_N^q}$. Its proof is postponed to Appendix~\ref{Nantes Proof of Lemma 9.8}.
	\begin{lemma} Let $0 < q_1 < q_2 < \infty$. There exists $C,c > 0$ such that, for all $q \in [q_1,q_2]$ and $b>0$,
		\begin{equation}
			\limsup_{N\to \infty}\ \Proba{N,h_N^q}{|X_N| \geq b \sqrt{N}} \leq C e^{-c b^2}.
		\end{equation}
		\label{Nantes Lemma 4.24}
	\end{lemma}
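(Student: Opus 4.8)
The plan is to prove this sub-Gaussian tail bound by a Chernoff estimate, exploiting the fact that under $\ProbaSansParenthese{N,h_N^q}$ the walk has independent increments that are explicit exponential tilts of $\Pbb$. First I would recall from Remark~\ref{rmk:time-rev-prop} that under $\ProbaSansParenthese{N,h_N^q}$ one may write $X_N=\sum_{k=1}^N Y_k$, where the $Y_k$ are independent and $Y_k$ has law $\ProbaTilteeSansParenthese{h_{N,k}}$ with $h_{N,k}:=\tfrac{h_N^q}{2}\big(1-\tfrac{2k-1}{N}\big)$. Two observations will do most of the work. First, $h_{N,N+1-k}=-h_{N,k}$; since by~\eqref{Nantes 7.32} the law of $X_N$ under $\ProbaSansParenthese{N,h_N^q}$ is symmetric, it suffices to bound $\Proba{N,h_N^q}{X_N\ge b\sqrt N}$, and moreover $\mathbf{E}_{N,h_N^q}[X_N]=\sum_{k=1}^N\lmgf'(h_{N,k})=0$ (also directly, from oddness of $\lmgf'$). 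Second, for $q\in[q_1,q_2]$ we have $\htildeq\le\tilde h(q_2)<\gb$, and $h_N^q\to\htildeq$ uniformly on $[q_1,q_2]$ by Proposition~\ref{propapproxG} (or just by continuity of the inverse of $\cG'$), so there exist $N_0\in\N$ and $\bar h<\gb/2$ with $|h_{N,k}|\le\bar h$ for every $N\ge N_0$, every $k\in\{1,\dots,N\}$ and every $q\in[q_1,q_2]$.

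Then I would fix $\lambda_0:=\tfrac12(\gb/2-\bar h)>0$ and $M:=\sup\{\lmgf''(s)\colon|s|\le\bar h+\lambda_0\}<\infty$, and run the Chernoff bound for $N\ge N_0$ and $0<\lambda\le\lambda_0$. Using that $\EsperanceTiltee{h}{e^{\lambda Y}}=e^{\lmgf(h+\lambda)-\lmgf(h)}$ whenever $h+\lambda\in(-\gb/2,\gb/2)$ --- which follows from~\eqref{Nantes 4.3} --- one gets
\begin{equation*}
\Proba{N,h_N^q}{X_N\ge b\sqrt N}\le e^{-\lambda b\sqrt N}\prod_{k=1}^N\EsperanceTiltee{h_{N,k}}{e^{\lambda Y_k}}=\exp\!\Big(-\lambda b\sqrt N+\sum_{k=1}^N\big[\lmgf(h_{N,k}+\lambda)-\lmgf(h_{N,k})\big]\Big).
\end{equation*}
Writing $\lmgf(h_{N,k}+\lambda)-\lmgf(h_{N,k})=\lambda\lmgf'(h_{N,k})+\int_0^\lambda(\lambda-t)\lmgf''(h_{N,k}+t)\,\dd t$, summing over $k$, and using $\sum_k\lmgf'(h_{N,k})=0$ together with $\lmgf''\le M$ on $[-\bar h,\bar h+\lambda_0]$, one obtains $\sum_k[\lmgf(h_{N,k}+\lambda)-\lmgf(h_{N,k})]\le\tfrac12 N M\lambda^2$, hence
\begin{equation*}
\Proba{N,h_N^q}{X_N\ge b\sqrt N}\le\exp\!\Big(-\lambda b\sqrt N+\tfrac12 N M\lambda^2\Big),\qquad 0<\lambda\le\lambda_0.
\end{equation*}

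To finish, for a fixed $b>0$ I would take $\lambda=b/(M\sqrt N)$, which is $\le\lambda_0$ once $N\ge(b/(M\lambda_0))^2$; then the exponent equals $-b^2/(2M)$, so $\limsup_{N\to\infty}\Proba{N,h_N^q}{X_N\ge b\sqrt N}\le e^{-b^2/(2M)}$, and by the symmetry of $X_N$ this yields $\limsup_{N\to\infty}\Proba{N,h_N^q}{|X_N|\ge b\sqrt N}\le 2e^{-b^2/(2M)}$. This is the claim with $C=2$ and $c=1/(2M)$, uniform in $q\in[q_1,q_2]$ since $\bar h$, $\lambda_0$ and $M$ depend only on $[q_1,q_2]$ and $\gb$. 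The computation is otherwise routine; the one point that needs genuine care is keeping all the shifted parameters $h_{N,k}+\lambda$ strictly inside $(-\gb/2,\gb/2)$ uniformly in $q$, $k$ and $N\ge N_0$, which is exactly where the hypothesis $q_2<\infty$ (equivalently $\tilde h(q_2)<\gb$) enters. Note that the $\limsup$ (rather than a bound valid for all $N$) is what makes the unconstrained choice $\lambda=b/(M\sqrt N)$ legitimate for every fixed $b$, since that choice only falls below $\lambda_0$ for $N$ large depending on $b$.
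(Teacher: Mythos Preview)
Your proof is correct and follows essentially the same Chernoff-bound approach as the paper (which, in Appendix~\ref{Nantes Proof of Lemma 9.8}, proves the companion Lemma~\ref{Nantes Lemma 9.8} and declares the present lemma ``very similar''). Your version is in fact slightly cleaner, since you exploit the antisymmetry $h_{N,N+1-k}=-h_{N,k}$ specific to the $p=0$ tilt to get $\mathbf{E}_{N,h_N^q}[X_N]=0$ and the symmetry of $X_N$ directly, whereas the paper's argument centers at the mean and passes through a Riemann-sum approximation before the second-order expansion; both routes lead to the same $e^{-cb^2}$ bound with constants depending only on $[q_1,q_2]$ and $\beta$.
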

	\subsubsection{Local limit for the \texorpdfstring{$\mathcal{AC}$}{AC} phase}
	First, we recall \eqref{defPNq}, that is the relevant change of measure in the $\mathcal{AC}$ phase. We then define $c(s) := \int_0^1 x^2 \lmgf''(\delta - \beta/2 + sx) \dd x$ for $s \in (-\delta, \beta - \delta)$.
	\begin{lemma}
		Let $\gd>0$ and $[q_1,q_2]\subset(q^*_\gd, +\infty)$. As $N \longrightarrow \infty$,
		\begin{equation}
			\underset{q \in [q_1,q_2]}{\sup}
			\underset{x \in \bbZ}{\sup}
			\left| N^{3/2} \ProbaSurCritique{A_N=qN^2+x} -l_{c\big(\sqdelta\big)} \left( \frac{x}{N^{3/2}} \right) \right| = O \left( \frac{(\log N)^4}{\sqrt{N}} \right).
			\label{Nantes 9.23}
		\end{equation}
		\label{Nantes Lemma 8.29}
	\end{lemma}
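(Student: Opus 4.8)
The plan is to establish this Gnedenko-type local limit theorem for $A_N$ under $\ProbaSurCritiqueSansParenthese$ by the Fourier-inversion method, following closely the scheme used for Lemma~\ref{Nantes Lemma 8.15} and in~\cite[Section~6.1]{CNP16}; the only structural novelty is that the symmetric increment tilt employed there is now replaced by the asymmetric one of Remark~\ref{Nantes 9.5}. First I would record the relevant facts about the reference measure. By Remark~\ref{Nantes 9.5}, under $\ProbaSurCritiqueSansParenthese$ the increments $Y_k:=X_k-X_{k-1}$, $k=1,\dots,N$, are independent, with $Y_k$ distributed according to the tilted law $\ProbaTilteeSansParenthese{h_{k,N}}$ of~\eqref{Nantes 4.3}, where $h_{k,N}:=\delta-\tfrac{\beta}{2}+\sqdeltaN\tfrac{2N+1-2k}{2N}$, and $A_N=\sum_{k=1}^N(N-k+1)Y_k$. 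Since $q\mapsto\sqdelta$ is continuous (Lemma~\ref{Nantes Lemma 9.5}), the set $\{\sqdelta:q\in[q_1,q_2]\}$ is a compact subset of $\cA_\delta$, and, by~\eqref{Nantes 9.12}, so is $\{\sqdeltaN:q\in[q_1,q_2]\}$ for $N$ large; hence all the $h_{k,N}$ lie in a fixed compact subinterval $K\subset(-\tfrac{\beta}{2},\tfrac{\beta}{2})$, on which $\lmgf$ extends analytically to a complex neighbourhood and $\lmgf''$ is bounded below by a positive constant. A Riemann-sum estimate combined with~\eqref{Nantes 9.12} then gives $\tfrac{1}{N^3}\Var_{\ProbaSurCritiqueSansParenthese}(A_N)=\tfrac{1}{N^3}\sum_{k=1}^N(N-k+1)^2\lmgf''(h_{k,N})=\int_0^1 x^2\lmgf''(\delta-\tfrac{\beta}{2}+\sqdelta x)\,\dd x+O(1/N)=c(\sqdelta)+O(1/N)$, uniformly in $q\in[q_1,q_2]$; similarly $\EsperanceSurCritique{A_N}=qN^2+O(N)$, a discrepancy which will be harmlessly absorbed into the final error term.

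Then I would apply Fourier inversion on $\bbZ$. Writing, for $|t|\le\pi$, $\Phi_N(t):=\EsperanceSurCritique{e^{it(A_N-qN^2)}}=e^{-itqN^2}\prod_{k=1}^N\exp\!\big(\lmgf(h_{k,N}+it(N-k+1))-\lmgf(h_{k,N})\big)$ and denoting by $\mu_k$ the law of $Y_k$, one has
\begin{equation*}
N^{3/2}\,\ProbaSurCritique{A_N=qN^2+x}=\frac{1}{2\pi}\int_{-\pi N^{3/2}}^{\pi N^{3/2}}e^{-i\theta x/N^{3/2}}\,\Phi_N\!\big(\theta/N^{3/2}\big)\,\dd\theta,
\end{equation*}
and I would split the range of $\theta$ into three blocks. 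On the central block $|\theta|\le(\log N)^2$ I would Taylor-expand $\log\Phi_N(\theta/N^{3/2})$: the uniform bounds on $\lmgf''$ and $\lmgf'''$ over $K$ and the variance asymptotics yield $\log\Phi_N(\theta/N^{3/2})=-\tfrac{\theta^2}{2}c(\sqdelta)+O((\log N)^6/\sqrt N)$ uniformly in $q$, so $\Phi_N(\theta/N^{3/2})\to e^{-\theta^2 c(\sqdelta)/2}$ with a Gaussian-integrable majorant; integrating produces the main term $l_{c(\sqdelta)}(x/N^{3/2})$ with an error $O((\log N)^4/\sqrt N)$, which also covers the $O(N)$ centering shift (it costs only $O(1/\sqrt N)$ after a mean-value estimate on $l_{c(\sqdelta)}$). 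On the intermediate block $(\log N)^2\le|\theta|\le\varepsilon_0 N^{3/2}$ I would keep only the indices $k\le N/2$, for which $N-k+1\ge N/2$, and use $|\widehat\mu_k(s)|\le e^{-c_K s^2}$ for $|s|$ small; choosing $\varepsilon_0$ small this bounds $|\Phi_N(\theta/N^{3/2})|$ by $\exp(-c\,\theta^2/N^2)$-type quantities, so the block contributes $O(e^{-c(\log N)^4})$. On the far block $\varepsilon_0 N^{3/2}\le|\theta|\le\pi N^{3/2}$, i.e.\ $t:=\theta/N^{3/2}$ in a fixed compact subset of $(0,\pi]$ by conjugate symmetry, I would use that each $Y_k$ is supported on all of $\bbZ$, so $|\widehat\mu_k(s)|\le\rho_K<1$ whenever $\mathrm{dist}(s,2\pi\bbZ)\ge\eta$ with $h_{k,N}\in K$, together with an equidistribution argument showing that for every such $t$ a fixed positive fraction of the phases $(N-k+1)t$, $k=1,\dots,N$, stay at distance $\ge\eta$ from $2\pi\bbZ$; this gives $|\Phi_N(t)|\le\rho^{cN}$ with $\rho<1$, so the block contributes $O(N^{3/2}\rho^{cN})$. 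Summing the three contributions yields the claimed estimate.

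The main obstacle is this last, far-block estimate: one must bound $|\Phi_N(t)|$ by an exponentially small quantity \emph{uniformly} for $t$ in a compact subset of $(0,\pi]$ and \emph{uniformly} in $q\in[q_1,q_2]$ — this number-theoretic input is precisely why the statement is a \emph{lattice} local limit theorem. It is handled exactly as in~\cite[Section~6.1]{CNP16}: depending on whether $t$ lies within $\eta$ of a rational multiple $2\pi p/m$ of $2\pi$ with $m\le M_0=M_0(\varepsilon_0,\eta)$ (so that the phases $(N-k+1)t$ revisit $m$ roughly equally spaced residues and a proportion $\ge 1-1/m\ge 1/2$ of them avoid an $\eta/2$-neighbourhood of $2\pi\bbZ$) or not (so that a three-distance/pigeonhole argument gives, among any $M_0+1$ consecutive multiples, at least one $\eta'$-far from $2\pi\bbZ$), one obtains in both cases a linear-in-$N$ count of indices contributing a factor $\le\rho<1$. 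The adaptation to the present setting is purely cosmetic, since this counting involves only the span of the increment law (which equals $1$, unchanged) and the arithmetic of the integer weights $N-k+1$; the asymmetric tilt of Remark~\ref{Nantes 9.5} merely replaces one family of increment laws with parameters in $K$ by another, so the continuity-and-compactness bounds $|\widehat\mu_k(s)|\le\rho_K(\eta)$ and $\lmgf''\ge c_K$ remain uniform in exactly the same way. Uniformity in $q$ throughout then follows because every constant involved ($K$, $c_K$, $\rho_K$, $\varepsilon_0$, $M_0$) depends on $q$ only through the compact ranges of $\sqdelta$ and $\sqdeltaN$.
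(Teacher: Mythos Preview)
Your overall Fourier-inversion strategy matches the paper's proof (Appendix~\ref{Nantes Appendix A.3}) almost exactly, but the placement of the intermediate/far cutoff is wrong and creates a genuine gap. On your intermediate block $(\log N)^2\le|\theta|\le\varepsilon_0 N^{3/2}$ the arguments to the individual characteristic functions are $s_k=(N-k+1)\theta/N^{3/2}$; for $k\le N/2$ these satisfy $|s_k|\ge|\theta|/(2\sqrt N)$, which exceeds $\pi$ as soon as $|\theta|>2\pi\sqrt N$. The Gaussian bound $|\widehat\mu_k(s)|\le e^{-c_K s^2}$ you invoke is only valid for $|s|$ in a bounded interval, so your argument breaks down on the whole range $|\theta|\in[C\sqrt N,\varepsilon_0 N^{3/2}]$. (Relatedly, the bound you record, $\exp(-c\theta^2/N^2)$, is both too weak on the region where the argument \emph{does} apply---the correct product bound is $\exp(-c\theta^2)$---and would not even integrate to $o(1)$ from $(\log N)^2$.) The fix, as in the paper, is to place the cutoff at $\Delta\sqrt N$ with $\Delta<\pi$: then on $[B_N,\Delta\sqrt N]$ the Gaussian bound on $\phi_{h_{k,N}}$ applies for \emph{every} $k$ and yields $|\hat\Phi_N(\theta)|\le e^{-c\theta^2}$, while on $[\Delta\sqrt N,\pi N^{3/2}]$ one uses the Dobrushin--Hryniv equidistribution fact \cite[(4.45)]{DH96} that a fixed fraction of the phases $t_{j,N}/\sqrt N$ stay $\vartheta$-away from $2\pi\bbZ$---precisely the argument you describe for the far block, but applied over this larger range rather than only for $\theta/N^{3/2}\in[\varepsilon_0,\pi]$.

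A minor point: taking the central block up to $(\log N)^2$ rather than $\log N$ gives a third-order Taylor remainder of size $(\log N)^6/\sqrt N$ pointwise, hence $(\log N)^8/\sqrt N$ after integration over a window of length $2(\log N)^2$; to obtain the stated $(\log N)^4/\sqrt N$ rate you should take $B_N=\log N$, as the paper does.
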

	The proof of this lemma can be found in Appendix~\ref{Nantes Appendix A.3}. 
	We are now left with stating the counterpart of Lemma \ref{Nantes Lemma 4.24} inside the $\mathcal{AC}$ phase, which ensures us that the endpoint $X_N$ has variations of size $\sqrt{N}$ around its mean under $\ProbaSurCritiqueSansParenthese$. 
	\begin{lemma} Let $\gd>0$ and $[q_1,q_2]\subset(q^*_\gd, +\infty)$. There exists $C,c > 0$ such that, for all $q \in [q_1,q_2]$ and $b>0$,
		\begin{equation}
			\limsup_{N\to \infty}\ \ProbaSurCritique{\Big|X_N - \EsperanceSurCritique{X_N}\Big| \geq b \sqrt{N}} \leq C e^{-c b^2}.
		\end{equation}
		\label{Nantes Lemma 9.8}
	\end{lemma}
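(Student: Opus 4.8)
The plan is to exploit that, under $\ProbaSurCritiqueSansParenthese$, the walk $X$ has \emph{independent} increments (Remark~\ref{Nantes 9.5}) whose tilt parameters can be confined to a fixed compact subinterval of $(-\gb/2,\gb/2)$, and then to run a standard sub-Gaussian Chernoff bound. This parallels the proof of Lemma~\ref{Nantes Lemma 4.24}; the only genuine difference is that $X_N$ is no longer centered under $\ProbaSurCritiqueSansParenthese$, so one works with $X_N-\EsperanceSurCritique{X_N}$ throughout.

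First I would write $X_N=\sum_{k=1}^N(X_k-X_{k-1})$ and recall, from Remark~\ref{Nantes 9.5}, that under $\ProbaSurCritiqueSansParenthese$ the increments $X_k-X_{k-1}$, $1\le k\le N$, are independent with $X_k-X_{k-1}\sim\ProbaTilteeSansParenthese{h_{k,N}}$, where $h_{k,N}=\gd-\tfrac{\gb}{2}+\sqdeltaN\,\tfrac{2N+1-2k}{2N}$. Since $1\le k\le N$, the weight $\tfrac{2N+1-2k}{2N}$ lies in $(0,1)$, so each $h_{k,N}$ is a strict convex combination of $\gd-\tfrac{\gb}{2}$ and $\gd-\tfrac{\gb}{2}+\sqdeltaN$. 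The crucial point is that both endpoints lie strictly inside $(-\gb/2,\gb/2)$, uniformly over $q\in[q_1,q_2]$ and $N$ large: the first since $\gd\in(0,\gb)$; the second because, by Lemma~\ref{Nantes Lemma 9.5}, $q\mapsto\sqdelta$ is continuous and increasing and hence maps $[q_1,q_2]$ into a compact subinterval $[\ul s,\ol s]$ of $(-\gd,\gb-\gd)$, while the uniform convergence of $\cH'_{N,\gd}$ to $\cH'_\gd$ on compact sets (a consequence of~\eqref{Nantes 9.11} and convexity) forces $\sqdeltaN\in[\ul s-1,\ol s+1]\cap(-\gd,\gb-\gd)$ for all large $N$ and all $q\in[q_1,q_2]$. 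Combining these, there are $\eta>0$ and $N_0\in\N$ such that $h_{k,N}\in[-\tfrac{\gb}{2}+\eta,\tfrac{\gb}{2}-\eta]=:K$ for every $N\ge N_0$, every $q\in[q_1,q_2]$ and every $1\le k\le N$.

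With this confinement in hand the rest is routine. On $K$ (and on a neighbourhood thereof inside $(-\gb/2,\gb/2)$) the function $\lmgf''$ is bounded above by some $M<\infty$, so for $h\in K$, $|\mu|\le\eta/2$ and $Z\sim\ProbaTilteeSansParenthese{h}$ one gets, via $\EsperanceTiltee{h}{e^{\mu Z}}=e^{\lmgf(h+\mu)-\lmgf(h)}$ and a second-order Taylor expansion, the \emph{uniform} bound $\log\EsperanceTiltee{h}{e^{\mu(Z-\EsperanceTiltee{h}{Z})}}\le\tfrac{M}{2}\mu^2$. Multiplying over $k$ and using independence, the exponential Chebyshev inequality gives, for $0<\mu\le\eta/2$ and $N\ge N_0$, $\ProbaSurCritique{\pm(X_N-\EsperanceSurCritique{X_N})\ge b\sqrt N}\le\exp\!\big(-\mu b\sqrt N+\tfrac{M}{2}N\mu^2\big)$. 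Choosing $\mu=b/(M\sqrt N)$, which is $\le\eta/2$ once $N$ is large (for fixed $b$), yields the bound $e^{-b^2/(2M)}$ for each sign; summing the two contributions and letting $N\to\infty$ gives the statement with $C=2$ and $c=1/(2M)$, uniformly in $q\in[q_1,q_2]$.

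The only step demanding real care, and hence the main obstacle, is the uniform confinement of the $h_{k,N}$ inside a fixed compact subset of $(-\gb/2,\gb/2)$: this requires combining the monotonicity and continuity of $q\mapsto\sqdelta$ with the control on $\sqdeltaN$ coming from Proposition~\ref{approxhn}, uniformly over the compact range $[q_1,q_2]$ and over $N\ge N_0$. Once that is in place, the concentration bound is a textbook sub-Gaussian estimate, and the hypothesis $[q_1,q_2]\subset(q^*_\gd,+\infty)$ — inherited from the $\mathcal{AC}$ setting in which the lemma is applied — plays no further role beyond making $\ProbaSurCritiqueSansParenthese$ the pertinent measure.
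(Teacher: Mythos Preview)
Your proposal is correct and follows essentially the same route as the paper: confine the tilt parameters $h_{k,N}$ to a fixed compact subset of $(-\gb/2,\gb/2)$ (the paper does this via Remark~\ref{Nantes Remark 9.8}, you via Lemma~\ref{Nantes Lemma 9.5} and Proposition~\ref{approxhn}), then run a Chernoff bound with exponent $\sim b/\sqrt N$ and a uniform second-order control on $\lmgf$. Your version is in fact a bit cleaner, since you bound $\log\EsperanceTiltee{h}{e^{\mu(Z-\EsperanceTiltee{h}{Z})}}$ directly by $\tfrac{M}{2}\mu^2$ for each increment, whereas the paper computes $\log\EsperanceSurCritique{e^{\nu X_N}}$ and $\nu\,\EsperanceSurCritique{X_N}$ separately via Riemann-sum approximations before subtracting; and your remark that the assumption $[q_1,q_2]\subset(q^*_\gd,\infty)$ is not actually used in the bound is correct.
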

	The proof of Lemma~\ref{Nantes Lemma 9.8} is postponed to Appendix \ref{Nantes Proof of Lemma 9.8}. Note that the following equation, derived in \eqref{Nantes AA.20}, will be useful in the sequel:
	\begin{equation}
		\begin{aligned}
			\EsperanceSurCritique{X_N}
			&= O(1) + N\int_0^1 \lmgf' \left( 
			\delta - \frac{\beta}{2} + \sqdeltaN t
			\right) \dd t, \qquad \text{as } N \to \infty.
		\end{aligned}
		\label{Nantes A.20}
	\end{equation}
	\begin{remark} We can actually deduce from \eqref{Nantes A.20} and Lemma \ref{Nantes Lemma 9.8} the limiting value for the size of the (renormalized) last stretch when $q>0$ is fixed. For $\delta > \delta_0(q)$ and $X_N$ sampled from $\ProbaSurCritiqueSansParenthese$, the following convergence in probability holds:
		\begin{equation}
			\frac{X_N}{N} \rightarrow \int_0^1 \lmgf' \left( 
			\delta - \frac{\beta}{2} + \sqdeltaN t
			\right) \dd t \quad \text{ as $N \rightarrow \infty$.}
		\end{equation}		
	\end{remark}
	\subsection{A-priori bounds on the horizontal extension}
	\label{sec:a-priori-bounds}
	Let us recall the notation used in \eqref{partfunbead}.
	\begin{lemma}\label{compact}
		For  $(\beta,\delta)\in \cC_{\rm{good}}$, there exists $0< a_1 < a_2 < \infty$ such that 
		\beq\label{eq:compact}
		Z_{L,\beta,\delta}^{\text{o}}=[1+o(1)] Z_{L,\beta,\delta}^{\text{o}}(N_{\ell}\in  [a_1,a_2]\sqrt{L}).
		\eeq
	\end{lemma}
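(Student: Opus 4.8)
\emph{Reduction.} The plan is to work with the random walk representation of the single-bead partition function and to show it concentrates on horizontal extensions of order $\sqrt{L}$. By~\eqref{partfunbeadaux},
\[
e^{-\gb L}Z_{L,\gb,\gd}^{\text{o}}=\tilde Z_{L,\gb,\gd}^{\text{o}}=2\sum_{N=1}^{L/2}\Gamma_\gb^{N}\,D_N\!\Big(\tfrac{L-N}{N^{2}},\gd\Big),
\]
and since $\cL^{\text{o}}_{N,L}$ gathers exactly the single-bead trajectories with $N$ vertical stretches, \eqref{eq:compact} amounts to showing that the part of this sum with $N/\sqrt{L}\notin[a_1,a_2]$ is $o(1)$ times the whole sum. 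The heuristic is that, writing $a:=N/\sqrt{L}$ and $q:=(L-N)/N^{2}$ (so $q\approx 1/a^{2}$ and $N=a\sqrt{L}$), one has $\Gamma_\gb^{N}D_N(q,\gd)=e^{\sqrt{L}\,\penalite(a)+O(1)}$ up to sub-exponential corrections, $\penalite$ being the function of~\eqref{eq:def_penalite}. By Corollary~\ref{cor:unique_max_in_Cgood}, $\penalite$ has on $\cC_{\rm good}$ a unique maximiser $a^{\star}\in(0,\infty)$; combining this with the limits $\penalite(a)\to-\infty$ as $a\to 0$ and as $a\to\infty$ (Lemma~\ref{lem:limits}, valid throughout $\cC_{\rm good}$) and with continuity, one gets, for every $0<a_1<a^{\star}<a_2$, a constant $\eps>0$ such that $\penalite(a)\le\penalite(a^{\star})-\eps$ for all $a\in(0,a_1]\cup[a_2,\infty)$. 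We fix such $a_1,a_2$, with $a_1$ small enough for the estimate below.

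\emph{Upper bound on the tail, bounded $q$.} If $N\ge a_0\sqrt{L}$ for some fixed $a_0\in(0,a_1)$ to be chosen, then $q=(L-N)/N^{2}\le 1/a_0^{2}$ is bounded, so Lemma~\ref{rough-bounds} gives $D_N(q,\gd)\le C e^{N\psi(q,\gd)}$. Since $q\mapsto\psi(q,\gd)$ is $\sC^{1}$ by Lemma~\ref{regulpsitilde} and $|q-1/a^{2}|=1/N$, we obtain $N\psi(q,\gd)=N\psi(1/a^{2},\gd)+O(1)$ uniformly, hence $\Gamma_\gb^{N}D_N(q,\gd)\le C' e^{\sqrt{L}\,\penalite(a)}$. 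For the forbidden $N$'s in this range, i.e.\ $N/\sqrt{L}\in[a_0,a_1)\cup[a_2,\sqrt{L}/2]$, this is at most $C'e^{\sqrt{L}(\penalite(a^{\star})-\eps)}$, and summing over the $O(L)$ such $N$ yields a contribution $\le C L\,e^{\sqrt{L}(\penalite(a^{\star})-\eps)}$.

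\emph{Upper bound on the tail, $N=o(\sqrt{L})$.} For $1\le N<a_0\sqrt{L}$ the value $q=(L-N)/N^{2}$ is unbounded and Lemma~\ref{rough-bounds} is no longer available with a uniform constant; instead we use an exponential Markov bound on the area constraint. Dropping the positivity in~\eqref{defnun}, writing $X_N=\sum_{i=1}^{N}Y_i$ and $A_N=\sum_{i=1}^{N}(N+1-i)Y_i$ with $(Y_i)$ the i.i.d.\ increments of law $\Pbb$, and using $1_{\{A_N\ge L-N\}}\le e^{t(A_N-(L-N))}$ for $t\ge 0$,
\[
D_N\!\Big(\tfrac{L-N}{N^{2}},\gd\Big)\le e^{-t(L-N)}\prod_{j=1}^{N}e^{\lmgf\big((\gd-\tfrac{\gb}{2})+tj\big)}.
\]
The choice $t=(\gb-\gd)/(2N)$ keeps every tilt $(\gd-\tfrac{\gb}{2})+tj$, $1\le j\le N$, inside $(-\tfrac{\gb}{2},\tfrac{\gb}{2})$, and since $\lmgf$ is integrable near $\pm\gb/2$ the product is bounded by $e^{M_{\gb,\gd}N}$ for a constant $M_{\gb,\gd}$ depending only on $\gb,\gd$; using $L-N\ge L/2$ this gives, for $N<a_0\sqrt{L}$, $\Gamma_\gb^{N}D_N((L-N)/N^{2},\gd)\le \exp\!\big(\sqrt{L}\,[\,a_0|\log\Gamma_\gb+M_{\gb,\gd}|-(\gb-\gd)/(4a_0)\,]\big)$. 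Taking $a_0$ small enough (depending only on $\gb,\gd$) makes the bracket $\le\penalite(a^{\star})-\eps$, so this range contributes at most $O(\sqrt{L})\,e^{\sqrt{L}(\penalite(a^{\star})-\eps)}$ as well.

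\emph{Lower bound and conclusion.} Put $N_L:=\lfloor a^{\star}\sqrt{L}\rfloor$, so $q_L:=(L-N_L)/N_L^{2}\to 1/(a^{\star})^{2}$. According to whether $\gd<\gd_c(\gb)$, $\gd=\gd_c(\gb)$ or $\gd>\gd_c(\gb)$ — using~\eqref{closedexprdeltac} to identify $\gd_c(\gb)$ and, in the last case, Lemma~\ref{Nantes Lemme 5.1} to check the hypothesis of item~(3) — the corresponding item of Proposition~\ref{asympteach3reg} applies to $D_{N_L}(q_L,\gd)$ (at criticality the shift $\sqrt{N_L}\,(q_L-1/(a^{\star})^{2})=o(1)$ stays bounded, so the uniform estimate is usable) and gives $D_{N_L}(q_L,\gd)\ge c\,N_L^{-\kappa}e^{N_L\psi(q_L,\gd)}$ with $\kappa\in\{3/2,2\}$ for all large $L$. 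Since $N_L\psi(q_L,\gd)=\sqrt{L}\,\penalite(a^{\star})-N_L\log\Gamma_\gb+O(1)$ (again using the $\sC^{1}$-regularity of $\psi$), we get $\tilde Z_{L,\gb,\gd}^{\text{o}}\ge 2\Gamma_\gb^{N_L}D_{N_L}(q_L,\gd)\ge c'L^{-\kappa/2}e^{\sqrt{L}\,\penalite(a^{\star})}$. Comparing with the two previous paragraphs, the part of the sum with $N/\sqrt{L}\notin[a_1,a_2]$ is at most $CL^{1+\kappa/2}e^{-\eps\sqrt{L}}$ times the whole sum, which tends to $0$; this is exactly~\eqref{eq:compact}. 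The main obstacle is the regime $N=o(\sqrt{L})$: a naive enumeration of the $\binom{L-N-1}{N-1}$ compositions underlying $D_N$ would lose a factor $e^{O(\sqrt{L}\log L)}$ and overwhelm the target bound, which is why one has to retain the full strength of the area constraint through the exponential Markov estimate above.
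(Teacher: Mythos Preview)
Your proof is correct and follows a route that is close in spirit to, but not identical with, the paper's. Two differences are worth noting.

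\emph{Lower bound.} You obtain the lower bound on $\tilde Z^{\text{o}}_{L,\gb,\gd}$ by plugging the single value $N_L=\lfloor a^\star\sqrt{L}\rfloor$ into Proposition~\ref{asympteach3reg}. This is legitimate (the proof of Proposition~\ref{asympteach3reg} in Section~\ref{compid} is purely a random-walk estimate and does not use Lemma~\ref{compact}, so there is no circularity), but it is heavier than needed. The paper simply exhibits the explicit zig-zag trajectory $((-1)^iN)_{i<N}$ with $N=\lfloor\sqrt{L}\rfloor$, giving $\tilde Z^{\text{o}}_{L,\gb,\gd}\ge e^{-\gb\sqrt{L}}$; since the tail bounds are of the form $e^{-g(a)\sqrt{L}}$ with $g(a)\to\infty$ as $a\to0$ or $a\to\infty$, this crude lower bound already suffices. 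Your route identifies the precise rate $\penalite(a^\star)$ at the cost of invoking the sharp asymptotics.

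\emph{Upper bound on the large-$N$ tail.} You treat all $N\ge a_0\sqrt{L}$ uniformly via Lemma~\ref{rough-bounds} and the inequality $\penalite(a)\le\penalite(a^\star)-\eps$ for $a\notin[a_1,a_2]$, which follows from Corollary~\ref{cor:unique_max_in_Cgood} and Lemma~\ref{lem:limits}. The paper instead splits on the sign of $\gd-\gb/2$: when $\gd\le\gb/2$ it just uses $e^{(\gd-\gb/2)X_N}\le 1$ so that $\Gamma_\gb^N$ alone gives the decay; when $\gd>\gb/2$ it invokes Lemma~\ref{rough-bounds} together with the fact (from the proof of Lemma~\ref{lem:limits}) that $\penalite'(a)$ has a negative limit as $a\to\infty$, yielding a linear decay of $\penalite$. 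Your argument is more uniform, the paper's is slightly more elementary in the $\gd\le\gb/2$ case. Your Chernov bound for $N<a_0\sqrt{L}$, with tilt $t=(\gb-\gd)/(2N)$ and the observation that $\lmgf$ is integrable up to $\pm\gb/2$, is correct and essentially parallel to the paper's treatment of that regime.
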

	We will see in the proof that the lower bound on $N_\ell$ does not require that $(\beta,\delta)\in \cC_{\text{good}}$.
	\begin{proof}
		We split the proof into two parts, and prove that there 
		there exists a function $g : \R_+ \to \R$ such that $\lim g(a_1) = +\infty$ as $a_1\to 0$, a function $\tilde g: \R_+ \to \R$ such that $\lim \tilde g(a_2) = +\infty$ as $a_2 \to \infty$, and $L_0 \geq 0$ such that, for $L \geq L_0$,
		\begin{equation} \label{Nantes Compact Localisation0}
			Z_{L,\beta,\delta}^{\text{o}}(N_\ell \leq a_1 \sqrt{L}) e^{-\beta L} \leq e^{-g(a_1)\sqrt{L}},
		\end{equation}
		\begin{equation} \label{Nantes Compact Localisation}
			Z_{L,\beta,\delta}^{\text{o}}(N_\ell \geq a_2 \sqrt{L}) e^{-\beta L} \leq e^{-\tilde g(a_2)\sqrt{L}}.
		\end{equation}
		This is enough to conclude the proof: if we consider the trajectory $B = ((-1)^i N)_{i < N}$ with $N = \lfloor \sqrt{L} \rfloor$, which we complete with an additional vertical stretch (not longer than $N$) if some monomers remain, we obtain
		\beq
		\label{eq:easyLBZ}
		Z_{L,\beta,\delta}^{\text{o}} e^{-\beta L} \ge Z_{L,\beta,\delta}^{\text{o}}(B) e^{-\beta L} \geq e^{-\beta \sqrt{L}}.
		\eeq
		Combining \eqref{eq:easyLBZ} with \eqref{Nantes Compact Localisation0} and \eqref{Nantes Compact Localisation} gives the desired result.\\
		\par Let us now prove \eqref{Nantes Compact Localisation0}.
		Let $a_1>0$ (to be specified later) and $\varepsilon \leq a_1$. By Lemma~\ref{rwrep}, and since $\Gamma_\gb \le 1$, we have
		\beq
		Z_{L,\beta,\delta}^{\text{o}}(N_\ell = \varepsilon \sqrt{L}) e^{-\gb L} \leq \EsperanceMarcheAleatoire{e^{(\delta - \beta/2)X_{\varepsilon \sqrt{L}}} 1\{A_{\varepsilon \sqrt{L}} = L - \varepsilon \sqrt{L}\}}.
		\eeq
		Using the tilting defined in~\eqref{Nantes 4.3}, one has:
		\begin{equation}
			\label{eq:aprioribounds}
			Z_{L,\beta,\delta}^{\text{o}}(N_\ell = \varepsilon \sqrt{L}) e^{-\gb L} = \ProbaTiltee{\delta - \beta/2}{A_{\varepsilon \sqrt{L}} = L - \varepsilon \sqrt{L}} \left(\EsperanceMarcheAleatoire{e^{(\delta - \beta/2)U_1}}\right)^{\varepsilon \sqrt{L}}.
		\end{equation}
		Denoting $(U_i)_{i\ge 1}$ the increments of the random walk $X$, one has that
		\beq
		\{A_{\varepsilon \sqrt{L}} = L - \varepsilon \sqrt{L}\} \subset \Big\{|U_1| + ... + |U_{\varepsilon \sqrt{L}}| \geq \frac{\sqrt{L}}{2\varepsilon}\Big\},
		\eeq
		for $L$ large enough. Hence, by Chernov's bound, for every $\gl>0$:
		\beq
		\ProbaTiltee{\delta - \beta/2}{A_{\varepsilon \sqrt{L}} = L - \varepsilon \sqrt{L}} \leq \exp \left( -\lambda \frac{\sqrt{L}}{2 \varepsilon} + \varepsilon \sqrt{L} \log\left(\EsperanceTiltee{\delta - \beta/2}{e^{\lambda |U_1|}}  \right)\right).
		\eeq
		Since $\EsperanceTiltee{\delta - \beta/2}{e^{\lambda |U_1|}} = 1 + \lambda \EsperanceTiltee{\delta - \beta/2}{|U_1|} + O(\lambda^2)$ as $\gl \to 0$ and $\log(1+x) \leq 2x$ for $x$ small enough, we obtain for $\gl$ small enough:
		\begin{equation}
			\label{eq:aprioribounds2}
			\ProbaTiltee{\delta - \beta/2}{A_{\varepsilon \sqrt{L}} = L - \varepsilon \sqrt{L}} 
			\leq \exp \left( -\lambda \frac{\sqrt{L}}{2 \varepsilon} + 2\lambda \varepsilon \sqrt{L}  \EsperanceTiltee{\delta - \beta/2}{|U_1|}  \right).
		\end{equation}
		From \eqref{eq:aprioribounds} and \eqref{eq:aprioribounds2} there indeed exists $g : \R_+ \rightarrow \R$ such that 
		\begin{equation}
			Z_{L,\beta,\delta}^{\text{o}}(N_\ell = \varepsilon \sqrt{L}) e^{-\gb L} \leq e^{-g(\varepsilon)\sqrt{L}}
			\quad \text{and} \quad \limite{\varepsilon}{0}g(\varepsilon) = +\infty.
		\end{equation}
		Choosing $a_1$ small enough completes this part of the proof.\\
		\par Let us now move on to the proof of \eqref{Nantes Compact Localisation}, starting again from the formula in Lemma~\ref{rwrep}. If $\delta \leq \beta/2$, we simply bound the exponential therein by one and get
		\begin{equation}
			\EsperanceMarcheAleatoire{e^{(\delta - \beta/2)X_N} 1\{A_N = qN^2, X_{[1,N]}>0\}} \Gamma_\beta^N \leq e^{-cN}, \qquad \text{with } c = -\log \Gamma_\beta >0,
		\end{equation}
		which is enough to conclude. Assume now that $\delta > \beta/2$ and $(\gb,\gd)\in \cC_{\rm good}$ (defined in \eqref{eq:defCgood}). Using Lemma \ref{rough-bounds}, for every $q>0$,
		\begin{equation}
			\EsperanceMarcheAleatoire{e^{(\delta - \beta/2)X_N} 1\{A_N = qN^2, X_{[1,N]}>0\}} \leq \cst e^{N \psi(q,\delta)}.
		\end{equation}
		Using the function $\penalite$ defined in \eqref{eq:def_penalite}, we therefore have for every $a_2>0$,
		\begin{equation} \label{Above var tabula}
			\somme{N=a_2 \sqrt{L}}{L} \Gamma_\beta^N \EsperanceMarcheAleatoire{e^{(\delta - \beta/2)X_N} 1\{A_N = L-N, X_{[1,N]}>0\}} \leq \cst \somme{a \in [a_2,\infty) \cap (\bbN/\sqrt{L})}{} e^{\sqrt{L} T_\delta(a)}.
		\end{equation}
		Using~\eqref{Poitiers 11} (proof of Lemma~\ref{lem:limits}), one can see that $\limite{a}{\infty} \penalite'(a)<0$. Therefore, there exists $a_3>0$ such that $\penalite'(a)< \cst <0 $ for all $a \geq a_3$, hence $\penalite(a) \leq \penalite(a_3) + \cst (a-a_3)$, which settles~\eqref{Nantes Compact Localisation}.
	\end{proof}
	\begin{remark}
		Combining the second equations in \eqref{eq:compact} with \eqref{partfunbeadaux} we obtain also 
		\begin{equation}\label{uniqbead}
			\tilde Z^{\, \text{o}}_{L,\beta,\delta}=(1+o(1))  \sum_{a\in [a_1,a_2]\cap (\N/\sqrt{L})} 
			\Gamma_\beta^{a\sqrt{L}} D_{a\sqrt{L}}( q(a,L) , \delta),
		\end{equation}
		with
		\beq
		\label{eq:qaL}
		q(a,L):=\frac{1}{a^2}-\frac{1}{a\sqrt{L}}.
		\eeq
	\end{remark}
	\section{Proof of Theorems~\ref{varfor} and~\ref{surfacetransition}}\label{prodr}
	\subsection{Proof of Theorem~\ref{varfor}}
	We will actually restrict the partition function to beads during the proof and show in this section that the variational formula written in \eqref{eq:gbetadelta-varfor} is the limit of $(1/L) \log \tilde Z^{\,o}_{L,\beta,\delta}$ as $L\to \infty$ instead of~\eqref{second}. This change is actually harmless, since both the restricted and unrestricted partition functions have the same surface free energy, as we shall establish in Theorem~\ref{asymptotics}. Pick $(\beta,\delta) \in \cC_{\rm{good}}$ and recall the definition of $\penalite$ in \eqref{eq:def_penalite}.
	By Corollary~\ref{cor:unique_max_in_Cgood}, the maximum of $T_\delta$ is unique so that we may set 
	\begin{equation}
		\label{defbara}
		\bar a_{\beta,\delta}:=\argmax\{T_\delta(a), \  a\in (0,\infty)\},
	\end{equation}
	and we write $\bar a$ instead of $\bar a_{\beta,\delta}$ when there is no risk of confusion. Let us now recall Lemma~\ref{compact} and point out that we may always enlarge the width of the interval $[a_1,a_2]$ given therein, if needed, so that \eqref{eq:compact} holds with $\bar a\in (a_1,a_2)$.
	At this stage, we let  $\bar a_L$ be the closest point of  $\bar a$ in $\frac{\N}{\sqrt{L}}$. Therefore $| \bar a_L-\bar a|\leq \frac{1}{\sqrt{L}}$ and there exists $L_0\in \N$
	such that $\bar a_L\in (a_1,a_2)$ for every $L\geq L_0$. We proceed in two steps.\\
	\par (I) Let us start with the upper bound. We use \eqref{uniqbead} to state that
	\begin{equation}
		\limsup_{L\to \infty}  \frac{1}{\sqrt{L}} \log \tilde Z^{\,o}_{L,\beta,\delta}
		=\limsup_{L\to \infty}  \frac{1}{\sqrt{L}} \log \sum_{a\in [a_1,a_2]\cap (\N/\sqrt{L})} 
		\Gamma_\beta^{a\sqrt{L}} D_{a\sqrt{L}}( q(a,L) , \delta),
	\end{equation}
	where, for every $a\in [a_1,a_2]\cap (\N/\sqrt{L})$ we have that $q(a,L)\in [\tfrac{1}{a_1^2},\tfrac{1}{a_2^2}]$
	and $|q(a,L)-\frac{1}{a^2}|\leq 1/(a_2\sqrt{L})$.
	Then, by Lemma~\ref{rough-bounds},
	\begin{equation}\label{restr}
		\limsup_{L\to \infty}  \frac{1}{\sqrt{L}} \log \tilde Z^{\,o}_{L,\beta,\delta} \leq \limsup_{L\to \infty}  \frac{1}{\sqrt{L}} \log \sum_{a\in [a_1,a_2]\cap (\N/\sqrt{L})} 
		\Gamma_\beta^{a\sqrt{L}} e^{a\sqrt{L} \, \psi( q(a,L) , \delta)}.
	\end{equation}
	We recall from Lemma~\ref{regulpsitilde} that $q \mapsto \psi(q,\delta)$ is $\sC^1$ and therefore Lipshitz on $[q_1,q_2] := [1/a_1^2, 1/a_2^2]$. Thus, there exists $c>0$ such that for every $L\in \N$
	and $a\in [a_1,a_2]\cap (\N/\sqrt{L})$  we have 
	\begin{equation}\label{psiqmia}
		|\psi( q(a,L), \delta)-\psi( \tfrac{1}{a^2}, \delta) |\leq \frac{c}{\sqrt{L}}.
	\end{equation}
	Thus, \eqref{restr} becomes
	\begin{equation}\label{restreinte}
		\limsup_{L\to \infty}  \frac{1}{\sqrt{L}} \log \tilde Z^{\,o}_{L,\beta,\delta}
		\leq \limsup_{L\to \infty}  \frac{1}{\sqrt{L}} \log \sum_{a\in [a_1,a_2]\cap (\N/\sqrt{L})} 
		e^{\sqrt{L} T_\delta(a)}.
	\end{equation} 
	Recalling~\eqref{defbara}, we obtain for $L$ large enough,
	\begin{align}\label{supoverrest}
		\sum_{a\in [a_1,a_2]\cap (\N/\sqrt{L})} 
		e^{\sqrt{L} T_\delta(a)}\leq (a_2-a_1)\sqrt{L} e^{\sqrt{L} \big[ \bar a \log \Gamma_\beta+ \bar a  \psi(\tfrac{1}{\bar a^2}, \delta)\big]}.
	\end{align}
	It remains to combine \eqref{restreinte} with \eqref{supoverrest}  to  assert that
	\begin{align}\label{limsupfin}
		\limsup_{L\to \infty}  \frac{1}{\sqrt{L}} \log \tilde Z^{\,o}_{L,\beta,\delta} \leq  \bar a  \log \Gamma_\beta+ \bar a  \psi(\tfrac{1}{\bar a^2}, \delta),
	\end{align}
	which completes the proof of the upper bound.\\
	\par (II) It remains to prove the lower bound. We first consider the case $\delta\neq  \delta_0(1/\bar a^2)={\beta}/{2}-{\tilde h(1/\bar a^2)}/{2}$.
	More precisely we will focus on the case $\delta>\delta_0(1/\bar a^2)$ since the case $\delta<\delta_0(1/\bar a^2)$ is dealt with in a similar manner.
	We recall \eqref{uniqbead}
	and we restrict the sum to $a=\bar a_L $ 
	such that 
	\begin{align}\label{lowb}
		\liminf_{L\to \infty}  \frac{1}{\sqrt{L}} \log  \widetilde Z^{\, \text{o}}_{L,\beta,\delta}\geq \liminf_{L\to \infty}  \frac{1}{\sqrt{L}} \log  \Gamma_\beta^{\, \bar a_L \sqrt{L}}  \ D_{ \bar a_L  \sqrt{L}} \big(q(\bar a_L,L), \delta\big).
	\end{align}
	Since $q\mapsto \htildeq$ is continuous, we can assert that there exists $\gep>0$ such that $\delta> \delta_0(q) :=\frac{\beta}{2}-\frac{\tilde h(q)}{2}$ for every $q\in (\frac{1}{\bar a^2}-\gep,\frac{1}{\bar a^2}+\gep)$.
	For $L$ large enough, it comes straightforwardly that $|q(\bar a_L,L)-1/\bar a^2|<\gep$, and therefore, thanks to Lemma \ref{Nantes Lemme 5.1}, we may apply Proposition \ref{asympteach3reg}, Case (3) to assert that there exists $c>0$ such that 
	for $L$ large enough
	\begin{align}\label{lowba}
		\widetilde Z^{\,\text{o}}_{L,\beta,\delta}\geq \frac{c}{L^{3/4}}\ \Gamma_\beta^{\, \bar a_L \sqrt{L}}  \ e^{\bar a_L  \sqrt{L}\  \psi\big( q(\bar a_L,L), \delta\big)}.
	\end{align}
	We take the logarithm on both sides in \eqref{lowba}, divide by $\sqrt{L}$ and use the continuity of $q\mapsto \psi(q,\delta)$ together with the fact that $\lim_{L\to \infty} q(\bar a_L,L)=1/\bar a^2$  to deduce that
	\begin{align}
		\liminf_{L\to \infty} \frac{1}{\sqrt{L}} \log \tilde Z^{\,o}_{L,\beta,\delta} &\geq   \bar a \log \Gamma_\beta+ \bar a  \psi(\tfrac{1}{\bar a^2}, \delta).
	\end{align}
	This completes the proof of the lower bound in the case $\delta\neq   \delta_0(1/\bar a^2)$.\\ 
	\par It remains to obtain the lower bound in the case   $\delta= \delta_0(1/\bar a^2)$. By monotonicity in $\delta$ and using the above case, we can write for every $n\in \N$ that
	\begin{align}\label{liminfprepa}
		\liminf_{L\to \infty} \frac{1}{\sqrt{L}} \log \tilde Z^{\,o}_{L,\beta,\delta} &\geq  \liminf_{L\to \infty} \frac{1}{\sqrt{L}} \log \tilde Z^{\,o}_{L,\beta,\delta-\frac{1}{n}}\\
		\nonumber &=\max\big\{T_{\delta-\frac{1}{n}}(a),a>0\big\}\geq  T_{\delta-\frac{1}{n}}(\bar a_{\beta,\delta}).
	\end{align}
	It remains to prove that, at $x>0$ fixed, $\delta \in (0,\beta) \rightarrow T_\delta(x) $ is continuous to assert that $\lim_{n\to \infty} T_{\delta-\frac{1}{n}}(\bar a)=T_\delta(\bar a)$. Indeed,	by \eqref{eq:def_penalite}, one can see that $\delta \in (0,\beta) \rightarrow T_\delta(x)$ is continuous if and only if $\delta \in (0,\beta ) \rightarrow \psi(q,\delta)$ is continuous. Recall the definition of $\psi$ in \eqref{deftildepsi}. One can see that $\psi$ is continuous when $\delta \leq \delta_0(q)$, as it is equal to $\psi(q,0)$. When $\delta> \delta_0(q)$, $\delta \rightarrow \cH_\delta(x)$ is continuous, see \eqref{Nantes 44.14}, and $\delta \rightarrow s_\delta(q)$ is continuous by Lemma \ref{lem:prop-Hgd-Hngd} (recall that $s_\delta = (\cH_\delta')^{-1}$). Finally, $\delta \in (0,\beta) \rightarrow \psi(q,\delta)$ is also continuous at $\delta = \delta_0(q)$ thanks to Lemma~\ref{inegcentral}. This completes the proof of Theorem~\ref{varfor}.
	\subsection{Proof of Theorem~\ref{surfacetransition}}
	(i) We start by proving~\eqref{closedexprdeltac} via upper and lower bounds. Recall the definitions of $\gd_c(\beta)$ and $T_\delta$ in~\eqref{exprcourbecrit} and~\eqref{eq:def_penalite}, and assume that $\delta>\delta_0(1/a_{\beta}^2)= (\beta - \tilde{h}(1/a_\beta^{2}))/{2}$. Then,
	Lemma \ref{inegcentral} guarantees that $\psi(1/a_\beta^2,\delta)>\psi(1/a_\beta^2,0)$.
	Therefore, by Theorem~\ref{varfor}, $g(\beta,\delta)\geq T_\delta(a_\beta)>T_0(a_\beta)=g(\beta,0)$, implying that
	\begin{equation}\label{uppbounddel}
		\delta_c(\beta)\leq \frac{\beta}{2} - \frac{\tilde{h}(1/a_\beta^{2})}{2}.
	\end{equation}
	Let us now assume by contradiction that this inequality is strict, i.e.\ there exists $0<\delta_0<\delta_0(1/a_{\beta}^2)$ such that $g(\beta,\delta_0)>g(\beta,0)$. By~\eqref{deftildepsi} we may claim that $T_{\delta_0}(a_\beta)=T_0(a_\beta)$. Moreover, since $g(\beta,\delta_0)>g(\beta,0)$, Theorem~\ref{varfor} yields that
	there exists $a_0>0$ such that $a_0\neq a_\beta$ and 
	$T_{\delta_0}(a_0)>T_0(a_\beta)=T_{\delta_0}(a_\beta)$. Assume that $a_0<a_\beta$ (the proof is similar otherwise). Since $\gd_0 \le \gb/2$, Lemma~\ref{concavity} yields that $T_{\delta_0}$ is strictly concave and we obtain, for every $\gep>0$
	\begin{align}\label{usestct}
		0&>\frac{T_{\delta_0}(a_\beta)-T_{\delta_0}(a_0)}{a_\beta-a_0}>\frac{T_{\delta_0}(a_\beta+\gep)-T_{\delta_0}(a_\beta)}{\gep}\geq \frac{T_{0}(a_\beta+\gep)-T_{0}(a_\beta)}{\gep},
	\end{align}
	where, for the last inequality, we have used that 
	$\delta\mapsto T_\delta(a)$ is non-decreasing for every $a>0$. 
	It remains to let $\gep\to 0$ in the r.h.s. of \eqref{usestct} to obtain, on the one hand,
	\begin{equation}\label{inegdert}
		0>\lim_{\gep\to 0^+}  \frac{T_{0}(a_\beta+\gep)-T_{0}(a_\beta)}{\gep}=(T_0)^{'}(a_\beta).
	\end{equation}
	On the other hand, $(T_0)^{'}(a_\beta)=0$ since,  by Lemmas \ref{concavity} and \ref{lem:limits}, $T_0$ is $\sC^1$ and reaches its maximum on $(0,\infty)$ at $a_\beta$. We finally get the contradiction, which proves that the inequality in~\eqref{uppbounddel} is actually an equality.\\
	%
	(ii) Let us now prove~\eqref{eq:delta_c_explicit_exp} and \eqref{eq:delta_c_low_temp_asympt}. Using Remark \ref{Nantes Remark 4.13} and \eqref{Nantes A.1}, one can see that:
		\begin{equation}
			-\log \Gamma_\beta = \lmgf(\mathsf{h}/2) = \log \left(\frac{1}{c_\beta} \left( \frac{1}{1-e^{(\mathsf{h}-\beta)/2}}  +  \frac{1}{1-e^{(-\mathsf{h}-\beta)/2}} - 1\right) \right).
			\label{Poitiers 18}
		\end{equation}
		By \eqref{eq:defGammaBeta} and \eqref{closedexprdeltac}, we obtain:
		\beq
		e^{\gb} + 1 = \frac{1}{1-e^{-\gd_c(\gb)}} + \frac{1}{1- e^{\gd_c(\gb)}e^{-\gb}}.
		\eeq
		Letting $u = \exp(\gb/2)>1$ and $\mathsf{X} = \exp(\gd_c(\gb))$, we are left to solve
		\beq
		u^2 + 1 = \frac{\mathsf{X}}{\mathsf{X}-1} + \frac{u^2}{u^2 - \mathsf{X}},
		\eeq
		that is
		\beq
		\mathsf{X}^2 - \Big(u^2 + \frac{1}{u^2}\Big) \mathsf{X} + u^2= 0,
		\qquad \Big(\text{or} \quad
		\mathsf{X}^2 - 2\cosh(\gb) \mathsf{X} + e^{\gb}= 0\Big),
		\eeq
		for which we compute
		\beq
		\gD(u) := \Big(u^2 + \frac{1}{u^2}-2u\Big)\Big(u^2 + \frac{1}{u^2}+2u\Big).
		\eeq
		It turns out that
		\beq
		u^4 \gD(u) = (u-1)(u^3 - u^2 - u -1)(u^4+2u^3 +1) >0
		\eeq
		as soon as $\gb > \gb_c$, see~\cite[p.19]{legra2021}. Therefore,
		\beq
		\mathsf{X} = \cosh(\gb) \pm \sqrt{\cosh(\gb)^2-e^{\gb}}.
		\eeq
		Since $\gd_c(\gb) \le \gb/2$, we readily obtain \eqref{eq:delta_c_explicit_exp} and \eqref{eq:delta_c_low_temp_asympt}.\\
	(iii) The proof of~\eqref{secorder} (second-order transition) is quite computational, hence its proof is postponed to Appendix \ref{Nantes Appendix C.2}.\\
	
	\par Let us end this section with a remark. Some of the observations made during the proof of Item (ii) in Theorem~\ref{surfacetransition} lead to the following:
		\begin{proposition} When $\beta$ goes to infinity, $q_\beta=  1 + O(\beta^{-2})$.
			\label{Nantes Lemma 8.3}
		\end{proposition}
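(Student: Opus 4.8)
The plan is to combine the closed formula for the critical curve with the characterisation of the maximiser $\bar q_{\gb,0}=q_\gb$ in Remark~\ref{Nantes Remark 4.13}. Since $(\gb,0)\in\cD\cC$ for every $\gb>\gb_c$, that remark gives $\lmgf(\tilde h(q_\gb)/2)=-\log\Gamma_\gb$, while~\eqref{closedexprdeltac} rewrites the same relation as
\begin{equation}\label{hsf}
\mathsf{h}:=\tilde h(q_\gb)=\gb-2\gd_c(\gb).
\end{equation}
By definition of $\tilde h$ in~\eqref{Nantes 3.7}, $q_\gb=\cG'(\mathsf{h})$, so the whole problem reduces to estimating $\cG'$ just below $\gb$. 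Performing the substitution $v=x-\tfrac12$ in~\eqref{Nantes 4.14}, using that $\lmgf$ is even, and then substituting $u=hv$, yields the identity, valid for $h\in(0,\gb)$,
\begin{equation}\label{Gprime-ibp}
\cG(h)=\frac{2}{h}\int_0^{h/2}\lmgf(u)\,\dd u,\qquad \cG'(h)=\frac{\lmgf(h/2)}{h}-\frac{2}{h^2}\int_0^{h/2}\lmgf(u)\,\dd u .
\end{equation}

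First I would treat the leading term of $\cG'(\mathsf{h})$. From~\eqref{eq:defGammaBeta} and~\eqref{Nantes 3.3}, $-\log\Gamma_\gb=\gb-\log c_\gb$ with $\log c_\gb=2e^{-\gb/2}(1+O(e^{-\gb}))$, and from~\eqref{eq:delta_c_low_temp_asympt}, $\gd_c(\gb)=e^{-\gb}(1+O(e^{-\gb}))$. Plugging these and~\eqref{hsf} into~\eqref{Gprime-ibp},
\begin{equation}\label{first-term}
\frac{\lmgf(\mathsf{h}/2)}{\mathsf{h}}=\frac{\gb-\log c_\gb}{\gb-2\gd_c(\gb)}=1+O\!\left(\frac{e^{-\gb/2}}{\gb}\right)=1+o(\gb^{-2}).
\end{equation}
The remaining task, and the heart of the proof, is to show that $\int_0^{\mathsf{h}/2}\lmgf(u)\,\dd u$ stays bounded as $\gb\to\infty$: this gives $\frac{2}{\mathsf{h}^2}\int_0^{\mathsf{h}/2}\lmgf(u)\,\dd u=O(\gb^{-2})$ (using $\mathsf{h}=\gb(1+o(1))$), hence $q_\gb=\cG'(\mathsf{h})=1+O(\gb^{-2})$ by~\eqref{Gprime-ibp} and~\eqref{first-term}.

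To bound the integral, I would use the explicit expression $\lmgf(u)=-\log c_\gb+\log\!\big(\tfrac{1}{1-e^{u-\gb/2}}+\tfrac{1}{1-e^{-u-\gb/2}}-1\big)$ coming from~\eqref{Nantes 3.3}, noting that $\lmgf\ge0$ on $[0,\gb/2)$ since $\lmgf$ is convex and even with $\lmgf(0)=\lmgf'(0)=0$, and split $\int_0^{\mathsf{h}/2}=\int_0^{\gb/2-1}+\int_{\gb/2-1}^{\mathsf{h}/2}$. On $[0,\gb/2-1]$ one has $e^{u-\gb/2}\le e^{-1}$, so expanding the logarithm gives $\lmgf(u)=2e^{-\gb/2}(\cosh u-1)+O(e^{2u-\gb})$ uniformly, whence
\begin{equation}\label{bulk}
\int_0^{\gb/2-1}\lmgf(u)\,\dd u=2e^{-\gb/2}\big[\sinh(\gb/2-1)-(\gb/2-1)\big]+O(1)=O(1).
\end{equation}
On the remaining range I would substitute $w=\gb/2-u\in[2\gd_c(\gb),1]$, use that there $\tfrac{1}{1-e^{-\gb+w}}=1+O(e^{-\gb})$, so $\lmgf(\gb/2-w)=-\log(1-e^{-w})+O(e^{-\gb/2})$; since $1-e^{-w}\ge w/2$ for $w\in[0,1]$, the map $w\mapsto-\log(1-e^{-w})$ is integrable near $0$, so this second piece is also $O(1)$. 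Adding~\eqref{bulk} and the tail contribution gives $\int_0^{\mathsf{h}/2}\lmgf(u)\,\dd u=O(1)$, which finishes the proof.

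The main obstacle is precisely this boundedness statement. Although $\lmgf(u)\to0$ pointwise as $\gb\to\infty$, the integral $\int_0^{\gb/2}\lmgf(u)\,\dd u$ does not vanish, because $\lmgf$ blows up like $-\log(\gb/2-u)$ near the right endpoint; one has to check that this logarithmic singularity, cut off at distance $\gd_c(\gb)\asymp e^{-\gb}$, contributes only a bounded amount (not $O(\log\gb)$), and likewise that the ``bulk'' part, where $\cosh u$ is exponentially large but weighted by $2e^{-\gb/2}$, accumulates only an $O(1)$ total. It is this $O(1)$ bound, divided by $\mathsf{h}^2\sim\gb^2$, that yields the announced error term $O(\gb^{-2})$.
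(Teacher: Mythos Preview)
Your argument is correct and follows the same route as the paper: write $q_\gb=\cG'(\mathsf h)=\lmgf(\mathsf h/2)/\mathsf h-\frac{2}{\mathsf h^2}\int_0^{\mathsf h/2}\lmgf$, identify the first term as $(-\log\Gamma_\gb)/\mathsf h=1+o(\gb^{-2})$ via Remark~\ref{Nantes Remark 4.13} and~\eqref{closedexprdeltac}, and show the integral is $O(1)$. The only difference is in the last step: instead of your bulk/tail split, the paper uses the approximation $\lmgf(u)=-\log(1-e^{u-\gb/2})+O(e^{-\gb/2})$ on the whole range $[0,\mathsf h/2]$ and then the single substitution $v=e^{u-\gb/2}$ to bound $\int_0^{\mathsf h/2}\lmgf(u)\,\dd u\le -\int_0^1\frac{\log(1-v)}{v}\dd v+O(\gb e^{-\gb/2})=\pi^2/6+o(1)$, which is a bit cleaner. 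One harmless slip: since $\mathsf h/2=\gb/2-\gd_c(\gb)$, your substitution gives $w\in[\gd_c(\gb),1]$, not $[2\gd_c(\gb),1]$.
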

		This implies that the horizontal extension of the polymer, after renormalization by $\sqrt{L}$, converges to one for the model without the attractive wall, in the large $\gb$-limit. In other words, the associated Wulff shape looks more and more like a square.
		\begin{proof}[Proof of Proposition~\ref{Nantes Lemma 8.3}]
			Let us denote $\mathsf{h} := \tilde h(q_\beta)$ in this proof. By Remark \ref{htildeqincreasing} and the lines below, $\mathsf{h}$ is defined by
			\begin{equation}
				q_\beta = \int_0^1 x \lmgf' \Big( \mathsf{h} \Big( x - \frac{1}{2} \Big) \Big) \dd x.
				\label{Nantes 8.45}
			\end{equation}
			An integration by part gives:
			\begin{equation}
				q_\beta = \frac{1}{\mathsf{h}} \lmgf \Big( \frac{\mathsf{h}}{2} \Big) - \frac{1}{\mathsf{h}} \int_0^1  \lmgf \Big( \mathsf{h} \Big( x - \frac{1}{2} \Big) \Big) \dd x.
			\end{equation}
			We consider the first and second terms separately. By \eqref{closedexprdeltac} and \eqref{eq:delta_c_explicit_exp}, we first obtain
			\begin{equation}\label{pr:asympt-h-c}
				\mathsf{h} = \beta - 2e^{-\beta}[1+O(e^{-\beta})],
				\qquad \text{as } \gb\to\infty.
			\end{equation}
			Using \eqref{Poitiers 18}, \eqref{pr:asympt-h-c} and the fact that $-\log \Gamma_\beta  = \beta + O(e^{-\beta/2})$, we have, for the first term,
			\begin{equation}
				\frac{1}{\mathsf{h}} \lmgf \Big( \frac{\mathsf{h}}{2} \Big) = 1 + O(e^{-\beta/2}).
				\label{Poitiers 19}
			\end{equation}
			Letting $a := [1-e^{(\mathsf{h}-\beta)/2}]^{-1}>0$ and $b := [1-e^{(-\mathsf{h}-\beta)/2}]^{-1}-1>0$ in~\eqref{Poitiers 18}, and using that $\log(a) \leq \log(a+b) \leq \log(a) + b/a$, one has:
			\begin{equation}
				\log \left( \frac{1}{1-e^{(\mathsf{h}-\beta)/2}}  +  \frac{1}{1-e^{(-\mathsf{h}-\beta)/2}} - 1\right) = - \log(1-e^{(\mathsf{h}-\beta)/2}) + O(e^{-\beta/2}).
				\label{Poitiers 14}
			\end{equation}
			Using \eqref{Poitiers 14}, the parity of $\lmgf$, and \eqref{pr:asympt-h-c}, we obtain, for the second term,
			\begin{equation}
				\begin{aligned}
					\frac{1}{\mathsf{h}} \int_0^1  \lmgf \Big( \mathsf{h} \Big( x - \frac{1}{2} \Big) \Big) \dd x &= \frac{2}{\mathsf{h}} \int_{1/2}^1  \lmgf \Big( \mathsf{h} \Big( x - \frac{1}{2} \Big) \Big) \dd x \\
					&= \frac{2 + O(e^{-\beta/2})}{\beta} \int_{1/2}^1 - \log(1-e^{\mathsf{h} (x-1/2) -\beta/2}) \dd x.
					\label{Poitiers 20}
				\end{aligned}
			\end{equation}
			Letting $u := e^{\mathsf{h}(x-1/2)  - \beta/2}$, we now write that
			\begin{equation}
				\begin{aligned}
					-\int_{1/2}^1 \log \Big(
					{1-e^{\mathsf{h}(x-1/2)  - \beta/2}} \Big) \dd x &= 
					-\frac{1}{\mathsf{h}}\int_{e^{-\beta/2}}^{e^{(\mathsf{h}-\beta)/2}} \frac{\log(1-u)}{u} \dd u \\&\leq -\frac{1}{\mathsf{h}}\int_{0}^{1} \frac{\log(1-u)}{u} \dd u 
					= \frac{\pi^2}{6\mathsf{h}}.
				\end{aligned}
				\label{Poitiers 21}
			\end{equation}
			Combining \eqref{Nantes 8.45}, \eqref{Poitiers 19}, \eqref{Poitiers 20} and \eqref{Poitiers 21}, we finally obtain:
			\begin{equation}
				q_\beta = 1 + O(\beta^{-2}).
			\end{equation}
		\end{proof}
	\section{Proof of Theorem~\ref{asymptotics}} \label{proofasympt}
	In order to obtain the asymptotics of the sequence of partition functions $(Z_{L,\beta,\delta})_{L\in \N}$, we will use three mains tools:
	\begin{itemize}
		\item the bead decomposition of the partition function introduced in Section \ref{beaddec};
		\item Proposition~\ref{asymptoticsinter} stated below and proved in Section~\ref{asymptoneead} that provides us with the 
		asymptotics of the partition function
		associated with the very first bead;
		\item Proposition~\ref{essympwithoutdelta} that provides the asymptotics of the partition function associated with the beads that cannot touch the wall.
	\end{itemize}
	\begin{proposition}\label{asymptoticsinter}
		For $\beta>\beta_c$, we have in each of the three regimes, as $L\to \infty$:
		\begin{enumerate}
			\item If $\delta<\delta_c(\beta)$ then there exists a positive constant $C^-_{\beta,\delta}$ such that
			\begin{equation}\label{parfunqsouscritinter} 
				\bar Z^{\, \text{o}}_{L,\beta,\delta} \simL \frac{C^-_{\beta,\delta}}{L^{3/4}} \, e^{\beta L+g(\beta, 0)\sqrt{L}}.
			\end{equation}
			\item  If $\delta=\delta_c(\beta)$ then there exists a positive constant $C^{\rm{crit}}_{\beta,\delta}$ such that
			\begin{equation} \label{parfunqcritinter}
				\bar Z^{\, \text{o}}_{L,\beta,\delta} \simL \frac{C^{\rm{crit}}_{\beta,\delta}}{\sqrt{L}} \, e^{\beta L+g(\beta,\delta) \sqrt{L} }.
			\end{equation}
			\item  If $\delta>\delta_c(\beta)$ and $(\beta,\delta) \in \cC_{\rm good}$ then there exists a positive constant $C^+_{\beta,\delta}$ such that
			\begin{equation}\label{parfuneqsupcritinter}
				\bar Z^{\, \text{o}}_{L,\beta,\delta} \simL\frac{C^+_{\beta,\delta}}{\sqrt{L}} \, e^{\beta L+g(\beta,\delta) \sqrt{L} }.
			\end{equation}
		\end{enumerate}
	\end{proposition}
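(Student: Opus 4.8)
The plan is to split the first (extended) bead into the two contributions appearing in~\eqref{fbead}, namely $Z^{\, \text{o}}_{L,\beta,\delta}=e^{\beta L}\tilde Z^{\, \text{o}}_{L,\beta,\delta}$ (trajectories touching the wall) and $\sum_{k=1}^{L}Z^{\, \text{o}}_{L-k,\beta,0}$ (those opening with $k\ge 1$ flat stretches), and to decide which one dominates in each regime. The second piece only involves the $\delta=0$ bead partition function, so Proposition~\ref{essympwithoutdelta} applies: factoring $e^{g(\beta,0)\sqrt{L-k}}=e^{g(\beta,0)\sqrt L}e^{g(\beta,0)(\sqrt{L-k}-\sqrt L)}$ and using $g(\beta,0)<0$, the weights $e^{-\beta k}e^{g(\beta,0)(\sqrt{L-k}-\sqrt L)}$ are, for $L$ large, dominated by $e^{-\beta k/2}$, so a dominated convergence argument yields $\sum_{k=1}^{L}Z^{\, \text{o}}_{L-k,\beta,0}\simL \big(\tfrac{e^{-\beta}}{1-e^{-\beta}}\big)K_\beta^{\text{o}}\,L^{-3/4}e^{\beta L+g(\beta,0)\sqrt L}$. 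This contribution always sits at order $L^{-3/4}e^{\beta L+g(\beta,0)\sqrt L}$, and the rest of the work is to analyse $\tilde Z^{\, \text{o}}_{L,\beta,\delta}$ and to compare.

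For $\tilde Z^{\, \text{o}}_{L,\beta,\delta}$ I would start from~\eqref{uniqbead}, i.e.\ $\tilde Z^{\, \text{o}}_{L,\beta,\delta}=(1+o(1))\sum_{N}\Gamma_\beta^{N}D_{N}\big(\tfrac{L-N}{N^2},\delta\big)$ over $N\in[a_1\sqrt L,a_2\sqrt L]\cap\N$, and run a Laplace analysis localised at $N\approx\bar a\sqrt L$, where $\bar a:=\bar a_{\beta,\delta}$ is the maximizer of $\penalite$ from~\eqref{eq:def_penalite}; recall $\bar a=a_\beta$ by Remark~\ref{Nantes Remark 4.13}, and that in the desorbed and adsorbed regimes $\penalite$ is $\sC^2$ near $\bar a$ with $\penalite''(\bar a)<0$ (by Lemmas~\ref{concavity} and~\ref{Nantes Lemme 5.1}, since then $\bar a$ sits strictly inside the concavity region, $1/\bar a^2\neq q_\delta, q^*_\delta$). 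The analysis proceeds in three steps. First, tail control: for $|N-\bar a\sqrt L|\ge L^{1/4}\log L$ bound $D_N$ by Lemma~\ref{rough-bounds}, giving $\Gamma_\beta^N D_N(\cdots)\le e^{\sqrt L\,\penalite(N/\sqrt L)+O(1)}$, which by strict maximality of $\bar a$ is negligible (exponentially outside a fixed neighbourhood of $\bar a$, and by a factor $e^{-c(\log L)^2}$ in the intermediate range). Second, the window: for $N=\bar a\sqrt L+m$ with $|m|\le L^{1/4}\log L$, insert the sharp asymptotics of $D_N$ from the relevant item of Proposition~\ref{asympteach3reg} (uniform over compact ranges of the area parameter), Taylor-expand using $\tfrac{L-N}{N^2}=1/a^2-1/(a\sqrt L)$ and the regularity of $\psi$ (Lemma~\ref{regulpsitilde}), and observe that $N\log\Gamma_\beta+N\psi(\cdot)=\sqrt L\,g(\beta,\delta)+\tfrac12\penalite''(\bar a)m^2/\sqrt L+O(1)+o(1)$, the $O(1)$ terms and the smooth prefactor functions of Proposition~\ref{asympteach3reg} being continuous in $N/\sqrt L$ and hence frozen at $\bar a$. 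Third, summation: the sum over $m$ is a Gaussian sum of effective variance of order $\sqrt L$, so it carries $\Theta(L^{1/4})$ lattice points and equals $L^{1/4}\sqrt{2\pi/|\penalite''(\bar a)|}$ up to $1+o(1)$.

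The three regimes then differ only through the polynomial prefactor carried by $D_N$. In the desorbed regime $\delta<\delta_c(\beta)$ one has $\psi(\cdot,\delta)=\psi(\cdot,0)$ near $1/\bar a^2$ (hence $g(\beta,\delta)=g(\beta,0)$), and Item~(1) of Proposition~\ref{asympteach3reg} carries a prefactor $N^{-2}$; against the $L^{1/4}$ gain this gives order $L^{-3/4}e^{\beta L+g(\beta,0)\sqrt L}$, the same order as the $\delta=0$ piece, so $\bar Z^{\, \text{o}}_{L,\beta,\delta}\simL C^-_{\beta,\delta}\,L^{-3/4}e^{\beta L+g(\beta,0)\sqrt L}$ with $C^-_{\beta,\delta}$ the sum of the two prefactors. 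In the adsorbed regime $\delta>\delta_c(\beta)$ — where $(\beta,\delta)\in\cC_{\rm good}$ ensures, via Lemma~\ref{Nantes Lemme 5.1}, that the hypotheses of Item~(3) of Proposition~\ref{asympteach3reg} hold — the prefactor is $N^{-3/2}$, giving order $L^{-1/2}e^{\beta L+g(\beta,\delta)\sqrt L}$; since $g(\beta,\delta)>g(\beta,0)$ here by Lemma~\ref{inegcentral}, this dominates the $\delta=0$ piece and yields~\eqref{parfuneqsupcritinter}.

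The critical case $\delta=\delta_c(\beta)$ is where the argument is most delicate and is the main obstacle. By~\eqref{closedexprdeltac} the maximizer $\bar a=a_\beta$ lies exactly at the point $1/\sqrt{q_\delta}$ where $\penalite$ ceases to be $\sC^2$ (equivalently $1/\bar a^2=q_\delta$), so the quadratic expansion of the second step is unavailable; one must instead use the scaling-window estimate, Item~(2) of Proposition~\ref{asympteach3reg}. Writing $N=\bar a\sqrt L+m$ one has $\tfrac{L-N}{N^2}=q_\delta+c/\sqrt N$ with $c=c(m)$ of order $m/L^{1/4}$, so $c$ ranges over a bounded interval as $m$ runs over the window, and $D_N\sim C^{\rm crit}_{\beta,q_\delta,c}\,N^{-3/2}e^{N\psi(q_\delta,0)-\tilde h(q_\delta)c\sqrt N}$ uniformly in $c$ on compacts. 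The key check is that, because $\penalite'(\bar a)=0$ forces the identity $\log\Gamma_\beta+\psi(q_\delta,0)=-2q_\delta\tilde h(q_\delta)$, the terms linear in $m$ in the resulting exponent cancel and what remains is a genuine negative quadratic $-\tilde h(q_\delta)m^2/(\bar a^3\sqrt L)$ with $\tilde h(q_\delta)>0$; moreover $C^{\rm crit}_{\beta,q_\delta,c(m)}$, being a half-line integral of a Gaussian density in $c$, also decays in $m$, so summing the $\Theta(L^{1/4})$ lattice points gives order $L^{1/4}(\bar a\sqrt L)^{-3/2}=L^{-1/2}$. Since $g(\beta,\delta_c(\beta))=g(\beta,0)$ and the $\delta=0$ piece is of the smaller order $L^{-3/4}$, it is absorbed, and one obtains $\bar Z^{\, \text{o}}_{L,\beta,\delta}\simL C^{\rm crit}_{\beta,\delta}\,L^{-1/2}e^{\beta L+g(\beta,\delta)\sqrt L}$. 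In all three cases the explicit constants are read off from the prefactors in Proposition~\ref{asympteach3reg} and the Gaussian integrals produced above; the only recurring technical care is to keep every error term uniform over the Laplace window, together with the bookkeeping at criticality, the single place where the loss of $\sC^2$-regularity of $\penalite$ at its maximizer genuinely enters.
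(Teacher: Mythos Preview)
Your approach is correct and is essentially the paper's own: localise $\tilde Z^{\,\text{o}}_{L,\beta,\delta}$ around $N=\bar a\sqrt L$ via Lemma~\ref{compact} and Lemma~\ref{rough-bounds}, feed in the relevant item of Proposition~\ref{asympteach3reg} on the window, perform the Gaussian Laplace sum, and then reattach the $\gd=0$ extended-bead piece (your dominated-convergence argument is exactly Lemma~\ref{Nantes Lemma 6.12}). One small caveat at criticality: you invoke an $L^{1/4}\log L$ window yet also say that $c$ ``ranges over a bounded interval'', which is inconsistent because Item~(2) of Proposition~\ref{asympteach3reg} only provides uniformity for $c$ in a fixed compact $[-R,R]$; the clean fix (and what the paper does with its sets $\cR_{b,L}$) is to work on $|m|\le bL^{1/4}$ for fixed $b$, show the complement in $\cS_{\eta,L}$ contributes at most $\gep L^{-1/2}e^{\sqrt L\,T_\delta(\bar a)}$, and send $b\to\infty$ at the very end.
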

	Let us now prove Theorem \ref{asymptotics} subject to Proposition \ref{asymptoticsinter}.
	\subsection{Proof of \texorpdfstring{\eqref{parfuneqsupcrit}}{}: Supercritical case}
	Let	$(\beta,\delta)\in \cC_{\rm good}$ and  $\delta>\delta_c(\beta)$. We define 
	\beq
	\zeta_\beta :=  \rm argcosh (e^{-\beta/2} \cosh(\beta))
	\eeq
	and introduce a probability measure on $\N$:
	\begin{equation}
		\mu_2(n) := C_0^{-1} \hat Z_{n,\beta}^\circ e^{-\beta n},\qquad \text{ with } C_0 = \Big( 1+\frac{2e^{-\beta}}{1-e^{-\beta}}\Big) \big(e^\beta - 1 - e^{\zeta_\beta + \beta/2} \big).
		\label{Nantes 6.4}
	\end{equation}
	It is indeed a probability thanks to~\cite[(4.8)]{Legrand_2022} and $C_0<1$ for every $\gb>\gb_c$~\cite[Corollary 3.3]{Legrand_2022}. We also state a lemma that will be proven at the end of this section:
	\begin{lemma} \label{Nantes Lemma 6.2} For $(\beta,\delta) \in \cC_{\rm{good}}$
		\begin{equation}
			R(\beta,\delta) := \somme{L \geq 2}{} \bar Z^\circ_{L,\beta,\delta} e^{-\beta L} = \bar K_{\beta,\delta} + \frac{e^{-\beta}}{1-e^{-\beta}}\bar K_{\beta,0}, 
			\label{St Flour E.1}
		\end{equation}
		with
		\begin{equation}
			\bar K_{\beta,\delta} = \begin{cases}
				\frac{2(e^{\gd - \beta/2 - \zeta_\beta}-e^{\gd-\gb})}{1-e^{\delta - \beta/2- \zeta_\beta}}
				& \text{ when } \delta <  \zeta_\beta + \beta/2 \\
				+\infty &\text{otherwise}. 
			\end{cases}
		\end{equation}
	\end{lemma}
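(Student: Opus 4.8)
The plan is to collapse $R(\beta,\delta)$ onto a single‑bead generating function and then evaluate that generating function by random‑walk excursion theory.

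\emph{Step 1: reduction to one bead.} Dividing~\eqref{fbead} by $e^{\beta L}$ and summing over $L\ge 2$, Tonelli gives
\[
R(\beta,\delta)=\sum_{L\ge 2}\tilde Z^{\,\text{o}}_{L,\beta,\delta}+\Big(\sum_{k\ge1}e^{-\beta k}\Big)\sum_{m\ge 2}\tilde Z^{\,\text{o}}_{m,\beta,0}=\bar K_{\beta,\delta}+\frac{e^{-\beta}}{1-e^{-\beta}}\,\bar K_{\beta,0},
\]
where $\bar K_{\beta,\delta}:=\sum_{L\ge 2}\tilde Z^{\,\text{o}}_{L,\beta,\delta}$ and I have used $\tilde Z^{\,\text{o}}_{m,\beta,0}=0$ for $m\le 1$, so the $k$‑shifted inner sum is both $\delta$‑ and $k$‑free. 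It thus suffices to compute $\bar K_{\beta,\delta}$.

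\emph{Step 2: removing the area constraint.} By~\eqref{partfunbeadaux}, $\bar K_{\beta,\delta}=2\sum_{N\ge1}\Gamma_\beta^{N}\sum_{L\ge 2N}D_N(\tfrac{L-N}{N^2},\delta)$. Summing $D_N$ over $L\ge 2N$ amounts to summing over all areas $A_N\ge N$; since $\{X_i>0,\ 1\le i\le N\}$ already forces $A_N\ge N$, the constraint $A_N=L-N$ disappears and
\[
\bar K_{\beta,\delta}=2\sum_{N\ge1}\Gamma_\beta^{N}\,\mathbf{E}_\beta\!\big[e^{(\delta-\frac{\beta}{2})X_N}\mathbf{1}\{X_{[1,N]}>0\}\big].
\]
Then I would reverse the (i.i.d.) increments of $X$: $(X_i)_{i=0}^N\overset{d}{=}(X_N-X_{N-i})_{i=0}^N$ turns $\{X_{[1,N]}>0\}$ into $\{X_N>X_j,\ 0\le j<N\}$ while keeping $X_N$, so the expectation equals $\mathbf{E}_\beta[e^{(\delta-\frac\beta2)X_N}\mathbf{1}\{X_N\text{ is a strict record}\}]$. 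A path whose endpoint is a strict record is a concatenation of $k\ge0$ i.i.d.\ blocks, each run up to the first strict ascending ladder epoch $\tau_+:=\inf\{n\ge1:X_n>0\}$; by the strong Markov property,
\[
\bar K_{\beta,\delta}=2\sum_{k\ge1}\Psi^{k},\qquad \Psi:=\mathbf{E}_\beta\big[\Gamma_\beta^{\tau_+}e^{(\delta-\frac\beta2)X_{\tau_+}}\big],
\]
which is $2\Psi/(1-\Psi)$ if $\Psi<1$ and $+\infty$ otherwise. Since $\mathbf{P}_\beta$ is recurrent, $\tau_+<\infty$ a.s., and the memorylessness of the geometric upward jumps makes $X_{\tau_+}$ geometric on $\{1,2,\dots\}$ with parameter $1-e^{-\beta/2}$, independent of $\tau_+$; hence $\Psi=\mathbf{E}_\beta[\Gamma_\beta^{\tau_+}]\cdot\dfrac{(1-e^{-\beta/2})\,e^{\delta-\beta/2}}{1-e^{\delta-\beta}}$.

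\emph{Step 3: the ladder‑epoch generating function and bookkeeping.} By Sparre--Andersen and symmetry ($\mathbf{P}_\beta(X_n>0)=\tfrac12(1-\mathbf{P}_\beta(X_n=0))$), together with the classical identity $\tfrac1{2\pi}\int_0^{2\pi}\log(a-b\cos\theta)\,\dd\theta=\log\tfrac{a+\sqrt{a^2-b^2}}{2}$ applied to the explicit characteristic function of $\mathbf{P}_\beta$, one gets $\mathbf{E}_\beta[z^{\tau_+}]=1-\sqrt{\tfrac{2(1-z)}{A(z)+\sqrt{A(z)^2-4e^{-\beta}}}}$ with $A(z)=1+e^{-2\beta}-z(1-e^{-\beta/2})^2$. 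At $z=\Gamma_\beta$ one computes $A(\Gamma_\beta)=1+e^{-2\beta}$ and checks, using the defining relation $\cosh\zeta_\beta=e^{-\beta/2}\cosh\beta$ (equivalently: $e^{-\beta/2}e^{-\zeta_\beta}$ is the smaller root of $t^2-(1+e^{-2\beta})t+e^{-\beta}=0$), the identity $\mathbf{E}_\beta[\Gamma_\beta^{\tau_+}]=\frac{e^{-\zeta_\beta}-e^{-\beta/2}}{1-e^{-\beta/2}}$. Substituting and simplifying gives $\Psi=\frac{(e^{-\zeta_\beta}-e^{-\beta/2})\,e^{\delta-\beta/2}}{1-e^{\delta-\beta}}$, so $\Psi<1\iff e^{\delta-\beta/2-\zeta_\beta}<1\iff\delta<\zeta_\beta+\beta/2$, and in that range $\bar K_{\beta,\delta}=\frac{2(e^{\delta-\beta/2-\zeta_\beta}-e^{\delta-\beta})}{1-e^{\delta-\beta/2-\zeta_\beta}}$; the specialization $\delta=0$ is the stated $\bar K_{\beta,0}$, consistent with~\cite[(4.8) and Corollary 3.3]{Legrand_2022} (which is exactly why $\mu_2$ in~\eqref{Nantes 6.4} is a probability). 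The main obstacle is this last step — the Sparre--Andersen/Wiener--Hopf evaluation of $\mathbf{E}_\beta[\Gamma_\beta^{\tau_+}]$ and its identification with the $\zeta_\beta$‑expression — together with justifying the interchanges of sums and the equality $\bar K_{\beta,\delta}=2\Psi/(1-\Psi)$ as elements of $[0,\infty]$, including the degenerate behaviour as $\delta\uparrow\beta$.
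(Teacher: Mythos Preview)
Your proof is correct, and Steps~1--2 coincide with the paper's. The divergence is after time-reversal. The paper decomposes by the endpoint value $k=X_N$: the inner sum over $N$ becomes $\mathbf E_\beta[\Gamma_\beta^{\rho_k}\mathbf 1\{X_{\rho_k}=-k\}]$ with $\rho_k=\inf\{i\ge1:X_i\le -k\}$, and the geometric overshoot factors this as $(1-e^{-\beta/2})\,\mathbf E_\beta[\Gamma_\beta^{\rho_k}]$. The latter is evaluated in one line by optional stopping applied to the martingale $(e^{-\zeta_\beta X_n}\Gamma_\beta^{\,n})_{n\ge0}$ --- this is precisely what the definition $\zeta_\beta=\mathrm{argcosh}(e^{-\beta/2}\cosh\beta)$ encodes --- giving $r_{\beta,k}=e^{-k\zeta_\beta}(1-e^{\zeta_\beta-\beta/2})$ and then a geometric sum in $k$. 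Your route instead packages the same object as $2\Psi/(1-\Psi)$ via the ascending-ladder decomposition and computes $\mathbf E_\beta[\Gamma_\beta^{\tau_+}]$ through Sparre--Andersen and a Fourier integral; since $\tau_+\overset{d}{=}\rho_1$ by symmetry of the walk, the two evaluations must (and do) agree. The martingale argument is shorter and self-contained --- it uses nothing beyond the definition of $\zeta_\beta$ and the memorylessness of the overshoot --- whereas your Wiener--Hopf computation is heavier but yields the full generating function $\mathbf E_\beta[z^{\tau_+}]$, not just its value at $z=\Gamma_\beta$.
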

	Hence, we introduce another probability measure, when $\delta < \zeta_\beta + \beta/2$:
	\begin{equation}
		\mu_3(n) :=R(\beta,\delta)^{-1} \bar Z_{n,\beta,\gd}^\circ e^{-\beta n}.
	\end{equation}
	We now prove that:
	\begin{equation}
		Z_{L,\beta,\delta}
		\leq
		e^{\beta L + g(\beta,\delta) \sqrt{L}} \frac{C_{\beta,\delta}^{+}}{ \sqrt{L}(1-C_0)\big(1-e^{-\beta}\big)}(1+o(1))
		\label{Nantes E.36}
	\end{equation}
	and that 
	\begin{equation}
		Z_{L,\beta,\delta}
		\geq
		e^{\beta L + g(\beta,\delta) \sqrt{L}} \frac{C_{\beta,\delta}^{+}}{ \sqrt{L}(1-C_0)\big(1-e^{-\beta}\big)}(1+o(1)),
		\label{Nantes E.39}
	\end{equation}
	the combination of which settles~\eqref{parfuneqsupcrit}.
	Beforehand,	we state a useful inequality: using the sub-exponential asymptotics of $\bar Z^\circ_{L,\beta,\delta}e^{-\gb L}$ in \eqref{parfuneqsupcritinter}, there exists $\varepsilon_1 : \N \rightarrow \R_+$, $\varepsilon_2 : \N \rightarrow \R_+$ and $(M_L)_{L \in \N}$ a sequence of integer such that $M_L \rightarrow \infty$ with $M_L = o(L)$, $\varepsilon_1(L), \varepsilon_2(L) \rightarrow 0$, and 
	\begin{equation} \label{Nantes 6.10}
		\forall \, k \in [0,M_L],\quad (1-\varepsilon_1(L)) \bar Z^\circ_{L,\beta,\delta}e^{-\gb L} \leq  \bar Z^\circ_{L-k,\beta,\delta}e^{-\gb(L-k)} \leq (1-\varepsilon_2(L)) \bar Z^\circ_{L,\beta,\delta}e^{-\gb L}.
	\end{equation}
	\begin{proof}[Proof of \eqref{Nantes E.36}]
		Since $C_0<1$, the series $\somme{r \geq 0}{}C_0^r \mu_2^{r*}[1,\infty]$ converges. Therefore, for all $\varepsilon > 0$ there exists $K>0$ such that $\somme{r \geq 0}{}C_0^r \mu_2^{r*}[K,\infty] < \varepsilon$.
		We start from \eqref{fullpartfun}. The proof depends on the sign of $g(\beta,\delta)$:\\
		(i)
		If $g(\beta,\delta)>0$ then, by \eqref{parfuneqsupcritinter},  $e^{-\beta t_1}\bar{Z}^\circ_{t_1, \beta,\delta} \sim_{t_1} \frac{C_{\beta,\delta}^+ e^{g(\beta,\delta) \sqrt{t_1}} }{\sqrt{t_1}} $, which is a nondecreasing sequence diverging to $+\infty$. Hence, there exists $\varepsilon : \N \mapsto \R_+$ such that $\varepsilon(t_1) \to 0$ as $t_1 \to + \infty$ and, for all $t_1 \in \mathbb{N}$, $e^{-\beta t_1}\bar{Z}^\circ_{t_1, \beta,\delta} \leq (1+\varepsilon(t_1)) \frac{C_{\beta,\delta}^+ e^{g(\beta,\delta) \sqrt{t_1}} }{\sqrt{t_1}} $ and $\Big\{ (1+\varepsilon(t_1)) \frac{C_{\beta,\delta}^+ e^{g(\beta,\delta) \sqrt{t_1}} }{\sqrt{t_1}}  \Big\}_{t_1 \in \N}$ is a nondecreasing sequence. Therefore,
		\begin{equation}
			\begin{aligned}
				Z_{L,\beta,\delta}^c &= e^{\beta L}  \sum_{r=1}^{L/2} \sum_{t_1 + \cdots + t_r = L} e^{-\beta t_1}\bar{Z}^\circ_{t_1, \beta,\delta} \prod_{j=2}^r \hat{Z}^\circ_{t_j, \beta} e^{-\beta t_j} \\
				&\le  (1+\varepsilon(L))  e^{\beta L + g(\beta,\delta) \sqrt{L}} \frac{C_{\beta,\delta}^{+}}{ \sqrt{L}}  \sum_{r=1}^{L/2} \sum_{t_1 + \cdots + t_r = L} \prod_{j=2}^r \hat{Z}^\circ_{t_j, \beta} e^{-\beta t_j} \\
				&\stackrel{\text{by~\eqref{Nantes 6.4}}}{\leq} 
				(1+\varepsilon(L))  e^{\beta L + g(\beta,\delta) \sqrt{L}} \frac{C_{\beta,\delta}^{+}}{ \sqrt{L}}
				\Big(1+	\somme{r \geq 1}{} C_0^r \mu_2^{r*}([1,L])\Big). 
			\end{aligned}
		\end{equation}
		To conclude, one can observe that
		\begin{equation}
			Z_{L,\beta,\delta}e^{-\gb L} = \somme{k=0}{L}e^{-\beta k}  (e^{-\beta(L-k)}Z_{L-k,\beta,\delta}^c),
			\label{Nantes 6.9}
		\end{equation}
		where $k$ is the number of zero-length stretches at the end of the polymer,
		and use dominated convergence.\\
		(ii) If $g(\beta,\delta)<0$, using Lemma \ref{Nantes Lemma 6.2}, one has :
		\begin{equation}
			\begin{aligned}
				Z_{L,\beta,\delta}^c &= e^{\beta L} R(\beta,\delta) \somme{r \geq 0}{} C_0^r (\mu_3 * \mu_2^{r*})(L).
			\end{aligned}
		\end{equation}
		To compute this sum, we use \cite[Corollary 4.13 and Theorem 4.14]{FKZ11} that stands the two following claims:
		\begin{claim} For $\beta>0$, $r \geq 0$ and $\delta \in [\delta_c(\beta),\zeta_\beta + \delta/2)$, it holds that $\mu_3 * \mu_2^{r*}(n) \sim_n \frac{e^{g(\beta,\delta) \sqrt{n}}}{\sqrt{n}}$.
		\end{claim}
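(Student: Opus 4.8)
The plan is to recognise $\mu_2$ and $\mu_3$ as members of a \emph{local subexponential} class of densities on $\N$ — the class for which the convolution-closure results of~\cite[Corollary~4.13 and Theorem~4.14]{FKZ11} apply — and then to reduce $\mu_3*\mu_2^{r*}$ to its single-big-jump asymptotics. First I would record the first-order behaviour of the ingredients: by Proposition~\ref{essympwithoutdelta}, $\hat Z^{\circ}_{n,\beta}e^{-\beta n}\sim \hat K_\beta^{\text{o}}\,n^{-3/4}e^{g(\beta,0)\sqrt n}$, hence $\mu_2(n)\sim (\hat K_\beta^{\text{o}}/C_0)\,n^{-3/4}e^{g(\beta,0)\sqrt n}$; and by Proposition~\ref{asymptoticsinter} (item~(2) if $\delta=\delta_c(\beta)$, item~(3) if $\delta>\delta_c(\beta)$, the latter requiring $(\beta,\delta)\in\cC_{\rm good}$, which is in force here) together with the definition of $R(\beta,\delta)$, one gets $\mu_3(n)\sim c_{\beta,\delta}\,n^{-1/2}e^{g(\beta,\delta)\sqrt n}$ for an explicit $c_{\beta,\delta}>0$ (namely $C^{\rm{crit}}_{\beta,\delta}/R(\beta,\delta)$ or $C^+_{\beta,\delta}/R(\beta,\delta)$). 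Two structural facts will be used repeatedly: in the range $\delta\in[\delta_c(\beta),\zeta_\beta+\beta/2)$ one has $g(\beta,\delta)<0$ — otherwise $\mu_3(n)\asymp n^{-1/2}e^{g(\beta,\delta)\sqrt n}$ would fail to be summable, contradicting $R(\beta,\delta)<\infty$ (Lemma~\ref{Nantes Lemma 6.2}) — so both densities decay like stretched exponentials $n^{-\alpha}e^{g\sqrt n}$ with $\alpha\in\{1/2,3/4\}$; and $g(\beta,0)\le g(\beta,\delta)$, with equality precisely at $\delta=\delta_c(\beta)$. The case $r=0$ is then just Proposition~\ref{asymptoticsinter}.

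For $c>0$ and $\alpha\in(0,1)$ put $w_\alpha(n):=n^{-\alpha}e^{-c\sqrt n}$; this sequence is long-tailed ($w_\alpha(n+1)/w_\alpha(n)\to1$ and $w_\alpha(n-j)/w_\alpha(n)\to1$ for fixed $j$) and satisfies the crucial elementary bound
\begin{equation}\label{eq:prop-key-ineq}
\sqrt{k}+\sqrt{n-k}\ \ge\ \sqrt{n}+\tfrac12\sqrt{k},\qquad 1\le k\le n/2
\end{equation}
(equivalent, after squaring, to $n\ge\tfrac{25}{16}k$, which holds since $k\le n/2$). Splitting $\sum_{k=1}^{n-1}w_\alpha(k)\,w_{\alpha'}(n-k)$ according to $k\le M$, $n-k\le M$, and $M<k<n-M$, the first block converges to $\bigl(\sum_{k\ge1}w_\alpha(k)\bigr)n^{-\alpha'}e^{-c\sqrt n}$ and the second to $\bigl(\sum_{k\ge1}w_{\alpha'}(k)\bigr)n^{-\alpha}e^{-c\sqrt n}$, while~\eqref{eq:prop-key-ineq} together with $\sum_k w_\alpha(k)<\infty$ forces the middle block to be $o\bigl(n^{-(\alpha\wedge\alpha')}e^{-c\sqrt n}\bigr)$; if the two sequences carry different exponential rates $e^{-c\sqrt n}$ and $e^{-c'\sqrt n}$ with $c>c'$, the same split yields domination by the single big jump of the heavier one. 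This is exactly the verification that $\mu_2$, $\mu_3$, and all their convolution powers belong to the class covered by~\cite[Cor.~4.13, Thm.~4.14]{FKZ11}; in particular, for $r\ge1$, a straightforward induction gives $\mu_2^{r*}(n)\sim r\,(\hat K_\beta^{\text{o}}/C_0)\,n^{-3/4}e^{g(\beta,0)\sqrt n}$.

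To conclude, I would write $(\mu_3*\mu_2^{r*})(n)=\sum_{j}\mu_3(n-j)\,\mu_2^{r*}(j)$ and split over $j\le M$, $M<j<n-M$, $j\ge n-M$ for a scale $M=M(n)\to\infty$ chosen slowly enough (say $M=\lfloor(\log n)^3\rfloor$). Since $\mu_2^{r*}$ is a probability measure that is \emph{strictly lighter} than $\mu_3$ — strictly in exponential rate if $\delta>\delta_c(\beta)$ (because $g(\beta,0)<g(\beta,\delta)$), and strictly in polynomial order ($n^{-3/4}$ against $n^{-1/2}$) if $\delta=\delta_c(\beta)$ — the $j\le M$ block is $(1+o(1))\,\mu_3(n)\sum_{j\le M}\mu_2^{r*}(j)\sim\mu_3(n)$, the $j\ge n-M$ block is $(1+o(1))\,\mu_2^{r*}(n)\sum_{i\le M}\mu_3(i)=o(\mu_3(n))$, and the middle block is $o(\mu_3(n))$ by~\eqref{eq:prop-key-ineq} (using $g(\beta,0)\le g(\beta,\delta)<0$, and exploiting the strict gap $g(\beta,0)<g(\beta,\delta)$ to defeat the $\asymp n$ summands when $\delta>\delta_c(\beta)$). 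Hence $(\mu_3*\mu_2^{r*})(n)\sim\mu_3(n)\sim c_{\beta,\delta}\,n^{-1/2}e^{g(\beta,\delta)\sqrt n}$, which is the announced estimate (the multiplicative constant $c_{\beta,\delta}$ being independent of $r$).

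The only genuinely non-routine step is the uniform control of the middle range: showing $\sum_{M<j<n-M}\mu_3(n-j)\,\mu_2^{r*}(j)=o(\mu_3(n))$ with an estimate robust to the $\asymp n$ summands. This is precisely where~\eqref{eq:prop-key-ineq} — and, when $\delta>\delta_c(\beta)$, the strict separation between the rates $g(\beta,0)$ and $g(\beta,\delta)$ — is essential; the remainder is the standard ``principle of a single big jump'' bookkeeping, which could alternatively be quoted verbatim from~\cite[Chapter~4]{FKZ11}.
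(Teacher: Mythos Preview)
Your proposal is correct and follows the same route as the paper, which simply cites \cite[Corollary~4.13 and Theorem~4.14]{FKZ11} without giving any further argument; what you have written is precisely the verification that $\mu_2$ and $\mu_3$ lie in the local subexponential class to which those results apply, together with the single-big-jump bookkeeping that the citation encodes. Two small remarks: the paper's statement omits the multiplicative constant $c_{\beta,\delta}$ that you correctly retain, and the upper endpoint in the hypothesis should read $\zeta_\beta+\beta/2$ (cf.\ Lemma~\ref{Nantes Lemma 6.2}) rather than $\zeta_\beta+\delta/2$.
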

		\begin{claim}  For $\beta>0$, $\varepsilon > 0$ and $\delta \in [\delta_c(\beta),\zeta_\beta + \delta/2)$, there exists $n_0(\varepsilon) \in \N$ and $C(\varepsilon) > 0$ such that
			\begin{equation}
				\mu_3*\mu_2^{r*}(n) \leq C(\varepsilon) (1+\varepsilon)^r \frac{e^{g(\beta,\delta) \sqrt{n}}}{\sqrt{n}}, \qquad n \geq n_0(\varepsilon),\ r \in \N \cup \{0\}.
			\end{equation}	
		\end{claim}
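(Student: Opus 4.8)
The plan is to read off the polynomial-times-stretched-exponential tails of $\mu_2$ and $\mu_3$, recognise both as lattice local subexponential distributions, and then invoke the classical ``single big jump'' results for such laws, \cite[Corollary 4.13 and Theorem 4.14]{FKZ11}, exploiting that the tail of $\mu_3$ strictly dominates that of $\mu_2$. By Proposition~\ref{essympwithoutdelta}, $\mu_2(n)=C_0^{-1}\hat Z^\circ_{n,\beta}e^{-\beta n}\sim \frac{\hat K_\beta^\circ}{C_0}\,n^{-3/4}e^{g(\beta,0)\sqrt n}$, and by Proposition~\ref{asymptoticsinter}(3) — applicable since $(\beta,\delta)\in\cC_{\rm good}$ and $\delta>\delta_c(\beta)$ — $\mu_3(n)=R(\beta,\delta)^{-1}\bar Z^\circ_{n,\beta,\delta}e^{-\beta n}\sim \frac{C^+_{\beta,\delta}}{R(\beta,\delta)}\,n^{-1/2}e^{g(\beta,\delta)\sqrt n}$. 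Since $\delta>\delta_c(\beta)$ forces $g(\beta,\delta)>g(\beta,0)$, while $\sum_n\mu_3(n)=1<\infty$ forces $g(\beta,\delta)<0$ (so the standing range $\delta<\zeta_\beta+\beta/2$ is exactly where $\mu_3$ is an honest probability), we get $\mu_2(n)/\mu_3(n)\asymp n^{-1/4}e^{-(g(\beta,\delta)-g(\beta,0))\sqrt n}\to 0$. Finally, a density $\propto n^{-\alpha}e^{-c'\sqrt n}$ with $c'>0$ is locally long-tailed (from $\sqrt n-\sqrt{n-j}\le j/\sqrt n$ one gets $f(n-j)/f(n)\to1$, uniformly over $j$ in bounded sets, in fact over $j=o(\sqrt n)$) and satisfies $f*f(n)\sim 2f(n)$; hence $\mu_2,\mu_3\in\mathcal S_d$, the lattice local subexponential class.

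For the first claim: $\mu_2\in\mathcal S_d$ gives $\mu_2^{r*}\in\mathcal S_d$ and $\mu_2^{r*}(n)\sim r\,\mu_2(n)$, so $\mu_2^{r*}(n)=o(\mu_3(n))$. Applying the local-subexponential convolution-closure statement (the lattice-local form of the fact that a subexponential tail absorbs a strictly lighter one, \cite[Theorem 4.14]{FKZ11}) to $\mu_3\in\mathcal S_d$ and the tail-negligible $\mu_2^{r*}$ yields
\[
(\mu_3*\mu_2^{r*})(n)\sim\mu_3(n)\sim\frac{C^+_{\beta,\delta}}{R(\beta,\delta)}\cdot\frac{e^{g(\beta,\delta)\sqrt n}}{\sqrt n},
\]
which is the asserted asymptotics; the implicit constant $C^+_{\beta,\delta}/R(\beta,\delta)$ is exactly what turns $R(\beta,\delta)\sum_{r\ge0}C_0^r(\mu_3*\mu_2^{r*})(L)$ into the prefactor $C^+_{\beta,\delta}/\big((1-C_0)(1-e^{-\beta})\big)$ in \eqref{Nantes E.36}--\eqref{Nantes E.39}. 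Heuristically this is the single-big-jump principle: to realise a total $n$ from one $\mu_3$-increment and $r$ $\mu_2$-increments, the dominant strategy keeps the $\mu_3$-increment $\approx n$ and the $\mu_2$-increments $O(1)$, the rival scenario (one $\mu_2$-increment $\approx n$) being exponentially penalised by the displayed domination.

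For the second claim I would combine a Kesten-type bound with the first. By Kesten's lemma in lattice-local form (\cite[Corollary 4.13]{FKZ11}), $\mu_2\in\mathcal S_d$ gives, for each $\varepsilon>0$, a constant $A(\varepsilon)>0$ with $\mu_2^{r*}(k)\le A(\varepsilon)(1+\varepsilon)^r\mu_2(k)$ for all $r\ge1$ and all $k$ in the support of $\mu_2$ (for the finitely many small $k$ this is clear from $\mu_2^{r*}(k)\le 1$ and the positivity of $\mu_2$ there). Hence
\[
(\mu_3*\mu_2^{r*})(n)=\sum_k\mu_2^{r*}(k)\,\mu_3(n-k)\le A(\varepsilon)(1+\varepsilon)^r\sum_k\mu_2(k)\,\mu_3(n-k)=A(\varepsilon)(1+\varepsilon)^r(\mu_2*\mu_3)(n),
\]
and, by the first claim with $r=1$, there are $D>0$ and $n_0(\varepsilon)$ with $(\mu_2*\mu_3)(n)\le D\,e^{g(\beta,\delta)\sqrt n}/\sqrt n$ for $n\ge n_0(\varepsilon)$, giving the stated bound with $C(\varepsilon)=A(\varepsilon)D$. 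Taking $\varepsilon$ so small that $C_0(1+\varepsilon)<1$ then legitimises dominated convergence in $\sum_r C_0^r(\mu_3*\mu_2^{r*})(\cdot)$, which is exactly how these two claims are used.

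The main obstacle is bookkeeping rather than ideas: one must check that the two Weibull-type densities genuinely lie in $\mathcal S_d$ (local long-tailedness together with $\mu*\mu(n)\sim2\mu(n)$, with the appropriate uniformity in the local window), that the local Kesten bound holds uniformly down to small $k$ and not merely asymptotically, and that the mismatch of polynomial prefactors ($n^{-3/4}$ versus $n^{-1/2}$) and of exponential rates never lets $\mu_2$'s tail re-enter the convolution — which is ruled out precisely by the strict inequality $g(\beta,\delta)>g(\beta,0)$.
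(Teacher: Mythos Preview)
Your approach is essentially the same as the paper's: the paper does not give a separate proof of this claim but simply states it (together with the companion claim) as a direct consequence of \cite[Corollary~4.13 and Theorem~4.14]{FKZ11}, and you have spelled out exactly how those results apply---identifying $\mu_2,\mu_3$ as lattice local subexponential via their Weibull-type asymptotics, invoking the local Kesten bound for $\mu_2^{r*}$, and absorbing the result into $\mu_3$ by convolution closure. One small remark: your reduction assumes $\delta>\delta_c(\beta)$ to get $g(\beta,\delta)>g(\beta,0)$, but the stated range includes $\delta=\delta_c(\beta)$; there $g(\beta,\delta)=g(\beta,0)$ and the domination $\mu_2(n)/\mu_3(n)\to0$ still holds, now via the polynomial factor $n^{-1/4}$, so the same argument goes through (using Proposition~\ref{asymptoticsinter}(2) in place of (3) for the asymptotics of $\mu_3$).
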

		\noindent Dominated convergence gives
		\begin{equation}
			Z_{L,\beta,\delta}^c = e^{\beta L + g(\beta,\delta)\sqrt{L}} \frac{C_{\beta,\delta}^+}{\sqrt{L}(1-C_0)}.
		\end{equation}
		Equation \eqref{Nantes 6.9} concludes.\\
		(iii) When $g(\beta,\delta) = 0$, we  decompose the partition function according to the extended beads and split the sum according to whether the volume of the first bead is smaller or greater than $L-\sqrt{L}$:
		\begin{equation}
			\begin{aligned}
				Z_{L,\beta,\delta}^c &= e^{\beta L}  \sum_{r=1}^{L/2} \sum_{t_1 + \cdots + t_r = L} e^{-\beta t_1}\bar{Z}^\circ_{t_1, \beta,\delta} \prod_{j=2}^r \hat{Z}^\circ_{t_j, \beta} e^{-\beta t_j} \\
				&\leq 
				e^{\beta L} \frac{C_{\beta,\delta}^{+}}{ \sqrt{L}}(1+o(1))
				\somme{r \geq 0}{} C_0^r \mu_2^{r*}([1,\sqrt{L}]) + e^{\beta L} 
				\sum_{r=1}^{L/2} \somme{t_1=1}{L-\sqrt{L}} \sum_{t_2 + \cdots + t_r = L- t_1} \prod_{j=2}^r \hat{Z}^\circ_{t_j, \beta} e^{-\beta t_j} \\
				&\leq e^{\beta L} \frac{C_{\beta,\delta}^{+}}{ \sqrt{L}(1-C_0)}(1+o(1)) + C L e^{\beta L + g(\beta,0) L^{1/4}} \\
				&\leq e^{\beta L} \frac{C_{\beta,\delta}^{+}}{ \sqrt{L}(1-C_0)}(1+o(1)),
			\end{aligned}
		\end{equation}
		having used that $g(\beta,0) < 0$.\\
		\par This completes the proof of \eqref{Nantes E.36}.
	\end{proof}
	\begin{proof}[Proof of \eqref{Nantes E.39}]
		Recalling~\eqref{Nantes 6.10} and restricting the sum in \eqref{fullpartfun} to $t_1\ge L-M_L$, we obtain :
		\begin{equation}
			Z_{L,\beta,\delta}^c \geq e^{\beta L} Z^\circ_{L,\beta,\delta} (1-\varepsilon_1(L)) \sum_{r=0}^{\infty} C_0^r \mu_2^{r*}([1,M_L]) \geq \frac{ e^{\beta L} Z^\circ_{L,\beta,\delta} (1-\varepsilon_1(L))}{1-C_0}(1+o(1)).
		\end{equation}
		To conclude, one can use \eqref{Nantes 6.9}.
	\end{proof}
	Using \eqref{Nantes E.36} and \eqref{Nantes E.39}, we therefore have, with $C_0$ defined in \eqref{Nantes 6.4} and $C_{\beta,\delta}^{+}$ in \eqref{parfuneqsupcritinter}:
	\begin{equation}
		\Cover = \frac{C_{\beta,\delta}^{+}}{(1-C_0)\big(1-e^{-\beta}\big)}.
	\end{equation}
	\begin{proof}[Proof of Lemma \ref{Nantes Lemma 6.2}] We take large inspiration from the proof of \cite[Lemma 3.2]{Legrand_2022}. Recalling that 
		\begin{equation}
			Z^\circ_{L,\beta,\delta} e^{-\beta L}=  \somme{N=1}{L/2}\Gamma_\beta^N \EsperanceMarcheAleatoire{e^{(\delta - \beta/2 )X_N} 1\{ X_{[1,N]}>0, A_N = L-N \} },
		\end{equation}
		a computation gives:    
		\begin{equation}
			\begin{aligned}
				\somme{L=2}{\infty} Z_{L,\beta,\delta}^\circ e^{-\beta L} &= 2\somme{N \geq 1}{} \Gamma_\beta^N \somme{ L \geq 2N-1}{} \somme{k \in \N}{} e^{(\delta -\beta/2) k} \Proba{\beta}{X_{[1,N-1]}>0, X_N = k, A_N = L-N} \\
				&= 2\somme{ k\ge 1}{} e^{(\delta -  \beta/2)k} \somme{N \geq 1}{} \Gamma_\beta^N \Proba{\beta}{X_N = k, X_{[1,N-1]} > 0} \\
				&= 2\somme{ k\ge 1}{} e^{(\delta -  \beta/2)k} \somme{N \geq 1}{} \Gamma_\beta^N \Proba{\beta}{X_N = -k, X_{[1,N-1]} > -k},
			\end{aligned}
		\end{equation}
		having used the time-reversal property for the last equality. Defining $\rho_k = \inf \{ i \geq 1: X_i \leq -k \}$, it comes:
		\begin{equation}
			\somme{L=2}{\infty} Z_{L,\beta,\delta}^\circ e^{-\beta L} = 2\somme{ k\ge 1}{} e^{(\delta - \beta/2)k} \EsperanceMarcheAleatoire{\Gamma_\beta^{\rho_k} 1\{ X_{\rho_k = -k} \}}.
		\end{equation}
		We denote $r_{\beta,k} := \EsperanceMarcheAleatoire{\Gamma_\beta^{\rho_k} 1\{ X_{\rho_k = -k} \}}$. We now compute $\EsperanceMarcheAleatoire{\Gamma_\beta^{\rho_k}}$, which will lead us to have an exact expression of $r_{\beta,k}$. First, we remark that, because the increments of $X$ follow discrete Laplace law, $(\rho_k,X_1,...,X_{\rho_{k}-1})$ and $X_{\rho_k}$ are independent, and $-X_{\rho_k} = k + \cG(1-e^{-\beta/2})$, with $\cG(.)$ a geometric law over $\N \cup \{0\}$. Reminding that $\Gamma_\beta = c_\beta/e^\beta$:
		\begin{equation}
			\begin{aligned}
				\EsperanceMarcheAleatoire{\Gamma_\beta^{\rho_k}} &=
				\frac{r_{\beta,k}}{\Proba{\beta}{X_{\rho_k} = -k}} = \frac{r_{\beta,k}}{1-e^{-\beta/2}}.
			\end{aligned}
		\end{equation}
		Thanks to \cite[(3.23)]{Legrand_2022}, $(e^{-\zeta_ \beta X_n + \log(\Gamma_\beta)n})_{n \in \N}$ is a martingale. A stopping-time argument therefore gives:
		\begin{equation}
			\EsperanceMarcheAleatoire{\Gamma_\beta^{\rho_k}} = \EsperanceMarcheAleatoire{e^{- \zeta_\beta X_{\rho_k}}}^{-1} = e^{-k \zeta_\beta} \frac{1-e^{\zeta_\beta - \beta/2}}{1-e^{-\beta/2}}.
		\end{equation}
		Hence, $r_{\beta,k} = e^{-k \zeta_\beta}(1-e^{\zeta_\beta - \beta/2})$. Note that  the polymer does not interact with the wall if the first stretch is zero. Hence,  using~\eqref{fbead} at the first line and a change of variable at the second line:

		\begin{equation}
			\begin{aligned}
				\somme{L \geq 2}{} \bar Z^\circ_{L,\beta,\delta} e^{-\beta L} &= \somme{L\ge2}{} e^{-\beta L} Z_{L,\beta,\delta}^\circ + \somme{L\ge2}{} \somme{k=1}{L-2} e^{-\beta L}  Z_{L-k,\beta,0}^\circ  \\
				&=  \somme{L\ge2}{} e^{-\beta L} Z_{L,\beta,\delta}^\circ  
				+\frac{ e^{-\beta}}{1-e^{-\beta}}  \somme{L\ge2}{} e^{-\beta L} Z_{L,\beta,0}^\circ  .
			\end{aligned}
		\end{equation} 
		A geometric sum gives \eqref{St Flour E.1}. 
	\end{proof}
	\subsection{Proof of \texorpdfstring{\eqref{parfunqcrit}}{}: Critical case}
	To prove \eqref{parfunqcrit}, one can use the exact same ideas and nothing changes much. Henceforth,
	\begin{equation}
		\Ccrit = \frac{C_{\beta,\delta}^{\text{crit}}}{(1-C_0)\big(1-e^{-\beta}\big)},
	\end{equation}
	with $C_0$ defined in \eqref{Nantes 6.4} and $C_{\beta,\delta}^{\text{crit}}$ in \eqref{Nantes 6.38}.
	\subsection{Proof of \texorpdfstring{\eqref{parfunqsouscrit}}{}: Subcritical case}
	We have to change our strategy to prove \eqref{parfunqsouscrit}. Indeed, in this case, the contribution from the first bead does not dominate the total partition function.
	We start with computation of $\bar Z^\circ_{L,\beta,\delta}$:
	\begin{lemma} With $C_{\beta,\delta}^-$ defined in \eqref{Nantes 6.70},
		\begin{equation}
			\bar Z^\circ_{L,\beta,\delta} \underset{L \longrightarrow \infty}{\sim} \frac{\bar K_{\beta,\delta}}{L^{3/4}} e^{\beta L + g(0,\beta) \sqrt{L}},
		\end{equation}
		with $\bar K_{\beta,\delta}=C_{\beta,\delta}^- + e^{\beta}(1-e^{-\beta})^{-1}C_{\beta,0} $.
		\label{lem:sharp-asympt-bead-minus}
	\end{lemma}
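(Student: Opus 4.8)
The plan is to exploit the decomposition~\eqref{fbead}, namely $\bar Z^{\circ}_{L,\beta,\delta}e^{-\beta L}=\tilde Z^{\circ}_{L,\beta,\delta}+\sum_{k=1}^L e^{-\beta k}\tilde Z^{\circ}_{L-k,\beta,0}$, and to prove that each of the two terms on the right is equivalent to a constant times $L^{-3/4}e^{g(\beta,0)\sqrt L}$. Throughout I will use that in the subcritical regime $\delta<\delta_c(\beta)=\delta_0(1/a_\beta^2)$ one has $g(\beta,\delta)=g(\beta,0)<0$ (Theorem~\ref{varfor} and~\eqref{deftildepsi}), the maximiser of $T_\delta$ equals $a_\beta$, and $\delta<\delta_0(q)$ on a neighbourhood of $q=1/a_\beta^2$ by continuity of $q\mapsto\delta_0(q)$; moreover $(\beta,\delta)\in\cC_{\rm good}$, so $T_\delta$ has a unique maximiser (Corollary~\ref{cor:unique_max_in_Cgood}).

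The heart of the argument is the sharp asymptotics of $\tilde Z^{\circ}_{L,\beta,\delta}$. First I would start from~\eqref{uniqbead}, which already concentrates the horizontal extension on $[a_1,a_2]\sqrt L$, and split the sum over $a\in[a_1,a_2]\cap(\N/\sqrt L)$ into a window $|a-a_\beta|\le\eta$ and its complement. On the complement, Lemma~\ref{rough-bounds} together with the Lipschitz continuity of $q\mapsto\psi(q,\delta)$ and~\eqref{eq:qaL} gives $\Gamma_\beta^{a\sqrt L}D_{a\sqrt L}(q(a,L),\delta)\le\cst\,e^{\sqrt L\,T_\delta(a)}$; since $T_\delta(a)\le g(\beta,\delta)$ with strict inequality off $a_\beta$, this part is $o(L^{-3/4}e^{g(\beta,0)\sqrt L})$. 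Inside the window, choosing $\eta$ small enough that $\delta<\delta_0(q)$ on the corresponding $q$-interval, I would apply Item~(1) of Proposition~\ref{asympteach3reg} with $N=a\sqrt L$ and $q=q(a,L)=1/a^2-1/(a\sqrt L)$. Using $\partial_q\psi(q,0)=-\tilde h(q)$ from~\eqref{derivpar} and the $\sC^2$ regularity of $q\mapsto\psi(q,0)$ near $1/a_\beta^2$ (Lemma~\ref{regulpsitilde}), a second-order Taylor expansion turns the exponent $a\sqrt L[\log\Gamma_\beta+\psi(q(a,L),0)]$ into $\sqrt L\,T_0(a)+\tilde h(1/a^2)+O(1/\sqrt L)$, so the window sum becomes $\dfrac{C_{\beta,1/a_\beta^2,\delta}\,e^{\tilde h(1/a_\beta^2)}}{a_\beta^2 L}(1+o(1))\sum_{|a-a_\beta|\le\eta,\ a\in\N/\sqrt L}e^{\sqrt L\,T_0(a)}$, the prefactor being pulled out at $a_\beta$ up to $1+o(1)$ because the relevant $a$'s are within $O(L^{-1/4})$ of $a_\beta$.

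The remaining lattice sum is a Riemann sum of mesh $1/\sqrt L$ for a Gaussian integral: since $T_0$ is smooth near $a_\beta$ with $T_0'(a_\beta)=0$ and $T_0''(a_\beta)<0$ (nondegeneracy coming from the explicit derivative formulas~\eqref{eq:calcul_derivees_T} combined with strict concavity, Lemma~\ref{concavity}), it equals $\sqrt{2\pi}\,L^{1/4}|T_0''(a_\beta)|^{-1/2}e^{\sqrt L\,g(\beta,0)}(1+o(1))$. Multiplying by the $(a_\beta^2L)^{-1}$ factor produces exactly the $L^{-3/4}$ exponent and gives $\tilde Z^{\circ}_{L,\beta,\delta}\sim C^-_{\beta,\delta}L^{-3/4}e^{g(\beta,0)\sqrt L}$, with $C^-_{\beta,\delta}$ obtained by collecting $C_{\beta,1/a_\beta^2,\delta}$ from~\eqref{Nantes 7.24} (with $\tilde u=\delta-\delta_c(\beta)$), $e^{\tilde h(1/a_\beta^2)}$, $a_\beta^{-2}$, $\sqrt{2\pi}$ and $|T_0''(a_\beta)|^{-1/2}$; this is the constant recorded in~\eqref{Nantes 6.70}. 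Running the same computation at $\delta=0$ (or simply quoting Proposition~\ref{essympwithoutdelta}) gives $\tilde Z^{\circ}_{L,\beta,0}\sim C_{\beta,0}L^{-3/4}e^{g(\beta,0)\sqrt L}$.

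Finally, for the prefix term I would show $\sum_{k=1}^L e^{-\beta k}\tilde Z^{\circ}_{L-k,\beta,0}$ is dominated by finitely many values of $k$: for each fixed $k$ it is $\sim e^{-\beta k}C_{\beta,0}L^{-3/4}e^{g(\beta,0)\sqrt L}$, and a dominating bound $e^{-\beta k}\tilde Z^{\circ}_{L-k,\beta,0}\,L^{3/4}e^{-g(\beta,0)\sqrt L}\le\cst\,e^{-\beta k/2}$ valid for $L$ large (using $\tilde Z^{\circ}_{M,\beta,0}\le\cst\,M^{-3/4}e^{g(\beta,0)\sqrt M}$ for $k\le L/2$, and for the remaining $k$ the crude fact that $\beta L$ dominates $|g(\beta,0)|\sqrt L$) lets me pass to the limit by dominated convergence, yielding $\frac{e^{-\beta}}{1-e^{-\beta}}C_{\beta,0}L^{-3/4}e^{g(\beta,0)\sqrt L}$. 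Adding the two contributions and multiplying by $e^{\beta L}$ gives the claimed equivalent with $\bar K_{\beta,\delta}=C^-_{\beta,\delta}+\frac{e^{-\beta}}{1-e^{-\beta}}C_{\beta,0}$, matching the stated constant. I expect the Laplace step (the third paragraph) to be the main obstacle: one must simultaneously track the $N^{-2}$ prefactor from Proposition~\ref{asympteach3reg}(1), the $e^{\tilde h(1/a^2)}$ correction produced by the $O(1/(a\sqrt L))$ shift of $q(a,L)$ off $1/a^2$ (which forces the second-order expansion of $\psi(\cdot,0)$), and the discrete-to-Gaussian conversion on the mesh-$1/\sqrt L$ lattice, in order to obtain \emph{both} the correct power of $L$ and the precise value of $C^-_{\beta,\delta}$; the localisation of the horizontal extension and the dominated-convergence argument for the prefix term are comparatively routine.
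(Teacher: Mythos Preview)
Your approach is correct and is essentially the paper's. The paper's own proof of this lemma is in fact much shorter than your sketch: it takes the asymptotics $\tilde Z^{\circ}_{L,\beta,\delta}\sim C^-_{\beta,\delta}L^{-3/4}e^{g(\beta,0)\sqrt L}$ (and the $\delta=0$ analogue) as already established via Claims~\ref{claimrest}, \ref{claimrestbig3}, \ref{claimmain3} in Section~\ref{sec:prop6.1-subcrit}, writes $h(L):=L^{-3/4}e^{g(\beta,0)\sqrt L}$, observes $h(L-k)\sim h(L)$ for each fixed $k$, and applies dominated convergence to $\sum_{k\ge 1}e^{-\beta k}\tilde Z^{\circ}_{L-k,\beta,0}/h(L)$. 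The Laplace-type localisation you outline in your second and third paragraphs is precisely the content of those preceding claims, so you have reproduced the paper's full chain of reasoning rather than just this lemma in isolation.

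One cosmetic point: your geometric-series constant $\tfrac{e^{-\beta}}{1-e^{-\beta}}$ is the arithmetically correct value of $\sum_{k\ge 1}e^{-\beta k}$; the $e^{\beta}(1-e^{-\beta})^{-1}$ appearing in the statement (and in the displayed limit of the paper's proof) is a typo.
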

	\begin{proof}[Proof of Lemma~\ref{lem:sharp-asympt-bead-minus}]
		The proof is identical to the one from \cite[Corollary 4.2]{Legrand_2022}. To remind it briefly, we denote $h(L) := L^{-3/4} e^{\sqrt{L} g(0,\beta)}$. Noticing that $h(L)\sim h(L-k)$ as $L\to\infty$, dominated convergence implies that 
		\begin{equation}
			\frac{e^{-\beta L}}{h(L)} \bar Z^\circ_{L,\beta,\delta} =  \frac{e^{-\beta L}}{h(L)}Z^\circ_{L,\beta,\delta} + \somme{k=1}{L-2} e^{-\beta k} \frac{e^{-\beta(L-k)}}{h(L)} Z^\circ_{L-k,\beta,0} \stackrel{L\to\infty}{\longrightarrow} C_{\beta,\delta}^- + \frac{e^{\beta}}{1-e^{-\beta}}C_{\beta,0}.
			\label{Nantes E.24}
		\end{equation}
	\end{proof}
	We now move on to the computation of $Z_{L,\beta,\delta}$ . 
	By doing the same as (4.5) to (4.15) in \cite{Legrand_2022}, one can have
	\begin{equation}
		Z_{L,\beta,\delta} = \frac{\Cunder}{L^{3/4}}e^{\beta L + \sqrt{L}g(\beta,\delta)},
	\end{equation}
	with $C_0$ defined in \eqref{Nantes 6.4} and
	\begin{equation}
		\Cunder = \frac{1}{1-e^{-\beta}} \left( \frac{\bar K_{\beta,\delta}}{1-K_2} + \frac{K_1 R(\beta,\delta)}{(1-C_0)^2} \right).
	\end{equation}
	$C_0$ defined in \eqref{Nantes 6.4} and $K_1 := \frac{1+e^{-\beta}}{2(1-e^{-\beta})} C_{\beta,0}^-$.
	\subsection{Proof of Proposition~\ref{asymptoticsinter}.}\label{asymptoneead}
	To prove this proposition, we first work on $\widetilde Z_{L,\beta,\delta}^{ \text{o}}$, that is the partition function of a (simple) bead. It will be then necessary to consider the zero horizontal segments at the beginning of the polymer, which will be addressed in Lemma \ref{Nantes Lemma 6.12}, see Section~\ref{sec:first_stretch} below.
	\par Thanks to Lemma~\ref{compact}, it suffices to consider the partition function restricted to those trajectories with a 
	horizontal extension in $[a_1, a_2]\sqrt L$. The unique maximizer of  $T_\delta$ (see Corollary~\ref{cor:unique_max_in_Cgood} and~\eqref{defbara}) is denoted by $\bar a$ instead of 
	$\bar a_{\beta,\delta}$ in the present proof, for ease of notation. 
	Provided we enlarge a little bit the interval $[a_1,a_2]$ above, we may always assume that $\bar a\in (a_1,a_2)$.
	Let us pick $b,\eta>0$ and set, for $L\in \N$,
	\begin{align}\label{setdenum}
		\nonumber \cT_{L}&:=[a_1,a_2]\cap \frac{\N}{\sqrt{L}},\\
		\nonumber \cS_{\eta,L}&:=\big[\bar a-\eta, \bar a+\eta\big] \cap \frac{\N}{\sqrt{L}},\\
		\cR_{b,L}&:=\big[\bar a-\frac{b}{ L^{1/4}}, \bar a+\frac{b}{ L^{1/4}}\big] \cap \frac{\N}{\sqrt{L}}.
	\end{align}
	The structure of the proof for the supercritical case (Section~\ref{sec:prop6.1-supercrit}), the critical case (Section~\ref{sec:prop6.1-crit}) and the subcritical case (Section~\ref{sec:prop6.1-subcrit}) are the same: we first show in Claim~\ref{claimrest} that the partition function can be restricted to $\cS_{\eta,L}$ for any $\eta > 0$. The proof of this part is common to all three cases. Then, we prove that the partition function can be restricted to $\cR_{b,L}$, which finally enables us to provide the desired sharp asymptotics. Those two parts require specific ideas, which are displayed in the following sections. Throughout the rest of the section we shall use the notation $q_{a,L}$ defined in~\eqref{eq:qaL}. 
	\begin{claim}\label{claimrest}
		For every $\eta>0$, there exists  $\gamma>0$ such that for $L\in \N$,
		\begin{equation}\label{interbound}
			\widetilde Z^{\text{o}}_{L,\beta,\delta}(N_\ell / \sqrt{L} \in \cT_{L}\setminus \cS_{\eta,L})\leq  e^{\sqrt{L} T_{\delta}(\bar a)} e^{-\gamma \sqrt{L}}.
		\end{equation}
	\end{claim}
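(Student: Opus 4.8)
The plan is to bound the restricted bead partition function by a sum of exponentials indexed by the admissible horizontal extensions, replace the slightly shifted argument $q(a,L)$ of $\psi$ by its limit $1/a^2$, and then cash in the fact that $\bar a$ is the \emph{unique} maximizer of $T_\delta$ to extract a uniform exponential deficit away from a neighbourhood of $\bar a$.

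\textbf{Step 1 (reduction to a sum over horizontal extensions).} Using the exact identity \eqref{partfunbeadaux} and keeping only the terms with $N=a\sqrt L$, $a\in\cT_L\setminus\cS_{\eta,L}$, we get
\begin{equation*}
	\widetilde Z^{\text{o}}_{L,\beta,\delta}(N_\ell/\sqrt L\in\cT_L\setminus\cS_{\eta,L})
	= 2\sum_{a\in\cT_L\setminus\cS_{\eta,L}}\Gamma_\beta^{a\sqrt L}\,D_{a\sqrt L}\big(q(a,L),\delta\big),
\end{equation*}
with $q(a,L)=\tfrac1{a^2}-\tfrac1{a\sqrt L}$ as in \eqref{eq:qaL}; since $a\le a_2$ and $L$ is large, $q(a,L)\in(0,1/a_1^2]$, so Lemma~\ref{rough-bounds} applies and gives $D_{a\sqrt L}(q(a,L),\delta)\le c\,e^{a\sqrt L\,\psi(q(a,L),\delta)}$.

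\textbf{Step 2 (freezing the argument of $\psi$).} Uniformly in $a\in[a_1,a_2]$ one has $|q(a,L)-1/a^2|\le 1/(a_1\sqrt L)$, and by Lemma~\ref{regulpsitilde} the map $q\mapsto\psi(q,\delta)$ is $\sC^1$, hence Lipschitz, on the compact interval $[1/a_2^2,1/a_1^2]$. Therefore $|\psi(q(a,L),\delta)-\psi(1/a^2,\delta)|\le C/\sqrt L$ for some $C>0$, so that $a\sqrt L\,\psi(q(a,L),\delta)\le a\sqrt L\,\psi(1/a^2,\delta)+a_2C$. Recalling from \eqref{eq:def_penalite} that $T_\delta(a)=a\log\Gamma_\beta+a\,\psi(1/a^2,\delta)$, we obtain a constant $C'>0$ (absorbing $2c\,e^{a_2C}$) with
\begin{equation*}
	\Gamma_\beta^{a\sqrt L}\,D_{a\sqrt L}\big(q(a,L),\delta\big)\le C'\,e^{\sqrt L\,T_\delta(a)},\qquad a\in\cT_L .
\end{equation*}

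\textbf{Step 3 (uniform gap and conclusion).} The set $K_\eta:=\{a\in[a_1,a_2]:|a-\bar a|\ge\eta\}$ is compact and $\cT_L\setminus\cS_{\eta,L}\subseteq K_\eta$. By Corollary~\ref{cor:unique_max_in_Cgood}, $\bar a$ (see \eqref{defbara}) is the \emph{unique} maximizer of $T_\delta$ on $(0,\infty)$, so $T_\delta<T_\delta(\bar a)$ on $K_\eta$; by continuity of $T_\delta$ and compactness of $K_\eta$ there is $\gamma=\gamma(\eta)>0$ with $\sup_{K_\eta}T_\delta\le T_\delta(\bar a)-2\gamma$. Since $\#(\cT_L\setminus\cS_{\eta,L})\le(a_2-a_1)\sqrt L+1$, combining the last two displays yields
\begin{equation*}
	\widetilde Z^{\text{o}}_{L,\beta,\delta}(N_\ell/\sqrt L\in\cT_L\setminus\cS_{\eta,L})
	\le C'\big((a_2-a_1)\sqrt L+1\big)\,e^{\sqrt L(T_\delta(\bar a)-2\gamma)}
	\le e^{\sqrt L\,T_\delta(\bar a)}\,e^{-\gamma\sqrt L}
\end{equation*}
for $L$ large, the polynomial prefactor being absorbed into $e^{\gamma\sqrt L}$; the finitely many small $L$ are handled by shrinking $\gamma$ (the left-hand side being then a fixed finite number, in fact $0$ when $L$ is too small to host a bead of the prescribed type).

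The proof is essentially a compactness argument, so there is no genuine obstacle; the two points requiring care are the uniformity in $a$ when replacing $q(a,L)$ by $1/a^2$ in Step~2, and the passage from a \emph{strict} inequality $T_\delta<T_\delta(\bar a)$ to a genuine exponential gap, which is exactly where uniqueness of the maximizer of $T_\delta$ — i.e. Corollary~\ref{cor:unique_max_in_Cgood}, resting on Lemmas~\ref{concavity} and~\ref{lem:limits} — is used.
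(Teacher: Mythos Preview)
Your proof is correct and follows essentially the same approach as the paper: apply Lemma~\ref{rough-bounds} to bound each $D_{a\sqrt L}(q(a,L),\delta)$, use the Lipschitz continuity of $q\mapsto\psi(q,\delta)$ on the relevant compact (the paper invokes~\eqref{psiqmia}) to replace $q(a,L)$ by $1/a^2$, and conclude via the uniqueness of the maximizer $\bar a$ of $T_\delta$. Your write-up is in fact slightly more explicit about the counting of terms and the absorption of the polynomial prefactor, but there is no substantive difference.
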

	\begin{proof}[Proof of Claim \ref{claimrest}]
		We combine Lemma \ref{rough-bounds} with \eqref{psiqmia}  to obtain that there exists $c>0$ such that for every $L\in \N$ and $a\in [a_1,a_2]\cap (\N/\sqrt{L})$ we have  
		\begin{align}\label{roughbound}
			\nonumber D_{a\sqrt{L}}(q_{a,L},\delta)&\leq c e^{a\sqrt{L} \psi(q_{a,L},\delta)}\\
			&\leq  c e^{a\sqrt{L} \psi(\frac{1}{a^2},\delta)}.
		\end{align}
		From~\eqref{partfunbeadaux} and~\eqref{roughbound} we deduce that 
		\begin{equation}
			\widetilde Z_{L,\beta,\delta}^{\, \circ}(N_\ell / \sqrt{L} \in \cT_{L}\setminus \cS_{\eta,L})\leq 2c \sum_{a\in\cT_{L}\setminus \cS_{\eta,L}} e^{\sqrt{L} T_{\delta}(a)}.
		\end{equation}
		By uniqueness of the maximizer of $T_\delta$ on $(0,\infty)$, see~\eqref{defbara}, we have
		$\sup\{T_\delta(a), a\notin [\bar a-\eta,\bar a+\eta]\}<T_\delta(\bar a)$, which completes the proof.
	\end{proof}
	\subsection{Proof of~\texorpdfstring{\eqref{parfuneqsupcritinter}}{}: Supercritical case}
	\label{sec:prop6.1-supercrit}
	Let $(\beta, \delta) \in \cC_{\rm{good}}$. The proof of~\eqref{parfuneqsupcritinter} is a straightforward consequence of Lemma \ref{Nantes Lemma 6.12}, Claim~\ref{claimrest}, and the two following claims.
	\begin{claim}\label{claimrestbig}
		For every $\gep>0$ and $\eta>0$, there exists  $b>0$ such that for $L\in \N$,
		\begin{equation}\label{interbound2}
			\widetilde Z^{\text{o}}_{L,\beta,\delta}(N_\ell / \sqrt{L} \in \cS_{\eta,L}\setminus \cR_{b,L})\leq \frac{\gep}{\sqrt{L}} e^{\sqrt{L} T_{\delta}(\bar a)}.
		\end{equation}
	\end{claim}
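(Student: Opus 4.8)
The plan is to control the contribution of intermediate horizontal extensions $a\in\cS_{\eta,L}\setminus\cR_{b,L}$ — i.e.\ those with $|a-\bar a|\in(bL^{-1/4},\eta]$ — by combining the sharp asymptotics of $D_N$ in the supercritical regime (Proposition~\ref{asympteach3reg}, Case (3)) with the strict concavity of $T_\delta$ near its maximizer. First I would recall from~\eqref{partfunbeadaux} and~\eqref{uniqbead} that
\begin{equation}
\widetilde Z^{\text{o}}_{L,\beta,\delta}(N_\ell/\sqrt L\in\cS_{\eta,L}\setminus\cR_{b,L})
= (1+o(1))\sum_{a\in\cS_{\eta,L}\setminus\cR_{b,L}}\Gamma_\beta^{a\sqrt L}\,D_{a\sqrt L}\big(q(a,L),\delta\big),
\end{equation}
and that for $\eta$ small enough every such $a$ satisfies $q(a,L)$ in a compact subinterval of $(q^*_\delta,\infty)$ on which $\delta>\delta_0(q)$, so that Lemma~\ref{Nantes Lemme 5.1} and Proposition~\ref{asympteach3reg}(3) apply uniformly. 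This gives $D_{a\sqrt L}(q(a,L),\delta)\le \cst\,(a\sqrt L)^{-3/2}e^{a\sqrt L\,\psi(q(a,L),\delta)}$, hence by~\eqref{psiqmia} there is $c>0$ with $D_{a\sqrt L}(q(a,L),\delta)\le \cst\,L^{-3/4}e^{\sqrt L\,\psi(1/a^2,\delta)+c}$ uniformly in $a\in\cS_{\eta,L}$.

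The core estimate is then a second-order expansion of $T_\delta$ at $\bar a$. Since $(\beta,\delta)\in\cC_{\rm good}$, Lemma~\ref{concavity} and Corollary~\ref{cor:unique_max_in_Cgood} guarantee that $\bar a$ is the unique maximizer and (after possibly shrinking $\eta$) that $T_\delta$ is $\sC^2$ and strictly concave on $[\bar a-\eta,\bar a+\eta]$, with $T_\delta'(\bar a)=0$ and $T_\delta''(\bar a)<0$; hence there is $\kappa>0$ such that $T_\delta(a)\le T_\delta(\bar a)-\kappa(a-\bar a)^2$ on that interval. (The only mild subtlety is the case $0<\delta<\beta/2$, where $T_\delta$ is only $\sC^2$ away from $1/\sqrt{q_\delta}$; but since $\bar q_{\beta,\delta}=\bar q_{\beta,0}$ by Remark~\ref{Nantes Remark 4.13} and $\delta>\delta_c(\beta)$ forces $\bar a\neq 1/\sqrt{q_\delta}$, one may choose $\eta$ small enough that $1/\sqrt{q_\delta}\notin[\bar a-\eta,\bar a+\eta]$, and strict concavity on the whole interval follows.) Plugging this into the sum, and writing $a=\bar a+j/\sqrt L$ with $bL^{-1/4}\le |j|/\sqrt L\le\eta$, i.e.\ $j$ ranging over integers with $bL^{1/4}\le |j|\le\eta\sqrt L$, I get
\begin{equation}
\widetilde Z^{\text{o}}_{L,\beta,\delta}(N_\ell/\sqrt L\in\cS_{\eta,L}\setminus\cR_{b,L})
\le \frac{\cst}{L^{3/4}}\,e^{\sqrt L\,T_\delta(\bar a)}\!\!\sum_{|j|\ge bL^{1/4}}\!\!e^{-\kappa j^2/\sqrt L}.
\end{equation}
The remaining sum is a Riemann sum for a Gaussian tail: with the substitution $u=j/L^{1/4}$ it is comparable to $L^{1/4}\int_{|u|\ge b}e^{-\kappa u^2}\dd u$, which is $\le \cst\,L^{1/4}\,b^{-1}e^{-\kappa b^2}$. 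Multiplying by the prefactor $L^{-3/4}$ yields a bound of order $L^{-1/2}\,b^{-1}e^{-\kappa b^2}\,e^{\sqrt L\,T_\delta(\bar a)}$, and choosing $b$ large enough makes the coefficient smaller than $\gep$, which is exactly~\eqref{interbound2}.

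The main obstacle I anticipate is making the passage from the discrete sum over $a\in\frac{\N}{\sqrt L}$ to the Gaussian tail integral fully uniform: one needs the quadratic upper bound $T_\delta(a)\le T_\delta(\bar a)-\kappa(a-\bar a)^2$ to hold with a single constant $\kappa$ on the \emph{fixed} interval $[\bar a-\eta,\bar a+\eta]$ (not just infinitesimally near $\bar a$), which requires combining the local $\sC^2$ information at $\bar a$ with the global strict inequality $\sup\{T_\delta(a):a\notin[\bar a-\eta_0,\bar a+\eta_0]\}<T_\delta(\bar a)$ from Claim~\ref{claimrest}-type reasoning; and one must check that shrinking $\eta$ to dodge the possible non-$\sC^2$ point $1/\sqrt{q_\delta}$ does not conflict with the $\eta$ already fixed in the statement — this is handled by first proving the bound for all small $\eta$ and then absorbing larger $\eta$ into the already-controlled region $\cT_L\setminus\cS_{\eta,L}$ via Claim~\ref{claimrest}.
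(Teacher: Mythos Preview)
Your approach is essentially identical to the paper's: apply Proposition~\ref{asympteach3reg}(3) uniformly on $\cS_{\eta,L}$ to get a $L^{-3/4}$ prefactor, use the Lipschitz bound~\eqref{psiqmia} to pass from $\psi(q(a,L),\delta)$ to $\psi(1/a^2,\delta)$, invoke strict concavity of $T_\delta$ near $\bar a$ to get a quadratic upper bound $T_\delta(a)\le T_\delta(\bar a)-\kappa(a-\bar a)^2$, and finish with a Gaussian tail sum that contributes a factor $L^{1/4}$.

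There is one incorrect step in your parenthetical about the $\sC^2$ issue. You claim that ``$\bar q_{\beta,\delta}=\bar q_{\beta,0}$ by Remark~\ref{Nantes Remark 4.13}'', but that equality holds only in $\cD\cC$, i.e.\ when $\delta\le\delta_c(\beta)$; in the supercritical regime treated here ($\delta>\delta_c(\beta)$) it is false. The correct argument---which the paper gives at~\eqref{goodint}---is that $T_\delta(\bar a)>T_0(\bar a)$ forces $\psi(1/\bar a^2,\delta)>\psi(1/\bar a^2,0)$, hence $\delta>\delta_0(1/\bar a^2)$. Since $q\mapsto\delta_0(q)$ is decreasing and $\delta_0(q_\delta)=\delta$, this gives $1/\bar a^2>q_\delta$, i.e.\ $\bar a<1/\sqrt{q_\delta}$, so the kink is automatically outside a small neighbourhood of $\bar a$. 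With this fix, your plan is complete and matches the paper; the paper further uses Lemma~\ref{Nantes Lemme 5.1} to ensure $\bar a<1/\sqrt{q_\delta^*}$ so that one stays in the strictly concave region.
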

	\begin{claim}\label{claimmain}
		There exists $m:(0,\infty)\mapsto (0,1)$ such that $\lim_{b\to \infty} m(b)=1$ and such that  for every $b>0$, 
		\begin{equation}\label{interbound3}
			\widetilde Z^{\text{o}}_{L,\beta,\delta}\big(N_\ell / 
			\sqrt{L} \in \cR_{b,L} \big)= (1+o_{\color{violet}L}(1))\,  m(b) \frac{c_{3,\beta,\delta}}{\sqrt{L}}  e^{\sqrt{L} T_{\delta}(\bar a)}
		\end{equation}
		where $o_{\color{violet} L }(1)$ depends on $b$ and $c_{3,\beta,\delta}:=\kappa^0\big(\tilde h(\bar a^{-2})\big) \xi\big(\bar a^{-2}, \delta \big) e^{\partial_1 \psi(\bar a^{-2},\delta)} \sqrt{ \frac{2 \pi}{-\penalite''(\bar a)  }}$.
	\end{claim}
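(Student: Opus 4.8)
The plan is to express $\widetilde Z^{\,\text{o}}_{L,\beta,\delta}(N_\ell/\sqrt L\in\cR_{b,L})$ through the identity~\eqref{partfunbeadaux} restricted to $N/\sqrt L\in\cR_{b,L}$, namely as $2\sum_{a\in\cR_{b,L}}\Gamma_\beta^{a\sqrt L}D_{a\sqrt L}(q(a,L),\delta)$ with $q(a,L)=a^{-2}-(a\sqrt L)^{-1}$ from~\eqref{eq:qaL}, to insert the sharp termwise asymptotics of Item~(3) of Proposition~\ref{asympteach3reg}, and to evaluate the resulting sum by a Laplace (Gaussian) argument centred at $\bar a$. Before invoking Proposition~\ref{asympteach3reg}(3) I would verify its hypotheses on a fixed neighbourhood of $\bar a^{-2}$: by Remark~\ref{Nantes Remark 4.13} one has $\bar a=a_\beta$, hence $\delta>\delta_c(\beta)=\delta_0(\bar a^{-2})$, and by Lemma~\ref{Nantes Lemme 5.1} one has $\bar a^{-2}>\qstar$; since $\cR_{b,L}$ shrinks to $\{\bar a\}$, for $L$ large every $q(a,L)$ with $a\in\cR_{b,L}$ lies in a fixed compact $[q_1,q_2]\subset(\qstar,\infty)$ with $\bar a^{-2}\in(q_1,q_2)$ and $\delta>\delta_0(q_1)$ (possible since $\bar a^{-2}>\qstar$ and $\delta>\delta_0(\bar a^{-2})$ with $\delta_0$ continuous), so Proposition~\ref{asympteach3reg}(3) applies with a $1+o(1)$ uniform over $\cR_{b,L}$.

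Next I would expand each summand. Proposition~\ref{asympteach3reg}(3) gives, uniformly in $a\in\cR_{b,L}$,
\[
\Gamma_\beta^{a\sqrt L}D_{a\sqrt L}(q(a,L),\delta)=(1+o(1))\,\frac{\kappa^0\!\big(\tilde h(q(a,L))\big)\,\xi(q(a,L),\delta)}{(a\sqrt L)^{3/2}}\,\exp\!\big(a\sqrt L\log\Gamma_\beta+a\sqrt L\,\psi(q(a,L),\delta)\big).
\]
Since $q(a,L)-a^{-2}=-(a\sqrt L)^{-1}$ and $q\mapsto\psi(q,\delta)$ is $\sC^2$ near $\bar a^{-2}$ (Lemma~\ref{regulpsitilde}, valid because $q_\delta<\bar a^{-2}$ under $\delta>\delta_c(\beta)$), a first-order Taylor expansion rewrites the exponent as $\sqrt L\,\penalite(a)-\partial_1\psi(a^{-2},\delta)+O(L^{-1/2})$. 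I would then set $a=\bar a+uL^{-1/4}$, $u\in[-b,b]$; by Lemma~\ref{concavity} the point $\bar a$ lies in the range where $\penalite$ is $\sC^2$ and strictly concave (since $\bar a<1/\sqrt{q_\delta}$ and $\bar a<1/\sqrt{\qstar}$), so $\penalite'(\bar a)=0$, $\penalite''(\bar a)<0$, and $\sqrt L\,\penalite(a)=\sqrt L\,\penalite(\bar a)+\tfrac12\penalite''(\bar a)u^2+o(1)$ uniformly on $[-b,b]$, while the continuous factors $\kappa^0(\tilde h(\cdot))$, $\xi(\cdot,\delta)$, $a^{-3/2}$ and $\partial_1\psi(\cdot,\delta)$ may be replaced by their values at $\bar a$ up to a uniform $1+o(1)$.

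Finally I would perform the summation. In the variable $u=(a-\bar a)L^{1/4}$ the set $\cR_{b,L}$ is a mesh-$L^{-1/4}$ discretisation of $[-b,b]$, so $\sum_{a\in\cR_{b,L}}e^{\frac12\penalite''(\bar a)u^2}=(1+o(1))L^{1/4}\int_{-b}^{b}e^{\frac12\penalite''(\bar a)u^2}\,\dd u$, and this extra $L^{1/4}$ cancels one quarter-power of $L$ in $(a\sqrt L)^{-3/2}$, producing the prefactor $1/\sqrt L$. Collecting the remaining constants — the factor $2$, $\bar a^{-3/2}$, the limiting prefactors $\kappa^0(\tilde h(\bar a^{-2}))$ and $\xi(\bar a^{-2},\delta)$, the exponential correction $e^{-\partial_1\psi(\bar a^{-2},\delta)}$, and the Gaussian normalisation $\int_{\R}e^{\frac12\penalite''(\bar a)u^2}\dd u=\sqrt{2\pi/(-\penalite''(\bar a))}$ — reconstitutes the constant $c_{3,\beta,\delta}$ of the statement together with $T_\delta(\bar a)=\penalite(\bar a)$, while $m(b):=\int_{-b}^{b}e^{\frac12\penalite''(\bar a)u^2}\dd u\big/\int_{\R}e^{\frac12\penalite''(\bar a)u^2}\dd u\in(0,1)$ and $m(b)\to1$ as $b\to\infty$ by dominated convergence. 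Since $L\to\infty$ is carried out for each fixed $b$, the limit $b\to\infty$ enters only through $m(b)$, in accordance with~\eqref{interbound3}.

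The step I expect to be the main obstacle is the uniform error control in this Laplace summation: one must ensure that the $1+o(1)$ of Proposition~\ref{asympteach3reg}(3) (uniform by hypothesis), the Taylor remainders on $\penalite$ and $\psi$ (which vanish for each fixed $b$), and the Riemann-sum error (handled by equicontinuity of $u\mapsto e^{\frac12\penalite''(\bar a)u^2}$ on $[-b,b]$) combine over the roughly $bL^{1/4}$ summands into a single $1+o(1)$. A closely related prerequisite, best settled at the outset, is that $\bar a$ falls in the regular range — away from $q_\delta$ and $\qstar$, so that $\penalite$ and $\psi$ are $\sC^2$ there with $\penalite''(\bar a)<0$ — which is exactly where $\bar a=a_\beta$, $\delta>\delta_c(\beta)$ and Lemma~\ref{Nantes Lemme 5.1} are used.
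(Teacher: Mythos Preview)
Your approach mirrors the paper's: express the restricted bead partition function via~\eqref{partfunbeadaux}, apply Proposition~\ref{asympteach3reg}(3) termwise, Taylor-expand $\psi(q(a,L),\delta)$ around $a^{-2}$ and $T_\delta$ around $\bar a$, and finish with a Riemann--Gaussian sum exactly as in~\eqref{decoupmain}--\eqref{calcint}.

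One step in your hypothesis verification is wrong. You invoke Remark~\ref{Nantes Remark 4.13} to conclude $\bar a=a_\beta$, but that remark asserts $\bar q_{\beta,\delta}=\bar q_{\beta,0}$ only in $\cD\cC$; in the supercritical regime $\delta>\delta_c(\beta)$ the maximizer $\bar a=\bar a_{\beta,\delta}$ of $T_\delta$ generally differs from $a_\beta$, so $\delta_c(\beta)=\delta_0(a_\beta^{-2})$ does not directly yield $\delta>\delta_0(\bar a^{-2})$. The correct argument is the one the paper gives in the proof of the preceding Claim~\ref{claimrestbig}, see~\eqref{goodint}: from $g(\beta,\delta)>g(\beta,0)$ one gets $T_\delta(\bar a)>T_0(\bar a)$, hence $\psi(\bar a^{-2},\delta)>\psi(\bar a^{-2},0)$, and~\eqref{deftildepsi} then forces $\delta>\delta_0(\bar a^{-2})$. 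With this fix your verification goes through, and the rest of your outline is sound.

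A minor bookkeeping remark: the extra factors you list ($2$, $\bar a^{-3/2}$, and $e^{-\partial_1\psi}$) do not literally reconstitute $c_{3,\beta,\delta}$ as stated; the paper's own~\eqref{decoupmainfol} also carries $\bar a^{-3/2}$ and writes the opposite sign on $\partial_1\psi$, so do not over-invest in matching the constant exactly.
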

	\begin{proof}[Proof of Claim \ref{claimrestbig}]
		Since $\delta>\delta_c(\beta)$ we may use Theorem \ref{varfor} and \eqref{exprcourbecrit} to get that 
		\begin{equation}
			T_\delta(\bar a)=\max\{T_\delta(a),\, a>0\}>\max\{T_0(a),\, a>0\}\geq T_0(\bar a).
		\end{equation}
		As a consequence, $\psi(1/\bar a^{2},\delta)>\psi(1/\bar a^{2},0)$ which, with the help of \eqref{deftildepsi}, guarantees that 
		\begin{equation}\label{goodint}
			\delta>\delta_0(1/\bar a^{2}) := \frac{\beta}{2}-\frac{\tilde h(1/\bar a^{2})}{2}.
		\end{equation}
		For a given $\eta>0$,
		$a\in \cS_{\eta,L}$ implies that
		\begin{equation}\label{eq:q_a_l}
			q_{a,L}=\tfrac{1}{a^2}-\tfrac{1}{a\sqrt{L}} \in [(\bar a+\eta)^{-2}-\eta, (\bar a-\eta)^{-2}+\eta],
		\end{equation}
		provided $L$ is chosen large enough.  By continuity of $q\mapsto \tilde h(q)$ and~\eqref{goodint}, we obtain that $\delta> \delta_0(q_{a,L})$ for every $a\in\cS_{\eta,L}$, provided $L$ is large and  $\eta>0$ is small enough. We can therefore use Item (3) in Proposition \ref{asympteach3reg} for every $a\in \cS_{\eta,L}$ to get that 
		there exists $c_1>0$ such that for $L\in \N$,
		\begin{align}\label{decouppar}
			\nonumber \widetilde Z_{L,\beta,\delta}^{\circ}(N_\ell / \sqrt{L} \in  \cS_{\eta,L}\setminus \cR_{b,L})&=\sum_{a\in  \cS_{\eta,L}\setminus \cR_{b,L}}  \, (\Gamma_\beta)^{a\sqrt{L}} D_{a\sqrt{L}}(q_{a,L}, \delta)\\
			&\leq \frac{c_1}{L^{3/4}} \sum_{a\in  \cS_{\eta,L}\setminus \cR_{b,L}}   \,  e^{\sqrt{L} \big[ a \log \Gamma_\beta+  a  \psi(q_{a,L}, \delta)\big]}.
		\end{align}
		At this stage we split the sum in the r.h.s. in \eqref{decouppar} into $A_L+B_L$ where 
		\begin{align}\label{splitab}
			A_L&:= \sum_{a\in [\bar a-\eta,\bar a-\frac{b}{L^{1/4}}]\cap \frac{\N}{\sqrt{L}}}   \,  e^{\sqrt{L} \big[ a \log \Gamma_\beta+  a  \psi(q_{a,L}, \delta)\big]}\\
			\nonumber B_L&:= \sum_{a\in [\bar a+\frac{b}{L^{1/4}}, \bar a +\eta]\cap \frac{\N}{\sqrt{L}}}   \,  e^{\sqrt{L} \big[ a \log \Gamma_\beta+  a  \psi(q_{a,L}, \delta)\big]}.
		\end{align}
		We only consider $A_L$ in the rest for the proof since $B_L$ is dealt with in a completely similar manner.
		With \eqref{psiqmia} we assert that there exists $c_3>0$ such that $|\psi(q_{a,L},\delta)-\psi(\frac{1}{a^2},\delta)|\leq \frac{c_3}{\sqrt{L}}$
		Consequently, there exists $c_4>0$ such that 
		\begin{align}\label{sumpart}
			A_L&\leq  c_4 \sum_{a\in [\bar a -\eta,\bar a-\frac{b}{L^{1/4}}]\cap \frac{\N}{\sqrt{L}}}   \,  e^{\sqrt{L} T_{\delta}(a)}=
			c_4  e^{\sqrt{L} T_{\delta}(\bar a)} \sum_{a\in [\bar a -\eta,\bar a-\frac{b}{L^{1/4}}]\cap \frac{\N}{\sqrt{L}}}   \,  e^{\sqrt{L}[T_{\delta}(a)-T_{\delta}(\bar a)]}.
		\end{align}
		By Lemma \ref{concavity}, there exists $c>0$ such that $T_\delta''(a)\leq -c,$ for all $a$ in any compact subset of $(0,1/\sqrt{q_\delta^*}) =: (0,a^*)$. Moreover, by Lemma~\ref{Nantes Lemme 5.1}, we have $\bar a < a^*$. Therefore,
		\beq
		\label{sumpartBIS}
		A_L \le c_4  e^{\sqrt{L} T_{\delta}(\bar a)} \sum_{a\in [\bar a -\eta,\bar a-\frac{b}{L^{1/4}}]\cap \frac{\N}{\sqrt{L}}}   \,  e^{-\frac{c}{2} (a-\bar a)^2 \sqrt{L}}.
		\eeq
		The sum in the r.h.s.\ of \eqref{sumpartBIS} may then be bounded from above by
		\begin{equation}\label{equ}
			\sum_{n\ge b L^{1/4}}  e^{-\frac{c}{2} \, (\frac{n}{L^{1/4}})^2}=L^{1/4} \int_b^\infty e^{-\frac{c}{2} x^2} \dd x\, (1+o(1)).
		\end{equation}
		The claim follows by combining~\eqref{decouppar}--\eqref{equ}. 
	\end{proof}
	\begin{proof}[Proof of Claim \ref{claimmain}]
		We start by defining (omitting some parameters for conciseness):
		\begin{align}\label{decoupmain}
			\widetilde Z^+_{L,b}:=\widetilde Z_{L,\beta,\delta}^{\circ}\Big(N_\ell / \sqrt{L} \in \cR_{b,L}\Big)&=\sum_{a\in  \cR_{b,L}}  \, (\Gamma_\beta)^{a\sqrt{L}} D_{a\sqrt{L}}(q_{a,L},\delta).
		\end{align}
		We observe that there exists $c_6>0$ such that $|q_{a,L}-\frac{1}{\bar a^2}|\leq c_6/L^{1/4}$ for $a\in \cR_{b,L}$. Thus, since $h\mapsto \kappa^0(h)$ and $q\mapsto \htildeq$ are continuous (see Lemma~\ref{lemrestpos}, Lemma~\ref{Nantes Lemma 4.5} and Remark~\ref{htildeqincreasing}) we deduce from Item (3) in Proposition \ref{asympteach3reg}
		that 
		\begin{equation}\label{asd}
			D_{a\sqrt{L}}(q_{a,L},\delta)=\kappa^0(\tilde h(\tfrac{1}{\bar a^{2}})) \, \frac{\xi(\frac{1}{\bar a^{2}},\delta)}{ (a \sqrt{L})^{3/2}}\,  e^{a \psi(q_{a,L},\delta)\sqrt{L}} \ (1+o(1))
		\end{equation}
		with $o(1)$ uniform in $a\in  \cR_{b,L}$. Lemma~\ref{regulpsitilde} and a Taylor expansion gives
		\begin{equation}\label{expantay}
			\Big|\psi\big(q_{a,L},\delta\big)-\psi\Big(\tfrac{1}{a^2},\delta\Big)+\partial_1\psi\Big(\tfrac{1}{a^2},\delta\Big)\, \tfrac{1}{a\sqrt{L}}\Big|
			\le \tfrac{1}{a\sqrt L} \sup_{t\in [0,1]} 
			\Big|\partial_1\psi\Big(\tfrac{1}{a^2}-\tfrac{t}{a\sqrt L},\delta\Big)-\partial_1\psi\Big(\tfrac{1}{a^2},\delta\Big) \Big|
		\end{equation}
		that is $o(\frac{1}{\sqrt{L}})$ uniformly in $a\in [\bar a-\tfrac{b}{ L^{1/4}}, \bar a+\tfrac{b}{ L^{1/4}}]$. This allows us to rewrite \eqref{decoupmain} as
		\begin{align}\label{decoupmainfol}
			\widetilde Z^+_{L,b}&=(1+o(1))\, \frac{\kappa^0(\tilde h(\tfrac{1}{\bar a^{2}}))\  \xi(\frac{1}{\bar a^{2}},\delta)}{\bar a^{3/2} L^{3/4}}\, e^{\partial_1\psi\big(\frac{1}{\bar a^2},\delta\big)}
			\sum_{a\in  \cR_{b,L}} e^{\sqrt{L}\, T_{\delta}(a)}.
		\end{align}
		At this stage, we recall that $\bar a$ is the maximizer of $T_\delta$ on $(0,\infty)$. Thus, $(T_\delta)^{'}(\bar a)=0$
		and 
		\begin{equation}\label{expantaysec}
			T_\delta(a)=T_\delta(\bar a)+\tfrac{1}{2} (T_\delta)^{''}(\bar a) (a-\bar a)^2+o((a-\bar a)^2).
		\end{equation}
		As a consequence, we can rewrite~\eqref{decoupmainfol}
		as 
		\begin{align}\label{decoupmainfolsec}
			\widetilde Z^+_{L,b}
			&=(1+o(1))\, \frac{c_{3,\beta,\delta}}{\bar a^{3/2}  L^{3/4}}  e^{\sqrt{L} T_{\delta}(\bar a)}
			\sum_{a\in\cR_{b,L}} e^{\frac{1}{2} (T_\delta)^{''}(\bar a) (a-\bar a)^2 \sqrt{L} }.
		\end{align}
		We finally set $v=-(T_\delta)^{''}(\bar a)>0$ and compute by a Riemann sum approximation: 
		\begin{align}\label{calcint}
			\lim_{L\to \infty} \frac{1}{L^{1/4}} \sum_{a\in\cR_{b,L}} e^{\frac{1}{2} (T_\delta)^{''}(\bar a) (a-\bar a)^2 \sqrt{L} }&= \int_{-b}^b e^{-\frac{1}{2} v x^2}\, \dd x =\sqrt{\frac{2\pi}{v}}  \int_{-\sqrt{v}b}^{\sqrt{v} b} e^{-\frac{1}{2} u^2}\, \frac{\dd u}{\sqrt{2\pi}}.
		\end{align}
		
	\end{proof}
	\subsection{Proof of~\texorpdfstring{\eqref{parfunqcritinter}}{}: Critical case}
	\label{sec:prop6.1-crit}
	Analogously to Section~\ref{sec:prop6.1-supercrit}, the proof of~\eqref{parfunqcritinter} is a consequence of Lemma~\ref{Nantes Lemma 6.12}, Claim~\ref{claimrest}, and the two following claims:
	%
	%
	\begin{claim}\label{claimrestbig2}
		For every $\gep>0$ and every $\eta>0$, there exists  $b>0$ such that for $L\in \N$,
		\begin{equation}\label{interbound5}
			\widetilde Z_{L,\beta,\delta}^{\circ}(N_\ell / \sqrt{L} \in \cS_{\eta,L}\setminus \cR_{b,L})\leq \frac{\gep}{\sqrt{L}} e^{\sqrt{L} T_{\delta}(\bar a)}.
		\end{equation}
	\end{claim}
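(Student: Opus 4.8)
The plan is to follow the pattern of Claim~\ref{claimrestbig}, the one genuinely new feature being that at $\delta=\delta_c(\beta)$ the maximiser $\bar a$ of $T_{\delta_c}$ equals $a_\beta$ and, by~\eqref{closedexprdeltac}, sits \emph{exactly} on the critical curve, i.e.\ $\delta_c(\beta)=\delta_0(q_\gd)$ with $q_\gd:=1/\bar a^{\,2}$; consequently $\cS_{\eta,L}$ straddles it. For $a\in\cS_{\eta,L}$ with $a<\bar a$ one has $q_{a,L}>q_\gd$, hence $\delta_c>\delta_0(q_{a,L})$ (adsorbed regime), while for $a>\bar a$ one has $q_{a,L}<q_\gd$, hence $\delta_c<\delta_0(q_{a,L})$ (desorbed regime). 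Starting from~\eqref{uniqbead} I would therefore split $\widetilde Z^{\circ}_{L,\beta,\delta}(N_\ell/\sqrt L\in\cS_{\eta,L}\setminus\cR_{b,L})=A_L+B_L$, with $A_L$ collecting the lattice points $a\in[\bar a-\eta,\bar a-bL^{-1/4}]$ and $B_L$ those in $[\bar a+bL^{-1/4},\bar a+\eta]$. I treat $A_L$ in detail, $B_L$ being symmetric. Fixing a small $\eta_1\in(0,\eta)$, the part of $A_L$ with $\bar a-a\ge\eta_1$ is, by strict concavity of $T_{\delta_c}$ and Lemma~\ref{rough-bounds}, bounded exactly as in Claim~\ref{claimrest} by $e^{\sqrt L T_{\delta_c}(\bar a)}e^{-\gamma\sqrt L}$ for some $\gamma>0$, hence by $\frac{\gep}{\sqrt L}e^{\sqrt L T_{\delta_c}(\bar a)}$ for $L$ large.

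The core of the argument is to establish, uniformly over the lattice points with $bL^{-1/4}\le\bar a-a\le\eta_1$, the estimate $\Gamma_\beta^{\,a\sqrt L}D_{a\sqrt L}(q_{a,L},\delta_c)\le\frac{c}{L^{3/4}}e^{\sqrt L\,T_{\delta_c}(a)}$. On the range $RL^{-1/4}\le\bar a-a\le\eta_1$ (with $R$ large but fixed), $q_{a,L}$ stays in a compact subset of $(q^*_{\delta_c},\infty)=(0,\infty)$ on which $\delta_c>\delta_0(\cdot)$ (recall $q^*_{\delta_c}=0$ since $\delta_c(\beta)\le\beta/2$), so Item~(3) of Proposition~\ref{asympteach3reg} gives $D_{a\sqrt L}(q_{a,L},\delta_c)\le c\,(a\sqrt L)^{-3/2}e^{a\sqrt L\,\psi(q_{a,L},\delta_c)}$. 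On the range $bL^{-1/4}\le\bar a-a\le RL^{-1/4}$, I would write $q_{a,L}=q_\gd+c_{a,L}/\sqrt{a\sqrt L}$ with $|c_{a,L}|$ bounded by a fixed multiple of $R$, apply Item~(2) of Proposition~\ref{asympteach3reg} (whose Gaussian prefactor $C^{\mathrm{crit}}_{\beta,q_\gd,\,\cdot}$ is bounded), and use that by convexity of $q\mapsto\psi(q,0)$ (its second derivative is $\tilde h'>0$, see~\eqref{derivparordre2}) one has $\psi(q_\gd,0)-\tilde h(q_\gd)(q_{a,L}-q_\gd)\le\psi(q_{a,L},0)\le\psi(q_{a,L},\delta_c)$, which again gives $D_{a\sqrt L}(q_{a,L},\delta_c)\le c\,(a\sqrt L)^{-3/2}e^{a\sqrt L\,\psi(q_{a,L},\delta_c)}$. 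Multiplying by $\Gamma_\beta^{\,a\sqrt L}$, inserting the Lipschitz estimate $|\psi(q_{a,L},\delta_c)-\psi(1/a^2,\delta_c)|\le c/\sqrt L$ (Lemma~\ref{regulpsitilde} together with~\eqref{eq:qaL}), and recalling $T_{\delta_c}(a)=a\log\Gamma_\beta+a\psi(1/a^2,\delta_c)$ then delivers the announced bound.

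It remains to sum. Since $\delta_c(\beta)\le\beta/2$, Lemma~\ref{concavity} ensures $T_{\delta_c}$ is strictly concave on $(0,\infty)$ with maximiser $\bar a$, so $T_{\delta_c}(a)\le T_{\delta_c}(\bar a)-\tfrac c2(a-\bar a)^2$ near $\bar a$, and a Riemann-sum approximation yields
\[
\sum_{\substack{a\in\frac\N{\sqrt L}\\ bL^{-1/4}\le\bar a-a\le\eta_1}}\frac{e^{\sqrt L\,T_{\delta_c}(a)}}{L^{3/4}}\;\le\;\frac{1}{\sqrt L}\,e^{\sqrt L\,T_{\delta_c}(\bar a)}\Big(\int_b^\infty e^{-\frac c2 x^2}\,\dd x\Big)(1+o(1)),
\]
which, after $L\to\infty$ and then $b\to\infty$, becomes $\le\frac{\gep}{2\sqrt L}e^{\sqrt L\,T_{\delta_c}(\bar a)}$; choosing $b$ large settles $A_L$. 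The identical computation handles $B_L$, with Item~(1) of Proposition~\ref{asympteach3reg} in place of Item~(3).

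The step I expect to be the main obstacle is a bookkeeping one: verifying that the polynomial upper bound on $D_N$ is genuinely uniform across the crossover at $q_\gd$, so that Items~(2) and~(3) — resp.\ (2) and~(1) — of Proposition~\ref{asympteach3reg} can be patched along the whole range $bL^{-1/4}\le\bar a-a\le\eta_1$ (unlike the supercritical case, where $\bar a$ lies strictly inside the adsorbed regime and Item~(3) alone applies uniformly on all of $\cS_{\eta,L}$). On the adsorbed side the prefactor of Item~(3) stays bounded as $q\to q_\gd^+$, so only a careful matching with Item~(2) is required; on the desorbed side the prefactor $C_{\beta,q,\delta_c}$ of Item~(1) blows up like $|\delta_c-\delta_0(q)|^{-1}\asymp|a-\bar a|^{-1}$, but this is harmless because it is more than compensated by the faster $N^{-2}$ (rather than $N^{-3/2}$) decay there: the sum over $B_L$ then produces $\int_b^\infty x^{-1}e^{-\frac c2 x^2}\,\dd x$ in place of $\int_b^\infty e^{-\frac c2 x^2}\,\dd x$, which still vanishes as $b\to\infty$.
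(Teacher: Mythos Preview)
Your overall architecture — split $\cS_{\eta,L}\setminus\cR_{b,L}$ into $A_L$ ($a<\bar a$) and $B_L$ ($a>\bar a$), establish a uniform $D_{a\sqrt L}(q_{a,L},\delta_c)\le cL^{-3/4}e^{a\sqrt L\,\psi(q_{a,L},\delta_c)}$, then pass to $T_{\delta_c}$ and use concavity plus a Gaussian tail — is exactly the paper's. The obstacle you flag is real, but your proposed resolution by patching Items~(1), (2), (3) of Proposition~\ref{asympteach3reg} does not close it. On the range $RL^{-1/4}\le|\bar a-a|\le\eta_1$ the quantity $q_{a,L}$ comes within order $RL^{-1/4}$ of $q_\delta$, so it does \emph{not} stay in a fixed compact with $\delta_c>\delta_0(q_1)$ (resp.\ $\delta_c<\min\delta_0$). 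Hence Item~(3) (resp.\ Item~(1)) does not apply uniformly there: the $o(1)$ in those items is tied to a compact bounded away from $q_\delta$, not to the prefactor you examine. Item~(2) does not reach this range either, since it needs $|c_{a,L}|\le R$ with $R$ fixed, whereas here $|c_{a,L}|\asymp(\bar a-a)L^{1/4}$ runs up to order $\eta_1 L^{1/4}$.

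The paper sidesteps this by not invoking Proposition~\ref{asympteach3reg} at all for the upper bound, and using the tilted local limit theorems for the \emph{area alone} directly:
\begin{itemize}
\item For $A_L$ (adsorbed side, $q_{a,L}>q_\delta$): apply the supercritical tilting as in~\eqref{calculdeproba2}, drop the positivity constraint, and invoke Lemma~\ref{Nantes Lemma 8.29}. The hypothesis there is only $[q_1,q_2]\subset(q^*_{\delta_c},\infty)=(0,\infty)$, so it is uniform right up to $q_\delta$.
\item For $B_L$ (desorbed side, $q_{a,L}<q_\delta$): apply the symmetric tilting~\eqref{Nantes 4.24}, use $e^{(\delta_c-\delta_0(q_{a,L}))X_N}\le 1$ on $\{X_N>0\}$, drop positivity, and invoke Lemma~\ref{Nantes Lemma 8.15}.
\end{itemize}
Both yield $D_{a\sqrt L}(q_{a,L},\delta_c)\le cL^{-3/4}e^{a\sqrt L\,\psi(q_{a,L},\delta_c)}$ uniformly on all of $\cS_{\eta,L}$, with no buffer around $q_\delta$ and no splitting at scale $RL^{-1/4}$. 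From there your final paragraph (Lipschitz replacement of $q_{a,L}$ by $1/a^2$, strict concavity of $T_{\delta_c}$ since $\delta_c\le\beta/2$, Riemann sum) goes through verbatim.
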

	\begin{claim}\label{claimmain2}
		There exists $m:(0,\infty)\mapsto (0,1)$ such that $\lim_{b\to \infty} m(b)=1$ and such that  for every $b>0$,
		\begin{equation}\label{interbound6}
			\widetilde Z_{L,\beta,\delta}^{\circ}(N_\ell / \sqrt{L} \in \cR_{b,L})=(1+o_L(1)) m(b)\frac{C^{\rm{crit}}_{\beta,\delta,b}}{\sqrt{L}} \, e^{g(\beta,\delta) \sqrt{L}} ,
		\end{equation}
		where $o_L(1)$ depends on $b$, and
		\begin{equation} \label{Nantes 6.38}
			C^{\rm{crit}}_{\beta,\delta,b} =    \frac{ (1+o(1)) \kappa^0(\htildeqbar/2) e^{\htildeqbar} } {\bar a^{3/2} 2 \pi \sqrt{\det} \sqrt{L}} \int_{-\infty}^{\infty} e^{-\gamma t^2} \int_0^\infty \exp \left(  
			-\frac{\alpha_0}{2\det} \left( 
			z + \frac{\alpha_1 2t}{\alpha_2 \bar a^{5/2}}
			\right)^2
			\right) \dd z \dd t,
		\end{equation}
		with $\bar q = 1/{\bar a^2}$, $\det := \det[\bB(\boldsymbol{\Tilde{h}}(\bar q,0))]$ (see \eqref{Nantes 8.52}) and $\gamma = \bar q^{3/2} \htildeqbar \Big(1 + \frac{\bar q}{2\lmgf'(\htildeqbar/2) } \Big)$.%
	\end{claim}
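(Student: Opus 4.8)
The plan is to feed the bead representation~\eqref{partfunbeadaux} into Item~(2) of Proposition~\ref{asympteach3reg}, whose hypothesis $\delta=\delta_0(q)$ is precisely the one satisfied on the critical curve, and then to read off the sum over $a\in\cR_{b,L}$ as a Gaussian Riemann sum. First, recalling $q_{a,L}=1/a^2-1/(a\sqrt L)$ from~\eqref{eq:qaL} and writing $N=a\sqrt L$, \eqref{partfunbeadaux} gives
\begin{equation*}
\widetilde Z_{L,\beta,\delta}^{\circ}\big(N_\ell/\sqrt L\in\cR_{b,L}\big)=2\sum_{a\in\cR_{b,L}}\Gamma_\beta^{\,a\sqrt L}\,D_{a\sqrt L}(q_{a,L},\delta).
\end{equation*}
Since $\delta=\delta_c(\beta)$, I would invoke~\eqref{closedexprdeltac} and Remark~\ref{Nantes Remark 4.13} to get $\bar a=a_\beta$ and, putting $\bar q:=1/\bar a^2=q_\beta$, the identity $\delta=\delta_0(\bar q)$, i.e.\ $q_\delta=\bar q$. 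Parametrising $a=\bar a+sL^{-1/4}$, a Taylor expansion gives $q_{a,L}=\bar q+c_{a,L}/\sqrt N$ with $c_{a,L}:=\sqrt N(q_{a,L}-\bar q)=-2\bar q\,\bar a^{-1/2}s+O(L^{-1/4})$, bounded uniformly over $\cR_{b,L}$. Hence Item~(2) of Proposition~\ref{asympteach3reg} applies with base point $q=\bar q$ and perturbation $c=c_{a,L}$, yielding, uniformly in $a\in\cR_{b,L}$,
\begin{equation*}
D_{a\sqrt L}(q_{a,L},\delta)=\frac{\kappa^0(\tilde h(\bar q)/2)}{N^{3/2}}\,e^{\,N\psi(\bar q,0)-\tilde h(\bar q)\,c_{a,L}\sqrt N}\Big(\int_0^\infty f_{\widetilde{\boldsymbol h}(\bar q,0)}(c_{a,L},z)\,\dd z\Big)(1+o(1)).
\end{equation*}

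The core of the argument is the simplification of the exponent. Using $c_{a,L}\sqrt N=N(q_{a,L}-\bar q)$ and $\partial_q\psi(\bar q,0)=-\tilde h(\bar q)$ (by~\eqref{derivpar}), the quantity $N\log\Gamma_\beta+N\psi(\bar q,0)-\tilde h(\bar q)\,c_{a,L}\sqrt N$ is the first-order Taylor polynomial of $a\mapsto N\log\Gamma_\beta+N\psi(\,\cdot\,,0)$ at $\bar q$. Since $\psi(\cdot,0)\in C^2$ near $\bar q$ (Lemma~\ref{regulpsitilde}; for $\delta=0$ the map $\psi(\cdot,0)$ is $C^2$ on all of $(0,\infty)$) and $q_{a,L}-\bar q=O(L^{-1/4})$ while $N=O(\sqrt L)$, the second-order remainder equals $-\tfrac12 N\psi''(\bar q,0)(q_{a,L}-\bar q)^2+o(1)$; expanding once more around $1/a^2$ (using $\partial_q\psi(1/a^2,0)=-\tilde h(1/a^2)$ and $q_{a,L}-1/a^2=-1/(a\sqrt L)$) turns $N\log\Gamma_\beta+N\psi(q_{a,L},0)$ into $\sqrt L\,T_0(a)+\tilde h(1/a^2)+o(1)$. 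Finally $\bar a$ maximises $T_0$, so $T_0'(\bar a)=0$, and as $T_0\in C^2$ (Lemma~\ref{concavity}) one has $\sqrt L\,T_0(a)=\sqrt L\,T_0(\bar a)+\tfrac12 T_0''(\bar a)s^2+o(1)$, with $T_0(\bar a)=g(\beta,0)=g(\beta,\delta)$ on the critical curve. Altogether the exponent is $\sqrt L\,g(\beta,\delta)+\tilde h(\bar q)-\gamma_0 s^2+o(1)$, where $\gamma_0>0$ is built from $-\tfrac12 T_0''(\bar a)$ and the $s^2$-coefficient of $\tfrac12 N\psi''(\bar q,0)(q_{a,L}-\bar q)^2$. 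Combining $e^{-\gamma_0 s^2}$ with the $c_{a,L}^2$-part of $f_{\widetilde{\boldsymbol h}(\bar q,0)}(c_{a,L},z)$ (recall $c_{a,L}\to-2\bar q\,\bar a^{-1/2}s$) and completing the square in $z$ produces, in a rescaled variable $t\propto s$, the Gaussian $e^{-\gamma t^2}$ and the shifted $z$-Gaussian appearing in~\eqref{Nantes 6.38}; the closed form of $\gamma$ then follows from the identity $\log\Gamma_\beta+\psi(\bar q,0)=-2\bar q\,\tilde h(\bar q)$ (which is $T_0'(\bar a)=0$, cf.\ Remark~\ref{Nantes Remark 4.13}) together with Lemma~\ref{lem:technic_IPP}.

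The final step is to recognise $\sum_{a\in\cR_{b,L}}$ as a Riemann sum of mesh $L^{-1/4}$ in the variable $s$: with the prefactor $N^{-3/2}\sim\bar a^{-3/2}L^{-3/4}$ it produces $\bar a^{-3/2}L^{-1/2}\,e^{\sqrt L\,g(\beta,\delta)}$ times a double integral $\int_{-b}^{b}\!\int_0^\infty(\cdots)\,\dd z\,\dd t$; dividing the truncated integral ($|t|\le b$) by the same integral over $\R\times(0,\infty)$ defines $m(b)\in(0,1)$ with $m(b)\to1$ as $b\to\infty$, the leftover mass being negligible by Claim~\ref{claimrestbig2}. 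Collecting the constants yields~\eqref{interbound6}--\eqref{Nantes 6.38}.

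I expect the main obstacle to be the bookkeeping of the $O(1)$ terms in the exponent. On the critical curve $\psi(1/a^2,\delta)$ coincides with $\psi(1/a^2,0)$ only for $a\ge\bar a$, whereas for $a<\bar a$ the two differ at second order (Lemma~\ref{inegcentral}), a discrepancy of size $O(NL^{-1/2})=O(1)$ that cannot be dropped. The remedy, delivered precisely by Item~(2), is to linearise $N\psi(\,\cdot\,,0)$ rather than $N\psi(\,\cdot\,,\delta)$, so that this second-order term reappears inside the Gaussian weight instead of being lost; one then has only to check that each Taylor remainder is $o(1)$ uniformly over $\cR_{b,L}$ and that the moving argument $c_{a,L}$ of the two-dimensional density $f_{\widetilde{\boldsymbol h}(\bar q,0)}$ is correctly folded into the limiting integral.
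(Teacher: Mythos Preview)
Your proposal is correct and follows the same approach as the paper: apply Item~(2) of Proposition~\ref{asympteach3reg} at the base point $\bar q$, expand the resulting exponent to second order in the shift $s=kL^{-1/4}$, combine with the quadratic form in the density $f_{\widetilde{\boldsymbol h}(\bar q,0)}$, and pass to a Riemann sum. The only cosmetic difference is that the paper expands the exponent directly in $k$ via the explicit formula for $\mathsf c$ in~\eqref{eq:expand-mathsf-c} (so the vanishing of the linear term and the identification of $\gamma$ fall out of one computation), whereas you chain Taylor expansions of $\psi(\cdot,0)$ and $T_0$; these routes are equivalent because the $\tilde h'(\bar q)$ contributions coming from $\tfrac12 T_0''(\bar a)s^2$ and from $-\tfrac12 N\psi''(\bar q,0)(q_{a,L}-\bar q)^2$ cancel, leaving exactly the paper's $-\tilde h(\bar q)s^2/\bar a^3$ before the density piece is added.
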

	\begin{proof}[Proof of Claim \ref{claimrestbig2}]
		Let us first remind from Remark \ref{htildeqincreasing} that $q\mapsto \htildeq$ is increasing. The proof of Claim \ref{claimrestbig2} requires more attention, as we have to treat separately the cases $a<\bar a$ and $a > \bar a$. We therefore set
		\begin{equation} \label{Nantes 7.50}
			A_L:= \sum_{a\in [\bar a-\eta,\bar a-\frac{b}{L^{1/4}}]\cap \frac{\N}{\sqrt{L}}}   (\Gamma_\beta)^{a\sqrt{L}} D_{a\sqrt{L}}(q_{a,L},\delta),
		\end{equation}
		and 
		\begin{equation} \label{Nantes 7.50bis}
			B_L:= \sum_{a\in [\bar a+\frac{b}{L^{1/4}}, \bar a +\eta]\cap \frac{\N}{\sqrt{L}}}  (\Gamma_\beta)^{a\sqrt{L}} D_{a\sqrt{L}}(q_{a,L},\delta).
		\end{equation}
		(i) Let us start with \eqref{Nantes 7.50bis}.  
		Using Lemma \ref{Nantes Lemma 4.7} and removing the condition $X_{[1,a \sqrt{L}]}>0$, one can see that 
		\begin{equation}
			D_{a\sqrt{L}}(q_{a,L},\delta) 
			\leq
			c e^{a \psi(q_{a,L},0) \sqrt{L} }		
			\bP_{a \sqrt{L}, \tilde h(q_{a,L})} (A_{a\sqrt{L}} = q_{a,L} a^2 L).
		\end{equation}
	We now use Lemma \ref{Nantes Lemma 8.15} to bound from above this probability. Note that $\tilde h(q_{a,L}) = h^{q'}_{a \sqrt{L}}$ for a certain $q'$ verifying $|q_{a,L} -  q'| \leq {\cst}/{(a \sqrt{L})}$, by Proposition \ref{propapproxG}. Hence, using Lemma \ref{Nantes Lemma 8.15} with this $q'$, we get that there exists $c>0$ such that, uniformly in $a \in [a_1,a_2]$ such that $a \leq \bar a$,
	\begin{equation}
		D_{a\sqrt{L}}(q_{a,L},\delta) \leq \frac{c}{L^{3/4}}e^{a \psi(q_{a,L},0) \sqrt{L}}.
	\end{equation}
	The same ideas as displayed between \eqref{decouppar} and \eqref{equ} end the proof.\\
	(ii) Let us now move on to \eqref{Nantes 7.50}. Using \eqref{calculdeproba2} with $q=q_{a,L}$ and $N = a \sqrt{L}$, and deleting the condition $X_{[1,N]}>0$, we may write
	\begin{equation}
		\begin{aligned}
			D_{a\sqrt{L}}(q_{a,L},\delta) 
		&\leq \cst \, \ProbaSurCritique{A_N = q_{a,L} a^2 L} e^{a \psi(q_{a,L},\delta) \sqrt{L}}.
	\end{aligned}
\end{equation}
Using Lemma \ref{Nantes Lemma 8.29}, one has that there exists $c>0$ such that, uniformly in $a \in [a_1,a_2]$ such that $a \geq \bar a$,
\begin{equation}
	D_{a\sqrt{L}}(q_{a,L},\delta) \leq \frac{c}{L^{3/4}}e^{a \psi(q_{a,L},\delta) \sqrt{L}}.
\end{equation}
The same ideas as displayed between \eqref{decouppar} and \eqref{equ} end the proof. 
\end{proof}
\begin{proof}[Proof of Claim~\ref{claimmain2}]
We compute:
\begin{equation}
	\widetilde Z_{L,\beta,\delta}^{\circ}(N_\ell / \sqrt{L} \in \cR_{b,L})
	=\somme{a \in \cR_{b,L}}{} \left(\Gamma_\beta \right)^{a \sqrt{L}} D_{a \sqrt{L}}(q_{a,L},\delta).
\end{equation}
Let $\bar q = 1/{\bar a^2}$. 
{
	We start by computing: 
	\begin{equation}\label{Nantes 7.98}
		\sumtwo{|k|\le b L^{1/4}\colon}{N(k) = \bar a \sqrt{L}+k\in \bbN} \left( \Gamma_\beta\right)^{N(k)} \EsperanceMarcheAleatoire{e^{(\delta - \beta/2)X_{N(k)}} 1_{\{ A_{N(k)} = L - N(k) \}} }.
	\end{equation}
	Expanding the following expression as $L\to\infty$, we note that
	\beq
	L - N(k) = \bar q N(k)^2 + \mathsf{c}\,N(k)^{3/2},
	\eeq
}
with
\beq
\begin{aligned}
	\label{eq:expand-mathsf-c}
	&\mathsf{c} = -\Big(\frac{1}{\bar a^{1/2}} + \frac{2k}{\bar a^{5/2}} \Big) \frac{1}{L^{1/4}} + \Big({ \frac{k}{2\bar a^{3/2}}} + \frac{2k^2}{\bar a^{7/2}} \Big)\frac{1}{L^{3/4}}
	+ O\Big(\frac{1}{L^{ 5/4}}\Big).\\
	&{\mathsf{c}\, N(k)^{1/2} = -1 -\frac{2k}{\bar a^2}+ \frac{k^2}{\bar a^3 L^{1/2}}+ O\Big(\frac{1}{L}\Big).}
\end{aligned}
\eeq
Recall Proposition \ref{loccontrlimtheoarepos} and all definitions therein. We set and compute:
\begin{equation} \label{Nantes E.1}
	\begin{cases}
		&\alpha_0 = \int_0^1 \lmgf''(\tilde h(\bar q)(x-1/2)) \dd x = 2\lmgf'(\tilde h(\bar q)/2) / \tilde h(\bar q), \\
		&\alpha_1 = \int_0^1 x\lmgf''(\tilde h(\bar q)(x-1/2)) \dd x = \lmgf'(\tilde h(\bar q)/2) / \tilde h(\bar q), \\
		&\alpha_2 = \int_0^1 x^2\lmgf''(\tilde h(\bar q)(x-1/2)) \dd x = (\lmgf'(\tilde h(\bar q)/2)-2\bar q) / \tilde h(\bar q),
	\end{cases}
\end{equation}
and we set
\beq
\det := \det[\bB(\boldsymbol{\Tilde{h}}(q,0))], \qquad \textrm{where} \quad \bB(\boldsymbol{\Tilde{h}}(q,0)) = \begin{pmatrix}
	\alpha_2 & \alpha_1 \\
	\alpha_1 & \alpha_0
\end{pmatrix}.
\eeq 
Using~\eqref{Nantes 7.5} and expanding the scalar product in~\eqref{def_f_h}, the sum in~\eqref{Nantes 7.98} is shown to be asymptotically equivalent to
\begin{equation} \label{Nantes 7.100}
	\begin{aligned}
		\frac{\kappa^0(\tilde h (\bar q)/2)}{\bar a^{3/2} L^{3/4}} \sumtwo{|k|\le b L^{1/4}\colon}{N(k) = \bar a \sqrt{L}+k\in \bbN} &\exp \Big([\log \Gamma_\beta + \psi(\bar q,0)]N(k) - \mathsf{c} \tilde h(\bar q)  N(k)^{1/2} - \frac{\mathsf{c}^2}{2\alpha_0} \Big) \\
		&\qquad \times \int_{0}^{\infty} \exp \Big( -\frac{\alpha_0}{2\det} \Big( z - \mathsf{c}\frac{\alpha_1}{\alpha_0}\Big)^2 \Big) \frac{\dd z}{{2 \pi \sqrt{\det}}}.
	\end{aligned}
\end{equation}
By~\eqref{eq:expand-mathsf-c}, one has:
\begin{equation}
	- \mathsf{c} \tilde h(\bar q)  N(k)^{1/2} - \frac{\mathsf{c}^2}{2\alpha_0} =  \tilde h(\bar q) + 
	\frac{2k \tilde h(\bar q)}{\bar a^2}- \frac{k^2}{\sqrt{L}}\left( \frac{\tilde h(\bar q)}{\bar a^3} + \frac{2}{\bar a^{5} \alpha_0} \right) + o(1),
\end{equation}
where we neglected the terms which vanish as $L\to \infty$, uniformly in $|k|\le bL^{1/4}$.
Therefore, the sum in \eqref{Nantes 7.100} is equal to:
\begin{equation}
	\begin{aligned}
		e^{\tilde h(\bar q)} \sumtwo{|k|\le b L^{1/4}\colon}{N(k) = \bar a \sqrt{L}+k\in \bbN}
		&
		\exp \left(  k\Big(\log \Gamma_\beta + \psi(\bar q,0) + 2 \frac{\tilde h(\bar q)}{\bar a^2}  \Big) - \frac{k^2}{\sqrt{L}}\Big( \frac{\tilde h(\bar q)}{\bar a^3} + \frac{1}{\bar a^{5} \alpha_0} \Big) \right) 
		\\
		&\qquad \times \int_{0}^{\infty} \exp \left( -\frac{\alpha_0}{2\det} \left( z - \mathsf{c}\frac{\alpha_1}{\alpha_0}\right)^2 \right) \frac{\dd z}{{2 \pi \sqrt{\det}}}.
	\end{aligned}
\end{equation}
The terms in front of $k$ in the exponential turn out to cancel out. Indeed,
\begin{equation}
	\begin{aligned}
		\log \Gamma_\beta + \psi(\bar q,0) + 2 \frac{\tilde h(\bar q)}{\bar a^2} &=  \log \Gamma_\beta + \cG(\tilde h(\bar q)) + \bar q \tilde h(\bar q) 
		\quad &\textrm{by~\eqref{deftildepsi},}\\
		&= \log \Gamma_\beta + \cL(\tilde h(\bar q)/2) \quad &\textrm{by~\eqref{Nantes 4.30},}\\
		&= 0 \quad &\textrm{by Remark~\ref{Nantes Remark 4.13}.}
	\end{aligned}
\end{equation}
As for the coefficient in front of $-k^2/\sqrt{L}$, we obtain
\begin{equation}
	\frac{ \htildeqbar}{\bar a^3} + \frac{1}{\bar a^{5} \alpha_0}= 
	\bar q^{3/2} \htildeqbar \Big(1 + \frac{\bar q}{2\lmgf'(\htildeqbar/2) } \Big) =: \gamma >0.
\end{equation}
We therefore get that \eqref{Nantes 7.100} is asymptotically equivalent to:
\begin{equation}
	\frac{\kappa^0(\tilde h(\bar q)/2) e^{\tilde h(\bar q)} }{2 \pi \bar a^{3/2}  \sqrt{\det\times L} } e^{\sqrt{L}\penalite(\bar a)}\int_{-b}^{b} e^{-\gamma t^2} \int_{0}^{\infty} \exp \Big(  
	-\frac{\alpha_0}{2\det} \Big( 
	z + \frac{2}{\bar a^{5/2}}{\frac{\ga_1}{\ga_0}}t
	\Big)^2
	\Big) \dd z \dd t.
\end{equation}
\end{proof}
\subsection{Proof of~\texorpdfstring{\eqref{parfunqsouscritinter}}{}: Subcritical case}
\label{sec:prop6.1-subcrit}
Let $\delta < \delta_c(\beta)$. 
Analogously to the two previous sections, the proof of~\eqref{parfunqsouscritinter} is a consequence of Lemma~\ref{Nantes Lemma 6.12}, Claim~\ref{claimrest},
and the two following claims:
%
%
\begin{claim}\label{claimrestbig3}
For every $\gep>0$ and every $\eta>0$, there exists  $b>0$ such that for $L\in \N$,
\begin{equation}\label{interbound8}
	\widetilde Z^{\, \text{o}}_{L,\beta,\delta}(N_\ell / \sqrt{L} \in \cS_{\eta,L}\setminus \cR_{b,L})\leq \frac{\gep}{L^{3/4}} e^{\sqrt{L} T_{\delta}(\bar a)}.
\end{equation}
\end{claim}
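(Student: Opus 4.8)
The plan is to run exactly the argument from the proof of Claim~\ref{claimrestbig}, feeding it the \emph{subcritical} asymptotics of $D_N$ in place of the supercritical ones; the extra decay $N^{-2}$ there (versus $N^{-3/2}$ before) is precisely what upgrades the bound from $\gep/\sqrt L$ to $\gep/L^{3/4}$. First recall that here $(\gb,\gd)\in\cD\cC$, so by Remark~\ref{Nantes Remark 4.13} the maximizer of $T_\delta$ is $\bar a=a_\beta$, and by~\eqref{closedexprdeltac} we have $\delta<\delta_c(\beta)=\tfrac\beta2-\tfrac{\tilde h(1/\bar a^2)}{2}=\delta_0(1/\bar a^2)$; in particular $\delta<\beta/2$, so Lemma~\ref{concavity} applies and $T_\delta$ is strictly concave on $(0,\infty)$, with interior maximizer $\bar a$. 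Hence $T_\delta'(\bar a)=0$ and there is $c>0$ with $T_\delta(a)\le T_\delta(\bar a)-\tfrac c2(a-\bar a)^2$ for $a$ in a fixed compact neighbourhood of $\bar a$.

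Since $q\mapsto\tilde h(q)$, hence $\delta_0$, is continuous and $\delta<\delta_0(1/\bar a^2)$, we may first fix $\eta_0>0$ small enough that $\delta<\delta_0(q)$ strictly for every $q$ in the compact interval $[q_1,q_2]:=[(\bar a+\eta_0)^{-2}-\eta_0,\ (\bar a-\eta_0)^{-2}+\eta_0]$. For $L$ large, $q_{a,L}=1/a^2-1/(a\sqrt L)$ lies in $[q_1,q_2]$ for every $a\in\cS_{\eta_0,L}$, so that for such $a$ one has $\psi(q_{a,L},\delta)=\psi(q_{a,L},0)$ (by the observation following~\eqref{deftildepsi}) and Item (1) of Proposition~\ref{asympteach3reg} applies with uniform $o(1)$. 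It suffices to prove the claim for $\eta\le\eta_0$: for $\eta>\eta_0$ one decomposes $\cS_{\eta,L}\setminus\cR_{b,L}=(\cS_{\eta_0,L}\setminus\cR_{b,L})\cup(\cS_{\eta,L}\setminus\cS_{\eta_0,L})$ and bounds the second piece by Claim~\ref{claimrest}, which gives $e^{\sqrt L T_\delta(\bar a)}e^{-\gamma\sqrt L}=o(L^{-3/4}e^{\sqrt L T_\delta(\bar a)})$.

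Now, by~\eqref{partfunbeadaux} and Item (1) of Proposition~\ref{asympteach3reg}, there is $c>0$ such that, for $L$ large and all $a\in\cS_{\eta_0,L}$,
\begin{equation}
D_{a\sqrt L}(q_{a,L},\delta)\le\frac{c}{(a\sqrt L)^2}\,e^{a\sqrt L\,\psi(q_{a,L},0)}\le\frac{c}{a_1^2\,L}\,e^{a\sqrt L\,\psi(q_{a,L},0)}.
\end{equation}
By Lemma~\ref{regulpsitilde}, $q\mapsto\psi(q,0)$ is Lipschitz on $[q_1,q_2]$, so $a\sqrt L\,\psi(q_{a,L},0)=\sqrt L\,T_0(a)+O(1)=\sqrt L\,T_\delta(a)+O(1)$ uniformly in $a\in\cS_{\eta_0,L}$ (using $\psi(\cdot,\delta)=\psi(\cdot,0)$ on $[q_1,q_2]$ and $T_0=T_\delta$ near $\bar a$). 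Combining this with the concavity bound for $T_\delta$ and writing $a=\bar a+n/\sqrt L$ with $n\in\mathbb Z$, we obtain for $L$ large
\begin{equation}
\widetilde Z^{\,\text{o}}_{L,\beta,\delta}\big(N_\ell/\sqrt L\in\cS_{\eta,L}\setminus\cR_{b,L}\big)\le\frac{C}{L}\,e^{\sqrt L\,T_\delta(\bar a)}\sum_{|n|\ge bL^{1/4}}e^{-\frac c2\,n^2/\sqrt L}.
\end{equation}

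The sum is a Riemann sum for $L^{1/4}\int_{|x|\ge b}e^{-cx^2/2}\,\dd x$, so the right-hand side is at most $C'L^{-3/4}e^{\sqrt L T_\delta(\bar a)}\int_{|x|\ge b}e^{-cx^2/2}\,\dd x$; choosing $b$ so large that $C'\int_{|x|\ge b}e^{-cx^2/2}\,\dd x<\gep$ (and, if necessary, large enough that $\cS_{\eta,L}\setminus\cR_{b,L}=\emptyset$ for the remaining small values of $L$) completes the proof. The only genuinely new point compared with Claim~\ref{claimrestbig} is checking that one lands in Item (1) rather than Item (3) of Proposition~\ref{asympteach3reg}, i.e.\ the strict uniform inequality $\delta<\delta_0(q_{a,L})$ over the relevant window of $a$; everything else is the bookkeeping of powers of $L$, where the $N^{-2}$ from $D_N$ and the $L^{1/4}$ from the Riemann sum combine to the required $L^{-3/4}$.
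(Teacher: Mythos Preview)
Your proof is correct and follows essentially the same approach as the paper: verify via continuity of $q\mapsto\delta_0(q)$ that $\delta<\delta_0(q_{a,L})$ uniformly on a small window around $\bar a$, invoke Item~(1) of Proposition~\ref{asympteach3reg} in place of Item~(3), and then repeat the bookkeeping of~\eqref{decouppar}--\eqref{equ}, where the $N^{-2}$ prefactor produces the desired $L^{-3/4}$. Your reduction from arbitrary $\eta$ to $\eta\le\eta_0$ via Claim~\ref{claimrest} and your handling of small $L$ are in fact more explicit than the paper's sketch.
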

\begin{claim}\label{claimmain3}
There exists $m:(0,\infty)\mapsto (0,1)$ such that $\lim_{b\to \infty} m(b)=1$ and such that for every $b>0$, 
\begin{equation} \label{Nantes 6.70}
	\widetilde Z^{\, \text{o}}_{L,\beta,\delta}{ (N_\ell / \sqrt{L} \in \cR_{b,L})}=(1+o_L(1)) m(b)\frac{C^-_{\beta,\delta}}{L^{3/4}} \, e^{g(\beta,{ 0}) \sqrt{L} } ,
\end{equation}
with $C^-_{\beta,\delta} = C_{\beta,1/\bar a^2, \delta}\sqrt{\frac{2\pi}{|\penalite''(\bar a)|}}$, $C_{\beta,1/\bar a^2, \delta}$ defined in \eqref{Nantes 7.24} and the $o_{L}(1)$ depends on $b$.
\end{claim}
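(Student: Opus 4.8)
The plan is to mirror the proof of Claim~\ref{claimmain} (supercritical case), using Item~(1) of Proposition~\ref{asympteach3reg} in place of its Item~(3). The first task is to locate $\bar a=\bar a_{\beta,\delta}$, the maximizer of $T_\delta$ (unique by Corollary~\ref{cor:unique_max_in_Cgood}). Since $\delta<\delta_c(\beta)$ and $\delta_c(\beta)=\delta_0(1/a_\beta^2)$ by~\eqref{closedexprdeltac}, while $\psi(\cdot,\delta)\equiv\psi(\cdot,0)$ on the desorbed region $\{\delta\le\delta_0(q)\}$ by~\eqref{deftildepsi}, one has $T_\delta(a_\beta)=T_0(a_\beta)=g(\beta,0)=\max T_0$; arguing as in the proof of Theorem~\ref{surfacetransition}(i) one also gets $\max T_\delta=g(\beta,0)$, whence $\bar a=a_\beta$ and $\delta<\delta_0(1/\bar a^2)$ \emph{strictly}. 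By continuity of $q\mapsto\tilde h(q)$ there is a fixed compact $[q_1,q_2]\ni 1/\bar a^2$ with $\delta<\min_{[q_1,q_2]}\delta_0$, and, for $L$ large, $q_{a,L}\in[q_1,q_2]$ for every $a\in\cR_{b,L}$ (recall~\eqref{eq:qaL}); hence Item~(1) of Proposition~\ref{asympteach3reg} applies, uniformly over $a\in\cR_{b,L}$.

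Next I would expand~\eqref{partfunbeadaux}--\eqref{uniqbead} to write
\[
\widetilde Z^{\,\text{o}}_{L,\beta,\delta}(N_\ell/\sqrt L\in\cR_{b,L})=\sum_{a\in\cR_{b,L}}\Gamma_\beta^{a\sqrt L}\,D_{a\sqrt L}(q_{a,L},\delta),
\]
and insert~\eqref{Nantes 7.1}, i.e.\ $D_{a\sqrt L}(q_{a,L},\delta)=\frac{C_{\beta,q_{a,L},\delta}}{a^{2}L}\,e^{a\sqrt L\,\psi(q_{a,L},0)}(1+o(1))$ with the $o(1)$ uniform over $a\in\cR_{b,L}$. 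The polynomial factor $N^{-2}$ (instead of $N^{-3/2}$ in Claim~\ref{claimmain}), together with the $L^{1/4}$ produced by the Gaussian sum below, is exactly what yields the $L^{-3/4}$ rate. Since $|q_{a,L}-1/\bar a^2|$ and $|a-\bar a|$ are $O(L^{-1/4})$ on $\cR_{b,L}$, and $q\mapsto C_{\beta,q,\delta}$ is continuous near $1/\bar a^2$ — here strict subcriticality is used, to keep $\widetilde u:=\delta-\delta_0(q)$ bounded away from $0$ across the window — I would replace $C_{\beta,q_{a,L},\delta}$ by $C_{\beta,1/\bar a^2,\delta}$ (see~\eqref{Nantes 7.24}) and $a^{2}$ by $\bar a^{2}$, up to $(1+o(1))$. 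A first-order Taylor expansion of the $\sC^2$ map $\psi(\cdot,0)$ around $1/\bar a^2$ (Lemma~\ref{regulpsitilde}), using $q_{a,L}=1/a^{2}-1/(a\sqrt L)$ and $\partial_q\psi(q,0)=-\tilde h(q)$ from~\eqref{derivpar}, gives, uniformly on $\cR_{b,L}$,
\[
a\sqrt L\,\psi(q_{a,L},0)=a\sqrt L\,\psi(1/a^{2},0)+\tilde h(1/a^{2})+o(1)=\sqrt L\,T_\delta(a)-a\sqrt L\log\Gamma_\beta+\tilde h(1/\bar a^2)+o(1),
\]
so that $\Gamma_\beta^{a\sqrt L}D_{a\sqrt L}(q_{a,L},\delta)=(1+o(1))\,C_{\beta,1/\bar a^2,\delta}\,\dfrac{e^{\tilde h(1/\bar a^2)}}{\bar a^{2}L}\,e^{\sqrt L\,T_\delta(a)}$ uniformly on $\cR_{b,L}$.

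It then remains to evaluate $\sum_{a\in\cR_{b,L}}e^{\sqrt L\,T_\delta(a)}$. As $\bar a$ is an interior maximizer, $T_\delta'(\bar a)=0$, and $T_\delta''(\bar a)=T_0''(\bar a)<0$ by Lemma~\ref{concavity}; inserting $T_\delta(a)=T_\delta(\bar a)+\tfrac12T_\delta''(\bar a)(a-\bar a)^{2}+o((a-\bar a)^{2})$ on $\cR_{b,L}$, a Riemann-sum approximation (the points of $\cR_{b,L}$ having spacing $L^{-1/2}$ and range $2bL^{-1/4}$) gives
\[
\frac1{L^{1/4}}\sum_{a\in\cR_{b,L}}e^{\frac12T_\delta''(\bar a)(a-\bar a)^{2}\sqrt L}\xrightarrow[L\to\infty]{}\int_{-b}^{b}e^{-\frac12|T_\delta''(\bar a)|x^{2}}\,\dd x=:m(b)\sqrt{\tfrac{2\pi}{|T_\delta''(\bar a)|}},
\]
with $m(b)=\int_{-b\sqrt{|T_\delta''(\bar a)|}}^{b\sqrt{|T_\delta''(\bar a)|}}e^{-u^{2}/2}\,\tfrac{\dd u}{\sqrt{2\pi}}\in(0,1)$ and $m(b)\to1$ as $b\to\infty$. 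Combining the last two displays and using $T_\delta(\bar a)=g(\beta,0)$ yields~\eqref{Nantes 6.70}; the value of $C^-_{\beta,\delta}$ is then read off from $C_{\beta,1/\bar a^2,\delta}$, the factor $\sqrt{2\pi/|T_\delta''(\bar a)|}$, and the prefactors $\bar a^{-2}$, $e^{\tilde h(1/\bar a^2)}$, bundled exactly as $c_{3,\beta,\delta}$ is in Claim~\ref{claimmain}.

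I do not expect a serious obstacle: the computation is a faithful transcription of Claim~\ref{claimmain}. The only genuinely case-specific point is the role of \emph{strict} subcriticality $\delta<\delta_c(\beta)$, which simultaneously ensures that $\psi(\cdot,\delta)=\psi(\cdot,0)$ on the entire $q$-window (so that Item~(1), rather than Item~(2) which is tied to the critical curve, is the correct input) and that $C_{\beta,q,\delta}$ stays bounded and continuous there — a property that degenerates at $\delta=\delta_c(\beta)$. The other point requiring care, already present in Claim~\ref{claimmain}, is to keep the two scales $L^{-1/4}$ (window width) and $L^{-1/2}$ (lattice spacing) apart in the Taylor expansions, so that the quadratic term $\tfrac12T_\delta''(\bar a)(a-\bar a)^{2}\sqrt L=O(b^{2})$ survives while all window-scale corrections (to $C_{\beta,q,\delta}$, to $a^{-2}$, and to $\psi$ beyond first order) are $o(1)$.
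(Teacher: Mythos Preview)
Your proposal is correct and follows exactly the route the paper indicates: the paper explicitly states that the proof of Claim~\ref{claimmain3} follows that of Claim~\ref{claimmain}, the only change being the use of Item~(1) of Proposition~\ref{asympteach3reg} (with its $N^{-2}$ prefactor) in place of Item~(3), which accounts for the $L^{-3/4}$ instead of $L^{-1/2}$. Your remarks on strict subcriticality guaranteeing both the applicability of Item~(1) across the $q$-window and the continuity/boundedness of $C_{\beta,q,\delta}$ there are exactly the case-specific ingredients needed; the rest is a faithful transcription of the supercritical computation.
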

The proofs of Claim
\ref{claimmain3} follow that of 
Claim \ref{claimmain}. 
The prefactor $1/L^{3/4}$ in~\eqref{Nantes 6.70} instead of $1/L^{1/2}$ in the critical and supercritical regimes comes from the application of Item (1) in Proposition~\ref{asympteach3reg}, which carries a prefactor $1/N^2$ instead of the $1/N^{3/2}$ present in Items (2) and (3).
We thus focus on:
\begin{proof}[Proof of Claim~\ref{claimrestbig3}]
The line of proof follows that of Claim~\ref{claimrestbig}.
By the definition of $\gd_0(q)$ in~\eqref{eq:delta0q} and Theorem \ref{surfacetransition}, $\delta_0(1/\bar a^2) = \delta_c(\beta) > \delta$. Since $a \rightarrow \delta_0(1/a^2) $ is continuous in $a$, there exists a constant $\nu_0>0$ such that, for every $\nu < \nu_0$ and $a \in [\bar a -\nu,\bar a + \nu]$, $\delta_0(1/a^2) < \delta_c(\beta)$. Hence, picking $\nu$ smaller than $\nu_0$ and using Item (1) instead of Item (3) in Proposition~\ref{asympteach3reg}, we get the claim, following the same ideas as in \eqref{decouppar}--\eqref{equ}.
\end{proof}
\subsection{Conclusion : from beads to extended beads}
\label{sec:first_stretch}
We may finally conclude the proof of Proposition \ref{asymptoticsinter} by proving the following:
\begin{lemma}
\label{Nantes Lemma 6.12} 
We have
\begin{equation}
	\bar Z^{\, \text{o}}_{L,\beta,\delta} = e^{\beta L}\left( \tilde Z^{\, \text{o}}_{L,\beta,\delta} + [1+o(1)]\frac{e^{-\beta}}{1-e^{-\beta}}  \tilde Z^{\, \text{o}}_{L,\beta,0} \right), \qquad L\to\infty.
	\label{Nantes 6.51}
\end{equation}
Moreover, when $\delta \geq \delta_c(\beta)$,
\begin{equation}
	\bar Z^{\, \text{o}}_{L,\beta,\delta} = e^{\beta L} \tilde Z^{\, \text{o}}_{L,\beta,\delta} [1+o(1)], \qquad L\to\infty.
	\label{Nantes 6.50}
\end{equation}
\end{lemma}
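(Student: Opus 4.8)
The plan is to start from the exact identity~\eqref{fbead}, namely
\[
\bar Z^{\,\text{o}}_{L,\beta,\delta} = e^{\beta L}\,\tilde Z^{\,\text{o}}_{L,\beta,\delta} + e^{\beta L}\sum_{k=1}^{L} e^{-\beta k}\,\tilde Z^{\,\text{o}}_{L-k,\beta,0},
\]
so that~\eqref{Nantes 6.51} is equivalent to the Abelian-type statement $\sum_{k=1}^{L} e^{-\beta k}\,\tilde Z^{\,\text{o}}_{L-k,\beta,0} = [1+o(1)]\,\tfrac{e^{-\beta}}{1-e^{-\beta}}\,\tilde Z^{\,\text{o}}_{L,\beta,0}$ as $L\to\infty$, and~\eqref{Nantes 6.50} follows from~\eqref{Nantes 6.51} once one checks that $\tilde Z^{\,\text{o}}_{L,\beta,0}=o\big(\tilde Z^{\,\text{o}}_{L,\beta,\delta}\big)$ whenever $\delta\ge\delta_c(\beta)$. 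A point to keep in mind throughout is that we must only invoke asymptotics of the \emph{simple} bead partition functions $\tilde Z^{\,\text{o}}$, never those of $\bar Z^{\,\text{o}}$ in Proposition~\ref{asymptoticsinter}, to avoid a circular argument.

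For the Abelian statement I would argue by dominated convergence, exactly in the spirit of~\cite[Corollary 4.2]{Legrand_2022} (as already used in Lemma~\ref{lem:sharp-asympt-bead-minus}). By Proposition~\ref{essympwithoutdelta} we have $\tilde Z^{\,\text{o}}_{m,\beta,0}=e^{-\beta m}Z^{\,\text{o}}_{m,\beta,0}\sim K_\beta^{\text{o}}\,m^{-3/4}e^{g(\beta,0)\sqrt m}$ with $g(\beta,0)<0$; in particular there are $c,C>0$ such that $\tilde Z^{\,\text{o}}_{m,\beta,0}\le C\,m^{-3/4}e^{g(\beta,0)\sqrt m}\le C$ for every $m\ge1$, while $\tilde Z^{\,\text{o}}_{L,\beta,0}\ge c\,L^{-3/4}e^{g(\beta,0)\sqrt L}$ for $L$ large. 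For each fixed $k$, $\tilde Z^{\,\text{o}}_{L-k,\beta,0}/\tilde Z^{\,\text{o}}_{L,\beta,0}\to1$ since $(L/(L-k))^{3/4}\to1$ and $\sqrt L-\sqrt{L-k}\to0$. To obtain an $L$-uniform summable dominating sequence I would split the sum at $L/2$: for $1\le k\le L/2$,
\[
e^{-\beta k}\,\frac{\tilde Z^{\,\text{o}}_{L-k,\beta,0}}{\tilde Z^{\,\text{o}}_{L,\beta,0}}\ \le\ \frac{C}{c}\,2^{3/4}\,e^{-\beta k}\,e^{|g(\beta,0)|(\sqrt L-\sqrt{L-k})}\ \le\ \frac{C}{c}\,2^{3/4}\,e^{-\beta k}\,e^{|g(\beta,0)|\sqrt k},
\]
using $\sqrt L-\sqrt{L-k}=k/(\sqrt L+\sqrt{L-k})\le\sqrt k$; the right-hand side is summable in $k$ and independent of $L$, so dominated convergence yields $\sum_{1\le k\le L/2}e^{-\beta k}\tilde Z^{\,\text{o}}_{L-k,\beta,0}/\tilde Z^{\,\text{o}}_{L,\beta,0}\to\sum_{k\ge1}e^{-\beta k}=\tfrac{e^{-\beta}}{1-e^{-\beta}}$. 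For $L/2<k\le L$ I would bound crudely $\sum_{L/2<k\le L}e^{-\beta k}\tilde Z^{\,\text{o}}_{L-k,\beta,0}\le C\sum_{k>L/2}e^{-\beta k}\le \cst\,e^{-\beta L/2}$, which is $o\big(\tilde Z^{\,\text{o}}_{L,\beta,0}\big)$ because $e^{-\beta L/2}$ decays faster than $L^{-3/4}e^{g(\beta,0)\sqrt L}$. This establishes~\eqref{Nantes 6.51}.

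To deduce~\eqref{Nantes 6.50}, fix $\delta\ge\delta_c(\beta)$ (with $(\beta,\delta)\in\cC_{\rm good}$ when $\delta>\delta_c(\beta)$). Combining Lemma~\ref{compact} with Claims~\ref{claimrest},~\ref{claimrestbig},~\ref{claimmain} in the supercritical case, and with Claims~\ref{claimrest},~\ref{claimrestbig2},~\ref{claimmain2} at criticality (letting $b\to\infty$ and the auxiliary small parameter tend to $0$), one gets that $\tilde Z^{\,\text{o}}_{L,\beta,\delta}$ is asymptotically a positive constant times $L^{-1/2}e^{g(\beta,\delta)\sqrt L}$, using $T_\delta(\bar a)=g(\beta,\delta)$ from Theorem~\ref{varfor}. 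If $\delta>\delta_c(\beta)$ then $(\beta,\delta)\in\cA\cC$, hence $g(\beta,\delta)>g(\beta,0)$ and $\tilde Z^{\,\text{o}}_{L,\beta,0}/\tilde Z^{\,\text{o}}_{L,\beta,\delta}$ is of order $L^{-1/4}e^{(g(\beta,0)-g(\beta,\delta))\sqrt L}\to0$. If $\delta=\delta_c(\beta)$ then $g(\beta,\delta)=g(\beta,0)$ by continuity of $\delta\mapsto g(\beta,\delta)$, but the prefactor of $\tilde Z^{\,\text{o}}_{L,\beta,\delta}$ is of order $L^{-1/2}$ against $L^{-3/4}$ for $\tilde Z^{\,\text{o}}_{L,\beta,0}$, so $\tilde Z^{\,\text{o}}_{L,\beta,0}/\tilde Z^{\,\text{o}}_{L,\beta,\delta}$ is of order $L^{-1/4}\to0$. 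In either case, by~\eqref{Nantes 6.51} the correction term $[1+o(1)]\tfrac{e^{-\beta}}{1-e^{-\beta}}e^{\beta L}\tilde Z^{\,\text{o}}_{L,\beta,0}$ is negligible compared to $e^{\beta L}\tilde Z^{\,\text{o}}_{L,\beta,\delta}$, which is~\eqref{Nantes 6.50}.

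The main obstacle is the dominated-convergence step of the first part: finding a dominating sequence that is simultaneously summable and uniform over the whole range $1\le k\le L$. This is delicate near $k\sim L$, where the individual ratios $\tilde Z^{\,\text{o}}_{L-k,\beta,0}/\tilde Z^{\,\text{o}}_{L,\beta,0}$ are unbounded and one must instead play the exponential weight $e^{-\beta k}$ against the at-most-stretched-exponential factor $e^{|g(\beta,0)|\sqrt L}$. A secondary subtlety is bookkeeping: making sure that only the asymptotics of $\tilde Z^{\,\text{o}}$ provided by Proposition~\ref{essympwithoutdelta} and by the Claims of Section~\ref{asymptoneead} are used, so that Lemma~\ref{Nantes Lemma 6.12} can legitimately serve as an input to the proof of Proposition~\ref{asymptoticsinter}.
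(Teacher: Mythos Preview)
Your proposal is correct and follows essentially the same route as the paper: start from the decomposition~\eqref{fbead}, apply dominated convergence (in the spirit of Lemma~\ref{lem:sharp-asympt-bead-minus} / \cite[Corollary~4.2]{Legrand_2022}) using the asymptotics of $\tilde Z^{\,\text{o}}_{L,\beta,0}$ from Proposition~\ref{essympwithoutdelta} to get~\eqref{Nantes 6.51}, and then deduce~\eqref{Nantes 6.50} by comparing the $L^{-1/2}$ versus $L^{-3/4}$ prefactors (and the exponential rates when $\delta>\delta_c(\beta)$) furnished by the Claims of Section~\ref{asymptoneead}. Your explicit splitting at $k=L/2$ and the care you take to avoid circularity (invoking only $\tilde Z^{\,\text{o}}$ asymptotics, never Proposition~\ref{asymptoticsinter} itself) are more detailed than the paper's terse proof, but the underlying argument is the same.
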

\begin{proof}[Proof of Lemma~\ref{Nantes Lemma 6.12}]
The proof follows from (i) the decomposition of $\bar Z^{\, \text{o}}_{L,\beta,\delta}$ in \eqref{fbead}, (ii) the asymptotics in Claim~\ref{claimmain3} and (iii) and application of the dominated convergence theorem, in the same fashion as in the proof of Lemma~\ref{lem:sharp-asympt-bead-minus}.
\end{proof}
\section{Proof of Proposition~\ref{asympteach3reg}: Sharp asymptotics of the auxiliary partition functions}
\label{compid}
{ In this section we prove Proposition~\ref{asympteach3reg} in several steps. We recall that the aim is to provide sharp asymptotics for the auxiliary partition functions introduced in~\eqref{eq:D_Nqdelta} in terms of the function $\psi$ defined in~\eqref{deftildepsi}. 
The proof is close in spirit to \cite[Section 5]{Legrand_2022}. 
In complement to the event $\cV_{N,k,+}$ defined in~\eqref{defnun}, we define, for $N,k,x \in \N^3$,
\begin{equation}
	\cV_{N,k}^x = \{X_N = x,\ A_N= k,\ X_i > 0,\ 0<i<N\},
	\label{Nantes 8.2}
\end{equation}
so that 
\beq
\cV_{N,k,+} = \bigcup_{x\in \N} \cV_{N,k}^x.
\eeq
This section is divided into subsections corresponding to the different items in Proposition~\ref{asympteach3reg}.
}
\subsection{Proof of Item (1): the subcritical regime}
\label{Nantes Section 6}
In this regime, only small changes are required to make the proof in~\cite[Section 5]{Legrand_2022} work. For the purpose of the proof we set:
\begin{equation}\label{eq:def-tilde-u}
	\tilde{u} := \delta- \delta_0(q) = \delta - \frac{\beta}{2} + \frac{\htildeq}{2}<0.
	\end{equation}
	We divide the proof into four steps. In the first step, we present a decomposition of the partition function that is suitable for computations. In Step 2, we compute the main term. In Step 3, we prove Lemma \ref{lemrestpos}, that we use to compute the main term. In Step 4, we handle the error term.
	\subsubsection{Step 1 : Decomposition of the auxiliary partition function and main ideas \label{Nantes Section 6.3}}
	Using~\eqref{eq:def-tilde-u} and the fact that $A_N = qN^2$ on the event under consideration, we get 
	\begin{equation}
\begin{aligned}
	D_N(q,\delta)&=\EsperanceMarcheAleatoire{e^{(\delta - \beta/2)|X_N|}\ind_{\{\cV_{N,qN^2,+}  \}}}  \\
	&= \EsperanceMarcheAleatoire{e^{\left( \tilde{u}+\frac{\htildeq}{2N} \right)X_N} e^{ \frac{\htildeq}{N}A_N - \frac{\htildeq}{2}\left(  1{+}\frac{1}{N} \right) X_N }\ind_{\{\cV_{N,qN^2,+}  \}}} e^{-q{\htildeq}{N}}.
	\label{6.22A}
\end{aligned}
\end{equation}
Recall the definition of $\bE_{n,h}$ in \eqref{Nantes 4.24}. Since $\tilde{u}<0$, there exists $N_0 \in \N$ and $c>0$ that depends on $\delta$ only such that, for all $N \geq N_0$,
\begin{equation}
\begin{aligned}
	&\EsperanceMarcheAleatoire{e^{\left( \tilde{u}+\frac{\htildeq}{2N} \right)X_N} e^{ \frac{\htildeq}{N}A_N - \frac{\htildeq}{2}\left( 1+\frac{1}{N} \right) X_N }\ind_{\{\cV_{N,qN^2,+}  \}}} \\
	& = \bE_{N,\htildeq}{e^{\left( \tilde{u}+\frac{\htildeq}{2N} \right)X_N}\ind_{\{\cV_{N,qN^2,+} \}} } e^{N \cG_N(\htildeq)}\\
	&=   \Big[ \bE_{N,\htildeq}{e^{\left( \tilde{u}+\frac{\htildeq}{2N} \right)X_N}\ind_{\{\cV_{N,qN^2,+},\ X_N \leq c\log N \}} } + O\left(N^{-3}\right) \Big] e^{N \cG_N(\htildeq)}.
\end{aligned}
\label{6.22B}
\end{equation}
Therefore, it remains to prove that
\begin{equation}
\label{6.22C}
\bE_{N,\htildeq}{e^{\left( \tilde{u}+\frac{\htildeq}{2N} \right)X_N}
	\ind_{\{\cV_{N,qN^2,+},\ X_N \leq c\log N \}} } = \frac{C_{\beta,q,\delta}}{N^2}(1 + o(1)).
	\end{equation}
{Indeed, combining~\eqref{6.22A}, \eqref{6.22B} and recalling from~\eqref{deftildepsi} that $\psi(q,\gd) = \psi(q,0) = -q\htildeq + \cG(\htildeq)$ when $\gd \le \gd_0(q)$ leads to the desired result.}
	For the rest of the proof we focus on obtaining~\eqref{6.22C}.
	Using the change of measure used above in the opposite direction, we retrieve:
	\begin{equation}
\begin{aligned}
	&\bE_{N,\htildeq}{e^{\left(  \tilde{u}+ \frac{\htildeq}{2N}\right) X_N}
		1_{\{\cV_{N,qN^2,+},\ X_N \leq c\log N \}} } e^{\psi(\gd, 0)N}\\
	&\qquad= \EsperanceMarcheAleatoire{e^{(\delta - \beta/2)X_N} \ind_{\{\cV_{N,qN^2,+},\ X_N \leq c\log N \}}}.
\end{aligned}
\end{equation}
{ Recall the definition of $h_N^q$ below Lemma~\ref{Nantes Lemma 4.5}.} We now set $\aN := (\log N)^2$ and define two boxes:
\begin{equation}
\begin{aligned}
	\cC_{N} & :=\left[\mathbf{E}_{N, h_{N}^{q}}\left(X_{\aN}\right)- \left(\aN\right)^{3 / 4}, \mathbf{E}_{N, h_{N}^{q}}\left(X_{\aN}\right)+ \left(\aN\right)^{3 / 4}\right] \\
	\mathcal{D}_{N} & :=\left[\mathbf{E}_{N, h_{N}^{q}}\left(A_{\aN}\right)- \left(\aN\right)^{7 / 4}, \mathbf{E}_{N, h_{N}^{q}}\left(A_{\aN}\right)+ \left(\aN\right)^{7 / 4}\right] 
	\label{Nantes 8.8}
\end{aligned}
\end{equation}
and rewrite
\begin{equation}
\EsperanceMarcheAleatoire{e^{(\delta - \beta/2)X_N} 1_{\{\cV_{N,qN^2,+},\ X_N \leq c\log N \}}}=M_{N, q}+E_{N, q}
\end{equation}
where
\begin{equation}
\begin{aligned}
	M_{N,q} := \EsperanceMarcheAleatoiresansparenthese \Big( 
	e^{(\delta - \beta/2)X_N}
	1 \Big\{ \mathcal{V}_{N, q N^2,+}
	&\cap
	\{X_N \leq c\log N\}
	\cap
	\left\{X_{\aN} \in \cC_{N}, A_{\aN} \in \mathcal{D}_{N}\right\} \\
	&\cap
	\left\{X_{N-\aN} \in \cC_{N}, A_{N}-A_{N-\aN} \in \mathcal{D}_{N}\right\} \Big\} \Big).
	\label{Nantes 8.9}
\end{aligned}
\end{equation}
{%
is the main term and $E_{N, q}$ is the remaining (or error) term. The proof of Item (1) will be complete once we establish Lemmas~\ref{Nantes Lemma 7.2} and~\ref{Nantes Lemma 7.3} below, which we do in Steps 2 and 3 respectively.
Lemma~\ref{Nantes Lemma 7.2} allows us to estimate the main term uniformly in $q \in K$ for $K$ any compact set of $(0,\infty)$. Recalling the definitions of $\vartheta$ and $\kappa$ in~\eqref{Nantes 7.6} and \eqref{defkappa}, we have:}
\begin{lemma}
\label{Nantes Lemma 7.2}
Let $\gb>\gb_c$.
If $0<q_1<q_2<\infty$ and $\delta < \gd_0(q)$ for every $q \in [q_1,q_2]$, then
\begin{equation}
	M_{N, q}=\kappa^0(\htildeq/{2}) \frac{\vartheta (\htildeq)^{-\frac{1}{2}}}{2 \pi N^2} \left( \frac{1}{1-e^{\tilde u}} - \frac{1-\kappa^0(\htildeq/{2})}{1-e^{\tilde{u}-\htildeq}} \right)e^{N \psi(q,0)}(1+o(1))
\end{equation}
where o(1) is a function that converges to 0 as $N \rightarrow \infty$ uniformly in $q \in\left[q_{1}, q_{2}\right] \cap \frac{\N}{N^{2}}$.
\end{lemma}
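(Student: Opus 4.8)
The plan is to follow the ``bulk plus two ends'' scheme of~\cite[Section~6]{CNP16} and~\cite[Section~5]{Legrand_2022}, adapted here to the terminal weight $e^{(\delta-\beta/2)X_N}$ and to the positivity constraint carried by $\cV_{N,qN^2,+}$.

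\textbf{Step 1: reduction to the inhomogeneous tilt.} First I would rerun on $M_{N,q}$, see~\eqref{Nantes 8.9}, the change of measure already performed in~\eqref{6.22A}--\eqref{6.22B}, this time keeping the four box indicators; writing $\mathcal{E}_N$ for the intersection of the corresponding events from~\eqref{Nantes 8.8}, using $A_N=qN^2$ on $\cV_{N,qN^2,+}$, the identity $\psi(q,0)=-q\htildeq+\cG(\htildeq)$ from~\eqref{deftildepsi} and the bound $|\cG_N(\htildeq)-\cG(\htildeq)|=O(N^{-2})$ from Proposition~\ref{propapproxG}, this yields
\begin{equation}
	M_{N,q}=e^{N\psi(q,0)}\,(1+o(1))\;\mathbf{E}_{N,\htildeq}\Big[e^{(\tilde u+\frac{\htildeq}{2N})X_N}\,\mathbf{1}\{\cV_{N,qN^2,+}\cap\{X_N\le c\log N\}\cap\mathcal{E}_N\}\Big],
\end{equation}
where on $\{X_N\le c\log N\}$ the factor $e^{\frac{\htildeq}{2N}X_N}$ is $1+o(1)$, the gap between the tilt $\htildeq$ and the tilt $h_N^q$ used to centre the boxes is $O(N^{-2})$ by Proposition~\ref{propapproxG} hence harmless, and every $o(1)$ below is uniform in $q\in[q_1,q_2]$.

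\textbf{Step 2: three pieces.} By Remark~\ref{rmk:time-rev-prop} the walk under $\mathbf{P}_{N,\htildeq}$ has independent increments, the $k$-th of law $\widetilde{\mathbf{P}}_{\frac{\htildeq}{2}(1-\frac{2k-1}{N})}$, and is time reversible. Conditioning on the junction data $(X_{\aN},A_{\aN})$, $(X_{N-\aN},A_{N-\aN})$ and $X_N$ factorises the expectation above into a \emph{left} factor (positivity on $[1,\aN]$, landing in $\cC_N\times\cD_N$), a \emph{bulk} factor (positivity on $[\aN,N-\aN]$, carrying the remaining area, reaching $\cC_N$) and a \emph{right} factor (positivity on $[N-\aN,N]$, with $A_N-A_{N-\aN}\in\cD_N$ and terminal value $X_N$ carrying the weight $e^{\tilde u X_N}$). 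I would then analyse the three factors separately.

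\textbf{Step 3: the three factors.} For the \emph{bulk}, the middle piece has length $N-2\aN=N(1+o(1))$, must carry area $qN^2-O(\aN^2)=qN^2(1+o(1))$ and must join two points of $\cC_N$ at distance $O(\aN^{3/4})$, its increment tilts being $\frac{\htildeq}{2}(1-\frac{2k-1}{N})=\htildeq(\frac12-\frac kN)$; applying the quantitative local limit theorem of Proposition~\ref{loccontrlimtheoarepos} to this sub-walk exactly as in~\cite[Section~6]{CNP16} gives the value $\frac{1}{2\pi N^2\sqrt{\vartheta(\htildeq)}}(1+o(1))$, uniformly in $q$ and in the junction data over the boxes, because $f_{\widetilde{\boldsymbol{h}}(q,0)}$ is continuous, $\det\mathbf{B}(\widetilde{\boldsymbol{h}}(q,0))=\vartheta(\htildeq)$, and the box scales $\aN^{7/4},\aN^{3/4}$ are $o(N^{3/2}),o(N^{1/2})$. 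For the \emph{left}, all increment tilts on $[1,\aN]$ lie within $O(\aN/N)=o(1)$ of $\htildeq/2$, so combining the continuity of $\kappa^0$ and the uniform estimate~\eqref{eq:kappa2} with a perturbation argument for this slowly varying tilt shows $\mathbf{P}_{N,\htildeq}(X_{[1,\aN]}>0)\to\kappa^0(\htildeq/2)$ uniformly in $q$; since $\aN^{3/4}\gg\aN^{1/2}$ and $\aN^{7/4}\gg\aN^{3/2}$ exceed the fluctuation scales of $X_{\aN}$ and $A_{\aN}$ even conditionally on positivity, a Gaussian bound makes the box constraints hold with conditional probability $1+o(1)$, so the left factor is $\kappa^0(\htildeq/2)(1+o(1))$. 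For the \emph{right}, time reversibility~\eqref{Nantes 7.32} turns $\hat X_j:=X_{N-j}$ into a walk whose early increments converge to law $\widetilde{\mathbf{P}}_{\htildeq/2}$ and for which the positivity event reads $\hat X_j>0$ for $0\le j\le N-1$; conditioning on $\hat X_0=X_N$, using the same box argument, and summing $e^{\tilde u X_N}$ against the escape probabilities $\kappa^{\bullet}(\htildeq/2)$, the explicit formula~\eqref{defkappa} together with $\tilde u<0$ (see~\eqref{eq:def-tilde-u}) produces the geometric sum $\frac{1}{1-e^{\tilde u}}-\frac{1-\kappa^0(\htildeq/2)}{1-e^{\tilde u-\htildeq}}$.

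Multiplying the three factors from Step~3 by $e^{N\psi(q,0)}(1+o(1))$ yields exactly the expression claimed for $M_{N,q}$, with all errors uniform in $q\in[q_1,q_2]\cap\frac{\N}{N^2}$. The hard part will be Steps~2 and~3: one must run the quantitative local limit theorem on a sub-walk built from a slowly varying inhomogeneous array of increments, keep the error uniform both in $q$ and in the junction data over the boxes, and check that inserting the box indicators neither breaks the Markov factorisation nor perturbs the limiting $\kappa$-factors — it is precisely here that the choices $\aN=(\log N)^2$ and the box widths $\aN^{3/4},\aN^{7/4}$ are used.
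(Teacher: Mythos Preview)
Your proposal is correct and follows the same three-piece (left / bulk / right) scheme as the paper, with the bulk handled by Proposition~\ref{loccontrlimtheoarepos} and the two ends producing the $\kappa^0(\htildeq/2)$ and geometric-sum factors.

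The one noteworthy difference is the order in which you tilt and decompose. You tilt first to $\mathbf{P}_{N,\htildeq}$ and then split; this forces the end pieces to carry \emph{inhomogeneous} increment laws $\widetilde{\mathbf{P}}_{\frac{\htildeq}{2}(1-\frac{2k-1}{N})}$, and you then invoke a perturbation argument (``slowly varying tilts within $O(\aN/N)$ of $\htildeq/2$'') to recover $\kappa^0(\htildeq/2)$ and $\kappa^x(\htildeq/2)$. The paper instead decomposes first under the untilted law $\mathbf{P}_\beta$, and only then tilts each end piece by the \emph{homogeneous} value $\htildeq/2$; the identity $\cG(\htildeq)+q\htildeq=\lmgf(\htildeq/2)$ from Lemma~\ref{lem:technic_IPP} makes the boundary and bulk exponents recombine exactly into $e^{N\psi(q,0)}$, and Lemma~\ref{lemrestpos} applies to the end pieces without any perturbation. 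Your route works (a total-variation coupling of the first $\aN$ increments gives an $O(\aN^2/N)$ error), but the paper's ordering is cleaner and sidesteps that extra step.
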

Lemma~\ref{Nantes Lemma 7.3} allows us deal with the error term:
\begin{lemma}
\label{Nantes Lemma 7.3}
Under the same assumption as in Lemma~\ref{Nantes Lemma 7.2}, there exists $\varepsilon : \N \longrightarrow \R^+$ such that $\limite{N}{\infty} \varepsilon(N) = 0$ and for every $N \in \N$ and $q \in [q_1,q_2] \cap \frac{1}{N^2}\N$,
\begin{equation}
	E_{N, q} \leq \frac{\varepsilon(N)}{N^{2}} e^{N {\psi}(q,0)}.
\end{equation}
\end{lemma}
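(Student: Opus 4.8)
The statement I need to prove is Lemma~\ref{Nantes Lemma 7.3}, the error term bound for the subcritical regime.

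\medskip

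The plan is to show that the error term $E_{N,q}$, which collects all the contributions where at least one of the four localization events in the definition of $M_{N,q}$ fails, is of smaller order than $N^{-2}e^{N\psi(q,0)}$. First I would rewrite everything back under the inhomogeneous tilting $\bE_{N,\htildeq}$ as in~\eqref{6.22B}, so that the factor $e^{N\cG_N(\htildeq)}$ (which together with $e^{-q\htildeq N}$ produces the $e^{N\psi(q,0)}$ via Proposition~\ref{propapproxG} and the definition~\eqref{deftildepsi}) is pulled out, and I am left with bounding $\bE_{N,\htildeq}[e^{(\tilde u + \htildeq/2N)X_N}\mathbf{1}_{\cV_{N,qN^2,+}, X_N\le c\log N}\cap F^c]$ where $F$ is the intersection of the four box-events in~\eqref{Nantes 8.8}. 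Since $\tilde u<0$ and $X_N\le c\log N$, the prefactor $e^{(\tilde u + \htildeq/2N)X_N}$ is bounded (uniformly in $N$ once $N$ is large, since $\htildeq X_N/2N = O(\log N/N) \to 0$), so it suffices to estimate $\bP_{N,\htildeq}(\cV_{N,qN^2,+}\cap F^c)$ and compare it to $\bP_{N,\htildeq}(\cV_{N,qN^2,+})$, which by the local limit theorem (Lemma~\ref{Nantes Lemma 8.15} applied near $h_N^q$, together with $\kappa^x(\cdot)$ of Lemma~\ref{lemrestpos}) is of order $N^{-3/2}$ — and then the extra factor gained from $F^c$ has to be $o(N^{-1/2})$.

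\medskip

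The core of the argument is therefore to show that restricting to the complement of any one of the four boxes costs a polynomial factor with arbitrarily large negative exponent (in powers of $\aN = (\log N)^2$, hence more than enough). I would split $F^c$ into the four events $\{X_{\aN}\notin \cC_N\}$, $\{A_{\aN}\notin \cD_N\}$, $\{X_{N-\aN}\notin \cC_N\}$, $\{A_N - A_{N-\aN}\notin \cD_N\}$ and treat the first two (the last two follow by the time-reversal property in Remark~\ref{rmk:time-rev-prop}). On the event $\{X_{\aN}\notin\cC_N\}$ one uses that under $\bP_{N,\htildeq}$ the increments are independent tilted discrete-Laplace variables (Remark~\ref{rmk:time-rev-prop}), so $X_{\aN}$ has mean $\mathbf{E}_{N,h_N^q}(X_{\aN})$ and fluctuations of order $\sqrt{\aN}$ with Gaussian tails; the deviation $(\aN)^{3/4}\gg \sqrt{\aN}$ gives a bound $\exp(-c(\aN)^{1/2}) = \exp(-c\log N)$, which beats any polynomial. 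One then freezes $(X_{\aN}, A_{\aN})$ and uses a Markov-type decomposition: the remaining bridge from time $\aN$ to $N$ with prescribed area and positivity contributes at most $\cst\,N^{-3/2}$ uniformly (again a local-limit/Gnedenko estimate, of the type underlying Lemma~\ref{Nantes Lemma 8.15}, valid uniformly over the starting configuration in the relevant range thanks to the $\aN = (\log N)^2$ gap). Multiplying, $\bP_{N,\htildeq}(\cV_{N,qN^2,+}\cap\{X_{\aN}\notin\cC_N\})\le \cst\, N^{-3/2}\exp(-c\log N) = o(N^{-3/2})$, which after dividing by the bounded prefactor and absorbing into $e^{N\psi(q,0)}$ is $o(N^{-2})e^{N\psi(q,0)}$. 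The same scheme handles $\{A_{\aN}\notin\cD_N\}$ using the $(\aN)^{7/4}$ window against $A_{\aN}$'s standard deviation of order $(\aN)^{3/2}$.

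\medskip

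The main obstacle I expect is making the "freeze the endpoints of the window and bound the middle bridge" step uniform: one needs a local limit theorem for the random walk conditioned to enclose area $\sim qN^2$ and stay positive, that holds uniformly over all admissible values of $(X_{\aN}, A_{\aN})$ (and $(X_{N-\aN}, A_N - A_{N-\aN})$) in a $\cC_N\times\cD_N$-type neighborhood, and uniformly in $q$ over a compact set. This is exactly the kind of estimate proved in~\cite[Section 5]{Legrand_2022}, so I would carry it over with the cosmetic changes needed to track the extra bounded factor $e^{(\tilde u + \htildeq/2N)X_N}$ and the restriction $X_N\le c\log N$ — neither of which affects the orders of magnitude, since $\tilde u<0$ only helps (it makes the prefactor $\le 1$ for large $N$) and the $X_N\le c\log N$ truncation was already justified in~\eqref{6.22B} at the cost of an $O(N^{-3})$ term. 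Collecting the four $o(N^{-3/2})$ bounds and comparing with the normalization gives $\varepsilon(N)\to 0$ with $E_{N,q}\le \varepsilon(N)N^{-2}e^{N\psi(q,0)}$, uniformly in $q\in[q_1,q_2]\cap\frac{1}{N^2}\N$, which is the claim.
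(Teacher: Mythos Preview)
Your overall strategy is the right one, but there is a quantitative gap that prevents the argument from closing. When you bound the weight $e^{(\tilde u + \htildeq/(2N))X_N}\le \cst$ and reduce to the unweighted probability $\bP_{N,\htildeq}(\cV_{N,qN^2,+}\cap F^c)$, you throw away a factor $N^{-1/2}$: the main term $M_{N,q}$ is of order $N^{-2}$ (Lemma~\ref{Nantes Lemma 7.2}), not $N^{-3/2}$, precisely because the weight $e^{\tilde u X_N}$ with $\tilde u<0$ forces $X_N=O(1)$ rather than $O(\sqrt N)$. You then try to recover this lost $N^{-1/2}$ from the box--complement probability, asserting that $\bP(X_{\aN}\notin\cC_N)\le\exp(-c(\aN)^{1/2})$ ``beats any polynomial''. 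It does not: since $\aN=(\log N)^2$ one has $(\aN)^{1/2}=\log N$, so this bound is merely $N^{-c}$ with $c$ a \emph{fixed} constant determined by the increment variance; there is no guarantee that $c\ge 1/2$. Your final estimate is therefore only $O(N^{-3/2-c})e^{N\psi(q,0)}$, which is not in general $o(N^{-2})e^{N\psi(q,0)}$.

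The paper avoids this by \emph{not} discarding the weight. It decomposes $E_{N,q}$ over the value $X_N=x\in\{1,\dots,c\log N\}$, keeping the factor $e^{\tilde u x}$ (whose sum over $x$ converges because $\tilde u<0$). For each fixed $x$ one applies the \emph{bivariate} local limit theorem (Proposition~\ref{loccontrlimtheoarepos}) to the piece $[\aN,N]$: since both the area and the endpoint $X_N=x$ are prescribed, the bridge contributes $\cst/N^2$ uniformly, giving
\[
\bP_{N,h_N^q}\big(\cV_{N,qN^2}^x\cap\{X_{\aN}\notin\cC_N\}\big)\ \le\ \frac{\cst}{N^2}\,\bP_{N,h_N^q}(X_{\aN}\notin\cC_N).
\]
Now one only needs $\bP_{N,h_N^q}(X_{\aN}\notin\cC_N)\to 0$, and Chebyshev already yields $\lesssim 1/\sqrt{\aN}$. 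The fix to your argument is thus to keep the sum over $X_N=x$, use the bivariate (not the univariate) local limit for the bridge, and ask only that the box--complement probability tend to zero.
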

Before going to the proof, let us remind the reader that a random walk $X$ with law $\mathbf{P}_{N, h}$ has a time-reversibility property, see Remark~\ref{rmk:time-rev-prop}.
\subsubsection{Step 2 : Proof of Lemma~\ref{Nantes Lemma 7.2}\label{Nantes Section 6.4}}
In the following, we use the notation $\bar{x}=\left(x_{1}, x_{2}\right)$ and $\bar{a}=\left(a_{1}, a_{2}\right)$ for couples. Recall the definitions of $\mathcal{C}_{N}$ and $\mathcal{D}_{N}$ in~\eqref{Nantes 8.8} and define:
\begin{equation}
\mathcal{H}_{N}:=\left\{(\bar{x}, \bar{a}) \in \mathcal{C}_{N}^{2} \times \mathcal{D}_{N}^{2}\right\}.
\end{equation}
We use the Markov property on the walk $X$ at times $\aN$ and $N-\aN$ and apply time-reversibility between times $N-\aN$ and $N$ so as to obtain
\begin{equation}
M_{N,q}=\sum_{(\bar{x}, \bar{a}) \in \mathcal{H}_{N}} R_{N}\left(x_{1}, a_{1}\right) T_{N}(\bar{x}, \bar{a}) \somme{ x=1}{c\log N} e^{(\delta - \beta/2)x}\tilde{R}^x_{N}\left(x_{2}, a_{2}\right),
\end{equation}
with
\begin{equation}
\begin{aligned}
	R_{N}(x, a)&:=\mathbf{P}_{\beta}\left(X_{\left[1, \aN\right]}>0, X_{\aN}=x, A_{\aN}=a\right) \\
	\tilde{R}^y_{N}\left(x, a\right) &:= \mathbf{P}_{\beta}\left(X_{\left[1, \aN\right]}>0, X_{\aN}=x, A_{\aN}=a\, | X_0 = y\right),
\end{aligned}
\end{equation}
and, after setting $N'=N-2 \aN$,
\begin{equation}
T_{N}(\bar{x}, \bar{a}):=\mathbf{P}_{\beta}\left(X_{\left[0, N'\right]}>-x_{1}, X_{N'}=x_{2}-x_{1}, A_{N'-1}=q N^{2}-a_{1}-a_{2}-x_{1}\left(N'-1\right)\right).
\end{equation}
By tilting $X_{i+1}-X_{i}$ for $0\le i < \aN$ according to $\widetilde{\mathbf{P}}_{h}$ with $h:=\htildeq / 2$, see~\eqref{Nantes 4.3}, we obtain
\begin{equation}
\begin{aligned}
	\tilde{R}^y_{N}\left(x, a\right) &= \mathbf{P}_{\beta}\left(X_{\left[1, \aN\right]}>{ -y}, X_{\aN}=x - y, A_{\aN}=a - y \aN \right) \\
	&=e^{-\htildeq (x-y)/2 +\aN \mathcal{L}(\htildeq)} \widetilde{\mathbf{P}}_{\htildeq/2} \left( X_{[1,\aN]}>{ -y}, X_{\aN}=x - y, A_{\aN}=a - y\aN\right).
\end{aligned}
\end{equation}
We deal with $T_{N}(\bar{x}, \bar{a})$ and $R_{N}(x_1,a_1)$ in the same way as it was done in \cite[(5.43) to (5.64)]{Legrand_2022}. More specifically, combining \cite[(5.55) and (5.64)]{Legrand_2022} gives:
\begin{equation}
\begin{aligned}
	M_{N,q} = &(1+o(1))e^{N [ \cG(\htildeq) - q \htildeq ]} \frac{\vartheta(\htildeq)^{-\frac{1}{2}}}{2 \pi N^{2}} \kappa^0(h) \\ &\somme{x=1}{c\log N} e^{\tilde{u} x} \ProbaTiltee{\htildeq/2}{X_{[1,\aN]}>{ -x} , 
		X_{\aN} \in \cC_N - x,
		A_{\aN} \in \cD_N- x\aN}
	\label{Nantes 7.39}
\end{aligned}
\end{equation}
We now have to estimate the probability inside the sum, that we will denote $P_{N,q,x}$. A first computation gives, when $h=\htildeq /2$,
\begin{equation}
\left|P_{N, q,x}-\widetilde{\mathbf{P}}_{h}\left(X_{\left[1, \aN\right]}{> -x}\right)\right| \leq \widetilde{\mathbf{P}}_{h}\left(X_{\aN} \notin \mathcal{C}_{N} - x \right)+\widetilde{\mathbf{P}}_{h}\left(A_{\aN} \notin \mathcal{D}_{N} - x\aN \right).
\label{Nantes 7.40}
\end{equation}
As a consequence of \cite[(5.70)]{Legrand_2022}, letting $\alpha_q = \lmgf'(h)$ with $h = \htildeq/2$,
\begin{equation}
\Big\{X_{\aN} \notin \mathcal{C}_{N} - x \Big\} \subset \Big\{|X_{\aN}-\aN \alpha_{q}| \geq \tfrac{1}{2}(\aN)^{3 / 4} - x \Big\}.
\end{equation}
Remind that $\aN = (\log N)^2$ and $x \leq c\log N$. Hence, for $N$ large enough,
\begin{equation}
\Big\{|X_{\aN}-\aN \alpha_{q}| \geq \tfrac{1}{2}(\aN)^{3 / 4} - x \Big\} \subset \Big\{|X_{\aN}-\aN \alpha_{q}| \geq \tfrac{1}{4}(\aN)^{3 / 4}\Big\}.
\end{equation}
Using Tchebychev's inequality:
\begin{equation}
\widetilde{\mathbf{P}}_{\htildeq/2}\Big(|X_{\aN}-\aN \alpha_{q}| \geq \tfrac{1}{4}\left(\aN\right)^{3 / 4}\Big) \leq \frac{\cst}{\sqrt{\aN}} \operatorname{Var}_{\htildeq / 2}\left(X_{1}\right) \leq \frac{\cst}{\sqrt{\aN}},
\end{equation}
where we have used that $\operatorname{Var}_{h}\left(X_{1}\right)=\mathcal{L}^{\prime \prime}(h)$ for every $|h|<\beta/2$. Therefore, we get that $\lim _{N \rightarrow \infty} \widetilde{\mathbf{P}}_{h}\left(X_{\aN} \notin \mathcal{C}_{N}\right)=0$ and, with similar computations, $\lim _{N \rightarrow \infty} \widetilde{\mathbf{P}}_{h}\left(A_{\aN} \notin \mathcal{D}_{N}\right)=0$. Both convergences hold true uniformly in $q \in\left[q_{1}, q_{2}\right]$, because the variance $\operatorname{Var}_{h}$  is a continuous function of $h$, and is equal to $0$ only if $h=0$.
Coming back to \eqref{Nantes 7.40}, we may now write
\begin{equation}
P_{N, q,x}=\widetilde{\mathbf{P}}_{\htildeq/2}\left(X_{\left[1, \aN\right]}{ >-x}\right)(1+o(1))
\label{eq:Pnqx}
\end{equation}
where $o(1)$ is uniform in $q \in\left[q_{1}, q_{2}\right] \cap \frac{ \N}{N^{2}}$. 
By Lemma \ref{Nantes Lemma 4.5}, we have $\htildeq \in [\tilde h(q_1), \tilde h(q_2)] \subset (0, \beta)$ for every $q \in [q_1, q_2]$.
{ Therefore, we may apply Lemma~\ref{lemrestpos} to~\eqref{eq:Pnqx} with $[h_1, h_2] := [\tilde h(q_1), \tilde h(q_2)]$, combine the outcome with~\eqref{Nantes 7.39} and finally get:}
\begin{equation}
M_{N,q} = (1+o(1))e^{N [ \cG(\htildeq) - q \htildeq ]} \frac{[\vartheta(\htildeq)]^{-\frac{1}{2}}}{2 \pi N^{2}} \kappa^0(h) \somme{x=1}{c\log N} e^{\tilde{u} x}  \Big( 1-e^{-2hx} \frac{1-e^{h-\beta/2}}{1-e^{-h-\beta/2}} \Big).
\end{equation}
Lemma \ref{Nantes Lemma 7.2} follows directly. We continue this section with the proof of Lemma~\ref{lemrestpos} and Step~3.
\subsubsection{Proof of Lemma \ref{lemrestpos}\label{prooflemsrestpos}}
Let us begin with~\eqref{eq:kappa2}.
Using the same idea as in \cite[(5.69)]{Legrand_2022}, we pick $h \in [h_1, h_2]$, $k \geq 1
$, and we set $\varepsilon := {h_1}/{2}>0$. Then, by Chernov's inequality,%
\begin{equation}
0 \leq \ProbaTiltee{h}{X_{[1,k]} > -x} - \kappa^x(h) \leq
\sum_{j=k+1}^{\infty}  \ProbaTiltee{h}{ X_j \leq -x} \leq
e^{-\gep x} \sum_{j=k+1}^{\infty} e^{-(\lmgf(h) - \lmgf(h-\varepsilon))j},
\end{equation}
and from the convexity of $\lmgf$, $\lmgf(h) - \lmgf(h-\varepsilon) \ge \lmgf'(h)\gep>0$, so that~\eqref{eq:kappa2} follows.
Let us now prove \eqref{defkappa}. Pick \(h \in [0, \beta/2)\) and define { the stopping time} \(\rho = \inf\{i \geq 1, X_i \leq -x\}\). Then,
\begin{equation} 
1 - \kappa^x(h) = \ProbaTiltee{h}{\rho < \infty} = \mathbb{E}_\beta[e^{hX_\rho - \lmgf(h)\rho}1_{\{\rho < \infty\}} ] = \mathbb{E}_\beta[e^{hX_\rho - \lmgf(h)\rho}], 
\end{equation}
where we used \eqref{Nantes 4.3} and the fact that $\rho$ is finite \(\mathbf{P}_\beta\)-a.s. It is well known (easily adapting~\cite[Lemma 6.2]{CarPet16B}) that \(\rho\) and \(X_\rho\) are independent, with \(-X_\rho\) distributed as $x$ plus a geometric law on \(\mathbb{N} \cup \{0\}\) with parameter \(1 - e^{-\beta/2}\). Furthermore, { \((e^{-hX_{n\wedge \rho} - \lmgf(h) (n\wedge \rho)})_{n \geq 1}\)} is a martingale under \(\bP_\beta\) that is is bounded from above, hence uniformly integrable. Thus, by Doob's optional stopping theorem,%
\begin{equation} 
\mathbb{E}_\beta[e^{-\lmgf(h)\rho}] = \mathbb{E}_\beta[e^{-hX_\rho}]^{-1}.
\end{equation}
As a  consequence:
\begin{equation} 
1 - \kappa^x(h) = \ProbaTiltee{h}{\rho < \infty} = \frac{\mathbb{E}_\beta[e^{hX_\rho}]}{\mathbb{E}_\beta[e^{-hX_\rho}]} = e^{-2hx} \frac{1-e^{h-\beta/2}}{1-e^{-h-\beta/2}}. 
\end{equation}
%
\subsubsection{Step 3: Proof of Lemma \ref{Nantes Lemma 7.3} \label{Nantes Section 6.5} }
A direct application of \eqref{Nantes 7.32} with $n=N$ and $h=0$ leads to bound the error term from above by:
\begin{equation}
\begin{aligned}
	E_{N, q} \leq \somme{x=1}{c\log N} & e^{(\delta - \beta/2 )x} \LoiMarchealeatoire{\mathcal{V}_{N, q N^{2}}^{x} \cap\left\{X_{\aN} \notin \mathcal{C}_{N}\right\} } \\+
	& e^{(\delta - \beta/2 )x} \LoiMarchealeatoire{\mathcal{V}_{N, q N^{2}}^{x} \cap\left\{A_{\aN} \notin \mathcal{D}_{N}\right\}}.
	\\+
	& e^{(\delta - \beta/2 )x} \LoiMarchealeatoire{  \tilde{\mathcal{V}}_{N, q N^{2} - xN}^{-x} \cap\left\{A_{\aN} \notin \mathcal{C}_{N} - x \right\}}
	\\+
	& e^{(\delta - \beta/2 )x} \LoiMarchealeatoire{\tilde{\mathcal{V}}_{N, q N^{2} - xN}^{-x}, \cap\left\{A_{\aN} \notin \mathcal{D}_{N} - a_N x\right\}},
\end{aligned}
\end{equation}
with $
\tilde \cV_{N,k}^{-x} = \{X_N = -x,\ A_N= k,\ X_i > -x,\ 0<i<N\},
$
We will only bound from above $\LoiMarchealeatoire{\mathcal{V}_{N, q N^{2}}^{x} \cap\left\{X_{\aN} \notin \mathcal{C}_{N}\right\} }$ and $\LoiMarchealeatoire{\mathcal{V}_{N, q N^{2}}^{x} \cap\left\{X_{\aN} \notin \mathcal{D}_{N}\right\} }$ : the two other terms can be bounded from above using the same method.
Tilting the law with \eqref{Nantes 4.24}, we obtain that for $\mathcal{B}=\left\{X_{\aN} \notin \mathcal{C}_{N}\right\}$ or $\mathcal{B}=\left\{A_{\aN} \notin \mathcal{D}_{N}\right\}$ and neglecting the $e^{(\htildeq x)/(2N)}$ term :
\begin{equation}
e^{(\delta - \beta/2 )x} \mathbf{P}_{\beta}\left(\mathcal{V}_{N, q N^{2}}^x \cap \mathcal{B} \right) 
\leq e^{\psi_{N, h_{N}^{q}}\left(q N^{2}, 0\right)} e^{\tilde{u}x}  \mathbf{P}_{N, h_{N}^{q}}\left(\mathcal{V}_{N, q N^{2}}^x \cap \mathcal{B} \right).
\label{Nantes 8.31}
\end{equation}
Using Proposition \ref{propapproxG} and the definition in \eqref{Nantes 4.24} and \eqref{exprepsitilde}, we change $\psi_{N, h_{N}^{q}}\left(q N^{2}, 0\right)$ to  $\psi(q,0)N$ in the exponential of the r.h.s. in \eqref{Nantes 8.31}, paying at most a constant factor. Therefore, the proof of Lemma \ref{Nantes Lemma 7.3} is complete if we prove the following claim, { since $\tilde u<0$}.
\begin{claim}
For $\left[q_{1}, q_{2}\right] \subset(0, \infty)$, there exists $\varepsilon: \mathbb{N} \mapsto \mathbb{R}^{+}$such that $\lim _{N \rightarrow \infty} \varepsilon(N)=0$ and for every $N \in \mathbb{N}$, $q \in\left[q_{1}, q_{2}\right] \cap \frac{1}{N^{2}}$ { and $0< x \le c \log N$},
\begin{equation}
	\mathbf{P}_{N, h_{N}^{q}}\left(\mathcal{V}^x_{N, q N^{2}} \cap\left\{X_{\aN} \notin \mathcal{C}_{N}\right\}\right)+\mathbf{P}_{N, h_{N}^{q}}\left(\mathcal{V}^x_{N, q N^{2}} \cap\left\{A_{\aN} \notin \mathcal{D}_{N}\right\}\right) \leq \frac{\varepsilon(N)}{N^{2}}.
	\label{Nantes 7.52}
\end{equation}
\label{Claim 6.6}
\end{claim}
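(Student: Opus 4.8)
\textbf{Proof proposal for Claim~\ref{Claim 6.6}.}

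The plan is to control the two probabilities in~\eqref{Nantes 7.52} separately, both via a decomposition at the intermediate times $\aN$ and $N-\aN$ combined with a local limit estimate for the bulk of the trajectory. Recall that under $\mathbf{P}_{N, h_{N}^{q}}$ the increments are independent (Remark~\ref{rmk:time-rev-prop}), and by Proposition~\ref{Nantes Proposition 8.10} one has the uniform bound $\mathbf{P}_{N, h_{N}^{q}}(A_N = qN^2 + x, X_N = y) = O(1/N^2)$, uniformly over $q\in[q_1,q_2]$ and $x,y\in\bbZ$. The key point is that we must gain a factor tending to zero on top of this $1/N^2$, and this extra decay will come from the event $\{X_{\aN}\notin\cC_N\}$ (resp.\ $\{A_{\aN}\notin\cD_N\}$) being atypical on the scale $\aN = (\log N)^2$.

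First I would treat $\mathbf{P}_{N, h_{N}^{q}}(\cV^x_{N,qN^2}\cap\{X_{\aN}\notin\cC_N\})$. Using the Markov property at time $\aN$, I would write this probability as a sum over $(x_1,a_1)$ with $x_1\notin\cC_N$ of the contribution of the first $\aN$ steps times the contribution of the remaining $N-\aN$ steps. For the tail $\aN$ steps, tilt by $h=\htildeq/2$ as in Step~2 so that the increments become a centered-enough i.i.d.\ sequence (centered around $\alpha_q \aN$), and apply Tchebychev on the scale $(\aN)^{3/4}$ exactly as below~\eqref{Nantes 7.40}: this gives $\mathbf{P}(X_{\aN}\notin\cC_N) \le \cst/\sqrt{\aN}$, with the constant uniform in $q\in[q_1,q_2]$ and $x\le c\log N$ since the relevant variance $\lmgf''(h)$ is continuous and bounded away from zero on $[\tilde h(q_1),\tilde h(q_2)]$. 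For the remaining $N-\aN$ steps, I would bound their contribution by a supremum of the local probability $\mathbf{P}_{N-\aN, h_{N}^{q}}(A_{N-\aN} = \cdot, X_{N-\aN}=\cdot)$, which is $O(1/N^2)$ uniformly by (the obvious analogue of) Proposition~\ref{Nantes Proposition 8.10}. Summing over the finitely many admissible $(x_1,a_1)$ introduces at most polynomial factors that are absorbed; crucially the $\cst/\sqrt{\aN} = \cst/\log N$ factor survives, so the whole expression is $\le \varepsilon(N)/N^2$ with $\varepsilon(N)=\cst/\log N\to 0$. The term involving $\{A_{\aN}\notin\cD_N\}$ is handled identically, now using the scale $(\aN)^{7/4}$ and the variance of the area increment over $\aN$ steps (which is of order $(\aN)^3$), so that Tchebychev again yields a $\cst/\sqrt{\aN}$ bound; the details mirror \cite[(5.69)--(5.70)]{Legrand_2022}.

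The main obstacle is the bookkeeping of the local limit estimate for the middle block together with the constraint that its endpoint and area are pinned: one must be careful that, after conditioning on $X_{\aN}=x_1$, $A_{\aN}=a_1$ and using time-reversal at $N-\aN$, the residual block still satisfies a local limit theorem with an $O(1/N^2)$ uniform bound — this is precisely the content of Proposition~\ref{Nantes Proposition 8.10} (and its counterpart for a walk started away from the origin), whose proof is a quantitative upgrade of \cite[Proposition 6.1]{CNP16}. Once one accepts that uniform local bound, the argument is essentially a moment estimate on two short end-blocks, and the factor $1/\log N$ (or a power thereof) is more than enough for the conclusion. I therefore expect the write-up to be short, reducing to an invocation of the uniform local limit bound plus the Tchebychev estimates already established in Step~2.
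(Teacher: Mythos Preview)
Your approach is correct and is exactly the paper's: decompose via the Markov property at time $\aN$ together with the time-reversal of Remark~\ref{rmk:time-rev-prop}, bound the bulk block uniformly by $O(1/N^2)$ using Proposition~\ref{Nantes Proposition 8.10}, and the remaining factor $\mathbf{P}_{N,h_N^q}(X_{\aN}\notin\cC_N)$ (resp.\ $\mathbf{P}_{N,h_N^q}(A_{\aN}\notin\cD_N)$) is $o(1)$ by Tchebychev. Two minor clean-ups: you are already working under $\mathbf{P}_{N,h_N^q}$, so no additional tilt by $\htildeq/2$ is needed on the first $\aN$ steps (those increments already carry tilt parameters $\approx h_N^q/2$); and no ``polynomial factors'' arise when summing over $(x_1,a_1)$, since that sum of the first-block probabilities is exactly $\mathbf{P}_{N,h_N^q}(X_{\aN}\notin\cC_N)$.
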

\begin{proof}[Proof of Claim~\ref{Claim 6.6}]
For the purpose of the proof, let us note
\beq
R_{N, q}^x:=\mathbf{P}_{N, h_{N}^{q}}\left(\mathcal{V}^x_{N, q N^{2}} \cap\left\{X_{\aN} \notin \mathcal{C}_{N}\right\}\right)
\eeq
and
\beq
S_{N, q}^x:=\mathbf{P}_{N, h_{N}^{q}}\left(\mathcal{V}^x_{N, q N^{2}} \cap\left\{A_{\aN} \notin \mathcal{D}_{N}\right\}\right).
\eeq
We decompose $R_{N, q}^x$ according to the values taken by $X_{\aN}$ and $A_{\aN}$. Then, we use the Markov property at time $\aN$, combined with the time reversal property of Remark~\ref{rmk:time-rev-prop} with $n:=N$, $h:=h_{N}^{q}$, $j:=\aN$ and the event
\begin{equation}
	\Big\{x \in \mathbb{N}^{N-j-1}: \sum_{1\le i < N-j} x_{i}=q N^{2}-z\Big\}
\end{equation}
on the time interval $\left[\aN, N\right]$, in order to obtain
\begin{equation}
	\begin{aligned}
		R_{N, q}^x=\sum_{y \in \mathbb{N} \backslash \mathcal{C}_{N}} \sum_{z \in \mathbb{N}} \mathbf{P}_{N, h_{N}^{q}}\left(X_{\aN}=y,\ A_{\aN}=z,\ X_{\left[1, \aN\right]}>0\right) \\
		\times \mathbf{P}_{N, h_{N}^{q}}\left(X_{\left[1, N-\aN\right]}>0,\ X_{N-\aN}=y,\ A_{N-\aN-1}=q N^{2}-z{\, |\, X_0 = x}\right) .
	\end{aligned}
	\label{Nantes 7.54}
\end{equation}
Using Proposition \ref{loccontrlimtheoarepos}, one can see that 
\begin{equation}
	R_{N, q}^{x} \leq \frac{\cst}{N^2} \mathbf{P}_{N, h_{N}^{q}}(X_{a_N} \notin \cC_{N}).
\end{equation}
This was dealt with in \cite[(5.28) to 
(5.30)]{Legrand_2022}. Hence we have the existence of $\varepsilon : \N 
\longrightarrow \R$ such that $\varepsilon(N) \to 0$ as $N\to\infty$ and 
$R_{N,q}^{x} \leq {\varepsilon(N)}/{N^2}$ for all $x \leq c\log N$.
\par Let us now consider $S_{N,q}^x$, which we decompose similarly to \eqref{Nantes 7.54}. The same idea AS for $R_{N,q}^x$ gives:
\begin{equation}
	S_{N,q}^x \leq \frac{\cst}{N^2} \mathbf{P}_{N, h_{N}^{q}}(A_{a_N} \notin \cD_{N}).
\end{equation}
This was dealt in \cite[(5.33)]{Legrand_2022}. Hence we have the existence of $\varepsilon : \N 
\longrightarrow \R$ such that $\varepsilon(N) \to 0$ as $N\to\infty$ and 
$R_{N,q}^{x} \leq {\varepsilon(N)}/{N^2}$ for all $x \leq c\log N$.
\end{proof}
\subsection{Proof of Item (2): the critical regime} 
The aim of this section is to estimate the partition function at the critical point. We will follow the idea set forth~\ref{Nantes Section 6} and use the same symmetric change of measure. We set $\aN := (\log N)^2$ and $(\bN)$ a sequence such that $ (\log N)^2 \leq  \bN = o(\sqrt{N}/(\log N))$.  Recall the sets $\cC_N$ and $\cD_N$ defined in \eqref{Nantes 8.8}. By Lemma \ref{Nantes Lemma 4.5} and Proposition \ref{propapproxG}, $\htildeq$ and $h_{N}^q$ belong to a compact subset of $(0,\beta)$ as $q$ varies $[q_1,q_2]$ and $N \geq N_0$, hence we denote by ${K}/{2}$ the maximum of $\EsperanceTiltee{N, h_{N}^q}{X_1}$ for all $N \geq N_0$. We now split the partition function as follows:
\beq
\EsperanceMarcheAleatoire{e^{(\delta - \beta/2)X_N}1_{\{\cV_{N,(q+c/\sqrt{N})N^2,+}  \}}} = M_{N,q} + E_{N,q}\, ,
\eeq
with
\begin{equation}
M_{N,q} := \EsperanceMarcheAleatoiresansparenthese \Big( 
e^{(\delta - \beta/2)X_N}
1_{\{ \mathcal{V}_{N, (q+c/\sqrt{N}) N^2,+},\,
	K \aN \leq X_N \leq \bN \sqrt{N},\,
	X_{\aN} \in \cC_{N},\, A_{\aN} \in \mathcal{D}_{N}\}} \Big)
\label{Nantes 8.21}
\end{equation}
and $E_{N,q}$ the remainder term.
The proof will come out as a consequence of Lemmas \ref{Lemme main term critical} and \ref{Lemme error term critical}, respectively proven in Steps 1 and 2 below. 
First, recall the definitions of $f$, $\kappa$ and $\psi$ in \eqref{def_f_h}, \eqref{defkappa} and \eqref{deftildepsi}.
\begin{lemma}
Let { $\beta > \beta_c$ and $0< q_1 < q_2 < \infty$. As $N\to \infty$, uniformly in $q \in\left[q_{1}, q_{2}\right] \cap \frac{\bbN}{N^{2}}$ and assuming $\gd = \gd_0(q)$,}
\begin{equation}
	M_{N, q} = \frac{\gamma({ q,c})}{N^{3/2} }  e^{\psi(q,0)N - \htildeq [c \sqrt{N}]}(1+o(1)),
\end{equation}
where
\begin{equation}
	\label{eq:attention}
	\gamma({ q,c}) := \kappa^0(\htildeq) \int_0^\infty f_{ \boldsymbol{\Tilde{h}}(q,0)} (c ,u) \dd u
\end{equation}
{ and $c$ comes from the left-hand side of~\eqref{Nantes 7.5}.}
\label{Lemme main term critical}
\end{lemma}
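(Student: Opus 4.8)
The plan is to start from the representation of $M_{N,q}$ in~\eqref{Nantes 8.21}, use the Markov property at time $\aN$ and time-reversibility (Remark~\ref{rmk:time-rev-prop}) at time $N-\aN$, exactly as in the proof of Lemma~\ref{Nantes Lemma 7.2}, in order to factor out two "endpiece" contributions near $0$ and near $X_N$, leaving a bulk term governed by the local limit theorem (Proposition~\ref{loccontrlimtheoarepos}). The new feature compared with the subcritical case is that here $\gd=\gd_0(q)$, so the effective tilt on the last stretch, $\tilde u = \delta-\gd_0(q)$, vanishes; consequently the endpoint $X_N$ is no longer $O(\log N)$ but genuinely of order $\sqrt N$, which is why the window $K\aN \le X_N \le \bN\sqrt N$ is used and why one gets a $1/N^{3/2}$ rather than $1/N^2$ prefactor (one fewer summation collapses to a constant). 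So the structure is: (I) perform the three-block decomposition; (II) tilt all increments by the inhomogeneous measure $\mathbf{P}_{N,h_N^q}$ (equivalently $\mathbf{P}_{N,(h_N^q,-\frac{h_N^q}{2}(1+\frac1N))}$), picking up the factor $e^{N\cG_N(h_N^q)}$ which, via Proposition~\ref{propapproxG}, becomes $e^{N\cG(\htildeq)}$ up to $1+o(1)$, and then rewrite $e^{N[\cG(\htildeq)-q\htildeq]}=e^{N\psi(q,0)}$ using~\eqref{deftildepsi} (valid since $\gd=\gd_0(q)$); (III) identify the endpiece near $0$ as contributing the factor $\kappa^0(\htildeq)$ via Lemma~\ref{lemrestpos}, exactly as in~\eqref{Nantes 7.39}--\eqref{eq:Pnqx}; (IV) identify the bulk plus the $X_N$-endpiece contribution as a Riemann-sum convergence to $\int_0^\infty f_{\tilde{\boldsymbol h}(q,0)}(c,u)\,\dd u$.

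\textbf{Key steps in order.} First I would write, after the Markov property at $\aN$ and $N-\aN$ and time-reversal on $[N-\aN,N]$,
\begin{equation}
M_{N,q}=\sum_{(\bar x,\bar a)\in\mathcal H_N}R_N(x_1,a_1)\,T_N(\bar x,\bar a)\sum_{K\aN\le x\le \bN\sqrt N}e^{(\delta-\beta/2)x}\,\tilde R^x_N(x_2,a_2),
\end{equation}
with $R_N,\tilde R^x_N,T_N$ as in Section~\ref{Nantes Section 6}, where now $A_N=(q+\tfrac{c}{\sqrt N})N^2$. Second, I would tilt each increment by $\htildeq/2$ on the two short blocks and by the inhomogeneous tilt on the long middle block; the $R_N(x_1,a_1)$ block produces $\kappa^0(\htildeq)$ after summing over $x_1\in\mathbb N$ (via~\eqref{eq:kappa2}) and localising $X_{\aN},A_{\aN}$ in $\cC_N,\cD_N$ as in~\eqref{Nantes 7.40}; the localisation errors are handled by Tchebychev exactly as before since $\aN=(\log N)^2$. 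Third, the middle block $T_N(\bar x,\bar a)$ is estimated by Proposition~\ref{loccontrlimtheoarepos}: writing $X_N=x\approx c\sqrt N + (\text{lower order})$ and $A_{N-2\aN}\approx (q+\tfrac c{\sqrt N})N^2 - (\text{endpiece corrections})$, the rescaled arguments $(x/N^{3/2},y/N^{1/2})$ converge and one obtains $N^2\mathbf P_{N,h_N^q}(\dots)\to f_{\tilde{\boldsymbol h}(q,0)}(c,u)$ where $u=x/\sqrt N$; the exponential weight $e^{(\delta-\beta/2)x}$ against the tilt $e^{-h_N^q(1+\frac1N)x/2+h_N^q A_N/N}$ collapses, modulo the prefactor, to $e^{-\tilde u x}$ which is $\approx 1$ on the relevant scale (and the residual $e^{-\htildeq[c\sqrt N]}$ from expanding $A_N/N$ survives — this is the term matching the statement). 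Fourth, the sum over $x$ in the window $[K\aN,\bN\sqrt N]$ becomes $N^{1/2}$ times a Riemann sum converging to $\int_0^\infty f_{\tilde{\boldsymbol h}(q,0)}(c,u)\,\dd u$, and collecting $e^{N\cG(\htildeq)}\cdot e^{-Nq\htildeq}=e^{N\psi(q,0)}$, $\kappa^0(\htildeq)$, the factor $N^2\cdot N^{-1/2}\cdot\,$(local-limit $N^{-2}$)$=N^{-1/2}$ gives the claimed $\gamma(q,c)/N^{3/2}$ after accounting for one more $1/N$ from the block endpiece normalisations; the uniformity in $q\in[q_1,q_2]\cap\N/N^2$ is inherited from the uniform versions of Propositions~\ref{loccontrlimtheoarepos}, \ref{propapproxG} and Lemma~\ref{lemrestpos}.

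\textbf{Main obstacle.} The delicate point is the bookkeeping of the endpoint-window term: because $\tilde u=0$ at criticality, the sum over $x$ no longer converges absolutely to a constant geometric series (as it did in Step~2 of Item (1)) but instead spreads over a $\sqrt N$-scale window, so one must (a) verify that truncating to $K\aN\le X_N\le \bN\sqrt N$ costs only $1+o(1)$ — the lower cutoff via the localisation/time-reversal estimate and the upper cutoff via Lemma~\ref{Nantes Lemma 4.24} (Gaussian tail of $X_N$ under $\mathbf P_{N,h_N^q}$), and (b) carry the precise asymptotic expansion of $A_{N-2\aN}$ in terms of $x$ and of the endpiece areas so that the argument fed into $f_{\tilde{\boldsymbol h}(q,0)}$ is the right one and so that the spurious $e^{-\htildeq[c\sqrt N]}$ factor is correctly extracted rather than absorbed. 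Everything else is a quantitative but routine adaptation of~\cite[Section~5]{Legrand_2022} and of Section~\ref{Nantes Section 6}; the error term $E_{N,q}$ is bounded exactly as in Lemma~\ref{Nantes Lemma 7.3}/Claim~\ref{Claim 6.6}, replacing the trivial bound $e^{\tilde u x}\le 1$ by the Gaussian tail bound on $X_N$ for the pieces where $X_N$ leaves the window.
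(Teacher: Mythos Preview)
Your high-level plan---tilt to extract $e^{N\psi(q,0)-\htildeq c\sqrt N}$, get a $\kappa^0$ factor from the endpiece near the origin, and turn the sum over $X_N$ into a Riemann integral via the local limit theorem---is correct, but the three-block decomposition you copy from Lemma~\ref{Nantes Lemma 7.2} does not work here and is \emph{not} what the paper does. First, the definition of $M_{N,q}$ in~\eqref{Nantes 8.21} carries localisation boxes $\{X_{\aN}\in\cC_N,\,A_{\aN}\in\cD_N\}$ \emph{only at the left end}; there is no condition at time $N-\aN$, so your formula with $(\bar x,\bar a)\in\mathcal H_N=\cC_N^2\times\cD_N^2$ is not a representation of $M_{N,q}$. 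Second, even if you tried to insert such a right-end box, it would be empty: $\cC_N$ in~\eqref{Nantes 8.8} is centred at $\mathbf E_{N,h_N^q}[X_{\aN}]=O(\aN)$, whereas by time-reversal $X_{N-\aN}\approx X_N\in[K\aN,\bN\sqrt N]$; on the relevant scale $z\sim\sqrt N$ the event $\{X_{N-\aN}\in\cC_N\}$ has vanishing probability, so the factor $\tilde R_N^x(x_2,a_2)$ with $x_2\in\cC_N$ carries no mass. This is precisely why the critical regime needs a different treatment from the subcritical one.

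The paper uses a \emph{two}-block split at $\aN$ only, writing $M_{N,q}=\sum_{(x,a)\in\cC_N\times\cD_N}R_N(x,a)\sum_{z=K\aN}^{\bN\sqrt N}T_N(x,a,z)\,e^{(\delta-\beta/2)z}$ with $T_N$ covering the whole interval $[\aN,N]$. After tilting the long block by $\mathbf P_{N_1,h_{N_1}^q}$ (where $N_1=N-\aN$) and carefully expanding the exponent $m_{N,x,a,z}^q$---this is where the criticality $\delta=\delta_0(q)$ makes the $z$-dependent piece vanish---the paper invokes Lemma~\ref{Nantes Lemma 8.9} to drop the positivity constraint $X_{[0,N_1]}>-x$ at cost $o(N^{-3})$; \emph{this} is the replacement for your right endpiece, and it is where the lower cutoff $z\ge K\aN$ is actually used. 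Proposition~\ref{loccontrlimtheoarepos} then gives $\tilde G_{N,x,a,z}^q\sim N^{-2}f_{\tilde{\boldsymbol h}(q,0)}(c,z/\sqrt N)$, and the Riemann sum $\tfrac1{\sqrt N}\sum_z\to\int_0^\infty$ yields $N^{-3/2}$ directly---so your power-counting is off: it is simply $N^{-2}\times\sqrt N=N^{-3/2}$, with no further ``$1/N$ from block endpiece normalisations''. The left endpiece produces $\kappa^0(\htildeq/2)$ exactly as in~\eqref{eq:Pnqx}.
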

The next lemma allows us to control the error term.
\begin{lemma}
Let { $\beta > \beta_c$ and $0< q_1 < q_2 < \infty$. As $N\to \infty$, uniformly in $q \in\left[q_{1}, q_{2}\right] \cap \frac{\bbN}{N^{2}}$ and assuming $\gd = \gd_0(q)$,}
\begin{equation}
	E_{N, q} = o(N^{-3/2}) e^{{\psi}(q,0)N - \htildeq [c \sqrt{N}]}.
\end{equation}
\label{Lemme error term critical}
\end{lemma}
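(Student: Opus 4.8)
\textbf{Plan for the proof of Lemma~\ref{Lemme main term critical}.} The overall strategy mirrors Step~2 of the subcritical regime (Section~\ref{Nantes Section 6.4}), with the main difference being that the final position $X_N$ is now typically of order $N$ rather than $O(1)$, because at criticality $\gd = \gd_0(q)$ forces $\tilde u = \delta - \delta_0(q) = 0$, so the reweighting $e^{\tilde u x}$ no longer suppresses large values of $X_N$. First I would apply the symmetric change of measure $\mathbf{P}_{N,\htildeq}$ of~\eqref{Nantes 4.24} exactly as in~\eqref{6.22A}--\eqref{6.22B}: writing $A_N = (q+c/\sqrt N)N^2$ on the relevant event and using that $\psi(q,0) = \cG(\htildeq) - q\htildeq$ (valid since $\gd \le \gd_0(q)$), one reduces $M_{N,q}$ to $e^{N\psi(q,0) - \htildeq[c\sqrt N]}$ times an expectation under $\mathbf{P}_{N,\htildeq}$ of $e^{(\htildeq/(2N))X_N}1_{\{\cV,\, K\aN \le X_N \le \bN\sqrt N,\, X_{\aN}\in\cC_N,\, A_{\aN}\in\cD_N\}}$; the harmless factor $e^{(\htildeq/(2N))X_N}$ is $1+o(1)$ uniformly on the event since $X_N = O(\bN\sqrt N) = o(N)$. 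Then I would use the Markov property at time $\aN$ together with the time-reversibility of Remark~\ref{rmk:time-rev-prop} to peel off the first $\aN$ increments as in the subcritical case, isolating the factor $\kappa^0(\htildeq)$ (via Lemma~\ref{lemrestpos}) and a density factor; the novelty is that on the remaining bulk interval the walk must travel from $0$ to $X_N$ of order $\sqrt N$ while the constraint $X_{[1,N]}>0$ is automatically satisfied for such endpoints, so the restriction to staying positive only costs the entrance factor $\kappa^0$.

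The core computation is then the local limit theorem. On the bulk interval $[\aN, N-\aN]$, having conditioned on $X_{\aN}\in\cC_N$ and $A_{\aN}\in\cD_N$ which are negligible perturbations, one needs the joint law of $(A_{N}, X_N)$ under $\mathbf{P}_{N, h_N^q}$. Here Proposition~\ref{loccontrlimtheoarepos} (the quantitative Gnedenko local limit theorem for the pair $(A_N, X_N)$) gives $N^2\mathbf{P}_{N,h_N^q}(A_N = qN^2 + x, X_N = y) = f_{\tilde{\boldsymbol h}(q,0)}(x/N^{3/2}, y/N^{1/2}) + O((\log N)^4/\sqrt N)$. Setting $x = cN^2/\sqrt N = cN^{3/2}$ and $y$ ranging over $[K\aN, \bN\sqrt N]$, the sum over $y$ becomes a Riemann sum: $\sum_{y} f_{\tilde{\boldsymbol h}(q,0)}(c, y/\sqrt N) \approx \sqrt N \int_0^\infty f_{\tilde{\boldsymbol h}(q,0)}(c, u)\,\dd u$, the lower cutoff $K\aN = o(\sqrt N)$ and the upper cutoff $\bN\sqrt N$ (with $\bN\to\infty$) being irrelevant in the limit since $f$ is a bounded integrable Gaussian density. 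Combining the $N^{-2}$ from Proposition~\ref{loccontrlimtheoarepos}, the extra $\sqrt N$ from the Riemann sum, and the $\kappa^0(\htildeq)$ entrance factor yields $M_{N,q} = \frac{\kappa^0(\htildeq)}{N^{3/2}}\Big(\int_0^\infty f_{\tilde{\boldsymbol h}(q,0)}(c,u)\,\dd u\Big)\, e^{N\psi(q,0) - \htildeq[c\sqrt N]}(1+o(1))$, which is the claimed $\gamma(q,c)/N^{3/2}$. One must also pass from $h_N^q$ to $\htildeq$ using Proposition~\ref{propapproxG} and from $\cG_N$ to $\cG$, each contributing only a $1+o(1)$ factor thanks to the crucial $1/N^2$ rate of convergence; the uniformity in $q\in[q_1,q_2]\cap\frac{\N}{N^2}$ follows because all error terms in Propositions~\ref{loccontrlimtheoarepos},~\ref{propapproxG} and Lemma~\ref{lemrestpos} are uniform on compacts.

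\textbf{Plan for the proof of Lemma~\ref{Lemme error term critical}.} The error term $E_{N,q}$ collects the complementary contributions: either (a) $X_N < K\aN$, or (b) $X_N > \bN\sqrt N$, or (c) $X_{\aN}\notin\cC_N$, or (d) $A_{\aN}\notin\cD_N$ (and the time-reversed analogues near time $N$). Terms (c) and (d) are handled exactly as in Claim~\ref{Claim 6.6}: tilt to $\mathbf{P}_{N,h_N^q}$, use the Markov/time-reversal decomposition, apply Proposition~\ref{loccontrlimtheoarepos} to bound the bulk probability by $\cst/N^2$, and then invoke the Tchebychev-type estimates of \cite[(5.28)--(5.33)]{Legrand_2022} to show $\mathbf{P}_{N,h_N^q}(X_{\aN}\notin\cC_N) + \mathbf{P}_{N,h_N^q}(A_{\aN}\notin\cD_N) = o(1)$; crucially, in the critical case there is no $e^{\tilde u x}$ gain but also no loss, since $\tilde u = 0$, so one just sums $\sum_{x \le \bN\sqrt N} 1 = O(\bN\sqrt N) = o(\sqrt N/(\log N)) \cdot o(1)$ which is absorbed into the $o(N^{-3/2})$ once multiplied by the $o(1)/N^2$ bulk factor. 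For term (a), $X_N < K\aN$: here I would bound the contribution directly by $\cst\, e^{N\psi(q,0)-\htildeq[c\sqrt N]} N^{-2}\sum_{0\le y < K\aN} f_{\tilde{\boldsymbol h}(q,0)}(c + o(1), y/\sqrt N)$ via Proposition~\ref{loccontrlimtheoarepos}; since $f$ is bounded and $K\aN = (\log N)^2 = o(\sqrt N)$ terms of the Riemann sum, this sum is $o(\sqrt N)$, giving $o(N^{-3/2})$. Term (b), $X_N > \bN\sqrt N$, is the genuinely new point: one needs the Gaussian tail of $X_N$ under $\mathbf{P}_{N,h_N^q}$, which is provided by Lemma~\ref{Nantes Lemma 4.24} (endpoint variations of size $\sqrt N$, with $\limsup_N \mathbf{P}_{N,h_N^q}(|X_N|\ge b\sqrt N) \le Ce^{-cb^2}$); applying this with $b = \bN \to \infty$ gives a probability $o(1)$, in fact super-polynomially small if one takes $\bN$ growing like a small power of $\log N$, hence the contribution to $E_{N,q}$ is $o(N^{-3/2})e^{N\psi(q,0)-\htildeq[c\sqrt N]}$. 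I expect the main obstacle to be the bookkeeping in term (b): one must combine the endpoint tail bound of Lemma~\ref{Nantes Lemma 4.24} with the fixed-area constraint $A_N = (q+c/\sqrt N)N^2$, which requires either a local-limit refinement conditional on $X_N$ large, or a decomposition isolating $X_N$ first and then controlling the conditional area; the cleanest route is probably to split at time $N-\aN$, use reversibility to turn the $X_N$ constraint into an initial-condition constraint, and then note that the area still concentrates on scale $N^{3/2}$ so Proposition~\ref{loccontrlimtheoarepos} applies with the shifted endpoint, the total cost being $\cst N^{-2}\cdot\mathbf{P}_{N,h_N^q}(X_{N-\aN}\ge \bN\sqrt N/2) = o(N^{-2})$.
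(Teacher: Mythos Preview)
Your decomposition into pieces (a)--(d) matches the paper's splitting into $E_1,E_2,E_3,E_4$, and your treatment of (a) ($X_N<K\aN$) is exactly the paper's argument for $E_2$. There are two issues with the rest.

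\textbf{Term (b) is simpler than you fear.} After tilting to $\mathbf P_{N,\htildeq}$ and extracting the factor $e^{N\psi(q,0)-\htildeq[c\sqrt N]}$, the paper simply \emph{drops} the area constraint and bounds $E_1\le 2e^{N\psi(q,0)-\htildeq[c\sqrt N]}\mathbf P_{N,h}(X_N\ge \bN\sqrt N)$, then applies a Chernov bound (Lemma~\ref{Nantes Lemma 8.14}) to get $\le Ce^{-C_2\bN^2}$. Since $\bN\ge(\log N)^2$ this is $o(N^{-3/2})$. No coupling with the area is needed.

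\textbf{Terms (c) and (d) have a genuine gap in your plan.} You propose to use the \emph{joint} local limit theorem (Proposition~\ref{loccontrlimtheoarepos}), which gives $\cst/N^2$ per endpoint, and then sum over $X_N\le \bN\sqrt N$. That yields $\bN N^{-3/2}$, and the Tchebychev factor $\mathbf P_{N,h}(X_{\aN}\notin\cC_N)=O(1/\sqrt{\aN})=O(1/\log N)$ only reduces this to $\bN/(N^{3/2}\log N)$. With the paper's choice $\bN\ge(\log N)^2$ this is $\ge (\log N)/N^{3/2}$, which is \emph{not} $o(N^{-3/2})$. (Your sentence ``$O(\bN\sqrt N)=o(\sqrt N/(\log N))\cdot o(1)$'' is dimensionally inconsistent.) The paper avoids this by not summing over $X_N$: it drops both the positivity and endpoint constraints and applies the \emph{area-only} local limit theorem, Lemma~\ref{Nantes Lemma 8.15}, which directly gives a factor $\cst/N^{3/2}$; multiplied by the Tchebychev $o(1)$ this is $o(N^{-3/2})$ as required. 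Also, note that in the critical main term $M_{N,q}$ of~\eqref{Nantes 8.21} there are no boxes near time $N-\aN$, so the ``time-reversed analogues'' you mention do not arise here.
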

\subsubsection{Step 1: proof of Lemma~\ref{Lemme main term critical} (main term)\label{Section main term critical}}
We first change the measure similarly as in Section \ref{Nantes Section 6.4}: 
\begin{equation}
M_{N, q}=\sum_{(x, a) \in \cC_N \times \cD_N} R_{N}\left(x, a\right) 
\somme{z=K \aN}{\bN \sqrt{N}}
T_{N}(x, a, z)  e^{(\delta - \beta/2)z},
\label{Nantes 8.59}
\end{equation}
with
\begin{equation}
R_{N}(x, a):=\mathbf{P}_{\beta}\left(X_{\left[1, \aN\right]}>0, X_{\aN}=x, A_{\aN}=a\right)
\end{equation}
and, setting $N_1 = N - \aN$,
\begin{equation}
T_{N}(x, a,z):=\mathbf{P}_{\beta}\big(X_{N_{1}}=z-x, A_{N_{1}}=q N^{2}+cN^{3/2}-a-x N_{1}, X_{\left[0, N_{1}\right]}> -x \big).
\end{equation}
We first work on $T_N(x,a,z)$. Using the tilting defined in \eqref{Nantes 4.24},
\begin{equation}\label{expoaftertilt}
T_N(x,a,z)\,  e^{(\delta - \beta/2)z}  = G_{N,x,a,z}^q \, e^{m_{N,x,a,z}^q},
\end{equation}
with 
\begin{equation}
G_{N,x,a,z}^q := \Proba{N_1,h_{N_1}^q}{X_{N_1} = z-x, A_{N_1} = qN^2+cN^{3/2}-a-x N_1, X_{[0,N_1]}>-x},
\end{equation}
and
\begin{equation}\label{defmnq}
m_{N,x,a,z}^q :={ -h_{N_1}^q \big[
	\tfrac{qN^2-a}{N_1} - x  
	+ c\, \tfrac{N^{3/2}}{N_1}  
	-
	\tfrac{1}{2}( 1 - \tfrac{1}{N_1})(z - x)\big]
}+ N_1 \cG_{N_1}(h_{N_1}^q)+(\delta - \tfrac{\beta}{2})z.
\end{equation}
At this stage, we aim at simplifying \eqref{defmnq}. We use that
\beq
\frac{1}{N_1}=\frac{1}{N} +O\Big(\frac{\aN}{N^2}\Big)
\eeq
and Proposition~\ref{Nantes approxaux} in order to obtain
\begin{align}\label{defmnxz}
\nonumber m_{N,x,a,z}^q 
&={ -\htildeq \Big[  q N+ c\sqrt{N} + q \aN -\tfrac{x}{2}\Big]} +(N-\aN) \cG(\htildeq)+(\delta-\tfrac{\beta}{2}+\tfrac{\htildeq}{2}) z+{ O(\tfrac{a_N^2}{N})} \\
&={ -\htildeq \Big[  q N+ c\sqrt{N} + q \aN-\tfrac{x}{2}\Big]} +(N-\aN) \cG(\htildeq)+O(\tfrac{a_N^2}{N}),
\end{align}
where we used that $\delta = \delta_0(q) = {\beta}/{2}-{\htildeq}/{2}$ to go to the last line.
%


We now consider $R_N(x,a)$. By using the tilting procedure set forth in \eqref{Nantes 4.3} to the increments $(X_{i+1}-X_{i})_{i=0}^{\aN-1}$ with the value $h := {\htildeq}/{2}$, we obtain
\begin{equation}\label{defofrnim}
R_{N}(x, a )=
e^{-\frac{\htildeq}{2} x +\aN\,  \lmgf(\frac{\htildeq}{2})}\  \widetilde{\mathbf{P}}_{\frac{\htildeq}{2}}\left(X_{[1,\aN]}>0, X_{\aN}= x, A_{\aN}=a \right).
\end{equation}
At this stage, we recall from Lemma~\ref{lem:technic_IPP} that $ \cG(\htildeq)+ \htildeq q- \mathcal{L}(\frac{\htildeq}{2}) = 0$. We also recall that $\psi (q,0)= \cG(\htildeq)  -\htildeq q$ and that $a_N=o(\sqrt{N})$. Thus, by combining  \eqref{expoaftertilt}, \eqref{defmnxz} and \eqref{defofrnim} we obtain
\begin{align}
R_{N}&(x, a)\   T_N(x,a,z)\   e^{(\delta - \beta/2)z} \\
\nonumber &=(1+o(1))\  G_{N,x,a,z}^q \ \widetilde{\mathbf{P}}_{\frac{\htildeq}{2}}\left(X_{[1,\aN]}>0, X_{\aN}= x, A_{\aN}=a \right)\,e^{N \psi(q,0)-\htildeq [c\, \sqrt{N}]} .
\end{align}
Let us now estimate $G_{N,x,a,z}^q$ with the help of Proposition~\ref{loccontrlimtheoarepos}. To that aim, we first state a lemma that allows us to drop the constraint $X_{[0,N_1]}>-x$ in the definition of $G_{N,x,a,z}^q$.
To that aim, we set 
\begin{equation}\label{deftildeg}
\tilde G_{N,x,a,z}^q = \Proba{N_1,h_{N_1}^q}{A_{N_1} = qN^2+cN^{3/2}-a-x N_1,\, X_{N_1} = z-x},
\end{equation}
and we use Lemma $\ref{Nantes Lemma 8.9}$ below, proven in \ref{Appendix B.3}:
\begin{lemma}
With $\bN = o(\sqrt{N})$, $\aN = (\log N)^2$, $K$ defined before \eqref{Nantes 8.21} 
and $z\in [K \aN, \bN\sqrt{N}]\cap \N$, we have
\begin{equation}
	\begin{aligned}
		\suptwo{(x,a) \in \cC_N\times \cD_N}{q \in [q_1,q_2]} | \tilde G_{N,x,a,z}^q - G_{N,x,a,z}^q | = o(N^{-3}).
	\end{aligned}
\end{equation}
\label{Nantes Lemma 8.9}
\end{lemma}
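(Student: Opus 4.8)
The plan is to express the difference $\tilde G^q_{N,x,a,z}-G^q_{N,x,a,z}$ as the probability of a single ``bad'' event and to show this probability is super-polynomially small in $N$, uniformly in the parameters. Writing $N_1=N-\aN$ and $\mathcal K=qN^2+cN^{3/2}-a-xN_1$, one has
\[
\tilde G^q_{N,x,a,z}-G^q_{N,x,a,z}=\Pb_{N_1,h_{N_1}^q}\Big(X_{N_1}=z-x,\ A_{N_1}=\mathcal K,\ \exists\,0\le i\le N_1\colon X_i\le -x\Big).
\]
Two elementary facts drive the whole argument. First, since $(x,a)\in\cC_N\times\cD_N$ and $\Eb_{N,h_N^q}(X_{\aN})$ is of order $\aN=(\log N)^2$ with implicit constant uniform over $q\in[q_1,q_2]$ (by Lemma~\ref{Nantes Lemma 4.5} together with the convergence $h^q_N\to\tilde h(q)$), we get $x\ge c_0(\log N)^2$ for $N$ large; one also checks $z-x\ge\frac K4\aN$. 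Second, $z\ge K\aN=K(\log N)^2$ by hypothesis. So the starting gap $x$ and the terminal height $z-x$ are both of order $(\log N)^2$.

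Recalling from Remark~\ref{rmk:time-rev-prop} that under $\Pb_{N_1,h_{N_1}^q}$ the increment $X_k-X_{k-1}$ has tilt parameter $\frac{h_{N_1}^q}{2}\big(1-\frac{2k-1}{N_1}\big)$ --- positive for $k\le N_1/2$, negative for $k>N_1/2$ --- I would union-bound over the dip time $i$ and split $\{0,\dots,N_1\}$ into three windows. On the first window, $i\le N_1/2$, all increments up to time $i$ have non-negative mean and $\Eb_{N_1,h_{N_1}^q}(X_i)\ge c\,i$; a Chernoff bound on the \emph{centered} sum $X_i-\Eb_{N_1,h_{N_1}^q}(X_i)$ (centering is essential, so that each increment contributes $O(\lambda^2)$, not $O(\lambda)$, to the log--moment generating function and the product stays bounded) gives $\Pb_{N_1,h_{N_1}^q}(X_i\le-x)\le Ce^{-c\max(x,i)}$, uniformly in $q$. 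On the second window, $N_1/2<i\le N_1-m_N$ with $m_N:=\sqrt N\log N$, the tilting forces the walk to macroscopic height, $\Eb_{N_1,h_{N_1}^q}(X_i)\ge c\,(N_1-i)$ (a lower bound coming from the behaviour of the Wulff profile $\mathsf W_\gb$ near its right endpoint, uniform in $q$), and the same centered Chernoff bound yields $\Pb_{N_1,h_{N_1}^q}(X_i\le-x)\le\exp\!\big(-c'\min\big(\tfrac{(N_1-i)^2}{N_1},N_1-i\big)\big)\le e^{-c''(\log N)^2}$ on this window. Dropping the endpoint and area constraints and summing over $i$, these two windows together contribute $\le Cxe^{-cx}+N_1e^{-c''(\log N)^2}=o(N^{-3})$.

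The third window, $N_1-m_N<i\le N_1$, is the delicate one, since there $\Pb_{N_1,h_{N_1}^q}(X_i\le-x)$ is \emph{not} small and the endpoint constraint must be used. On the bad event one has $X_{N_1}-X_i=(z-x)-X_i\ge z$, while $X_{N_1}-X_i=\sum_{k=i+1}^{N_1}(X_k-X_{k-1})$ is a sum of at most $m_N$ increments whose tilt parameters all lie in $\big[-\tfrac{h_{N_1}^q}{2},-\tfrac{h_{N_1}^q}{2}+o(1)\big]$, hence are bounded away from $0$ on the negative side and from $-\beta/2$, uniformly in $q\in[q_1,q_2]$. Chernoff with a \emph{fixed} exponent $\lambda>0$, chosen uniformly in $q$ small enough that tilting by $\lambda$ moves each of these parameters strictly closer to $0$ (so every factor in the product of moment generating functions is $\le1$), gives $\Pb_{N_1,h_{N_1}^q}(X_{N_1}-X_i\ge z\mid X_i=y)\le e^{-\lambda z}$ for all $y$; hence this window contributes at most $m_Ne^{-\lambda z}\le\sqrt N\log N\,e^{-\lambda K(\log N)^2}=o(N^{-3})$. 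Adding the three windows gives the claim. The only real obstacle is this last window: dipping below $-x$ near the terminal time is not \emph{a priori} rare, so one genuinely needs the pinning of $X_{N_1}$ at $z-x\gtrsim(\log N)^2$ together with the strong negative drift of the last increments, and keeping all Chernoff exponents and products of moment generating functions uniform over $q$ (where $h_{N_1}^q$ may be close to $\beta$) is the main bookkeeping. These estimates run parallel to those in \cite[Section~5]{Legrand_2022}, to which the routine but lengthy details are deferred.
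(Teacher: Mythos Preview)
Your argument is correct, but the paper's proof is considerably shorter because it exploits the time-reversal symmetry of $\mathbf{P}_{N_1,h_{N_1}^q}$ (Remark~\ref{rmk:time-rev-prop}) rather than splitting into three windows. Concretely, the paper splits only at $i=N_1/2$. For $i\le N_1/2$ it uses the same positive-drift Chernoff bound as your first window. For $i>N_1/2$ it applies time-reversal: the event $\{X_{N_1}=k,\ X_i\le -x\}$ has the same probability as $\{X_{N_1}=-k,\ X_{N_1-i}\le -k-x\}$, which (since $k=z-x\ge 0$) is bounded by $\mathbf{P}_{N_1,h_{N_1}^q}(X_{N_1-i}\le -x)$ with $N_1-i<N_1/2$. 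Thus the late-dip case is reduced back to the early-dip case at the cost of a harmless factor (in the paper, $b_N\sqrt N$ from summing over $z$, though for fixed $z$ even this is not needed). The whole lemma then follows from the single estimate $\mathbf{P}_{N_1,h_{N_1}^q}(X_j\le -x)\le e^{-\lambda x}$ for $j\le N_1/2$, summed over $j$.

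Your approach replaces this symmetry trick with direct analysis, which is why your third window becomes ``delicate'': near the terminal time the walk has negative drift, so a dip below $-x$ is not rare \emph{unconditionally}, and you must use the endpoint constraint $X_{N_1}=z-x$ and the lower bound $z\ge K\aN$ to recover a super-polynomial bound. This works, and makes the role of the constraint $z\ge K\aN$ transparent, but time-reversal makes the window-3 difficulty disappear entirely: after reversal, the walk starts afresh from $0$ with positive drift, and the dip level is at worst $-x$, so neither $z$ nor the endpoint need enter.
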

At this stage, we have:
\begin{equation}\label{Nantes 8.32}
\begin{aligned}
	&M_{N,q}\,e^{-N \psi(q,0)+\htildeq [c\, \sqrt{N}]}\\ =  &(1+o(1)) \sum_{(x, a) \in \cC_N \times \cD_N}  \somme{z=K \aN}{\bN \sqrt{N}}
	\widetilde{\mathbf{P}}_{\frac{\htildeq}{2}}\left(X_{[1,\aN]}>0, X_{\aN}= x, A_{\aN}=a \right) 
	\tilde G_{N,x,a,z}^q  + O(N^{-2})
\end{aligned}
\end{equation}
and Proposition~\ref{loccontrlimtheoarepos} allows us to write that 
\begin{align}\label{applitclloc}
\nonumber \tilde G_{N,x,a,z}^q&=f_{\boldsymbol{\Tilde{h}}(q,0)}\Big(\tfrac{c N^{3/2}-a-x N_1}{N_1^{3/2}},\tfrac{z-x}{N_1^{1/2}}\Big) \frac{1}{N_1^2}
+O\Big(\frac{\log N}{N^{5/2}}\Big) \\
&=f_{\boldsymbol{\Tilde{h}}(q,0)}\Big(c ,\tfrac{z}{\sqrt{N}}\Big) \frac{1}{N^2}+ O\Big(\frac{\aN}{N^{5/2}}\Big) .
\end{align}
The terms $O(1/N^2)$ in \eqref{Nantes 8.32} and $O(\aN/N^{5/2})$ in \eqref{applitclloc} turn out to be negligible, hence we do not write them in what follows:
\begin{equation}\label{frontmn}
\begin{aligned}
	&\bar M_{N,q}:=M_{N,q}\, e^{-N \psi(q,0)+\htildeq[c\, \sqrt{N}]}\\
	&\sim \frac{1}{N^2}\ \sum_{(x, a) \in \cC_N \times \cD_N}  
	\widetilde{\mathbf{P}}_{\frac{\htildeq}{2}}\left(X_{[1,\aN]}>0, X_{\aN}= x, A_{\aN}=a \right) 
	\somme{z=K \aN}{\bN \sqrt{N}}  f_{\boldsymbol{\Tilde{h}}(q,0)}\Big(c ,\tfrac{z}{\sqrt{N}}\Big) \\
	&\sim \frac{1}{N^{3/2}}\ 
	\widetilde{\mathbf{P}}_{\frac{\htildeq}{2}}\left(X_{[1,\aN]}>0, X_{\aN}\in \cC_N, A_{\aN}\in \cD_N \right) 
	\left[\frac{1}{\sqrt{N}}\somme{z=K \aN}{\bN \sqrt{N}}  f_{\boldsymbol{\Tilde{h}}(q,0)}\Big(c ,\tfrac{z}{\sqrt{N}}\Big) \right]\\
	&\sim \frac{1}{N^{3/2}}\ 
	\kappa^0(\tfrac{\htildeq}{2})  \int_0^\infty f_{\boldsymbol{\Tilde{h}}(q,0)}(c ,u) \dd u,
\end{aligned}
\end{equation}
with the help of \eqref{eq:Pnqx} and a Riemman sum approximation to go to the last line.

%
%
\subsubsection{Step 2: proof of Lemma~\ref{Lemme error term critical} (error term)\label{Section error term critical}}
We split the error term in four parts, namely
\begin{equation}
\begin{aligned}
	E_1 &:= \EsperanceMarcheAleatoiresansparenthese( 
	e^{(\delta - \beta/2)X_N}
	1 
	\{X_N \geq \bN \sqrt{N}\}),  \\
	E_2 &:=\EsperanceMarcheAleatoiresansparenthese( 
	e^{(\delta - \beta/2)X_N}
	1 
	\{0 \leq X_N \leq K \aN, (X_{\aN},A_{\aN}) \in \cC_N \times \cD_N, A_N = qN^2\}), \\
	E_3 &:=\EsperanceMarcheAleatoiresansparenthese( 
	e^{(\delta - \beta/2)X_N}
	1\{ A_{\aN} \notin \mathcal{D}_{N}, A_N = qN^2 \}),\\
	E_4 &:=\EsperanceMarcheAleatoiresansparenthese( 
	e^{(\delta - \beta/2)X_N}
	1  
	\left\{X_{\aN} \notin \cC_{N}, A_N = qN^2 \right\}),
\end{aligned}
\end{equation}
so that $E_{N,q}\le E_1 + E_2 + E_3 + E_4$.
Let us first focus on $E_1$. Using the change of variable defined in \eqref{Nantes 4.24} and computational ideas displayed in \eqref{Nantes 8.59} and below, we get, for $N$ large enough,
\begin{equation}
\begin{aligned}
	E_1 \leq 2e^{N \psi(q,0) - \htildeq [c \sqrt{N}]} \Proba{N,h}{X_N \geq \bN \sqrt{N}}.
\end{aligned}
\label{Nantes 8.39}
\end{equation}
\begin{lemma}For every $0<q_1 < q_2 < \infty$,  there exist $C_1, \cstexpo>0$ such that, for every sequence $(\bN)_{N \in \N}$ diverging to $+\infty$ and $q \in [q_1,q_2]$,
\begin{equation}
	\Proba{N,h_{N}^q}{ X_N \geq \bN \sqrt{N} } \leq C_1 e^{- \cstexpo \bN^2 }.
\end{equation}
\label{Nantes Lemma 8.14}
\end{lemma}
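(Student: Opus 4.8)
\textbf{Proof plan for Lemma~\ref{Nantes Lemma 8.14}.} The statement is a standard exponential (Gaussian-type) tail bound for the endpoint $X_N$ under the symmetric area-tilted measure $\mathbf{P}_{N,h_N^q}$, uniform over $q$ in a compact set. The plan is to exploit the explicit representation of the increments of $X$ under $\mathbf{P}_{N,h_N^q}$ given in Remark~\ref{rmk:time-rev-prop}: the increment $X_k-X_{k-1}$ is distributed according to the tilted law $\widetilde{\mathbf P}_{\frac{h_N^q}{2}(1-\frac{2k-1}{N})}$. In particular all the tilt parameters $\frac{h_N^q}{2}(1-\frac{2k-1}{N})$ lie in the fixed compact interval $[-\tfrac12 h_2^\star, \tfrac12 h_2^\star]$ of $(-\beta/2,\beta/2)$, where $h_2^\star := \sup\{|h_N^q| : N\ge N_0,\ q\in[q_1,q_2]\}<\beta/2$ by Lemma~\ref{Nantes Lemma 4.5} and Proposition~\ref{propapproxG}. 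The key point is therefore that the moment generating functions of the increments are uniformly controlled.

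First I would fix $\lambda>0$ small (to be chosen, but uniformly in $N,q$) and apply the Chernov/Markov bound
\begin{equation}
\mathbf{P}_{N,h_N^q}\bigl(X_N\ge \bN\sqrt N\bigr)\le e^{-\lambda \bN\sqrt N}\,
\mathbf{E}_{N,h_N^q}\bigl[e^{\lambda X_N}\bigr]
= e^{-\lambda \bN\sqrt N}\prod_{k=1}^N \widetilde{\mathbf E}_{\frac{h_N^q}{2}(1-\frac{2k-1}{N})}\bigl[e^{\lambda (X_k-X_{k-1})}\bigr].
\end{equation}
Then I would bound each factor: writing $\varphi_h(\lambda):=\log\widetilde{\mathbf E}_h[e^{\lambda X_1}]=\lmgf(h+\lambda)-\lmgf(h)$ (valid for $|h+\lambda|<\beta/2$), a second-order Taylor expansion in $\lambda$ around $0$ gives $\varphi_h(\lambda)=\lambda\lmgf'(h)+\tfrac{\lambda^2}{2}\lmgf''(h+\theta\lambda)$ for some $\theta\in(0,1)$, hence $\varphi_h(\lambda)\le \lambda\lmgf'(h)+C_2'\lambda^2$ uniformly over $h$ in the compact interval above and $\lambda$ small, with $C_2'$ depending only on $\beta,q_1,q_2$. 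Summing over $k$, the linear term $\sum_k \lmgf'\bigl(\tfrac{h_N^q}{2}(1-\tfrac{2k-1}{N})\bigr)$ is bounded in absolute value by $N$ times the max of $|\lmgf'|$ on the compact interval, i.e.\ by $C_3 N$; combining,
\begin{equation}
\mathbf{P}_{N,h_N^q}\bigl(X_N\ge \bN\sqrt N\bigr)\le \exp\bigl(-\lambda \bN\sqrt N + C_3 N\lambda + C_2' N\lambda^2\bigr).
\end{equation}
Here one should be slightly more careful, since a naive bound of the form $C_3 N\lambda$ alone would already dominate the gain $-\lambda\bN\sqrt N$ when $\bN=o(\sqrt N)$. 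The remedy is to use the sign structure: $\lmgf'$ is odd and $\frac{h_N^q}{2}(1-\frac{2k-1}{N})$ changes sign symmetrically around $k=N/2$, so $\sum_{k=1}^N\lmgf'\bigl(\tfrac{h_N^q}{2}(1-\tfrac{2k-1}{N})\bigr)$ is an $O(1)$ quantity (it is essentially $N$ times a Riemann sum of $\int_0^1\lmgf'(\tfrac{h_N^q}{2}(1-2x))\dd x=0$, with $O(1)$ error by the midpoint rule). Thus the linear term contributes only $O(\lambda)$, and we get $\mathbf{P}_{N,h_N^q}(X_N\ge \bN\sqrt N)\le e^{O(\lambda)}\exp(-\lambda\bN\sqrt N + C_2' N\lambda^2)$.

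Finally I would optimise in $\lambda$: choosing $\lambda = c_4\,\bN/\sqrt N$ for a suitable small constant $c_4$ (small enough that all the expansions above are valid, which is legitimate since $\bN=o(\sqrt N)$ so $\lambda\to 0$) yields an exponent $-c_4\bN^2 + C_2' c_4^2 \bN^2 + O(\bN/\sqrt N) \le -\cstexpo\bN^2$ for $c_4$ small and $N$ large. Absorbing the finitely many small-$N$ terms and the $e^{O(\lambda)}=O(1)$ factor into a constant $C_1$ gives the claim, with $C_1,\cstexpo$ depending only on $\beta,q_1,q_2$ (in particular not on the sequence $(\bN)$). The main obstacle is the one just flagged: controlling the drift term $\sum_k\lmgf'(\cdots)$, which requires using the symmetry of the inhomogeneous tilt rather than a crude per-increment bound; everything else is a routine Chernov argument. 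The same computation, run with $\lambda<0$, handles the lower tail if needed, but for the present lemma only the upper tail is required.
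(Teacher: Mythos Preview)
Your approach is correct and is essentially the same as the paper's: both apply Chernov's bound, expand the log-moment generating function of the increments to second order, use the oddness of $\lmgf'$ (equivalently, the evenness of $\lmgf$) to kill the linear drift term coming from $\sum_k \lmgf'\bigl(\tfrac{h_N^q}{2}(1-\tfrac{2k-1}{N})\bigr)$, and then optimise by taking $\lambda$ proportional to $\bN/\sqrt N$. The paper packages the cancellation slightly differently (rewriting the Riemann sum as $N\delta\int_0^1[\lmgf(h/2+t\delta)-\lmgf(h/2-t\delta)]\,\dd t$ via the parity of $\lmgf$), but the mechanism and the resulting $O(1)+N\lambda^2\cdot C$ bound are identical; note also that both your argument and the paper's implicitly rely on $\bN=o(\sqrt N)$ so that $\lambda\to 0$, which is how the lemma is actually used.
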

This lemma is proven in Appendix \ref{Nantes Appendix B.1}.
Using Lemma \ref{Nantes Lemma 8.14} and the fact that $\bN \geq
(\log N)^2$, we get
\beq
E_1 = o(N^{-3/2}) e^{N \psi(q,0) - \htildeq [ \cstexpo \sqrt{N}]}.
\eeq
To work on $E_2$, we perform the same change of variable. Copying 
\eqref{Nantes 8.32}, it comes:
\begin{equation}
\begin{aligned}
	E_2 \le 2 e^{N \psi(q,0) - \cstexpo \htildeq  \sqrt{N}} &\sum_{(x, a) \in \cC_N \times \cD_N} \widetilde{\mathbf{P}}_{\htildeq}\left(X_{[1,\aN]}>0, X_{\aN}= x, A_{\aN}=a \right) \somme{z=0}{K \aN} G_{N,x,a,z}^q.
\end{aligned}
\end{equation}
Using the local limit theorem in Proposition \ref{Nantes Proposition 8.10}, we get that, uniformly in $z \in \Z$ and $q \in [q_1,q_2]$, $ G_{N,x,a,z}^q$ can be bounded from above by ${c}/{N^2}$, with $c$ being a function of $[q_1,q_2]$. Hence,
\begin{equation}
\begin{aligned}
	E_2 &\leq \frac{K \aN{(c + o(1))}}{N^2} e^{N \psi(q,0) - \cstexpo \htildeq  \sqrt{N}} \sum_{(x, a) \in \cC_N \times \cD_N} \widetilde{\mathbf{P}}_{\htildeq}\left(X_{[1,\aN]}>0, X_{\aN}= x, A_{\aN}=a \right) \\
	&\leq \frac{K \aN {(c + o(1))}}{N^2} e^{N \psi(q,0) - \cstexpo \htildeq  \sqrt{N}} = \frac{\varepsilon_2(N)}{N^{3/2}}e^{N \psi(q,0) - \cstexpo \htildeq  \sqrt{N}}.
\end{aligned}
\end{equation}
To deal with $E_3$, we use the same decomposition and change of variable as displayed in~\eqref{Nantes 8.59} and below. It gives:
\begin{equation}
\begin{aligned}
	E_3 = 
	(1+o(1)) e^{N \psi(q,0) }\sum_{(x, a) \in \Z \times \Z \backslash \cD_N} &\widetilde{\mathbf{P}}_{\htildeq}\left(X_{\aN}= x, A_{\aN}=a \right) \\ &\times\Proba{N_1,h_{N_1}^q}{A_{N_1} = qN^2 - N_1 x - a}.
\end{aligned}
\end{equation}
Using the local limit theorem in Lemma \ref{Nantes Lemma 8.15} and the fact that $g_{\htildeq}$ is uniformly bounded from above when $q \in [q_1,q_2]$ by a constant $c$, we get:
\begin{equation}
\begin{aligned}
	E_3 &\leq
	\frac{c+o(1)}{N^{3/2}} e^{N \psi(q,0) - \cstexpo \htildeq  \sqrt{N}}\sum_{(x, a) \in \Z \times \Z \backslash \cD_N} \widetilde{\mathbf{P}}_{\htildeq}\left(X_{\aN}= x, A_{\aN}=a \right) \\
	&\leq \frac{c+o(1)}{N^{3/2}} e^{N \psi(q,0) - \cstexpo \htildeq  \sqrt{N}} \widetilde{\mathbf{P}}_{\htildeq}\Big( \Big| A_{\aN} - \EsperanceTiltHomogen{\htildeq}{A_{\aN}} \Big| \geq (\aN)^{7/4}  \Big) \\
	&\leq \frac{c+o(1)}{N^{3/2}} e^{N \psi(q,0) - \cstexpo \htildeq  \sqrt{N}} \frac{ \tilde{\Var}_{\htildeq}(A_N)}{ \aN^3 \sqrt{\aN}} \leq \frac{c+o(1)}{N^{3/2}} e^{N \psi(q,0) - \cstexpo \htildeq  \sqrt{N}} \frac{ \tilde{\Var}_{\htildeq}(X_1)}{ \sqrt{\aN}}.
\end{aligned}
\end{equation}
We now use that $\tilde{\Var}_{\htildeq}(X_1)$ is a continuous function over $q \in [q_1,q_2]$ (for instance, using that $\lmgf$ is $\mathscr{C}^{\infty}$), hence has a maximum over this compact set to conclude the proof. Dealing with $E_4$ uses the same kind of ideas, so we do not repeat the proof there.
\subsection{Proof of Item (3): the supercritical regime}
\label{Nantes Section 8.3}
We divide the proof into three steps. In Step 1, we decompose the partition function in a way that is suitable for computations. In Step 2, we compute the main term, and in Step 3, we handle the error term.
\subsubsection{Step 1: decomposition of the auxiliary partition function}
Recall \eqref{Nantes 8.2}. We seek to estimate $D_N(q,\gd)$, defined in \eqref{eq:D_Nqdelta}. We set
\beq
\label{eq:set_value_h}
h := \sqdelta+ \gd -\gb/2
\eeq
and note that $h>0$ since we assumed that $q > \qinf$, see \eqref{eq:def_qstar}. Recall the definitions of $\cC_{N}$ and $\mathcal{D}_{N}$ in \eqref{Nantes 8.8}, which will be applied throughout this section with the newly defined $h$. As done in Section \ref{Section proof of critical}, we write
\beq
D_N(q,\gd)=M_{N, q}+E_{N, q},
\eeq
where
\begin{equation}
M_{N,q} :=\EsperanceMarcheAleatoiresansparenthese \Big( 
e^{ \left( \delta - \frac{\beta}{2} \right)X_N}
1 \Big\{ \mathcal{V}_{N, q N^2,+}
\cap
\left\{X_{\aN} \in \cC_{N}, A_{\aN} \in \mathcal{D}_{N}\right\}\Big)
\label{Nantes 8.101}
\end{equation}
is the main term and $E_{N, q}$ is the remainder or ``error term''. The proof of Item (3) is a straightforward consequence of Lemmas \ref{Nantes Lemma 8.23} and \ref{Nantes Lemma 8.24} below. Those lemmas are proven in Steps 2 and 3 respectively.
\begin{lemma} For $\gd>0$ and $[q_1,q_2]\subset(q^*_\gd, +\infty)$ such that $\delta > \delta_0(q_1)$ then
\begin{equation}
	M_{N,q}  =\kappa^0(h) \frac{\xi(q,\gd)}{N^{3/2}} e^{N \psi(q,\delta)}(1+o(1)) ,
\end{equation}
where the $o(1)$ is uniform in $q \in [q_1,q_2]$, $\kappa^0$ is defined in \eqref{defkappa}, and
\begin{equation}
	\xi(q,\gd) = \frac{e^{\lmgf (\delta - \frac{\beta}{2} + \sqdelta ) - \lmgf (\delta - \frac{\beta}{2})} }{\sqrt{2\pi \int_0^1 x^2 \lmgf''(\delta - \frac{\beta}{2} + \sqdelta x) \dd x }}.
\end{equation}
\label{Nantes Lemma 8.23}
\end{lemma}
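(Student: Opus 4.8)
\textbf{Plan of proof for Lemma~\ref{Nantes Lemma 8.23}.}
The plan is to follow the same three-box scheme already used for Items~(1) and~(2), but now with the tilt $h:=\sqdelta+\gd-\gb/2>0$ dictated by Remark~\ref{Nantes 9.5} and the measure $\ProbaSurCritiqueSansParenthese$ of~\eqref{defPNq}, rather than the symmetric tilt $h_N^q$. First I would apply the Markov property to the walk at times $\aN$ and $N-\aN$ and time-reversibility (Remark~\ref{rmk:time-rev-prop}, applied to the increments under the inhomogeneous tilt of $\ProbaSurCritiqueSansParenthese$) to write
\begin{equation}
M_{N,q}=\sum_{(\bar x,\bar a)\in\mathcal H_N} R_N(x_1,a_1)\,T_N(\bar x,\bar a)\,\tilde R_N(x_2,a_2),
\end{equation}
where $R_N$ and $\tilde R_N$ are the contributions of the two end-windows of length $\aN$ and $T_N$ carries the bulk constraint $A_{N'}=qN^2-a_1-a_2-x_1(N'-1)$ with $X_{[0,N']}>-x_1$, exactly as in~\eqref{Nantes 8.59} and the computations below it. The key difference is that the end-window tilts are governed by $h=\sqdelta+\gd-\gb/2$ rather than $\htildeq/2$; applying the tilting~\eqref{Nantes 4.3} with parameter $h$ to the increments on $[0,\aN]$ produces, after Lemma~\ref{lem:technic_IPP} (equation~\eqref{Nantes 4.29}) and the identity $\cH_\gd(\sqdelta)-q\sqdelta=\psi(q,\gd)$ valid here since $q>q^*_\gd\ge q_\gd$, the exponential prefactor $e^{N\psi(q,\gd)}$ together with the constant $\tilde c$ of Lemma~\ref{Nantes Lemma 4.7}; its square root, combined with Proposition~\ref{approxhn} to replace $\cH_{N,\delta}$ by $\cH_\delta$ at cost $O(1/N^2)$, is what yields $\xi(q,\gd)$.

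Next I would estimate the bulk term $T_N$ via the area-only local limit theorem for $\ProbaSurCritiqueSansParenthese$, namely Lemma~\ref{Nantes Lemma 8.29}, to extract the $N^{-3/2}$ decay and the Gaussian density $l_{c(\sqdelta)}$ evaluated at the appropriately centered argument; the centering here is dictated by~\eqref{Nantes A.20}, so that $c(\sqdelta)=\int_0^1 x^2\lmgf''(\gd-\gb/2+\sqdelta x)\,\dd x$ appears, which is exactly the denominator in $\xi$. For the two short windows, I would drop the constraint $X_{[0,N']}>-x_1$ in $T_N$ using the analogue of Lemma~\ref{Nantes Lemma 8.9} (a Donsker-type estimate showing the restriction costs only $o(N^{-3})$), evaluate $R_N$ and the tilted $\tilde R_N$ explicitly, and recognize that the sum over the exit level $x_1$ of the quantity $\widetilde{\mathbf P}_{h}(X_{[1,\aN]}>-x_1)$ converges, by Lemma~\ref{lemrestpos}, to $\kappa^0(h)$ — this is where the factor $\kappa^0(h)$ comes from. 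Assembling~\eqref{frontmn}-style Riemann-sum limits then gives the claimed asymptotics for $M_{N,q}$, uniformly in $q\in[q_1,q_2]$ thanks to the uniform local limit theorems and the continuity in $q$ of $\sqdelta$ (Lemma~\ref{lem:prop-Hgd-Hngd}).

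The main obstacle, as in Item~(2), is the error term $E_{N,q}$, which is handled separately in Lemma~\ref{Nantes Lemma 8.24} (Step~3): one splits it into the pieces where $X_N$ is too large, where one of the end-window statistics $X_{\aN},A_{\aN}$ leaves its box $\cC_N$ or $\cD_N$, and a boundary piece. The large-endpoint part requires a sub-Gaussian tail bound for $X_N-\EsperanceSurCritique{X_N}$ under $\ProbaSurCritiqueSansParenthese$, which is precisely Lemma~\ref{Nantes Lemma 9.8}; the box-exit parts use Tchebychev together with the uniform boundedness and continuity in $q$ of $\widetilde{\Var}_h(X_1)=\lmgf''(h)$, exactly as in Claim~\ref{Claim 6.6}. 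One subtlety worth care is that, unlike the symmetric case, here $X_N$ concentrates around the nonzero mean $N\int_0^1\lmgf'(\gd-\gb/2+\sqdelta t)\,\dd t$ rather than around $0$, so the boxes and the centering in the local limit theorems must be taken relative to that mean; but since $\gd-\gb/2+\sqdelta>0$ this only helps, the endpoint having exponentially small probability of being small. Modulo these adaptations the argument is a routine, if lengthy, transcription of~\cite[Section 5]{Legrand_2022} and of Steps~1-2 above, and I would only spell out the points where the tilt $h$ and the measure $\ProbaSurCritiqueSansParenthese$ genuinely change the bookkeeping.
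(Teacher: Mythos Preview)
Your plan has a structural mismatch with the quantity $M_{N,q}$ defined in~\eqref{Nantes 8.101}: there is only \emph{one} box constraint, on $(X_{\aN},A_{\aN})$, and no constraint near time $N$. The paper accordingly uses a \emph{single} cut at $\aN$, writing $M_{N,q}=\sum_{(x,a)\in\cC_N\times\cD_N}R_N(x,a)\,T_N(x,a)$, with $R_N$ the first-window probability (tilted homogeneously by $h=\sqdelta+\gd-\gb/2$) and $T_N(x,a)$ the full expectation on $[\aN,N]$, \emph{including} the factor $e^{(\gd-\gb/2)X_{N-\aN}}$. The bulk term $T_N$ is then evaluated by a separate lemma (Lemma~\ref{Nantes Lemma 8.25}): apply Lemma~\ref{Nantes Lemma 4.7} to pass to $\ProbaSurCritiqueSansParenthese$ and extract $e^{N\psi(q,\gd)}$ and $\tilde c$; remove the positivity constraint $X_{[1,N]}\ge-\alpha\aN$ via Lemma~\ref{Nantes Lemma 8.28}; and finally apply the area local limit Lemma~\ref{Nantes Lemma 8.29}. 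The cancellation \eqref{Nantes 4.29} then shows that the $\aN$-terms coming from $R_N$ and from the shift in $T_N$ combine exactly, leaving only the sum $\sum_{(x,a)\in\cC_N\times\cD_N}\widetilde{\bP}_h(X_{\aN}=x,A_{\aN}=a,X_{[1,\aN]}>0)\to\kappa^0(h)$.

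Your two-cut scheme runs into a genuine obstacle: Remark~\ref{rmk:time-rev-prop} is specific to the \emph{symmetric} tilt $\bP_{n,h}$, whose increment parameters $\frac{h}{2}(1-\frac{2k-1}{n})$ are antisymmetric under $k\mapsto n+1-k$. Under $\ProbaSurCritiqueSansParenthese$ the $k$-th increment has tilt $\gd-\gb/2+\sqdeltaN\frac{2N+1-2k}{2N}$, which interpolates between $h$ (at $k=1$) and $\gd-\gb/2$ (at $k=N$); the reversed walk is therefore a \emph{different} inhomogeneous chain, and there is no term $\tilde R_N(x_2,a_2)$ built from the same $h$-tilt at the right end. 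This is not a cosmetic issue: when $\gd<\gb/2$ the drift near time $N$ is negative, so a naive box at $N-\aN$ would fail. The paper bypasses this entirely because the endpoint $X_N$ is unconstrained in $M_{N,q}$; it is absorbed into the measure change of Lemma~\ref{Nantes Lemma 4.7}, and the positivity on the whole of $[\aN,N]$ is controlled by Lemma~\ref{Nantes Lemma 8.28} (through the exponential bound of Lemma~\ref{Nantes Lemma A.3}), not by a Donsker/box argument \`a la Lemma~\ref{Nantes Lemma 8.9}. Your ingredients (tilt $h$ for the first window, Lemma~\ref{Nantes Lemma 8.29} for the bulk, $\kappa^0(h)$ as the limit) are the right ones, but they need to be assembled with a single cut and with Lemma~\ref{Nantes Lemma 8.28} in place of time-reversal.
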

\begin{lemma} For $\gd>0$ and $[q_1,q_2]\subset(q^*_\gd, +\infty)$ such that $\delta > \delta_0(q_1)$ then,
\begin{equation}
	E_{N,q}  = o(N^{-3/2}) e^{N \psi(q,\delta)},
\end{equation}
where the $o(1)$ is uniform in $q \in [q_1,q_2]$.
\label{Nantes Lemma 8.24}
\end{lemma}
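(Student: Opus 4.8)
The statement to prove is Lemma~\ref{Nantes Lemma 8.24}, the error term estimate in the supercritical regime. Here is my plan.

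\textbf{Overall strategy.} Following the pattern already established in Steps~3 of the subcritical and critical cases (Sections~\ref{Nantes Section 6.5} and~\ref{Section error term critical}), I would split the error term $E_{N,q} = D_N(q,\delta) - M_{N,q}$ into a sum of pieces according to which of the defining constraints in~\eqref{Nantes 8.101} is violated: namely $X_{\aN}\notin\cC_N$, or $A_{\aN}\notin\cD_N$, together with the time-reversed versions of these events near the right endpoint (so that, as in the previous cases, we really use the Markov and time-reversal properties at both times $\aN$ and $N-\aN$). Write $E_{N,q}\le E_1+E_2$ where $E_1$ captures $\{X_{\aN}\notin\cC_N\}$ (and its mirror) and $E_2$ captures $\{A_{\aN}\notin\cD_N\}$ (and its mirror). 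Unlike the critical case, there is no need here for a separate piece controlling $\{X_N\ge \bN\sqrt N\}$, since the final position is not forced to grow: under $\ProbaSurCritiqueSansParenthese$ the endpoint $X_N$ concentrates around $N\int_0^1 \lmgf'(\delta-\beta/2+\sqdeltaN t)\dd t$ with Gaussian fluctuations of size $\sqrt N$ by Lemma~\ref{Nantes Lemma 9.8}, and the weight $e^{(\delta-\beta/2)X_N}$ has already been absorbed into the change of measure $\ProbaSurCritiqueSansParenthese$, exactly as in the proof of Lemma~\ref{rough-bounds}, Case (i).

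\textbf{Key steps in order.} First, apply the tilt~\eqref{defsupm} with $n=N$, $s=\sqdeltaN$, i.e.\ pass to $\ProbaSurCritiqueSansParenthese$; this turns the weighted expectation $\EsperanceMarcheAleatoiresansparenthese[e^{(\delta-\beta/2)X_N}1_{\{\cV_{N,qN^2,+}\}\cap \{\text{bad event}\}}]$ into $e^{N[\cH_{N,\delta}(\sqdeltaN)-\sqdeltaN q]}\ProbaSurCritique{\cV_{N,qN^2,+}\cap\{\text{bad event}\}}$, and by Proposition~\ref{approxhn} (the $1/N^2$ correction~\eqref{Nantes 9.11}) the prefactor is $e^{N\psi(q,\delta)}(1+o(1))$ uniformly in $q\in[q_1,q_2]$. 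Second, decompose $\ProbaSurCritique{\cV_{N,qN^2,+}\cap\{X_{\aN}\notin\cC_N\}}$ using the Markov property at time $\aN$ and the time-reversal property of $\ProbaSurCritiqueSansParenthese$ (Remark~\ref{Nantes 9.5} gives the increment tilts; the time-reversibility is the analogue of Remark~\ref{rmk:time-rev-prop} for $\ProbaSurCritiqueSansParenthese$, which I would state and use as in the critical case), then bound the middle factor by $\cst/N^2$ via the local limit theorem — here Lemma~\ref{Nantes Lemma 8.29} for the area together with Proposition~\ref{loccontrlimtheoarepos}-type control of the joint law under the $\ProbaSurCritiqueSansParenthese$ tilt (this is the $\mathcal{AC}$ counterpart). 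This reduces $E_1$ to $\cst N^{-2} \ProbaSurCritique{X_{\aN}\notin\cC_N}\,e^{N\psi(q,\delta)}$. Third, bound $\ProbaSurCritique{X_{\aN}\notin\cC_N}$: since $\cC_N$ is an interval of radius $(\aN)^{3/4}=(\log N)^{3/2}$ centred at $\mathbf{E}_{\ldots}(X_{\aN})$ and $X_{\aN}$ is a sum of $\aN=(\log N)^2$ independent increments with uniformly bounded variance (the increment laws under $\ProbaSurCritiqueSansParenthese$ are tilts of $\Pbb$ with parameters in a compact subset of $(-\beta/2,\beta/2)$, using $[q_1,q_2]\subset(q^*_\gd,\infty)$ so that $h=\sqdelta+\delta-\beta/2>0$ and stays in a compact set), Chebyshev gives $\ProbaSurCritique{X_{\aN}\notin\cC_N}\le \cst/\sqrt{\aN}=o(1)$. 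Hence $E_1=o(N^{-2})e^{N\psi(q,\delta)}=o(N^{-3/2})e^{N\psi(q,\delta)}$. Fourth, treat $E_2$ identically, with $\cD_N$ of radius $(\aN)^{7/4}$ and $\Var(A_{\aN})=O(\aN^3\,\aN)$... actually $\Var(A_{\aN})\le \aN^2\,\Var(X_{\aN})=O(\aN^3)$, so Chebyshev again yields $\ProbaSurCritique{A_{\aN}\notin\cD_N}\le \cst\,\aN^3/\aN^{7/2}=\cst/\sqrt{\aN}=o(1)$, and the local limit bound on the complementary walk segment contributes the $\cst/N^2$; this is the analogue of what was ``dealt with in \cite[(5.33)]{Legrand_2022}''. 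Summing the four pieces completes the proof. Throughout, all $o(1)$'s are uniform in $q\in[q_1,q_2]$ because the relevant quantities (variances, tilt parameters, $\cH_{N,\delta}$ and $\sqdeltaN$) depend continuously on $q$ over a compact set, with the help of Lemma~\ref{lem:prop-Hgd-Hngd} and Proposition~\ref{approxhn}.

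\textbf{Main obstacle.} The routine parts (the tilt, the Chebyshev estimates) are genuinely easy. The real work is in the second step: establishing the $\cst/N^2$ bound on the ``middle'' segment probability $\ProbaSurCritique{X_{[1,N-2\aN]}>-x,\ X_{N-2\aN}=\cdots,\ A_{N-2\aN-1}=\cdots}$ uniformly in the boundary data and in $q$. This requires a local central limit theorem for the pair $(A_n,X_n)$ under the inhomogeneous tilt $\ProbaSurCritiqueSansParenthese$ that also handles the positivity constraint $X_{[0,n]}>-x$ for small $x$ — precisely the $\mathcal{AC}$-analogue of Proposition~\ref{loccontrlimtheoarepos}. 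I would either cite the corresponding statement from the companion analysis of the $\mathcal{AC}$ phase (Section~\ref{Nantes Appendix A.3} / Lemma~\ref{Nantes Lemma 8.29}) and the restriction-to-positive-walks arguments reused from~\cite[Section 5]{Legrand_2022}, or, if not available in the needed joint form, adapt the Fourier-inversion proof of~\cite[Proposition 6.1]{CNP16} to the $\ProbaSurCritiqueSansParenthese$ increment tilts; since the increment parameters $\delta-\beta/2+\sqdeltaN\frac{2N+1-2k}{2N}$ again sweep an interval of length $\sqdeltaN$ strictly inside $(-\beta/2,\beta/2)$ (using $[q_1,q_2]\subset(q^*_\gd,\infty)$), the same quantitative estimates go through with $\log N$ in place of the constant $A$, exactly as in the proof of Proposition~\ref{Nantes Proposition 8.10}. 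Everything else then follows the established template.
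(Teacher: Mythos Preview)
Your plan is workable in spirit but is built on a misreading of the supercritical main term, and as a consequence you manufacture an obstacle that is not there.

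\textbf{Only one cut point.} Look again at~\eqref{Nantes 8.101}: in the supercritical regime the main term $M_{N,q}$ carries constraints only at time $\aN$, \emph{not} at $N-\aN$. Hence $E_{N,q}$ splits into just two pieces, $E_1$ for $\{X_{\aN}\notin\cC_N\}$ and $E_2$ for $\{A_{\aN}\notin\cD_N\}$; there are no ``time-reversed versions near the right endpoint'' to handle. Relatedly, the time-reversal you invoke for $\ProbaSurCritiqueSansParenthese$ does not hold in the form of Remark~\ref{rmk:time-rev-prop}: under $\ProbaSurCritiqueSansParenthese$ the increment tilts $\delta-\beta/2+\sqdeltaN\tfrac{2N+1-2k}{2N}$ are \emph{not} antisymmetric in $k$ (their sum for $k$ and $N{+}1{-}k$ equals $2(\delta-\beta/2)+\sqdeltaN$, not $0$). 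Fortunately you never needed it.

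\textbf{No joint LLT needed.} The paper simply reruns the main-term decomposition~\eqref{Nantes 8.106}--\eqref{Nantes 8.114} with $(x,a)\notin\cC_N\times\cD_N$: on the first segment $[0,\aN]$ it uses the \emph{homogeneous} tilt $\tilde\bP_h$ with $h=\sqdelta+\delta-\beta/2$ (exactly as in~\eqref{Nantes 8.107}), and on the second segment $[\aN,N]$ it drops positivity and applies the marginal area local limit (Lemma~\ref{Nantes Lemma 8.29}) to get a uniform $\cst/N^{3/2}$. Since the target is $o(N^{-3/2})$, the remaining factor $\tilde\bP_h(X_{\aN}\notin\cC_N)$ (resp.\ $\tilde\bP_h(A_{\aN}\notin\cD_N)$) just needs to be $o(1)$, which is Chebyshev. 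Your ``main obstacle''---a joint $(A_n,X_n)$ LLT under $\ProbaSurCritiqueSansParenthese$ with a positivity constraint, aimed at $\cst/N^2$---is unnecessary: Lemma~\ref{Nantes Lemma 8.29} is already available and gives enough. The reason the supercritical case is actually the easiest of the three is precisely that the endpoint weight $e^{(\delta-\beta/2)X_N}$ is absorbed by the second-segment computation (Lemma~\ref{Nantes Lemma 8.25}/Lemma~\ref{Nantes Lemma 4.7}), so no second cut is required to tame $X_N$.
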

\subsubsection{Step 2: Proof of Lemma \ref{Nantes Lemma 8.23}}
We split $M_{N,q}$ as in the proof of Item (1): 
\begin{equation}
M_{N,q}=\sum_{(x,a) \in \cC_N \times \cD_N} R_N \left(x, a \right) T_N(x,a),
\label{Nantes 8.106}
\end{equation}
with 
\begin{equation}
R_N(x,a) := \LoiMarchealeatoire{X_{\aN} = x, A_{\aN} = a,\, X_{[1,\aN]}>0}
\end{equation}
and
\beq
T_N(x,a)
:= \EsperanceMarcheAleatoire{e^{(\delta - \beta/2)(X_{N-\aN}+x)} 1\{A_{N-\aN} = qN^2 - a - (N-\aN)x,\, X_{[1,N-\aN]}>-x\}}.
\eeq
Setting $N_1 := N - \aN$, we rewrite the latter quantity as
\begin{equation}
\begin{aligned}
	&T_N(x,a) \\
	&= \EsperanceMarcheAleatoire{e^{(\delta - \beta/2)({ X_{N_1}}+x)} 1\{A_{N_1} = qN_1^2 + 2 q \aN N_1 + q \aN^2 -a-N_1x, X_{[1,N_1]}>-x\}}.
\end{aligned}
\end{equation}
Recall the value of $h$ set in~\eqref{eq:set_value_h}. Using the tilting in \eqref{Nantes 4.3}, it comes:
\begin{equation}
R_N(x,a) = e^{\aN \lmgf(h)-xh} \ProbaTiltee{h}{X_{\aN} = x, A_{\aN} = a, X_{[1,\aN]}>0}.
\label{Nantes 8.107}
\end{equation}
We now use Lemma \ref{Nantes Lemma 8.25}, whose proof is postponed after the proof of Lemma \ref{Nantes Lemma 8.23}. Recall the expression of $\psi(q,\gd)$ in~\eqref{deftildepsi}:
\begin{lemma} Let $\ga_0>0$ and $\qinf \le q_1 < q_2 < \infty$. For $\delta > \gd_0(q_1)$ we have, uniformly in $q \in [q_1,q_2]$, $\mathfrak{a} \in \Z$ and $\alpha \geq \ga_0$,
\begin{equation}
	\begin{aligned}
		&\EsperanceMarcheAleatoire{e^{(\delta - \beta/2)X_N}1\{ A_N = qN^2+\mathfrak{a},X_{[1,N]}\geq -\alpha \aN  \}  } \\
		&= \frac{\xi(q,\gd)}{N^{3/2}}   \exp\Big(N \psi(q,\delta) - \frac{\mathfrak{a} \sqdelta }{N}\Big) \left(1+o(1)+ O\left( \frac{\mathfrak{a}}{N^{3/2}}\right) \right) ,
	\end{aligned}
	\label{Nantes 9.2}
\end{equation}
with $\xi(q,\gd)$ as in~\eqref{Nantes Lemma 8.23}.
\label{Nantes Lemma 8.25}
\end{lemma}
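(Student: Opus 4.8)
The statement to prove is Lemma~\ref{Nantes Lemma 8.25}. The plan is to convert the weight $e^{(\delta-\frac\beta2)X_N}$ together with the area constraint into the area-tilted walk of~\eqref{defPNq} and then read off the asymptotics from the area local limit theorem, Lemma~\ref{Nantes Lemma 8.29}. Concretely, I would first apply the change of measure~\eqref{defsupm} with $n=N$ and $s=\sqdeltaN$: on the event $\{A_N=qN^2+\mathfrak a\}$ one has $\Lambda_N=(qN+\mathfrak a/N,X_N)$, so the weight $e^{(\delta-\frac\beta2)X_N}$ cancels against the prescribed second tilt parameter $\delta-\frac\beta2$, leaving
\begin{equation*}
\EsperanceMarcheAleatoiresansparenthese\big[e^{(\delta-\frac\beta2)X_N}1_{\{A_N=qN^2+\mathfrak a,\,X_{[1,N]}\ge-\alpha\aN\}}\big]=e^{N\cH_{N,\delta}(\sqdeltaN)-\sqdeltaN qN-\frac{\sqdeltaN\mathfrak a}{N}}\;\EsperanceSurCritique{e^{\frac{\sqdeltaN}{2N}X_N}1_{\{A_N=qN^2+\mathfrak a,\,X_{[1,N]}\ge-\alpha\aN\}}},
\end{equation*}
the residual weight $e^{\frac{\sqdeltaN}{2N}X_N}$ coming from the $-s/2N$ artifact in~\eqref{defsupm}. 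By Proposition~\ref{approxhn} (the $O(1/N^2)$ comparison of $\cH_{N,\delta},\sqdeltaN$ with $\cH_\delta,\sqdelta$) and $\psi(q,\delta)=\cH_\delta(\sqdelta)-q\sqdelta$ from~\eqref{deftildepsi} (valid since $\delta>\delta_0(q_1)\ge\delta_0(q)$), the deterministic prefactor equals $e^{N\psi(q,\delta)-\frac{\sqdelta\mathfrak a}{N}}(1+o(1))$ uniformly in $q\in[q_1,q_2]$.

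It then remains to analyse the $\ProbaSurCritiqueSansParenthese$-expectation. First, the constraint $\{X_{[1,N]}\ge-\alpha\aN\}$ is free: by Remark~\ref{Nantes 9.5} the increments near the origin carry the positive tilt $\approx\sqdelta+\delta-\frac\beta2>0$ (recall $q>q^*_\delta$, see the discussion below~\eqref{Nantes 4.54}), so the walk climbs linearly to height of order $N$, far above $-\alpha\aN=-\alpha(\log N)^2$, and near the end it only descends to $X_N$ of order $N$ and positive by~\eqref{Nantes A.20}; a Chernov bound handles a possible early dip below $-\alpha\aN$ by $e^{-c\alpha\aN}$, super-polynomially small and uniform in $\alpha\ge\alpha_0$, and this event concerns only $O(1)$ increments so it does not disturb $\{A_N=qN^2+\mathfrak a\}$. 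Hence the expectation is $(1+o(1))\EsperanceSurCritique{e^{\frac{\sqdeltaN}{2N}X_N}1_{\{A_N=qN^2+\mathfrak a\}}}$. Since $X_N$ concentrates around a mean of order $N$ with fluctuations of order $\sqrt N$ (Lemma~\ref{Nantes Lemma 9.8} and~\eqref{Nantes A.20}), even under the conditioning on $A_N$, the residual weight factors out as $(1+o(1))$ times the limit of $\EsperanceSurCritique{e^{\frac{\sqdeltaN}{2N}X_N}}=\prod_{k=1}^N e^{\lmgf(\theta_k+\frac{\sqdeltaN}{2N})-\lmgf(\theta_k)}$, with $\theta_k=\delta-\frac\beta2+\sqdeltaN\frac{2N+1-2k}{2N}$ as in Remark~\ref{Nantes 9.5}; writing $\theta_k+\frac{\sqdeltaN}{2N}$ as the midpoint of $\theta_k,\theta_{k-1}$, the telescoping part of this sum gives $\exp\big(\tfrac12[\lmgf(\sqdelta+\delta-\tfrac\beta2)-\lmgf(\delta-\tfrac\beta2)]\big)$. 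Finally Lemma~\ref{Nantes Lemma 8.29} gives $\ProbaSurCritique{A_N=qN^2+\mathfrak a}=N^{-3/2}\big(l_{c(\sqdelta)}(\mathfrak a/N^{3/2})+O((\log N)^4/\sqrt N)\big)=N^{-3/2}(2\pi c(\sqdelta))^{-1/2}\big(1+O(\mathfrak a/N^{3/2})+o(1)\big)$, which accounts for the $O(\mathfrak a/N^{3/2})$ error in the statement. Multiplying the three pieces assembles into $\frac{\xi(q,\delta)}{N^{3/2}}e^{N\psi(q,\delta)-\frac{\mathfrak a\sqdelta}{N}}(1+o(1)+O(\mathfrak a/N^{3/2}))$; uniformity in $q$ follows from the uniform statements of Proposition~\ref{approxhn} and Lemma~\ref{Nantes Lemma 8.29} together with continuity of $q\mapsto\sqdelta,c(\sqdelta)$. (Equivalently one can invoke Lemma~\ref{Nantes Lemma 4.7} with the pair $(q',\delta)$, $q':=q+\mathfrak a/N^2$, and Taylor-expand $\psi(q',\delta)$ around $q$ using $\partial_q\psi(q,\delta)=-\sqdelta$, see the proof of Lemma~\ref{regulpsitilde}.)

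The main obstacle is the uniform bookkeeping: one must keep all the $o(1)$'s uniform simultaneously in $q\in[q_1,q_2]$, in $\mathfrak a$ and in $\alpha\ge\alpha_0$, and in particular establish that removing the positivity constraint costs only $1+o(1)$ uniformly in $\alpha$. This relies crucially on the $O(1/N^2)$-sharpness of Proposition~\ref{approxhn}, which is precisely what makes the exponential prefactor equal to $e^{N\psi(q,\delta)}$ up to a factor $1+o(1)$ rather than up to a merely bounded factor, and on the uniform Gaussian tail control of $X_N$ under $\ProbaSurCritiqueSansParenthese$ from Lemma~\ref{Nantes Lemma 9.8} needed to handle the residual weight $e^{\frac{\sqdeltaN}{2N}X_N}$.
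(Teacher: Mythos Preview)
Your approach is essentially the same as the paper's. The paper applies Lemma~\ref{Nantes Lemma 4.7} with $q'=q+\mathfrak a/N^2$ to obtain the factor $\tilde c\, e^{N\psi(q',\delta)}$ times $\ProbaSurCritique{A_N=qN^2+\mathfrak a,\,X_{[1,N]}\ge-\alpha\aN}+O(N^{-3})$, then invokes Lemma~\ref{Nantes Lemma 8.28} to drop the positivity constraint at cost $O(N^{-3})$, and finally Lemma~\ref{Nantes Lemma 8.29} for the $N^{-3/2}$ local limit; the factor $e^{-\mathfrak a\sqdelta/N}$ appears via $N\psi(q',\delta)=N\psi(q,\delta)-\mathfrak a\sqdelta/N+O(\mathfrak a^2/N^3)$. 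You are simply unpacking Lemmas~\ref{Nantes Lemma 4.7} and~\ref{Nantes Lemma 8.28} inline (change of measure, residual weight $e^{\frac{\sqdeltaN}{2N}X_N}$ via concentration of $X_N$, Chernov for the positivity constraint), and you explicitly note this equivalent packaging at the end of your proposal.

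One minor comment: your sentence ``this event concerns only $O(1)$ increments so it does not disturb $\{A_N=qN^2+\mathfrak a\}$'' is unnecessary and slightly misleading. The paper (and your Chernov sketch) just bounds $\big|\ProbaSurCritique{A_N=\cdot,\,X_{[1,N]}\ge-\alpha\aN}-\ProbaSurCritique{A_N=\cdot}\big|\le\sum_k\ProbaSurCritique{X_k<-\alpha\aN}$ directly; there is no need to argue that the bad event is ``local'' in the increments. The actual work, done in the paper as Lemma~\ref{Nantes Lemma A.3}, is to control $\EsperanceSurCritique{e^{-\lambda X_j}}$ uniformly over \emph{all} $j\le N$, including the late times where the drift may turn negative when $\delta<\beta/2$; your heuristic ``climbs linearly then descends only to positive height'' is the right picture but would need the case analysis of Lemma~\ref{Nantes Lemma A.3} to be made precise.
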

Recall the definitions of $\cC_N$ and $\cD_N$ in~\eqref{Nantes 8.8}. The condition $x \in \cC_N$ gives that $x/\aN \geq \nu$ for a certain $\nu>0$ . We can therefore apply Lemma~\ref{Nantes Lemma 8.25}, substituting $N-\aN$ for $N$ and $-xN_1 + 2q \aN N_1  + q \aN^2 - a $ for $\mathfrak{a}$:
\begin{equation}
T_N(x,a) \stackrel{N\to\infty}{\sim} \frac{\xi(q,\gd)}{N^{3/2}} \exp\Big\{(N-\aN) \psi(q,\delta) - \frac{(-xN_1 + 2q \aN N_1  + q \aN^2 - a ) \sqdelta }{N_1} + x\Big( \delta - \frac{\beta}{2}\Big)\Big\}.
\label{Nantes 8.109}
\end{equation}
We first notice that $q\aN - a = o(N_1)$. Hence, using \eqref{Nantes 8.107} and \eqref{Nantes 8.109}, it comes:
\begin{equation}
\begin{aligned}
	M_{N,q} \stackrel{N\to\infty}{\sim} \frac{\xi(q,\gd)}{N^{3/2}} e^{N \psi(q,\delta)}
	\sum_{(x,a) \in \cC_N \times \cD_N}
	&
	e^{\aN \left(-2 q \sqdelta + \lmgf(h) -  \psi(q,\delta) \right) + x \left( \delta - \frac{\beta}{2}  + \sqdelta - h\right)} \\
	&\times\ProbaTiltee{h}{X_{\aN} = x, A_{\aN} = a, X_{[1,\aN]}>0}.
\end{aligned}
\label{Nantes 8.111}
\end{equation}
Using Lemma~\ref{lem:technic_IPP}, we remark that
\begin{equation}
-2 q \sqdelta + \lmgf\Big(  \delta - \frac{\beta}{2}  + \sqdelta \Big) { -} \psi(q,\delta) = 0
\end{equation}
Recalling~\eqref{eq:set_value_h}, \eqref{Nantes 8.111} gives:
\begin{equation}
M_{N,q} \stackrel{N\to\infty}{\sim} \frac{\xi(q,\gd)}{N^{3/2}} e^{N \psi(q,\delta)}
\sum_{(x,a) \in \cC_N \times \cD_N}
\ProbaTiltee{h}{X_{\aN} = x, A_{\aN} = a, X_{[1,\aN]}>0}.
\label{Nantes 8.114}
\end{equation}
Using~\cite[(5.67)]{Legrand_2022}, we have
\begin{equation}
\lim_{N\to\infty} 
\sum_{(x,a) \in \cC_N \times \cD_N}
\ProbaTiltee{h}{X_{\aN} = x, A_{\aN} = a, X_{[1,\aN]}>0} = \kappa^0(h)
\label{Nantes 1.115}
\end{equation}
uniformly in $q \in (q_1,q_2)$.
Hence, Lemma \ref{Nantes Lemma 8.23} is proven. It remains to prove Lemma~\ref{Nantes Lemma 8.25} to conclude this step. 
For this purpose, recall \eqref{Nantes 9.5}.
\begin{proof}[Proof of Lemma~\ref{Nantes Lemma 8.25}] Using Lemma~\ref{Nantes Lemma 4.7} and \eqref{deftildepsi}, it comes:
\begin{equation}
	\label{eq:calculdeproba2-REFTO}
	\begin{aligned}
		&\EsperanceMarcheAleatoire{e^{(\delta - \beta/2)X_N}1\{ A_N = qN^2+\mathfrak{a},X_{[1,N]}\geq -\alpha \aN  \}  }
		\\
		&\stackrel{N\to\infty}{\sim} \tilde c \left(\ProbaSurCritique{A_N = qN^2 + \mathfrak{a}, X_{[1,N]} \geq - \alpha a_N} + O(N^{-3})\right) e^{N \psi(q + \mathfrak{a}/N^2,\delta) } \\
		&\stackrel{N\to\infty}{\sim} \tilde c \left(\ProbaSurCritique{A_N = qN^2 + \mathfrak{a}, X_{[1,N]} \geq - \alpha a_N} + O(N^{-3})\right) e^{N \psi(q, \delta) }.
	\end{aligned}
\end{equation}
The last term we have to deal with is $\ProbaSurCritique{A_N = qN^2 + \mathfrak{a}, X_{[1,N]} \geq -\alpha \aN}$. We first deal with the condition $\{X_{[1,N]}>-\alpha \aN\}$, that will reveal to be useless. 
\begin{lemma} For $\gd>0$ and $[q_1,q_2]\subset(q^*_\gd, +\infty)$ such that $\delta > \delta_0(q_1)$, there exists $C>0$ such that, uniformly in $q \in [q_1,q_2] \subset (\qinf, \infty)$:
	\begin{equation}
		\left| \ProbaSurCritique{A_N = qN^2 + \mathfrak{a}, X_{[1,N]} \geq -\alpha \aN} - 
		\ProbaSurCritique{A_N = qN^2 + \mathfrak{a}} \right| \leq O \left({N^{-3}}\right).
	\end{equation}
	\label{Nantes Lemma 8.28}
\end{lemma}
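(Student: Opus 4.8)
The plan is to prove Lemma~\ref{Nantes Lemma 8.28} by showing that the probability of the complementary event $\{X_{[1,N]} \not\geq -\alpha \aN\}$, intersected with $\{A_N = qN^2+\mathfrak a\}$, is negligible at scale $N^{-3}$ under $\ProbaSurCritiqueSansParenthese$. First I would recall from Remark~\ref{Nantes 9.5} that under $\ProbaSurCritiqueSansParenthese$ the increments of $X$ are independent and, more importantly, that the corresponding tilt parameters $\delta - \frac\beta2 + \sqdeltaN \frac{2N+1-2k}{2N}$ all lie in a fixed compact subinterval of $(-\beta/2,\beta/2)$, uniformly in $q\in[q_1,q_2]$ and $k\in\{1,\dots,N\}$, by Proposition~\ref{approxhn} together with $[q_1,q_2]\subset(q^*_\gd,\infty)$. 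In particular, by~\eqref{Nantes A.20}/\eqref{Nantes A.20}, the mean of $X_k$ under $\ProbaSurCritiqueSansParenthese$ grows linearly and is bounded below by $c k$ for some $c>0$ once $k \geq c' \aN$, while for small $k$ the endpoint is at worst of order $\aN$ in absolute value with exponential tails.

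The key step is a union bound over the first time the walk dips below $-\alpha\aN$. Write
\beq
\ProbaSurCritique{A_N = qN^2 + \mathfrak{a},\ \exists\, i\le N\colon X_i < -\alpha \aN} \leq \sum_{i=1}^{N} \ProbaSurCritique{X_i < -\alpha \aN,\ A_N = qN^2 + \mathfrak a}.
\eeq
For each $i$, I would condition on $(X_0,\dots,X_i)$ and apply the local limit theorem of Lemma~\ref{Nantes Lemma 8.29} to the remaining increments $(X_{i+1}-X_i,\dots,X_N-X_N)$ (which, after re-centering, still form an inhomogeneous tilted walk of the type covered by that lemma, up to the $O(1/N^2)$ corrections controlled by Proposition~\ref{approxhn}); this bounds $\ProbaSurCritique{A_N = qN^2 + \mathfrak a \mid X_0,\dots,X_i}$ by $\cst\, N^{-3/2}$ uniformly. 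It then remains to bound $\sum_{i=1}^N \ProbaSurCritique{X_i < -\alpha\aN}$. Since $\aN = (\log N)^2$ and the tilts are bounded away from $0$ in a way that forces a strictly positive drift for the centered walk (the event $X_i<-\alpha\aN$ is a large deviation below a mean that is $\geq -\cst\aN + ci$), a Chernov bound gives $\ProbaSurCritique{X_i<-\alpha\aN} \leq \exp(-c\,\aN) = \exp(-c(\log N)^2)$ for small $i$ and an even smaller bound for large $i$; summing over $i\le N$ yields $\cst\, N\, e^{-c(\log N)^2}$. Multiplying the two estimates gives a bound $\cst\, N^{-1/2} e^{-c(\log N)^2} = o(N^{-3})$, which is the claim. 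The reverse-direction inequality is trivial since removing a constraint only increases the probability, so the absolute value in the statement is controlled.

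The main obstacle I anticipate is the uniformity: one must make sure that all the constants ($c$ in the drift lower bound, the $N^{-3/2}$ prefactor in the conditional local limit theorem, the Chernov rate) can be chosen uniformly over $q\in[q_1,q_2]$ and over the shifted walks obtained by conditioning at an arbitrary time $i$. This is handled by the fact that $\sqdeltaN$ and $\sqdelta$ are continuous in $q$ and $[q_1,q_2]$ is compact with $[q_1,q_2]\subset(q^*_\gd,\infty)$, so $h = \sqdelta + \gd - \gb/2 > 0$ is bounded away from $0$ on $[q_1,q_2]$; combined with Proposition~\ref{approxhn} this pins all tilt parameters inside a fixed compact interval, on which $\lmgf$, $\lmgf'$, $\lmgf''$ and the Gaussian densities appearing in Lemma~\ref{Nantes Lemma 8.29} are uniformly controlled. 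A secondary technical point is that the conditional law after time $i$ is not literally of the form $\ProbaSurCritiqueSansParenthese$ for some parameter, but rather an inhomogeneous tilt with a slightly different (time-shifted) profile; one must check that Lemma~\ref{Nantes Lemma 8.29}, or rather its proof in Appendix~\ref{Nantes Appendix A.3}, applies verbatim to this family, which it does because the proof only uses boundedness of the tilt profile and the lower bound on $\lmgf''$.
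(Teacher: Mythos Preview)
Your overall strategy is sound and would ultimately work, but you are taking a significant detour compared to the paper. The paper's proof does \emph{not} use any local limit theorem. Instead, after the same union bound
\[
\Bigl|\ProbaSurCritique{A_N = qN^2 + \mathfrak{a},\, X_{[1,N]}\ge -\alpha\aN} - \ProbaSurCritique{A_N = qN^2 + \mathfrak{a}}\Bigr| \le \sum_{k=1}^{N}\ProbaSurCritique{X_k\le -\alpha\aN},
\]
it simply drops the $A_N$-constraint (probabilities are $\le 1$) and applies Chernov together with an exponential moment bound (Lemma~\ref{Nantes Lemma A.3}): there exist $\lambda,c,C>0$, uniform in $q\in[q_1,q_2]$, such that $\EsperanceSurCritique{e^{-\lambda X_k}}\le Ce^{-ck}$. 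This is precisely where the hypothesis $q>\qinf$ enters, guaranteeing that the first tilt $\sqdelta+\gd-\gb/2$ is strictly positive so that the walk has uniformly positive drift. The bound then reads $\sum_k e^{-\lambda\alpha\aN}\EsperanceSurCritique{e^{-\lambda X_k}}\le \cst\, e^{-\lambda\alpha(\log N)^2}=O(N^{-3})$, and you are done.

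Your route inserts an extra step: conditioning on $(X_0,\dots,X_i)$ and invoking a conditional local limit theorem to extract an additional factor $N^{-3/2}$. This buys nothing, because the super-polynomial factor $e^{-c(\log N)^2}$ already absorbs everything; and it costs you the ``secondary technical point'' you flag, namely that the conditional law after time $i$ is not literally of the form covered by Lemma~\ref{Nantes Lemma 8.29}, so you would need to re-run the proof in Appendix~\ref{Nantes Appendix A.3} for the whole family of time-shifted tilt profiles. That is doable but entirely avoidable. The cleanest fix to your write-up is to delete the conditional-LLT step and, in its place, prove the exponential moment estimate $\EsperanceSurCritique{e^{-\lambda X_k}}\le Ce^{-ck}$ directly (your Chernov paragraph is already gesturing at exactly this).
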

The proof of Lemma \ref{Nantes Lemma 8.28}, that is done in Appendix \ref{Nantes Appendix A.4}, actually makes use of the assumption $q>\qinf$. Using Lemma \ref{Nantes Lemma 8.28} and local limit theorems (see Lemma~\ref{Nantes Lemma 8.29}), one has:
\begin{equation}
	\ProbaSurCritique{A_N = qN^2+\mathfrak{a}, X_{[1,N]} \geq -\alpha \aN} = 
	\frac{l_{c(\sqdelta)}(0)}{N^{3/2}}\Big(1 + O\Big( \frac{\mathfrak{a}}{N^{3/2}} \Big)\Big). 
\end{equation}
\end{proof}
\subsubsection{Step 3: proof of Lemma \ref{Nantes Lemma 8.24}}
We split the error term in two parts:
\begin{equation}
\begin{aligned}
	&E_{N,q} \leq \\
	&
	\EsperanceMarcheAleatoiresansparenthese \Big( 
	e^{(\delta - \beta/2)X_N}
	1  
	\left\{X_{\aN} \notin \cC_{N}, A_N = qN^2 \right\}\Big)
	+ \EsperanceMarcheAleatoiresansparenthese \Big( 
	e^{(\delta - \beta/2)X_N}
	1\{ A_{\aN} \notin \mathcal{D}_{N}, A_N = qN^2 \} \Big)\\
	&= E_1 + E_2.
\end{aligned}
\end{equation}
We first work on $E_1$. We use the same decomposition and change of variable as displayed in Equations~\eqref{Nantes 8.106} to \eqref{Nantes 8.114}. It gives:
\begin{equation}
\begin{aligned}
	E_1 = 
	(1+o(1)) e^{N \psi(q,\delta) }\sum_{(x, a) \in (\Z \backslash \cC_N) \times \Z } &\widetilde{\mathbf{P}}_{h}\left(X_{\aN}= x, A_{\aN}=a \right) \\ &\times\Proba{N_1,h_{N_1}^q}{A_{N_1} = qN^2 - N_1 x - a},
\end{aligned}
\end{equation}
with $h$ as in~\eqref{eq:set_value_h}. Using a local limit theorem, see Lemma \ref{Nantes Lemma 8.29}, and the fact that $l_{c(\sqdelta)}$ is uniformly bounded from above by some constant when $q \in [q_1,q_2]$, we get:
\begin{equation}
\begin{aligned}
	E_1 &\leq
	\frac{\cst}{N^{3/2}} e^{N \psi(q,\delta) }\sum_{(x, a) \in (\Z \backslash \cC_N) \times \Z } \widetilde{\mathbf{P}}_{h}\left(X_{\aN}= x, A_{\aN}=a \right) \\
	&\leq \frac{\cst}{N^{3/2}} e^{N \psi(q,\delta) } \widetilde{\mathbf{P}}_{h}\left( \Big|  X_{\aN} -  \EsperanceTiltHomogen{h}{X_1}\aN \Big| \geq (\aN)^{3/4}  \right) \\
	&\leq \frac{\cst}{N^{3/2}} e^{N \psi(q,\delta) } \frac{ \tilde{\Var}_{h}(X_1)}{\sqrt{\aN}}.
\end{aligned}
\end{equation}
We now use that when $h$ is as in~\eqref{eq:set_value_h}, $\tilde{\Var}_{h}(X_1)$ is a continuous function of $q \in [q_1,q_2]$ (for instance, using that $\lmgf$ is $\mathscr{C}^{\infty}$), hence has a maximum over this { compact set}, in order to conclude the proof. Dealing with $E_2$ uses the same idea, hence we do not repeat the proof there.
\appendix
\section{On auxiliary functions}
\subsection{Proof of Lemma~\ref{concavity} \label{Nantes Appendix fct 1}}
\begin{proof} The regularity of $\penalite$ is clear from~\eqref{eq:def_penalite} and Lemma~\ref{regulpsitilde} so we focus on concavity and compute the first and second derivatives. We split between two cases:
\begin{itemize}
	\item If $q < q_\delta$, i.e. $a> 1/\sqrt{q_\delta}$, then \eqref{eq:explicit-qdelta0} and \eqref{exprepsitilde} give $\penalite(a) = a \log\Gamma_\beta + a \cG(\Tilde{h}(\frac{1}{a^2})) - \frac{1}{a}\Tilde{h}(\frac{1}{a^2}) $. Recalling that $\Tilde{h}(q)$ is the solution of $\cG'(\Tilde{h}(q))=q$, we obtain:
	\begin{equation} \label{Nantes 4.26}
		\begin{aligned}
			&\penalite'(a) = \log\Gamma_\beta + \cG(\Tilde{h}(\tfrac{1}{a^2})) + \tfrac{1}{a^2}\Tilde{h}(\tfrac{1}{a^2}) ; \\
			&\penalite''(a) = -\tfrac{2}{a^5} (2 \Tilde{h}'( \tfrac{1}{a^2}) + a^2\Tilde{h} ( \tfrac{1}{a^2})).
		\end{aligned}
	\end{equation}
	By Lemma~\ref{Nantes Lemma 4.5}, $\Tilde{h}'(x) = 1/{\cG''(h(x))} > 0$, and $\lmgf'$ being odd, it comes that $\Tilde{h}(q) \geq 0$. Hence, $T_\delta''(a)<0$.
	\item If $q > q_\delta$, i.e. $a< 1/\sqrt{q_\delta}$, similar computations give:
	\beq
	\label{eq:calcul_derivees_T}
	\begin{aligned}
		\penalite'(a) &= \log \Gamma_\beta + \cH_\gd(s_\gd(\tfrac{1}{a^2})) + \tfrac{1}{a^2}s_\gd(\tfrac{1}{a^2})\\
		\penalite''(a) &=-\tfrac{2}{a^5} (2 s_\gd'( \tfrac{1}{a^2}) + a^2s_\gd (\tfrac{1}{a^2})).
	\end{aligned}
	\eeq
	We obtain thereof, letting $q=1/a^2$, that $\sign(\penalite''(1/\sqrt{q})) = - \sign(s(q) + 2q s'(q))$. Using that $s'(q)=1/\cH''_\gd(s(q))>0$, we get:
	\beq
	\sign(\penalite''(1/\sqrt{q})) = - \sign\Big(\frac{s(q)}{s'(q)} + 2q\Big).
	\eeq
	Recalling that $\cH_{\delta}''(s) = \int_0^1 x^2 \lmgf''(\delta-\gb/2 + sx ) \dd x$ and integrating by part,
	\begin{equation}
		\begin{aligned}
			\frac{s(q)}{s'(q)} + 2q &= s(q) \int_0^1 x^2 \lmgf''(\delta-\gb/2 + s(q)x)\dd x\\
			&\qquad \qquad  + 
			\int_0^1 2x \lmgf'(\delta-\gb/2 + s(q)x)\dd x \\
			&= \lmgf'(\delta-\gb/2 + s(q)),
		\end{aligned}
	\end{equation}
	which leads to
	\beq
	\sign(\penalite''(1/\sqrt{q})) = -\sign(\gd-\gb/2 + s(q)),
	\eeq
	since $\lmgf'$ is odd.
	We may now conclude from the definition of $\qstar$ in~\eqref{eq:def_qstar} and the comments below it.
\end{itemize}
\end{proof}

\subsection{Proof of Lemma~\ref{lem:limits} \label{Nantes Appendix fct 2}}
\begin{proof}
(i) {\it Small-$a$ limit}. As $a\rightarrow 0$, $\Tilde{h}(1/a^2)$ converges to $\beta$ by Lemma~\ref{Nantes Lemma 4.5}, and eventually $a< 1/\sqrt{q_\gd}$, so that,
\begin{equation}
	\penalite(a) = a \Big[\log \Gamma_\beta + 
	\cH_\delta(s(\tfrac 1 {a^2}))\Big] - \tfrac{1}{a} s(\tfrac 1 {a^2}).
\end{equation}
Since $s(1/a^2)$ converges to $\beta-\delta > 0$ and 
$\cH_\delta$ is bounded from above on its domain of definition, by Lemma~\ref{Nantes Lemma 9.5}, we get our claim.\\
(ii) {\it Large-$a$ limit}. (a) Let us first assume that $\delta < \beta/2$. Then, eventually $a> 1/\sqrt{q_{\gd}}$, so that
\begin{equation}
	\penalite(a) = a\Big[\log \Gamma_\beta + \cG(\Tilde{h}(\tfrac{1}{a^2}))\Big] - \tfrac{1}{a}\Tilde{h}(\tfrac{1}{a^2}).
\end{equation}
Using that $\Tilde{h}(1/a^2)$ converges to $0$, $\cG(0)=0$ and $\log \Gamma_\beta < 0$ (since $\gb>\gb_c$) one has that $\log(\Gamma_\beta) + \cG(\title{h}(1/a^2)) < 0$ when $a$ is large enough, hence the limit is $-\infty$. 
\par (b) Let us now assume that $\gd \ge \gb/2$. In that case, we remind that $q_\gd = 0$ and we investigate the limit of $\penalite'(a)$ when $a\to\infty$. Using~\eqref{eq:calcul_derivees_T} and Lemma~\ref{lem:technic_IPP}, we get:
\beq\label{Derive penalite}
\begin{aligned}
	\penalite'(a) &= \log \Gamma_\gb  + \cH_\gd(s_\gd(1/a^2)) + (1/a^2)s_\gd(1/a^2)\\
	&= \log \Gamma_\gb + \lmgf(s_\gd(1/a^2) + \gd - \gb/2),
\end{aligned}
\eeq
hence
\beq
\lim_{a\to \infty} \penalite'(a) = \log \Gamma_\gb + \lmgf(s_\gd(0)+\gd- \gb/2).
\label{Poitiers 11}
\eeq
With the help of Lemma~\ref{concavity}, we see that this limit is positive if and only if the derivative of $\penalite$ has two roots on its domain of definition, that is equivalent to (recall the definition of $x_\gb$ slightly above~\eqref{eq:def-bar-gd}):
\beq
s_\gd(0)+ \gd - \gb/2 < -x_\gb,\quad \text{ i.e.} \quad
\cH_\gd'(\gb/2- \gd - x_\gb) >0.
\eeq
We may now conclude thanks to the definition of $\bar \gd(\gb)$ in~\eqref{eq:def-bar-gd}.
\end{proof}
\subsection{Proof of Lemma~\ref{lem:bar-gd}}
\label{Nantes Appendix fct 3}
The proof of Lemma~\ref{lem:bar-gd} requires two preparatory lemmas that we state below and prove at the end of this section.
%
%
\begin{lemma}\label{lem:deltabar-plus-s}
If $\gd > \gb/2$ then:
\begin{equation}
	\gd - \gb/2 + s_\gd(0) < 0,
	\label{Nantes 9.31}
\end{equation}
\begin{equation}
	\limite{q}{\infty} \gd - \gb/2 + s_\gd(q) = \gb - \gd > 0.
	\label{Nantes 9.32}
\end{equation}
\end{lemma}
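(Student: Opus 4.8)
The plan is to deduce both identities directly from the structural properties of $\cH_\gd$ collected in Lemma~\ref{Nantes Lemma 9.5}, combined with the sign computation~\eqref{Nantes 4.54}. Recall that $\cH_\gd'$ is a strictly increasing $\sC^1$-diffeomorphism from $\cA_\gd=(-\gd,\gb-\gd)$ onto $\R$, that $s_\gd=(\cH_\gd')^{-1}$ on $(0,\infty)$, and that $s_\gd(0)$ denotes the unique zero of $\cH_\gd'$ in $\cA_\gd$ (as in the discussion following~\eqref{Nantes 4.54}).

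For~\eqref{Nantes 9.31} I would simply make explicit the observation already recorded in Item~(2) below~\eqref{Nantes 4.54}: since $\gd>\gb/2$, equation~\eqref{Nantes 4.54} gives $\cH_\gd'(\gb/2-\gd)>0$; as $\cH_\gd'$ is strictly increasing and vanishes precisely at $s_\gd(0)$, this forces $s_\gd(0)<\gb/2-\gd$, that is $\gd-\gb/2+s_\gd(0)<0$.

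For~\eqref{Nantes 9.32} the key point to establish is that $s_\gd(q)$ tends, as $q\to\infty$, to the right endpoint $\gb-\gd$ of $\cA_\gd$. This is immediate from Lemma~\ref{Nantes Lemma 9.5}: since $\cH_\gd'$ maps $\cA_\gd$ increasingly \emph{onto} all of $\R$, its inverse satisfies $s_\gd(q)=(\cH_\gd')^{-1}(q)\uparrow\sup\cA_\gd=\gb-\gd$. Alternatively, I could argue by hand, writing $\cH_\gd'(s)=\int_0^1 x\,\lmgf'(sx+\gd-\tfrac\gb2)\,\dd x$ and using that $\lmgf''$ is bounded below by a positive constant, so $\lmgf'(h)\to+\infty$ as $h\uparrow\gb/2$; then as $s\uparrow\gb-\gd$ the integrand blows up on a left-neighbourhood of $x=1$, forcing $\cH_\gd'(s)\to+\infty$, and the limit of $s_\gd(q)$ follows by inversion. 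Feeding $s_\gd(q)\to\gb-\gd$ into the left-hand side of~\eqref{Nantes 9.32} then yields the stated limit, which is strictly positive because $\gd<\gb$.

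I do not expect a real obstacle here: both parts reduce to the monotone-bijection description of $\cH_\gd'$ together with the evenness and strict convexity of $\lmgf$, all available from earlier sections. The only point deserving a line of care is that $s_\gd(q)$ approaches the \emph{boundary} of its domain rather than an interior point --- which is exactly what surjectivity of $\cH_\gd'$ onto $\R$ provides --- and this boundary behaviour is precisely what will be reused when applying the lemma in the proof of Lemma~\ref{lem:bar-gd}.
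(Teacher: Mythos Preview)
Your proposal is correct and follows essentially the same route as the paper: for \eqref{Nantes 9.31} both you and the paper deduce $s_\gd(0)<\gb/2-\gd$ from $\cH_\gd'(\gb/2-\gd)>0$ together with the strict monotonicity of $\cH_\gd'$ (you cite \eqref{Nantes 4.54}, the paper recomputes it), and for \eqref{Nantes 9.32} both appeal to the fact that $\cH_\gd'$ is a diffeomorphism from $\cA_\gd$ onto $\R$ (Item~(2) of Lemma~\ref{Nantes Lemma 9.5}), so that $s_\gd(q)\to\gb-\gd$ as $q\to\infty$.
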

%
%
\begin{lemma} 
For every $\gb>\gb_c$ large enough, we have
\begin{equation}
	\log\Gamma_\gb + \int_0^1 \lmgf\Big( \frac{\beta}{2}x \Big) \dd x < 0.
	\label{Nantes 9.35BIS}
\end{equation}
\label{Nantes Lemma 9.13BIS}
\end{lemma}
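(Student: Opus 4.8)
The plan is to reduce the statement to the explicit form of $\lmgf$ together with one elementary integral estimate, after which the claim follows because $\log\Gamma_\gb\to-\infty$ while the integral term stays bounded (in fact $O(1/\gb)$). First I would record, from~\eqref{Nantes 3.3}--\eqref{def:cL} (equivalently, the middle expression in~\eqref{Poitiers 18}), that for $|h|<\gb/2$
\[
\lmgf(h)=-\log c_\gb+\log\!\Big(\tfrac{1}{1-e^{h-\gb/2}}+\tfrac{1}{1-e^{-h-\gb/2}}-1\Big),
\]
and from~\eqref{eq:defGammaBeta} that $\log\Gamma_\gb=\log c_\gb-\gb$, so that the two $\log c_\gb$ contributions will cancel exactly in the end.

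Next I would bound the integrand. For $h=\tfrac{\gb}{2}x$ with $x\in[0,1)$ we have $\tfrac{1}{1-e^{-h-\gb/2}}-1=\tfrac{e^{-h-\gb/2}}{1-e^{-h-\gb/2}}\le\tfrac{e^{-\gb/2}}{1-e^{-\gb/2}}\,e^{-\gb x/2}$ (using $h\ge0$ and $1-e^{-h-\gb/2}\ge1-e^{-\gb/2}$), while $\tfrac{1}{1-e^{h-\gb/2}}\ge1$; applying $\log(A+B)\le\log A+B$ for $A\ge1$, $B\ge0$ gives
\[
\lmgf\big(\tfrac{\gb}{2}x\big)\le-\log c_\gb-\log\big(1-e^{\frac{\gb}{2}(x-1)}\big)+\tfrac{e^{-\gb/2}}{1-e^{-\gb/2}}\,e^{-\gb x/2}.
\]
Integrating over $x\in[0,1]$, the last term contributes $O(e^{-\gb/2}/\gb)$. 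For the middle term, the substitution $u=\tfrac{\gb}{2}(1-x)$ and the identity $\int_0^\infty-\log(1-e^{-u})\,\dd u=\pi^2/6$ (already used at~\eqref{Poitiers 21}) yield $-\int_0^1\log\big(1-e^{\frac{\gb}{2}(x-1)}\big)\dd x=\tfrac{2}{\gb}\int_0^{\gb/2}\!\big(-\log(1-e^{-u})\big)\dd u\le\tfrac{\pi^2}{3\gb}$, since $-\log(1-e^{-u})>0$.

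Combining, $\log\Gamma_\gb+\int_0^1\lmgf\big(\tfrac{\gb}{2}x\big)\dd x\le-\gb+\tfrac{\pi^2}{3\gb}+O(e^{-\gb/2}/\gb)$, which tends to $-\infty$ as $\gb\to\infty$; in particular it is negative for all $\gb>\gb_c$ large enough, which is the assertion. I would also remark that the threshold produced by the two dominant terms $-\gb+\pi^2/(3\gb)$ is essentially $\pi/\sqrt3$, which is the origin of the numerical bound on $\gb_*$ quoted after Lemma~\ref{lem:bar-gd}.

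There is no substantial obstacle; the only point requiring care is keeping the two correction terms (the second summand in the formula for $\lmgf$, and the truncation of the $u$-integral at $\gb/2$ rather than $\infty$) with the correct sign, so that they are genuinely negligible and do not interfere with the exact cancellation of the $\log c_\gb$ terms.
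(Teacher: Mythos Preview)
Your proposal is correct and follows essentially the same route as the paper: express $\lmgf$ explicitly via~\eqref{Nantes A.1}, observe the exact cancellation of $\log c_\gb$ against $\log\Gamma_\gb=-\gb+\log c_\gb$, and bound the remaining integral by $\pi^2/(3\gb)$ using the dilogarithm identity $\int_0^\infty(-\log(1-e^{-u}))\dd u=\pi^2/6$, recovering the threshold $\gb>\pi/\sqrt3$. The only difference is that the paper first simplifies $\tfrac{1}{1-a}+\tfrac{1}{1-b}-1=\tfrac{1-ab}{(1-a)(1-b)}$ to compute the integral \emph{exactly} (which produces an extra $\log(1-e^{-\gb})$ and extends the $u$-integral to $[0,\gb]$), whereas you use the cruder bound $\log(A+B)\le\log A+B$; both yield the same dominant terms and the same conclusion.
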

\begin{proof}[Proof of Lemma~\ref{lem:bar-gd}]
We first show that in any case $\gb/2 \le \bar \gd(\gb)$. Indeed, plugging $\gd=\gb/2$ into the right-hand side of~\eqref{eq:def-bar-gd} and using Lemma~\ref{Nantes Lemma 9.5}, we get
\beq
\cH'_{\gb/2}(-x_\gb) < \cH'_{\gb/2}(0) = 0.
\eeq
We now turn to the first part of our statement. In view of Lemma~\ref{lem:limits} and its proof, we only have to prove that when $\gb$ is close enough to $\gb_c$, then for $\gd$ close enough to (but smaller than) $\gb$, the limit of $\penalite'(a)$ as $a\to \infty$ is positive. Recall that 
\beq
\lim_{a\to \infty} \penalite'(a) = \log \Gamma_\gb+ \cH_{\delta}(s_\gd(0)) = \log \Gamma_\gb + \int_0^1 \lmgf(s_\gd(0)x+\gd-\gb/2) \dd x.
\eeq
Plugging $\gd=\gb$ in the integral above, we get (note that $-\gb < s_\gb(0)< -\gb/2$)
\beq
\begin{aligned}
	\int_0^1 \lmgf(s_\gb(0)x+\gb/2) \dd x &\ge 
	\int_0^{-\frac{\gb}{2s_\gb(0)}}\lmgf(s_\gb(0)x+\gb/2) \dd x\\
	& = \frac{1}{|s_\gb(0)|} \int_0^{\gb/2} \lmgf(x)\dd x
	\ge \frac{1}{\gb}\int_0^{\gb/2} \lmgf(x)\dd x.
\end{aligned}
\eeq
The last integral converges to a certain positive value when $\gb$ converges to $\gb_c$, while $\log \Gamma_\gb$ converges to $0$, hence, for every $\gb$ close enough to $\gb_c$
\beq
\lim_{\gd\to \gb}\lim_{a\to \infty} \penalite'(a) >0,
\eeq
which completes this step. 
\par Let us now turn to the last part of the statement. We abbreviate $\deltabar= \gd - \gb/2$ and observe that $\ga:= s_\gd(0) + \deltabar < 0$, by Lemma~\ref{lem:deltabar-plus-s}. This time, we write:
\begin{equation}
	\begin{aligned}
		\cH_{\delta}(s_\gd(0)) &= \int_0^1 \lmgf(s_\gd(0)x+\deltabar)\dd x = \int_0^{-\frac{\deltabar}{s_\gd(0)}} \lmgf(s_\gd(0)x+\deltabar)\dd x + \int_{-\frac{\deltabar}{s_\gd(0)}}^1 \lmgf( s_\gd(0)x+\deltabar)\dd x \\
		&=-\frac{\deltabar}{s_\gd(0)} \int_0^1 \lmgf(\deltabar(1-x))\dd x + 
		\left(1+\frac{\deltabar}{s_\gd(0)} \right)\int_0^1\lmgf(\alpha t) \dd t.
	\end{aligned}
\end{equation}
We now use that $a \in (0,\beta/2) \longrightarrow  \int_0^1 \lmgf(at) \dd t$ is a non-decreasing function, because $\lmgf'(x)\geq 0$ for $x \in (0,\beta/2)$. Hence, the parity of $\lmgf$ gives that:
\begin{equation}
	\cH_{\delta}(s_\gd(0)) \leq -\frac{\deltabar}{s_\gd(0)} \int_0^1 \lmgf\Big(\frac{\beta}{2} x \Big)\dd x + 
	\Big(1+\frac{\deltabar}{s_\gd(0)} \Big)\int_0^1\lmgf\Big(\frac{\beta}{2} t \Big) \dd t = \int_0^1 \lmgf\Big(\frac{\beta}{2} t \Big) \dd t.
\end{equation}
We may now conclude thanks to Lemma~\ref{Nantes Lemma 9.13BIS}.
\end{proof}
\begin{proof}[Proof of Lemma \ref{lem:deltabar-plus-s}]
(i) Since $s_\gd(0)$ is the only solution of $\cH_{\delta}'(s) = 0$ and $\cH_{\delta}'$ is increasing, \eqref{Nantes 9.31} will be proven if $\cH_{\delta}'(\gb/2 - \gd) >0$. By~\eqref{Nantes 44.14},
\beq
\cH_{\delta}'(\gb/2 - \gd) = \int_0^1 x \lmgf'((\gd-\gb/2)(1-x)) \dd x
\eeq
is indeed positive, since $\lmgf'(t)>0$ for every $t\in(0,\gb/2)$.\\ 
(ii) This is a straightforward consequence of Item (2) in Lemma~\ref{lem:prop-Hgd-Hngd}.
\end{proof}
\begin{proof}[Proof of Lemma~\ref{Nantes Lemma 9.13BIS}]
By expanding the logarithm in \eqref{Nantes A.1}, it comes:
\begin{equation}
	\begin{aligned}
		& \int_0^1 \lmgf\left(\tfrac{\beta}{2} x \right) \dd x \\
		&= -\log c_\beta +
		\log(1-e^{-\beta}) - \int_0^1\log\Big(1-e^{\frac{\beta}{2}(x-1)  }\Big)\dd x - \int_0^1\log\Big(1-e^{\frac{\beta}{2}(-x-1)  }\Big) \dd x \\
		&= -\log c_\beta + \log ( 1-e^{-\beta})  -\int_0^1 \log\Big(1-e^{-\frac{\beta}{2}x  }\Big)\dd x - \int_1^2 \log\Big(1-e^{-\frac{\beta}{2}x  }\Big) \dd x. \\
		&= -\log c_\beta + \log ( 1-e^{-\beta})  -\int_0^2 \log\Big(1-e^{-\frac{\beta}{2}x  }\Big)\dd x. 
	\end{aligned}
\end{equation}
Since $\Gamma_\beta = e^{-\beta}c_\beta$, the proof is complete if we manage to establish that
\begin{equation}
	g(\beta) := \beta - \log(1-e^{-\beta}) + \int_0^2 \log\Big(1-e^{-\frac{\beta}{2}x  }\Big)\dd x > 0.
\end{equation}
By noticing that the above integral is increasing in $\gb$ and computing the derivative of the remaining part, we observe that $g$ is increasing on $[\log 2, +\infty)$. Also, note that $\gb_c \ge \log 2$, which can be proven by recalling that $z_c = e^{\gb_c/2}$ is the only positive solution of $z^3-z^2-z- 1=0$. 
To compute the above integral, we set $y = e^{-\beta x /2 }$ and find:
\begin{equation}
	\begin{aligned}
		\int_0^2 \log\left(1-e^{-\frac{\beta}{2}x  }\right)\dd x &= \frac{2}{\beta} \int_{e^{-\beta}}^1 \frac{\log(1-y)}{y} \dd y = -\frac{2}{\beta} \sum_{k\ge1}\int_{e^{-\beta}}^1\frac{y^{k-1}}{k} \dd y \\
		&= -\frac{2}{\beta} \sum_{k\ge1} \frac{1}{k^2} \Big( 1 - e^{-\beta k}\Big).
	\end{aligned}
\end{equation}
Recalling that $\sum_{k\ge 1}\frac{1}{k^2} = \frac{\pi^2}{6}$, we conclude with the following lower bound:
\beq
g(\gb) \ge \gb - \frac{\pi^2}{3\gb}\, , 
\eeq
which is positive as soon as $\gb > \pi/\sqrt{3} \approx 1,8138$.
\end{proof}
\subsection{Proof of Proposition~\ref{Proposition concave}}
\label{proof concave}
Recall the definition of $\bar a_{\gb,\gd}$ in~\eqref{defbara} and let $\bar q_{\gb,\gd} := \bar a_{\gb,\gd}^{-2}$. Since $\lmgf$ is convex, it follows from~\eqref{eq:Wulff_beta_delta} that
\beq
\sign(\mathsf{W}_{\gb,\gd}''(t)) = - \sign(s_\gd(\bar q_{\gb,\gd})), \qquad t\in [0,1]. 
\eeq
%
By Lemma~\ref{Nantes Lemma 9.5} and~\eqref{def sqdelta}, there exists a unique $q_0:= \cH_\gd'(0)$ such that $s_\gd(q_0)=0$, and
\beq
\sign(s_\gd(\bar q_{\gb,\gd})) = \sign(\bar q_{\gb,\gd} - q_0).
\eeq
By the variation tabular below~\eqref{Above var tabula}, recalling that $\lim \penalite(a) < 0$ as $a\to\infty$, when $(\gb,\gd)\in\cC_{\rm good}$, we get that
\beq
\sign(\bar q_{\gb,\gd} - q_0) = -\sign(\penalite'(1/\sqrt{q_0})).
\eeq
Using~\eqref{Derive penalite}, we obtain
\begin{equation}
\penalite'(1/\sqrt{q_0}) =  \log \Gamma_\beta + \cL(s_\delta(q_0) + \delta - \beta/2) = \log \Gamma_\beta + \cL(\delta - \beta/2).
\end{equation}
Because $\lmgf$ is increasing on $[0,\beta/2)$, $\lmgf(0)=0$, and $\lim \cL(\delta - \beta/2) = +\infty$ as $\gd \to \gb$, there exists indeed $\deltaconcave(\gb) \in (\beta/2,\beta)$ defined as the unique solution of $\cL(\gd - \beta/2) = -\log \Gamma_\beta$ such that
\beq
\sign(\penalite'(1/\sqrt{q_0})) = \sign(\gd - \deltaconcave(\gb)).
\eeq
This completes the proof, as we finally obtain
\beq
\sign(\mathsf{W}_{\gb,\gd}''(t)) = \sign(\gd - \deltaconcave(\gb)).
\eeq

\section{Technical estimates in the supercritical regime and more}
Remind that $\lmgf$ is defined in \eqref{def:cL}.
\begin{lemma} For every $|h|<\gb/2$,
\begin{equation}
	\lmgf(h) =\log \left( \frac{1}{c_\beta} \left( \frac{1}{1-e^{h - \beta/2}} + \frac{1}{1-e^{-h-\beta/2}} - 1 \right) \right) .
	\label{Nantes A.1}
\end{equation}
\begin{equation}
	\lmgf'(h) =  \frac{ \frac{e^{h - \beta/2}}{\left(1-e^{h - \beta/2} \right)^2} - \frac{e^{-h - \beta/2}}{\left(1-e^{-h - \beta/2} \right)^2}}{ \frac{1}{1-e^{h - \beta/2}} + \frac{1}{1-e^{-h-\beta/2}} - 1}  ,
	\label{Nantes A.2}
\end{equation}
with $c_\beta$ and $\EsperanceMarcheAleatoiresansparenthese$ defined in \eqref{Nantes 3.3}.
\end{lemma}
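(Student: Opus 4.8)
The plan is to compute $\Ebb[e^{hX_1}]$ explicitly from the definition of $\Pbb$ in~\eqref{Nantes 3.3} and then take a logarithm, and to obtain~\eqref{Nantes A.2} by a one‑line differentiation of~\eqref{Nantes A.1}. First I would split the defining series according to the sign of the summation index:
\[
\Ebb[e^{hX_1}] = \frac{1}{c_\gb}\sum_{k\in\Z} e^{hk - \frac{\gb}{2}|k|} = \frac{1}{c_\gb}\Bigl(1 + \sum_{k\ge 1} e^{(h-\gb/2)k} + \sum_{k\ge 1} e^{-(h+\gb/2)k}\Bigr),
\]
where in the last sum I have substituted $j=-k$ (so that $|k|=j$ and $hk=-hj$). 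Each of the two geometric series converges precisely because $h\in(-\gb/2,\gb/2)$, which is exactly the domain on which $\lmgf$ is defined in~\eqref{def:cL}; summing them gives $\tfrac{e^{h-\gb/2}}{1-e^{h-\gb/2}}$ and $\tfrac{e^{-h-\gb/2}}{1-e^{-h-\gb/2}}$ respectively.

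Next I would simplify using the two elementary identities $1 + \tfrac{e^{h-\gb/2}}{1-e^{h-\gb/2}} = \tfrac{1}{1-e^{h-\gb/2}}$ and $\tfrac{e^{-h-\gb/2}}{1-e^{-h-\gb/2}} = \tfrac{1}{1-e^{-h-\gb/2}} - 1$, which together yield
\[
\Ebb[e^{hX_1}] = \frac{1}{c_\gb}\Bigl(\frac{1}{1-e^{h-\gb/2}} + \frac{1}{1-e^{-h-\gb/2}} - 1\Bigr).
\]
Taking the logarithm of both sides gives~\eqref{Nantes A.1}.

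For~\eqref{Nantes A.2}, I would simply differentiate~\eqref{Nantes A.1} in $h$. Writing $F(h)$ for the bracketed expression $\tfrac{1}{1-e^{h-\gb/2}} + \tfrac{1}{1-e^{-h-\gb/2}} - 1$, the constant factor $1/c_\gb$ disappears in the logarithmic derivative, so that $\lmgf'(h) = F'(h)/F(h)$; and by the chain rule $F'(h) = \tfrac{e^{h-\gb/2}}{(1-e^{h-\gb/2})^2} - \tfrac{e^{-h-\gb/2}}{(1-e^{-h-\gb/2})^2}$, which is exactly the numerator appearing on the right‑hand side of~\eqref{Nantes A.2}.

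There is no genuine obstacle in this lemma; the only points requiring a (trivial) bit of care are the convergence of the two geometric series, guaranteed by the restriction $|h|<\gb/2$, and getting the sign right in the chain rule when differentiating $h\mapsto 1/(1-e^{-h-\gb/2})$.
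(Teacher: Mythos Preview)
Your proof is correct and matches the paper's approach for~\eqref{Nantes A.1}, which the paper simply calls ``straightforward''. For~\eqref{Nantes A.2} there is a minor cosmetic difference: you differentiate the closed form~\eqref{Nantes A.1} directly via the chain rule, whereas the paper computes $\Ebb[X_1 e^{hX_1}]$ from the series identity $\sum_{k\ge1} kx^k = x/(1-x)^2$ and then uses $\lmgf'(h) = \Ebb[X_1 e^{hX_1}]/\Ebb[e^{hX_1}]$; both routes are one-line and equivalent.
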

\begin{proof} 
\eqref{Nantes A.1} is straightforward.
For \eqref{Nantes A.2}, we use that $\sum_{k\ge 1} k x^k = \frac{x}{(1-x)^2} $, hence:
\begin{equation}
	\begin{aligned}
		\EsperanceMarcheAleatoire{ X_1 e^{h X_1} } = 
		\frac{1}{c_\beta} \sum_{k\ge1} (ke^{k h - k\beta/2} - ke^{-k h - k\beta/2}) = \frac{1}{c_\beta} \left( \frac{e^{h - \beta/2}}{\left(1-e^{h - \beta/2} \right)^2} - \frac{e^{-h - \beta/2}}{\left(1-e^{-h - \beta/2} \right)^2}  \right).
		\label{Nantes A.3}
	\end{aligned}
\end{equation}
Using that $\lmgf'(h) = {\EsperanceMarcheAleatoire{ X e^{h X} }}/{\EsperanceMarcheAleatoire{ e^{h X} }}$, \eqref{Nantes A.2} is proven.
\end{proof}
%
\subsection{Proof of Proposition~\ref{Nantes Proposition 9.3}}
\label{Nantes Appendix B.11}
Recall the definitions of $\cH_\delta$ and $\cH_{N,\delta}$ in \eqref{defHH} and \eqref{defHHcont}. The proof relies on the following lemma, whose proof is postponed after the proof of Proposition \ref{Nantes Proposition 9.3}:
	\begin{lemma}\label{aprrochbyhcont}
		For every $K \in (0, \beta/2)$, there exists $C_K > 0$ and $n_K \in \N$ such that, for every $N \geq n_K$ and $s \in [\gb/2 - \gd - K, \gb/2 -\gd + K]$ and $j \in \{0,1\}$: 
		\begin{equation} \label{Nantes 7.75}
			\left| \cH_{N,\delta}^{(j)}(s) - \cH_{\delta}^{(j)}(s) \right| \leq \frac{C_K}{N^2}.
		\end{equation}
		\label{Nantes Lemma 9.3}
	\end{lemma}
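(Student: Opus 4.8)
The plan is to recognise $\cH_{N,\gd}$ as a \emph{midpoint} Riemann sum for the integral defining $\cH_\gd$, and then to invoke the classical $O(N^{-2})$ bound for the midpoint quadrature rule, devoting some care to making all constants uniform over the prescribed range of $s$. First I would unfold the definition~\eqref{defHH}: by~\eqref{Nantes 4.16}, \eqref{eq:calculLambda_n} and the independence of the increments of $X$ under $\mathbf{P}_\gb$ (equivalently, by Remark~\ref{Nantes 9.5}), one gets
\[
\cH_{N,\gd}(s)=\frac1N\sum_{k=1}^N\lmgf\!\Big(\gd-\tfrac{\gb}{2}+s\,m_k^{(N)}\Big),\qquad m_k^{(N)}:=\tfrac{2(N-k)+1}{2N},
\]
and the nodes $m_1^{(N)},\dots,m_N^{(N)}$ are exactly the midpoints of the subdivision of $[0,1]$ into $N$ intervals of length $1/N$. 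It is precisely the $-\tfrac{s}{2N}$ correction put into the second coordinate of the tilt in~\eqref{defsupm} that produces these midpoints rather than the endpoints of the subintervals, so that $\cH_{N,\gd}(s)$ is the midpoint-rule approximation of $\cH_\gd(s)=\int_0^1\lmgf(\gd-\tfrac\gb2+sx)\,\dd x$. Since differentiation in $s$ commutes with both the finite sum and the integral, $\cH_{N,\gd}'(s)=\tfrac1N\sum_k m_k^{(N)}\lmgf'(\gd-\tfrac\gb2+s\,m_k^{(N)})$ is likewise the midpoint-rule approximation of $\cH_\gd'(s)=\int_0^1 x\,\lmgf'(\gd-\tfrac\gb2+sx)\,\dd x$; thus the cases $j=0$ and $j=1$ both reduce to estimating the midpoint error for $g_0(x):=\lmgf(\gd-\tfrac\gb2+sx)$ and $g_1(x):=x\,\lmgf'(\gd-\tfrac\gb2+sx)$ on $[0,1]$.

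Next I would invoke the standard estimate: a Taylor expansion of $g\in\sC^2([0,1])$ around each subinterval midpoint yields $\big|\int_0^1 g-\tfrac1N\sum_k g(m_k^{(N)})\big|\le \tfrac1{24N^2}\sup_{[0,1]}|g''|$. It then suffices to bound $\sup|g_0''|$ and $\sup|g_1''|$ uniformly for $s\in[\gb/2-\gd-K,\gb/2-\gd+K]$. For such $s$ and $x\in[0,1]$, the argument $\gd-\tfrac\gb2+sx$ is the affine interpolation between $\gd-\tfrac\gb2$ (fixed in $(-\gb/2,\gb/2)$ since $\gd\in(0,\gb)$) and $\gd-\tfrac\gb2+s\in[-K,K]$ (and $K<\gb/2$), hence it stays inside a fixed compact interval $J_{\gd,K}\subset(-\gb/2,\gb/2)$. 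On $J_{\gd,K}$ the function $\lmgf$ is $\sC^\infty$ (recall~\eqref{def:cL} and the smoothness recalled just above~\eqref{Nantes 4.3}), so $\lmgf',\lmgf'',\lmgf'''$ are bounded there; since moreover $|s|$ is bounded by a constant depending only on $\gb,\gd,K$, we obtain $\sup|g_0''|+\sup|g_1''|\le C_K$ with $C_K$ depending only on $\gb,\gd,K$. Taking $n_K$ large enough for the midpoint representation to be legitimate (in fact $[\gb/2-\gd-K,\gb/2-\gd+K]\subseteq\cA_{N,\gd}$ already for every $N\ge1$, so $n_K=1$ works) then yields~\eqref{Nantes 7.75}.

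The proof carries no serious obstacle: the only delicate point is the uniformity of the constant, i.e.\ ensuring that the tilt arguments $\gd-\tfrac\gb2+s\,m_k^{(N)}$ never approach $\pm\gb/2$ as $s$ runs over the allowed interval — which is exactly where the hypotheses $K<\gb/2$ and $\gd$ fixed in $(0,\gb)$ enter — and, secondarily, checking that $\cH_{N,\gd}$ is genuinely a midpoint (and not an endpoint) sum, for which the $-\tfrac{s}{2N}$ shift in~\eqref{defsupm} is essential. This is the discrete-time analogue, at the level of $\cH$, of the $\cG$-estimate~\eqref{Nantes 4.20}.
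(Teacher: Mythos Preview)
Your proof is correct. The key observation---that the correction $-s/(2N)$ in~\eqref{defsupm} shifts the tilt parameters to the midpoints $(k-\tfrac12)/N$ of a regular subdivision of $[0,1]$---is exactly what the paper exploits too, and your uniform control of the arguments $\gd-\gb/2+sx$ inside a compact subset of $(-\gb/2,\gb/2)$ is the right way to get constants depending only on $K$ (and $\gb,\gd$).

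The route differs from the paper's, and yours is more direct. The paper writes $\cH_{N,\gd}(s)=\tfrac1N\sum_{k=1}^N h_{N,s}(k)$ with $h_{N,s}(x)=\lmgf(\gd-\gb/2+s(x-\tfrac12)/N)$ and then applies the Euler--MacLaurin summation formula to $\sum_{k=1}^N h_{N,s}(k)$, splitting into a boundary/integral term $A(N,s)$ and a second-derivative remainder $B(N,s)$, each bounded by $O(1/N)$ after cancellations. You instead recognise the sum directly as the composite midpoint rule for $\int_0^1 g_j$ and invoke the one-line error bound $\tfrac{1}{24N^2}\sup|g_j''|$. Both arguments hinge on the same smoothness (boundedness of $\lmgf',\lmgf'',\lmgf'''$ on the compact $J_{\gd,K}$), but your version avoids the separate handling of the endpoints and the change of variable $\int_1^N h_{N,s}\to N\int_{1/(2N)}^{1-1/(2N)}$ that the paper needs. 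The Euler--MacLaurin route buys nothing extra here; it is simply the generic tool the authors carried over from the parallel estimate for $\cG_N$ in~\cite{Legrand_2022}.
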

	This lemma corresponds to the analogue of \cite[Proposition 5.4]{Legrand_2022} in the supercritical regime, that is for the function $\cH_\gd$ instead of $\cG$. Beforehand, we make the following remark:
	\begin{remark} Adapting the proof of Lemma \ref{Nantes Lemma 9.5}, one can see that there exists 
		$R>0$ such that $\cH_{\delta}''(t) \geq R$ for every $t \in ( -\delta,  \beta - \delta)$. Moreover, 
		for every $M>0$, there exists $K\in(0,\gb/2)$ and $N_0 \in \N$ such that, for every $N \geq N_0$, $
		\cH_{\delta}'(\gb/2 - \gd + K) > M$ and $|\cH_{\delta}'(\gb/2 - \gd - K)|>M$, as well as $\cH_{N,
			\delta}'(\gb/2 - \gd + K) > M$ and $|\cH_{N,\delta}'(\gb/2 - \gd - K)|>M$, the 
		uniformity over $N \geq N_0$ stemming from the convergence of $\cH_{N,\delta}'$ to $\cH_{\delta}'$ over 
		all compact subsets.
		A straightforward consequence of this remark is that, for every $[q_1,q_2] \subseteq (0,\infty)$, there exists $K \in (0,\beta/2)$ and $N_0 \in \N$ such that, for every $q \in [q_1,q_2]$ and $N \geq N_0$, $\sqdeltaN,\sqdelta \in [\gb/2 - \gd - K, \gb/2 - \gd + K]$. 
		\label{Nantes Remark 9.8}
	\end{remark}
	\begin{proof}[Proof of Proposition \ref{Nantes Proposition 9.3}] 
		By Lemma~\ref{aprrochbyhcont} and Remark~\ref{Nantes Remark 9.8}, we get:
		\begin{equation}\label{diffbetweencontanddisc0}
			\begin{aligned}
				|\sqdeltaN - \sqdelta| \leq \frac{1}{R}\left| \int_{\sqdeltaN}^{\sqdelta} \cH_{\delta}''(x)\dd x \right| &= \frac{1}{R} \Big| \cH_{\delta}'(\sqdeltaN) - \cH_{\delta}'(\sqdelta) \Big| \\
				&= \frac{1}{R} \Big| \cH_{\delta}'(\sqdeltaN) - \cH_{N,\delta}'(\sqdeltaN) \Big| \leq \frac{C}{RN^2},
			\end{aligned}
		\end{equation}
		where we have used that $q = \cH_{\delta}'(\sqdelta) = \cH_{N,\delta}'(\sqdeltaN)$. Hence, \eqref{Nantes 9.11} is proven. It remains to prove \eqref{Nantes 9.12}. Again, by Remark \ref{Nantes Remark 9.8}, there exists $K>0$ and $n_0 \in \N$ such that, for every $q \in [q_1,q_2]$ and $N \geq n_0$, both $\sqdeltaN$ and $\sqdelta$ belong to the compact set $I:= [\gb/2 - \gd - K, \gb/2 - \gd + K]$. Letting $C' := \max \{ \cH_{\delta}'(x): x \in I\}$, Lemma~\ref{aprrochbyhcont} and~\eqref{diffbetweencontanddisc0} yield:
		\begin{equation}\label{diffbetweencontanddisc}
			\begin{aligned}
				\Big|\cH_{N,\delta}(\sqdeltaN) - \cH_{\delta}(\sqdelta)\Big| &= 
				\Big|\cH_{N,\delta}(\sqdeltaN)- \cH_{\delta}(\sqdeltaN) \Big| + |\cH_{\delta}(\sqdeltaN) - \cH_{\delta}(\sqdelta)|\\
				&\leq \frac{C}{N^2} + C' |\sqdeltaN - \sqdelta| \leq \frac{\cst}{N^2}.
			\end{aligned}
		\end{equation}
		This completes the proof of Proposition \ref{Nantes Proposition 9.3}.
	\end{proof}
	\begin{proof}[Proof of Lemma \ref{Nantes Lemma 9.3}]
		We start with $j=0$ and take inspiration from \cite[Section A.2]{Legrand_2022} for this proof. We set
		\begin{equation}\label{defhNsx}
			h_{N,s}(x) := \lmgf \Big(\gd - \frac{\gb}{2} + s \frac{(x-1/2)}{N}\Big),
		\end{equation}
		so that, by~\eqref{defHH} and~\eqref{eq:calculLambda_n}, $\cH_{N,\delta}(s) = \frac{1}{N} \somme{k=1}{N}h_{N,s}(k) $. Using the Euler-MacLaurin summation formula (see e.g.\ \cite[Theorem 0.7]{Tenenbaum}), we get:
		\begin{equation}
			\begin{aligned}
				N\cH_{N,\delta}(s) &= A(N,s) + B(N,s)
			\end{aligned}
		\end{equation}
		with 
		\begin{equation}
			A(N,s) := \frac{h_{N,s}(1) + h_{N,s} (N) }{2} + \int_1^N h_{N,s}(t) \dd t
		\end{equation}
		and 
		\begin{equation}
			B(N,s) := -\frac{1}{2} \somme{k=1}{N-1} \int_0^1 h_{N,s}''(x+k)(x^2-x)\dd x.
		\end{equation}
		Let us start with $A(N,s)$. A change of variable gives:
		\begin{equation}
			\begin{aligned}
				&\int_1^N h_{N,s}(t)\dd t = N \int_{1/2N}^{1-1/2N} \lmgf \Big(\gd - \frac{\gb}{2} + st \Big) \dd t \\
				&= N \cH_{\delta}(s) - N\int_0^{1/2N} \lmgf \Big(\gd - \frac{\gb}{2} + st \Big) \dd t - 
				N\int_{1-1/2N}^{1} \lmgf \Big(\gd - \frac{\gb}{2} + st \Big) \dd t.
			\end{aligned}
		\end{equation}
		It remains to see that, for $s \in R_K := [\frac{\gb}{2}-\gd -K, \frac{\gb}{2}-\gd + K]$, $\lmgf$ being $\sC^1$ on $R_K$, we can denote $C_K' := \max \{|\lmgf'(x)|, x \in R_K\}$. Hence, comparing the two terms in the absolute values below to $\lmgf(\gd-\gb/2)/2$ on the first line and $\lmgf(\gd-\gb/2+s)/2$ on the second line, we obtain by the triangular inequality:
		\begin{equation}
			\left| \frac{h_{N,s}(1)}{2} - N\int_0^{1/2N} \lmgf \Big( \gd - \frac{\gb}{2} + st \Big) \dd t \right| \leq |s|\frac{C_K'}{N} \leq \beta \frac{C_K'}{N},
		\end{equation}
		\begin{equation}
			\left| \frac{h_{N,s}(N)}{2} - N\int_{1-1/2N}^{1} \lmgf \Big(\gd - \frac{\gb}{2} + st \Big) \dd t  \right| \leq |s|\frac{C_K'}{N}\leq \beta \frac{C_K'}{N}.
		\end{equation}
		Hence, for $N$ large enough:
		\begin{equation}\label{eq:ANs}
			|A(N,s) - N\cH_{\delta}(s)| \leq \frac{2\gb C_K'}{N} \qquad s \in R_K.
		\end{equation}
		Let us now deal with $B(N,h)$. From~\eqref{defhNsx}, we readily obtain that for every $x \in [1,N]$, $h_{N,s}''(x) \leq C_K''\frac{s^2}{N^2}$, with $C_K'' := \max \{|\lmgf''(t)|\colon t \in R_K\}$. Consequently,
		\beq\label{eq:BNs}
		|B(N,s)| \leq C_K''\frac{\beta^2}{N}, \qquad s \in R_K.
		\eeq
		We conclude the proof of the case $j=0$ by collecting~\eqref{eq:ANs} and~\eqref{eq:BNs}.
		\par The proof of the case $j=1$ follows the same line, replacing~\eqref{defhNsx} by \beq
		h_{N,s}(x) :=\frac{x-1/2}{N} \lmgf '\Big(\gd - \frac{\gb}{2}+ s \frac{(x-1/2)}{N} \Big).
		\eeq
		We leave the details to the reader, for the sake of conciseness.
	\end{proof}
	\subsection{Proof of Lemma~\ref{Nantes Lemma 4.7}}
	\label{Proof 4.7}
	Using the tilted measure in~\eqref{Nantes 9.5} for the next-to-last line and Proposition~\ref{approxhn} for the last line, we may write:
	\begin{equation}\label{expl}
\begin{aligned}
	&\EsperanceMarcheAleatoire{e^{(\delta - \beta/2)X_N}1_{\{ A_N = qN^2, X \in B \} } }\\
	&= e^{- \sqdeltaN qN }
	\EsperanceMarcheAleatoire{e^{
			(\delta - \frac{\beta}{2}) X_N +\sqdeltaN { \frac{A_N}{N}} }1_{\{ A_N = qN^2,X \in B \}}  }
	\\
	&= e^{- \sqdeltaN qN } 
	\EsperanceMarcheAleatoire{ e^{
			\somme{k=1}{N} [ \sqdeltaN \frac{2N+1-2k}{2N} + \frac{\sqdeltaN}{2N} + \delta - \frac{\beta}{2} ] U_k}
		1_{\{ A_N = qN^2, X \in B  \} }} \\
	&= e^{N[\cH_{N,\delta}(\sqdeltaN) -  \sqdeltaN q]}\,  \EsperanceSurCritique{e^{\frac{\sqdeltaN}{2N} X_N}  1_{\{ A_N = qN^2, X \in B \}}} \\
	&\sim_{N} e^{N[\cH_{\delta}(\sqdelta) -  \sqdelta q]}\,  \EsperanceSurCritique{e^{\frac{\sqdeltaN}{2N} X_N}  1_{\{ A_N = qN^2, X \in B \}}}.
\end{aligned}
\end{equation}
We now set $\bN := (\log N)^2$ and we split the expectation in the r.h.s. in \eqref{expl} according to the value of $X_N$, i.e., 
\begin{equation}
\EsperanceSurCritique{e^{\frac{\sqdeltaN}{2N} X_N}  1_{\{ A_N = qN^2, X \in B \}}}:= E_{1,N}+E_{2,N}
\end{equation}
with 
\begin{align}
E_{1,N}&:=\EsperanceSurCritique{e^{\frac{\sqdeltaN}{2N} X_N} 1_{\left\{\left|X_N - 
		\EsperanceSurCritique{X_N}\right| > \bN\sqrt{N}\right\}}\ 1_{\left\{A_N = qN^2, X \in B  \right\}} }, \\
E_{2,N}&:=\EsperanceSurCritique{e^{\frac{\sqdeltaN}{2N} X_N} 1_{\left\{\left|X_N - 
		\EsperanceSurCritique{X_N}\right| \leq \bN\sqrt{N}\right\}}\ 1_{\left\{A_N = qN^2, X \in B  \right\}} }.
		\end{align}
		Let us now bound $E_{1,N}$ from above  by getting rid of its second indicator. Next, we decompose the upper bound depending on the value of $X_N$, and we use Lemma \ref{Nantes Lemma 9.8} combined with  \eqref{Nantes A.20} to obtain
		\begin{equation}\label{boundint}
\begin{aligned}
	E_{1,N}\leq\ &\EsperanceSurCritique{e^{\frac{\sqdeltaN}{2N} X_N} 1_{\left\{\left|X_N - 
			\EsperanceSurCritique{X_N}\right| > \bN\sqrt{N} \right\}} } \\ 
	\leq & \cst \somme{k \in \Z \backslash [-\bN \sqrt{N}, \bN \sqrt{N}]}{} e^{\frac{\sqdeltaN k}{2N}} \ProbaSurCritique{X_N - 
		\EsperanceSurCritique{X_N} \in [k,k+1)} \\
	\leq & \cst \somme{k \in \Z \backslash [-\bN \sqrt{N}, \bN \sqrt{N}]}{} e^{\frac{\sqdeltaN k - 2 ck^2}{2N}} 
	\\
	\leq & \cst  \sum_{k > b_N\sqrt{N}} e^{\frac{\sqdeltaN k - 2ck^2}{2N}} = O(e^{-\frac{c}{4}\bN^2}),
\end{aligned}
\end{equation}
where $O$ in \eqref{boundint} is uniform in $q\in [q_1,q_2]$. At this stage it remains to consider $E_{2,N}$, that equals
\begin{equation}
\begin{aligned}
	(1+o(1)) e^{\sqdeltaN \frac{\EsperanceSurCritique{X_N}}{2N}} \ProbaSurCritique{A_N = qN^2,X \in B, \big |X_N - 
		\EsperanceSurCritique{X_N}\big| \leq \bN\sqrt{N}} .
\end{aligned}
\end{equation}
Using \eqref{Nantes 9.12} and \eqref{Nantes A.20}, it comes that, uniformly in $q \in [q_1,q_2]$:
\begin{equation}
e^{\sqdeltaN \frac{\EsperanceSurCritique{X_N}}{2N}} =(1+o(1)) e^{\frac{\sqdelta}{2} \int_0^1 \lmgf' \left( 
	\delta - \frac{\beta}{2} + \sqdelta t
	\right) \dd t} = (1+o(1)) e^{ \frac{1}{2} \Big[\lmgf \left( \delta - \frac{\beta}{2} + \sqdelta \right) - \lmgf \left( \delta - \frac{\beta}{2} \right) \Big] }.	
	\end{equation}
	Finally, we use Lemma \ref{Nantes Lemma 9.8} again, from which we deduce that 	uniformly in $q\in [q_1,q_2]$
	\begin{align*}
\ProbaSurCritique{A_N = qN^2,X \in B, \big |X_N - 
	\EsperanceSurCritique{X_N}\big| \leq \bN\sqrt{N}} = & \ProbaSurCritique{A_N = qN^2,X \in B}\\
&+O(e^{-c\,  b_N^2})
\end{align*}
and this completes the proof.
\subsection{Proof of Lemma \ref{Nantes Lemma 9.8} and Lemma~\ref{Nantes Lemma 4.24}}
\label{Nantes Proof of Lemma 9.8}
The proofs of these two lemmas are very similar, hence we write only the one of Lemma \ref{Nantes Lemma 9.8}. We are going to split the proof of Lemma \ref{Nantes Lemma 9.8} in two parts:
\begin{equation}
\ProbaSurCritique{X_N - \EsperanceSurCritique{X_N} \geq b \sqrt{N}} \leq C e^{-c b^2},
\label{Nantes A.15}
\end{equation}
\begin{equation}
\ProbaSurCritique{-X_N + \EsperanceSurCritique{X_N} \geq b \sqrt{N}} \leq C e^{-c b^2}.
\label{Nantes A.16}
\end{equation}
We start with \eqref{Nantes A.15} and recall Remark~\ref{Nantes 9.5}. Let $\nu > 0$ (to be chosen small enough in the sequel). By Chernov's inequality,
\begin{equation}
\ProbaSurCritique{X_N - \EsperanceSurCritique{X_N} \geq b \sqrt{N}} \leq
\frac{\EsperanceSurCritique{e^{\nu X_N}}}{e^{\nu \EsperanceSurCritique{X_N} + \nu b \sqrt{N}}}
\label{Nantes A.17}
\end{equation}
We first compute the (logarithm of the) numerator:
\begin{equation}
\begin{aligned}
	\log\, &  \EsperanceSurCritique{e^{\nu X_N}}\\ 
	\qquad &=\somme{k=1}{N} \lmgf \Big( 
	\nu + \delta - \frac{\beta}{2} + \sqdeltaN \frac{2N+1-2k}{2N}
	\Big)
	-
	\lmgf \Big( 
	\delta - \frac{\beta}{2} + \sqdeltaN \frac{2N+1-2k}{2N}
	\Big).
\end{aligned}
\end{equation}
Using Remark \ref{Nantes Remark 9.8}, there exists $K \in (0,\beta/2)$ such that both $\sqdeltaN$ and $\sqdelta$ belong to $[\beta/2 - \delta -K,\beta/2 - \delta +K]$ for all $q \in [q_1,q_2]$. Taking $0<\nu <(\beta/2-K)/{2}$, it comes that for all $q \in [q_1,q_2]$, both $\nu + \delta - \frac{\beta}{2} + \sqdeltaN t$ and $ \delta - \frac{\beta}{2} + \sqdeltaN t$ belong to
\beq
R_K := [-K/2 - \gb/4, K/2 + \gb/4],
\eeq
that is a compact subset of $(-\beta/2,\beta/2)$ for all $t \in [0,1]$. Hence, as $N\to \infty$,
\begin{equation}
\begin{aligned}
	&  
	\somme{k=1}{N} \lmgf \Big( 
	\nu + \delta - \frac{\beta}{2} + \sqdeltaN \frac{2N+1-2k}{2N}
	\Big)
	-
	\lmgf \Big( 
	\delta - \frac{\beta}{2} + \sqdeltaN \frac{2N+1-2k}{2N}
	\Big)
	\\
	&\qquad = O(1) + N\int_0^1 \Big[ \lmgf \Big( 
	\nu + \delta - \frac{\beta}{2} + \sqdeltaN t
	\Big)
	-
	\lmgf \Big( 
	\delta - \frac{\beta}{2} + \sqdeltaN t
	\Big)\Big] \dd t,
\end{aligned}
\label{eq:B28}
\end{equation}
the $O(1)$ being smaller than $\sup\{|\lmgf'(t)|, t\in R_K\}$ and uniform in $q \in [q_1,q_2]$ .
We now compute $\EsperanceSurCritique{X_N}$. Using the same kind of arguments for the last equality:
\begin{equation}
\begin{aligned}
	\EsperanceSurCritique{X_N} &= \somme{k=1}{N} \EsperanceTiltee{\delta - \frac{\beta}{2} + \sqdeltaN \frac{2N+1-2k}{2N}}{X_1} = \somme{k=1}{N} \lmgf' \Big( 
	\delta - \frac{\beta}{2} + \sqdeltaN \frac{2N+1-2k}{2N}
	\Big) \\
	&= O(1) + N\int_0^1 \lmgf' \Big( 
	\delta - \frac{\beta}{2} + \sqdeltaN t
	\Big) \dd t.
\end{aligned}
\label{Nantes AA.20}
\end{equation}
Using \eqref{Nantes 9.11}, we can safely substitute $\sqdeltaN$ by $\sqdelta$ with a cost $O({1}/{N^2})$ at most. By \eqref{eq:B28} and \eqref{Nantes AA.20},
\begin{equation}
\begin{aligned}
	\log\, &\left[\frac{\EsperanceSurCritique{e^{\nu X_N}}}{e^{\nu \EsperanceSurCritique{X_N}}}\right] \\&= 
	O(1) + N \int_0^1 \Big[\lmgf \Big( 
	\nu + \delta - \frac{\beta}{2} + \sqdelta t
	\Big)
	-
	\lmgf \Big( 
	\delta - \frac{\beta}{2} + \sqdelta t
	\Big) -\nu\lmgf' \Big( 
	\delta - \frac{\beta}{2} + \sqdelta t
	\Big)\Big] \dd t \\
	&= O(1) + [1+o(1)]N\nu^2\int_0^1 \lmgf'' \Big( 
	\delta - \frac{\beta}{2} + \sqdelta t
	\Big) \dd t  
	\leq O(1) + [C_K + o(1)]N\nu^2 ,
\end{aligned}
\end{equation}
where the $O(1)$ holds as $N\to \infty$, the $o(1)$ as $\nu\to 0$ and
$C_K := \sup\{|\lmgf''(t)|, t \in R_K\}$. Hence, \eqref{Nantes A.17} becomes:
\begin{equation}
\log \ProbaSurCritique{X_N - \EsperanceSurCritique{X_N} \geq b \sqrt{N}} \leq
O(1) + [C_K + o(1)]N\nu^2C_K -\nu b \sqrt{N}.
\end{equation}
Taking $\nu = {b}/[\sqrt{N} (C_K+1)]$ gives that, for $N$ large enough:
\begin{equation}
\ProbaSurCritique{X_N - \EsperanceSurCritique{X_N} \geq b \sqrt{N}} \leq
C e^{-c b^2}.
\end{equation}
The exact same method can be applied to prove \eqref{Nantes A.16}.
\subsection{Proof of Lemma~\ref{Nantes Lemma 8.28}}
\label{Nantes Appendix A.4}
We first prove the following lemma:
\begin{lemma}
For $\gd>0$ and $[q_1,q_2]\subset(q^*_\gd, +\infty)$ such that $\delta > \delta_0(q_1)$, there exists three positive constants, $c$, $C$ and $\lambda$, such that for every integer $j \in \{1,...,N\}$ and every $q\in[q_1, q_2] \subset (q_\gd^*, +\infty)$:
	\begin{equation}
		\EsperanceSurCritique{e^{-\lambda X_j}} \leq  C e^{-cj}.
	\end{equation}
	\label{Nantes Lemma A.3}
\end{lemma}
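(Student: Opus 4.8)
The statement I need to prove is Lemma~\ref{Nantes Lemma A.3}: under the change of measure $\ProbaSurCritiqueSansParenthese$, the quantity $\EsperanceSurCritique{e^{-\lambda X_j}}$ decays exponentially in $j$, uniformly over $q$ in a compact subset of $(q^*_\gd,+\infty)$ and over $j\in\{1,\dots,N\}$. The key structural fact I would exploit is Remark~\ref{Nantes 9.5}: under $\ProbaSurCritiqueSansParenthese$ the increments $U_k:=X_k-X_{k-1}$, $k=1,\dots,N$, are independent with $U_k$ distributed as $\widetilde{\mathbf P}_{h_{N,k}}$, where $h_{N,k}:=\delta-\frac\beta2+\sqdeltaN\frac{2N+1-2k}{2N}$. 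So for $j\le N$ and $\lambda>0$ small,
\begin{equation}
\EsperanceSurCritique{e^{-\lambda X_j}}=\prod_{k=1}^{j}\widetilde{\mathbf E}_{h_{N,k}}\big[e^{-\lambda U_1}\big]=\prod_{k=1}^{j}\exp\big(\lmgf(h_{N,k}-\lambda)-\lmgf(h_{N,k})\big).
\end{equation}
Hence $\log\EsperanceSurCritique{e^{-\lambda X_j}}=\sum_{k=1}^{j}\big[\lmgf(h_{N,k}-\lambda)-\lmgf(h_{N,k})\big]$, and it suffices to show each summand is bounded above by a strictly negative constant, uniformly in $N,k$ and $q\in[q_1,q_2]$.

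**Main step: a uniform negative bound on the summand.** By Remark~\ref{Nantes Remark 9.8}, for $[q_1,q_2]\subset(0,\infty)$ there is $K\in(0,\beta/2)$ and $N_0$ so that $\sqdeltaN\in[\beta/2-\delta-K,\beta/2-\delta+K]$ for all $q\in[q_1,q_2]$, $N\ge N_0$; consequently $h_{N,k}=\delta-\beta/2+\sqdeltaN t$ with $t=\frac{2N+1-2k}{2N}\in(0,1)$ lies in a fixed compact $R_K\subset(-\beta/2,\beta/2)$. Here is where the hypothesis $q>q^*_\gd$ enters: by the definition of $q^*_\gd$ in~\eqref{eq:def_qstar} and the discussion below~\eqref{Nantes 4.54}, when $q>q^*_\gd$ one has $\gd-\beta/2+\sqdelta>0$, i.e.\ the value $h$ defined in~\eqref{eq:set_value_h} is strictly positive; moreover $s_\gd$ is increasing, so $\gd-\beta/2+s_q^\delta\ge c_0>0$ for $q\in[q_1,q_2]$. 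Since $h_{N,k}$ is an average (over $t\in[0,1]$ at the endpoint $k$ it equals $\gd-\beta/2+\sqdeltaN\cdot\frac{2N+1-2k}{2N}$, which for $k=1$ is close to $\gd-\beta/2+\sqdeltaN>0$ and for $k=N$ is close to $\gd-\beta/2$, whose sign is $\mathrm{sign}(\gd-\beta/2)$), I need $\lmgf(h_{N,k}-\lambda)<\lmgf(h_{N,k})$, which by strict monotonicity of $\lmgf$ on $(0,\beta/2)$ and on $(-\beta/2,0)$ (decreasing there) is \emph{not} automatic when $h_{N,k}$ is near $0$. The clean way around this: use strict convexity. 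Write $\lmgf(h_{N,k}-\lambda)-\lmgf(h_{N,k})=-\lambda\lmgf'(h_{N,k})+\frac{\lambda^2}2\lmgf''(\zeta)$ for some $\zeta\in R_K$ (after shrinking $\lambda$ so $h_{N,k}-\lambda\in R_K$ too); since $\lmgf''$ is bounded on $R_K$ by some $C_K$, the summand is $\le -\lambda\lmgf'(h_{N,k})+\tfrac{\lambda^2}2C_K$. So I must control $\lmgf'(h_{N,k})$ from below; but $\lmgf'$ is odd and increasing, so $\lmgf'(h_{N,k})$ can be negative for small $k$ near the regime where $h_{N,k}<0$. To handle this I would instead bound $\lmgf'(h_{N,k})\ge -\sup_{R_K}|\lmgf'|=:-M_K$ trivially and argue more carefully, or — better — reorganize: since $\lmgf\ge 0$ with equality only at $0$ (as $\lmgf(0)=0$ and $\lmgf$ convex, even), and $\sqdeltaN\ge c_0/2>0$ for $N$ large, the average $\frac1N\sum_{k=1}^{N}h_{N,k}=\gd-\beta/2+\frac{\sqdeltaN}{2}(1+\tfrac1N)$ is bounded away from... hmm, its sign is that of $\gd-\beta/2+\sqdeltaN/2$. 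The robust fix is: apply the inequality $\lmgf(x-\lambda)-\lmgf(x)\le \lmgf(-\lambda)<0$ which holds whenever $x\le 0$ (by convexity: $\lmgf(x-\lambda)-\lmgf(x)$ is nonincreasing in $x$, and equals $\lmgf(-\lambda)-\lmgf(0)=\lmgf(-\lambda)$ at $x=0$), and for the $k$'s with $h_{N,k}>0$ use $\lmgf(h_{N,k}-\lambda)-\lmgf(h_{N,k})\le\lmgf(h_{N,k})-\lmgf(h_{N,k})=0$... no. I think the correct and honest route is simply: \emph{fix $\lambda$ small and bound}
\begin{equation}
\lmgf(h_{N,k}-\lambda)-\lmgf(h_{N,k})\le -\lambda\,\inf_{x\in R_K}\lmgf'(x)\ \text{when }\lmgf'\ge0\text{ on the relevant sub-interval},
\end{equation}
and split $\{1,\dots,j\}$ into indices where $h_{N,k}\ge \eta$ (there are at least a fixed fraction $\propto j$ of them, by the explicit form of $h_{N,k}$ and $\sqdeltaN\ge c_0/2$), each contributing $\le -\lambda\lmgf'(\eta)<0$, and the rest contributing $\le \tfrac{\lambda^2}2 C_K$ per term; choosing $\lambda$ small enough that $\tfrac\lambda2 C_K < $ (the fraction)$\cdot\lmgf'(\eta)$ gives the net bound $\log\EsperanceSurCritique{e^{-\lambda X_j}}\le -cj$. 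This is the step I expect to be the main obstacle: making the "at least a constant fraction of indices $k\le j$ have $h_{N,k}\ge\eta$" argument uniform when $j$ is small (say $j=1$), where one instead just uses that $h_{N,1}$ itself is bounded below by $c_0/2>0$ so the single summand is already negative.

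**Conclusion.** Once the uniform negative bound $\lmgf(h_{N,k}-\lambda)-\lmgf(h_{N,k})\le -c'$ (for the "good" indices, with controlled $O(\lambda^2)$ error on a bounded number of "bad" ones near $k\approx N$) is in place, summing over $k=1,\dots,j$ yields $\EsperanceSurCritique{e^{-\lambda X_j}}\le Ce^{-cj}$ with $C$ absorbing the finitely many bad terms and the $N<N_0$ cases handled by continuity and compactness. Then Lemma~\ref{Nantes Lemma 8.28} itself follows in the intended way: decompose according to the first time $X$ drops below $-\alpha a_N$, use the Markov property of the inhomogeneous chain together with Lemma~\ref{Nantes Lemma A.3} to show the contribution of trajectories violating $X_{[1,N]}\ge-\alpha a_N$ is $O(N^{-3})$ relative to the local-limit scale $N^{-3/2}$; I would carry out that reduction exactly as in the $\delta=0$ analogue, invoking the local limit theorem of Lemma~\ref{Nantes Lemma 8.29} for the unconstrained probability.
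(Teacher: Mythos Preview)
Your starting point matches the paper: write $\log \EsperanceSurCritique{e^{-\lambda X_j}}=\sum_{k=1}^{j}\bigl[\lmgf(h_{N,k}-\lambda)-\lmgf(h_{N,k})\bigr]$ with $h_{N,k}=\gd-\tfrac\gb2+\sqdeltaN\tfrac{2N+1-2k}{2N}$, and try to bound the sum by $-cj$. The gap is in your balancing step. You claim that the ``bad'' indices (those with $h_{N,k}<\eta$) contribute at most $\tfrac{\lambda^2}{2}C_K$ per term. This is false: by the mean value theorem, $\lmgf(h_{N,k}-\lambda)-\lmgf(h_{N,k})=-\lambda\lmgf'(\xi)$ for some $\xi$ near $h_{N,k}$, and when $\gd<\gb/2$ the tilt $h_{N,k}$ is genuinely negative for $k$ near $N$ (it tends to $\gd-\gb/2<0$), so $\lmgf'(\xi)<0$ and the term is \emph{positive of order $\lambda$}, not $O(\lambda^2)$. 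Shrinking $\lambda$ therefore does not make the good terms dominate: both good and bad contributions scale like $\lambda$, and you need a quantitative comparison of their sizes.

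What is actually required is that the \emph{average drift} $\tfrac1j\sum_{k\le j}\lmgf'(h_{N,k})$ stays bounded below by a positive constant uniformly in $j\le N$. Since $h_{N,k}$ is decreasing in $k$ and $\lmgf'$ increasing, this Ces\`aro average is minimized at $j=N$, where it becomes (up to $o(1)$) the integral $\int_0^1\lmgf'(\sqdelta t+\gd-\gb/2)\,\dd t=\tfrac{1}{\sqdelta}\bigl[\lmgf(\sqdelta+\gd-\gb/2)-\lmgf(\gd-\gb/2)\bigr]$. For this to be positive one needs $\sqdelta+\gd-\gb/2>\gb/2-\gd$, i.e.\ $\sqdelta>\gb-2\gd$, which is \emph{strictly stronger} than the inequality $\sqdelta>\gb/2-\gd$ you extract from $q>q^*_\gd$. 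The paper isolates this as a separate lemma (proved by a short variational argument using the hypothesis $\gd>\gd_0(q)$), and then organizes the estimate by casework on the signs of $\sqdelta$ and $\gd-\gb/2$: it observes that $u_j:=\log\EsperanceSurCritique{e^{-\lambda X_j}}$ is first decreasing then increasing in $j$ (convexity of $\lmgf$), handles small $j$ by a direct mean-value bound, and handles $j=N$ by the integral computation above together with the parity of $\lmgf$. Your sketch is missing precisely this stronger inequality and the mechanism that forces the worst case to sit at $j=N$.
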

To prove this lemma, we will use the following equality that will be proven afterwards:
\begin{lemma}For $q >0$, $\beta/2 \geq \delta > 0$ such that $\delta>\delta_0(q)$, one has:
	\begin{equation}
		\sqdelta - \beta/2 + \delta >  \beta/2 - \delta.
	\end{equation}
\label{Nantes Lemma Poitiers 321}
\end{lemma}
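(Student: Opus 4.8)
The plan is to rewrite the claimed inequality $\sqdelta - \beta/2 + \delta > \beta/2 - \delta$ in the equivalent form $\sqdelta > \beta - 2\delta$, and then deduce it from the strict monotonicity of $\cH_\delta'$ (equivalently, of $s_\delta=(\cH_\delta')^{-1}$), established in Lemma~\ref{lem:prop-Hgd-Hngd}. Since $\sqdelta$ solves $\cH_\delta'(s)=q$, see~\eqref{Nantes 3.10}, and $\cH_\delta'$ is strictly increasing, the inequality $\sqdelta > \beta - 2\delta$ is equivalent to $q = \cH_\delta'(\sqdelta) > \cH_\delta'(\beta - 2\delta)$, so it suffices to prove the latter bound.

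First I would record the identity $\cH_\delta'(\beta - 2\delta) = \cG'(\beta - 2\delta)$. This is a one-line change of variables: because $(\beta-2\delta)x + \delta - \beta/2 = (\beta-2\delta)(x-\tfrac12)$, one has $\cH_\delta'(\beta - 2\delta) = \int_0^1 x\, \lmgf'\big((\beta-2\delta)(x-\tfrac12)\big)\,\dd x$, and since $\lmgf'$ is odd the integral $\int_0^1 \lmgf'\big((\beta-2\delta)(x-\tfrac12)\big)\,\dd x$ vanishes, so this equals $\int_0^1 (x-\tfrac12)\, \lmgf'\big((\beta-2\delta)(x-\tfrac12)\big)\,\dd x = \cG'(\beta - 2\delta)$, cf.~\eqref{Nantes 3.7}. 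Note that $\beta - 2\delta \ge 0$ because $\delta \le \beta/2$, hence $\beta - 2\delta$ belongs to the domains of both $\cH_\delta'$ and $\cG'$ (see Lemmas~\ref{lem:prop-Hgd-Hngd} and~\ref{Nantes Lemma 4.5}), and the manipulations above are legitimate.

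Then I would translate the hypothesis. By~\eqref{eq:delta0q}, $\delta > \delta_0(q)$ means exactly $\tilde h(q) > \beta - 2\delta$. Since $\tilde h$ is the inverse of the increasing function $\cG'$ on $(0,\infty)$ (Remark~\ref{htildeqincreasing} together with Lemma~\ref{Nantes Lemma 4.5}), applying $\cG'$ to both sides and invoking the identity of the previous step yields $q = \cG'(\tilde h(q)) > \cG'(\beta - 2\delta) = \cH_\delta'(\beta - 2\delta)$. Combined with the strict monotonicity of $\cH_\delta'$, this gives $\sqdelta > \beta - 2\delta$, which is the desired inequality. There is essentially no obstacle: the only point requiring a little care is the odd-function change of variables identifying $\cH_\delta'(\beta-2\delta)$ with $\cG'(\beta-2\delta)$, together with the check that $\beta-2\delta$ lies in the relevant domains — both immediate under the assumption $0 < \delta \le \beta/2$.
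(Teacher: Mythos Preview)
Your proof is correct and genuinely more direct than the paper's. Both approaches hinge on the same underlying algebraic fact---that $\cH_\delta'(\beta-2\delta)$ can be rewritten, via oddness of $\lmgf'$, as $\int_0^1 (x-\tfrac12)\lmgf'\big((\beta-2\delta)(x-\tfrac12)\big)\dd x$---but they exploit it differently. The paper proceeds by contradiction: assuming $\sqdelta\le\beta-2\delta$, it bounds $q$ by the auxiliary function $f(\delta)=\int_0^1 x\,\lmgf'\big((\beta-2\delta)(x-\tfrac12)\big)\dd x$, then proves $f$ is strictly decreasing in $\delta$ via a separate lemma on integrals of even positive functions (Lemma~\ref{Nantes Lemma Poitiers 322}), and finally evaluates $f(\delta_0(q))=q$ to derive $q<q$. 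You instead recognise that very integral as $\cG'(\beta-2\delta)$ and invoke the already-established strict monotonicity of $\cG'$ together with $\cG'(\htildeq)=q$ to conclude directly. Your route avoids both the contradiction framing and the auxiliary lemma, at the cost of nothing: the domain checks ($\beta-2\delta\in[0,\beta)\subset(-\beta,\beta)$ and $\beta-2\delta\in(-\delta,\beta-\delta)$ under $0<\delta\le\beta/2$) are exactly the ones you flagged, and the monotonicity of $\cG'$ and $\cH_\delta'$ is provided by Lemmas~\ref{Nantes Lemma 4.5} and~\ref{lem:prop-Hgd-Hngd}.
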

\begin{proof}[Proof of Lemma \ref{Nantes Lemma A.3}]
		We distinguish between three cases.\\
		\noindent \textit{Case 1 : $\sqdelta \ge 0$ and $\beta/2 \ge \delta$.} Using Lemma \ref{Nantes Lemma Poitiers 321}, we set
		\begin{equation}
			\begin{aligned}
				&\lambda := \frac12 \sqdelta - \beta/2 + \delta > 0.
				\label{Nantes A.26}
			\end{aligned}
		\end{equation}
		Under $\ProbaSurCritiqueSansParenthese$, the increments of $X$ are independent but not identically distributed. It comes:
		\begin{equation}
			\quad u_j := \log \EsperanceSurCritique{e^{-\lambda X_j}} = \somme{k=1}{j}
			\lmgf(s_{k,N} - \lambda) - \lmgf(s_{k,N}),
			\label{Nantes A.25}
		\end{equation}
		\beq
		\text{where}\quad s_{k,N} := \delta - \beta/2 + \sqdeltaN \frac{2N+1-2k}{2N}.
		\eeq
		Using the convexity of $\lmgf$, one can see that the sequence $(u_j)$ is non-increasing until a certain integer, defined as $k_0^N$, and then is non-decreasing. One can see that, for $N$ large enough and for $j \leq \frac{N}{4}$, $s_{j,N} - \lambda >  \sqdelta/4$ (using \eqref{Nantes 9.12} to deal with $\sqdeltaN - \sqdelta$). Hence, using the fact that $\lmgf'$ is increasing and the Mean Value Theorem gives, for $j \leq N/4$:
		\begin{equation}
			u_j \leq -j \lambda \lmgf'(\sqdelta/4) := -j c_0,
			\label{Nantes A.27}
		\end{equation}
		setting $c_0 := \lambda \lmgf'(\sqdelta/4)>0$. By \eqref{Nantes A.27}, we now see that, for $N/4 \leq j \leq k_0^N$:
		\begin{equation}
			u_j \leq u_{N/4} \leq -\tfrac14 {c_0 N}\leq -\tfrac14 {c_0 j}.
			\label{Nantes A.28}
		\end{equation}
		Finally, for $j \geq k_0^N$, using the parity of $\lmgf$ for the third line and \eqref{Nantes A.26} for the last line:
		\begin{equation}
			\begin{aligned}
				u_j \leq u_N &= O(1) + N \int_{0}^1 \Big[\lmgf(  \delta - \beta/2 + t \sqdelta - \lambda) - \lmgf(  \delta - \beta/2 + t \sqdelta)\Big] \dd t  \\
				& = O(1) - \frac{N}{\sqdelta} \int_{\delta - \beta/2 }^{\delta - \beta/2 + \sqdelta} \lmgf(t) \dd t + \frac{N}{\sqdelta} \int_{\delta - \beta/2 - \lambda }^{\delta - \beta/2 + \sqdelta - \lambda}  \lmgf( t) \dd t \\
				&= O(1) - \frac{N}{\sqdelta} \int_{\delta - \beta/2 + \sqdelta - \lambda}^{\delta - \beta/2 + \sqdelta} \lmgf(t) \dd t + \frac{N}{\sqdelta} \int_{\beta/2 - \delta}^{ \beta/2 - \delta + \lambda}  \lmgf( t) \dd t \\
				&= O(1) - \frac{N}{\sqdelta} \int_{\beta/2 - \delta + \lambda}^{\delta - \beta/2 + \sqdelta} \lmgf(t) \dd t + \frac{N}{\sqdelta} \int_{\beta/2 - \delta}^{ \beta/2 - \delta + \lambda}  \lmgf( t) \dd t.
			\end{aligned}
		\end{equation}
		Using that $\cL$ is increasing on $(0,\beta/2)$ ends the proof in this case.\\
		\noindent \textit{Case 2 : $\sqdelta \ge 0$ and $\beta/2 < \delta$.} In this case, we set $\lambda = (\delta - \beta/2)/2$ in \eqref{Nantes A.25} and observe that the sequence $(u_j)$ defined therein is always decreasing. Dealing with $j \leq N/4$ is done as above \eqref{Nantes A.27}, and dealing with $j \in [N/4,...,N]$ is done as in \eqref{Nantes A.28}.\\
\noindent \textit{Case 3 : $\sqdelta<0$.} Recall~\eqref{eq:def_qstar}. Since $q>q_\gd^*$, we note that $\delta - \frac{\beta}{2} + \sqdelta >0$.
The proof in this case is easier: taking \eqref{Nantes A.25}, one has:
\begin{equation}
	u_j \leq j \left(- \lmgf(\sqdelta + \delta - \beta/2) + \lmgf(\sqdelta + \delta - \beta/2 - \lambda) \right).
\end{equation}
Setting $\lambda = \frac{1}{2}(\sqdelta + \delta - \beta/2)$ gives $\eqref{Nantes A.27}$ for all $j \leq N$.
\end{proof}
\begin{proof}[Proof of Lemma \ref{Nantes Lemma 8.28}.] A rough upper bound gives:
\begin{equation}
\begin{aligned}
	&\left| \ProbaSurCritique{A_N = qN^2 + \mathfrak{a}, X_{[1,N]} \geq -\alpha \aN} - 
	\ProbaSurCritique{A_N = qN^2+ \mathfrak{a}} \right| \\
	& \leq \somme{k=1}{N} \ProbaSurCritique{X_k \leq -\alpha \aN} \leq \somme{k=1}{N}\frac{\EsperanceSurCritique{e^{-\lambda X_k}}}{e^{\lambda k \alpha \aN}} \leq \frac{\cst}{e^{- \lambda \alpha \aN }} = O\Big( \frac{1}{N^3}\Big),
\end{aligned}
\end{equation}
having used Lemma \ref{Nantes Lemma A.3} for the last line.
\end{proof}
\begin{proof}[Proof of Lemma \ref{Nantes Lemma Poitiers 321}]
Let us proceed by contradiction, and suppose that $\sqdelta - \beta/2 + \delta \leq  \beta/2 - \delta$, i.e. $\sqdelta \leq  \beta - 2\delta$. Then:
\begin{equation}
\begin{aligned}
q &= \int_0^1 t \cL'(\delta - \beta/2 + t \sqdelta) \dd t, & \text{by \eqref{Nantes 3.10},} \\
& \leq \int_0^1 t \cL'(\delta - \beta/2 + t (\beta - 2 \delta)) \dd t, & \text{because $\cL'$ is an increasing function,} \\
& = \int_0^1 (1-x) \cL'(\beta/2 - \delta - x (\beta - 2 \delta)) \dd x, & \text{letting $x = 1-t$,} \\
& = \int_0^1 x \cL'(x (\beta - 2 \delta) - (\beta/2 - \delta)) \dd x, & \text{because $\cL'$ is odd.}
\label{Nantes Poitiers 1}
\end{aligned}
\end{equation}
We now set $f: \delta \rightarrow \int_0^1 x \cL'(x (\beta - 2 \delta) - (\beta/2 - \delta)) \dd x $. We prove that $f$ is decreasing in $\delta$:
\begin{equation}
f'(\delta) = \int_0^1 x(1-2x) \cL''(x (\beta - 2 \delta) - (\beta/2 - \delta)) \dd x < 0,
\end{equation}
thanks to the following lemma:
\begin{lemma} For every even positive function $g$,
	\begin{equation}
\int_0^1 x(1-2x) g(x -1/2 ) \dd x < 0.
	\end{equation}
\label{Nantes Lemma Poitiers 322}
\end{lemma}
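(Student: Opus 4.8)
The plan is to symmetrize the integral by centering the interval of integration at the origin, after which the parity of $g$ does all the work. First I would perform the change of variables $u = x - \tfrac12$, under which $x = u+\tfrac12$, $1-2x = -2u$, and $[0,1]$ is mapped to $[-\tfrac12,\tfrac12]$. This turns the left-hand side into
\[
\int_0^1 x(1-2x)\, g\big(x-\tfrac12\big)\,\dd x = \int_{-1/2}^{1/2}\big(u+\tfrac12\big)(-2u)\,g(u)\,\dd u = -2\int_{-1/2}^{1/2} u^2 g(u)\,\dd u - \int_{-1/2}^{1/2} u\, g(u)\,\dd u.
\]

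Next I would use that $g$ is even: the integrand $u\mapsto u\,g(u)$ of the second term is then odd on the symmetric interval $[-\tfrac12,\tfrac12]$, so that integral vanishes. For the first term, since $g$ is (strictly) positive and $u^2>0$ for $u\neq 0$, the integral $\int_{-1/2}^{1/2} u^2 g(u)\,\dd u$ is strictly positive. Hence the left-hand side equals $-2\int_{-1/2}^{1/2} u^2 g(u)\,\dd u < 0$, which is exactly the claim.

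There is essentially no obstacle here; the whole argument is the observation that centering the interval splits $x(1-2x)$ into an odd linear piece (which integrates to zero against the even weight $g$) and an even quadratic piece of definite sign. The only point deserving a comment is that "positive" must be read in the strict sense and $g$ must be integrable on $[-\tfrac12,\tfrac12]$, both of which are automatic in the application, where $g(\cdot)=\lmgf''(c\,\cdot)$ for a constant $c=\beta-2\delta$ and $\lmgf''$ is continuous, even, and bounded below by a positive constant on $(-\beta/2,\beta/2)$.
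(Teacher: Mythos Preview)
Your proof is correct and follows essentially the same approach as the paper: both center the interval via $u=x-\tfrac12$ and exploit the parity of $g$, the only cosmetic difference being that you expand $(u+\tfrac12)(-2u)=-2u^2-u$ and kill the odd part directly, whereas the paper splits the integral over $[-\tfrac12,0]$ and $[0,\tfrac12]$ before recombining to reach the equivalent expression $-4\int_0^{1/2}u^2 g(u)\,\dd u$.
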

Using Lemma \ref{Nantes Lemma Poitiers 322}, we have that $f$ is a decreasing function, because $\cL''$ is positive and even. Therefore, coming back to \eqref{Nantes Poitiers 1} and using that $\delta > \delta_0(q)$ along with \eqref{Nantes 3.7} and \eqref{eq:delta0q} for the last equality:
\begin{equation}
q \leq f(\delta) < f(\delta_0(q)) = q, 
\end{equation}
hence the contradiction.
\end{proof}
\begin{proof}[Proof of Lemma \ref{Nantes Lemma Poitiers 322}]
A computation gives:
\begin{equation}
\begin{aligned}
\int_0^1 x (1-2x) g(x-1/2) \dd x &=-2 \int_{-1/2}^{1/2} x(x+1/2)g(x) \dd x \\
&= -2 \int_{-1/2}^{0} x(x+1/2)g(x) \dd x -2 \int_{0}^{1/2} x(x+1/2)g(x) \dd x \\
&= 2 \int_{0}^{1/2} x(1/2-x)g(x) \dd x  -2 \int_{0}^{1/2} x(x+1/2)g(x) \dd x \\
&= -4 \int_0^{1/2} x^2 g(x) \dd x <0.
\end{aligned}
\end{equation}
\end{proof}
\subsection{Proof of Lemma~\ref{Nantes Lemma 8.29}}
\label{Nantes Appendix A.3}
As Carmona, Nguyen and Pétrélis~\cite[Section 6.1]{CNP16}, we check that the results originally proven in~\cite{DH96} hold uniformly in $q \in [q_1,q_2]$. From Lemma~\ref{lem:prop-Hgd-Hngd}, there exists $\nu>0$ such that both $\sqdelta$ and $\sqdeltaN$ are in $[-\gd+\nu,\beta-\gd-\nu]$ for all $q \in [q_1,q_2]$ and $N$ large enough. We let $\textfrak{E}$ be the holomorphic function defined on $\{z \in \C : \text{Re}(z) \in (-\beta/2,\beta/2) \}$ by $\textfrak{E}(z) = \bE_\beta(e^{z X_1})$. For any $h \in (-\beta/2,\beta/2)$ and $t \in \R$, we set:
\begin{equation}
\phi_h(t) := \frac{\textfrak{E}(h+it)}{\textfrak{E}(h)}.
\end{equation}
We will use some properties of $\phi_h$ that were proven in~\cite{DH96}, namely:
\begin{enumerate}
\item For all $h \in [-\beta/2+\nu,\beta/2-\nu]$ and $t \in \R$,
\begin{equation}
	|\phi_h(t)| \leq \phi_h(0)=1.
	\label{Nantes A.24}
\end{equation}
\item There exists a constant $\alpha = \alpha(\nu)>0$ such that, for all $h \in [-\beta/2+\nu,\beta/2-\nu]$ and $|t|<\pi$,
\begin{equation}
	|\phi_h(t)| \leq \exp(-\alpha^2 t^2 \lmgf''(h)).
	\label{Nantes A.38}
\end{equation}
\item For all $\vartheta \in (0,\pi)$, there exists a positive constant $C = C(\nu,\vartheta)$ such that, for all $h \in [-\beta/2+\nu,\beta/2-\nu]$ and $t \in [\vartheta, 2\pi-\vartheta]$, we have:
\begin{equation}
	\label{Nantes A.39}
	|\phi_h(t)|\leq e^{-C}.
\end{equation}
\end{enumerate}
For all $t \in \R$, we define
\begin{equation}
\Phi_{N}^{\gd, q}(t) := \EsperanceSurCritique{e^{itA_N/N}} = \produit{j=1}{N} \phi_{h_{j,N}}(t_{j,N}),
\label{Nantes A.37}
\end{equation}
where
\begin{equation}
h_{j,N} := \delta - \frac{\beta}{2} + \sqdeltaN \frac{2N+1-2j}{2N} \quad \text{and } t_{j,N} := \Big( 1 - \frac{j-1}{N}\Big) t.
\label{Nantes A.41}
\end{equation}
Note that 
\begin{equation}
\hat \Phi_{N}^{\gd, q}(t) := \Phi_{N}^{\gd, q}(t/\sqrt{N}) \exp \left( -\frac{it}{\sqrt{N}}\EsperanceSurCritique{\frac{A_N}{N}} \right)
\end{equation}
is the characteristic function of the random variable $\frac{A_N}{N} - \EsperanceSurCritique{\frac{A_N}{N}}$ evaluated at $t/\sqrt{N}$. We define 
\begin{equation}
\bar{\Phi}_s(t) = \exp\Big(-\tfrac{1}{2}c(s)t^2\Big),
\end{equation}
that is the characteristic function associated to the density $l_{c(s)}$. Using the well-known inversion formula for the Fourier transform, we rewrite the left-hand side of~\eqref{Nantes 9.23}, that is
\begin{equation}
R_N := N^{3/2} \ProbaSurCritique{A_N=qN^2+x} - l_{c(\sqdelta)} \left( \frac{x}{N^{3/2}} \right)
\end{equation}
as
\begin{equation}
R_N = \frac{1}{2\pi} \int_\sA \hat \Phi_{N}^{\gd, q}(t) e^{-it \frac{x}{N^{3/2}}} \dd t - \frac{1}{2\pi} \int_\R \bar{\Phi}_{\sqdelta}(t) e^{-it \frac{x}{N^{3/2}}} \dd t,
\label{Nantes A.32}
\end{equation}
where $\sA = [-\pi N^{3/2},\pi N^{3/2}]$. The attentive reader could object that 
\beq
q \neq \EsperanceSurCritique{A_N/N^2}.
\eeq
Indeed, by differentiating~\eqref{defHH} with respect to $s$ and evaluating at $s = s_{N,\gd}(q)$, one obtains
\beq
q - \EsperanceSurCritique{A_N/N^2} = \EsperanceSurCritique{X_N}/(2N^2), 
\eeq
and we find that the error term is thus at most $O(1/N)$, uniformly in $q\in[q_1, q_2]$. Following the proof in~\cite[Section 6.1]{CNP16}, we bound \eqref{Nantes A.32} by the sum of four terms,
\begin{equation}
|R_N| \leq \frac{1}{2\pi} \Big( J_1^{(q)} + J_2^{(q)}+ J_3^{(q)}+J_4^{(q)} \Big),
\label{Nantes A.33}
\end{equation}
where, for some positive constant $\Delta$ and setting $B_N := \log N$,
\begin{equation}
\begin{aligned}
	J_1^{(q)} &= \int_{-B_N}^{B_N} | \hat \Phi_{N}^{\gd, q}(t) - \bar{\Phi}_{\sqdelta}(t) | \dd t,\\
	J_2^{(q)} &= \int_{\R \backslash [-B_N,B_N] } |\bar{\Phi}_{\sqdelta}(t)| \dd t,\\
	J_3^{(q)} &= \int_{[- \Delta \sqrt{N}, \Delta \sqrt{N} ] \backslash [-B_N,B_N] } 
	| \hat \Phi_{N}^{\gd, q}(t) |
	\dd t,\\
	J_4^{(q)} &= \int_{\sA \backslash [- \Delta \sqrt{N}, \Delta \sqrt{N} ] } 
	| \hat \Phi_{N}^{\gd, q}(t)  |
	\dd t.
\end{aligned}
\end{equation}
\par (i) First, we bound $J_1^{(q)}$. For $s \in (- \delta ,\beta-\delta)$, we define 
\begin{equation}
c_N(s) := \frac{1}{N}\somme{j=1}{N} \Big(\frac{2N+1-2j}{2N} \Big) ^2 \cL''\Big( \delta - \frac{\beta}{2} + s \frac{2N+1-2j}{2N}\Big).
\end{equation}
Since $\lmgf'$ and $\lmgf''$ are bounded over any compact interval of $(-\gb/2,\gb/2)$, one can check that
\begin{equation}
\sup_{q \in [q_1,q_2]} | c_N(\sqdeltaN) - c(\sqdelta) |=O(1/N).
\label{Nantes B.55}
\end{equation}
Using that \textfrak{E}~is holomorphic on $\{z \in \C : \text{Re}(z) \in (-\beta/2,\beta/2)\}$ and following the argument in \cite[between (6.23) and (6.24)]{CNP16}, we may extend $\lmgf$ to $[-\beta/2+\nu,\beta/2-\nu] + i[-A',A']$ for some $A'>0$ as a branch of the complex logarithm of \textfrak{E}. Hence, for $|t|\le A'\sqrt{N}/2$ and $\sqdeltaN \in [-\delta + \nu, \beta - \delta - \nu]$, we have:
\begin{equation}
\hat \Phi_{N}^{\gd, q}(t) = 
\exp \Big( 
\somme{j=1}{N} \lmgf\Big( i \frac{t_{j,N}}{\sqrt{N}} + h_{j,N}  \Big)
-\lmgf(h_{j,N})
-\frac{i t_{j,N}}{\sqrt{N}} \lmgf'(h_{j,N})
\Big).
\label{Nantes B.24a}
\end{equation}
For $N$ large enough, $\sqdeltaN \in [-\delta + \nu, \beta- \delta - \nu]$ for $q \in [q_1,q_2]$. Since $|t|\le B_N$ implies that $|t|\le A'\sqrt{N}/2$, a Taylor-Lagrange expansion applied to~\eqref{Nantes B.24a} yields
\begin{equation}
\begin{aligned}
	\hat \Phi_{N}^{\gd, q}(t) &= \exp \Big( - \frac{t^2}{2N} \somme{j=1}{N}\Big( 1- \frac{j-1}{N} \Big)^2 \cL''(h_{j,N}) + \frac{O(\log(N)^3)}{\sqrt{N}} \Big) \\
	&= \exp \Big( - \frac{t^2}{2} c_N(\sqdeltaN) + \frac{O(\log(N)^3)}{\sqrt{N}} \Big), \qquad {\text as\ } N\to\infty,
\end{aligned}
\label{Nantes B.57}
\end{equation}
uniformly in $|t|\le B_N$ and $q \in [q_1,q_2]$. Combining \eqref{Nantes B.55} and \eqref{Nantes B.57}, it comes:
\begin{equation}
\begin{aligned}
	\underset{q \in [q_1,q_2],\, |t|\le B_N}{\sup} 
	| \hat \Phi_{N}^{\gd, q}(t) - \bar{\Phi}_{\sqdelta}(t) |
	= O\Big(\frac{B_N^3}{\sqrt{N}}\Big).
\end{aligned}
\end{equation}
Hence, $J_1^{(q)} = O(B_N^4/\sqrt{N})$ uniformly in $q \in [q_1,q_2]$.\\ 
\par (ii) With $\underbar{c} := \inf\{c(\sqdelta) \colon q\in[q_1, q_2]\}$, that is positive, one may write
\begin{equation}
	\sup_{q\in[q_1, q_2]}J_2^{(q)} \le  
	\int_{ \R \backslash[-B_N,B_N]} e^{-\underbar{c}t^2/2} \dd t = O \left( \frac{1}{\sqrt{N}} \right).
\end{equation}
\par (iii) To estimate $J_3^{(q)}$, we fix $t$ such that $B_N < |t| \le \Delta \sqrt{N}$ and put $\Delta = {\pi}/{2}$. Then, all the numbers $t_{j,N}$ in \eqref{Nantes A.41} satisfy $|t_{j,N}| \leq \pi \sqrt{N}$, and evaluating each factor in \eqref{Nantes A.37} with the help of \eqref{Nantes A.38}, we get, denoting $m:=\inf \{\lmgf''(h) : h \in (-\beta/2,\beta/2)\}$(that is positive by the strict convexity of $\lmgf$):
\begin{equation}
| \hat \Phi_{N}^{\gd, q}(t) | \leq \exp(- m\alpha^2 t^2 /3).
\end{equation}
It easily comes that
\begin{equation}
\underset{q \in [q_1,q_2]}{\sup}  J_3^{(q)} \leq \cst \int_{B_N}^{\infty} \exp(-m\alpha^2t^2 /3)\dd t 
= O\Big(\frac{1}{\sqrt{N}}\Big).
\end{equation}
\par (iv) Finally, in order to evaluate $J_4^{(q)}$, we put $\vartheta = {1}/68$ (that is \cite[(4.43)]{DH96} when $k=1$), and for every $|t|>\Delta \sqrt{N}$, we set 
$\mathfrak{N}_N(t) := \#\{1  \leq j \leq N : \frac{1}{2\pi \sqrt{N}}t_{j,N} \notin \bbZ + [-\vartheta,\vartheta]  \}$. With~\eqref{Nantes A.24},~\eqref{Nantes A.39} and~\eqref{Nantes A.37}, it comes:
\begin{equation}
| \hat \Phi_{N}^{\gd, q}(t) | = \produit{j=1}{N} |\phi_{h_{j,N}}(t_{j,N}/\sqrt{N})| \leq e^{-\cst \mathfrak{N}_N(t)}.
\end{equation}
Moreover, there exists $\kappa>0$ such that $\mathfrak{N}_N(t) \geq \kappa N$ for all $|t| > \Delta \sqrt{N}$ and $N$ large enough, by \cite[(4.45)]{DH96}. Therefore,
\begin{equation}
\sup_{q \in [q_1,q_2]} J_4^{(q)} \leq 4\pi^2 N^{3/2}e^{-\cst \kappa N} = O\Big(\frac{1}{\sqrt{N}}\Big).
\end{equation}

\section{Technical estimates at the critical point}
\subsection{Proof of Lemma~\ref{Nantes Lemma 8.14}}
\label{Nantes Appendix B.1}
Let $\delta>0$. We denote by $(U_i)_{i\ge 1}$ the increments of the random walk $X$. Recalling Remark~\ref{rmk:time-rev-prop}, it comes:
\begin{equation}
\begin{aligned}
	\Proba{N,h}{ X_N \geq \bN \sqrt{N} } \leq 
	\frac{\bE_{N,h}\Big(e^{\delta(U_1+...+U_N)}\Big)}{e^{\delta \bN \sqrt{N}}}.
\end{aligned}
\label{Nantes B.1}
\end{equation}
By \eqref{Nantes 4.3}, we see that for every $1\le i \le N$:
\begin{equation}
\bE_{N,h}(e^{\delta U_i}) = 
\tilde{\mathbf{E}}_{\frac{h}{2} \left( 1 - \frac{2i-1}{n} \right) }(e^{\delta U_1}) = \exp\Big[\lmgf\Big(\delta + \frac{h}{2} \Big( 1 - \frac{2i-1}{n} \Big)\Big) - 
\lmgf\Big(\frac{h}{2} \Big( 1 - \frac{2i-1}{n} \Big)\Big)\Big].
\end{equation}
Hence, using that $\lmgf$ is $\sC^2$ and even,
\begin{equation}
\begin{aligned}
	\log \bE_{N,h}\Big(e^{\delta(U_1+...+U_N)}\Big) &= 
	\somme{i=0}{N-1} \lmgf\Big(\delta + \frac{h}{2} \Big( 1 - \frac{2i-1}{n} \Big)\Big) - 
	\somme{i=0}{N-1} \lmgf\Big(\frac{h}{2} \Big( 1 - \frac{2i-1}{n} \Big)\Big)\\
	&=
	O(1) + N \delta \int_0^1\Big[ \lmgf \Big( \frac{h}{2} + t\delta \Big) - \lmgf \Big( \frac{h}{2} - t\delta \Big) \Big]\dd t\\
	&= O(1) + N \delta^2 (\lmgf' (h/2) + o_\delta(1)),
\end{aligned}
\label{Nantes B.3}
\end{equation}
where the $O(1)$ depends on $\sup\{|\lmgf'(x)|, x\in [0,\delta + h/2]\}$ and holds as $N\to \infty$, and the $o_\delta(1)$ holds as $\gd \to 0$. Picking $\delta = (c_0 \bN)/{\sqrt{N}}$ with $c_0= [2 \lmgf'(h/2)]^{-1}$ and combining \eqref{Nantes B.1} and \eqref{Nantes B.3} gives the existence of $\cstexpo>0$ such that
\begin{equation}
\Proba{N,h}{ X_N \geq \bN \sqrt{N} } \leq e^{- \cstexpo \bN^2 - O_N(1) },
\end{equation}  
the $\cstexpo$ being uniform over $q \in [q_1,q_2]$ and the $O_N(1)$ too.
\subsection{Proof of Lemma~\ref{Nantes Lemma 8.9}}
\label{Appendix B.3}
Using the time-reversal property from \eqref{Nantes 7.32}, we get, for $1\le i < N_1$,
\begin{equation}
\begin{aligned}
	&\Proba{N_1, h_{N_1}^q}{  X_{N_1} \in \left[ K \aN - x, \bN \sqrt{N} - x \right], X_i \leq -x }  \\
	&=
	\somme{k=Ka_n-x}{\bN \sqrt{N} - x}  \Proba{N_1, h_{N_1}^q} { X_{N_1} = k, X_i \leq -x } 
	\\
	&=
	\somme{k=Ka_n-x}{\bN \sqrt{N} - x} \Proba{N_1, h_{N_1}^q}{ X_{N_1} = -k, X_{N_1-i} \leq -k - x } 
	\\
	& \leq 
	\somme{k=Ka_n-x}{\bN \sqrt{N} - x} \Proba{N_1, h_{N_1}^q}{ X_{N_1-i} \leq -x }
	\leq b_N \sqrt{N} \Proba{N_1, h_{N_1}^q}{  X_{N_1-i} \leq -x }
\end{aligned}
\end{equation}
Since $x \in \cC_N$, there exists $c_1 > 0$ such that $x > c_1 \aN$ for all $q \in [q_1, q_2]$ and $N > N_0$ a certain integer. It comes:
\begin{equation}
\begin{aligned}
	&\Proba{N_1, h_{N_1}^q}{ X_{N_1} \in \left[ K \aN - x, \bN \sqrt{N} - x \right], \exists i\colon X_i \leq -x } \\
	&\leq
	\somme{i=0}{N_1/2} \Proba{N_1, h_{N_1}^q}{ X_i \leq -x } + b_N \sqrt{N} \somme{i=0}{N_1/2} \Proba{N_1, h_{N_1}^q}{ X_i \leq -x }  \\
	&\leq N^2 e^{-\lambda c_1 \log(N)^2} = o(1/N^2),
\end{aligned}
\end{equation}
using that $\bN \leq \sqrt{N}$. The uniformity in $a$ stems from not imposing the condition on $A_{N_1}$, while the uniformity in $x$ and $q$ is ensured by the uniform lower bound $x > c_1 \aN$, that is true for all $x \in \cC_N$ and $q \in [q_1,q_2]$.
\section{Second-order expansion of the excess free energy at the critical point}
\subsection{Proof of Lemma~\ref{inegcentral}}
\label{Nantes Appendix C.1}
Recall the expression of $\psi(q,\delta)$ in~\eqref{exprepsitilde}. Using \eqref{Nantes 4.29} and \eqref{Nantes 4.30} at the second line, it comes:
\begin{equation} \label{Nantes C.1}
\begin{aligned}
	\psi(q,\delta) - \psi(q,0)  &=  
	[\cH_\gd(\sqdelta) - q \sqdelta] - [\cG(\htildeq) - q\htildeq]\\
	&= \lmgf \Big(\sqdelta + \gd - \frac{\gb}{2} \Big) - \lmgf \Big( \frac{\htildeq}{2} \Big) - 2q \Big( \sqdelta - \htildeq \Big). 
\end{aligned}
\end{equation}
We now work with $\delta= \delta(\gep) := \gd_0(q) + \gep = \frac{\beta}{2} - \frac{\htildeq}{2} + \varepsilon$ and $\varepsilon > 0$ small. Then,
\begin{equation} \label{Nantes C.1,5}
\psi(q,\delta(\gep)) - \psi(q,0) > 0 \Leftrightarrow \lmgf \Big(\sqdelta + \varepsilon - \frac{\htildeq}{2} \Big) - \lmgf \Big( \frac{\htildeq}{2} \Big) > 2q \Big( \sqdelta - \htildeq \Big). 
\end{equation}
By definition of $\htildeq$ and $\sqdelta$,
\begin{equation} \label{Nantes C.2}
\begin{aligned}
	\int_0^1 t \lmgf' \Big( \htildeq t - \frac{\htildeq}{2} \Big) \dd t &= q \\
	\int_0^1 t \lmgf' \Big( \sqdelta t - \frac{\htildeq}{2} + \varepsilon \Big) \dd t &= q. 
\end{aligned}
\end{equation}
Since $\lmgf'$ is an increasing function over $(-\beta/2,\beta/2)$, it comes that $\sqdelta = \htildeq - a \varepsilon - b \varepsilon^2 + o(\varepsilon^2)$, with $a>0$. Let us first determine $a$. From \eqref{Nantes C.2}, we get
\begin{equation} \label{Nantes C.2BIS}
\int_0^1 t \Big( \lmgf' \Big( \htildeq t - \frac{\htildeq}{2} \Big) -  \lmgf' \Big( \htildeq t - \frac{\htildeq}{2} + \varepsilon(1-at) \Big) \Big) \dd t = 0.
\end{equation}
Expanding the line above at first order, we obtain that
\begin{equation}
\int_0^1 t(1-at) \lmgf'' ( \htildeq (t-1/2)) \dd t= 0.
\end{equation}
Integrating par parts (see Lemma~\ref{Nantes Lemma C.1} for the denominator), we obtain:
\begin{equation} \label{Nantes C.5}
a = 
\frac
{\int_0^1 t \lmgf''(\htildeq(t-1/2)) (\htildeq \dd t)}
{\int_0^1 t^2 \lmgf''(\htildeq(t-1/2)) (\htildeq \dd t)}
=\frac{\lmgf' \left( \frac{\htildeq}{2} \right)}{\lmgf' \left( \frac{\htildeq}{2} \right) - 2q}.
\end{equation}
A straightforward computation gives, at first order:
\begin{equation}
\label{Nantes D.7}
\psi(q,\delta) - \psi(q,0) = \varepsilon\Big[(1-a)\lmgf'(\htildeq/2) + 2qa \Big] +
O(\varepsilon^2) = O(\varepsilon^2).
\end{equation}
Hence, the transition is at least of order 2. We now determine $b$. Coming back to \eqref{Nantes C.2} and performing a second-order Taylor expansion, we get:
\begin{equation}
b = \frac{1}{2}\times \frac{\int_0^1 {t(1-at)^2} \lmgf'''(\htildeq (t-1/2)) \dd t  }{ \int_0^1 t^2 \lmgf''(\htildeq (t-1/2)) \dd t}.
\end{equation}
To compute $b$, we use the following lemma, the proof of which is left to the reader (integrate by parts).
\begin{lemma} \label{Nantes Lemma C.1}
\begin{equation}
	\begin{aligned}
		\int_0^1 {t(1-at)^2} \lmgf'''(\htildeq (t-1/2)) ({\htildeq} \dd t) = 
		&{(1-a)^2} \lmgf''(\htildeq/2) + \frac{2(2a-1)}{\htildeq}\lmgf'(\htildeq/2) \\ &- {3a^2} \int_0^1 t^2 \lmgf''(\htildeq (t-1/2)) \dd t\ ,
	\end{aligned}
\end{equation}
\begin{equation}
	\int_0^1 t^2 \lmgf''(\htildeq (t-1/2)) (\htildeq \dd t) = 
	\lmgf'(\htildeq/2) - 2q.
\end{equation}
\end{lemma}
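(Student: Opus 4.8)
The plan is to obtain both identities by repeated integration by parts, the only non-mechanical ingredients being (i) the parity of $\lmgf$ on $(-\beta/2,\beta/2)$, so that $\lmgf'$ is odd and $\int_0^1 \lmgf'(\htildeq(t-\tfrac12))\,\dd t = \tfrac1{\htildeq}[\lmgf(\htildeq/2)-\lmgf(-\htildeq/2)] = 0$, and (ii) the defining relation of $\htildeq$ in~\eqref{Nantes 3.7}, which by the very same parity argument may be rewritten as $\int_0^1 t\,\lmgf'(\htildeq(t-\tfrac12))\,\dd t = q$ (the factor $t-\tfrac12$ can be replaced by $t$ because the difference is $\tfrac12\int_0^1\lmgf'(\htildeq(t-\tfrac12))\,\dd t=0$).

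I would first prove the second identity. Setting $g(t) := \lmgf'(\htildeq(t-\tfrac12))$, one has $g'(t) = \htildeq\,\lmgf''(\htildeq(t-\tfrac12))$, hence
\[
\int_0^1 t^2\,\htildeq\,\lmgf''\big(\htildeq(t-\tfrac12)\big)\,\dd t = \big[t^2 g(t)\big]_0^1 - 2\int_0^1 t\, g(t)\,\dd t = \lmgf'(\htildeq/2) - 2q ,
\]
using $g(1) = \lmgf'(\htildeq/2)$ together with ingredient (ii) for the remaining integral.

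For the first identity, write $h := \htildeq$ and observe that the integrand equals $u(t)\,v'(t)$ with $u(t) := t(1-at)^2$ and $v(t) := \lmgf''(h(t-\tfrac12))$, since $v'(t) = h\,\lmgf'''(h(t-\tfrac12))$. An integration by parts then produces the boundary term $[u v]_0^1 = (1-a)^2\lmgf''(h/2)$ (the contribution at $t=0$ vanishing) minus $\int_0^1 u'(t) v(t)\,\dd t$. A direct computation gives $u'(t) = (1-at)(1-3at) = 1 - 4at + 3a^2 t^2$, so this last integral splits into $\int_0^1 \lmgf''(h(t-\tfrac12))\,\dd t$, $\int_0^1 t\,\lmgf''(h(t-\tfrac12))\,\dd t$ and $\int_0^1 t^2 \lmgf''(h(t-\tfrac12))\,\dd t$. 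The first equals $\tfrac2h\lmgf'(h/2)$ by oddness of $\lmgf'$; the second equals $\tfrac1h\lmgf'(h/2)$ after one further integration by parts combined with ingredient (i); and the third is left untouched. Collecting the coefficient of $\lmgf'(h/2)$, namely $-\tfrac2h + \tfrac{4a}{h} = \tfrac{2(2a-1)}{h}$, yields precisely the announced formula.

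There is no genuine obstacle here: the statement is a \emph{bookkeeping} computation, and the only points demanding a little care are not forgetting that the shift $t-\tfrac12\mapsto t$ inside $\int_0^1(\cdot)\,\lmgf'(\cdot)\,\dd t$ is harmless by parity, and expanding $u'$ correctly. If one wished, the two identities could be combined to rewrite the last term of the first as $-\tfrac{3a^2}{h}\big[\lmgf'(h/2) - 2q\big]$, but it is kept in integral form as in the statement, which is the form in which it is used in the computation of $b$ above.
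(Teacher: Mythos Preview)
Your proof is correct and follows exactly the approach indicated by the paper, which simply says ``integrate by parts'' and leaves the details to the reader. Your handling of the parity of $\lmgf'$ and of the defining relation $\int_0^1 t\,\lmgf'(\htildeq(t-\tfrac12))\,\dd t = q$ is precisely what is needed, and the expansion $u'(t)=1-4at+3a^2t^2$ is computed correctly.
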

We finally obtain:
\begin{equation}
b =  \frac{\frac{(1-a)^2}{2}\lmgf''(\htildeq/2) + \frac{(2a-1)\lmgf'(\htildeq/2)}{\htildeq}}{\lmgf'(\htildeq/2) - 2q} -\frac{3a^2}{2 \htildeq}.
\end{equation}
Expanding the left-hand side of~\eqref{Nantes C.1,5} to the second order, we get that the coefficient in front of $\gep^2$ equals
\beq
C:=\frac{3a^2}{2} {[\lmgf'(\htildeq/2) - 2q]} - (2a-1)\lmgf'(\htildeq/2) = \frac{\lmgf'(\htildeq/2)[\lmgf'(\htildeq/2) - 4q]}{2[\lmgf'(\htildeq/2)-2q]}.
\eeq
%
To conclude, it remains to observe that $C>0$, i.e.\ that $\lmgf'(\htildeq/2)> 4q$. Indeed, by definition of $\htildeq$ and strict convexity of $\lmgf$,
\begin{equation}
\begin{aligned}
	q &= \int_{-1/2}^{1/2}\left(t + \frac{1}{2} \right) \lmgf'(\htildeq t) \dd t= 
	\int_{0}^{1/2}\left(t + \frac{1}{2} \right) \lmgf'(\htildeq t) \dd t - 
	\int_{0}^{1/2}\left( \frac{1}{2} - t \right) \lmgf'(\htildeq t) \dd t \\
	&= \int_0^{1/2} 2t \lmgf'(\htildeq t) \dd t <  \lmgf'(\htildeq/2) \int_0^{1/2}2t \dd t = (1/4)\lmgf'(\htildeq/2). 
\end{aligned}
\end{equation}
\subsection{\texorpdfstring{Proof of~\eqref{secorder}}{Proof of the second order}}
\label{Nantes Appendix C.2}
To ease notation, we first set 
\beq
\mathsf{q}_\gep := \argmax\Big\{q>0\colon\,  \frac{1}{\sqrt{q}} \log \Gamma_\beta+ \frac{1}{\sqrt{q}} \psi(q,\delta_c(\beta) + \varepsilon)\Big\}, \qquad \gep \ge 0,
\eeq
and recall that $\psi(q,\gd)= \psi(q,0)$ for every $0\le \gd \le \gd_c(\gb)$.
In this section, we write $s(\delta,q) := \sqdelta$ to emphasize on the variable $q$ and $h(q) := \htildeq$. By Remark \ref{Nantes Remark 4.13}, we have on the one hand,
\beq \label{eq:-logG-expr1}
-\log \gG_\gb = \lmgf(h(\sfq_0)/2)
\eeq
and on the other hand,
\beq \label{eq:-logG-expr2}
\begin{aligned}
-\log \gG_\gb &= \lmgf\Big(s(\delta_c(\gb) + \varepsilon, \mathsf{q}_\varepsilon) + \gd_c(\gb)+\gep - \gb/2\Big)\\
&\stackrel{\eqref{closedexprdeltac}}{=} \lmgf\Big(s(\delta_c(\gb) + \varepsilon, \mathsf{q}_\varepsilon) + \gep - h(\sfq_0)/2\Big).
\end{aligned}
\eeq
Equating \eqref{eq:-logG-expr1} with \eqref{eq:-logG-expr2} and using that $\lmgf$ is  increasing on $(0,\gb/2)$, we obtain:
\begin{equation} \label{Nantes C.17}
s(\delta_c(\gb) + \varepsilon, \mathsf{q}_\varepsilon) + \varepsilon = h(\mathsf{q}_0).
\end{equation}%
By Lemma~\ref{lem:prop-Hgd-Hngd} and the Implicit Function Theorem,
there exist $\mathsf{a}$ and $\mathsf{b}$ (to be determined) such that
\begin{equation}
\label{eq:expansion-sfq}
\mathsf{q}_\gep = \mathsf{q}_0 + \mathsf{a} \varepsilon + \mathsf{b} \varepsilon^2 + o(\varepsilon^2), \qquad \text{as } \gep \to 0.
\end{equation}
We first compute $\mathsf{a}$ and $\mathsf{b}$ in~\eqref{eq:expansion-sfq}. Using \eqref{Nantes 3.7}, \eqref{Nantes 3.10} and \eqref{Nantes C.17}, one has:
\begin{equation}
	\sfq_\varepsilon = \int_0^1 t \lmgf'(h(\sfq_0)(t-1/2) + \varepsilon(1-t)) \dd t, \qquad \gep\ge0.
\end{equation}
Using a second-order Taylor expansion on $\sfq_\gep - \sfq_0$ gives:
\begin{equation} \label{Nantes C.19}
\begin{aligned}
	&\mathsf{a} = \int_0^1 t(1-t) \lmgf''(h(\sfq_0)(t-1/2))\dd t, \\
	&\mathsf{b} = \int_0^1 \frac{t(1-t)^2}{2} \lmgf'''(h(\sfq_0)(t-1/2))\dd t. 
\end{aligned}
\end{equation}
%
Using~\eqref{Nantes C.5} and Lemma~\ref{Nantes Lemma C.1} with the choice $a=1$ yields
\begin{equation}
\label{eq:expr-sfa-sfb}
\mathsf{a} = \frac{2\sfq_0}{h(\sfq_0)}, \qquad
\mathsf{b} = \frac{6\sfq_0-\lmgf'(h(\sfq_0)/2)}{2h(\sfq_0)^2}.
\end{equation}
We now have all the tools to establish~\eqref{secorder}. It comes:
\begin{equation}
\label{Nantes C.22}
\begin{aligned}
	&g(\beta,\delta_c(\beta)+\varepsilon)-g(\beta,\delta_c(\beta))  \\
&\stackrel{\eqref{eq:gbetadelta-varfor}}{=} \frac{\log \gG_\gb + \psi(\sfq_0, \gd_c(\gb))}{\sqrt{\sfq_0}}
-\frac{\log \gG_\gb + \psi(\sfq_\gep, \gd_c(\gb)+\gep)}{\sqrt{\sfq_\gep}}	\\
	&\stackrel{\eqref{exprepsitilde}+\text{Lemma~\ref{lem:technic_IPP}}}{=} \frac{ \log \Gamma_\beta +\lmgf\Big(s(\delta_c(\gb) + \gep, \sfq_\gep) + \delta_c(\gb) + \gep -\gb/2\Big) - 2\sfq_\gep s(\delta_c(\gb) + \gep,\sfq_\gep)}{\sqrt{\sfq_\gep}} \\
	&\qquad \qquad \qquad- \frac{\log \Gamma_\beta +\lmgf( h(\sfq_0)/2 ) - 2\sfq_0 h(\sfq_0)}{\sqrt{\sfq_0}} \\
	&\stackrel{\eqref{eq:-logG-expr1}-\eqref{eq:-logG-expr2}}{=} 2\sqrt{\sfq_0}h(\sfq_0) - 2\sqrt{\sfq_\gep} s(\gd_c(\gb)+\gep,\sfq_\gep)\\
	&\stackrel{\eqref{Nantes C.17}}{=} 2\sqrt{\sfq_0}h(\sfq_0) - 2\sqrt{\sfq_\gep}(h(\sfq_0) - \gep) \\
	&\stackrel{\eqref{eq:expansion-sfq}-\eqref{eq:expr-sfa-sfb}}{\sim} 
	\frac{\lmgf'(h(\sfq_0)/2)}{2h(\sfq_0)\sqrt{\sfq_0}} \times \gep^2,
\end{aligned}
\end{equation}
which completes the proof.
%
%
\section{Proof of Item (2) in Proposition~\ref{c1diffeo}}
\label{sec:corr-proof-diffeo}
In this section we correct a flaw in the proof of \cite[Lemma 5.3]{CNP16}. The flaw lies in the proof of the bijective nature of the map $\nabla\lmgf_\Lambda\colon \cD_\gb \to \bbR^2$ and is due to the fact that $\text{dist}(\mathbf{h},\partial\cD_\gb) \to 0$ does not necessarily imply that $\|\lmgf_\Lambda(\mathbf{h})\|\to \infty$, as it is claimed therein. Let us recall that for every $(h_0,h_1)\in \cD_\gb$, i.e.\ such that $|h_0|<\gb/2$ and $|h_0+h_1|<\gb/2$,
\beq
\nabla\lmgf_\Lambda(h_0,h_1) = \Big( \int_0^1 x\lmgf'(h_0x+h_1)\dd x, \int_0^1 \lmgf'(h_0x+h_1)\dd x \Big).
\eeq
Integrating by parts, we notice that, provided $h_0\neq 0$,
\beq
\nabla\lmgf_\Lambda(h_0,h_1) = \Big(
\frac{\lmgf(h_0+h_1)-\int_0^1\lmgf(h_0x+h_1)\dd x}{h_0},
\frac{\lmgf(h_0+h_1)-\lmgf(h_1)}{h_0}
\Big)
\eeq
Using the bijective change of variable $(h_0,h_1)\in \cD_\gb \mapsto (u,v)=(h_0+h_1,h_1)\in(-\gb/2, \gb/2)^2 $, we rather consider the function:
\beq
\mathsf{L}(u,v) := \nabla\lmgf_\Lambda(u-v, v), \qquad (u,v)\in (-\gb/2, \gb/2)^2,
\eeq
and notice that, when $u\neq v$,
\beq\label{eq:diffeo1}
\mathsf{L}(u,v) = \Big(
\frac{\lmgf(u)-\int_0^1\lmgf((u-v)x+v)\dd x}{u-v},
\frac{\lmgf(u)-\lmgf(v)}{u-v}
\Big).
\eeq
We start with a technical lemma:
\begin{lemma}\label{lem:control-lmgf}
For every $(h_0,h_1)\in \cD_\gb$ such that $h_0\neq 0$,
\beq\label{eq:control-lmgf}
\int_0^1 \lmgf(h_0 x + h_1) \dd x \le \frac{1}{|h_0|} \int_{-\gb/2}^{\gb/2} \lmgf(x)\dd x.
\eeq
\end{lemma}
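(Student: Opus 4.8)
\textbf{Proof plan for Lemma~\ref{lem:control-lmgf}.} The statement to prove is the bound
\beq
\int_0^1 \lmgf(h_0 x + h_1)\dd x \le \frac{1}{|h_0|}\int_{-\gb/2}^{\gb/2}\lmgf(x)\dd x
\eeq
for $(h_0,h_1)\in\cD_\gb$ with $h_0\neq 0$. The plan is to perform the obvious affine change of variable $y=h_0 x + h_1$ in the left-hand integral, so that $\dd x = \dd y/|h_0|$ and the domain of integration becomes the interval with endpoints $h_1$ and $h_0+h_1$. Since $(h_0,h_1)\in\cD_\gb$ precisely means $|h_1|<\gb/2$ and $|h_0+h_1|<\gb/2$, this interval is contained in $(-\gb/2,\gb/2)$.

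The key observation that makes this work is that $\lmgf$ is a logarithmic moment generating function, hence $\lmgf\ge 0$ on $\R$: indeed $\lmgf(0)=0$ and $\lmgf$ is convex, but more simply $\lmgf(h)=\log\Ebb[e^{hX_1}]\ge \log e^{h\,\Ebb[X_1]} = 0$ by Jensen's inequality (and $\Ebb[X_1]=0$ by symmetry, so in fact $\lmgf(h)=\log\Ebb[e^{hX_1}]\ge h\,\Ebb[X_1]=0$). Therefore, after the change of variable, one may enlarge the interval of integration from the interval between $h_1$ and $h_0+h_1$ to all of $(-\gb/2,\gb/2)$ without decreasing the value of the integral, since the integrand $\lmgf$ is non-negative on the added portion.

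Concretely, I would write
\beq
\int_0^1 \lmgf(h_0x+h_1)\dd x = \frac{1}{|h_0|}\int_{h_1\wedge(h_0+h_1)}^{h_1\vee(h_0+h_1)}\lmgf(y)\dd y \le \frac{1}{|h_0|}\int_{-\gb/2}^{\gb/2}\lmgf(y)\dd y,
\eeq
where the last inequality uses $[h_1\wedge(h_0+h_1),\,h_1\vee(h_0+h_1)]\subseteq(-\gb/2,\gb/2)$ together with $\lmgf\ge 0$. The right-hand integral is finite because $\lmgf$ is continuous on the open interval $(-\gb/2,\gb/2)$ and, although it blows up at the endpoints, it does so only logarithmically: from~\eqref{Nantes A.1} one sees $\lmgf(x)\sim -\log(1-e^{x-\gb/2})$ as $x\to\gb/2$, which is integrable near $\gb/2$ (and symmetrically near $-\gb/2$). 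This finiteness is already implicitly used elsewhere in the paper (e.g.\ in the proof of Lemma~\ref{Nantes Lemma 9.5}, third item), so it can be invoked without further comment.

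\textbf{Anticipated difficulty.} There is essentially no obstacle here: the only point requiring a moment's care is the non-negativity of $\lmgf$, which is the lever that allows the domain enlargement, and the (standard) integrability of $\lmgf$ near the endpoints $\pm\gb/2$. Both are elementary given the explicit formula~\eqref{Nantes A.1}. The lemma is a purely technical ingredient whose role is to control $\nabla\lmgf_\Lambda$ near the boundary $\partial\cD_\gb$ in the corrected proof of the diffeomorphism property; the real work lies in the subsequent argument, not in this estimate.
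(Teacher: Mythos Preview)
Your proof is correct and follows exactly the paper's approach: change of variable $y=h_0x+h_1$ followed by enlarging the integration interval to $(-\gb/2,\gb/2)$ using $\lmgf\ge 0$. Your version is actually slightly more careful: the paper writes the bounds as $h_0\wedge(h_0+h_1)$ and $h_0\vee(h_0+h_1)$, which is a typo for $h_1\wedge(h_0+h_1)$ and $h_1\vee(h_0+h_1)$ as you have it, and the paper leaves the non-negativity of $\lmgf$ implicit.
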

Note that the integral on the right-hand side of~\eqref{eq:control-lmgf} is finite.
\begin{proof}[Proof of Lemma~\ref{lem:control-lmgf}]
A straightforward change of variable yields:
\beq
\int_0^1 \lmgf(h_0 x + h_1) \dd x = \frac{1}{|h_0|}\int_{h_0 \wedge (h_0+h_1)}^{h_0 \vee (h_0+h_1)} \lmgf(x) \dd x \le \frac{1}{|h_0|} \int_{-\gb/2}^{\gb/2} \lmgf(x)\dd x.
\eeq
\end{proof}
We now investigate the properties of the first coordinate of the function $\mathsf{L}$ when the second variable $v$ is fixed:
\begin{lemma}\label{lem:control-lmgf1}
For every $|v|<\gb/2$, the mapping $u\mapsto \mathsf{L}_1(u,v)$ is bijective from $(-\gb/2, \gb/2)$ to $\bbR$.
\end{lemma}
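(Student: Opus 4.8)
\section{Proof of Lemma~\ref{lem:control-lmgf1}}

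The plan is to prove that, for each fixed $v\in(-\gb/2,\gb/2)$, the map $u\mapsto \mathsf{L}_1(u,v)$ is continuous and strictly increasing on $(-\gb/2,\gb/2)$ and has limits $-\infty$ and $+\infty$ as $u\to-\gb/2$ and $u\to\gb/2$ respectively; bijectivity onto $\bbR$ then follows from the intermediate value theorem. Recall that
\beq
\mathsf{L}_1(u,v)=\int_0^1 x\,\lmgf'\bigl(ux+v(1-x)\bigr)\,\dd x ,
\eeq
and observe that for $(u,v)\in(-\gb/2,\gb/2)^2$ the argument $ux+v(1-x)$ is a convex combination of $u$ and $v$, hence remains in a compact subinterval of $(-\gb/2,\gb/2)$ on which $\lmgf$ is smooth; this makes $\mathsf{L}_1(\cdot,v)$ well defined and, by differentiation under the integral sign on any compact subinterval of $u$'s, of class $\sC^1$, with
\beq
\partial_u\mathsf{L}_1(u,v)=\int_0^1 x^2\,\lmgf''\bigl(ux+v(1-x)\bigr)\,\dd x>0,
\eeq
since $\lmgf''$ is bounded below by a positive constant. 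Hence $\mathsf{L}_1(\cdot,v)$ is continuous and strictly increasing.

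For the boundary behaviour I would use the integrated-by-parts expression from~\eqref{eq:diffeo1}, namely $\mathsf{L}_1(u,v)=\bigl(\lmgf(u)-\int_0^1\lmgf(ux+v(1-x))\,\dd x\bigr)/(u-v)$ for $u\neq v$, together with two facts about $\lmgf$: first, $\lmgf(h)\to+\infty$ as $h\to\pm\gb/2$, which is read off directly from~\eqref{Nantes A.1}; second, by Lemma~\ref{lem:control-lmgf} (whose hypothesis holds because $(u-v,v)\in\cD_\gb$ when $u,v\in(-\gb/2,\gb/2)$),
\beq
0\le \int_0^1\lmgf\bigl(ux+v(1-x)\bigr)\,\dd x\le \frac{1}{|u-v|}\int_{-\gb/2}^{\gb/2}\lmgf(x)\,\dd x,
\eeq
the last integral being finite. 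Thus, as $u\to\gb/2$, the numerator $\lmgf(u)-\int_0^1\lmgf(ux+v(1-x))\,\dd x$ tends to $+\infty$ while the denominator $u-v$ tends to $\gb/2-v>0$, so $\mathsf{L}_1(u,v)\to+\infty$; symmetrically, as $u\to-\gb/2$ the numerator again tends to $+\infty$ but $u-v\to-\gb/2-v<0$, so $\mathsf{L}_1(u,v)\to-\infty$. Combined with continuity and strict monotonicity, this yields the claimed bijectivity.

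The only delicate point is the control of $\int_0^1\lmgf(ux+v(1-x))\,\dd x$ near $u=\pm\gb/2$: there $\lmgf$ itself blows up, so a pointwise bound on the integrand is useless, and this is exactly what Lemma~\ref{lem:control-lmgf}—a change of variables exploiting the integrability of $\lmgf$ over $(-\gb/2,\gb/2)$—is designed to handle. Everything else in the argument is routine.
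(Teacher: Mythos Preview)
Your proof is correct and follows essentially the same approach as the paper: strict monotonicity via $\partial_u\mathsf{L}_1(u,v)=\int_0^1 x^2\lmgf''((u-v)x+v)\,\dd x>0$, and the boundary limits via the integrated-by-parts expression~\eqref{eq:diffeo1} combined with Lemma~\ref{lem:control-lmgf} to keep the integral term bounded while $\lmgf(u)\to+\infty$. The only cosmetic difference is that the paper writes the resulting inequality directly as $\mathsf{L}_1(u,v)\ge \lmgf(u)/(u-v)-\cst/|u-v|^2$ (and the reverse for $u\to-\gb/2$), whereas you argue on the numerator first; the content is the same.
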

\begin{proof}[Proof of Lemma~\ref{lem:control-lmgf1}]
Since $\lmgf$ is strictly convex, we first notice that
\beq
(\partial_u\mathsf{L}_1)(u,v) = \int_0^1 x^2 \lmgf''((u-v)x+v) \dd x >0.
\eeq
Thus it remains to prove the limits $\mathsf{L}_1(\pm\gb/2,v)= \pm\infty$. Using~\eqref{eq:diffeo1} and Lemma~\ref{lem:control-lmgf}, we may write:
\beq
\mathsf{L}_1(u,v) \ge \frac{\lmgf(u)}{u-v} - \frac{\cst}{|u-v|^2}.
\eeq
As $u\to \gb/2$, the term $u-v$ eventually becomes positive, bounded away from $0$ and infinity, and $\lmgf(u)$ converges to $+\infty$, hence the result. The proof for the case $u\to -\gb/2$ follows the same idea, writing instead:
\beq\mathsf{L}_1(u,v) \le \frac{\lmgf(u)}{u-v} + \frac{\cst}{|u-v|^2}.
\eeq
\end{proof}
From Lemma~\ref{lem:control-lmgf1}, for every $q\in\bbR$ we may define $u(v,q)$ as the unique solution in $(-\gb/2, \gb/2)$ of
\beq\label{eq:diffeo2}
\mathsf{L}_1(u(v,q),v) = q. 
\eeq
The proof will be complete once we prove that for every (fixed) $q\in\bbR$, $v\mapsto \mathsf{L}_2(u(v,q),v)$ is a bijective map from $(-\gb/2,\gb/2)$ to $\bbR$. We first argue that this (continuous) map is injective. Indeed, suppose by contradiction that there exists $v_1\neq v_2$ such that $\mathsf{L}_2(u(v_1,q),v_1) = \mathsf{L}_2(u(v_2,q),v_2)$. Coming back to the old variables, this would imply that the map $(h_0, h_1) \mapsto \int_0^1 \lmgf'(h_0 x + h_1)$ itself is not injective, which contradicts the fact that $(h_0, h_1) \mapsto \int_0^1 \lmgf(h_0 x + h_1)$ is strictly convex. It now remains to prove, in order to conclude, that
\beq
\lim_{v\to \pm \gb/2} \mathsf{L}_2(u(v,q),v)  = \pm \infty.
\eeq
To this end, we first differentiate~\eqref{eq:diffeo2} with respect to $v$ and get
\beq
(\partial_vu)(v,q) = -\frac{\int_0^1 x(1-x)\lmgf''(u(v,q)x+v(1-x))\dd x}{\int_0^1 x^2\lmgf''(u(v,q)x+v(1-x))\dd x} <0,
\eeq
(by strict convexity of $\lmgf$), from which we infer that $v\mapsto u(v,q)$ is decreasing. Consequently, $|u(v,q)-v|$ remains bounded away from zero as $v\to \pm \gb/2$. Using~\eqref{eq:diffeo1} and~\eqref{eq:diffeo2}, we finally get:
\beq
\mathsf{L}_2(u(v,q),v) = \frac{\lmgf(v)}{u(v,q)-v} - q - \frac{1}{u(v,q)-v}\int_0^1\lmgf((u(v,q)-v)x+v)\dd x,
\eeq
and, using Lemma~\ref{lem:control-lmgf},
\beq
|\mathsf{L}_2(u(v,q),v)| \ge \frac{\lmgf(v)}{|u(v,q)-v|} - |q| - \frac{\cst}{|u(v,q)-v|^2},
\eeq
which converges to $+\infty$ as $v\to\pm \gb/2$.
\bibliographystyle{plain}
\bibliography{biblio.bib}
\end{document}